\documentclass[12pt,reqno]{amsart}
\usepackage[margin=1in]{geometry}

\usepackage{hyperref}
\usepackage{amsthm}
\usepackage{amssymb}
\usepackage{amsxtra}
\usepackage{mathtools}
\usepackage{kopp} 
\usepackage{epsfig}
\usepackage{verbatim}
\usepackage{cleveref}
\usepackage{mathrsfs}
\usepackage{cite}
\usepackage{tikz-cd}
\usepackage{autonum} 

\numberwithin{equation}{section}

\theoremstyle{plain}
\newtheorem{theorem}{Theorem}[section]
\newtheorem{thm}[theorem]{Theorem}
\newtheorem{prop}[theorem]{Proposition}
\newtheorem{cor}[theorem]{Corollary}
\newtheorem{lem}[theorem]{Lemma}

\newtheorem{conj}[theorem]{Conjecture}

\theoremstyle{definition}
\newtheorem{defn}[theorem]{Definition}

\newtheorem{egz}[theorem]{Example}

\theoremstyle{remark}
\newtheorem{rmk}[theorem]{Remark}

\AddToHook{env/prop/begin}{\crefalias{theorem}{prop}}
\AddToHook{env/cor/begin}{\crefalias{theorem}{cor}}
\AddToHook{env/lem/begin}{\crefalias{theorem}{lem}}
\AddToHook{env/que/begin}{\crefalias{theorem}{que}}
\AddToHook{env/conj/begin}{\crefalias{theorem}{conj}}
\AddToHook{env/assu/begin}{\crefalias{theorem}{assu}}
\AddToHook{env/cla/begin}{\crefalias{theorem}{cla}}
\AddToHook{env/defn/begin}{\crefalias{theorem}{defn}}
\AddToHook{env/nota/begin}{\crefalias{theorem}{nota}}
\AddToHook{env/cond/begin}{\crefalias{theorem}{cond}}
\AddToHook{env/egz/begin}{\crefalias{theorem}{egz}}
\AddToHook{env/intro/begin}{\crefalias{theorem}{intro}}
\AddToHook{env/rmk/begin}{\crefalias{theorem}{rmk}}

\begin{document}

\allowdisplaybreaks

\title[The Shintani--Faddeev modular cocycle]{The Shintani--Faddeev modular cocycle: Stark units from $q$-Pochhammer ratios}
\author{Gene S. Kopp}

\address{Department of Mathematics, Louisiana State University, Baton Rouge, LA, USA}
\email{kopp@math.lsu.edu}

\begin{abstract}
We give a new interpretation of Stark units associated to real quadratic fields as \textit{real multiplication values} of a \textit{modular cocycle}. The cocycle of interest is a meromorphic factor describing the modular transformations of the $q$-Pochhammer symbol and is related to the Shintani--Barnes double sine function and the Faddeev quantum dilogarithm. We prove a refinement of Shintani's Kronecker limit formula that relates square roots of Stark class invariants to real multiplication values of the cocycle, which are cohomological invariants.
\end{abstract}

\date{May 2, 2025}

\subjclass[2020]{11R37 (primary), 11F37, 11F67, 11R27, 11R42, 11R54 (secondary)}

\keywords{Stark conjectures, real quadratic field, double sine function, double gamma function, noncompact quantum dilogarithm, $q$-Pochhammer symbol, meromorphic modular cocycle, holomorphic quantum modular form, ray class field, non-maximal order, partial zeta function, Hilbert's 12th problem}

\maketitle

\tableofcontents

\section{Introduction}\label{sec:intro}

In the theory of singular moduli, special values of modular functions generate abelian Galois extensions of imaginary quadratic fields. Such modular functions may be specified as ratios of products of $q$-Pochhammer symbols (for example, eta quotients and theta quotients)---in particular, theta quotients yield elliptic units. In this paper, we prove that certain other ratios of $q$-Pochhammer symbols meromorphically continue as a function of $\tau$ (with $q = e^{2\pi i \tau}$) to a region that includes an open subset of the real line, where their special values at real quadratic points are closely related to certain special values of $L$-functions at $s=0$. Then, under the assumption of a version of the Stark conjectures, limiting ratios of $q$-Pochhammer symbols generate abelian extensions of \textit{real} quadratic fields.

The central objects of interest are the \textit{Shintani--Faddeev Jacobi cocycle} $(\m,A) \mapsto \sfj{\m,A}{z}{\tau}$ and the closely related \textit{Shintani--Faddeev modular cocycle (with characteristics $\r$)} $A \mapsto \sf{\r}{A}{\tau}$.\footnote{The notation $\sf{\r}{A}{\tau}$ uses the Hebrew letter \textit{shin}. 
See \Cref{sec:typesetting} for typesetting information.} The Shintani--Faddeev Jacobi cocycle is a mapping from the Jacobi group $\Z^2 \semidirect \SL_2(\Z)$ to complex meromorphic functions that serves as a generalized factor of automorphy describing the modular transformation law of the $q$-Pochhammer symbol. The function $\sfj{\m,A}{z}{\tau}$ has been identified by Dimofte (in a slightly different form) as a partition function of a certain topological quantum field theory on a squashed lens space \cite{dimofte}. It has been called both the \textit{rarefied hyperbolic gamma function} and the \textit{generalized noncompact quantum dilogarithm} by Sarkissian and Spiridonov \cite{sarkissian}; it has also been studied by Garoufalidis and Wheeler \cite{gw} and Wheeler \cite{wheelerthesis}. It generalizes the Shintani's \textit{double sine function} \cite{shintani,shintanicertain} (named by Kurokawa \cite{kurokawa}), which was rediscovered by Faddeev \cite{faddeev} and is called the \textit{noncompact quantum dilogarithm} in the physics literature. Our name---the Shintaini--Faddeev Jacobi cocycle---was chosen to emphasize the basic algebraic role of the function (as a meromorphic cocycle for the Jacobi group) and its dual history (and importance) in number theory and physics. 

Our main theorem, \Cref{thm:main}, expresses a Stark class invariant as the square of a \textit{real multiplication (RM) value} of the Shintani--Faddeev modular cocycle (or equivalently, an RM value of the Shintani--Faddeev Jacobi cocycle), up to an explicit root of unity. The Stark class invariant is the value $\exp(-Z_\A'(0))$ for a Dirichlet series $Z_\A(s)$ that is defined as the difference of two ray class partial zeta functions of a real quadratic field $F$. Stark conjectured that the value $\exp(-Z_\A'(0))$ is an algebraic unit in an abelian extension of $F$ \cite{stark3, starkrealquad, stark4}. 
Tate's refinement of Stark's conjectures includes 
the further prediction that the square root of this invariant is in an abelian extension of $F$ \cite{tate}.
The square roots we obtain from RM values of $\shin^\r$ are sometimes positive and sometimes negative, and the sign defines a new class invariant. The sign plays a key role in an application to quantum information theory, the construction of symmetric, informationally complete, positive operator-valued measures (SIC-POVMs) \cite{afk}.

\subsection{The Shintani--Faddeev Jacobi and modular cocycles}

Before stating our main theorems, we give a short, self-contained description of the transcendental functions of interest. Here and throughout the paper, we use the notation $\ee{z} := e^{2\pi i z}$ for the complex exponential.

For $\m = \smcoltwo{m_1}{m_2} \in \Q^2$ and $A = \smmattwo{a}{b}{c}{d} \in \SL_2(\Z)$, we will consider the following infinite product defined for $(z,\tau) \in \C \times \HH$:
\begin{equation}
\sfj{\m,A}{z}{\tau} 
= \prod_{k=0}^\infty \frac{1-\ee{\frac{z}{c\tau+d}+(m_2+k)\frac{a\tau+b}{c\tau+d}-m_1}}{1-\ee{z+k\tau}}.
\end{equation}
This product will often be written as a ratio of two infinite $q$-Pochhammer symbols
\begin{equation}
\sfj{\m,A}{z}{\tau} 
= \frac{\left(\ee{\frac{z}{c\tau+d}+m_2\frac{a\tau+b}{c\tau+d}-m_1},\ee{\frac{a\tau+b}{c\tau+d}}\right)_\infty}{(\ee{z},\ee{\tau})_\infty},
\end{equation}
using the notation $(w,q)_\infty = \prod_{k=0}^\infty(1-wq^k)$. 
We will also write $\sfj{\m,A}{z}{\tau}$ for a meromorphic continuation of this function, and we will interpret
\begin{equation}
\sfj{\m,A}{z_0}{\tau} = \lim_{z \to z_0}\sfj{\m,A}{z}{\tau}
\end{equation}
when necessary, for example, when there are zeros in the numerator and denominator of the product formula.

The \textit{Shintani--Faddeev Jacobi cocycle} is the function from $\Z^2 \semidirect \SL_2(\Z)$ to a space of meromorphic functions given\footnote{For $\m \in \Z^2$, $\sigma_{\m,A}$ does not depend on $m_1$ (but does depend on $m_2$); indeed, this vestigial variable does not appear in treatments \cite{dimofte, sarkissian}. We keep it for bookkeeping purposes, to emphasize the Jacobi group action.} 
by $(\m,A) \mapsto \sigma_{\m,A}$.
For $(\m,A) \in \Z^2 \semidirect \SL_2(\Z)$, the function $\sfj{\m,A}{z}{\tau}$ has a meromorphic continuation to the domain $\C \times \DD_{\!A}$, where
\begin{equation}
\DD_{\!A} = 
\begin{cases}
\C \setminus (-\infty,-d/c] & \mbox{if } c > 0,\\
\C & \mbox{if } c=0 \mbox{ and } d>0,\\
\HH & \mbox{if } c=0 \mbox{ and } d<0,\\
\C \setminus [-d/c,\infty) & \mbox{if } c < 0.
\end{cases}
\end{equation}

Let $\r \in \Q^2$, and consider the congruence group
\begin{equation}
\Gamma_\r = \{A \in \SL_2(\Z) : A\r - \r \in \Z^2\}.
\end{equation}
Define the ``shin'' function to be
\begin{equation}
\sf{\r}{A}{\tau} = \sigma_{\r,A}(0,\tau);
\end{equation}
it continues meromorphically to $\tau \in \DD_{\!A}$.
The \textit{Shintani--Faddeev modular cocycle} is the function from $\Gamma_\r$ to a space of meromorphic functions given by $A \mapsto \shin_{\!A}$. 

\subsection{Partial zeta functions and the Stark conjectures}

We now describe the definitions of the ray class groups and zeta functions needed for the statement of the main theorem.
Let $F$ be a number field and $\OO$ be an order in $F$. Let $\mm$ be an ideal of $\OO$ and $\rS$ a subset of the set of real embeddings of $F$. The following algebraic structures are defined in \cite{kopplagarias,kopplagariasmonoids} and generalize the standard definitions from class field theory to arbitrary orders.

The \textit{ray class group of the order $\OO$ modulo $(\mm, \rS)$} is
\begin{equation}
\Cl_{\mm,\rS}(\OO) = \frac{\rJ_{\mm}^\ast(\OO)}{\rP_{\mm,\rS}(\OO)},
\end{equation}
where
\begin{align}
\rJ_{\mm}^\ast(\OO) &= \{\mbox{invertible fractional ideals of $\OO$ coprime to $\mm$}\}, \mbox{ and} \\
\rP_{\mm,\rS}(\OO) &= \{\alpha\OO \mbox{ such that } \alpha \con 1 \Mod{\mm} \mbox{ and } \rho(\alpha)>0 \mbox{ for } \rho \in \rS\}.
\end{align}
The \textit{\rcmia} is
\begin{equation}
\Clt_{\mm,\rS}(\OO) = \frac{\rJf_\mm(\OO)}{\sim_{\mm,\rS}},
\end{equation}
where
\begin{align}
\rJf_\mm(\OO) &= \{\aa \in \rJ_\OO^\ast(\OO) : \aa\OO[S_\mm^{-1}] \subseteq \OO[S_\mm^{-1}]\} \mbox{ with} \\
S_\mm &= \{\alpha \in \OO : \alpha\OO + \mm = \OO\},
\end{align}
and the equivalence relation $\sim_{\mm,\rS}$ is defined by
\begin{equation}
\aa \sim_{\mm,\rS} \bb \iff \begin{array}{c}\exists \ccc \in \rJf_{\mm}(\OO) \mbox{ and } \alpha, \beta \in \OO[S_\mm^{-1}] \mbox{ such that } \aa = \alpha\ccc, \bb = \beta\ccc, \\ 
\alpha - \beta \in \mm\OO[S_\mm^{-1}],
\sgn(\rho(\alpha)) = \sgn(\rho(\beta)) \mbox{ for all } \rho \in \rS.\end{array}
\end{equation}
The \textit{submonoid of zero classes} is
\begin{equation}
\ZClt_{\mm,\rS}(\OO) = \{[\dd] \in \Clt_{\mm,\rS}(\OO) : \dd \subseteq \mm\}.
\end{equation}
If the real embeddings of $F$ are labeled $\rho_1, \ldots, \rho_r$ and $\rS = \{\rho_{j_1}, \ldots, \rho_{j_k}\}$, the pair $(\mm,\rS)$ may be abbreviated as $\mm\infty_{j_1} \cdots \infty_{j_k}$.

Let $\A \in \Clt_{\mm,\rS}(\OO)$, and let $\sR$ be the element of $\Cl_{\mm,\rS}(\OO)$ defined by
\begin{equation}
\sR := \{\alpha\OO : \alpha \equiv -1 \Mod{\mm} \mbox{ and } \rho(\alpha)>0 \mbox{ for all } \rho \in \rS\}.
\end{equation} 
For $\re(s)>1$, define the \textit{ray class partial zeta function} and  the \textit{differenced ray class partial zeta function}, respectively, by
\begin{align}
\zeta_{\mm,\rS}(s,\A) &= \sum_{\aa \in \A} \Nm(\aa)^{-s}, \mbox{ and} \\
Z_{\mm,\rS}(s,\A) &= \zeta_{\mm,\rS}(s,\A) - \zeta_{\mm,\rS}(s,\sR\A).
\end{align}
 
The Takagi existence theorem associates to the ray class group $\Cl_{\mm,\rS}(\OO_F)$ a \textit{ray class field}. This correspondence is extended to nonmaximal orders by \cite[Thm.\ 1.1]{kopplagarias}, which associates to $\Cl_{\mm,\rS}(\OO_F)$ the \textit{ray class field $H_{\mm,\rS}^{\OO}$ of $\OO$ modulo $(\mm,\rS)$}, a particular abelian Galois extension of $F$ with certain properties.

A famous series of conjectures of Stark connects the leading term of the Taylor series expansion of partial zeta functions at $s=1$ to units in abelian extensions. More generally, the Stark conjectures describe generalized regulators for Artin $L$-functions of possibly non-abelian Galois extensions \cite{stark1,stark2,stark3,starkrealquad,stark4}. The present paper is concerned only with the rank $1$ Stark conjecture in the abelian case with real quadratic base field.

For a real quadratic field $F$ and $\A \in \Cl_{\mm\infty_2}(\OO_F)$, the Stark conjectures predict that the real number
\begin{equation}
\exp(-Z_{\mm\infty_2}'(0,\A))
\end{equation}
is an algebraic unit $\e_\A$. Except in trivial cases 
(when $\exp(-Z_{\mm\infty_2}'(0,\A)) = \e_\A = 1$),
Stark conjectured that $H_{\mm\infty_2}^{\OO_F} = F(\e_\A)$. He also conjectured a compatibility with the Artin map: $(\Art(\A))(\e_{\id}) = \e_\A$. Tate's refinement of the Stark conjectures \cite{tate} includes the claim that $F(\e_\A^{1/2})$ is abelian over $F$, which he attributes to Stark.

\subsection{Eta-multiplier and theta-multiplier characters}

The Shintani--Faddeev cocycles are intimately connected to the Dedekind eta function and Jacobi theta functions, which are half-integral weight modular forms with character. The properties of these functions are reviewed in \Cref{sec:prelim}. For now, we describe their characters briefly in order to state the main theorem.

For $\tau \in \HH$, the \textit{Dedekind eta function} is
\begin{equation}
\eta(\tau) = \ee{\frac{\tau}{24}}\prod_{k=1}^\infty \left(1-\ee{k\tau}\right).
\end{equation}
Under the \textit{fractional linear transformation action} $A \cdot \tau = \frac{a\tau+b}{c\tau+d}$ of $A = \smmattwo{a}{b}{c}{d} \in \SL_2(\Z)$, the eta function transforms by
\begin{equation}
\eta\!\left(\frac{a\tau+b}{c\tau+d}\right)
= \psi\!\left(A,\sqrt{c\tau+d}\right)\sqrt{c\tau+d}\ \eta(\tau).
\end{equation}
Here $\psi$ is a character of the metaplectic group (see \Cref{sec:covering}) taking values is the group of complex roots of unity $\mu_\infty(\C)$. (In fact, its values are all $24$-th roots of unity.) Its square is a bona fide character of the modular group: $\psi^2 : \SL_2(\Z) \to \mu_\infty(\C)$. A formula for $\psi$ due to Rademacher is given as \Cref{thm:psiformula}.

For $\r = \smcoltwo{r_1}{r_2} \in \Q^2$ and $\tau \in \HH$, the \textit{Jacobi theta function with characteristics $\r$} is
\begin{equation}
\theta_\r(\tau) 
= \sum_{n=-\infty}^\infty \ee{\foh \left(n+r_2+\foh\right)^2\tau + \left(n+r_2+\foh\right)\left(-r_1+\foh\right)}.
\end{equation}
Under the fractional linear transformation action of $A = \smmattwo{a}{b}{c}{d} \in \Gamma_\r$, this theta function transforms by
\begin{equation}\label{eq:thetatransintro}
\theta_\r(A\cdot\tau) = \psi\!\left(A,\sqrt{c\tau+d}\right)^3\chi_\r(A)\sqrt{c\tau+d}\ \theta_\r(\tau).
\end{equation}
The character $\chi_\r : \Gamma_\r \to \mu_\infty(\C)$ is given by the formula
\begin{align}\label{eq:chifmlaintro}
\chi_\r(A) &:= -(-1)^{\delta_2(\!A\r-\r)}\ee{-\frac{1}{2}\det\!\rowtwo{A\r}{\r}}, \mbox{ where} \\
\delta_2(\q) &:= \begin{cases}
1, & \mbox{if } \q \in 2\Z^2, \\
0, & \mbox{if } \q \nin 2\Z^2.
\end{cases}
\end{align}
For proofs of \eqref{eq:thetatransintro} and \eqref{eq:chifmlaintro}, see \Cref{thm:thetamod} and \Cref{lem:charsimp}.

\subsection{Main result:\ a limit formula}

The main theorem relates a generalized ``Sark class invariant'' $\exp(Z_{\mm\infty_2}'(0,\A))$ to a limit of ratios of $q$-Pochhammer symbols. It may be considered a refinement of Shintani's Kronecker limit formula in the real quadratic case \cite{shintani}. Its statement does not require Shintani decomposition or continued fractions.

\begin{thm}\label{thm:main}
Let $\OO$ be an order in a real quadratic field $F \subset \R$, with Galois conjugation map $x \mapsto x'$, and let $\mm$ be a nonzero $\OO$-ideal.
Let $\A \in \Clt_{\mm\infty_2}(\OO) \setminus \ZClt_{\mm,\rS}(\OO)$, let $\A_0$ be the class of $\A$ in $\Cl(\OO)$, choose some $\bb \in \A_0^{-1}$ coprime to $\mm$, and write $\bb\mm = \alpha(\beta\Z+\Z)$ for some $\alpha,\beta \in F$ such that $\alpha$ is totally positive and $\beta > \beta'$. Choose $\r = \smcoltwo{r_1}{r_2} \in \Q^2$ such that $\alpha(r_2\beta-r_1)\OO \in \bb\A$ and $r_2\beta'-r_1>0$.
Write
\begin{equation}
\{B \in \Gamma_\r : B \cdot \beta = \beta\} = \langle A \rangle \mbox{ or } \langle -I, A \rangle
\end{equation}
with $A \smcoltwo{\beta}{1} = \lambda \smcoltwo{\beta}{1}$ for $\lambda > 1$.
Let $n = \frac{2}{\abs{\phi^{-1}(\A)}}$, where $\phi : \Clt_{\mm\infty_1\infty_2}(\OO) \to \Clt_{\mm\infty_2}(\OO)$ is the natural quotient map.
Then 
\begin{align}\label{eq:main}
\exp\!\left(n Z_{\mm\infty_2}'(0,\A)\right)
& = (\psi^{-2}\chi_\r^{-1})(A) \ \sf{\r}{A}{\beta}^2,
\end{align}
where $\shin^{\r}$ denotes the Shintani--Faddeev modular cocycle. Explicitly, the value
\begin{align}
\sf{\r}{A}{\beta} 
= \lim_{y \to 0^+} \frac{(\widetilde{w}_y,\widetilde{q}_y)_\infty}{(w_y,q_y)_\infty}
= \lim_{y \to 0^+} \prod_{k=0}^\infty \frac{1-\widetilde{w}_y\widetilde{q}_y^k}{1-w_yq_y^k},
\end{align}
where the parameters in the product are
$q_y=\ee{\beta+y i}$, 
$w_y=\ee{r_2(\beta+y i)-r_1)}$, 
$\widetilde{q}_y=\ee{A\cdot(\beta+y i)}$, 
and 
$\widetilde{w}_y=\ee{r_2(A\cdot(\beta+y i))-r_1}$.
\end{thm}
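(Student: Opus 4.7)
The plan is to express both sides of \eqref{eq:main} as exponentiated derivatives at $s=0$ of a Shintani-type cone zeta function, evaluate both via the Barnes double gamma / Shintani double sine function, and match.

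First, I would translate the class data into an explicit lattice parametrization. Since $\bb\mm = \alpha(\beta\Z+\Z)$ and $\alpha(r_2\beta-r_1)\OO \in \bb\A$, the nonzero elements of $\bb\A$ in the $\mm$-residue class specifying $\A$ are the totally positive generators of the form $\alpha((m_2+r_2)\beta - (m_1+r_1))$ for $(m_1,m_2)\in\Z^2$; the stated positivity conditions on $\alpha$, on $\beta-\beta'$, and on $r_2\beta'-r_1$ cut out a simplicial cone $C\subset\R^2$ on which the totally positive unit group acts by multiplication by $\lambda$, an action encoded by $A$ in the $(m_1,m_2)$ coordinates. The factor $n$ accounts for whether $\lambda$ is the totally positive fundamental unit or its square, i.e.\ for the fiber size of $\phi$. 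This recasts $n\zeta_{\mm\infty_2}(s,\A)$ as a sum over a fundamental domain $\mathcal{F}$ for the $A$-action inside the cone, and similarly for $\zeta_{\mm\infty_2}(s,\sR\A)$ after shifting $\r$ by an appropriate $\mm$-periodic half-translate; the $\sR$-differencing converts the sum into an alternating one that suppresses the Bernoulli-polynomial contribution to the derivative.

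Second, I would apply Shintani's integral/series representation of such a cone sum: its derivative at $s=0$ is expressible as a finite combination of logarithms of Barnes' double gamma $\Gamma_2$ (equivalently, the double sine $S_2$) at arguments encoding $\r$ and the period pair $(\beta,1)$. For the differenced zeta function the many terms collapse into a single clean $\log S_2$-type expression --- essentially Shintani's Kronecker limit formula in the real quadratic case, but organized around the single hyperbolic stabilizer $A$ rather than around a continued-fraction Shintani decomposition of the cone. I would then invoke the identification of the double sine with the meromorphically continued $q$-Pochhammer ratio defining $\sf{\r}{A}{\beta}$: the cocycle $\sigma_{\r,A}(0,\tau)$ is (up to eta and theta normalizations) a reformulation of Shintani's double sine / Faddeev's quantum dilogarithm, and its value obtained as the $y\to 0^+$ limit at $\tau=\beta$ recovers exactly the $S_2$ expression from the previous step. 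Squaring appears naturally because the Barnes representation encodes half the derivative; the character factors $\psi^{-2}(A)$ and $\chi_\r^{-1}(A)$ emerge as the discrepancies between the raw $q$-Pochhammer normalization and the normalizations of $\eta^2$ and $\theta_\r$ when converting the abstract Barnes-gamma expression into the explicit modular product in $q$.

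The main obstacle will be the rigorous execution of the last step: controlling the $y\to 0^+$ limit at a real quadratic irrational $\beta$, where neither $(w_y,q_y)_\infty$ nor $(\widetilde{w}_y,\widetilde{q}_y)_\infty$ converges individually (since $|q_y|\to 1$), and identifying the limit of the ratio with the Shintani/Barnes expression. Technically this should follow from a Faddeev-type integral representation for $\shin$ combined with a contour-shift / saddle-point argument matching the asymptotics as $y\to 0^+$, but the careful tracking of the exponentially small ``quantum'' corrections and their recombination into the finite root of unity $(\psi^{-2}\chi_\r^{-1})(A)$ is the genuine technical heart of the refinement of Shintani's formula.
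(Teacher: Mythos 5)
The high-level tools you name are the right ones: Shintani's Kronecker limit formula, the Barnes double gamma/Shintani double sine, and the Jacobi triple product connecting $q$-Pochhammer ratios to $\eta$ and $\theta_\r$. But your proposal has two genuine gaps, and one place where you have the right idea but underestimate its content.

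The most serious gap is your claim that you can express $nZ'_{\mm\infty_2}(0,\A)$ as a cone sum ``organized around the single hyperbolic stabilizer $A$ rather than around a continued-fraction Shintani decomposition of the cone.'' Shintani's formula applies to simplicial cones, and the full first quadrant swept out by the unit acting by $\lambda$ is not simplicial; you \emph{need} a subdivision, which for real quadratic fields is precisely the Hirzebruch--Jung continued-fraction decomposition of $\beta$. The paper still performs this decomposition (Tangedal's invariant $U^{(1)}_\mm(\A)$ is defined as a product of $k\ell$ double sine values over the HJ cycle), and the key technical move is the \emph{telescoping} identity: because $\shin^\r_{\!A}$ is a cocycle, one can write each double sine factor as $\ee{\text{phase}}\cdot\lim_{\tau\to\beta}\frac{\varpi_{\r_{n-1}}(A_{n-1,k\ell}\cdot\tau)}{\varpi_{\r_n}(A_{n,k\ell}\cdot\tau)}$ (Lemma~\ref{lem:deltafrac2}), and the product over $n$ collapses to $\frac{\varpi_\r(A\cdot\tau)}{\varpi_\r(\tau)}$. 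This is exactly Yamamoto's telescoping idea brought to completion, and it is what makes the result look like a ``single $A$'' statement; your proposal assumes the conclusion of this step without supplying the mechanism.

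Second, your proposed treatment of the $y\to 0^+$ limit by a ``Faddeev-type integral representation combined with a contour-shift / saddle-point argument'' is not how the paper proceeds and is, in my judgment, significantly harder than what is needed. The paper establishes the meromorphic continuation of $\sigma_{\!A}(z,\tau)$ to $\tau \in \DD_{\!A}$ structurally: Shintani's Proposition~5 (Theorem~\ref{thm:shin5}) identifies $\sigma_{\!S}(z,\tau)$ with the double sine $\SS_2(z,\tau)^{-1}$ (up to elementary factors), which visibly continues across the positive reals, and the cocycle relation $\sigma_{\!A} = \sigma_{\!S}^{B}\cdot\sigma_{\!B}$ (induction on the lower-left entry) handles general $A$. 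No saddle-point analysis is required, and attempting one would bury the exact identity \eqref{eq:main} under error terms.

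Finally, on the phase factor: you correctly say that tracking the root of unity $(\psi^{-2}\chi_\r^{-1})(A)$ is ``the genuine technical heart,'' but you give no method. In the paper these arise as explicit rational phases $\gamma(A)$ and $\lambda_\r(A)$ produced by the telescoping (Proposition~\ref{prop:almost}), and the nontrivial identifications $\ee{\gamma(A)}^2 = \psi^2(A)$ and $\ee{\lambda_\r(A)}^2 = \chi_\r(A)$ require (i) Meyer's theorem relating $\sum b_n - 3\ell$ to the Rademacher $\Phi$ invariant, plus a combinatorial comparison of the HJ expansions of $\beta$ and $\beta'$ (Lemma~\ref{lem:hjconj}), and (ii) a clever application of the functional equation $\shin^\r_{\!A}\shin^{-\r}_{\!A} = \psi^2\chi_\r$ at the fixed point (Theorem~\ref{thm:shincharacter}) together with the symmetry $\lambda_\r(A) = \lambda_{-\r}(A)$. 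These are not ``normalization discrepancies'' that fall out automatically; they are the delicate part of the argument, and you should expect them to consume most of the effort.
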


The proof of \Cref{thm:main} relies on most of the lemmas in the paper and will be completed in \Cref{sec:proof}. The key idea of the proof is to use the cocycle condition to ``telescope'' a variant of Shintani's formula based on a continued fraction expansion.

The statement of \Cref{thm:main} could be made slicker in several ways, at the expense of hiding some of the complexity behind further definitions. 
\begin{itemize}
\item
The value $\sf{\r}{\!A}{\tau}$ in \Cref{thm:main} is a \textit{real multiplication (RM) value} of the cocycle $\shin^{\r}$, as defined in \Cref{sec:stable}. It may be written as
\begin{equation}
\shin^\r[\beta] := \sf{\r}{\!A}{\tau},
\end{equation}
as it only depends on the real quadratic number $\beta$. Indeed, it also depends only on the class of $\shin^\r$ in a certain cohomology group. 
\item 
The RM values of the Jacobi cocycle coincide with those of the modular cocycle:
\begin{equation}
\sigma[r_2\beta-r_1,\beta] = \shin^\r[\beta],
\end{equation}
although this equivalence hides a shift in the elliptic variable in the definition of the RM value. Thus, \Cref{thm:main} may also be understood as a result about the RM values of Shintani--Faddeev Jacobi cocycle. From this perspective, the theorem exhibits all Stark class invariants of ray class fields over real quadratic fields as RM values of a single object, the Jacobi cocycle $\sigma$.
\item
The characters could be absorbed into the definition of either the modular or the Jacobi cocycle. In particular, if one defines the ``samech cocycle'' to be
\begin{equation}
\samech^\r_{\!A}(\tau) = (\psi^{-2}\chi_\r^{-1})(A) \ \sf{\r}{A}{\tau}^2,
\end{equation}
then \eqref{eq:main} becomes
\begin{equation}
\exp\!\left(n Z_{\mm\infty_2}'(0,\A)\right) = \samech^\r[\beta],
\end{equation}
and the matrix $A$ may be left out of the theorem statement.
\end{itemize}

We give a corollary to \Cref{thm:main} that is particularly essential to the construction of SIC-POVMs in \cite{afk}. 
\begin{cor}
If $\beta$ is a real quadratic number, $\r \in \Q^2$, and
\begin{equation}
\{B \in \Gamma_\r : B \cdot \beta = \beta\} = \langle A \rangle \mbox{ or } \langle -I, A \rangle
\end{equation}
such that $A \smcoltwo{\lambda}{1} = \lambda \smcoltwo{\beta}{1}$ for $\lambda > 1$,
then $\samech^\r[\beta] = (\psi^{-2}\chi_\r^{-1})(A) \, \sf{\r}{A}{\beta}^2$ is a positive real number.
\end{cor}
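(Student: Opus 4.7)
The plan is to reduce directly to \Cref{thm:main}, recognizing $\samech^\r[\beta]$ as the right-hand side of \eqref{eq:main} for an appropriate ray class $\A$; the positivity conclusion then falls out of the left-hand side. To reverse-engineer the arithmetic data, let $F = \Q(\beta)$. Since $A\smcoltwo{\lambda}{1} = \lambda\smcoltwo{\beta}{1}$ with $\lambda > 1$ makes $\beta$ the attracting fixed point of $A$, we have $\beta > \beta'$. Let $\OO$ be the endomorphism order of the lattice $L := \beta\Z + \Z$ inside $F$, and pick an $\OO$-ideal $\mm$ with $N\OO \subseteq \mm$ for some positive integer $N$ satisfying $N\r \in \Z^2$. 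By weak approximation, choose $\alpha \in F^\times$ totally positive such that $\bb := \alpha^{-1} L \mm$ is an integral $\OO$-ideal coprime to $\mm$; then $\bb\mm = \alpha L$ as required by the main theorem. Assuming $\r \notin \Z^2$, after translating $\r$ by an integer vector to arrange $r_2\beta' - r_1 > 0$, define $\A := [\alpha(r_2\beta - r_1)\OO \cdot \bb^{-1}] \in \Clt_{\mm\infty_2}(\OO)$; one checks $\A \notin \ZClt_{\mm,\rS}(\OO)$, since this is equivalent to $r_2\beta - r_1 \notin L$, which holds precisely when $\r \notin \Z^2$.

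Applying \Cref{thm:main} to this data yields
\begin{equation*}
(\psi^{-2}\chi_\r^{-1})(A)\,\sf{\r}{A}{\beta}^2 \;=\; \exp\!\bigl(n\, Z_{\mm\infty_2}'(0,\A)\bigr).
\end{equation*}
Because $Z_{\mm\infty_2}(s,\A) = \zeta_{\mm\infty_2}(s,\A) - \zeta_{\mm\infty_2}(s,\sR\A)$ is a Dirichlet series with integer coefficients (a difference of counts of ideals with prescribed norm), $Z_{\mm\infty_2}'(0,\A)$ is real, $n$ is a positive rational, and the exponential of a real number is positive. The preliminary translation of $\r$ does not affect $(\psi^{-2}\chi_\r^{-1})(A)\,\sf{\r}{A}{\beta}^2$, which descends to a function of $\r$ modulo $\Z^2$; this invariance can be verified from the explicit change in $\chi_\r$ combined with the shift behaviour of $\sigma_{\r,A}$ under $\r \mapsto \r + \v$.

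The main obstacle I foresee is the degenerate case $\r \in \Z^2$, which forces $\A$ to be a zero class and so lies outside the hypothesis of \Cref{thm:main}. In that regime the $q$-Pochhammer ratio defining $\sf{\r}{A}{\beta}$ collapses as $z \to 0$ with coincident zeros in numerator and denominator, and the limit reduces to an eta-quotient expression. Positivity would then have to be checked directly using the explicit transformation law for $\eta$ together with Rademacher's formula for $\psi$ (\Cref{thm:psiformula}), after which the identity $(\psi^{-2}\chi_\r^{-1})(A)\,\sf{\r}{A}{\beta}^2 > 0$ should follow from a short unit-root calculation on $A$.
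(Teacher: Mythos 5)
Your approach is essentially the paper's: recognize the quantity as the right-hand side of \Cref{thm:main} for a suitable ray class $\A$, then conclude positivity from $Z'_{\mm\infty_2}(0,\A)\in\R$ and the positivity of $\exp$. The paper cites \Cref{thm:correspondence} to produce $\A$ in a single sentence, whereas you re-derive the construction by hand; that duplicates the work of $\tilde\Omega_\mm$ in the proof of \Cref{thm:correspondence} but is not wrong in spirit. Two small formula slips worth fixing: to get $\bb\mm=\alpha L$ you need $\bb:=\alpha L\mm^{-1}=\alpha\colonideal{L}{\mm}$, not $\alpha^{-1}L\mm$ (with your formula $\bb\mm=\alpha^{-1}L\mm^2$); and the ideal constraint should run $\mm\subseteq N\OO$ (or simply take $\mm=N\OO$, as \Cref{thm:correspondence} does), not $N\OO\subseteq\mm$ — a larger $\mm$ makes $\sympt{\r}{\beta}\mm\subseteq L$ harder, not easier, to satisfy.

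Your instinct to quarantine $\r\in\Z^2$ is correct and actually sharper than the paper's own two-line proof, which invokes \Cref{thm:main} without noting that the corresponding $\A$ is then a zero class and lies outside that theorem's hypotheses. However, you do not need the eta-quotient route you sketch: the paper already provides \Cref{thm:trivrmval}, which gives $\sf{\r}{A}{\beta}=\psi(A,\sqrt{j_{\!A}})\sqrt{\lambda}^{\pm 1}$ for $\r\in\Z^2$, and since $\chi_\r(A)=\chi_{\mathbf 0}(A)=1$ one gets $\samech^\r[\beta]=\lambda^{\pm 1}>0$ immediately. (The paper itself uses exactly this lemma to dispatch $\r\in\tfrac{1}{2}\Z^2$ at the start of its proof of \Cref{thm:field}.)
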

\begin{proof}
By \Cref{thm:correspondence}, 
every such pair $(\r,\beta)$ corresponds to some $\A \in \Clt_{\mm\infty_2}(\OO)$ in the manner of \Cref{thm:main}. Since $Z_{\mm\infty_2}'(0,\A) \in \R$, it follows from \eqref{eq:main} that $(\psi^{-2}\chi_\r^{-1})(A) \, \sf{\r}{A}{\beta}^2$ is a positive real number.
\end{proof}

\subsection{Conditional results and conjectures:\ algebraicity}

If one assumes an appropriate version of a Stark conjecture, our main theorem implies that the RM values of the Shintani--Faddeev cocycle lie in abelian extensions of real quadratic fields, at least in the maximal order case.

\begin{thm}\label{thm:field}
Assume \Cref{conj:stark2} (a consequence of Tate's refinement of the Stark conjectures). 
Let $\beta \in \R$ such that $a\beta^2+b\beta+c=0$ with $a,b,c \in \Z$, $b^2-4ac$ not a square, and let $\r \in \Q^2$. 
\begin{enumerate}
\item There exists some $n \in \N$ such that $\shin^{\r}[\beta]^n$ is an algebraic unit in an abelian extension of $F=\Q(\beta)$.
\item If $b^2-4ac$ is a fundamental discriminant, then we may take $n=1$. Moreover, if $\cc = \beta\Z+\Z$, and $\mm$ is the kernel of the $\OO_F$-module map $\OO_F \to ((r_2\beta-r_1)\OO_F+\cc)/\cc$ given by $1 \mapsto r_2\beta-r_1$, then $\samech^\r[\beta] \in H_{\mm\infty_2}$.
\end{enumerate}
\end{thm}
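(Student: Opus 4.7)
My plan is to apply \Cref{thm:main} together with the $(\r,\beta)\leftrightarrow\A$ correspondence (\Cref{thm:correspondence}, as invoked in the proof of the preceding corollary), and then feed the result into the assumed Stark conjecture \Cref{conj:stark2}.

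Given $\beta$ and $\r$, \Cref{thm:correspondence} produces an order $\OO \subseteq \OO_F$, an ideal $\mm$ of $\OO$, a class $\A \in \Clt_{\mm\infty_2}(\OO)\setminus \ZClt_{\mm,\rS}(\OO)$, and a stabilizer matrix $A$ for $\beta$ in $\Gamma_\r$, so that all hypotheses of \Cref{thm:main} are satisfied. Applying the main theorem rewrites
\begin{equation*}
\samech^\r[\beta] \;=\; (\psi^{-2}\chi_\r^{-1})(A)\,\shin^\r[\beta]^2 \;=\; \exp\!\left(n Z'_{\mm\infty_2}(0,\A)\right) \;=\; \e_\A^{-n},
\end{equation*}
where $\e_\A = \exp(-Z'_{\mm\infty_2}(0,\A))$, $n \in \{1,2\}$, and $(\psi^2\chi_\r)(A)$ is a root of unity. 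Under \Cref{conj:stark2}, $\e_\A$ is an algebraic unit and $\e_\A^{1/2}$ generates an abelian extension of $F$. Consequently $\shin^\r[\beta]$ is equal to a root of unity times $\pm\e_\A^{-n/2}$, so it lies in an abelian extension of $F$ and is itself a unit.

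For part (1), this already yields the claim with the exponent $n=2$ (or a higher multiple that also absorbs the root of unity, e.g.\ a multiple of $48$). For part (2), the assumption that $b^2-4ac$ is a fundamental discriminant forces $\cc = \beta\Z+\Z = \OO_F$, placing us in the maximal-order case. I would then verify that the ideal $\mm$ delivered by \Cref{thm:correspondence} coincides with the intrinsic $\mm$ in the statement --- the kernel of the module map $\OO_F \to ((r_2\beta-r_1)\OO_F+\cc)/\cc$ sending $1 \mapsto r_2\beta-r_1$ --- by unwinding the defining condition $\alpha(r_2\beta-r_1)\OO_F \in \bb\A$. At that point \Cref{conj:stark2} gives $\e_\A \in H_{\mm\infty_2}$, whence $\samech^\r[\beta]=\e_\A^{-n}\in H_{\mm\infty_2}$ directly, yielding the \textit{moreover} clause. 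Since the preceding corollary guarantees $\samech^\r[\beta]>0$, one may extract a consistent real square root to conclude that $\shin^\r[\beta]$ itself lies in the compositum $H_{\mm\infty_2}\cdot\Q(\zeta_N)$ for $N$ the order of the root-of-unity factor, an abelian extension of $F$; this gives $n=1$ in the sense of part (1).

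The main obstacle will be reconciling the intrinsic description of $\mm$ in the theorem statement with the ideal delivered by \Cref{thm:correspondence}, and tracking precisely which abelian extension contains $\shin^\r[\beta]$ (as opposed to $\shin^\r[\beta]^2$) in order to justify $n=1$ in part (2) with no extra sign or quadratic ambiguity. A secondary technical difficulty in part (1) is the non-maximal order case: if \Cref{conj:stark2} is phrased only for $\OO_F$, one must either invoke its extension to arbitrary orders from \cite{kopplagarias} or pass to a power of $\shin^\r[\beta]$ whose Stark interpretation reduces to the maximal-order setting.
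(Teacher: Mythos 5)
Your reduction of part (2) to \Cref{conj:stark2} via \Cref{thm:mainrestate} and \Cref{prop:stark3} is essentially the paper's argument, and your identification of $\mm$ in the fundamental-discriminant case (via the correspondence $\Upsilon_\mm$) is on the right track. However, there is a genuine gap in your treatment of part (1), and it is not a ``secondary technical difficulty'' --- it is the main content.

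When the discriminant $b^2-4ac$ is not fundamental, the pair $(\r,\beta)$ corresponds to a ray class over a \emph{non-maximal} order $\OO$, and here your argument breaks. Your first suggested fix --- ``invoke the extension of \Cref{conj:stark2} to arbitrary orders'' --- is not available: the paper explicitly poses the non-maximal-order algebraicity statement as \Cref{conj:field} and states that it is unclear whether it follows from the Stark conjectures, in part because of Euler-factor discrepancies between ``Galois-theoretic'' and ``ray class-theoretic'' $L$-functions over non-maximal orders, and in part because not every $(\r,\beta)$ lies in the image of a ray class \emph{group} (only of a ray class \emph{monoid}; see \Cref{eg:notraygroupimage}). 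Your second suggested fix --- ``pass to a power of $\shin^\r[\beta]$ whose Stark interpretation reduces to the maximal-order setting'' --- is the right idea, but you have not said how. The missing ingredient is the conductor-lowering/level-raising relation \Cref{thm:cllr}: writing $\beta = B\cdot\alpha$ with $B \in G_f$ and $\alpha$ of conductor $1$ (\Cref{lem:fto1}), and choosing $n$ so that $C:=B^{-1}A^nB$ lies in $\bigcap_\s \Gamma_\s$ over the relevant $\s$ with $B\s \equiv \r \pmod{\Z^2}$, one gets $\shin^\r[\beta]^n = \prod_\s \sf{\s}{C}{\alpha}$, a product of conductor-$1$ RM values, each covered by part (2). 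The exponent $n$ in part (1) is dictated by this congruence-subgroup requirement, not by a need to kill the root-of-unity factor $\sqrt{(\psi^2\chi_\r)(A)}$ (which is already a unit in a cyclotomic, hence abelian, extension and requires no extra power).

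Two smaller points. First, you never treat $\r \in \Z^2$, which falls outside the hypotheses of \Cref{thm:main} (it lands in the zero classes $\ZClt$); the paper disposes of $\r\in\tfrac12\Z^2$ separately via \Cref{thm:trivrmval}, unconditionally. Second, your closing claim that $\shin^\r[\beta]$ itself lies in $H_{\mm\infty_2}\cdot\Q(\zeta_N)$ is not asserted by the theorem and does not follow without care: \Cref{conj:stark2} only guarantees $H(\e_\A^{1/2})$ is abelian over $F$, not that $\e_\A^{1/2}\in H_{\mm\infty_2}$, so the square root may live in a strictly larger abelian extension. The theorem asserts only that $\samech^\r[\beta]\in H_{\mm\infty_2}$ and that $\shin^\r[\beta]$ is a unit in \emph{some} abelian extension of $F$.
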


We conjecture that the assumption in \Cref{thm:field}(2) that the discriminant is fundamental (equivalently, that $\colonideal{\beta\Z+\Z}{\beta\Z+\Z} = \OO_F$) is unnecessary. 
It is not clear whether that conjecture, stated below as \Cref{conj:field}, follows from the Stark conjectures and their existing refinements. Obstructions to proving this include discrepancies in Euler factors between ``Galois-theoretic'' and ``ray class-theoretic'' $L$-functions in the non-maximal order case, as well as the failure of some pairs $(\r,\beta)$ to be in the image of primitive classes under any of the maps $\Upsilon_{\mm}$ in \Cref{thm:correspondence}. 
Numerical evidence for \Cref{conj:field} is given in \cite{afk}.
\begin{conj}\label{conj:field}
If $\beta \in \R$ such that $a\beta^2+b\beta+c=0$ with $a,b,c \in \Z$, $b^2-4ac$ not a square, and $\r \in \Q^2$, then $\shin^{\r}[\beta]$ is an algebraic unit in an abelian extension of $\Q(\beta)$. 
Moreover, if $\mm$ is an $\OO$-invertible ideal such that $(\r,\beta) \in \M_{\OO,\mm}$ in the notation of \Cref{thm:correspondence},
then $\samech^\r[\beta] \in H_{\mm\infty_2}$. 
\end{conj}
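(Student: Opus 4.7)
The plan is to reduce \Cref{conj:field} to \Cref{thm:field}(2) by separately tackling the two obstacles identified in the excerpt: the restriction to fundamental discriminants (the maximal-order case) and the assumption that $(\r,\beta)$ lies in the image of a primitive class under some $\Upsilon_{\mm}$. Both obstructions are already visible in the statement of \Cref{thm:main}, so the attack is organized around understanding how the two sides of \eqref{eq:main} behave under (i) passing from the maximal order $\OO_F$ to a non-maximal order $\OO$, and (ii) refining the conductor $\mm$.

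For the primitivity issue, given any pair $(\r,\beta)$ with associated order $\OO$, I would use \Cref{thm:correspondence} to exhibit an auxiliary conductor $\mm'$ and a primitive pair $(\r',\beta')$ in $\M_{\OO,\mm'}$ whose RM value differs from $\shin^\r[\beta]$ by an explicit factor built out of roots of unity and $q$-Pochhammer transition terms. These factors are controlled by the Jacobi cocycle identity, first under translations $\r \mapsto \r+\Z^2$ and then under the telescoping step used in the proof of \Cref{thm:main}. Both RM values are then expressed through \Cref{thm:main} in terms of the same (or a predictably twisted) differenced partial zeta derivative, so algebraicity and the field prediction for one transfers to the other.

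For the non-maximal order case, the strategy is to apply \Cref{thm:main} inside $\OO$ to write $\shin^\r[\beta]^2$ as $\exp(-nZ_{\mm\infty_2}'(0,\A))$ up to an explicit root of unity, and then relate $Z_{\mm\infty_2}(s,\A)$ to a $\Z$-linear combination of ray class partial zeta functions for $\OO_F$. Using M\"obius inversion on the conductor lattice, together with the identification of elements of $\ol\rJ_{\mm}^\ast(\OO)$ with $\OO_F$-ideals coprime to suitable primes (from \cite{kopplagariasmonoids}), one expects a decomposition
\begin{equation*}
Z_{\mm\infty_2}(s,\A) \;=\; \sum_{\DD} c_{\DD}\, \zeta_{\mm\OO_F\infty_2}(s,\DD) \;+\; E(s),
\end{equation*}
where $E(s)$ collects Euler-factor corrections at primes dividing the conductor $\mathfrak{f}(\OO/\OO_F)$. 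Each main term exponentiates, under \Cref{conj:stark2} via \Cref{thm:field}(2), to an algebraic unit in $H_{\mm\OO_F\infty_2}$, hence in a larger ray class field containing $H_{\mm\infty_2}^{\OO}$.

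The hard part will be controlling $\exp(E'(0))$. The Stark conjectures as usually formulated say nothing about derivatives at $s=0$ of imprimitive local Euler factors, and this is exactly the gap the author flags. An honest proof seems to require either a genuine extension of Tate's refinement to the ring-class-field $H_{\mm,\rS}^{\OO}$ of \cite{kopplagarias}---plausibly a Brumer--Stark-type integrality statement formulated on the monoid $\Clt_{\mm,\rS}(\OO)$---or a direct analysis showing that $\exp(E'(0))$ lies in the correct relative abelian extension even when the corresponding Artin $L$-factor is ``wrong''. Without such an enhancement, the best one could hope to extract from this route is $\shin^\r[\beta]^n$ as an algebraic unit in \emph{some} abelian extension of $F$, matching \Cref{thm:field}(1) but falling short of the sharp field prediction $\samech^\r[\beta] \in H_{\mm\infty_2}$ of \Cref{conj:field}; closing this gap is the genuine unresolved mathematical content, consistent with the numerical-only evidence cited from \cite{afk}.
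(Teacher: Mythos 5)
This statement is explicitly a \emph{conjecture} in the paper, with no proof given: the author writes that it is ``not clear whether that conjecture\ldots follows from the Stark conjectures and their existing refinements,'' identifies the two obstructions (Euler factor discrepancies between Galois-theoretic and ray class-theoretic $L$-functions for non-maximal orders, and the failure of some $(\r,\beta)$ to come from primitive classes under any $\Upsilon_\mm$), and cites only numerical evidence from \cite{afk}. You have correctly recognized all of this: your proposal names exactly the same two obstacles, sketches the natural reduction strategy (translate via the cocycle identities and the correspondence of \Cref{thm:correspondence}, then compare to the maximal-order case via extension of ideals and Euler-factor corrections), and then honestly concedes that the Euler-factor error term $E(s)$ is precisely where known refinements of the Stark conjectures give no control, so the method as stated yields at best something like \Cref{thm:field}(1). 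That is a faithful and accurate assessment of the state of the problem, not a proof, and the paper likewise offers no proof.

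One small refinement worth flagging: \Cref{thm:main} itself already applies to arbitrary orders $\OO$ and imprimitive classes $\A \in \Clt_{\mm\infty_2}(\OO)\setminus\ZClt_{\mm\infty_2}(\OO)$, so the unconditional identity $\exp(nZ'_{\mm\infty_2}(0,\A)) = \samech^\r[\beta]$ is available in full generality. The gap is entirely on the arithmetic side: one needs an analogue of \Cref{conj:stark2} (i.e., a Stark--Tate statement) for the ray class fields $H^{\OO}_{\mm,\rS}$ of non-maximal orders and for the monoid $\Clt_{\mm,\rS}(\OO)$, and this is what is missing. Your proposal gestures at that correctly, though the M\"obius-inversion decomposition of $Z_{\mm\infty_2}(s,\A)$ into $\OO_F$-zeta functions plus $E(s)$ is sketched rather than verified; showing it holds with integer coefficients $c_\DD$ and that the resulting units glue into $H_{\mm\infty_2}$ would itself be a nontrivial contribution, and nothing in the paper does this.
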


In addition to these results and conjectures, it is natural to ask for:
\begin{itemize}
\item[(1)] a precise description of the field generated by $\sqrt{\samech^\r[\beta]} = \sqrt{(\psi^{-2}\chi_\r^{-1})(A)} \ \sf{\r}{A}{\beta}$, as a ray class field or a subfield thereof, and
\item[(2)] an analogue of the Shimura reciprocity law, that is, a complete description of the action of $\Gal(\ol{\Q}/\Q(\beta))$ on the values $\shin^\r[\beta]$.
\end{itemize}
It is possible to deduce a Shimura reciprocity law for the \textit{squares} $\shin^{\r}[\beta]^2$, in the fundamental discriminant case, from the Stark conjectures and the results of the present paper. 
However, we are hopeful that the study of SIC-POVMs and related objects will help produce conjectural answers to (1) and (2), and we postpone these lines of inquiry to future work.

\subsection{Prior work}\label{sec:prior}

The function $\sigma_{\m,A}(z,\tau)$ defining the Shintani--Faddeev Jacobi cocycle is studied under other names by Dimofte \cite{dimofte} and Sarkissian and Spiridonov \cite{sarkissian} in mathematical physics and by Garoufalidis and Wheeler \cite{gw} and Wheeler \cite{wheelerthesis} in the context of low-dimensional topology and quantum modularity, with Chern--Simons theory playing a central role for both sets of authors. 

Yamamoto first suggested that Shintani's formula for the Stark unit would ``telescope'' down to a limit of an absolute value of a ratio of $q$-Pochhammer symbols, showing this in one example and suggesting that the general case could be handled using continued fractions \cite{yamamotofactorization}.  
Our \Cref{thm:main} brings Yamamoto's idea to its fulfillment while also removing the absolute value and giving a cocycle interpretation.

The present author has previously given another limit formula for ray class partial zeta function of real quadratic fields that also does not require Shintani decomposition or continued fractions \cite{koppklf}. That formula, which is substantially more complicated, relied on a different continuous interpolation of the arithmetic zeta functions, using Mellin transforms of non-holomorphic indefinite theta functions.

Outside of the lines of research opened by Shintani and Faddeev, variants of the double sine function have appeared in other areas of mathematics. One such variant is the \textit{quantum exponential function} in the theory of quantum groups; see Woronowicz \cite{wor}. A related function also appears in the work of Malyuzhinets on wave diffration in a wedge-shaped region; see the review \cite{osipov} and the references therein.

Several constructions in the number theory literature have both similar names and an indirect mathematical relationship to the Shintani--Faddeev cocycles. The \textit{Shintani cocycle} constructed by Solomon \cite{solomon1, solomon2} is a cocycle for $\SL_2(\Z)$ (or, with appropriate modification, $\GL_2(\Q)$) valued in power series that may be used to evaluate ray class zeta functions of real quadratic fields at nonpositive integers; Hill's generalization applies to totally real fields \cite{hill}.
Sczech's \textit{Eisenstein cocycle} \cite{sczech,charollois1,charollois2} is related to the Shintani cocycle but encodes Shintani's Kronecker formula for the derivative of a ray class partial zeta function of a totally real field at $s=0$ by means of an integration pairing with cycles. The Shintani and Eisenstein cocycles allow for $p$-adic interpolation, and the Eisenstein cocycle plays an important role in the work of Dasgupta, Kakde, and Ventullo on the Gross--Stark and Brumer--Stark conjectures \cite{dkv,dk1,dk2}.
The \textit{rigid meromorphic cocycles} of Darmon and Vonk \cite{vonk,dpv} (and, in particular, the \textit{Dedekind--Rademacher cocycle}) are $p$-adic rigid meromorphic modular $1$-cocycles for $\SL_2\!\left(\Z\!\left[\frac{1}{p}\right]\right)$ whose real multiplication values are conjectured (and in some cases, proven) to be algebraic numbers in abelian extensions of a real quadratic field. Finally, papers of Radchenko and Zagier \cite{rz} and Choie and Kumar \cite{ck} discuss (but do not formalize) a cocycle interpretation of the Herglotz--Zagier function, which (conjecutrally) produces rank $2$ Stark regulators in the real quadratic case.

\subsection{Applications and future work}

Work of Marcus Appleby, Steven Flammia, and the author uses the Shintani--Faddeev modular cocycle as part of a conjectural construction of symmetrically complete positive operator-valued measures (SIC-POVMs) \cite{afk}. The Shintani--Faddeev modular cocycle provides the ``correct'' signed square roots of the Stark units needed to extend the present author's previous results on the problem \cite{koppsic} to arbitrary dimension. The construction of SIC-POVMs may be viewed as a geometric interpretation of Stark units.

Many future research directions naturally stem from this paper. Elucidating the precise mathematical relationships between the Shintani--Faddeev cocycle and the various cocycles discussed in \Cref{sec:prior} is an important (and far from trivial) undertaking. 
Investigating RM values of other, generally noncommutative, ``quantum modular cocycles'' studied by Garoufalidis and Wheeler \cite{gw} and Wheeler \cite{wheelerthesis} is sure to prove interesting. More broadly, one hopes to make sense of the myriad possible connections to quantum modularity, knot theory, $3$-manifolds, topological quantum field theory, and gauge theory. Another future direction of research may be to align the constructions in this paper with Manin's noncommutative geometry perspective on real multiplication \cite{manin}.

The author is hopeful the results of this paper will eventually be extended to arbitrary (not just totally real) number fields. For example, in the complex cubic case, the analogous $1$-cocycle for $\SL_3$ seems to come from the elliptic gamma function \cite{felder,gerbe}---this has been partially shown by the impressive work of Bergeron, Charollois, and Garc\'{i}a \cite{bcg}, but further work would be needed to define the cocycle precisely, remove the $\aa$-smoothing,
and characterize arbitrary ``stable values'' of the cocycle. One can make some educated guesses that hint at a program for general number fields to realize Stark units and variants thereof as stable values of function-valued $(r_1+r_2-1)$-cocycles for $\SL_n$. However, much work is needed to pin down the details in the totally real ($r_2=0$) and almost totally real ($r_2=1$) cases, and genuinely new ideas will be needed in the general case.

\subsection{Structure of this paper}

We briefly outline the format of the paper. 

\Cref{sec:prelim} proves needed basic results about $q$-Pochhammer symbols, eta functions, theta functions, and the characters $\psi$ and $\chi_\r$. Content that can be found elsewhere is summarized with references to proofs provided. The identities we desire for the character $\chi_\r$ do not appear to be in the literature in the form we need and so must be proven in detail.

\Cref{sec:moduli} provides background on ray class monoids and describes how ray classes are associated to ``real multiplication points'' in a moduli space.

\Cref{sec:wannabe} introduces the Shintani--Faddeev modular and Jacobi cocycles and defines their stable values (including real multiplication values). It relates them to the theta functions, eta functions, and characters describes in \Cref{sec:prelim}. A more sophisticated perspective on these cocycles is provided in \Cref{sec:cohomology} by a form of equivariant cohomology. Specifically, $\shin^\r$ is an element of a first cohomology group of the global sections of $\Gamma_\r$-invariants of a certain complex of sheaves. This approach is reminiscent of Bekki's work on Eisenstein and Shintani--Barnes cocycles \cite{bekki} but is not directly compatible.

\Cref{sec:partial} defines the partial zeta functions of interest, proves relations between different types of partial zeta functions, and connects a few different versions of the Stark conjectures. \Cref{sec:partialzero} focuses on evaluating partial zeta functions at $s=0$. It uses continued fractions following the approach of Tangedal \cite{tangedal}. Several technical results on continued fractions and related quantities are required, and their proofs constitute a large portion of that section.

The proofs of \Cref{thm:main} and \Cref{thm:field} are completed in \Cref{sec:completing}. Some further implications are discussed, and a numerical example is provided.

\subsection{List of notation}

The following list describes some of the notation used in the paper that is nonstandard, uncommon, new, or holds some potential for confusion.
\begin{itemize}
\item $\ee{z} = e^{2\pi i z}$ for $z \in \C$.
\item $\C \cup \{\infty\}$ is the Riemann sphere (which can be identified with $\Pj^1(\C)$).
\item $\HH :=  \{\tau \in \C : \im(\tau)>0\}$ is the upper half plane.
\item Vectors are assumed to be column vectors unless otherwise stated; the transpose of a vector $\v$ is $\v^\top$.
\item If $\v = \smcoltwo{v_1}{v_2}$ and $\w = \smcoltwo{w_1}{w_2}$ are vectors, the ``standard'' symplectic form is denoted $\symp{\v}{\w} := -\det\smmattwo{v_1}{w_1}{v_2}{w_2} = v_2w_1 - v_1w_2$.
\item If $\r = \smcoltwo{r_1}{r_2} \in \R^2$ and $\tau \in \C$, then $\sympt{\r}{\tau} := \symp{\r}{\smcoltwo{\tau}{1}} = r_2\tau-r_1$.
\item If $A = \smmattwo{a}{b}{c}{d} \in \SL_2(\R)$ and $\tau \in \C$, then $j_{\!A}(\tau) := c\tau+d$, and $s_{\!A}(\tau) = \sgn(j_{\!A}(\tau))$.
\item If $A = \smmattwo{a}{b}{c}{d} \in \SL_2(\R)$ and $\tau \in \C \cup \{\infty\}$, then $A \cdot \tau := \frac{a\tau+b}{c\tau+d}$.
\item If $(\m,A) \in \Q^2 \semidirect \SL_2(\Q)$ (the rational Jacobi group) and $(z,\tau) \in \C \times (\C \setminus \Q)$, then $(\m,A) \cdot (z,\tau) := \left(\frac{z}{j_{\!A}(\tau)} + \sympt{\m}{A\cdot\tau},A\cdot\tau\right)$, and $A \cdot (z,\tau) := (\mathbf{0},A) \cdot (z,\tau)$.
\item For $\r \in \Q^2$, the group $\Gamma_\r = \{A \in \SL_2(\Z) : A\r - \r \in \Z^2\}$.
\item If $F$ is a number field, then $\rho_1, \ldots, \rho_{r_1}$ are its real embeddings, and $\infty_1, \ldots, \infty_{r_1}$ are formal symbols denoting the associated real places. If $F$ is a real quadratic field, we will sometimes fix a choice of real embedding $F \subset \R$ and denote $\rho_1(x) = x$ and $\rho_2(x) = x'$ for $x \in F$, so that $x \mapsto x'$ is the nontrivial Galois automorphism.
\item $\Cquad$ is the set consisting of those irrational complex numbers that are roots of a degree two polynomial in $\Z[x]$, $\Rquad = \Cquad \cap \R$, and $\Hquad = \Cquad \cap \HH$. 
\item If $R$ is a commutative Noetherian domain with fraction field $F$, and $\aa, \bb$ are fractional $R$-ideals (that is, finitely generated $R$-submodules of $F$), then the \textit{quotient ideal} is the fractional ideal $\colonideal{\aa}{\bb} = \{\gamma \in F : \gamma\bb \subseteq \aa\}$.
\item If $R$ is a commutative ring and $\aa, \bb$ are $R$-ideals, then $\aa$ and $\bb$ are \textit{coprime}, or $\aa$ is \textit{coprime to $\bb$}, if $\aa + \bb = R$. This terminology will be used even in rings that are not Dedekind domains, particularly non-maximal orders of number fields. Additionally, if $R$ is a commutative Noetherian domain, then a fractional ideal $\cc$ is \textit{coprime} to an integral ideal $\mm$ if $\cc = \aa\bb^{-1}$ for an integral ideal $\aa$ and an invertible integral ideal $\bb$ such that $\aa$ and $\bb$ are both coprime to $\mm$. 
\item If $\r = \smcoltwo{r_1}{r_2} \in \R^2$, then $\{\r\} := \smcoltwo{r_1-\floor{r_1}-1}{r_2-\floor{r_2}}$.
\item The character $\psi$ on $\Mp_2(\Z)$ is a variant of the Rademacher/Meyer invariant and is defined in \Cref{thm:psiformula}.
\item The character $\chi_\r$ on $\Gamma_\r$ is defined in \Cref{thm:thetamod}; see \Cref{lem:charsimp} for a simplified formula.
\item The theta function $\vartheta_\r(z,\tau)$ is defined in \Cref{defn:thetachar}, and $\theta_\r(\tau) = \vartheta_\r(0,\tau)$.
\item The \textit{ray class group of an order} $\Cl_{\mm,\rS}(\OO)$ and the associated \textit{ray class field of an order} $H_{\mm,\rS}^\OO$ are defined in \Cref{sec:rayclassorder}. We set $H_{\mm,\rS} = H_{\mm,\rS}^{\OO_F}$.
\item The \textit{\rcmia} $\Clt_{\mm,\rS}(\OO)$ is defined in \Cref{sec:monoid}, together with its \textit{submonoid of zero classes} $\ZClt_{\mm,\rS}(\OO)$.
\item For $A \in \SL_2(\Z)$, the complex domain $\DD_{\!A}$ is defined in \eqref{eq:DD}, and the complex domain $\tDD_{\!A}$ is defined in \eqref{eq:tDD}.
\item The function $\sfj{\m,A}{z}{\tau}$ is defined in \Cref{defn:sfjacobimaster}.
\item The function $\sf{\r}{\!A}{\tau}$ is defined in \Cref{defn:sfmodular}.
\item The ``stable value''/``RM value'' notation $w[\beta]$ 
(e.g., with $w=\shin^\r$) is defined in \Cref{sec:stable} for modular cocycles and \Cref{sec:stablejacobi} for Jacobi cocycles.
\item \textit{Hirzebruch--Jung (HJ) continued fractions} and \textit{Hirzebruch--Jung (HJ) cycle data} are defined in \Cref{sec:continuedfractions}, with attached notation.
\end{itemize}

\section{Preliminaries on $q$-Pochhammer symbols and half-integral weight modular forms}\label{sec:prelim}

In this section, we will provide some necessary foundational definitions and results about $q$-Pochhammer symbols as well as certain (fractional weight) modular forms on congruence subgroups of $\SL_2(\Z)$ and its metaplectic cover $\Mp_2(\Z)$.

\subsection{$\SL_2$ and the standard symplectic form}

For a commutative ring $R$ with unity, the group $\SL_2(R)$ is the same as the symplectic group $\Sp_2(R)$ (a special case of the general symplectic group $\Sp_{2n}(R)$). In particular, the $2 \times 2$ symplectic group is defined to be
\begin{equation}
\Sp_2(R) = \{\smmattwo{a}{b}{c}{d} \in \Mat_{2 \times 2}(R) : \smmattwo{a}{b}{c}{d}^\top\smmattwo{0}{1}{-1}{0}\smmattwo{a}{b}{c}{d} = \smmattwo{0}{1}{-1}{0}\},
\end{equation}
those matrices preserving the \textit{standard symplectic form} $\symp{\u}{\v} := \u^\top \smmattwo{0}{1}{-1}{0} \v$. Examining this condition, we note
$\smmattwo{a}{b}{c}{d}^\top\smmattwo{0}{1}{-1}{0}\smmattwo{a}{b}{c}{d} = \smmattwo{0}{ad-bc}{-(ad-bc)}{0}$,
so the condition is equivalent to $ad-bc=1$.

We also introduce the notation $\sympt{\u}{\tau} := \symp{\u}{\smcoltwo{\tau}{1}} = u_2\tau - u_1$ for $\u = \smcoltwo{u_1}{u_2}$.

\subsection{Fractional linear transformations}

The special linear group $\SL_2(\R)$ acts on $\C \cup \{\infty\}$ by the \textit{fractional linear transformation action} $\smmattwo{a}{b}{c}{d} \cdot \tau = \frac{a\tau+b}{c\tau+d}$ (with $\smmattwo{a}{b}{c}{d} \cdot \infty = \frac{a}{c}$), and this action restricts to an action on the upper half plane $\HH$. If $\tau = \smmattwo{a}{b}{c}{d} \in \SL_2(\R)$ and $\tau \in \C$, we define
\begin{equation}
j_{\!A}(\tau) := c\tau+d
\end{equation}
and note that $j$ satisfies the cocycle relation $j_{\!A_1A_2}(\tau) = j_{\!A_1}(A_2\cdot\tau)j_{\!A_2}(\tau)$.

If $A \in \SL_2(\R)$ is \textit{hyperbolic}, meaning that $\Tr(A)>2$, the action of $A$ on $\C \cup \{\infty\}$ has two fixed points, both in $\R \cup \{\infty\}$. Following Katok \cite{katok}, the \textit{attracting fixed point} $\beta$ of $A$ is the fixed point satisfying $\beta = \lim_{n \to \infty} A^n \cdot \tau$ for all (equivalently, any) $\tau \in \HH$. Equivalently, $\smcoltwo{\beta}{1}$ is an eigenvector of $A$ with eigenvalue greater than 1.

A real number generating a quadratic extension of $\Q$ will be called a \textit{real quadratic number}, and the set of all real quadratic numbers will be denoted $\Rquad$. If $\beta$ is a real quadratic number, we denote by $\beta'$ its unique nontrivial Galois conjugate. If $A \in \SL_2(\Z)$ is hyperbolic, then its fixed points are two Galois conjugate real quadratic numbers. The set of all complex numbers generating quadratic extensions of $\Q$ will be denoted $\Cquad$, and those in the upper half-plane by $\Hquad = \Cquad \cap \HH$.

The \textit{real Jacobi group} is the semidirect product $\R^2 \semidirect \SL_2(\R)$ with group operation
\begin{equation}
(\m,A)(\n,B) = (\m+A\n,AB).
\end{equation}
The Jacobi group acts on $\C \times \HH$ by the \textit{Jacobi action}
\begin{equation}\label{eq:jacobiaction}
(\m,A)\cdot(z,\tau) = \left(\frac{z}{j_{\!A}(\tau)} + \sympt{\m}{A\cdot\tau},A\cdot\tau\right).
\end{equation}
When the right-hand side of \eqref{eq:jacobiaction} is well-defined, we will use the same notation $(\m,A)\cdot(z,\tau)$, even when $(z,\tau) \nin \C \times \HH$. In particular, the Jacobi action gives an action of the \textit{rational Jacobi group} $\Q^2 \semidirect \SL_2(\Q)$ on $\C \times (\C \setminus \Q)$.

Of particular import is the \textit{integer Jacobi group} $\Z^2 \semidirect \SL_2(\Z)$, which is used to describe the transformation laws of theta functions. The subgroups $\Z^2 \semidirect \{I\}$ and $\{\mathbf{0}\} \semidirect \SL_2(\Z)$ describe \textit{elliptic transformations} and \textit{modular transformations}, respectively.

For $N \in \N$ and $\r \in \Q^2$, we define several congruence subgroups of $\SL_2(\Z)$:
\begin{align}
\Gamma(N) &:= \{A \in \SL_2(\Z) \mid A \con I \Mod{N}\}, \\
\Gamma_1(N) &:= \{A \in \SL_2(\Z) \mid A \con \smmattwo{1}{\ast}{0}{1} \Mod{N}\}, \mbox{ and} \\
\Gamma_{\r} &:= \{A \in \SL_2(\Z) \mid A\r \con \r \Mod{1}\}.
\end{align}
Note that $\Gamma_{\smcoltwo{1/N}{0}} = \Gamma_1(N)$, and $\Gamma_{\r} \subset \Gamma(N)$ for $\r \in \frac{1}{N}\Z^2$.

\subsection{The covering groups $\widetilde{\SL_2(\R)}$ and $\Mp_2(\R)$}\label{sec:covering}

In this section, we review the definition of the universal covering group $\widetilde{\SL_2(\R)}$ of $\SL_2(\R)$ and the metaplectic group $\Mp_2(\R)$.

The group $\widetilde{\SL_2(\R)}$ may be defined as follows:
\begin{equation}
\widetilde{\SL_2(\R)} := 
\left\{
(A,\lambda) : 
\begin{array}{ll}
A \in \SL_2(\Z), \\ 
\lambda : \HH \to \C \mbox{ continuous with } \exp(\lambda(\tau))=c\tau+d
\end{array}
\right\}.
\end{equation}
Its group law is defined by $(A_1,\lambda_1)(A_2,\lambda_2) = (A_1A_2,\lambda_3)$ where $\lambda_3(\tau) = \lambda_1(A_2\cdot\tau) + \lambda_2(\tau)$. This group fits into a short exact sequence of the form
\begin{equation}\label{eq:sltildeseq}
1 \to \Z \to \widetilde{\SL_2(\R)} \to \SL_2(\R) \to 1,
\end{equation}
where the left map is given by $n \mapsto (I, 2\pi i n)$, and the right map is given by $(A,\lambda) \mapsto A$. 
This construction may be described more abstractly: Topologically, $\SL_2(\R)$ is homotopy equivalent to a circle (by Iwasawa decomposition), so $\pi_1(\SL_2(\R)) \isom \Z$, and $\widetilde{\SL_2(\R)}$ is the topological universal cover of $\SL_2(\R)$ endowed with a group structure.
(Algebraically, $\widetilde{\SL_2(\R)}$ is also the universal perfect central extension of $\SL_2(\R)$.)
For convenience, we also define the group $\widetilde{\SL_2(\Z)} = \{(A,\lambda) \in \widetilde{\SL_2(\R)} : A \in \SL_2(\Z)\}$. (This is an abuse of notation, because $\widetilde{\SL_2(\Z)}$ it is not a canonical covering group of $\SL_2(\Z)$ itself as an abstract group.)

The metaplectic group $\Mp_2(\R)$ (a special case of the more general $\Mp_{2n}(\R)$) is defined as a double cover of $\SL_2(\R) = \Sp_2(\R)$. In particular,
\begin{equation}
\Mp_2(\R) 
:= 
\left\{(A,\ep) : 
\begin{array}{ll}
A \in \SL_2(\Z), \\ 
\ep : \HH \to \C \mbox{ continuous with } \ep(\tau)^2=c\tau+d
\end{array}
\right\},
\end{equation}
with multiplication law given by $(A_1,\ep_1)(A_2,\ep_2) = (A_1 A_2, \ep_3)$ with $\ep_3(\tau) = \ep_1(A_2\cdot\tau)\ep_2(\tau)$. A surjective map $\widetilde{\SL_2(\R)} \to \Mp_2(\R)$ may be defined by sending $(A,\lambda) \mapsto (A,\ep)$ with $\ep(\tau) = \exp(\foh\lambda(\tau))$; the kernel of this map is identified with $2\Z \subset \Z$ in the exact sequence \eqref{eq:sltildeseq}. The metaplectic group is the two-fold central covering group of $\SL_2(\R)$, fitting into the short exact sequence
\begin{equation}
1 \to \Z/2\Z \to \Mp_2(\R) \to \SL_2(\R) \to 1.
\end{equation}
We also define the integer metaplectic group $\Mp_2(\Z) := \{(A,\ep) \in \Mp_2(\R) : A \in \SL_2(\Z)\}$.
More generally, for any discrete subgroup $\Gamma \leq \SL_2(\R)$, we will define the metaplectic cover
$\MG := \{(A,\ep) \in \Mp_2(\R) : A \in \Gamma\}$.
In particular, we will use the groups $\MG(N)$, $\MG_1(N)$, and $\MG_\r$, for $N \in \N$ and $\r \in \Q$.

\subsection{The $q$-Pochhammer symbol and variants}

We will need to use several versions of the $q$-Pochhammer symbol.
\begin{defn}
The \textit{finite $q$-Pochhammer symbol} is
\begin{equation}
(w,q)_n = 
\begin{cases}
\ds\prod_{k=0}^{n-1} (1-wq^k) & \mbox{for $n \geq 0$, and} \\
\ds\prod_{k=1}^{-n} (1-wq^{-k})^{-1} & \mbox{for $n<0$}.
\end{cases}
\end{equation}
This definition is equivalent to the following recursive definition, extended to negative integers so as to preserve the recursion:
\begin{align}
(w,q)_0 &= 1, \mbox{ and} \\
(w,q)_{n+1} &= (w,q)_n(1-wq^n).
\end{align}
\end{defn}

\begin{defn}
The \textit{infinite $q$-Pochhammer symbol} is defined for $w, q \in \C$ with $\abs{q} < 1$ by
\begin{equation}
(w,q)_\infty = \lim_{n \to \infty} (w,q)_n = \prod_{k=0}^\infty \left(1-wq^k\right).
\end{equation}
The following alternative notations are also used. The infinite $q$-Pochhammer symbol in ``Jacobi form notation'' for $z \in \C$ and $\tau \in \HH$ is
\begin{equation}
\varpi(z, \tau) := (\ee{z},\ee{\tau})_\infty.
\end{equation}
The infinite $q$-Pochhammer symbol in ``characteristics notation'' for $\r = \smcoltwo{r_1}{r_2} \in \R^2$ and $\tau \in \HH$ is
\begin{equation}
\varpi_\r(\tau) := \varpi\!\left(\sympt{\r}{\tau},\tau\right) = (\ee{r_2\tau - r_1},\ee{\tau})_\infty.
\end{equation}
\end{defn}

\begin{lem}\label{lem:ell}
If $z \in \C$, $\tau \in \HH$, and $m,n \in \Z$, then
\begin{equation}
\varpi(z+m\tau+n,\tau) = \left(\ee{z},\ee{\tau}\right)_m^{-1} \varpi(z,\tau).
\end{equation}
\end{lem}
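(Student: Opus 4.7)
The plan is to reduce the identity to a purely algebraic statement about $q$-Pochhammer symbols and then prove it by manipulating the infinite product directly, handling the cases $m \ge 0$ and $m < 0$ separately.

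First I would set $w = \ee{z}$ and $q = \ee{\tau}$. Because $n \in \Z$, we have $\ee{n} = 1$, so
\begin{equation}
\ee{z+m\tau+n} = \ee{z}\ee{\tau}^m = wq^m,
\end{equation}
and the claim becomes the purely algebraic identity
\begin{equation}
(wq^m, q)_\infty = (w,q)_m^{-1} (w,q)_\infty,
\end{equation}
valid for $w \in \C$, $q \in \C$ with $|q|<1$, and $m \in \Z$. Since $|q|<1$, the infinite products converge absolutely, so the manipulations below are justified.

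Next I would prove the identity by splitting the product at index $m$. For $m \ge 0$, a reindexing $j = k+m$ gives
\begin{equation}
(wq^m,q)_\infty = \prod_{k=0}^\infty (1 - wq^{k+m}) = \prod_{j=m}^\infty (1-wq^j) = \frac{\prod_{j=0}^\infty (1-wq^j)}{\prod_{j=0}^{m-1}(1-wq^j)} = (w,q)_m^{-1}(w,q)_\infty,
\end{equation}
using the definition of the finite $q$-Pochhammer symbol for nonnegative $m$. For $m < 0$, the same reindexing $j = k+m$ extends the product over $j$ down to $m$, and splitting off the finitely many new factors yields
\begin{equation}
(wq^m,q)_\infty = \prod_{j=m}^{-1}(1-wq^j) \cdot (w,q)_\infty = \prod_{k=1}^{-m}(1-wq^{-k}) \cdot (w,q)_\infty = (w,q)_m^{-1}(w,q)_\infty,
\end{equation}
matching the definition of $(w,q)_m$ for negative $m$. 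Combining the two cases establishes the identity.

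There is no real obstacle here: this is essentially a bookkeeping calculation, and the only thing to be careful about is matching the sign conventions in the two branches of the definition of $(w,q)_m$. As a sanity check, the result is consistent with the recursion $(w,q)_{m+1} = (w,q)_m(1-wq^m)$: replacing $z$ by $z+\tau$ on the left-hand side removes the $k=0$ factor from $\varpi(z,\tau)$, which agrees with incrementing $m$ by $1$ on the right-hand side. One could alternatively run an induction on $|m|$ from the base case $m=0$ (where both sides equal $\varpi(z,\tau)$) using this recursion, but the direct product-splitting argument above is cleaner and handles both signs uniformly.
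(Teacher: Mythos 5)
Your proof is correct and matches the paper's approach: the paper simply says the identity ``follows from inspection of the product forms of both sides,'' and your argument is exactly that inspection carried out carefully, with the two-case handling of the sign of $m$ matching the two-branch definition of $(w,q)_m$.
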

\begin{proof}
Follows from inspection of the product forms of both sides.
\end{proof}

\subsection{The Dedekind eta function and its logarithm}\label{sec:eta}

In this section, we review the transformation theory of the Dedekind eta function. Proofs of assertions that are not proven here may be found in \cite{atiyah, rademacher}.

The Dedekind eta function
\begin{equation}
\eta(\tau) = \ee{\frac{\tau}{24}}\varpi(\tau,\tau) = \ee{\frac{\tau}{24}}\prod_{k=1}^\infty (1-\ee{k\tau}) \mbox{ for } \tau \in \HH
\end{equation}
has a continuous, well-defined logarithm defined by
\begin{equation}
(\log\eta)(\tau) = \frac{2\pi i\tau}{24} + \sum_{k=1}^\infty \log(1-\ee{k\tau}).
\end{equation}

The function $(\log\eta)(\tau)$ has a modularity property for the group $\widetilde{\SL_2(\Z)}$. Specifically, for $(A, \lambda) \in \widetilde{\SL_2(\Z)}$,
\begin{equation}\label{eq:psitrans}
(\log\eta)(A\cdot\tau) = \frac{2\pi i}{24}\Psi(A,\lambda) + \foh \lambda(\tau) + (\log\eta)(\tau),
\end{equation} 
where $\Psi : \widetilde{\SL_2(\Z)} \to \Z$ is an integer-valued group homomorphism.

Rademacher defines a function (not a homomorphism) $\Phi : \SL_2(\Z) \to \Z$ satisfying the relation
\begin{equation}\label{eq:phitrans}
(\log\eta)(A\cdot\tau) = \frac{2\pi i}{24}\Phi(A) + \foh \one_{c \neq 0}\log\!\left(-i\sgn(c)(c\tau+d)\right) + (\log\eta)(\tau),
\end{equation}
where the middle term in interpreted as zero if $c=0$, taking $\one_{c \neq 0} = 1$ and using the principal branch of the logarithm otherwise. This function is defined as
\begin{equation}\label{eq:Phi}
\Phi(A) =
\begin{cases}
\frac{b}{d}, &\mbox{if } c=0, \\
\frac{a+d}{c} - 12 \sgn(c) s(d,\abs{c}), &\mbox{if } c \neq 0,
\end{cases}
\end{equation}
where $s(d,\abs{c})$ is the \textit{Dedekind sum}
\begin{equation}
s(h,k) = \sum_{j=1}^{\abs{k}-1} \left(\frac{j}{k} - \floor{\frac{j}{k}} - \frac{1}{2}\right)\left(\frac{hj}{k} - \floor{\frac{hj}{k}} - \frac{1}{2}\right).
\end{equation}
(See (71.1), (71.2), and (71.22) on p.\ 150--151 and (68.3) on p.\ 146 of \cite{rademacher}.)
Comparing \eqref{eq:psitrans} and \eqref{eq:phitrans}, we see that $\Psi$ and $\Phi$ are related by the identity
\begin{equation}\label{eq:Psi}
\Psi(A,\lambda) = \Phi(A) + \frac{6}{\pi i}\left(\one_{c \neq 0}\log\!\left(-i\sgn(c)(c\tau+d)\right) - \lambda(\tau)\right).
\end{equation}

The modular transformation law for $\eta(\tau)$ follows from that for $(\log\eta)(\tau)$. If $(A,\ep) \in \Mp_2(\Z)$, then
\begin{equation}\label{eq:etatrans}
\eta(A\cdot\tau) = \psi(A,\ep)\ep(\tau)\eta(\tau),
\end{equation}
where $\psi(A,\ep) = \exp\!\left(\frac{2\pi i}{24}\Psi(A,\lambda)\right)$ for any $\lambda$ with $\ep(\tau) = \exp\!\left(\foh\lambda(\tau)\right)$. We also introduce the shorthand $\psi^2(A) := (\psi(A,\ep))^2$, which does not depend on the choice of $\ep$. Note that $\psi^2$ is a character of $\SL_2(\Z)$.

For the purposes of explicit calculation, it is important to note that the function $\psi(A,\ep)$ can be expressed in terms of Jacobi symbols without needing to use Dedekind sums.
\begin{thm}\label{thm:psiformula}
Let $A = \smmattwo{a}{b}{c}{d} \in \SL_2(\Z)$ and $\tau \in \HH$.
If $c=0$, then
\begin{equation}
\psi(A,\ep)\ep(\tau) = \ee{\frac{\sgn(d)b}{24}}.
\end{equation}
If $c > 0$, then
\begin{equation}
\psi(A,\ep)\frac{\ep(\tau)}{\sqrt{-i (c\tau+d)}} =
\begin{cases}
\left(\frac{d}{c}\right) \ee{\frac{1-c}{8}}\ee{\frac{bd(1-c^2)+c(a+d)}{24}}, & \mbox{ if } 2\nmid c,\\
\left(\frac{c}{\abs{d}}\right) \ee{\frac{d}{8}}\ee{\frac{ac(1-d^2)+d(b-c)}{24}}, & \mbox{ if } 2\nmid d.\\
\end{cases}
\end{equation}
Here, $\left(\frac{d}{c}\right)$ and $\left(\frac{c}{\abs{d}}\right)$ are Jacobi symbols.
If $c < 0$, then $\psi(A,\ep) = i\psi(-A,i\ep)$. 
\end{thm}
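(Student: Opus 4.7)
The approach is to unwind the definition of $\psi$ and reduce the claim to an explicit formula for $\ee{\Phi(A)/24}$. From the definition $\psi(A,\ep) = \exp(\frac{2\pi i}{24}\Psi(A,\lambda))$ together with \eqref{eq:Psi}, a short manipulation gives, for $c \neq 0$,
\begin{equation*}
\psi(A,\ep)\,\frac{\ep(\tau)}{\sqrt{-i\sgn(c)(c\tau+d)}} \;=\; \ee{\Phi(A)/24},
\end{equation*}
where the square root uses the branch matching the logarithm in \eqref{eq:Psi}. For $c = 0$, the indicator kills the log-term and one obtains $\psi(A,\ep)\,\ep(\tau) = \ee{\Phi(A)/24}$ directly. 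In every case, the theorem thus reduces to computing $\ee{\Phi(A)/24}$ in terms of Jacobi symbols.

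The case $c = 0$ is then immediate: $ad = 1$ forces $d = \pm 1$ and $a = d$, so $\Phi(A) = b/d = \sgn(d)\,b$, matching the first formula. For $c < 0$, I would note that $j_{-A}(\tau) = -(c\tau+d)$ and $(i\ep(\tau))^2 = j_{-A}(\tau)$, so the pair $(-A, i\ep)$ lies in $\Mp_2(\R)$ with $-A$ having positive lower-left entry $-c$; since $A\cdot\tau = (-A)\cdot\tau$, applying \eqref{eq:etatrans} to both pairs yields $\psi(A,\ep) = i\,\psi(-A, i\ep)$, reducing the $c < 0$ case to the $c > 0$ case.

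For $c > 0$, the main case, I would substitute $\Phi(A) = \frac{a+d}{c} - 12\,s(d,c)$ from \eqref{eq:Phi} and collect the remaining phases using $bc = ad - 1$. For $2 \nmid c$, the claim then reduces to the classical identity
\begin{equation*}
\ee{-\tfrac{1}{2}s(d,c)} \;=\; \left(\tfrac{d}{c}\right)\ee{\tfrac{1-c}{8}}\cdot \zeta(a,b,c,d),
\end{equation*}
where $\zeta(a,b,c,d)$ is an explicit $24$-th root of unity. This is essentially the Petersson--Rademacher evaluation of the $\eta$-multiplier (see Rademacher \cite{rademacher}, Ch.~9); its proof combines the reciprocity law for Dedekind sums with the Landsberg--Schaar identity (a quadratic Gauss sum evaluation) to identify $\exp(-\pi i\,s(d,c))$ with a Jacobi symbol up to an explicit phase. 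The $2 \nmid d$ formula can be obtained either by an analogous Gauss-sum argument, or by right-multiplying $A$ by $T = \smmattwo{1}{1}{0}{1}$ to toggle the parity of $d$ and invoking the cocycle property of $\psi$ together with the $2 \nmid c$ case.

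The main obstacle is the Jacobi-symbol identity in the $c > 0$ step: although the ingredients are standard, the precise matching of $24$-th roots of unity is delicate and requires careful bookkeeping of the integer-valued exponent that appears after clearing denominators via $bc = ad - 1$. A direct reduction to the generators $S$ and $T$ of $\SL_2(\Z)$ is possible in principle, but the asymmetric form of the theorem (different Jacobi symbols in the two parity subcases) makes appealing to the classical $\eta$-multiplier formula the cleanest route.
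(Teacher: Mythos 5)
Your reduction of $\psi(A,\ep)$ to $\ee{\Phi(A)/24}$ via \eqref{eq:Psi}, and your handling of the $c=0$ and $c<0$ cases, are all correct; and your final step---citing the Petersson--Rademacher explicit formula for the $\eta$-multiplier in terms of Jacobi symbols---is exactly what the paper does, whose entire proof is the citation to \cite[p.~163]{rademacher}. So this is essentially the same approach as the paper, spelled out in somewhat more detail.
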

\begin{proof}
See \cite[p.\ 163]{rademacher}.
\end{proof}

\subsection{Jacobi theta functions with characteristics}

In this section, we review the transformation theory of Jacobi theta functions with arbitrary real characteristics. Our primary reference is Rademacher \cite{rademacher}. The author was unable to find a suitable source for the general transformation laws of theta functions with characteristics, so those have been proven here.

Jacobi defined four theta functions, $\vartheta_j$ for $j \in \{1,2,3,4\}$; we will treat $\vartheta_1$ as the ``basic'' theta function and define other theta functions in terms of $\vartheta_1$. The primary advantage of this approach is that the full $\SL_2(\Z)$-action takes $\vartheta_1$-values to other $\vartheta_1$-values, whereas the other $\vartheta_j$ are permuted with each other by the action.
\begin{defn}
For $z \in \C$ and $\tau \in \HH$, the \textit{first Jacobi theta function} is
\begin{equation}
\vartheta_1(z,\tau) = - \sum_{n=-\infty}^\infty \ee{\foh \left(n+\foh\right)^2\tau + \left(n+\foh\right)\left(z+\foh\right)}.
\end{equation}
\end{defn}

The first Jacobi theta function satisfies elliptic and modular transformation laws. (These may also be interpreted as a single transformation law under the action of $\Z^2 \semidirect \SL_2(\Z)$.)
\begin{thm}\label{thm:thetaell}
If $z \in \C$, $\tau \in \HH$, and $k,\ell \in \Z$, then
\begin{equation}
\vartheta_1(z+k\tau+\ell,\tau) = (-1)^{k+\ell}\ee{-\foh k^2\tau - kz}\vartheta_1(z,\tau).
\end{equation}
\end{thm}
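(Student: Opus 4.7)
The plan is to reduce to the two generator cases---the integer shift $z \mapsto z+\ell$ and the quasi-period shift $z \mapsto z+k\tau$---and then compose them. These correspond respectively to the ``transparent'' direction (translation by a real period, producing only a sign) and the ``opaque'' direction (translation by the imaginary period $\tau$, producing a Gaussian prefactor). It suffices to verify each on the defining theta series, since the general shift is their composition.

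For the integer shift, I would substitute $z+\ell$ directly into the series for $\vartheta_1(z,\tau)$. The only change is an extra factor $\ee{(n+\foh)\ell}$ in each summand; since $\ell \in \Z$, this equals $\ee{n\ell}\ee{\foh\ell} = (-1)^\ell$, which is independent of $n$ and pulls outside the sum. This immediately gives $\vartheta_1(z+\ell,\tau) = (-1)^\ell \vartheta_1(z,\tau)$.

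For the quasi-period shift, I would substitute $z+k\tau$ and then reindex the sum by $n \mapsto n-k$ (valid by absolute convergence on $\HH$, where the summands decay like $e^{-\pi(n+\foh)^2 \im(\tau)}$). The purpose of the reindexing is to restore the quadratic part of the exponent to the original $\foh(n+\foh)^2\tau$. Expanding $\foh(n-k+\foh)^2\tau + (n-k+\foh)(z+k\tau+\foh)$ and collecting terms, the cross terms involving $n k \tau$ cancel against each other, and the residual $n$-independent factor is $\ee{-\foh k^2\tau - k z - \foh k}$. Rewriting $\ee{-\foh k} = (-1)^k$ yields the shift formula $\vartheta_1(z+k\tau,\tau) = (-1)^k \ee{-\foh k^2\tau - k z}\vartheta_1(z,\tau)$, and composing with the integer-shift formula gives the claim.

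The computation is elementary, and the main obstacle---if there is one---is careful bookkeeping of the half-integer shift $+\foh$ built into the definition of $\vartheta_1$. This half-shift is precisely what distinguishes $\vartheta_1$ from the other Jacobi theta functions and what is responsible for the sign $(-1)^{k+\ell}$. No convergence or interchange issues arise, and no external input beyond the definition of $\vartheta_1$ is needed.
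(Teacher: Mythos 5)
Your proof is correct and complete: both generator shifts are verified exactly as stated, the reindexing $n \mapsto n-k$ is justified by absolute convergence, the cross terms do cancel, and the residual factor $\ee{-\foh k^2\tau - kz - \foh k}$ gives precisely the claimed prefactor with sign $(-1)^k$. The paper does not carry out this computation itself but simply cites Rademacher (correcting a misprint in his formula (80.31)); your direct series manipulation is the standard argument and is essentially what that reference contains.
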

\begin{proof}
See \cite[p.\ 177, (80.31)]{rademacher}. There is a misprint in that formula, corrected here.
\end{proof}
\begin{thm}\label{thm:thetatrans}
If $z \in \C$, $\tau \in \HH$, and $(A,\ep) \in \Mp_2(\Z)$ with $A = \smmattwo{a}{b}{c}{d}$, then
\begin{equation}
\vartheta_1(A\cdot(z,\tau)) = \psi(A,\ep)^3\ee{\frac{cz^2}{2(c\tau+d)}}\ep(\tau)\vartheta_1(z,\tau).
\end{equation}
\end{thm}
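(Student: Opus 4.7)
The plan is to reduce the modular transformation law for $\vartheta_1$ to its specialization at $z=0$, where the result follows from the eta transformation law \eqref{eq:etatrans} via the classical identity $\vartheta_1'(0,\tau) = 2\pi\eta(\tau)^3$ (provable from the Jacobi triple product expansion of $\vartheta_1$).

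First I would show that the $z$-dependence of the ratio $\vartheta_1(A\cdot(z,\tau))/\vartheta_1(z,\tau)$ is exactly the Jacobi factor $\ee{cz^2/(2(c\tau+d))}$. Define
$$F_A(z,\tau) := \frac{\vartheta_1(A\cdot(z,\tau))}{\vartheta_1(z,\tau)}\, \ee{-\tfrac{cz^2}{2(c\tau+d)}}.$$
Since $\SL_2(\Z)$ preserves the lattice $\Z + \tau\Z$, the zeros of the numerator and denominator coincide, so $F_A(\cdot,\tau)$ extends to an entire function of $z$. Under the shift $z \mapsto z + k\tau + \ell$ with $k,\ell \in \Z$, the transformed argument reads $A\cdot(z+k\tau+\ell,\tau) = (z/(c\tau+d) + k''(A\cdot\tau) + \ell'',\, A\cdot\tau)$ where $k'' = dk - c\ell$ and $\ell'' = a\ell - bk$ are integers. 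Applying Theorem~\ref{thm:thetaell} to both $\vartheta_1$ factors, together with the direct transformation of the exponential, the $z$-linear exponent terms cancel identically; the remaining $z$-independent exponent, combined with the sign $(-1)^{k''+\ell''-k-\ell}$, evaluates to $1$ by repeated use of $ad-bc=1$. Hence $F_A(\cdot,\tau)$ is entire and doubly periodic, so constant in $z$.

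Next I would determine the constant value $F_A(0,\tau)$ by differentiating
$$\vartheta_1(A\cdot(z,\tau)) = F_A(0,\tau)\, \ee{\tfrac{cz^2}{2(c\tau+d)}}\, \vartheta_1(z,\tau)$$
with respect to $z$ at $z=0$. Since $\vartheta_1$ is odd in $z$ we have $\vartheta_1(0,\tau)=0$, and $A\cdot(0,\tau) = (0, A\cdot\tau)$, so the chain rule yields
$$\frac{1}{c\tau+d}\,\vartheta_1'(0, A\cdot\tau) \;=\; F_A(0,\tau)\, \vartheta_1'(0,\tau).$$
Substituting $\vartheta_1'(0,\tau) = 2\pi\eta(\tau)^3$, cubing the transformation law $\eta(A\cdot\tau) = \psi(A,\ep)\ep(\tau)\eta(\tau)$, and using $\ep(\tau)^2 = c\tau+d$, I obtain $F_A(0,\tau) = \psi(A,\ep)^3 \ep(\tau)$, as required.

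The main obstacle is the classical identity $\vartheta_1'(0,\tau) = 2\pi\eta(\tau)^3$: this is the precise mechanism by which the cube of the eta multiplier $\psi$ is promoted to the theta multiplier $\psi^3$, and its proof requires the Jacobi triple product (or an equivalent infinite-product expansion of $\vartheta_1$). A secondary bookkeeping challenge is verifying that the mixed exponential terms vanish in the periodicity argument for $F_A$; this is a direct but somewhat tedious computation that relies crucially on $ad-bc=1$.
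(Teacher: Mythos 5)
Your proof is correct and, unlike the paper — which disposes of this theorem with a one-line citation to Rademacher, p.~180 — it is actually self-contained. The structure you chose is the classical one: force the $z$-dependence via Liouville, then pin down the $z$-independent constant at $z=0$ using $\vartheta_1'(0,\tau) = 2\pi\eta(\tau)^3$.

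A few remarks on the two places you flag as the substantive work. For the periodicity of $F_A$, the $z$-linear exponent terms do cancel exactly as you say (immediately from $k''=dk-c\ell$). The remaining $z$-independent exponent, after collecting terms and using $ad-bc=1$, factors as $-(c\tau+d)\bigl(bdk^2-2bck\ell+ac\ell^2\bigr)/\bigl(2(c\tau+d)\bigr) = -\tfrac12\bigl(bdk^2-2bck\ell+ac\ell^2\bigr)$, so the apparent $\tau$-dependence disappears and one is left with a half-integer. Combined with the sign $(-1)^{k''+\ell''-k-\ell}$, the total factor becomes $(-1)^{(d-1)(b+1)k+(a-1)(c+1)\ell}$ after using $n^2 \equiv n \bmod 2$. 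Since $ad-bc=1$ forces $ad$ and $bc$ to have opposite parities, either $b,c$ are both odd (so $b+1$, $c+1$ are even) or $a,d$ are both odd (so $a-1$, $d-1$ are even); in each case both exponents are even. So the factor is $+1$ for all $k,\ell$, as you asserted. The tediousness is real, but the argument closes.

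For the step $\vartheta_1'(0,\tau) = 2\pi\eta(\tau)^3$: this is indeed derivable from \Cref{prop:jacprod2} by taking the $z\to 0$ limit after dividing by $\ee{z/2}-\ee{-z/2}$, and you should note that \Cref{prop:jacprod2} in the paper depends only on \Cref{thm:jacprodoriginal} (a Rademacher citation) and the eta transformation law, not on \Cref{thm:thetatrans} itself — so there is no circularity even though the paper places \Cref{prop:jacprod2} after the present theorem. The final computation $F_A(0,\tau) = \eta(A\cdot\tau)^3/\bigl((c\tau+d)\eta(\tau)^3\bigr) = \psi(A,\ep)^3\ep(\tau)^3/\ep(\tau)^2 = \psi(A,\ep)^3\ep(\tau)$ is exactly the mechanism that promotes $\psi$ to $\psi^3$, as you identify.

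Since the paper does not reproduce Rademacher's argument, I can only say with certainty that your proof supplies what the paper does not; it may or may not coincide in detail with Rademacher's presentation, but it is correct and complete.
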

\begin{proof}
See \cite[p.~180]{rademacher}.
\end{proof}

Theta functions with characteristics involve an additional pair of real parameters $\r \in \R^2$, often taken to be rational in applications. In the special case $\r \in \left\{\smcoltwo{0}{0}, \smcoltwo{0}{1/2}, \smcoltwo{1/2}{0}, \smcoltwo{1/2}{1/2}\right\}$, they are essentially the four classical Jacobi theta functions usually called $\vartheta_j$ for $j \in \{1,2,3,4\}$. 
There is no widely accepted notational convention for theta functions with characteristics, and the following definition reflects the author's preferred notation. The use of a shift by $r_2\tau-r_1$ (the symplectic pairing of $\smcoltwo{r_1}{r_2}$ with $\smcoltwo{\tau}{1}$) instead of the more commonly used $r_1\tau+r_2$ makes the modular transformation law nicer.
\begin{defn}\label{defn:thetachar}
For $\r = \smcoltwo{r_1}{r_2} \in \R^2$, $z \in \C$, and $\tau \in \HH$, the \textit{Jacobi theta function with characteristics} is
\begin{align}
\vartheta_\r(z,\tau) 
&= \sum_{n=-\infty}^\infty \ee{\foh \left(n+r_2+\foh\right)^2\tau + \left(n+r_2+\foh\right)\left(z-r_1+\foh\right)} \\
&= -\ee{\foh r_2^2\tau + r_2\left(z-r_1+\foh\right)}\vartheta_1(z+r_2\tau-r_1,\tau).
\end{align}
In particular, $\vartheta_1(z,\tau) = -\vartheta_{\smcoltwo{0}{0}}(z,\tau)$.
\end{defn}

Shifting the characteristics by a pair of integers simply multiplies the theta function by a constant of norm $1$.
\begin{prop}\label{prop:thetacharshift}
Let $\r = \smcoltwo{r_1}{r_2} \in \R^2$, $\m = \smcoltwo{m_1}{m_2} \in \Z^2$, $z \in \C$, and $\tau \in \HH$. Then
\begin{equation}
\vartheta_{\r+\m}(z,\tau) = \ee{-m_1\left(r_2+\foh\right)} \vartheta_{\r}(z,\tau).
\end{equation}
\end{prop}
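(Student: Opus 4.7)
The plan is to reduce the statement to the elliptic transformation law for $\vartheta_1$ given in \Cref{thm:thetaell}, using the factorization of $\vartheta_\r$ through $\vartheta_1$ recorded in \Cref{defn:thetachar}. Everything else is bookkeeping of exponential factors.

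First I would apply \Cref{defn:thetachar} directly to both sides. The left-hand side becomes
\begin{equation*}
\vartheta_{\r+\m}(z,\tau) = -\ee{\tfrac{1}{2}(r_2+m_2)^2\tau + (r_2+m_2)\bigl(z-r_1-m_1+\tfrac{1}{2}\bigr)}\,\vartheta_1\!\bigl(z+(r_2+m_2)\tau-(r_1+m_1),\tau\bigr),
\end{equation*}
so the task is to express this in terms of $\vartheta_1(z+r_2\tau-r_1,\tau)$ with an appropriate scalar prefactor.

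Next I would invoke \Cref{thm:thetaell} with the substitution $z \rightsquigarrow z+r_2\tau-r_1$, $k = m_2$, $\ell = -m_1$, giving
\begin{equation*}
\vartheta_1\!\bigl(z+(r_2+m_2)\tau-(r_1+m_1),\tau\bigr)
= (-1)^{m_2-m_1}\ee{-\tfrac{1}{2}m_2^2\tau - m_2(z+r_2\tau-r_1)}\,\vartheta_1(z+r_2\tau-r_1,\tau).
\end{equation*}
Substituting this into the previous display, the terms $\tfrac{1}{2}m_2^2\tau$ and $r_2 m_2 \tau$ cancel against contributions from the expansion of $\tfrac{1}{2}(r_2+m_2)^2\tau$ and $(r_2+m_2)\cdot m_2\tau$; similarly $m_2(z-r_1)$ in the prefactor cancels $-m_2(z-r_1)$ coming from the elliptic law. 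What survives in the exponent collapses to $\tfrac{1}{2}r_2^2\tau + r_2(z-r_1+\tfrac{1}{2})$ plus a correction in $m_1, m_2$ only.

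Finally I would clean up the remaining $m_1,m_2$-dependent factor. It collects to $(-1)^{m_2-m_1}(-1)^{m_2}\ee{-m_1(r_2+m_2)+\tfrac{1}{2}m_2}$; since $m_1 m_2 \in \Z$ and $(-1)^{2m_2-m_1}=(-1)^{m_1}=\ee{\tfrac{1}{2}m_1}$, this reduces to $\ee{-m_1(r_2+\tfrac{1}{2})}$ (using $\ee{\tfrac{1}{2}m_1}=\ee{-\tfrac{1}{2}m_1}$). Recognizing $-\ee{\tfrac{1}{2}r_2^2\tau + r_2(z-r_1+\tfrac{1}{2})}\vartheta_1(z+r_2\tau-r_1,\tau) = \vartheta_\r(z,\tau)$ by \Cref{defn:thetachar} completes the proof.

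The only real obstacle is keeping track of the many $(-1)^{\text{integer}}$ factors and verifying that the seemingly asymmetric exponent $\ee{-m_1(r_2+\tfrac{1}{2})}$ (rather than $\ee{-m_1(r_2-\tfrac{1}{2})}$, say) is correct; this hinges on the identity $\ee{\tfrac{1}{2}k} = \ee{-\tfrac{1}{2}k}$ for $k \in \Z$, which makes the sign ambiguity in the final step immaterial.
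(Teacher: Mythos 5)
Your approach is valid and reaches the correct conclusion, but it takes a genuinely more roundabout route than the paper's proof. The paper works directly with the defining sum
\begin{equation*}
\vartheta_{\r+\m}(z,\tau)
= \sum_{n \in \Z} \ee{\foh\left(n+m_2+r_2+\foh\right)^2\tau + \left(n+m_2+r_2+\foh\right)\left(z-m_1-r_1+\foh\right)},
\end{equation*}
reindexes $n \mapsto n-m_2$ to absorb $m_2$ entirely, and then notes that the leftover factor $\ee{-m_1(n+r_2+\foh)}$ has $\ee{-m_1 n} = 1$ since $m_1 \in \Z$, so $\ee{-m_1(r_2+\foh)}$ pulls out of the sum. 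That's the whole proof — two lines, no sign bookkeeping. Your route factors $\vartheta_{\r+\m}$ through $\vartheta_1$ via \Cref{defn:thetachar}, applies the elliptic transformation law \Cref{thm:thetaell}, and then must reconcile a substantial pile of exponential and sign factors; it works, and it is arguably more robust in the sense that it would track through to more elaborate transformation formulas, but for this statement it trades a one-line reindexing for a half-page of algebra.

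One bookkeeping slip: after substituting the elliptic law, the correct residual prefactor is $(-1)^{m_2-m_1}\ee{-m_1(r_2+m_2)+\tfrac{1}{2}m_2}$, not $(-1)^{m_2-m_1}(-1)^{m_2}\ee{-m_1(r_2+m_2)+\tfrac{1}{2}m_2}$ as you wrote. The extra $(-1)^{m_2}$ looks like a double-count: you may have converted $\ee{\tfrac{1}{2}m_2}$ to $(-1)^{m_2}$ but also retained $+\tfrac{1}{2}m_2$ in the exponent. The correct simplification is $(-1)^{m_2-m_1}\ee{\tfrac{1}{2}m_2} = \ee{m_2 - \tfrac{1}{2}m_1} = \ee{-\tfrac{1}{2}m_1}$ (using $m_2 \in \Z$), and then discarding $\ee{-m_1 m_2}=1$ gives $\ee{-\tfrac{1}{2}m_1 - m_1 r_2} = \ee{-m_1(r_2+\tfrac{1}{2})}$. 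Since your final answer is nonetheless correct, this is a typo rather than a conceptual gap, but as written that intermediate display is not an equality.
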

\begin{proof}
By \Cref{defn:thetachar}, we have
\begin{align}
\vartheta_{\r+\m}(z,\tau)
&= \sum_{n \in \Z} \ee{\left(n+m_2+r_2+\foh\right)^2\tau + \left(n+m_2+r_2+\foh\right)\left(z-m_1-r_1+\foh\right)} \\
&= \sum_{n \in \Z} \ee{\left(n+r_2+\foh\right)^2\tau + \left(n+r_2+\foh\right)\left(z-m_1-r_1+\foh\right)} \\
&= \ee{-m_1\left(r_2+\foh\right)} \vartheta_{\r}(z,\tau),
\end{align}
where in the second-to-last line we substituted $n \mapsto n-m_2$.
\end{proof}

The modular transformation law for the theta function with characteristics also follows by direct calculation from the modular transformation law for $\theta_1$; however, it is more complicated, so we give a detailed proof.
\begin{thm}\label{thm:thetachartrans}
Let $\r = \smcoltwo{r_1}{r_2} \in \R^2$, $z \in \C$, $\tau \in \HH$, and $(A,\ep) \in \Mp_2(\Z)$ with $A = \smmattwo{a}{b}{c}{d}$. Then
\begin{equation}\label{eq:thetachartrans}
\vartheta_{\!A\r}(A\cdot(z,\tau)) = \psi(A,\ep)^3\kappa(A,\r)\ee{\tfrac{cz^2}{2(c\tau+d)}}\ep(\tau)\vartheta_\r(z,\tau),
\end{equation}
where
\begin{equation}\label{eq:kappadef}
\kappa(A,\r) = \ee{\foh\left(cr_1 + (d-1)r_2 - acr_1^2 - 2bcr_1r_2 - bdr_2\right)}.
\end{equation}
\end{thm}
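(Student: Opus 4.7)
The plan is to reduce the transformation law for $\vartheta_\r$ to the already-established transformation law for $\vartheta_1$ (\Cref{thm:thetatrans}) by unpacking the definition
\[
\vartheta_\r(z,\tau) = -\ee{\foh r_2^2\tau + r_2(z - r_1 + \foh)}\,\vartheta_1(z + r_2\tau - r_1,\tau)
\]
given in \Cref{defn:thetachar}. The crucial observation is a shift-compatibility identity: the elliptic-variable shift built into the definition of $\vartheta_{A\r}$ combines with the Jacobi action on $z$ in precisely such a way that the underlying $\vartheta_1$ ends up evaluated at $A \cdot (z+r_2\tau-r_1,\tau)$, allowing direct application of \Cref{thm:thetatrans}.

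Concretely, expanding $\vartheta_{A\r}(A\cdot(z,\tau))$ via \Cref{defn:thetachar} with characteristics $A\r = \smcoltwo{ar_1+br_2}{cr_1+dr_2}$, the argument of the underlying $\vartheta_1$ is $\tfrac{z}{j_{\!A}(\tau)} + (cr_1+dr_2)(A\cdot\tau) - (ar_1+br_2)$. Using $ad-bc=1$, one computes
\begin{equation}
(cr_1+dr_2)(a\tau+b) - (ar_1+br_2)(c\tau+d) = (ad-bc)(r_2\tau-r_1) = r_2\tau - r_1,
\end{equation}
so this argument simplifies to $\tfrac{z + r_2\tau - r_1}{j_{\!A}(\tau)}$, which is exactly the first coordinate of $A\cdot(z+r_2\tau-r_1,\tau)$. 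Applying \Cref{thm:thetatrans} with elliptic variable $z' = z+r_2\tau-r_1$ then produces a factor $\psi(A,\ep)^3\,\ep(\tau)\,\ee{\tfrac{c(z+r_2\tau-r_1)^2}{2j_{\!A}(\tau)}}$ and replaces $\vartheta_1(\tfrac{z'}{j_{\!A}(\tau)},A\cdot\tau)$ by $\vartheta_1(z+r_2\tau-r_1,\tau)$. Re-applying \Cref{defn:thetachar} rewrites this latter quantity in terms of $\vartheta_\r(z,\tau)$, with the two leading minus signs in the two applications of the definition cancelling.

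What remains is to collect all the resulting exponential prefactors and identify what is left over (after removing $\psi^3\,\ep\,\ee{\tfrac{cz^2}{2j_{\!A}(\tau)}}$) as the claimed $\kappa(A,\r)$. Treating the total exponent as a polynomial in $z$, the $z^2$ coefficient is immediately $\tfrac{c}{2j_{\!A}(\tau)}$; the $z^1$ coefficient vanishes via the identity
\begin{equation}
\frac{(cr_1+dr_2) + c(r_2\tau-r_1)}{j_{\!A}(\tau)} - r_2 = \frac{r_2(c\tau+d)}{j_{\!A}(\tau)} - r_2 = 0.
\end{equation}
For the $z^0$ part, the only $\tau$-dependent contributions are
\begin{equation}
\frac{(cr_1+dr_2)^2(a\tau+b) + c(r_2\tau-r_1)^2}{2j_{\!A}(\tau)} - \foh r_2^2\tau,
\end{equation}
and a polynomial long division of the numerator by $c\tau+d$, using $ad-bc=1$ to kill both the $\tau^2$- and $\tau$-coefficient discrepancies, gives quotient $\foh(r_2^2\tau + acr_1^2 + 2bcr_1r_2 + bdr_2^2)$, making the $\tau$ contribution constant.

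The main obstacle is bookkeeping rather than conceptual: one must carefully organize the half-integer shifts and sign factors coming from the definition of $\vartheta_\r$, and perform the polynomial division cleanly so that the remaining arithmetic pieces coming from $-(cr_1+dr_2)(ar_1+br_2)+\foh(cr_1+dr_2)+r_1r_2-\foh r_2$ combine, using $1-ad=-bc$, to the desired formula for $\kappa(A,\r)$. Once this calculation is carried out, no further input beyond \Cref{thm:thetatrans} and \Cref{defn:thetachar} is needed.
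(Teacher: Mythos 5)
Your proposal is correct and follows essentially the same route as the paper's own proof: unpack Definition \ref{defn:thetachar} for $\vartheta_{A\r}(A\cdot(z,\tau))$, use the identity $(cr_1+dr_2)(a\tau+b)-(ar_1+br_2)(c\tau+d)=r_2\tau-r_1$ to reduce the underlying $\vartheta_1$-argument to $\tfrac{z+r_2\tau-r_1}{j_{\!A}(\tau)}$, apply \Cref{thm:thetatrans} in the variable $z'=z+r_2\tau-r_1$, re-express $\vartheta_1(z',\tau)$ via $\vartheta_\r(z,\tau)$, and collect exponents. Your intermediate checks (vanishing of the $z^1$ coefficient, the polynomial division killing the $\tau$-dependence, the $z^0$ collection via $1-ad=-bc$) are all correct and match the paper's computation; note also that the displayed formula \eqref{eq:kappadef} has a typo ($-bdr_2$ should be $-bdr_2^2$), consistent with what your calculation and the paper's in-proof formula actually produce.
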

\begin{proof}
By \Cref{defn:thetachar},
\begin{align}
\vartheta_{\!A\r}(A\cdot(z,\tau))
&= -\ee{\foh(cr_1+dr_2)^2\tfrac{a\tau+b}{c\tau+d}+(cr_1+dr_2)\left(\tfrac{z}{c\tau+d}-ar_1-br_2+\foh\right)} \\
&\ \ \ \ \ \times \vartheta_1\!\left(\tfrac{z}{c\tau+d} + (cr_1+dr_2)\tfrac{a\tau+b}{c\tau+d} - (ar_1+br_2), \tfrac{a\tau+b}{c\tau+d}\right) \\
&= -\ee{\tfrac{cr_1+dr_2}{2(c\tau+d)}\left((cr_1+dr_2)(a\tau+b) + 2z - (2ar_1+2br_2-1)(c\tau+d)\right)} \\
&\ \ \ \ \ \times \vartheta_1\!\left(\tfrac{z+(cr_1+dr_2)(a\tau+b)-(ar_1+br_2)(c\tau+d)}{c\tau+d},\tfrac{a\tau+b}{c\tau+d}\right), \\
&= -\ee{\tfrac{cr_1+dr_2}{2(c\tau+d)}\left(r_2\tau-r_1 + 2z - (ar_1+br_2-1)(c\tau+d)\right)} \\
&\ \ \ \ \ \times 
\vartheta_1\!\left(\tfrac{z+r_2\tau-r_1}{c\tau+d},\tfrac{a\tau+b}{c\tau+d}\right), \label{eq:rsub}
\end{align}
using the relation 
\begin{equation}
(cr_1+dr_2)(a\tau+b)-(ar_1+br_2)(c\tau+d) = r_2\tau-r_1
\end{equation}
in \eqref{eq:rsub}.
Using the transformation law for $\vartheta_1$ from \Cref{thm:thetatrans}, we obtain
\begin{align}
\vartheta_{\!A\r}(A\cdot(z,\tau))
&= -\ee{\tfrac{cr_1+dr_2}{2(c\tau+d)}\left(r_2\tau-r_1 + 2z - (ar_1+br_2-1)(c\tau+d)\right)} \\
&\ \ \ \ \ \times \psi(A,\ep)^3\ee{\tfrac{c(z+r_2\tau-r_1)^2}{2(c\tau+d)}}\ep(\tau)\vartheta_1(z+r_2\tau-r_1,\tau) \\
&= -\psi(A,\ep)^3\ee{-\foh(ar_1+br_2-1)(cr_1+dr_2)} \\
&\ \ \ \ \ \times \ee{\tfrac{c(z+r_2\tau-r_1)^2+(cr_1+dr_2)(r_2\tau-r_1 + 2z)}{2(c\tau+d)}}\ep(\tau)\vartheta_1(z+r_2\tau-r_1,\tau) \\
&= -\psi(A,\ep)^3\ee{-\foh(ar_1+br_2-1)(cr_1+dr_2)} \\
&\ \ \ \ \ \times \ee{\tfrac{c z^2}{2(c\tau+d)} + \foh r_2(r_2\tau-r_1+2z)}\ep(\tau)\vartheta_1(z+r_2\tau-r_1,\tau) \\
&= -\psi(A,\ep)^3\ee{\foh(cr_1+dr_2-acr_1^2+(ad+bc)r_1r_2+bdr_2^2)} \\
&\ \ \ \ \ \times \ee{\tfrac{c z^2}{2(c\tau+d)} + \tfrac{r_2^2\tau}{2} + r_2\left(z-r_1+\foh\right) + \tfrac{r_1r_2}{2} - \tfrac{r_2}{2}}\ep(\tau)\vartheta_1(z+r_2\tau-r_1,\tau) \\
&= -\psi(A,\ep)^3\ee{\foh(cr_1+(d-1)r_2-acr_1^2-(ad+bc-1)r_1r_2-bdr_2^2)} \\
&\ \ \ \ \ \times \ee{\tfrac{c z^2}{2(c\tau+d)}}\ee{\tfrac{r_2^2\tau}{2} + r_2\left(z-r_1+\foh\right)}\ep(\tau)\vartheta_1(z+r_2\tau-r_1,\tau) \\
&= \psi(A,\ep)^3\ee{\tfrac{cr_1+(d-1)r_2-acr_1^2-2bcr_1r_2-bdr_2^2}{2}} \ee{\tfrac{c z^2}{2(c\tau+d)}}\ep(\tau)\vartheta_\r(z,\tau) \\
&= \psi(A,\ep)^3\kappa(A,\r)\ee{\tfrac{c z^2}{2(c\tau+d)}}\ep(\tau)\vartheta_\r(z,\tau). \tag*{\qedhere}
\end{align}
\end{proof}

\begin{lem}\label{lem:kappacocycle}
Let $\r \in \Q^2$. The function $\kappa$ satisfies the following cocycle condition: For any $A,B \in \Gamma_\r$,
\begin{equation}
\kappa(AB,\r) = \kappa(A,B\r)\kappa(B,\r).
\end{equation}
\end{lem}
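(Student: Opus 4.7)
The plan is to derive the cocycle identity for $\kappa$ from the theta transformation law \Cref{thm:thetachartrans}, which holds for \emph{every} $\r\in\R^2$, not just those with $A\r-\r\in\Z^2$. This lets us apply the law first with $B$ (producing $\vartheta_{B\r}$ on the right) and then with $A$, treating $B\r$ as the new characteristics in the second step.

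The strategy is to compute $\vartheta_{(AB)\r}\bigl((AB)\cdot(z,\tau)\bigr)$ in two ways. Applying \Cref{thm:thetachartrans} once with the lift $(AB,\ep_{AB})$, where $\ep_{AB}(\tau)=\ep_A(B\tau)\ep_B(\tau)$, yields one expression carrying $\kappa(AB,\r)$. Applying it twice, first with $(B,\ep_B)$ and then with $(A,\ep_A)$—using $(AB)\r=A(B\r)$ and $(AB)\cdot(z,\tau)=A\cdot\bigl(B\cdot(z,\tau)\bigr)$—yields a second expression carrying the product $\kappa(A,B\r)\kappa(B,\r)$. Equating the two and dividing out common factors will give the identity.

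To match the other factors, I would use three facts. First, $\psi$ descends to a group homomorphism $\Mp_2(\Z)\to\mu_\infty(\C)$: applying \eqref{eq:etatrans} to $\eta\bigl(A(B\tau)\bigr)$ and then to $\eta(B\tau)$, and comparing with a single application to $\eta((AB)\tau)$, gives $\psi(AB,\ep_{AB})=\psi(A,\ep_A)\psi(B,\ep_B)$, so the $\psi^3$-factors cancel. Second, the $\ep$-factors agree by the definition of metaplectic multiplication. Third, the combined quadratic Gaussian factor must satisfy
\begin{equation*}
\tfrac{c_A}{j_A(B\tau)\,j_B(\tau)^2}+\tfrac{c_B}{j_B(\tau)}=\tfrac{c_{AB}}{j_{AB}(\tau)}.
\end{equation*}
Multiplying through by $j_{AB}(\tau)=j_A(B\tau)j_B(\tau)$ reduces this to $c_A=j_B(\tau)\bigl(c_{AB}-c_B\,j_A(B\tau)\bigr)$, and writing $A=\smmattwo{a}{b}{c}{d}$, $B=\smmattwo{e}{f}{g}{h}$, the right side simplifies via $j_A(B\tau)j_B(\tau)=c_{AB}\tau+d_{AB}$ to $c(eh-fg)=c_A$.

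With these three ingredients in place, one divides the two expressions by the common $\psi^3$, Gaussian, and $\ep$-factors, and by $\vartheta_\r(z,\tau)$—which by \Cref{defn:thetachar} is a nonzero scalar times $\vartheta_1$ at a shifted point and hence not identically zero—to conclude $\kappa(AB,\r)=\kappa(A,B\r)\kappa(B,\r)$. The main obstacle is only the small bookkeeping needed to verify the Gaussian-factor identity; all other cancellations are immediate from the multiplicative structure of $\Mp_2(\Z)$ and the eta transformation. A purely algebraic alternative, substituting the explicit formula \eqref{eq:kappadef} and expanding in the entries of $A$, $B$, $AB$ and the components of $\r$ and $B\r$, also works but is considerably more opaque.
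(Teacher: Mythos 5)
Your proof is correct and matches the paper's intended approach: compute $\vartheta_{(AB)\r}\bigl((AB)\cdot(z,\tau)\bigr)$ once with the single matrix $AB$ and once in two steps through $\vartheta_{B\r}\bigl(B\cdot(z,\tau)\bigr)$, then cancel the $\psi(\cdot)^3$-, Gaussian-, and $\ep$-factors. Your write-up is in fact slightly more careful than the paper's one-line sketch---you track the intermediate characteristics correctly as $B\r$, verify explicitly that the Gaussian exponents compose via $\det B = eh-fg=1$, and observe in passing that the argument needs no congruence condition on $A,B$.
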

\begin{proof}
This follows from \Cref{thm:thetachartrans} by applying \eqref{eq:thetachartrans} to each of $\vartheta_\r((A_1A_2)\cdot(z,\tau))$ with $A=A_1A_2$, $\vartheta_\r(A_1\cdot(A_2\cdot(z,\tau)))$ with $A=A_1$, and $\vartheta_\r(A_2\cdot(z,\tau))$ with $A=A_2$, and comparing the results.
\end{proof}

\begin{lem}\label{lem:kappaAinv}
Let $\r = \smcoltwo{r_1}{r_2} \in \R^2$ and $A = \smmattwo{a}{b}{c}{d} \in \SL_2(\Z)$.
With $\kappa(A,\r)$ defined as in \Cref{thm:thetachartrans},
\begin{align}
\kappa(A,A^{-1}\r) 
&= \kappa(A^{-1},\r)^{-1} \\
&= \ee{\foh\!\left(cr_1 + (-a+1)r_2 - cdr_1^2 + 2bcr_1r_2 - abr_2^2\right)}. \label{eq:kappaAinv}
\end{align}
\end{lem}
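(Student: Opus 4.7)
The plan is to establish the two equalities separately: first the cohomological identity $\kappa(A, A^{-1}\r) = \kappa(A^{-1},\r)^{-1}$ via the cocycle condition, and then compute the explicit formula by direct substitution.

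For the first equality, I would observe that $\kappa(I, \r) = 1$, which is immediate from the definition \eqref{eq:kappadef} since every term in the exponent vanishes when $a = d = 1$ and $b = c = 0$. Then I would apply \Cref{lem:kappacocycle} to the factorization $I = A \cdot A^{-1}$, which yields
\begin{equation*}
1 = \kappa(I, \r) = \kappa(A \cdot A^{-1}, \r) = \kappa(A, A^{-1}\r) \, \kappa(A^{-1}, \r),
\end{equation*}
so $\kappa(A, A^{-1}\r) = \kappa(A^{-1}, \r)^{-1}$, as claimed.

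For the second equality, I would substitute $A^{-1} = \smmattwo{d}{-b}{-c}{a}$ into the defining formula \eqref{eq:kappadef}. This means replacing the matrix entries $(a, b, c, d)$ in \eqref{eq:kappadef} with $(d, -b, -c, a)$, giving
\begin{equation*}
\kappa(A^{-1}, \r) = \ee{\tfrac{1}{2}\!\left(-c r_1 + (a-1) r_2 + cd \, r_1^2 - 2bc \, r_1 r_2 + ab \, r_2^2\right)}.
\end{equation*}
Inverting this expression by negating the exponent immediately yields the stated right-hand side of \eqref{eq:kappaAinv}.

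The proof is essentially a bookkeeping exercise, and the main (minor) obstacle is tracking the sign changes carefully under the substitution $(a,b,c,d) \mapsto (d,-b,-c,a)$, particularly in the cross term $-2bc \, r_1 r_2$, which is invariant because two signs flip, versus the quadratic terms in $r_1^2$ and $r_2^2$, each of which picks up one sign change. No deeper structural input beyond \Cref{lem:kappacocycle} and \eqref{eq:kappadef} is needed.
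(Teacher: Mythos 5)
Your proposal is correct and follows the same route as the paper: invoke $\kappa(I,\r)=1$ together with the cocycle relation from \Cref{lem:kappacocycle} applied to $AA^{-1}=I$, then substitute $A^{-1}=\smmattwo{d}{-b}{-c}{a}$ into \eqref{eq:kappadef} and negate the exponent. One small remark: the printed formula \eqref{eq:kappadef} has a typo in its last term (it reads $bdr_2$ where it should read $bdr_2^2$, as the derivation inside \Cref{thm:thetachartrans} shows); your substitution implicitly uses the corrected version, which is why your result matches the stated $-abr_2^2$.
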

\begin{proof}
By \Cref{lem:kappacocycle}, $1 = \kappa(I,\r) = \kappa(AA^{-1},\r) = \kappa(A,A^{-1}\r)\kappa(A^{-1},\r)$, so $\kappa(A,A^{-1}\r) = \kappa(A^{-1},\r)^{-1}$. The second equality of \eqref{eq:kappaAinv} follows by plugging in $A^{-1} = \smmattwo{d}{-b}{-c}{a}$ to \eqref{eq:kappadef}.
\end{proof}

\subsection{Characters of modular theta functions}

We will now consider modular forms defined by theta functions. We define a character $\chi_\r$ on the group $\Gamma_\r$ that will play an important role in the theory of the Shintani--Faddeev modular cocycle. 

\begin{defn}
Let $\r \in \R^2$, $\tau \in \HH$.
The \textit{theta null with characteristics} is
\begin{equation}
\theta_\r(\tau) = \vartheta_\r(0,\tau).
\end{equation}
\end{defn}

\begin{thm}\label{thm:thetamod}
Let $\r \in \Q^2$.
The function $\theta_\r(\tau)$ is a weight $\foh$ modular form with character for the group
\begin{equation}
\MG_\r = \{(A,\ep) \in \Mp_2(\Z) : A\cdot\r \con \r \Mod{1}\}.
\end{equation}
Specifically, for $(A,\ep) \in \MG_\r$ and $\tau \in \HH$,
\begin{equation}\label{eq:thetamod}
\theta_\r(A\cdot\tau) = \psi(A,\ep)^3\chi_\r(A)\ep(\tau)\theta_\r(\tau)
\end{equation}
where $\chi_\r$ is a character on $\Gamma_\r$ defined by the formula
\begin{equation}
\chi_\r(A) = \ee{\tfrac{(c-d+1)r_1 + (-a+b+1)r_2 - cdr_1^2 + 2(a-1)dr_1r_2 - (a-2)b r_2^2}{2}}.
\end{equation}
\end{thm}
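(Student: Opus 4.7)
The plan is to derive the transformation law by combining Theorem \ref{thm:thetachartrans} (at $z=0$) with Proposition \ref{prop:thetacharshift}, using the fact that $A\r - \r \in \Z^2$ for $A \in \Gamma_\r$, and then to identify the resulting phase with the claimed formula for $\chi_\r(A)$.

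Concretely, first I would set $z = 0$ in \eqref{eq:thetachartrans} to obtain
$$\vartheta_{\!A\r}(0,A\cdot\tau) = \psi(A,\ep)^3 \kappa(A,\r) \ep(\tau) \theta_\r(\tau).$$
Writing $\m = A\r - \r = \smcoltwo{m_1}{m_2} \in \Z^2$ (which holds precisely because $A \in \Gamma_\r$) and applying Proposition \ref{prop:thetacharshift} with $\tau$ replaced by $A\cdot\tau$ yields
$$\vartheta_{\!A\r}(0,A\cdot\tau) = \vartheta_{\r+\m}(0,A\cdot\tau) = \ee{-m_1(r_2 + \tfrac{1}{2})} \theta_\r(A\cdot\tau).$$
Equating these two expressions gives \eqref{eq:thetamod} with
$$\chi_\r(A) = \ee{m_1(r_2 + \tfrac{1}{2})} \kappa(A,\r), \qquad m_1 = (a-1)r_1 + b r_2.$$

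Next, I would substitute the explicit formula \eqref{eq:kappadef} for $\kappa(A,\r)$ and expand, obtaining $\chi_\r(A) = \ee{\tfrac{1}{2} E_1}$ for an explicit polynomial $E_1$ in $r_1,r_2$. To match the formula $\chi_\r(A) = \ee{\tfrac{1}{2} E_2}$ in the theorem statement, I would show that $E_1 - E_2 \in 2\Z$ using the following tools: the relation $ad - bc = 1$, the integrality $m_1 = (a-1)r_1 + b r_2 \in \Z$, and the parallel integrality $m_2 = c r_1 + (d-1) r_2 \in \Z$ (the latter being the second component of the defining condition $A\r \equiv \r \Mod{1}$ for $\Gamma_\r$). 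This reduction to an even integer is the main computational obstacle; it is elementary but requires carefully tracking how each monomial $r_1$, $r_2$, $r_1^2$, $r_2^2$, $r_1 r_2$ transforms under the congruences $m_1, m_2 \in \Z$ and the unimodularity of $A$.

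Finally, the multiplicativity of $\chi_\r$ on $\Gamma_\r$ requires no separate computation: comparing $\theta_\r((AB)\cdot\tau)$ computed directly via \eqref{eq:thetamod} with the two-step computation $\theta_\r(A\cdot(B\cdot\tau))$, and using that $\psi^3$ is a character on $\Mp_2(\Z)$ together with the metaplectic multiplication rule $\ep_{AB}(\tau) = \ep_A(B\cdot\tau)\ep_B(\tau)$, forces $\chi_\r(AB) = \chi_\r(A)\chi_\r(B)$. (This could alternatively be verified directly from the defining expression $\chi_\r(A) = \ee{m_1(r_2+\tfrac{1}{2})}\kappa(A,\r)$ using Lemma \ref{lem:kappacocycle} and the identity $(AB)\r - \r = A(B\r - \r) + (A\r - \r)$, but the indirect argument is cleaner.) Independence of $\chi_\r(A)$ from the choice of $\ep$ is automatic, since both $m_1$ and $\kappa(A,\r)$ depend only on $A$ and $\r$.
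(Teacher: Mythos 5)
Your outline is logically sound, and the intermediate identity $\chi_\r(A) = \ee{m_1(r_2+\foh)}\kappa(A,\r)$ with $m_1 = (a-1)r_1 + br_2$ is correct. However, you take a genuinely different route from the paper, and it turns out to be the harder one. The paper instead writes $\theta_\r(A\cdot\tau) = \vartheta_{A(A^{-1}\r)}(A\cdot(0,\tau))$ and applies Theorem \ref{thm:thetachartrans} with $\r$ replaced by $A^{-1}\r$, so that $\chi_\r(A) = \kappa(A,A^{-1}\r)\,\ee{-\tilde m_1(r_2+\foh)}$ with $\tilde m_1 = (d-1)r_1 - br_2$. Substituting the closed form of $\kappa(A,A^{-1}\r)$ from Lemma \ref{lem:kappaAinv} then reproduces the stated formula \emph{as an exact polynomial identity}, using only $ad-bc=1$ (concretely $bc-d+1 = (a-1)d$); no congruences on $\r$ are needed at all.

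Your version, working with $A\r$ and $\kappa(A,\r)$, produces an expression that agrees with the theorem's formula only modulo multiplication by $\ee{\foh(E_1-E_2)}$ for a quadratic polynomial $E_1 - E_2$ in $r_1,r_2$ that is \emph{not} identically zero. You describe showing $E_1 - E_2 \in 2\Z$ as "elementary but requires carefully tracking"; I would caution that this is the real content, and it is more delicate than that phrasing suggests. One has
$E_1-E_2+2m_1m_2 = (a+d-2)\bigl(r_1+r_2+cr_1^2+br_2^2\bigr)$; then, using $(a+d-2)r_1 = bm_2-(d-1)m_1$ and $(a+d-2)r_2 = cm_1-(a-1)m_2$ and $cr_1^2+br_2^2 = m_2r_1+m_1r_2-(a+d-2)r_1r_2$, the right-hand side collapses to $(c-d+1)m_1+(b-a+1)m_2 + cdm_1^2 + abm_2^2 - 2bc\,m_1m_2$, which is an integer; reducing modulo $2$ using $m_i^2\equiv m_i$, this is $\equiv (c+1)(d+1)m_1 + (a+1)(b+1)m_2$, which is even because $ad-bc=1$ forces exactly one of $ad$, $bc$ to be odd, and in either case both products of consecutive-parity factors vanish mod $2$. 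So your plan does close, but it requires identifying these specific integer combinations and a parity case analysis, none of which are needed in the paper's derivation. If you intend to present a complete proof, either supply this verification explicitly or switch to the $A^{-1}\r$ route, which is strictly shorter. Your final paragraph (multiplicativity of $\chi_\r$ via the cocycle comparison) matches the paper and is fine.

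One typographical caveat: \eqref{eq:kappadef} as printed reads $bdr_2$ where the proof of Theorem \ref{thm:thetachartrans} makes clear it should be $bdr_2^2$; if you substitute the misprinted formula you will get nonsense.
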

\begin{proof}
We first apply the modular transformation formula given in \Cref{thm:thetachartrans}.
\begin{align}
\theta_\r(A\cdot\tau) 
&= \vartheta_{\!A(A^{-1}\r)}(A\cdot(0,\tau)) \\
&= \psi(A,\ep)^3\kappa(A,A^{-1}\r)\ep(\tau)\vartheta_{\!A^{-1}\r}(z,\tau).
\end{align}
Since $A \in \Gamma_\r$, so is $A^{-1}$, and $A^{-1}\r = \r + \m$ for some $\m = \smcoltwo{m_1}{m_2}\in \Z^2$.
Applying \Cref{prop:thetacharshift} and regrouping factors, we have
\begin{equation}\label{eq:thetamod2}
\theta_\r(A\cdot\tau) = \psi(A,\ep)^3\chi_\r(A)\ep(\tau)\theta_\r(\tau),
\end{equation}
where $\chi_\r(A) := \kappa(A,A^{-1}\r)\ee{-m_1\left(r_2+\foh\right)}$.
We have
\begin{equation}
\m = (A^{-1}-I)\r = \coltwo{(d-1)r_1-br_2	}{-cr_1+(a-1)r_2}.
\end{equation}
Applying \Cref{lem:kappaAinv}, we may write $\chi_\r$ as
\begin{align}
\chi_\r(A)
&= \ee{\tfrac{cr_1 + (-a+1)r_2 - cdr_1^2 + 2bcr_1r_2 - abr_2^2}{2}}\ee{-\left((d-1)r_1-br_2\right)\left(r_2+\foh\right)} \\
&= \ee{\tfrac{(c-d+1)r_1 + (-a+b+1)r_2 - cdr_1^2 + 2(bc-d+1)r_1r_2 - (ab-2b)r_2^2}{2}} \\
&= \ee{\tfrac{(c-d+1)r_1 + (-a+b+1)r_2 - cdr_1^2 + 2(a-1)dr_1r_2 - (a-2)b r_2^2}{2}}.
\end{align}
The fact that $\chi_\r$ is a character on $\Gamma_\r$ follows by applying \eqref{eq:thetamod2} to each of $\theta_\r((A_1A_2)\cdot\tau)$ with $A=A_1A_2$, $\theta_\r(A_1\cdot(A_2\cdot\tau))$ with $A=A_1$, and $\theta_\r(A_2\cdot\tau)$ with $A=A_2$, and comparing the results.
\end{proof}

The character $\chi_\r$ can be written in a somewhat nicer way, as the following lemma shows.
\begin{lem}\label{lem:charsimp}
Let $\r \in \frac{1}{N}\Z^2$ for $N \in \N$. The character $\chi_\r$ on $\Gamma_\r$ has the following formula.
\begin{align}
\chi_\r(A) &= -(-1)^{\delta_2(A\r-\r)}\ee{\frac{1}{2}\symp{A\r}{\r}}, \mbox{ where} \\
\delta_2(\q) &:= 
\begin{cases}
1, &\mbox{if } \q \in 2\Z^2, \\
0, &\mbox{if } \q \nin 2\Z^2.
\end{cases}
\end{align}
Moreover, $\chi_\r$ takes values in the group $\mu_{N'}(\C)$ of $N'$-th roots of unity, where
\begin{equation}
N' = 
\begin{cases}
N & \mbox{if $N$ is odd}, \\
2N & \mbox{if $N$ is even}.
\end{cases}
\end{equation}
\end{lem}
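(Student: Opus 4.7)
The plan is to verify both statements by direct computation from the explicit formula for $\chi_\r(A)$ established in \Cref{thm:thetamod}. Writing that formula as $\chi_\r(A) = \ee{E}$ with
\[2E = (c-d+1)r_1 + (-a+b+1)r_2 - cd\, r_1^2 + 2(a-1)d\, r_1 r_2 - (a-2)b\, r_2^2,\]
and noting that $-(-1)^k = \ee{(1+k)/2}$, the proposed formula reads $\ee{\foh \symp{A\r}{\r} + \foh(1+\delta_2(\q))}$, where $\q := A\r-\r \in \Z^2$.

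For the first equality, the identity $\ee{E} = -(-1)^{\delta_2(\q)}\ee{\foh\symp{A\r}{\r}}$ amounts to the congruence $2E - \symp{A\r}{\r} \equiv 1 + \delta_2(\q) \pmod{2}$. Combined with the elementary observation $\delta_2(\q) \equiv 1 + q_1 + q_2 + q_1 q_2 \pmod{2}$ (valid for any $\q \in \Z^2$), this reduces to showing that
\[D := 2E - \symp{A\r}{\r} - (q_1 + q_2 + q_1 q_2) \in 2\Z.\]
Expanding and collecting powers of $r_1, r_2$, using $ad - bc = 1$, produces the compact form
\[D = (2-T)(r_1 + r_2) - cT\, r_1^2 + 2(a-d)\,r_1 r_2 + b(4-T)\, r_2^2, \qquad T := a+d.\]
To verify $D \in 2\Z$, I would exploit that $u_1 := (2-T)r_1 = (d-1)q_1 - bq_2$ and $u_2 := (2-T)r_2 = (a-1)q_2 - cq_1$ are integers (treating the parabolic case $T = 2$ separately by direct substitution), rewrite $(2-T)^2 D$ as an integer polynomial in $u_1, u_2$ and the matrix entries, and then carry out a parity analysis. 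The key structural input is that no column of an $\SL_2(\Z)$-matrix can have both entries even, since $\det A = 1$ is odd.

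For the root-of-unity claim, note that $\symp{A\r}{\r} = q_2 r_1 - q_1 r_2 \in \frac{1}{N}\Z$ whenever $\r \in \frac{1}{N}\Z^2$, so the formula from part (1) already shows $\chi_\r(A) \in \mu_{2N}$, settling the case $N$ even. For $N$ odd, set $p_i := N r_i \in \Z$. A short computation gives $\chi_\r(A)^N = -(-1)^{\delta_2(\q) + q_2 p_1 - q_1 p_2}$, and the required triviality $\chi_\r(A)^N = 1$ reduces (via the identity for $\delta_2$) to the mod-$2$ congruence $(a+1)(c+1)p_1 + (b+1)(d+1)p_2 \equiv 0 \pmod{2}$. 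This holds for any matrix in $\SL_2(\Z)$ because at least one of $\{a,c\}$ and at least one of $\{b,d\}$ must be odd.

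The main obstacle is the integrality step $D \in 2\Z$ in part (1). The algebraic reduction to $D$ is routine, but $D$ is only rational in general, and the required $2$-divisibility emerges only after simultaneously invoking the constraint $\q \in \Z^2$ and the Plücker relation $ad - bc = 1$; the parity cancellations hinge essentially on the structural impossibility of entirely-even rows or columns of $A$, which is precisely what the determinantal relation forbids modulo $2$.
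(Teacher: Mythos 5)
Your approach is a genuinely different route from the paper's. The paper's proof starts from the formula of \Cref{thm:thetamod}, applies the character identity $\chi_\r(A) = \chi_\r(A^{-1})^{-1}$, and then introduces $\r' = A\r$; the $\SL_2$ relation collapses the quadratic form to a manifestly symmetric expression $(r_1'-r_1)+(r_2'-r_2)+(r_1'-r_1)(r_2'-r_2)+(r_2r_1'-r_1r_2')$, each piece of which is already either an integer or the desired symplectic term. Your reduction keeps everything in $r_1,r_2$ and boils the claim down to a single divisibility: $D := 2E - \symp{A\r}{\r} - (q_1+q_2+q_1q_2) \in 2\Z$. That reduction, and your explicit expression for $D$, are correct.

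The critical gap is exactly where you flag it, and I do not think the proposed repair works. Multiplying by $(2-T)^2$ yields the integer $(2-T)^2 D = (2-T)(u_1+u_2) - cTu_1^2 + 2(a-d)u_1u_2 + b(4-T)u_2^2$, but this does not establish $D \in 2\Z$ for two reasons. First, knowing $(2-T)^2 D \in \Z$ does not give $D \in \Z$: a priori $D$ lives in $\frac{1}{N^2}\Z$, and there is no reason $\gcd((2-T)^2, N^2)=1$ (indeed in the simplest examples with $N=3$ one can have $3 \mid (2-T)$). Second, and more seriously, when $T$ is \emph{even} (a much larger set of cases than the parabolic $T=2$), $(2-T)^2 \equiv 0 \pmod 4$, so $(2-T)^2 D$ carries no parity information about $D$ whatsoever; your plan to treat only $T=2$ separately misses this entirely. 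The paper's change of variables to $\r'=A\r$ rather than $\q = A\r-\r$ is precisely what avoids ever needing to invert $A-I$ and hence avoids the $(2-T)$ denominator problem.

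One further caution worth checking before you commit to the computation: the paper's own derivation ends with the term $r_2r_1'-r_1r_2'$, which in the paper's sign convention is $\symp{\r}{A\r}$, not $\symp{A\r}{\r}$ as written in the lemma's statement. A direct numerical check (e.g.\ $\r=\smcoltwo{1/3}{0}$, $A = \smmattwo{1}{1}{3}{4}$) shows $\symp{\r}{A\r} = \symp{\r}{\q} = r_2q_1-r_1q_2$ is not an integer and the two signs do yield different roots of unity, so the distinction matters. With the lemma's literal sign, your $D$ is $-8/3$ in this example (not in $2\Z$); with the sign as it comes out of the paper's derivation, $D = -2$. If you pursue a brute-force verification you should use the sign that emerges from the computation, not necessarily the one printed.

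For the root-of-unity claim your route (raise $\chi_\r(A)$ to the $N$-th power and reduce mod 2) is logically equivalent to the paper's direct parity check of $\delta_2(A\r-\r) + 1 + \symp{A\r}{\r}$; no substantive difference there, and the structural input about $\SL_2(\Z)$ rows and columns mod 2 is the same.
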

\begin{proof}
Write $A = \smmattwo{a}{b}{c}{d}$ and $\r = \smcoltwo{r_1}{r_2}$; we have $A^{-1} = \smmattwo{d}{-b}{-c}{a}$. By \Cref{thm:thetamod},
\begin{align}
\chi_\r(A) 
&= \chi_\r(A^{-1})^{-1} \\
&= \ee{\tfrac{(-c-a+1)r_1 + (-d-b+1)r_2 + acr_1^2 + 2a(d-1)r_1r_2 + b(d-2) r_2^2}{2}}^{-1} \\
&= \ee{\tfrac{(a+c-1)r_1 + (b+d-1)r_2 - acr_1^2 - 2a(d-1)r_1r_2 - b(d-2) r_2^2}{2}} \\
&= \ee{\tfrac{(ar_1+br_2-r_1) + (cr_1+dr_2-r_1) - ar_1(cr_1+dr_2) - (d-2)r_2(ar_1+br_2)}{2}}.
\end{align}
Set $\r' = \smcoltwo{r_1'}{r_2'} := A\r = \smcoltwo{ar_1+br_2}{cr_1+dr_2}$. Then,
\begin{align}
\chi_\r(A) 
&= \ee{\tfrac{(r_1'-r_1) + (r_2'-r_2) - ar_1r_2' - (d-2)r_2r_1'}{2}} \\
&= \ee{\tfrac{(r_1'-r_1) + (r_2'-r_2) + 2r_2r_1' - (ar_1r_2' + dr_2r_1')}{2}}.
\end{align}
Moreover, $ar_1r_2' + dr_2r_1' = ar_1r_2' + (r_2'-cr_1)r_1' = r_1(ar_2'-cr_1')+r_1'r_2' = r_1r_2+r_1'r_2'$.
Thus,
\begin{align}
\chi_\r(A) 
&= \ee{\tfrac{(r_1'-r_1) + (r_2'-r_2) + 2r_2r_1' - (r_1r_2+r_1'r_2')}{2}} \\
&= \ee{\tfrac{(r_1'-r_1) + (r_2'-r_2) + (r_1'-r_1)(r_2'-r_2) + (r_2r_1'-r_1r_2'))}{2}}.
\end{align}
Since $\r' \con \r \Mod{1}$, we see that $\ee{\tfrac{(r_1'-r_1) + (r_2'-r_2) + (r_1'-r_1)(r_2'-r_2)}{2}} \in \{\pm 1\}$, and moreover that
\begin{equation}
\ee{\tfrac{(r_1'-r_1) + (r_2'-r_2) + (r_1'-r_1)(r_2'-r_2)}{2}} = -(-1)^{\delta_2(A\r-\r)}.
\end{equation}
Also, $r_2r_1'-r_1r_2' = \symp{\r'}{\r} = \symp{A\r}{\r}$, so 
\begin{equation}\label{eq:chidelta2}
\chi_\r(A) = -(-1)^{\delta_2(A\r-\r)}\ee{\frac{1}{2}\symp{A\r}{\r}}.
\end{equation}
Clearly the right-hand side \eqref{eq:chidelta2} is a $(2N)$-th root of unity. Moreover, when $N$ is odd, we check $\delta_2(A\r-\r) \con 1 \con 1+\symp{A\r}{\r} \Mod{2}$ when $A\r \con \r \Mod{2}$, and $\delta_2(A\r-\r) \con 0 \con 1+\symp{A\r}{\r} \Mod{2}$ when $A\r \not\con \r \Mod{2}$, so in fact $\chi_\r(A)$ is an $N$-th root of unity.
\end{proof}

\begin{lem}
Let $\r \in \Q^2$. For any $\m \in \Z^2$ and any $A \in \Gamma_\r$,
\begin{equation}
\chi_{\r+\m}(A) = \chi_{\r}(A).
\end{equation}
\end{lem}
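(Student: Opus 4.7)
The plan is to deduce the invariance of $\chi_\r$ under integer translation of the characteristics from the elliptic translation law of the Jacobi theta function (\Cref{prop:thetacharshift}) together with the modular transformation law (\Cref{thm:thetachartrans}). It is convenient to work with the variable theta function $\vartheta_\r(z,\tau)$ rather than the theta null $\theta_\r(\tau)$, since $\vartheta_\r(z,\tau)$ is never identically zero on $\C\times\HH$---whereas $\theta_\r\equiv 0$ precisely when $\r\in\Z^2$, so an argument written at the level of $\theta_\r$ would degenerate in that case.

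First I would observe that $\Gamma_{\r+\m}=\Gamma_\r$, so the statement is at least well-posed: for any $A\in\SL_2(\Z)$, one has $A(\r+\m)-(\r+\m)=(A\r-\r)+(A-I)\m$ and $(A-I)\m\in\Z^2$, so $A\r-\r\in\Z^2$ if and only if $A(\r+\m)-(\r+\m)\in\Z^2$.

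Next I would extract a transformation law for $\vartheta_\r$ at fixed characteristics $\r$ by mimicking, but keeping the $z$-dependence, the calculation already performed in the proof of \Cref{thm:thetamod}: writing $A^{-1}\r=\r+\n$ for some $\n\in\Z^2$ (valid since $A\in\Gamma_\r$), applying \Cref{thm:thetachartrans}, and using \Cref{prop:thetacharshift} to replace $\vartheta_{A^{-1}\r}(z,\tau)$ by a scalar multiple of $\vartheta_\r(z,\tau)$. The accumulated scalar is exactly $\chi_\r(A)$ by its very definition, yielding
\begin{equation*}
\vartheta_\r(A\cdot(z,\tau))=\psi(A,\ep)^3\,\chi_\r(A)\,\ee{\tfrac{cz^2}{2(c\tau+d)}}\,\ep(\tau)\,\vartheta_\r(z,\tau)
\end{equation*}
for all $A\in\Gamma_\r$ and any lift $(A,\ep)\in\Mp_2(\Z)$.

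Finally, applying \Cref{prop:thetacharshift} once more gives $\vartheta_{\r+\m}(z,\tau)=\gamma\,\vartheta_\r(z,\tau)$ with $\gamma=\ee{-m_1(r_2+\tfrac{1}{2})}\ne 0$. Computing $\vartheta_{\r+\m}(A\cdot(z,\tau))$ in two ways---directly via the transformation law above with multiplier $\chi_{\r+\m}(A)$, and as $\gamma$ times the transformation of $\vartheta_\r$ with multiplier $\chi_\r(A)$---yields two expressions for the same function that agree in every factor except possibly the character; since $\vartheta_{\r+\m}(z,\tau)\not\equiv 0$ on $\C\times\HH$ (its zeros, inherited from $\vartheta_1$, form a discrete lattice), cancellation forces $\chi_{\r+\m}(A)=\chi_\r(A)$. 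The only step requiring any real thought is the derivation of the $\vartheta_\r$-transformation law with multiplier $\chi_\r$, which is a pure bookkeeping exercise mimicking the proof of \Cref{thm:thetamod}; a purely algebraic alternative---verifying the identity from the explicit formula in \Cref{lem:charsimp}---is possible but noticeably messier, because of the cross terms in $\symp{A(\r+\m)}{\r+\m}-\symp{A\r}{\r}$ and their interaction with the $(-1)^{\delta_2(\cdot)}$ factor.
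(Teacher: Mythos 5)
Your proof is essentially the paper's proof (divide transformation laws for theta functions and appeal to \Cref{prop:thetacharshift}), but with a refinement that the paper silently skips: you point out that the paper's argument, which works at the level of the theta null $\theta_\r(\tau)=\vartheta_\r(0,\tau)$, degenerates when $\r\in\Z^2$, because in that case $\theta_\r\equiv 0$ (since $\vartheta_1(z,\tau)$ is odd in $z$ and so vanishes at $z=0$), making the ratio $\theta_{\r+\m}/\theta_\r$ meaningless. Your fix---lifting the argument to the two-variable Jacobi form $\vartheta_\r(z,\tau)$, which is never identically zero on $\C\times\HH$, after first deriving the transformation law $\vartheta_\r(A\cdot(z,\tau))=\psi(A,\ep)^3\chi_\r(A)\ee{cz^2/2(c\tau+d)}\ep(\tau)\vartheta_\r(z,\tau)$ for $A\in\Gamma_\r$ by the same substitutions already carried out in the proof of \Cref{thm:thetamod}---is sound and closes the gap cleanly. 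It costs nothing extra (the $\vartheta_\r$-level transformation law is a byproduct of the paper's own derivation of $\chi_\r$; the paper simply specialized to $z=0$ before stating it), and it makes the lemma genuinely cover the entire range $\r\in\Q^2$ claimed in the statement. The observation that $\Gamma_{\r+\m}=\Gamma_\r$ is a nice sanity check confirming the statement is well-posed, though the paper takes it for granted.
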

\begin{proof}
For all $\tau \in \HH$, \Cref{thm:thetamod} gives
\begin{align}
\theta_{\r+\m}(A\cdot\tau) &= \psi(A,\ep)^3\chi_{\r+\m}(A)\ep(\tau)\theta_{\r+\m}(\tau); \\
\theta_\r(A\cdot\tau) &= \psi(A,\ep)^3\chi_\r(A)\ep(\tau)\theta_\r(\tau).
\end{align}
Dividing, we have
\begin{equation}
\frac{\theta_{\r+\m}(A\cdot\tau)}{\theta_\r(A\cdot\tau)} = \frac{\chi_{\r+\m}(A)}{\chi_\r(A)} \cdot \frac{\theta_{\r+\m}(\tau)}{\theta_\r(\tau)}.
\end{equation}
By \Cref{prop:thetacharshift}, $\frac{\theta_{\r+\m}(A\cdot\tau)}{\theta_\r(A\cdot\tau)} = \ee{-m_1\left(r_2+\foh\right)} = \frac{\theta_{\r+\m}(\tau)}{\theta_\r(\tau)}$. Therefore $\chi_{\r+\m}(A) = \chi_{\r}(A)$.
\end{proof}

\begin{rmk}
While the metaplectic transformation law for the modular theta nulls with rational characters is sufficient for proving the properties of the (multiplicative) Shintani--Faddeev modular cocycle, a similar study of additive cocycles would require working out the transformation law for $\log \theta_\r$ under a corresponding congruence subgroup $\widetilde\Gamma_\r$ of $\widetilde{\SL_2(\Z)}$.
\end{rmk}

\subsection{The Jacobi triple product formula}

The Jacobi triple product formula relates a certain $q$-series to a certain infinite product and can be seen as an expression for a Jacobi theta function in terms of infinite $q$-Pochhammer symbols. Beyond its importance in the theory of modular forms, this ubiquitous formula has Lie theoretic, probabilistic, and physical interpretations, among others \cite{macdonald,bb,bfj}.  It is commonly stated as follows.
\begin{thm}\label{thm:jacprodoriginal}
If $w,q \in \C$ with $\abs{q} < 1$, then
\begin{equation}\label{eq:jacprod1}
\sum_{n=-\infty}^\infty w^{n}q^{n^2} = \prod_{k=1}^{\infty} \left(1-q^{2k}\right)\left(1+wq^{2k-1}\right)\left(1+w^{-1}q^{2k-1}\right).
\end{equation}
\end{thm}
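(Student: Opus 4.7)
The plan is to follow Jacobi's classical functional-equation argument. Fix $q \in \C$ with $|q| < 1$ and regard both sides as holomorphic functions of $w \in \C^{\times}$. The sum $L(w) := \sum_{n \in \Z} w^n q^{n^2}$ converges on $\C^{\times}$ because $|q|^{n^2}$ decays super-exponentially and dominates $|w|^n$; the product $R(w) := \prod_{k=1}^\infty(1-q^{2k})(1+wq^{2k-1})(1+w^{-1}q^{2k-1})$ converges uniformly on compacta of $\C^{\times}$.

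The first key step is to verify that both sides satisfy the same functional equation
\begin{equation}
F(wq^2) = (wq)^{-1} F(w).
\end{equation}
For $L$, this is immediate after completing the square $n^2 + 2n = (n+1)^2 - 1$ and reindexing the sum. For $R$, shifting $k$ in each of the two infinite subproducts produces an extra factor of $(1+wq)^{-1}$ from one side and an extra factor of $(1+w^{-1}q^{-1})$ from the other, so
\begin{equation}
\frac{R(wq^2)}{R(w)} = \frac{1 + w^{-1}q^{-1}}{1+wq} = (wq)^{-1}.
\end{equation}
Next, I would show that $f(w) := L(w)/R(w)$ extends to a holomorphic function on all of $\C^{\times}$ by checking that $L$ vanishes at each simple zero $w = -q^{\pm(2k-1)}$ of $R$: the identity $L(-q) = \sum_n (-1)^n q^{n(n+1)} = 0$ follows from the involution $n \mapsto -n-1$, and $L(-q^{-1}) = 0$ from $n \mapsto 1-n$, and the functional equation propagates these vanishings to all the remaining required points. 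The unique Laurent expansion $f(w) = \sum_n c_n w^n$ on $\C^{\times}$ combined with the invariance $f(wq^2) = f(w)$ then forces $c_n(q^{2n} - 1) = 0$, so $c_n = 0$ for $n \neq 0$, and $f(w)$ equals a constant $c(q)$ in $w$.

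The main obstacle will be pinning down the normalization $c(q) = 1$. The plan is to invoke Euler's $q$-binomial identity $\prod_{k \geq 1}(1 + xq^{2k-1}) = \sum_{n \geq 0} x^n q^{n^2}/(q^2;q^2)_n$, which can itself be established by the same functional-equation technique applied to a single product. Multiplying the expansions of $\prod_k(1+wq^{2k-1})$ and $\prod_k(1+w^{-1}q^{2k-1})$ and extracting the $w^0$ coefficient reduces the desired claim $c(q) = 1$ to the Durfee-square identity
\begin{equation}
\sum_{n \geq 0} \frac{q^{2n^2}}{(q^2;q^2)_n^2} = \frac{1}{(q^2;q^2)_\infty},
\end{equation}
which I would prove bijectively: each integer partition decomposes uniquely into its $s \times s$ Durfee square, a partition with parts of size $\leq s$ below the square, and a partition with $\leq s$ parts to the right of the square, giving the generating function $q^{2s^2}/(q^2;q^2)_s^2$ for fixed $s$.
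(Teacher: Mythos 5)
The paper does not prove this theorem itself; it cites Rademacher's \emph{Topics in Analytic Number Theory}, p.~231, (100.1), so there is no ``paper proof'' to compare against in a literal sense. Your argument is a correct and complete self-contained proof via the classical functional-equation method, and I could not find a gap in it. The three pillars are all sound: the verification that $L$ and $R$ both satisfy $F(wq^2)=(wq)^{-1}F(w)$ (for $R$ this follows from shifting the index in each of the two $w$-dependent subproducts, yielding the compensating factors $(1+wq)^{-1}$ and $(1+w^{-1}q^{-1})$ whose product is $(wq)^{-1}$); the vanishing of $L$ at $-q$ and $-q^{-1}$ by the involutions $n\mapsto -n-1$ and $n\mapsto 1-n$, propagated by the functional equation to all zeros $-q^{\pm(2k-1)}$ of $R$, which makes $f=L/R$ holomorphic on $\C^\times$; and the elliptic-function argument that a $q^2$-periodic holomorphic function on $\C^\times$ is constant, since $c_n(q^{2n}-1)=0$ forces $c_n=0$ for $n\neq 0$ when $0<|q|<1$. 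Your normalization step via Euler's identity $\prod_{k\geq 1}(1+xq^{2k-1})=\sum_{n\geq 0}x^nq^{n^2}/(q^2;q^2)_n$ (itself provable by the same one-variable recursion $a_n(1-q^{2n})=a_{n-1}q^{2n-1}$) followed by extraction of the $w^0$ coefficient and the Durfee-square identity $\sum_{s\geq 0}Q^{s^2}/(Q;Q)_s^2=1/(Q;Q)_\infty$ with $Q=q^2$ is correct, if somewhat elaborate; some expositions instead fix the constant by specializing $w$ (e.g.\ setting $w=q$ and comparing with a known pentagonal-number or Gauss identity), or by observing a cancellation as $q\to 0$. Rademacher's own proof in the cited source proceeds differently (via a limiting argument from finite Gaussian-polynomial products), so what you have is a genuine alternative route rather than a reconstruction of the reference.
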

\begin{proof}
See \cite[p.\ 231, (100.1)]{rademacher}.
\end{proof}
After a change of variables, the Jacobi triple product formula is equivalent to the following.
\begin{prop}\label{prop:jacprod2}
If $z \in \C$ and $\tau \in \HH$, then
\begin{equation}\label{eq:jacprod2}
\varpi(z,\tau)\varpi(-z,\tau) =
-i \ee{-\tfrac{\tau}{12}}\left(\ee{\tfrac{z}{2}}-\ee{-\tfrac{z}{2}}\right) \frac{\vartheta_1(z,\tau)}{\eta(\tau)}.
\end{equation}
\end{prop}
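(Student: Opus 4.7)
The plan is to derive this identity directly from the classical Jacobi triple product formula (Theorem \ref{thm:jacprodoriginal}) by an explicit change of variables, rearranging the resulting product form of $\vartheta_1(z,\tau)$ into a ratio involving two copies of $\varpi$. Set $q = e^{\pi i \tau}$ and substitute into \eqref{eq:jacprod1}; after the shift $w \mapsto -w q$ (or equivalently working with $w = -\ee{z}$ after the substitution $q \to q^{1/2}$), the standard manipulation produces the product formula
\begin{equation}
\vartheta_1(z,\tau) = 2 q^{1/4} \sin(\pi z) \prod_{k=1}^\infty (1-q^{2k})\bigl(1 - 2q^{2k}\cos(2\pi z) + q^{4k}\bigr).
\end{equation}
I would verify the prefactors by comparing the $n=0$ and $n=-1$ coefficients in the defining series of $\vartheta_1$ from \Cref{defn:thetachar} (with $\r = \mathbf{0}$) against the constant and linear terms of the right-hand side in $\zeta = e^{2\pi i z}$.

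Next I would rewrite the three factors in the product in $\varpi$-language. Observing that $1 - 2q^{2k}\cos(2\pi z) + q^{4k} = (1-\ee{z}\ee{k\tau})(1-\ee{-z}\ee{k\tau})$ and applying \Cref{lem:ell} to shift the starting index from $k = 1$ to $k = 0$,
\begin{align}
\prod_{k=1}^\infty (1 - \ee{z}\ee{k\tau}) &= \frac{\varpi(z,\tau)}{1-\ee{z}}, \\
\prod_{k=1}^\infty (1 - \ee{-z}\ee{k\tau}) &= \frac{\varpi(-z,\tau)}{1-\ee{-z}},
\end{align}
while $\prod_{k=1}^\infty(1 - q^{2k}) = \prod_{k=1}^\infty(1 - \ee{k\tau}) = \ee{-\tau/24}\eta(\tau) = q^{-1/12}\eta(\tau)$.

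Substituting these into the product formula for $\vartheta_1$, combining the prefactors $q^{1/4} \cdot q^{-1/12} = q^{1/6} = \ee{\tau/12}$, and rewriting $2\sin(\pi z) = -i(\ee{z/2} - \ee{-z/2})$, one obtains
\begin{equation}
\vartheta_1(z,\tau) = -i\,\ee{\tau/12}\,\bigl(\ee{z/2}-\ee{-z/2}\bigr)\,\eta(\tau) \cdot \frac{\varpi(z,\tau)\,\varpi(-z,\tau)}{(1-\ee{z})(1-\ee{-z})}.
\end{equation}
The final step is the algebraic identity $(1-\ee{z})(1-\ee{-z}) = -(\ee{z/2}-\ee{-z/2})^2$, which cancels one of the sine factors and inverts the overall sign; solving for $\varpi(z,\tau)\varpi(-z,\tau)$ yields \eqref{eq:jacprod2}.

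The main obstacle is genuinely just bookkeeping: getting the fractional powers of $q$ right (the $\ee{-\tau/12}$ prefactor), carefully tracking the sign from $(1-\ee{z})(1-\ee{-z}) = -(\ee{z/2}-\ee{-z/2})^2$, and matching the convention in \Cref{defn:thetachar} (where $\vartheta_1 = -\vartheta_{\mathbf{0}}$) so that the overall sign of the product formula for $\vartheta_1$ is correctly fixed. Once these are settled, the proof is a direct computation.
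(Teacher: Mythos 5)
Your proposal is correct, and it is essentially the same argument as the paper's: both substitute into the classical Jacobi triple product (\Cref{thm:jacprodoriginal}) with $q = \ee{\tau/2}$ and then reorganize the resulting infinite product in terms of $\varpi(\pm z,\tau)$ and $\eta(\tau)$. The only cosmetic difference is that you route through the classical sine product formula $\vartheta_1(z,\tau) = 2q^{1/4}\sin(\pi z)\prod_{k\geq1}(1-q^{2k})(1-2q^{2k}\cos 2\pi z + q^{4k})$ as an explicit intermediate step, whereas the paper plugs $w = -\ee{z-\tau/2}$ directly into both sides of \eqref{eq:jacprod1} and matches the two resulting expressions without isolating that formula; your bookkeeping (the identity $(1-\ee{z})(1-\ee{-z}) = -(\ee{z/2}-\ee{-z/2})^2$ and the $q^{1/4}\cdot q^{-1/12} = \ee{\tau/12}$ combination) checks out.
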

\begin{proof}
In \Cref{thm:jacprodoriginal}, take $w = \ee{z - \tfrac{\tau}{2} + \foh} = -\ee{z - \tfrac{\tau}{2}}$ and $q = \ee{\tfrac{\tau}{2}}$. Then, the left-hand side of \eqref{eq:jacprod1} becomes
\begin{align}
\sum_{n=-\infty}^\infty w^{n}q^{n^2}
&= \sum_{n=-\infty}^\infty \ee{nz-\tfrac{n}{2}\tau+\tfrac{n}{2}+\tfrac{n^2}{2}\tau} \\
&= \ee{-\tfrac{\tau}{8}+\foh\left(z+\foh\right)} \sum_{n=-\infty}^\infty \ee{\foh\left(n-\foh\right)^2\tau + \left(n-\foh\right)\left(z+\foh\right)} \\
&= -i\ee{-\tfrac{\tau}{8}+\tfrac{z}{2}} \vartheta_1(z,\tau). \label{eq:jacprodpartone}
\end{align}
The right-hand side of \eqref{eq:jacprod1} becomes
\begin{align}
&\prod_{k=1}^{\infty} \left(1-q^{2k}\right)\left(1+wq^{2k-1}\right)\left(1+w^{-1}q^{2k-1}\right) \\
&= \prod_{k=1}^{\infty} \left(1-\ee{k\tau}\right) \left(1-\ee{z-\tfrac{\tau}{2}+(2k-1)\tfrac{\tau}{2})}\right) \left(1-\ee{-z+\tfrac{\tau}{2}+(2k-1)\tfrac{\tau}{2})}\right) \\
&= 
\prod_{k=1}^{\infty} \left(1-\ee{k\tau}\right) \times
\prod_{k=1}^{\infty} \left(1-\ee{z+(k-1)\tau)}\right) \times 
\prod_{k=1}^{\infty} \left(1-\ee{-z+(k+1)\tau)}\right) \\
&= 
\left(\frac{\eta(\tau)}{\ee{\tfrac{\tau}{24}}}\right)
\left(\varpi(z,\tau)\right)
\left(\frac{\varpi(-z,\tau)}{1-\ee{-z}}\right) \\
&= \frac{\ee{-\tfrac{\tau}{24} + \tfrac{z}{2}}}{\ee{\tfrac{z}{2}}-\ee{-\tfrac{z}{2}}}\eta(\tau)\varpi(z,\tau)\varpi(-z,\tau). \label{eq:jacprodparttwo}
\end{align}
Equating \eqref{eq:jacprodparttwo} and \eqref{eq:jacprodpartone} and multiplying/dividing/rearranging factors, we get \eqref{eq:jacprod2}.
\end{proof}
The following version of the triple product formula incorporates ``characteristics'' $\r$ and will be used in \Cref{sec:further} to prove important identities for the Shintani--Faddeev cocycles.
\begin{prop}\label{prop:jacprod3}
If $\r \in \R^2$, $z \in \C$, and $\tau \in \HH$, let
\begin{equation}
\varpi_\r(z,\tau) = \varpi(z + \sympt{\r}{\tau},\tau) = \prod_{k=0}^\infty \left(1-\ee{z+(k+r_2)\tau-r_1}\right). 
\end{equation}
Then
\begin{align}
\varpi_\r(z,\tau)\varpi_{-\r}(-z,\tau)
&= i \ee{-\left(\tfrac{r_2^2}{2}+\tfrac{1}{12}\right)\tau-r_2\left(z-r_1+\foh\right)} \\
& \ \ \ \times \left(\ee{\tfrac{z+r_2\tau-r_1}{2}}-\ee{\tfrac{-z-r_2\tau+r_1}{2}}\right) \frac{\vartheta_\r(z,\tau)}{\eta(\tau)}.
\label{eq:jacprod3}
\end{align}
\end{prop}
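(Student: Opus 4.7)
The plan is to derive this characteristics version directly from the ``ordinary'' Jacobi triple product of \Cref{prop:jacprod2} by means of a simple change of variables. Set $w = z + \sympt{\r}{\tau} = z + r_2\tau - r_1$. Then by the definition of $\varpi_\r$, we immediately have $\varpi_\r(z,\tau) = \varpi(w,\tau)$, and since $\sympt{-\r}{\tau} = -\sympt{\r}{\tau}$, we also have $\varpi_{-\r}(-z,\tau) = \varpi(-z-r_2\tau+r_1,\tau) = \varpi(-w,\tau)$. Thus the left-hand side of \eqref{eq:jacprod3} is exactly $\varpi(w,\tau)\varpi(-w,\tau)$.

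Next, I apply \Cref{prop:jacprod2} in the variable $w$, obtaining
\begin{equation}
\varpi(w,\tau)\varpi(-w,\tau) = -i\ee{-\tfrac{\tau}{12}}\!\left(\ee{\tfrac{w}{2}}-\ee{-\tfrac{w}{2}}\right)\frac{\vartheta_1(w,\tau)}{\eta(\tau)}.
\end{equation}
It remains to rewrite $\vartheta_1(w,\tau) = \vartheta_1(z+r_2\tau-r_1,\tau)$ in terms of $\vartheta_\r(z,\tau)$. But this is exactly the content of the second equality in \Cref{defn:thetachar}, which (after solving for $\vartheta_1$) gives
\begin{equation}
\vartheta_1(z+r_2\tau-r_1,\tau) = -\ee{-\tfrac{r_2^2}{2}\tau - r_2\!\left(z-r_1+\tfrac{1}{2}\right)}\vartheta_\r(z,\tau).
\end{equation}

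Substituting this into the previous display and combining the two exponential prefactors into $\ee{-\left(\tfrac{r_2^2}{2}+\tfrac{1}{12}\right)\tau - r_2(z-r_1+\tfrac{1}{2})}$ — while the two minus signs cancel to give the $+i$ in front — yields exactly \eqref{eq:jacprod3}. There is no real obstacle: the entire argument is the substitution $z \mapsto z+\sympt{\r}{\tau}$ in \Cref{prop:jacprod2} followed by the definitional identity relating $\vartheta_\r$ to $\vartheta_1$. The only thing to be careful about is bookkeeping of the signs and of the exponential factor coming from \Cref{defn:thetachar}, which I would verify by writing out the two exponentials side-by-side to confirm they combine as claimed.
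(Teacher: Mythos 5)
Your proposal is correct and is essentially identical to the paper's own proof: both make the substitution $z \mapsto z + \sympt{\r}{\tau}$ in \Cref{prop:jacprod2} and then invoke the second equality in \Cref{defn:thetachar} to replace $\vartheta_1(z+r_2\tau-r_1,\tau)$ with the corresponding multiple of $\vartheta_\r(z,\tau)$, after which the sign cancellation and exponent bookkeeping give \eqref{eq:jacprod3}.
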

\begin{proof}
Make the substitution $z \mapsto z+r_2\tau-r_1$ in \Cref{prop:jacprod2}. We have
\begin{equation}
\varpi_\r(z,\tau)\varpi_{-\r}(-z,\tau) =
-i \ee{-\tfrac{\tau}{12}}\left(\ee{\tfrac{z+\sympt{\r}{\tau}}{2}}-\ee{-\tfrac{z+\sympt{\r}{\tau}}{2}}\right) \frac{\vartheta_1(z+\sympt{\r}{\tau},\tau)}{\eta(\tau)}.
\end{equation}
Using the relation $\vartheta_\r(z,\tau) = - \ee{\foh r_2^2\tau+r_2(z-r_1+\foh)}\vartheta_1(z+\sympt{\r}{\tau},\tau)$, 
\begin{align}
\varpi_\r(z,\tau)\varpi_{-\r}(-z,\tau)
&= i \ee{-\tfrac{\tau}{12}}\left(\ee{\tfrac{z+r_2\tau-r_1}{2}}-\ee{\tfrac{-z-r_2\tau+r_1}{2}}\right) \\
& \ \ \ \times \ee{-\foh r_2^2\tau-r_2(z-r_1+\foh)} \frac{\vartheta_\r(z,\tau)}{\eta(\tau)}, 
\end{align}
which can be algebraically simplified to \eqref{eq:jacprod3}.
\end{proof}

\subsection{Modular specializations of the $q$-Pochhamer symbol}\label{sec:modspecial}

The following formulas can be proven directly by manipulation of the infinite products. Taking $q=\ee{\tau}$:
\begin{align}
\varpi_{\tiny\smcoltwo{0}{1}}(\tau) &= (q,q)_\infty &&= q^{-1/24} \eta(\tau); \\
\varpi_{\tiny\smcoltwo{0}{1/2}}(\tau) &= \frac{(q^{1/2},q^{1/2})_\infty}{(q,q)_\infty} &&= q^{1/48} \frac{\eta(\tau/2)}{\eta(\tau)}; \\
\varpi_{\tiny\smcoltwo{1/2}{1}}(\tau) &= \frac{(q^{2},q^{2})_\infty}{(q,q)_\infty} &&= q^{-1/24} \frac{\eta(2\tau)}{\eta(\tau)}; \\
\varpi_{\tiny\smcoltwo{1/2}{1/2}}(\tau) &= \frac{(-q^{1/2},-q^{1/2})_\infty}{(q,q)_\infty} &&= \zeta_{48}q^{1/48} \frac{\eta((\tau+1)/2)}{\eta(\tau)}. \label{eq:modspecial}
\end{align}
Thus, we see that $\varpi_\r(\tau)$ is a weak modular function on $\Gamma(2)$ with character when $\r \in \foh\Z^2$. Conversely, when $\r \nin \foh\Z^2$, $\varpi_\r(\tau)$ is not modular (although we haven't proven non-modularity rigorously). Nonetheless, $\varpi_\r(\tau)$ satisfies a more complicated modular-like property, which will be examined in \Cref{sec:wannabe}.

\section{On moduli spaces of ray class data}\label{sec:moduli}

It will be helpful to have an interpretation of the ``ray class data'' determining a Stark unit as a point on some continuous ``moduli space.'' We use the phrase ``moduli space'' in a loose sense. In particular, our moduli space will be described as the quotient of $(\R/\Z)^2 \times (\R \setminus \Q)$ by an action of discrete group that is not properly discontinuous. Ray class data will correspond to the dense set of points $(\Q/\Z)^2 \times \Rquad$.

\subsection{Ray class groups and ray class fields of orders}\label{sec:rayclassorder}

We briefly review some definitions and results from \cite{kopplagarias} that generalize standard results about ray class groups and ray class fields to non-maximal orders. Let $\OO$ be an order in a number field $F$, let $\mm$ be an ideal in $\OO$, and let $\rS$ be a subset of the real embeddings of $F$.
\begin{defn}
The \textit{ray class group of the order $\OO$ modulo $(\mm, \rS)$} is
\begin{equation}
\Cl_{\mm,\rS}(\OO) = \frac{\rJ_{\mm}^\ast(\OO)}{\rP_{\mm,\rS}(\OO)},
\end{equation}
where
\begin{align}
\rJ_{\mm}^\ast(\OO) &= \{\mbox{invertible fractional ideals of $\OO$ coprime to $\mm$}\}, \mbox{ and} \\
\rP_{\mm,\rS}(\OO) &= \{\alpha\OO \mbox{ such that } \alpha \con 1 \Mod{\mm} \mbox{ and } \rho(\alpha)>0 \mbox{ for } \rho \in \rS\}.
\end{align}
\end{defn}

The study of the structure of ray class groups naturally leads to the study of certain groups of units satisfying congruence conditions.
\begin{defn}\label{defn:ugroups}
For a commutative ring with unity $R$ and an ideal $I$ of $R$, define the group
\begin{equation}
\U_I(R) := \{\alpha \in R^\times : \alpha \equiv 1 \bmod{I}\} = (1+I) \cap R^\times.
\end{equation}

If $R$ has real embeddings and $\rS$ is a subset of the real embeddings of $R$, define
\begin{equation}
\U_{I,\rS}(R) := \{\alpha \in R^\times : \alpha \equiv 1 \bmod{I} \mbox{ and } \rho(\alpha)>0 \mbox{ for } \rho \in \rS\}.
\end{equation}
\end{defn}

A key theorem of \cite{kopplagarias} relates different class groups of orders to each other via a surjective map whose quotient is described using the $\U$-groups.
\begin{thm}\label{thm:exseq}
Let $F$ be a number field and $\OO \subseteq \OO' \subseteq \OO_F$ be orders of $F$.
Let $\mm$ be an ideal of $\OO$, $\mm'$ an ideal of $\OO'$ such that $ \mm\OO' \subseteq \mm'$, and $\rS' \subseteq \rS \subseteq \{\mbox{embeddings } F \inj \R\}$. 
Let $\dd$ be any $\OO'$-ideal such that $\dd \subseteq \colonideal{\mm}{\OO'}$.
We have the following exact sequence.
\begin{equation}\label{eq:exseq}
1 \to \frac{\U_{\mm',\rS'}\!\left(\OO'\right)}{\U_{\mm,\rS}\!\left(\OO\right)} \to \frac{\U_{\mm'}\!\left(\OO'/\dd\right)}{\U_{\mm}\!\left(\OO/\dd\right)} \times \{\pm 1\}^{|\rS \setminus \rS'|} \to \Cl_{\mm,\rS}(\OO) \to \Cl_{\mm',\rS'}(\OO') \to 1.
\end{equation}
\end{thm}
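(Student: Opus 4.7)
The plan is to construct the four maps of the sequence, verify they are well-defined homomorphisms, and check exactness at each of the three interior positions together with surjectivity on the right. Throughout I will exploit the hypothesis $\dd \subseteq \colonideal{\mm}{\OO'}$, which gives $\dd = \dd\OO' \subseteq \mm \subseteq \OO$, so that $\dd$ is simultaneously an ideal of $\OO$ and $\OO'$, and the natural inclusions $\OO/\dd \hookrightarrow \OO'/\dd$ and $\U_{\mm}(\OO/\dd) \hookrightarrow \U_{\mm'}(\OO'/\dd)$ make sense.

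I would first define the maps. The rightmost map is $[\aa] \mapsto [\aa\OO']$, well-defined because $\mm \subseteq \mm\OO' \subseteq \mm'$ and $\rS' \subseteq \rS$. The leftmost map is $[\alpha] \mapsto \bigl(\overline{\alpha \bmod \dd},\, (\sgn(\rho(\alpha)))_{\rho \in \rS \setminus \rS'}\bigr)$, manifestly trivial on $\U_{\mm,\rS}(\OO)$. The middle map is the subtlest: given $(\overline{\alpha},\epsilon)$, lift $\overline{\alpha}$ to some $\alpha \in \OO'$ with $\alpha \equiv 1 \pmod{\mm'}$, then use weak approximation to replace $\alpha$ by $\alpha u$ with $u \in \U_{\mm'}(\OO')$ so that $\alpha u\OO' = \aa\OO'$ for some invertible $\OO$-ideal $\aa \in \rJ_{\mm}^{\ast}(\OO)$ realizing the signs $\epsilon$ at places in $\rS \setminus \rS'$, and send $(\overline{\alpha},\epsilon) \mapsto [\aa]$.

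For exactness, surjectivity on the right is the classical lifting statement that every invertible $\OO'$-ideal coprime to $\mm'$ equals $\aa\OO'$ for some $\aa \in \rJ_{\mm}^{\ast}(\OO)$, via an approximation argument using the conductor bound. Exactness at $\Cl_{\mm,\rS}(\OO)$ says $[\aa\OO']=1$ in $\Cl_{\mm',\rS'}(\OO')$ iff $\aa\OO' = \alpha\OO'$ for some $\alpha \in \OO'$ with $\alpha \equiv 1 \pmod{\mm'}$ and $\rho(\alpha)>0$ on $\rS'$, and such $\alpha$ is precisely a middle-map preimage of $[\aa]$. Exactness at the middle term: $(\overline{\alpha},\epsilon) \mapsto [\aa]$ is trivial iff $\aa = \beta\OO$ for some $\beta \in \U_{\mm,\rS}(\OO)$, in which case $\alpha\beta^{-1} \in \U_{\mm',\rS'}(\OO')$ has leftmost-image $(\overline{\alpha},\epsilon)$. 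Injectivity on the left uses the conductor bound: if $\alpha \equiv \beta \pmod{\dd}$ for $\beta \in \U_{\mm,\rS}(\OO)$ with matching signs at $\rS \setminus \rS'$, then $\alpha\beta^{-1} - 1 \in \dd\beta^{-1} \subseteq \dd \subseteq \mm \subseteq \OO$ and similarly for its inverse, placing $\alpha\beta^{-1}$ in $\U_{\mm,\rS}(\OO)$.

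The main obstacle is the construction of the middle map: showing weak approximation in $\OO'$ is strong enough to simultaneously achieve $\OO$-principality of $\alpha u\OO'$ and to prescribe arbitrary sign vectors in $\{\pm 1\}^{|\rS \setminus \rS'|}$, and then verifying that the resulting class $[\aa]$ depends only on $(\overline{\alpha},\epsilon)$ modulo the image of $\U_{\mm}(\OO/\dd)$. The conductor hypothesis $\dd \subseteq \colonideal{\mm}{\OO'}$ is essential here, since it ensures that modifying $\alpha$ by a factor in $1 + \dd$ does not take us out of $\OO$, so that all comparisons take place within $\rJ_{\mm}^{\ast}(\OO)$.
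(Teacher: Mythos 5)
The paper does not actually prove \Cref{thm:exseq}; the ``proof'' in the source is a one-line citation to Theorem~6.5 of \cite{kopplagarias}. So there is no in-paper argument to compare against, and your sketch has to be judged on its own.

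The global structure of your argument (construct four maps, verify exactness) is the natural one, and your treatment of left-exactness is essentially correct: the computation showing $\alpha\beta^{-1} - 1 \in \dd\beta^{-1} = \dd \subseteq \mm$ (using $\beta^{-1} \in \OO'^\times$) does place $\alpha\beta^{-1}$ in $\U_{\mm,\rS}(\OO)$. But there are two points where the sketch glosses over exactly the content of the theorem.

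First, the right-hand surjectivity is misstated. It is not true that every invertible $\OO'$-ideal coprime to $\mm'$ has the form $\aa\OO'$ with $\aa \in \rJ_{\mm}^{\ast}(\OO)$; the extension map $\rJ_{\mm}^{\ast}(\OO) \to \rJ_{\mm'}^{\ast}(\OO')$ is not surjective in general, because its image consists of ideals coprime to $\mm\OO'$, and more importantly the descent from $\OO'$-ideals to $\OO$-ideals is a bijection only on ideals coprime to the relative conductor $\ff = \colonideal{\OO}{\OO'}$. What is true, and what you need, is that every \emph{class} in $\Cl_{\mm',\rS'}(\OO')$ has a representative coprime to $\ff\mm'$ (a weak approximation argument), that such a representative descends uniquely to an $\OO$-ideal coprime to $\ff$, and that one must then separately check coprimality to $\mm$. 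The hypothesis $\dd \subseteq \colonideal{\mm}{\OO'}$ is not by itself the conductor bound that makes the descent work; it controls the relationship between $\dd$ and $\mm$, but one still needs the standard bijection between invertible $\OO$- and $\OO'$-ideals coprime to $\ff$ as a separate ingredient, and you never invoke it.

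Second, the middle map is where this really bites. You write $\alpha u\OO' = \aa\OO'$ for some $\aa \in \rJ_{\mm}^{\ast}(\OO)$, but $\alpha u\OO'$ is a principal $\OO'$-ideal, and there is no $\aa$ at all unless $\alpha u\OO'$ lies in the image of the extension map; since $\alpha u$ is a unit of $\OO'$, the only candidate is $\aa = \OO$, which makes the map trivial. The correct construction has to use the descent at the level of \emph{ideals of the ray class}, not principal $\OO'$-ideals directly: lift $\overline\alpha$ to $\alpha \in 1+\mm'$, use weak approximation to choose a representative of the kernel class in $\rJ_{\mm\ff}^{\ast}(\OO)$ mapping to $\alpha\OO'$ in $\Cl_{\mm',\rS'}(\OO')$, and send $(\overline\alpha,\epsilon)$ to that class twisted by the sign data. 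You correctly flag well-definedness as a concern, but the descent mechanism itself is the real content, and the sketch does not contain it. I would suggest writing out the two short exact sequences defining $\Cl_{\mm,\rS}(\OO)$ and $\Cl_{\mm',\rS'}(\OO')$, identifying the vertical extension maps, establishing the $\OO\leftrightarrow\OO'$ ideal bijection away from the conductor as a lemma, and then chasing the (non-surjective on the middle column) diagram directly rather than relying on a vague appeal to weak approximation.
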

\begin{proof}
See \cite[Thm.\ 6.5]{kopplagarias}.
\end{proof}

To the ray class group of an order, there is associated a \textit{ray class field of the order}, denoted $H_{\mm,\rS}^\OO$. Some of its important properties are summarized by the following theorem.
\begin{thm}\label{thm:main1} 
Let $F$ be a number field, $\OO$ an order of $F$, $\mm$ an ideal of $\OO$, and $\rS$ a subset 
of the set of real embeddings of $F$. 
Then there exists a unique abelian Galois extension 
$H_{\mm,\rS}^{\OO}/F$ with the property that a prime ideal $\pp$ of $\OO_F$ that is 
coprime to the quotient ideal $\colonideal{\mm}{\OO_F}$ 
splits completely in $H_{\mm,\rS}^{\OO}/F$ if and only if $\pp \cap \OO = \pi\OO$, a principal prime $\OO$-ideal having $\pi \in \OO$ with $\pi \equiv 1 \Mod{\mm}$ and $\rho(\pi)>0$ for $\rho \in \rS$.

Additionally, these fields have the following properties:
\begin{itemize}
\item $H_{\mm\OO_F,\rS}^{\OO_F} \subseteq H_{\mm,\rS}^{\OO} \subseteq H_{\colonideal{\mm}{\OO_F},\rS}^{\OO_F}$.
\item There is a canonical isomorphism $\Art_{\OO} : \Cl_{\mm, \rS}(\OO) \to \Gal\!\left(H_{\mm,\rS}^{\OO}/F\right)$.
\end{itemize}
\end{thm}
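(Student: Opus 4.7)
The plan is to reduce to classical class field theory for the maximal order $\OO_F$ and then descend along a surjection of ray class groups. Uniqueness is the easy part: any two finite abelian extensions of $F$ in which the same set of primes of $\OO_F$ (a set having well-defined density when one quotients by the splitting condition) splits completely must coincide by the Chebotarev density theorem, since their compositum over $F$ has the same splitting set, and the Galois group of a finite abelian extension is determined by the set of primes that split.

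For existence, I would start from the classical ray class field $K := H_{\colonideal{\mm}{\OO_F},\rS}^{\OO_F}$ of $\OO_F$, whose Galois group over $F$ is canonically identified with $\Cl_{\colonideal{\mm}{\OO_F},\rS}(\OO_F)$ by the usual Artin map $\Art_{\OO_F}$. The next step is to construct a surjection
\begin{equation}
\pi : \Cl_{\colonideal{\mm}{\OO_F},\rS}(\OO_F) \twoheadrightarrow \Cl_{\mm,\rS}(\OO).
\end{equation}
Since an invertible $\OO_F$-ideal coprime to $\colonideal{\mm}{\OO_F}$ contracts to an invertible $\OO$-ideal coprime to $\mm$ (this uses the standard extension/contraction bijection for ideals coprime to the conductor of $\OO \subseteq \OO_F$), this gives a well-defined homomorphism on generators, and checking that principal ideals with $\alpha \equiv 1 \pmod{\colonideal{\mm}{\OO_F}}$ and $\rho(\alpha)>0$ for $\rho \in \rS$ map to trivial classes in $\Cl_{\mm,\rS}(\OO)$ is immediate because such $\alpha$ lie in $\U_{\mm,\rS}(\OO)$. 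Surjectivity amounts to the classical fact that every invertible $\OO$-ideal class coprime to $\mm$ has a representative extending to a (coprime) $\OO_F$-ideal, which one proves by a weak approximation argument.

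Now I would define $H_{\mm,\rS}^{\OO}$ to be the fixed field inside $K$ of $\Art_{\OO_F}(\ker \pi)$. The Galois group $\Gal(H_{\mm,\rS}^\OO/F)$ is then canonically the quotient $\Cl_{\colonideal{\mm}{\OO_F},\rS}(\OO_F)/\ker\pi \cong \Cl_{\mm,\rS}(\OO)$, which defines $\Art_{\OO}$ and establishes the claimed canonical isomorphism. The splitting criterion follows by transporting the classical criterion for $K$: a prime $\pp$ of $\OO_F$ coprime to $\colonideal{\mm}{\OO_F}$ splits completely in $H_{\mm,\rS}^\OO$ if and only if $\Art_{\OO_F}(\pp) \in \ker\pi$, if and only if $\pi([\pp]) = 1$ in $\Cl_{\mm,\rS}(\OO)$, if and only if $\pp \cap \OO$ is principal with a generator $\pi \equiv 1 \pmod{\mm}$ satisfying the sign conditions; one must check the last equivalence carefully using the definition of $\pi$.

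For the containments, the exact sequence in \Cref{thm:exseq}, applied twice, provides the needed surjections of ray class groups, which translate by the (now established) Artin maps into inclusions of fixed fields: the surjection $\Cl_{\colonideal{\mm}{\OO_F},\rS}(\OO_F) \twoheadrightarrow \Cl_{\mm,\rS}(\OO)$ just constructed gives $H_{\mm,\rS}^\OO \subseteq H_{\colonideal{\mm}{\OO_F},\rS}^{\OO_F}$, and applying \Cref{thm:exseq} with $\OO' = \OO_F$ and $\mm' = \mm\OO_F$ gives the surjection $\Cl_{\mm,\rS}(\OO) \twoheadrightarrow \Cl_{\mm\OO_F,\rS}(\OO_F)$, yielding $H_{\mm\OO_F,\rS}^{\OO_F} \subseteq H_{\mm,\rS}^\OO$. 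The main obstacle throughout is bookkeeping around the conductor: one must consistently verify that all constructed maps respect the congruence-mod-$\mm$ conditions even though the ambient class field theory is formulated mod $\colonideal{\mm}{\OO_F}$, and that the $\U$-group kernels appearing in \Cref{thm:exseq} are correctly accounted for. This is where the technical work of \cite{kopplagarias} is concentrated, and it is what makes the statement genuinely more than a formal consequence of classical class field theory for $\OO_F$.
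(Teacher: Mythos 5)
The paper's proof of this theorem is simply a citation to \cite{kopplagarias}, so there is no in-paper argument to compare against directly; your reconstruction follows the strategy of that reference---pass to the classical ray class field $H_{\colonideal{\mm}{\OO_F},\rS}^{\OO_F}$, construct a surjection of ray class groups via contraction of ideals coprime to $\colonideal{\mm}{\OO_F}$ (hence coprime to the conductor), and take the fixed field of the Artin image of the kernel---and it is essentially correct. One small slip worth flagging: in the well-definedness check you say such $\alpha$ ``lie in $\U_{\mm,\rS}(\OO)$,'' but $\alpha$ need not be a unit of $\OO$; the relevant point is that $\alpha\OO \in \rP_{\mm,\rS}(\OO)$, which does follow since $\colonideal{\mm}{\OO_F} \subseteq \mm$ forces $\alpha \equiv 1 \Mod{\mm}$ with $\alpha \in \OO$, and the sign conditions transfer unchanged.
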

\begin{proof}
See \cite[Thm.\ 1.1, Thm.\ 1.2, Thm.\ 1.3]{kopplagarias}.
\end{proof}

We provide one new proposition about ray class groups of orders that will be relevant to our moduli interpretation.
\begin{prop}\label{prop:oopiso}
Let $\OO \subseteq \OO'$ be an orders in the same number field $F$. Let $\mm$ be an $\OO'$-ideal (thus also an $\OO$-ideal) and $\rS$ a subset of the real embeddings of $F$. 
Then the map $\ol\ext = \ol\ext_{(\OO;\mm,\rS)}^{(\OO';\mm,\rS)}: \Cl_{\mm,\rS}(\OO) \to \Cl_{\mm,\rS}(\OO')$ induced by ideal extension $\ext(\aa) = \aa\OO'$ is an isomorphism.
\end{prop}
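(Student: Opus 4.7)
The plan is to derive this as a direct consequence of the exact sequence in \Cref{thm:exseq} by choosing parameters that collapse the middle of the sequence. Specifically, I would apply \Cref{thm:exseq} with $(\mm',\rS') = (\mm,\rS)$ and the auxiliary ideal $\dd = \mm$, then simply read off what remains.

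First I would verify the three hypotheses of \Cref{thm:exseq}. The containment $\mm\OO' \subseteq \mm' = \mm$ holds because $\mm$ is assumed to be an $\OO'$-ideal; in fact $\mm\OO' = \mm$. The inclusion $\rS' \subseteq \rS$ is trivial since the two sets are equal. For the auxiliary ideal $\dd = \mm$ to be admissible we need both that $\dd$ is an $\OO'$-ideal (immediate) and that $\dd \subseteq \colonideal{\mm}{\OO'}$; the latter is equivalent to $\mm \cdot \OO' \subseteq \mm$, which is again the identity $\mm\OO' = \mm$.

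With these choices made, I would then analyze the middle terms. The factor $\{\pm 1\}^{|\rS \setminus \rS'|}$ in \Cref{thm:exseq} is trivial because $\rS = \rS'$. The unit groups $\U_\mm(\OO'/\mm)$ and $\U_\mm(\OO/\mm)$ are each obtained by requiring $\alpha \equiv 1$ modulo the image of $\mm$ in the relevant quotient ring; but that image is $0$, so both groups reduce to $\{1\}$ and their quotient is trivial. The exact sequence therefore collapses to
\begin{equation*}
1 \longrightarrow 1 \longrightarrow 1 \longrightarrow \Cl_{\mm,\rS}(\OO) \xrightarrow{\ol\ext} \Cl_{\mm,\rS}(\OO') \longrightarrow 1,
\end{equation*}
which forces $\ol\ext$ to be an isomorphism. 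To close the argument I would note that the final map in \Cref{thm:exseq} is, by construction under the hypothesis $\mm\OO' \subseteq \mm'$, the natural extension map $\aa \mapsto \aa\OO'$, i.e.\ exactly $\ol\ext$.

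I do not anticipate any serious obstacle; the argument is essentially bookkeeping once one notices the fortunate choice $\dd = \mm$. The one point requiring care is interpreting the subscript in $\U_\mm(\OO'/\mm)$ and $\U_\mm(\OO/\mm)$ as the image of $\mm$ in the quotient ring (which is $0$), rather than as the original ideal $\mm$ of $\OO$ or $\OO'$; once this convention is fixed, the triviality of both groups is immediate from \Cref{defn:ugroups}.
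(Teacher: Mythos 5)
Your proof is correct and follows essentially the same approach as the paper: apply \Cref{thm:exseq} with $\dd = \mm$ and $\mm' = \mm\OO' = \mm$, and observe that $\U_{\mm}(\OO'/\mm)$ is trivial because the image of $\mm$ in $\OO'/\mm$ is zero. Your explicit verification of the hypotheses of \Cref{thm:exseq} and the identification of the final map with $\ol\ext$ are slightly more thorough than the paper's terse version, but the argument is the same.
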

\begin{proof}
By \Cref{thm:exseq}
with $\dd = \colonideal{\mm}{\OO'}=\mm$ and $\mm' = \mm\OO'=\mm$, there is an exact sequence
\begin{equation}
1
\to
\frac{\U_{\mm,\rS}(\OO')}{\U_{\mm,\rS}(\OO)}
\to 
\frac{\U_{\mm}(\OO'/\mm)}{\U_{\mm}(\OO/\mm)}
\to 
\Cl_{\mm,\rS}(\OO)
\to
\Cl_{\mm,\rS}(\OO')
\to 
1.
\end{equation}
The group $\U_{\mm}(\OO'/\mm) = \{\alpha \in (\OO'/\mm)^\times : \alpha \con 1 \Mod{\mm}\}$ is the trivial group, so $\frac{\U_{\mm}(\OO'/\mm)}{\U_{\mm}(\OO/\mm)}$ is the trivial group, and thus the map from $\Cl_{\mm,\rS}(\OO)$ to $\Cl_{\mm,\rS}(\OO')$ is an isomorphism.
\end{proof}

\subsection{The flat imprimitive ray class monoid}\label{sec:monoid}

In \cite{kopplagariasmonoids}, the present author and Lagarias describe several different \textit{ray class monoids} that extend the usual definition of the ray class group to the structure of a larger (but still finite) monoid (semigroup with identity). The present paper requires one of these constructions in particular, the \textit{\rcmia}, which extends the ray class group to include classes of ideals not coprime to the modulus.

We extend the ray class group to a monoid by extending $\rJ^\ast_\mm(\OO)$ to a large monoid $\rJf_\mm(\OO)$ still consisting of $\OO$-invertible ideals, but relaxing the condition of coprimality to $\mm$ to a condition of \textit{semilocal integrality at $\mm$}. We use the term \textit{flat} (and the corresponding musical symbol) because nonzero $\OO$-ideals are invertible if and only if they are flat as $\OO$-modules, and to avoid the ambiguity of the term ``invertible'' (as ideals in $\rJf_\mm(\OO)$ not coprime to $\mm$ are invertible as $\OO$-ideals but not invertible in the monoid $\rJf_\mm(\OO)$).
\begin{defn}
We define the following submonoid of the group of invertible ideals:
\begin{align}
\rJf_\mm(\OO) &= \{\aa \in \rJ^\ast(\OO) : \aa\OO[S_\mm^{-1}] \subseteq \OO[S_\mm^{-1}]\}.
\end{align}
The condition that $\aa\OO[S_\mm^{-1}] \subseteq \OO[S_\mm^{-1}]$ is equivalent to the condition that $\aa\OO_\pp \subseteq \OO_\pp$ for all nonzero prime ideals $\pp \subseteq \mm$; we call this condition \textit{semilocal integrality at $\mm$}.
Consider the equivalence relation $\sim_{\mm,\rS}$ on $\rJf_\mm(\OO)$ defined by
\begin{equation}
\aa \sim_{\mm,\rS} \bb \iff \begin{array}{c}\exists \cc \in \rJf_{\mm}(\OO) \mbox{ and } \alpha, \beta \in \OO[S_\mm^{-1}] \mbox{ such that } \aa = \alpha\cc, \bb = \beta\cc, \\ \alpha \equiv \beta \Mod{\mm}, \sgn(\rho(\alpha)) = \sgn(\rho(\beta)) \mbox{ for all } \rho \in \rS.\end{array}
\end{equation}
The \textit{\rcmia} is
\begin{equation}
\Clt_{\mm,\rS}(\OO) = \frac{\rJf_\mm(\OO)}{\sim_{\mm,\rS}}.
\end{equation}
Classes in the image of the map $\Cl_{\mm,\rS}(\OO) \inj \Clt_{\mm,\rS}(\OO)$ are called \textit{primitive}, and other classes are called \textit{imprimitive}.
The \textit{submonoid of zero classes} is
\begin{equation}
\ZClt_{\mm,\rS}(\OO) = \{[\dd] \in \Clt_{\mm,\rS}(\OO) : \dd \subseteq \mm\}.
\end{equation}
\end{defn}

We now show that the ray class group embeds into the flat imprimitive ray class monoid in the expected manner.
\begin{prop}
The inclusion map $\rJ_\mm^\ast(\OO) \inj \rJf_\mm(\OO)$ induces an injection of monoids $\Cl_{\mm,\rS}(\OO) \inj \Clt_{\mm,\rS}(\OO)$.
\end{prop}
\begin{proof}
Consider $\aa, \bb \in \rJ_\mm(\OO)$. We wish to show that $\aa$ is equivalent to $\bb$ in $\Cl_{\mm,\rS}(\OO)$ if and only if $\aa \sim_{\mm,\rS} \bb$.

If $\aa$ is equivalent to $\bb$ in $\Cl_{\mm,\rS}(\OO)$, then there is some $\gamma\OO \in \rP_{\mm, \rS}(\OO)$ such that $\aa = \gamma\bb$, $\gamma \equiv 1 \Mod{\mm}$, and $\rho(\gamma)>0$ for all $\rho \in \rS$. It follows that $\aa \sim_{\mm,\rS} \bb$ by taking $\cc = \bb$ and $(\alpha,\beta) = (1,\gamma)$.

Conversely, suppose $\aa \sim_{\mm,\rS} \bb$. Then there are some $\cc \in \rJf_{\mm}(\OO)$ and $\alpha, \beta \in \OO[S_\mm^{-1}]$ such that $\aa = \alpha\cc$, $\bb = \beta\cc$, $\alpha \equiv \beta \Mod{\mm}$, and $\sgn(\rho(\alpha)) = \sgn(\rho(\beta))$ for all $\rho \in \rS$. Since $\aa, \bb$ are coprime to $\mm$, it follows (from the equations $\aa=\alpha\cc$ and $\bb=\beta\cc$ and the semilocal integrality of $\alpha\OO$, $\beta\OO$, and $\cc$) that $\alpha\OO$, $\beta\OO$, and $\cc$ are also coprime to $\mm$. Thus, $\alpha\beta^{-1} \equiv 1 \Mod{\mm}$. Also, $\aa = (\alpha\beta^{-1})\bb$, and $\rho(\alpha\beta^{-1})$ for all $\rho \in \rS$, so $\aa$ and $\bb$ are equivalent in $\Cl_{\mm,\rS}(\OO)$.
\end{proof}

To facilitate describing the structure of the monoid $\Clt_{\mm,\rS}(\OO)$, we define a suitable notion of an exact sequence of commutative monoids. 
As in an exact sequence of abelian groups, we want the fibers of the latter map to be cosets of the image of the former; to guarantee this property, we impose it directly, because it is not sufficient to say that the image of the former map is the kernel of the latter.

\begin{defn}
A sequence
\begin{equation}
\cdots \to A \xrightarrow{\alpha} B \xrightarrow{\beta} C \to \cdots
\end{equation}
of commutative monoids with homomorphisms between them is \textit{exact at $B$} 
if every nonempty preimage of an element $C$ under $\beta$ is a coset of an image of $\alpha$; that is, if for all $c \in C$, either $\beta^{-1}(c) = \emptyset$, or there exists $b \in B$ such that
\begin{equation}
b\alpha(A) = \beta^{-1}(c).
\end{equation}
A sequence that is exact at all objects with an in-arrow and out-arrow is simply called \textit{exact}.
\end{defn}

We prove a proposition ``resolving'' the map from $\Clt_{\mm,\rS}(\OO)$ to $\Cl(\OO)$ in order to understand the structure of $\Clt_{\mm,\rS}(\OO)$.

\begin{prop}\label{prop:exmonoid}
Let $\phi : \Clt_{\mm,\rS}(\OO) \to \Cl(\OO)$ be the map given by $\phi([\bb]) = [\bb]$. Then, there is an exact sequence of monoids
\begin{equation}
\left(\OO/\mm,\times\right) \times \{\pm 1\}^\rS \xrightarrow{\psi} \Clt_{\mm,\rS}(\OO) \xrightarrow{\phi} \Cl(\OO) \to 1.
\end{equation}
\end{prop}
\begin{proof}
Exactness at $\Cl(\OO)$ is equivalent to the surjectivity of $\phi$. By \cite[Lem.~5.12]{kopplagarias} (taking $\dd=\mm$), every class in $\Cl(\OO)$ is represented by some $\bb \in \rJ_{\mm}^\ast(\OO)$, and $\rJ_{\mm}^\ast(\OO) \subseteq \rJf_{\mm}(\OO)$, so $\phi$ is surjective.

Define the map $\psi : \left(\OO/\mm,\times\right) \times \{\pm 1\}^\rS \to \Clt_{\mm,\rS}(\OO)$ by
\begin{equation}
\psi(\ol{\alpha},\epsilon) = [\alpha\OO] \mbox{ where } \alpha \equiv \ol{\alpha} \Mod{\mm} \mbox{ and } \sgn(\rho(\alpha)) = \epsilon_\rho.
\end{equation}
This map is well-defined because:
\begin{itemize}
\item[(i)] For any pair $(\ol{\alpha},\epsilon)$, the set $(\ol{\alpha} + \mm) \cap \{\alpha \in \OO : \sgn(\rho(\alpha)) = \epsilon_\rho\} \neq \emptyset$.
\item[(ii)] If $\alpha_1, \alpha_2 \in \OO \setminus \{0\}$, $\alpha_1 \equiv \alpha_2 \Mod{\mm}$, and $\sgn(\rho(\alpha_1))=\sgn(\rho(\alpha_2))$, then $[\alpha_1\OO] = [\alpha_2\OO]$ in $\Clt_{\mm,\rS}(\OO)$.
\end{itemize}

To prove exactness at $\Clt_{\mm,\rS}(\OO)$, consider a class $\BB \in \Cl(\OO)$. By \cite[Lem.~5.12]{kopplagarias}, we may write $\BB = [\bb]$ for some $\bb \in \rJ^\ast_\mm(\OO)$. Clearly $\phi([\alpha\bb]) = \BB$ for any $\alpha \in \OO[S_\mm^{-1}] \setminus \{0\}$, so $\{[\alpha\bb] : \alpha \in \OO[S_\mm^{-1}] \setminus \{0\}\} \subseteq \phi^{-1}(\BB)$. On the other hand, suppose $\aa \in \rJf_{\mm}(\OO)$ such that $\phi([\aa]) = \BB$. Then $\aa$ is equivalent to $\bb$ in $\Cl(\OO)$, so $\aa = \alpha\bb$ for some $\alpha \in F^\times$. Moreover, $\alpha\OO = \aa\bb^{-1} \in \rJf_{\mm}(\OO)$, that is, $\alpha\OO$ is semilocally integral at $\mm$, so $\alpha \in \OO[S_\mm^{-1}] \setminus \{0\}$. Therefore, $\{[\alpha\bb] : \alpha \in \OO[S_\mm^{-1}] \setminus \{0\}\} = \phi^{-1}(\BB)$. The left-hand set is the same as $[\bb]\phi\!\left(\left(\OO/\mm,\times\right) \times \{\pm 1\}^\rS\right)$, so we have proven that the sequence is exact at $\Clt_{\mm,\rS}(\OO)$.
\end{proof}

The exact sequence in \Cref{prop:exmonoid} is related to the ray class group by the following commutative diagram, where in both rows the image of the first map consists of the classes of principal ideals.
\begin{equation}
\begin{tikzcd}[column sep=small] 
& \left(\OO/\mm\right)^\times \times \{\pm 1\}^\rS \arrow[r] \arrow[d, hook] 
& \Cl_{\mm,\rS}(\OO) \arrow[r] \arrow[d, hook] 
& \Cl(\OO) \arrow[r] \arrow[d, equal] 
& 1
\\ 
& \left(\OO/\mm,\times\right) \times \{\pm 1\}^\rS \arrow[r, "\psi"] 
& \Clt_{\mm,\rS}(\OO) \arrow[r, "\phi"]
& \Cl(\OO) \arrow[r]
& 1
\\
\end{tikzcd}
\end{equation}
The monoid of zero classes has the properties that $\image(\psi) \cap \ZClt_{\mm,\rS}(\OO) = \{(0,\ep)\}$ (which does not depend on the choice of $\ep$), and $\phi$ restricts to an isomorphism $\ZClt_{\mm,\rS}(\OO) \isom \Cl(\OO)$.

\begin{egz}
This example shows that, in contrast to the case of the ray class group seen in \Cref{prop:oopiso}, the surjective monoid homomorphism $\ol\ext = \ol\ext_{(\OO;\mm,\rS)}^{(\OO';\mm,\rS)}: \Clt_{\mm,\rS}(\OO) \to \Clt_{\mm,\rS}(\OO')$ induced by extension of ideals $\ext(\aa) = \aa\OO'$ need not be an isomorphism when $\mm$ is an $\OO'$-ideal. Let $\OO = \Z[3\sqrt{3}]$ and $\OO' = \Z[\sqrt{3}]$, and consider the unit $\e = 2+\sqrt{3}$. Let $\aa = 3\OO$ and $\bb = 3\e\OO$. Then, $\ext(\aa)=\ext(\bb)=3\OO'$.

We show by contradiction that $\aa \not\sim_{9\OO',\emptyset} \bb$. If $\aa \sim_{9\OO',\emptyset} \bb$, then we would have $\beta\aa = \alpha\bb$ for $\alpha \equiv \beta \Mod{9\OO'}$. We obtain $3\beta\OO = 3\e\alpha\OO$, so $\e\alpha\beta^{-1} \in \OO^\times = \langle -1, \e^3 \rangle$, and thus $\alpha = \pm \e^{3n+1}\beta$ for some $n \in \Z$, so $\pm \e^{3n+1}\beta \equiv \beta \Mod{9\OO'}$. We must have $3\e\OO = \bb = \beta\cc$ for $\cc \in \rJf_{9\OO'}(\OO)$, so $3\OO'=\beta\cc\OO'$, and we must have $\beta \div 3$ in $\OO' = \Z[\sqrt{3}]$, that is, $\beta\gamma = 3$ for some $\gamma \in \OO'$. Multiplying both sides of the congruence by $\gamma$, we obtain $\pm 3\e^{3n+1} \equiv 3 \Mod{9\OO'}$, so $\pm \e^{3n+1} \equiv 1 \Mod{3\OO'}$. Note that $\e^3 = 26+15\sqrt{3} \equiv 1 \Mod{3\OO'}$, so the congruence simplifies to $\pm \e \equiv 1 \Mod{3\OO'}$. But, since $\e = 2+\sqrt{3}$, we obtain a contradiction.
\end{egz}

\subsection{Key properties of orders}

We now recall a few basic results and definitions regarding orders of number fields.

\begin{prop}\label{prop:locallyprincipal}
If $\OO$ is an order in a number field and $\aa$ is a fractional $\OO$-ideal, the following are equivalent:
\begin{itemize}
\item[(1)] $\aa$ is invertible as an $\OO$-ideal.
\item[(2)] For every nonzero prime $\pp$ of $\OO$, the localization $\aa_\pp := \aa\OO_\pp$ is a principal $\OO_\pp$-ideal.
\end{itemize}
\end{prop}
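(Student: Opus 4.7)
The plan is to prove the two implications separately, deferring the main technical content to the converse direction.

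For $(1) \Rightarrow (2)$, I would argue essentially formally. Assuming $\aa\bb = \OO$ for a fractional $\OO$-ideal $\bb$, localization at a nonzero prime $\pp$ yields $\aa_\pp\bb_\pp = \OO_\pp$. Writing $1 = \sum_{i=1}^n a_i b_i$ with $a_i \in \aa_\pp$ and $b_i \in \bb_\pp$, at least one summand $a_ib_i$ must lie outside the maximal ideal $\pp\OO_\pp$, hence is a unit in the local ring $\OO_\pp$. For any $a \in \aa_\pp$, the element $ab_i(a_ib_i)^{-1}$ lies in $\OO_\pp$ (using $\aa_\pp\bb_\pp \subseteq \OO_\pp$), and $a = a_i \cdot ab_i(a_ib_i)^{-1}$, so $\aa_\pp = a_i\OO_\pp$ is principal.

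For $(2) \Rightarrow (1)$, I would take as candidate inverse the quotient ideal $\aa^{-1} = \colonideal{\OO}{\aa} = \{\gamma \in F : \gamma\aa \subseteq \OO\}$, which is a fractional $\OO$-ideal since $\OO$ is Noetherian. The inclusion $\aa\cdot\aa^{-1} \subseteq \OO$ is immediate from the definition, and the task reduces to checking equality, which I would verify locally at every nonzero prime $\pp$. Writing $\aa_\pp = \alpha\OO_\pp$ with $\alpha \in F^\times$ using hypothesis (2), the local quotient ideal is straightforwardly $\colonideal{\OO_\pp}{\aa_\pp} = \alpha^{-1}\OO_\pp$, so $\aa_\pp\cdot\colonideal{\OO_\pp}{\aa_\pp} = \OO_\pp$. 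Combining these local computations gives $(\aa\cdot\aa^{-1})_\pp = \OO_\pp$ at every nonzero prime, and since an ideal of the Noetherian domain $\OO$ equals $\OO$ iff it fails to lie in any maximal ideal, we conclude $\aa\cdot\aa^{-1} = \OO$.

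The one nontrivial input is the identification $(\aa^{-1})_\pp = \colonideal{\OO_\pp}{\aa_\pp}$, which is the assertion that formation of the quotient ideal commutes with localization. This is where finite generation is used: interpreting $\colonideal{\OO}{\aa}$ as $\Hom_\OO(\aa,\OO)$ (valid because $\aa$ contains a nonzerodivisor and spans $F$), the general fact that $\Hom_R(M,N)_\pp = \Hom_{R_\pp}(M_\pp,N_\pp)$ for finitely presented $M$ applies, and $\aa$ is finitely presented because $\OO$ is Noetherian. I would expect this to be the only step requiring more than a one-line justification.
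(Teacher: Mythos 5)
Your proposal is correct. Note, however, that the paper itself supplies no proof of this proposition: it simply cites \cite[Prop.~3.8]{kopplagarias} and \cite[Cor.~2.1.7]{dtz}, so there is no ``paper's own argument'' to compare against. Your argument is the standard one and is sound throughout. The $(1)\Rightarrow(2)$ direction is the usual localization-of-an-identity trick. For $(2)\Rightarrow(1)$, you correctly isolate the one real issue---that formation of the quotient ideal $\colonideal{\OO}{\aa}$ commutes with localization---and your justification via $\colonideal{\OO}{\aa}\cong\Hom_\OO(\aa,\OO)$ together with the base-change property of $\Hom$ for finitely presented modules is valid (finite presentation is automatic over the Noetherian ring $\OO$). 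An equally common route to the same commutation, which you might find marginally more elementary, is to write $\aa = (a_1,\ldots,a_n)$ and note $\colonideal{\OO}{\aa} = \bigcap_i a_i^{-1}\OO$, then use the fact that localization commutes with finite intersections of submodules in a Noetherian setting. Either way, the remaining steps---$\colonideal{\OO_\pp}{\alpha\OO_\pp} = \alpha^{-1}\OO_\pp$ and the local-to-global criterion for an ideal to equal the whole ring---are routine, and since the nonzero primes of a one-dimensional order are exactly its maximal ideals, the local check at nonzero primes is precisely the check needed.
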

\begin{proof}
See \cite[Prop.~3.8]{kopplagarias} or \cite[Cor.~2.1.7]{dtz}.
\end{proof}

\begin{defn}
If $\OO$ is an order in a number field $F$ and $\aa$ is a fractional $\OO$-ideal, the \textit{multiplier ring} (or \textit{multiplier order}) of $\aa$ is
\begin{equation}
\ord(\aa) := \colonideal{\aa}{\aa} = \{x \in F : x\aa \subseteq \aa\}.
\end{equation}
\end{defn}

\begin{prop}\label{prop:ordinvertible}
If $\OO$ is an order in a quadratic field, $\aa$ is a fractional $\OO$-ideal, and $\OO' = \ord(\aa)$, then $\aa$ is an invertible fractional $\OO'$-ideal.
\end{prop}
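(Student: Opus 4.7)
The plan is to exploit the classical fact that in a quadratic order, a fractional ideal is \emph{invertible} if and only if it equals its own multiplier ring (i.e., is ``proper''). Since $\OO' = \ord(\aa)$ by hypothesis, $\aa$ is tautologically a proper $\OO'$-ideal, so the task reduces to showing that proper ideals in quadratic orders are invertible. This result is essentially due to Dedekind and is the cornerstone of the classical correspondence between ideal classes of orders and binary quadratic forms.

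Scaling $\aa$ by any $\lambda \in F^\times$ alters neither the multiplier ring nor the invertibility property, so I first assume $\aa \subseteq \OO'$, and then rescale to put $\aa$ in normalized form $\aa = \Z + \tau\Z$, where $\tau \in F \setminus \Q$ satisfies a primitive integral quadratic $a\tau^2 + b\tau + c = 0$ with $a > 0$ and $\gcd(a,b,c) = 1$. A short direct computation, testing $x = p + q\tau$ against the conditions $x \cdot 1, \, x \cdot \tau \in \Z + \tau\Z$, identifies $\OO' = \Z + (a\tau)\Z$ explicitly.

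I would then exhibit an inverse directly. Let $\aa' = \Z + \tau'\Z$ be the Galois conjugate ideal, with $\tau' \in F$ the other root of $a\tau^2 + b\tau + c$. Expanding the product $\aa\aa'$ using $\tau + \tau' = -b/a$ and $\tau\tau' = c/a$, the $\Z$-module generated by $\{1, \tau, \tau', \tau\tau'\}$ collapses to $\tfrac{1}{a}\OO'$, yielding $\aa \cdot (a\aa') = \OO'$. This shows $\aa$ is invertible as an $\OO'$-ideal, with explicit inverse $\aa^{-1} = a\aa'$.

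The main obstacle is bookkeeping: the primitivity hypothesis $\gcd(a,b,c) = 1$ must be used in the right place to ensure that $\ord(\aa)$ equals $\Z + (a\tau)\Z$ and not some larger order—this is where the hypothesis $\OO' = \ord(\aa)$ is genuinely used rather than just assumed. A more structural alternative would invoke \Cref{prop:locallyprincipal}: one shows each localization $\aa_\pp$ is principal over the one-dimensional Noetherian local ring $\OO'_\pp$, since any non-principal finitely generated fractional ideal there would have multiplier order strictly larger than $\OO'_\pp$, contradicting $\ord(\aa) = \OO'$. I would prefer the hands-on lattice computation, since it produces an explicit formula for $\aa^{-1}$ and bypasses the need to treat primes separately.
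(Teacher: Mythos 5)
Your proof is correct. Note that the paper does not supply its own argument for this proposition --- it simply cites \cite{kopplagariasmonoids} and \cite{jt} --- so there is no direct comparison of methods to be made; you have provided a self-contained classical proof where the paper delegates to the literature.

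A few remarks. Your opening sentence has a small slip in phrasing: a fractional ideal is invertible over a quadratic order $\OO'$ if and only if \emph{its multiplier ring} equals $\OO'$ (i.e., it is proper), not ``if and only if it equals its own multiplier ring.'' Your parenthetical ``(i.e., is proper)'' and the rest of the argument make clear you mean the right thing, but the literal statement as written would force $\aa = \OO'$. The computation itself is sound: after normalizing to $\aa = \Z + \tau\Z$ with $a\tau^2 + b\tau + c = 0$ primitive, one finds $\ord(\aa) = \Z + a\tau\Z$ using $\gcd(a,b,c) = 1$, and then $\aa\aa' = \tfrac{1}{a}\Z + \tau\Z = \tfrac{1}{a}\OO'$, so $\aa \cdot (a\aa') = \OO'$. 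This is exactly where the quadratic hypothesis enters; the analogous statement is false for orders of degree $\geq 3$, and your proof makes the failure mode visible (the $\Z$-module generated by $\{1,\tau,\tau',\tau\tau'\}$ only collapses so cleanly in rank $2$). Regarding your suggested alternative via \Cref{prop:locallyprincipal}: be careful that the claim ``a non-principal finitely generated fractional ideal over a one-dimensional Noetherian local domain has strictly larger multiplier ring'' is not a general ring-theoretic fact --- it is again special to quadratic orders and would require essentially the same rank-$2$ lattice computation localized at $\pp$, so that route offers no genuine savings. Your instinct to prefer the hands-on version, which also produces the explicit inverse $\aa^{-1} = a\aa'$, is the right one.
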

\begin{proof}
See \cite[p.~557]{jt}.
\end{proof}

\subsection{The main correspondence}

We will now describe classes in ray class groups of real quadratic fields as corresponding to special ``real multiplication'' points 
\begin{equation}
\Q^2/\Z^2 \times (F \setminus \Q) \xhookrightarrow{\rho_1} \Q^2/\Z^2 \times \Rquad \subset \R^2/\Z^2 \times \R
\end{equation}
modulo an action of $\SL_2(\Z)$ or $\GL_2(\Z)$.
We will use this correspondence is to relate Stark units (more specifically, Stark--Tangedal--Yamamoto invariants), attached to objects on the left-hand side of \eqref{eq:correspondence}, to RM values of Shintani--Faddeev modular cocycles, attached to objects on the right-hand side of \eqref{eq:correspondence}.
A similar correspondence can be given in the complex case, replacing $\Rquad$ by the set $\Hquad$ of quadratic numbers in the upper half-plane; a notable difference in the real case is that the action of $\SL_2(\Z)$ on $\Q^2/\Z^2 \times \Rquad$ is not totally discontinuous.
These correspondences are also related to Gauss composition for quadratic forms, as will be explored further in \cite{bk}.

\begin{thm}\label{thm:correspondence}
Let $\OO$ be an order in a real quadratic field $F$, and let $\Fquad = F \setminus \Q$.
Let $\mm$ be a nonzero $\OO$-ideal and $\OO' = \ord(\mm)$.
There are explicit compatible functions
\begin{equation}\label{eq:correspondence}
\begin{tikzcd}
\Clt_{\mm\infty_1\infty_2}(\OO) \ar[r, "\tilde\Upsilon_\mm"] \ar[d] & \SL_2(\Z) \backslash \!\left(\Q^2/\Z^2 \times \Fquad\right) \ar[d] \\
\Clt_{\mm\infty_2}(\OO) \ar[r, "\Upsilon_\mm"] & \GL_2(\Z) \backslash \!\left(\Q^2/\Z^2 \times \Fquad\right)
\end{tikzcd}
\end{equation}
where the action of $\GL_2(\Z)$ on $\Q^2/\Z^2 \times \Fquad$ is $A\cdot(\r,\beta) = (s_{\!A}(\beta)A\r,A\cdot\beta)$, the quantity $s_{\!A}(\beta) = \sgn(\rho_1(j_{A}(\beta))$, the notation $\GL_2(\Z) \backslash \!\left(\Q^2/\Z^2 \times F\right)$ denotes the set of orbits of this right action (and similarly for $\SL_2(\Z)$), and the downward maps are the obvious quotient maps.
If $\OO' = \OO$, then $\tilde\Upsilon_\mm$ and $\Upsilon_\mm$ are injective.
Generally, the image of $\tilde\Upsilon_\mm$ is described as
\begin{align}
\image(\tilde\Upsilon_{\mm}) &= \SL_2(\Z) \backslash \M_{\OO',\mm}, \mbox{ where} \\
\M_{\OO',\mm} &= \{(\r,\beta) \in \Q/\Z \times F_{\OO'} : \sympt{\r}{\beta}\mm \subseteq \beta\Z+\Z\}, \mbox{ and} \\
F_{\OO'} &= \{\beta \in F : \ord(\beta\Z+\Z) = \OO'\}.
\end{align}
In particular, for $m \in \N$,
\begin{equation}
\image(\tilde\Upsilon_{m\OO}) = \SL_2(\Z) \backslash \!\left(\tfrac{1}{m}\Z^2/\Z^2 \times F_{\OO}\right).
\end{equation}
These functions factor through the monoid maps induced by extension of ideals: $\tilde\Upsilon_\m(\A) = \tilde\Upsilon_\m\!\left(\ol\ext_{(\OO;\mm\infty_1\infty_2)}^{(\OO';\mm\infty_1\infty_2)}(\A)\right)$; $\Upsilon_\m(\A) = \Upsilon_\m\!\left(\ol\ext_{(\OO;\mm\infty_2)}^{(\OO';\mm\infty_2)}(\A)\right)$. Additionally, the zero classes are
$\ZClt_{\mm\infty_1\infty_2}(\OO) = \tilde\Upsilon_\m^{-1}(\{\mathbf{0}\} \times \Fquad)$.

The function $\tilde\Upsilon_{\mm}$ is described as follows: 
Given $\A \in \Clt_{\mm\infty_1\infty_2}(\OO)$, let $\A_0$ be the class of $\A$ in the narrow class group $\Cl_{\infty_1\infty_2}(\OO)$. Choose an integral ideal $\bb \in \A_0^{-1}$ that is coprime to $\mm$.
Express $\bb\mm = \alpha(\beta\Z+\Z)$ with $\rho_1(\alpha), \rho_2(\alpha) > 0$ and $\rho_1(\beta)>\rho_2(\beta)$.
Choose a representative $\gamma\OO$ of $\bb\A$ such that $\gamma \in \bb$ and $\rho_1(\gamma),\rho_2(\gamma)>0$, and write $\gamma = \alpha\sympt{\r}{\beta}$ for some $\r \in \Q^2$.
Set
\begin{equation}
\tilde\Upsilon_\m(\A) = \SL_2(\Z) \cdot (\r, \beta).
\end{equation}
The function $\Upsilon_\m(\A)$ is then defined by $\Upsilon_\m(\A) = \GL_2(\Z) \cdot \tilde\Upsilon_\m(\tilde\A)$ for any choice of lift of $\A$ to $\tilde\A \in \Clt_{\mm\infty_1\infty_2}(\OO)$.
\end{thm}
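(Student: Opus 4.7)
The plan is to establish, in order, the following properties of $\tilde\Upsilon_\mm$: \textbf{(a)} it is a well-defined function, independent of the choices of $\bb$, $\alpha$, $\beta$, and $\gamma$; \textbf{(b)} it factors through the extension map $\ol\ext_{(\OO;\mm\infty_1\infty_2)}^{(\OO';\mm\infty_1\infty_2)}$ and after this factoring is injective; \textbf{(c)} its image equals $\SL_2(\Z)\backslash \M_{\OO',\mm}$; and \textbf{(d)} its restriction to $\ZClt_{\mm\infty_1\infty_2}(\OO)$ is exactly the preimage of $\{\mathbf{0}\}\times \Fquad$. The corresponding statements for $\Upsilon_\mm$ then follow by passing to the quotient: the reflection $\smmattwo{-1}{0}{0}{1} \in \GL_2(\Z) \setminus \SL_2(\Z)$ has $s_{\!A}(\beta) = -1$ and identifies $(\r,\beta)$ with $(\smcoltwo{-r_1}{r_2}, -\beta)$, matching exactly the relaxation on the ray-class side of dropping the sign condition at $\rho_1$. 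The special case $\image(\tilde\Upsilon_{m\OO}) = \SL_2(\Z)\backslash\!\left(\tfrac{1}{m}\Z^2/\Z^2 \times F_\OO\right)$ then reduces to showing $\M_{\OO,m\OO} = \tfrac{1}{m}\Z^2/\Z^2 \times F_\OO$, a direct computation using $(\cc:\OO) = \cc$ when $\ord(\cc)=\OO$.

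For (a), the linchpin is the identity $\sympt{A\r}{A\cdot\beta} = \sympt{\r}{\beta}/j_{\!A}(\beta)$ for $A \in \SL_2(\Z)$, which follows from $A\smcoltwo{\beta}{1} = j_{\!A}(\beta)\smcoltwo{A\cdot\beta}{1}$ together with the $\SL_2$-invariance of $\symp{\cdot}{\cdot}$. Changing the $\Z$-basis of $\beta\Z+\Z$ by a matrix $A \in \GL_2(\Z)$ replaces $(\alpha,\beta)$ by $(\alpha\, j_{\!A}(\beta), A\cdot\beta)$; the imposed sign conditions (total positivity of $\alpha$ and $\beta > \beta'$) force $\rho_1(j_{\!A}(\beta)), \rho_2(j_{\!A}(\beta)) > 0$ and $\det(A)=1$, so $A$ ranges over $\SL_2(\Z)$, and the identity forces $\r \mapsto A\r$ to preserve $\gamma = \alpha\sympt{\r}{\beta}$. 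Replacing $\bb$ by $\lambda\bb$ for $\lambda$ totally positive rescales $\alpha \mapsto \lambda\alpha$ while leaving $\cc = \beta\Z+\Z$, hence $\beta$ and $\r$, unchanged. Finally, replacing $\gamma$ within its $\sim_{\mm,\infty_1\infty_2}$-equivalence class multiplies $\sympt{\r}{\beta}$ by a totally positive factor congruent to $1$ modulo $\mm\cc^{-1}$, shifting $\r$ only by a vector in $\Z^2$. These are precisely the equivalences collapsed in forming $\Clt_{\mm\infty_1\infty_2}(\OO)$.

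For (b)--(d), given $(\r,\beta)\in \M_{\OO',\mm}$, set $\cc := \beta\Z+\Z$, so $\ord(\cc)=\OO'$ and $\cc$ is an invertible $\OO'$-ideal by \Cref{prop:ordinvertible}. Choose $\alpha \in F$ totally positive and an $\OO$-ideal $\bb$ coprime to $\mm$ with $\bb\mm = \alpha\cc$; this is possible because $\ol\ext: \Cl(\OO)\to\Cl(\OO')$ is an isomorphism by \Cref{prop:oopiso}, so the narrow $\OO'$-class of $\cc^{-1}\mm$ has an $\OO$-ideal representative coprime to $\mm$. The $\M_{\OO',\mm}$-condition $\sympt{\r}{\beta}\mm\subseteq\cc$ rewrites as $\gamma := \alpha\sympt{\r}{\beta}\in\bb$, so $\mathfrak{c} := \bb^{-1}\gamma\OO$ is an integral $\OO$-ideal whose class $\A\in\Clt_{\mm\infty_1\infty_2}(\OO)$ maps under $\tilde\Upsilon_\mm$ to the orbit of $(\r,\beta)$; running paragraph two in reverse shows $\ol\ext(\A)$ is uniquely determined by the orbit, giving (c) and the injectivity half of (b). Factoring through $\ol\ext$ is immediate because for $\bb$ coprime to $\mm$ one has $(\bb\OO')\mm = \bb\mm$, and the recipe uses only $\bb\mm$, $\alpha$, and $\gamma\OO$. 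For (d), $\r\equiv\mathbf{0}\Mod{\Z^2}$ iff $\sympt{\r}{\beta}\in\cc$ iff $\gamma \in \alpha\cc = \bb\mm$ iff $\mathfrak{c}\subseteq\mm$, i.e., iff $\A$ is a zero class. The hardest part is bookkeeping: each of the four choices carries simultaneous sign and mod-$\mm$ conditions, and aligning them with both the $\SL_2(\Z)$-action and the monoid equivalence without double-counting demands careful tracking; in particular, the reduction from $\tilde\Upsilon_\mm$ to $\Upsilon_\mm$ requires verifying that the sign reflection above corresponds on the monoid side to exactly the identification lost when dropping $\infty_1$, neither more nor less.
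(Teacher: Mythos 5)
Your overall strategy matches the paper's: well-definedness, construction of an inverse map to establish injectivity and describe the image, a short computation for the special case $\mm = m\OO$, and a sign argument for the descent to $\GL_2(\Z)$. The inverse construction $[\gamma\bb^{-1}]$ is essentially the paper's $\tilde\Omega_\mm$. However, there are several specific errors and a genuinely underdeveloped step that would need to be repaired before this is a complete proof.

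On well-definedness: you try to factor the argument into three independent changes (of basis, of $\bb$, of $\gamma$). The paper instead handles all choices simultaneously, relating two tuples $(\bb_j,\alpha_j,\beta_j,\gamma_j)$ via a single matrix and a single $\delta \in \OO[S_\mm^{-1}]^\times$, then proving $\det = 1$ by an explicit positivity computation. Your decomposition is not incorrect in spirit, but the steps are not truly independent (changing $\bb$ forces compatible changes of $\alpha,\beta,\gamma$), and your treatment of the $\gamma$-change is wrong in a way that needs fixing. You write that replacing $\gamma$ multiplies $\sympt{\r}{\beta}$ by a factor ``congruent to $1$ modulo $\mm\cc^{-1}$,'' but $\mm\cc^{-1}$ is not an integral ideal and this congruence does not make sense as stated. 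What is actually true is that $\gamma_2 = \e\gamma_1$ for a totally positive unit $\e$ (which in general acts on the lattice $\cc = \beta\Z+\Z$ by a nontrivial element of $\SL_2(\Z)$, not a translation), together with the congruence $\gamma_1 - \gamma_2 \in \bb\mm$, which by coprimality of $\bb$ to $\mm$ gives $\sympt{\r_1}{\beta} - \sympt{\r_2}{\beta} \in \cc$ and hence $\r_1 \equiv \r_2 \pmod{\Z^2}$. The conclusion you reach is right, but the stated justification is not.

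On the descent to $\Upsilon_\mm$: you assert that $A = \smmattwo{-1}{0}{0}{1}$ has $s_{\!A}(\beta) = -1$. This is false: $j_{\!A}(\tau) = 1$ identically, so $s_{\!A}(\beta) = +1$. More importantly, the paper deliberately selects a reflection $A$ with $\rho_2(j_{\!A}(\beta)) < 0 < \rho_1(j_{\!A}(\beta))$, because it is precisely this norm-negative condition that causes $\tilde\Upsilon_\mm$ to carry $\A$ to $\sR_{-+}\A$, the class lost when dropping $\infty_1$. Your matrix has $j_{\!A}(\beta) = 1$ totally positive, so the accompanying ideal-theoretic analysis does not go through for it. The $\GL_2/\SL_2$-orbit identification is of course independent of the choice of reflection, but the claim that your reflection ``matches exactly'' the monoid quotient is exactly the content that requires the paper's detailed verification (adjusting $\bb$, $\alpha$, $\gamma$, and a vector $\n$ to land in $\sR_{-+}\A$); asserting it without that verification leaves the hardest part undone, as you acknowledge.

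On the appeal to \Cref{prop:oopiso}: that proposition concerns $\Cl_{\mm,\rS}(\OO) \to \Cl_{\mm,\rS}(\OO')$ for $\mm$ an $\OO'$-ideal, not the untruncated class groups $\Cl(\OO) \to \Cl(\OO')$, which is only surjective, not injective, in general. What the construction of $\bb$ actually needs, and what the paper uses, is the isomorphism on \emph{ideals} coprime to $\mm$ from $\rJ_\mm(\OO)$ to $\rJ_\mm(\OO')$ (the paper cites this as a separate proposition from \cite{kopplagarias}). Note also that the paper reduces to the case $\OO'=\OO$ via the factoring-through-$\ol\ext$ step \emph{before} constructing the inverse map, which sidesteps the subtleties you encounter by trying to build the inverse directly in the general case.
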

\begin{proof}
We will first show that the map $\tilde\Upsilon_\mm$ is well-defined, that is, that it does not depend on the choices of $\bb$, $\alpha$, $\beta$, or $\gamma$. Consider two such tuples of choices $(\bb_1, \alpha_1, \beta_1, \gamma_1)$ and $(\bb_2, \alpha_2, \beta_2, \gamma_2)$, as well as corresponding $\r_1$ and $\r_2$ such that $\gamma_j = \alpha_j\sympt{\r_j}{\beta_j}$. 
We have $\bb_j\mm = \alpha_j(\beta_j\Z+\Z)$. 
Moreover, $\bb_1\mm$ and $\bb_2\mm$ are both in the ideal class $\mm\A_0^{-1}$, so there exists some $\delta \in \OO[S_\mm^{-1}]^\times$ such that $\alpha_1(\beta_1\Z+\Z) = \delta\alpha_2(\beta_2\Z+\Z)$ 
and $\rho_1(\delta), \rho_2(\delta) > 0$.
Thus, there are integers $a, b, c, d, a', b', c', d'$ such that
\begin{align}
\smcoltwo{\alpha_1\beta_1}{\alpha_1} = \smmattwo{a}{b}{c}{d}\smcoltwo{\delta\alpha_2\beta_2}{\delta\alpha_2}
& & \mbox{ and } & &
\smcoltwo{\delta\alpha_2\beta_2}{\delta\alpha_2} = \smmattwo{a'}{b'}{c'}{d'}\smcoltwo{\alpha_1\beta_1}{\alpha_1}.
\end{align}
Thus, the matrices $\smmattwo{a}{b}{c}{d}$ and $\smmattwo{a'}{b'}{c'}{d'}$ are inverses of each other in $\GL_2(\Z)$, and $\beta_1 = \smmattwo{a}{b}{c}{d} \cdot \beta_2$. Set $A = \smmattwo{a}{b}{c}{d}$. We have
\begin{align}
\rho_1(\beta_1)- \rho_2(\beta_1)
&= A\cdot\rho_1(\beta_2) - A\cdot\rho_2(\beta_2) \\
&= \frac{\det(A)\left(\rho_1(\beta_2)-\rho_2(\beta_2)\right)}{\Nm(c\beta+d)} \\
&= \frac{\det(A)\left(\rho_1(\beta_2)-\rho_2(\beta_2)\right)}{\Nm(\delta^{-1}\alpha_1\alpha_2^{-1})},
\end{align}
and $\rho_1(\beta_1)- \rho_2(\beta_1)$, $\rho_1(\beta_2)-\rho_2(\beta_2)$, and $\Nm(\delta^{-1}\alpha_1\alpha_2^{-1})$ are all positive, so $\det(A) = 1$, and $A \in \SL_2(\Z)$.

Since $\bb_1\mm = \delta\bb_2\mm$ and every ideal of a quadratic order is invertible in its multiplier ring, it follows that $\bb_1\OO' = \delta\bb_2\OO'$. Moreover, if $\ff = \colonideal{\OO}{\OO'}$ is the relative conductor, then $\mm \supseteq \ff$; thus, \cite[Prop.\ 4.8]{kopplagarias} says that the extension map $\ext(\aa) = \aa\OO'$ defines an isomorphism $\rJ_{\mm}(\OO) \to \rJ_{\mm}(\OO')$ on fractional ideals coprime to $\mm$. Since $\bb_1$ and $\delta\bb_2$ are coprime to $\mm$, it follows that we can ``cancel'' the factor of $\OO'$ and obtain $\bb_1 = \delta\bb_2$. Equivalently, $\delta^{-1}\bb_1 = \bb_2$.

We are given that $\gamma_1\OO \in \bb_1\A$, so $\delta^{-1}\gamma_1\OO \in \delta^{-1}\bb_1\A = \bb_2\A$; we are also given that $\gamma_2\OO \in \bb_2\A$. So $\delta^{-1}\gamma_1\OO$ and $\gamma_2\OO$ belong to the same class in $\Clt_{\mm\infty_1\infty_2}(\OO)$; that is, there is some global unit $\e \in \OO$ such that $\e\delta^{-1}\gamma_1 - \gamma_2 \in \mm\OO[S_\mm^{-1}]$ and $\sgn(\rho_i(\e\delta^{-1}\gamma_1))=\sgn(\rho_i(\gamma_2))$ for $i \in \{1,2\}$. As $\delta, \gamma_1, \gamma_2$ are positive at both real places, it follows that $\rho_i(\e) > 0$ for $i \in \{1,2\}$.
We may write
\begin{equation}
\e\smcoltwo{\alpha_2\beta_2}{\alpha_2} = E\smcoltwo{\alpha_2\beta_2}{\alpha_2} \mbox{ for some } E \in \SL_2(\Z).
\end{equation}
We then have $\e\delta^{-1}\smcoltwo{\alpha_1\beta_1}{\alpha_1} = \e A \smcoltwo{\alpha_2\beta_2}{\alpha_2} = A E \smcoltwo{\alpha_2\beta_2}{\alpha_2}$. Write $AE = \smmattwo{e}{f}{g}{h}$.
Thus,
\begin{align}
\e\delta^{-1}\gamma_1 - \gamma_2
&= \e\delta^{-1}(r_{12}\alpha_1\beta_1 - r_{11}\alpha_1) - (r_{22}\alpha_2\beta_2 - r_{21}\alpha_2) \\
&= \left(r_{12}(e\alpha_2\beta_2+f\alpha_2) - r_{11}(g\alpha_2\beta_2+h\alpha_2)\right) - (r_{22}\alpha_2\beta_2 - r_{21}\alpha_2) \\
&= \left(-gr_{11}+er_{12}-r_{22}\right)\alpha_2\beta_2-\left(hr_{11}-fr_{12}-r_{21}\right)\alpha_2.
\end{align}
Moreover, $\gamma_2 \in \bb_2$, and $\e\delta^{-1}\gamma_1 = \delta^{-1}\bb_1 = \bb_2$, so
\begin{equation}
\e\delta^{-1}\gamma_1 - \gamma_2 \in \bb_2 \cap \mm\OO[S_\mm^{-1}] = \bb_2\mm,
\end{equation}
because $\bb_2$ is coprime to $\mm$. Thus, $-gr_{11}+er_{12}-r_{22}$ and $hr_{11}-fr_{12}-r_{21}$ are integers, so
\begin{equation}
(AE)^{-1}\r_1 = \smmattwo{h}{-f}{-g}{e}\r_1 \equiv \r_2 \Mod{\Z^2}.
\end{equation}
We also know that $\alpha_2 j_{\!AE}(\beta_2) = \e\delta^{-1}\alpha_1$, and $\alpha_1,\alpha_2,\delta,\e$ are positive at both real embeddings, so $s_{\!AE}(\beta) = 1$. Thus, $s_{\!AE}(\beta)AE\r_2 \equiv \r_1 \Mod{\Z^2}$ and $AE \cdot \beta_2 = A \cdot \beta_2 = \beta_1$. We have now established that $\tilde\Upsilon_\mm$ is well-defined.

We now observe that $\tilde\Upsilon_\mm$ factors through the induced extension map
\begin{equation}
\ol\ext_{\OO;\mm\infty_1\infty_2}^{\OO';\mm\infty_1\infty_2} : \Clt_{\mm\infty_1\infty_2}(\OO) \to \Clt_{\mm\infty_1\infty_2}(\OO').
\end{equation}
This is seen by observing that $\bb\mm = \bb\OO'\mm$, and $\gamma\OO \in \bb\A \implies \gamma\OO' \in \bb\OO'\ol\ext_{\OO;\mm\infty_1\infty_2}^{\OO';\mm\infty_1\infty_2}(\A)$, so the definition of $\tilde\Upsilon_\mm(\A)$ remains unchanged under replacing $\bb$ by $\bb\OO'$. It follows (because $\M_{\OO',\mm}$ depends only on $\OO'$, not on $\OO$) that one need only prove the claims about the image of $\tilde\Upsilon_\mm$ in the case when $\OO' = \OO$.

Now suppose that $\OO' = \OO$; it follows by \Cref{prop:ordinvertible} that $\mm$ is $\OO$-invertible. To prove that $\tilde\Upsilon_\mm$ is injective and that it has image specified in the theorem statement, we will construct a function
\begin{equation}
\tilde\Omega_\mm : \SL_2(\Z) \backslash \M_{\OO,\mm} \to \Clt_{\mm\infty_1\infty_2}(\OO)
\end{equation}
and show that $\tilde\Omega_\mm$ defines an inverse to $\tilde\Upsilon_\mm$. Consider $(\r,\beta) \in \M_{\OO,\mm}$ such that $\rho_1(\beta)>\rho_2(\beta)$; note that every $\SL_2(\Z)$-orbit in $\Fquad$ contains such a $\beta$. 
Represent $\r \in \Q^2/\Z^2$ by an element $\r \in \Q^2$ such that $\sympt{\r}{\beta}$ is totally positive. 
Since $\ord(\beta\Z+\Z) = \OO$, it follows from \Cref{prop:ordinvertible} that $\beta\Z+\Z$ is $\OO$-invertible, so $\colonideal{\mm}{\beta\Z+\Z}$ is also $\OO$-inverible. 
By \Cref{prop:locallyprincipal}, for every nonzero prime $\pp$ of $\OO$, we have $\colonideal{\mm}{\beta\Z+\Z} = \alpha_p\OO_\pp$ for some $\alpha_\pp$. 
We may choose some $\alpha \in F^\times$ such that $\alpha$ is totally positive, $\alpha\OO_\pp = \alpha_\pp\OO_\pp$ whenever $\pp \supseteq \mm$, and $\alpha(\beta\Z+\Z) \subseteq \mm$. Setting $\bb = \alpha\colonideal{\beta\Z+\Z}{\mm}$, it follows that $\alpha(\beta\Z+\Z) = \bb\mm$, $\bb$ is an integral $\OO$-ideal, and $\bb+\mm=\OO$. 
Define $\gamma = \alpha\sympt{\r}{\beta}$ and
\begin{equation}
\tilde\Omega_\mm(\r,\beta) = [\gamma\bb^{-1}] \in \Clt_{\mm\infty_1\infty_2}(\OO).
\end{equation}

To show that $\tilde\Omega_\mm$ is well-defined, consider $(\r_1,\beta_1), (\r_2,\beta_2) \in \M_{\OO,\mm}$ such that
$\sympt{\r_1}{\beta}, \sympt{\r_2}{\beta}$ are totally positive and
\begin{equation}
(\r_2,\beta_2) = A \cdot (\r_1 + \n,\beta_1) = \left(s_{\!A}(\beta_1)A(\r_1 + \n), A\cdot\beta_1\right)
\end{equation}
for some $A = \smmattwo{a}{b}{c}{d} \in \SL_2(\Z)$ and $\n \in \Z^2$. Consider two choices of $\alpha_1, \alpha_2$ as above. These in turn determine $\bb_j = \alpha_j\colonideal{\beta_j\Z+\Z}{\mm}$ and $\gamma_j = \alpha_j\sympt{\r_j}{\beta_j}$ for $j \in \{1,2\}$.
We have
\begin{align}
\gamma_2 
&= \alpha_2\sympt{s_{\!A}(\beta_1)A(\r_1+\n)}{A\cdot\beta_1}
= \alpha_2s_{\!A}(\beta_1)\sympt{A(\r_1+\n)}{A\cdot\beta_1} \\
&= \frac{\alpha_2 s_{\!A}(\beta_1)}{j_{\!A}(\beta_1)}\sympt{\r_1+\n}{\beta_1}
= \frac{\alpha_2 s_{\!A}(\beta_1)}{\alpha_1 j_{\!A}(\beta_1)}\left(\gamma_1 + \alpha_1\sympt{\n}{\beta_1}\right). \label{eq:g12rel}
\end{align}
We also have
\begin{align}
\bb_2
&= \alpha_2\colonideal{\beta_2\Z+\Z}{\mm}
= \alpha_2\colonideal{\frac{a\beta_1+b}{c\beta_1+d}\Z+\Z}{\mm} 
= \frac{\alpha_2}{j_{\! A}(\beta_1)} \colonideal{\beta_1\Z+\Z}{\mm}
= \frac{\alpha_2}{\alpha_1 j_{\! A}(\beta_1)}\bb_1.
\end{align}
Thus,
\begin{equation}
\gamma_2\bb_2^{-1}
= s_{\!A}(\beta_1)\left(\gamma_1 + \alpha_1\sympt{\n}{\beta_1}\right)\bb_1^{-1}
= \left(\gamma_1 + \alpha_1\sympt{\n}{\beta_1}\right)\bb_1^{-1}.
\end{equation}
Since $\bb_1 = \alpha_1\colonideal{\beta_1\Z+\Z}{\mm}$ (and $\mm$ is $\OO$-invertible), we have $\bb_1\mm = \alpha_1(\beta_1\Z+\Z)$. Thus, $\alpha_1\sympt{\n}{\beta_1} \in \bb_1\mm \subseteq \mm$, so
\begin{equation}
\gamma_1 + \alpha_1\sympt{\n}{\beta_1}
\equiv \gamma_1 \Mod{\mm}.
\end{equation}
Therefore,
\begin{equation}\label{eq:simpart}
\gamma_2\bb_2^{-1} \sim_{\mm} \gamma_1\bb_1^{-1}.
\end{equation}
Moreover, the $\alpha_j$ and $\gamma_j$ are totally positive, and $s_{\!A}(\beta_1) = \sgn(j_{\!A}(\beta_1))$, so by \eqref{eq:g12rel} we have $\sgn(\rho_i(\gamma_1+\alpha_1\sympt{n,\beta})) = \sgn(\rho_1(j_A(\beta_1)))\sgn(\rho_i(j_A(\beta_1)))$. 
That is,
\begin{align}
\sgn(\rho_1(\gamma_1+\alpha_1\sympt{n}{\beta}))
&= \sgn(\rho_1(j_A(\beta_1)))^2 = 1, \mbox{ and} \\
\sgn(\rho_1(\gamma_1+\alpha_1\sympt{n}{\beta}))
&= \sgn(\Nm(j_A(\beta_1)))
= \sgn\!\left(\frac{\rho_1(\beta_1)-\rho_2(\beta_1)}{\rho_1(\beta_2)-\rho_2(\beta_2)}\right)
= 1,
\end{align}
using the conditions that $\rho_1(\beta_j) > \rho_2(\beta_j)$ in the last step. Hence \eqref{eq:simpart} can be improved to
\begin{equation}\label{eq:simall}
\gamma_2\bb_2^{-1} \sim_{\mm\infty_1\infty_2} \gamma_1\bb_1^{-1},
\end{equation}
and therefore the function $\tilde\Omega_\mm$ is well-defined.

It remains to check that $\tilde\Omega_\mm \circ \tilde\Upsilon_\mm$ is the identity on $\Clt_{\mm\infty_1\infty_2}$ and $\tilde\Upsilon_\mm \circ \tilde\Omega_\mm$ is the identity on $\SL_2(\Z) \backslash (\Q^2/\Z^2 \times \Fquad)$.
But these claims follow directly from the definitions of the two functions.
The formula $\ZClt_{\mm\infty_1\infty_2}(\OO) = \tilde\Omega_\mm(\{\mathbf{0}\} \times \Fquad) = \tilde\Upsilon_\m^{-1}(\{\mathbf{0}\} \times \Fquad)$ also follows directly from the construction of $\tilde\Omega_\mm$.

In the special case $\mm = m\OO$ for $m \in \N$, we have
\begin{align}
\M_{\OO,m\OO} 
&= \left\{(\r,\beta) \in \Q/\Z \times F_\OO : (r_2\beta-r_1)m\OO \subseteq \beta\Z+\Z\right\} \\
&= \left\{(\r,\beta) \in \Q/\Z \times F_\OO : r_2\beta-r_1 \in \tfrac{1}{m}(\beta\Z+\Z)\right\} \\
&= \tfrac{1}{m}\Z/\Z \times F_{\OO}.
\end{align}
Thus, $\image(\tilde\Upsilon_{m\OO}) = \SL_2(\Z)\backslash \left(\frac{1}{m}\Z/\Z \times F_{\OO}\right)$.

Finally, we must show that quotienting by $\sim_{\mm\infty}$ on the left-hand side of \eqref{eq:correspondence} corresponds under $\tilde\Upsilon_\mm$ to quotienting on the right-hand side by $\GL_2(\Z)$. (The statements about $\Upsilon_\mm$ will then follow.) 
Consider the ideal class $\mathfrak{R}_{-+} \in \Cl_{\mm\infty_1\infty_2}(\OO)$ given by
\begin{equation}
\mathfrak{R}_{-+} = \{\lambda\OO : \lambda \equiv 1 \Mod{\mm},\, \rho_1(\lambda) < 0 < \rho_2(\lambda)\};
\end{equation} 
then $\Clt_{\mm\infty_2}(\OO)$ is $\Clt_{\mm\infty_1\infty_2}(\OO)$ modulo the action of $\{\mathfrak{I}, \mathfrak{R}_{-+}\}$ (where $\mathfrak{I}$ is the identity class). 
Consider any $\A \in \Clt_{\mm\infty_1\infty_2}(\OO)$. Choose $\bb_1 \in \A_0^{-1}$, and write $\bb_1\mm = \alpha_1(\beta_1\Z+\Z)$, $\gamma_1\OO \in \bb_1\A$ with $\gamma_1 \in \bb_1$, and $\gamma_1 = \alpha_1\sympt{\r_1}{\beta_1}$ such that $\alpha_1, \gamma_1$ are totally positive and $\rho_1(\beta_1)>\rho_2(\beta_1)$, so that $\tilde\Upsilon_\mm(\A) = \SL_2(\Z) \cdot (\r_1,\beta_1)$. Choose some $A \in \GL_2(\Z)$ such that $\det(A)=-1$ and $\rho_2(j_{\!A}(\beta_1)) < 0 < \rho_1(j_{\!A}(\beta_1))$. Choose some $\n \in \Z^2$ such that $\delta := \sympt{\r_1+\n}{\beta_1}$ has $\rho_1(\delta)<0<\rho_2(\delta)$. 
Let $\bb_2 = j_{\!A}(\beta_1)\bb_1$, $\alpha_2 = \alpha_1 j_{\!A}(\beta_1)^2$, $\beta_2 = A\cdot\beta_1$, $\gamma_2 = -j_{\!A}(\beta_1)\delta$, and $\r_2 = A(\r_2+\n) = s_{\!A}(\beta)A(\r_2+\n)$. We may then check that $\bb_2 \in j_A(\beta_1)\A_0^{-1} = (\sR_{-+}\A)_0^{-1}$,
\begin{equation}
\bb_2\mm 
= j_{\!A}(\beta_1)\alpha_1(\beta\Z+\Z) 
= \alpha_1 j_{\!A}(\beta_1)^2((A\cdot\beta)\Z+\Z),
\end{equation}
$\gamma_2\OO = j_{\!A}(\beta_1)\delta\OO \in j_{\!A}(\beta_1)\bb_1\sR_{-+}\A = \bb_2\sR_{-+}\A$, $\gamma_2 \in \bb_2$, and 
\begin{equation}
\gamma_2
= \alpha_1 j_{\!A}(\beta_1)^2 \sympt{A(\r_1+\n)}{A\cdot\beta_1} = \alpha_2\sympt{\r_2}{\beta_2}.
\end{equation}
Moreover, $\alpha_2, \gamma_2$ are totally positive, and $\Nm(j_{\!A}(\beta_1)) = \det(A) \frac{\rho_1(\beta_1)-\rho_2(\beta_1)}{\rho_1(\beta_2)-\rho_2(\beta_2)}$, and taking the sign of each factor shows that $\rho_1(\beta_2)>\rho_2(\beta_2)$. Therefore,
\begin{align}
\tilde\Upsilon_{\mm}(\mathfrak{R}_{-+}\A) 
&= \SL_2(\Z) \cdot (\r_2, \beta_2) 
= \SL_2(\Z) \cdot A \cdot (\r_1,\beta_1) 
= A \cdot \SL_2(\Z) \cdot (\r_1,\beta_1)
= A \cdot \tilde\Upsilon_{\mm}(\A).
\end{align}
This proves that $\Upsilon_\mm$ defines a function from $\Clt_{\mm\infty_2}(\OO) \to \GL_2(\Z)\backslash(\Q^2/\Z^2 \times \Fquad)$ making the diagram \eqref{eq:correspondence} commute. (It then follows from the corresponding statements for $\tilde\Upsilon_\mm$ that $\Upsilon_\mm$ factors through the induced extension map $\ol\ext_{(\OO;\mm\infty_2)}^{(\OO';\mm\infty_2)}$, that its image is $\GL_2(\Z)\backslash \M_{\OO',\mm}$, and that is is injective whenever $\OO'=\OO$.)
\end{proof}

The maps $\tilde\Upsilon_\mm$ and $\Upsilon_\mm$ have some unintuitive behavior that should be pointed out. 
Firstly, by varying $\mm$, every $(\r, \beta)$ lies in the image of infinitely many $\tilde\Upsilon_\mm$. 
Indeed, if $(\r,\beta) \in \M_{\OO,\mm}$, then $(\r,\beta) \in \M_{\OO,\nn}$ for every nonzero $\OO$-ideal $\nn \subseteq \mm$. 

If $(\r,\beta) = \tilde\Upsilon_\mm(\A)$ for some primitive class $\A$ in a ray class group $\Cl_{\mm\infty_1\infty_2}(\OO)$, then $(\r,\beta) \in \M_{\OO,\nn}$ if and only if $\nn \subseteq \mm$ and $\nn$ is $\OO$-invertible; in such cases, we may consider $\A$ to be the canonical preimage of $(\r,\beta)$ and $\mm$ to be the ``level'' of $(\r,\beta)$. One might hope that every $(\r,\beta)$ is in the image of a ray class group (rather than only a ray class monoid); however, that is not always true.
\begin{egz}\label{eg:notraygroupimage}
Let $(\r,\beta) = \left(\smcoltwo{0}{1/3},3\sqrt{3}\right) = \left(\smcoltwo{-2}{1/3},3\sqrt{3}\right)$, where the latter representative is chosen so that $\sympt{\r}{\beta} = \sqrt{3}+2$ is totally positive. In order for $(\r,\beta) \in \SL_2(\Z)\backslash \M_{\mm,\OO}$, we must have $\OO = \ord\!\left(3\sqrt{3}\Z+\Z\right) = 3\sqrt{3}\Z+\Z$ and $(\sqrt{3}+2)\mm \subseteq 3\sqrt{3}\Z+\Z$. The latter condition and the integrality of $\mm$ implies that 
\begin{equation}
\mm 
\subseteq \frac{1}{\sqrt{3}+2}\!\left(3\sqrt{3}\Z+\Z\right) \cap \left(3\sqrt{3}\Z + \Z\right) 
= 3\sqrt{3}\Z+3\Z.
\end{equation}
However, $3\sqrt{3}\Z+3\Z=3\OO_{\Q(\sqrt{3})}$ is not $\OO$-invertible. In the partial order on ideals, there are four largest invertible $\OO$-ideals contained in $3\sqrt{3}\Z+3\Z$, giving the four possibilities
\begin{align}
\mm &\subseteq 9\sqrt{3}\Z+3\Z, \\
\mm &\subseteq 3\sqrt{3}\Z+9\Z, \\
\mm &\subseteq (3\sqrt{3}+3)\Z+9\Z, \text{ or} \\
\mm &\subseteq (3\sqrt{3}+6)\Z+9\Z.
\end{align}
Since there is no unique maximum $\OO$-invertible value of $\mm$ for which $(\r,\beta) \in \M_{\mm,\OO}$ (i.e., no well-defined ``level''), it follows that $(\r,\beta)$ is not in the image of a ray class group.

We give more details in the case $\mm = 9\sqrt{3}\Z + 3\Z$. We compute $\tilde\Omega_\mm(\r,\beta)$ by taking $\alpha = 3$, $\bb = \alpha\colonideal{\beta\Z+\Z}{\mm} = 3\colonideal{\OO}{3\OO} = \OO$, and $\gamma = \alpha\sympt{\r}{\beta} = 3\sqrt{3}+6$. Thus,
\begin{equation}
\tilde\Omega_\mm(\r,\beta) = [\gamma\bb^{-1}] = [(3\sqrt{3}+6)\OO] = [3\sqrt{3}\OO] \in \Clt_{\mm\infty_1\infty_2}(\OO).
\end{equation}
Thus, $(\r,\beta) = \tilde\Upsilon_\mm([3\sqrt{3}\OO])$, and the class $[3\sqrt{3}\OO]$ is imprimitive. (It can similarly be checked that $\tilde\Omega_\mm(\r,\beta)$ is imprimitive in the cases $\mm = 3\sqrt{3}\Z+9\Z$, $\mm = (3\sqrt{3}+3)\Z+9\Z$, and $\mm = (3\sqrt{3}+6)\Z+9\Z$. It follows that the same is true for subideals of these.)
\end{egz}
This example shows the necessity of working with the pathologies of the ray class monoids to describe all RM points in terms of ideal-theoretic data defining zeta functions.

Finally, we note some consequences of the reduction theory for binary quadratic forms for our correspondence. These are important for intermediate steps in \Cref{sec:partialzero} that use continued fraction expansions corresponding to reduced representatives.

\begin{defn}\label{defn:reduced}
Let $\A \in \Clt_{\mm\infty_1\infty_2}(\OO)$ (resp.~$\Clt_{\mm\infty_2}(\OO)$), and write
\begin{align}
\tilde\Upsilon_\mm(\A) &= \SL_2(\Z) \cdot (\r,\beta) &
\mbox{(resp.~} \Upsilon_\mm(\A) &= \GL_2(\Z) \cdot (\r,\beta) \mbox{)}.
\end{align}
We say that $(\r, \beta)$ is a \textit{reduced} representative of $\tilde\Upsilon_\mm(\A)$ (resp.~$\Upsilon_\mm(\A)$) if $-1 \leq r_1 < 0$, $0 \leq r_2 < 1$, and $0 < \rho_2(\beta) < 1 < \rho_1(\beta)$.
\end{defn}

\begin{prop}\label{prop:reduced}
Every $\tilde\Upsilon_\mm(\A)$ (resp.~$\Upsilon_\mm(\A)$) has at least one, and at most finitely many, reduced representatives.
\end{prop}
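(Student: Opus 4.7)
The plan is to prove existence and finiteness separately, combining the classical reduction theory of real quadratic irrationals with the elementary fact that the characteristics $\r$ lie in a finite subgroup $\tfrac{1}{N}\Z^2/\Z^2$ of $\Q^2/\Z^2$.

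For existence, I start with any representative $(\r_0,\beta_0)$ of the orbit. First, invoking classical continued fraction theory (to be recalled in Hirzebruch--Jung form in \Cref{sec:continuedfractions}), the $\SL_2(\Z)$-orbit of $\beta_0$ contains real quadratic irrationals $\beta$ satisfying $0 < \rho_2(\beta) < 1 < \rho_1(\beta)$; these constitute the cycle of purely periodic (reduced) terms in the continued fraction expansion. Picking such a $\beta = A \cdot \beta_0$ with $A \in \SL_2(\Z)$, I set $\r' := s_A(\beta_0) A \r_0$, producing a representative $(\r',\beta)$ of the same orbit. Since $\r'$ is only well-defined modulo $\Z^2$, there is a unique integer translate $\r \equiv \r' \pmod{\Z^2}$ with $r_1 \in [-1,0)$ and $r_2 \in [0,1)$, and $(\r,\beta)$ is reduced by construction.

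For finiteness, I fix $N \in \N$ with $N\r_0 \in \Z^2$. Both the $\SL_2(\Z)$-action on the $\r$-coordinate and translation by $\Z^2$ preserve the finite subgroup $\tfrac{1}{N}\Z^2/\Z^2$, so every representative $(\r,\beta)$ of the orbit satisfies $N\r \in \Z^2$. The set of reduced quadratic irrationals in the $\SL_2(\Z)$-orbit of $\beta_0$ is finite, being the continued fraction cycle, and for each such $\beta$ there are at most $N^2$ values of $\r$ satisfying simultaneously $r_1 \in [-1,0)$, $r_2 \in [0,1)$, and $N\r \in \Z^2$. Multiplying gives a finite bound on the total number of reduced representatives. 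The case of $\Upsilon_\mm$ reduces immediately to that of $\tilde\Upsilon_\mm$: every $\GL_2(\Z)$-orbit on $\Q^2/\Z^2 \times (F \setminus \Q)$ is a union of at most two $\SL_2(\Z)$-orbits, so both conclusions transfer by summing.

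The only nontrivial ingredient is the reduction theory itself---that every $\SL_2(\Z)$-orbit on real quadratic irrationals contains an element satisfying \Cref{defn:reduced} and that the set of such reduced elements in a single orbit is finite. This is standard, and is in any case developed from scratch in \Cref{sec:continuedfractions}, so I do not expect it to be a real obstacle; the remainder of the argument is pigeonhole counting within $\tfrac{1}{N}\Z^2/\Z^2$.
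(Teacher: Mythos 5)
Your proof is correct and takes essentially the same route as the paper: the paper's proof is a one-line citation to Katok's reduction theory for Hirzebruch--Jung continued fractions, which supplies both the existence and finiteness of reduced $\beta$ in an $\SL_2(\Z)$-orbit, leaving the finite pigeonhole on $\r$ within $\tfrac{1}{N}\Z^2/\Z^2$ implicit. You have simply made the counting on the $\r$-coordinate and the passage from $\SL_2(\Z)$- to $\GL_2(\Z)$-orbits explicit.
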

\begin{proof}
Follows from \cite[Thm.\ 1.3 and Thm.\ 1.4]{katoknotes}.
\end{proof}

\section{Modular properties of the $q$-Pochhammer symbol}\label{sec:wannabe}

In this section, we describe how the $q$-Pochhammer symbol transforms under modular transformations and give a framework for understanding the transformation factor as a \textit{modular 1-cocycle} or \textit{Jacobi 1-cocycle} and evaluating its \textit{real multiplication values} (or more generally, its \textit{stable values}), which are cohomological invariants. We define a notion of \textit{$w$-modular form} 
for a \textit{modular cocycle} $A \mapsto w_{\!A}$; this notion is a multiplicative analogue of Zagier's concept of a \textit{holomorphic quantum modular form}. The $q$-Pochhammer symbol with characteristics is a $w$-modular form for $w=\shin^\r$, the Shintani--Faddeev modular cocycle. We endeavor to keep the theory as simple as possible for now so as not to obscure what's going on, postponing more sophisticated cohomological interpretations to \Cref{sec:cohomology}.

\subsection{A working definition for first cohomology}\label{sec:working}

Let $\FF$ be any sheaf of multiplicative groups of $\C$-valued functions on a topological space $X$, and let $X^\circ$ be an open subset of $X$. Let $\Gamma$ be a group with a continuous action $\Gamma \times X \to X$. For $A \in \Gamma$ and $f \in \FF(\UU)$, we write $f^{A} \in \FF(A^{-1}\cdot \UU)$ for the function defined by $f^{A}(u) = f(A\cdot u)$.

We are concerned primarily with sheaves of analytic or meromorphic functions, usually on connected open sets, for which restriction maps are injective. To lessen the notational overload, we will not write the restriction maps unless they are needed. In other words, if $\UU_1, \UU_2 \subseteq X$ and $f_j \in \FF(\UU_j)$, we write
\begin{align}
f_1 = f_2 &\mbox{ to mean } f_1|_{\UU_1 \cap \UU_2} = f_2|_{\UU_1 \cap \UU_2} \mbox{ and} \\
f_1f_2 &\mbox{ to mean } (f_1|_{\UU_1 \cap \UU_2})(f_2|_{\UU_1 \cap \UU_2}).
\end{align}
Additionally, for arbitrary subsets $S \subseteq X$, we will write $\FF(S) := \bigcup_{\UU \supseteq S} \FF(\UU)$, where the union denotes a direct limit over open sets $\UU$ taken with respect to the restriction maps. Moreover, for arbitrary $S_1, S_2 \subseteq X$, we set $\FF_{S_1}\!(S_2) := \FF(S_1 \cap S_2)$.

\begin{defn}\label{defn:domainscohom}
A \textit{system of domains} $\UU = (\UU_{\!A})_{\!A \in \Gamma}$ is a $\Gamma$-tuple of open subsets of $X^\circ$ that is also an open cover of $X^\circ$, that is, a map $(A \mapsto \UU_{\!A}) : \Gamma \to \{\mbox{open subsets of $X^\circ$}\}$ such that $X^\circ = \bigcup_{A \in \Gamma} \UU_{\!A}$.
A \textit{$1$-cochain for $\UU$} is an element $w = (w_{\!A})_{\!A\in\Gamma}$ of the multiplicative group
\begin{equation}
C_{\UU}^1(\Gamma,\FF) = \prod_{A \in \Gamma} \FF(\UU_{\!A}).
\end{equation}
A \textit{$1$-cocycle for $\UU$} is an element of the subgroup
\begin{align}
Z_{\UU}^1(\Gamma,\FF) = \{w \in C_{\UU}^1(\Gamma,\FF) : w_{A_1A_2} = w_{\!A_1}^{A_2} w_{\!A_2}\}.
\end{align}
A \textit{$1$-coboundary for $\UU$} is an element of the subgroup 
\begin{align}
B_{\UU}^1(\Gamma,\FF) = \{w \in C_{\UU}^1(\Gamma,\FF) : w_{\!A} = f^{A}f^{-1} \mbox{ for some } f \in \FF(X^\circ)\}.
\end{align}
A \textit{first cohomology class for $\UU$} is an element of the quotient group
\begin{align}
H_{\UU}^1(\Gamma,\FF) &= \frac{Z_{\UU}^1(\Gamma,\FF)}{B_{\UU}^1(\Gamma,\FF)}.
\end{align}
A cohomology class is typically denoted as $[w]$ for $w \in Z_{\UU}^1(\Gamma,\FF)$.
\end{defn}

\subsection{Modular and Jacobi cocycles}

We now restrict to the group actions and sheaves of interest for this paper. On any complex manifold, we will denote by $\AA$ and $\MM$ the sheaves of rings of analytic functions and meromorphic functions, respectively, so that $\AA^\times$ and $\MM^\times$ are the sheaves of multiplicative groups of nowhere vanishing analytic functions and nonzero meromorphic functions, respectively.

\begin{defn}
Suppose $X = \C \cup \{\infty\}$, $X^\circ = \C$, and $\Gamma$ is a discrete subgroup of $\SL_2(\R)$ acting by fractional linear transformations $\smmattwo{a}{b}{c}{d}\cdot\tau = \frac{a\tau+b}{c\tau+d}$. Suppose $\HH \subseteq \UU_{\!A} \subseteq \C \cup \{\infty\}$ for $A \in \Gamma$, and $\{\UU_{\!A}\}_{\!A \in \Gamma}$ is an open cover of $\C$.  If $\FF=\AA^\times$ or $\FF=\MM^\times$, the elements of the groups $C_{\UU}^1(\Gamma,\FF)$, $Z_{\UU}^1(\Gamma,\FF)$, $B_{\UU}^1(\Gamma,\FF)$, and $H_{\UU}^1(\Gamma,\FF)$ are called \textit{modular $1$-cochains}, \textit{modular $1$-cocycles}, \textit{modular $1$-coboundaries}, and \textit{modular first cohomology classes}, respectively.
\end{defn}

The first example of a weight cocycle is the standard modular cocycle.
\begin{egz}
For $A = \smmattwo{a}{b}{c}{d} \in \SL_2(\R)$ and $\tau \in \C$, define
\begin{equation}
j_{\!A}(\tau) = c\tau+d.
\end{equation}
For any $\Gamma \leq \SL_2(\R)$, $j_{\!A}$ is an analytic modular cocycle for the constant system of domains $\UU_{\!A} = \C$.
\end{egz}

\begin{defn}
Suppose $X = X^\circ = \C \times (\C \setminus \Q)$ and $\Gamma$ is a discrete subgroup of $\R^2 \semidirect \SL_2(\R)$ acting by the Jacobi action 
\begin{equation}
(\m,A)\cdot(z,\tau) = \left(\frac{z}{j_{\!A}(\tau)}+\sympt{\m}{A\cdot\tau},A\cdot\tau\right)
\end{equation}
(where $A \cdot \tau$ denotes the fractional linear transformation action).
Suppose $\C \times \HH \subseteq \UU_{\!A} \subseteq X$ for $A \in \Gamma$, and $\{\UU_{\!A}\}_{\!A \in \Gamma}$ is an open cover of $X^\circ$. 
If $\FF=\AA^\times$ or $\FF=\MM^\times$, the elements of the groups $C_{\UU}^1(\Gamma,\FF)$, $Z_{\UU}^1(\Gamma,\FF)$, $B_{\UU}^1(\Gamma,\FF)$, and $H_{\UU}^1(\Gamma,\FF)$ are called \textit{Jacobi $1$-cochains}, \textit{Jacobi $1$-cocycles}, \textit{Jacobi $1$-coboundaries}, and \textit{Jacobi first cohomology classes}, respectively.
\end{defn}

\subsection{Cocycles as generalized modular weights}

Recall that a meromorphic modular form of weight $k$ is a meromorphic function $f : \HH \to \C$ whose coboundary is $j_{\!A}^k$; that is, such that $f(A\cdot\tau) = j_{\!A}(\tau)^k f(\tau)$ This definition may be generalized so as to replace $j_{\!A}^k$ by an arbitrary modular cocycle $w$.
\begin{defn}
Let $w \in Z_{\UU}^1(\Gamma,\MM_\C^\times)$ be a modular cocycle for some system of domains $\UU$.
A meromorphic complex-valued function $f : \HH \to \C$ is a 
\textit{$w$-modular form}\footnote{The author has used the term ``wannabe modular form'' in several talks on the subject but is now aware that Zagier calls the additive analogues of such functions---that is, $f$ such that $f(A\cdot\tau)-f(\tau)$ or more generally $j_{\!A}(\tau)^{-k} f(A\cdot\tau)-f(\tau)$ has a larger domain of analyticity---``holomorphic quantum modular forms.'' The latter terminology has appeared in print in work of Bringmann, Ono, and Wagner \cite{bow}. We primarily use the term ``$w$-modular form'' (where the cocycle $w$ is specified) in this work, but when discussing these objects in general, we refer to them informally as ``multiplicative holomorphic quantum modular forms.''} 
if
\begin{equation}
f(A\cdot\tau) = w_{\!A}(\tau)f(\tau)
\end{equation}
for all $\tau \in \HH$  (except where both sides have a pole).
\end{defn}
We also define a compatible generalization of meromorphic Jacobi forms.
\begin{defn}
Let $u \in Z_{\UU}^1(\Gamma,\MM_{\C \times (\C\setminus\Q)}^\times)$ be a Jacobi cocycle for some system of domains $\UU$.
A meromorphic complex-valued function of two variables $g(z,\tau)$ for $z \in \C$ and $\tau \in \HH$ is 
a \textit{$u$-Jacobi form} if 
\begin{equation}
g((\m,A)\cdot(z,\tau)) = u_{(\m,A)}(z,\tau)g(z,\tau)
\end{equation}
for all $(z,\tau) \in \C \times \HH$ (except where both sides have a pole).
\end{defn}

\subsection{Stable values and real multiplication values of modular cocycles}\label{sec:stable}

In this section, we show how a modular $1$-cocycle---and indeed, a first cohomology class---can sometimes be evaluated at a point to produce a numerical value. Doing so requires choosing ``canonical'' generators for certain stabilizers. We restrict to the case when $\Gamma$ is a finite-index subgroup of $\SL_2(\Z)$.

\begin{defn}\label{defn:generator}
Let $\Gamma$ be a finite-index subgroup of $\SL_2(\Z)$, and let $\beta \in \C \cup \infty$. Define $A_\beta^+$ to be the unique element of $\Gamma$ with the following properties:
\begin{itemize}
\item[(1)] The stabilizer $\stab_\Gamma(\beta) = \langle A_\beta^+ \rangle$ (if $-I \nin \Gamma$) or $\stab_\Gamma(\beta) = \langle \pm I, A_\beta^+ \rangle$ (if $-I \in \Gamma$).
\item[(2)] The following condition holds in the appropriate case:
\begin{itemize}
\item If $\beta \in \Rquad$, then $A_\beta^+ \smcoltwo{\beta}{1} = \lambda\smcoltwo{\beta}{1}$ for some $\lambda>1$.
\item If $\beta \in \SL_2(\Z) \cdot \infty = \Q \cup \{\infty\}$, then $A_\beta^+ = P\smmattwo{1}{b}{0}{1}P^{-1}$ for some $b>0$ and some $P \in \SL_2(\Z)$.
\item If $\beta \in \SL_2(\Z) \cdot \frac{1+\sqrt{-3}}{2}$, then $A_\beta^+ = P\smmattwo{0}{-1}{1}{-1}P^{-1}$ for some $P \in \SL_2(\Z)$.
\item If $\beta \in \SL_2(\Z) \cdot \sqrt{-1}$, then $A_\beta^+ = P\smmattwo{0}{-1}{1}{0}P^{-1}$ for some $P \in \SL_2(\Z)$.
\item Otherwise, $A_\beta^+ = I$.
\end{itemize}
\end{itemize}
\end{defn}

The positivity conditions for choosing a generator of the stabilizer may be understood geometrically in terms of the fractional linear transformation action. For $\beta \in \Rquad$, the eigenvalue condition $\lambda>1$ means that $A_\beta^+$ acts on $\C \cup \{\infty\}$ with $\beta$ as an attracting fixed point, dynamically speaking. Additionally, $A_\beta^+$ preserves the modular geodesic between $\beta$ and $\beta'$ setwise, and it moves points on this geodesic a distance of $2\log\lambda$ toward $\beta$ in the hyperbolic metric. For $\beta \in \Q \cup \{\infty\}$, the matrix $A_\beta^+$ acts by shifting points along horocycles centered on the point $\tau = \beta$, and the condition $b>0$ specifies the direction of this movement.

\begin{prop}\label{prop:stablewd}
Let $\Gamma$ be a finite-index subgroup of $\SL_2(\Z)$, $\UU$ a system of domains for $\Gamma$, and $w$ a meromorphic modular $1$-cocycle for $\UU$.
Let $\beta \in \C \cup \{\infty\}$, and suppose $\beta \in \UU_{\!A^+_\beta}$. If defined, the value
$w[\beta] := w_{\!A_\beta^+}(\beta) \in \C^\times$
depends only on the
class of $w$ in $\frac{Z_{\UU}^1(\Gamma,\MM_\C^\times)}{B_{\UU}^1(\Gamma,\MM_{\{\beta\},\C}^\times)}$, where $\MM_{\{\beta\},\C}$ denotes the sheaf of meromorphic functions that are analytic at $\beta$. In particular, $[w] \in H_{\UU}^1(\Gamma,\AA_\C^\times)$ defines a canonical value $[w][\beta] := w[\beta] \in \C^\times$. For $[w] \in H_{\UU}^1(\Gamma,\MM_\C^\times)$, we instead obtain $[w][\beta] \in \C^\times/\lambda^{2\Z}$ where $\lambda = j_{\!A^+_\beta}(\beta)$.
\end{prop}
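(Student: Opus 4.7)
The plan is to reduce everything to a local computation at $\beta$. Modifying $w$ by a coboundary $\delta_{\!A} = f^{\!A}/f$ (with $f$ meromorphic on $X^\circ$) multiplies $w_{A_\beta^+}(\beta)$ by
\begin{equation*}
\delta_{A_\beta^+}(\beta) = \lim_{\tau \to \beta} \frac{f(A_\beta^+\cdot\tau)}{f(\tau)}.
\end{equation*}
Since $A_\beta^+$ fixes $\beta$, the whole question reduces to evaluating this local ratio when $f$ has a zero or pole at $\beta$.

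Writing $f(\tau) = (\tau-\beta)^n \tilde{f}(\tau)$ with $n \in \Z$ the order of $f$ at $\beta$ (negative for a pole) and $\tilde{f}$ analytic nonvanishing at $\beta$, I obtain $\delta_{A_\beta^+}(\beta) = \bigl((A_\beta^+)'(\beta)\bigr)^n$. For a fractional linear transformation $A$, direct differentiation using $\det A = 1$ gives $A'(\tau) = j_{\!A}(\tau)^{-2}$; evaluated at the fixed point $\beta$, this is $\lambda^{-2}$. Hence $\delta_{A_\beta^+}(\beta) = \lambda^{-2n}$. The case $\beta = \infty$ is handled identically via the local coordinate $u = 1/\tau$, and the parabolic case of \Cref{defn:generator} yields $\lambda = 1$, making the formula trivial there.

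All three assertions now follow. For the main claim, any section $f \in \MM_{\{\beta\},\C}^\times(X^\circ)$ is a unit in the stalk at $\beta$---analytic and nonvanishing there---so $n = 0$ and $\delta_{A_\beta^+}(\beta) = 1$; this shows $w[\beta]$ depends only on the class of $w$ in $Z_{\UU}^1(\Gamma,\MM_\C^\times)/B_{\UU}^1(\Gamma,\MM_{\{\beta\},\C}^\times)$. For $[w] \in H^1_{\UU}(\Gamma,\AA_\C^\times)$, any coboundary representative $f \in \AA_\C^\times(X^\circ)$ is analytic and nonvanishing everywhere, so again $n=0$ and the value is canonical in $\C^\times$. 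For $[w] \in H^1_{\UU}(\Gamma,\MM_\C^\times)$, representatives $f \in \MM_\C^\times(X^\circ)$ can have any integer order at $\beta$, and the resulting factors sweep out the subgroup $\lambda^{2\Z}$, so $w[\beta]$ is well-defined only modulo $\lambda^{2\Z}$.

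The main conceptual step is the identification $(A_\beta^+)'(\beta) = \lambda^{-2}$, which is the sole source of the $\lambda^{2\Z}$ ambiguity and holds uniformly across the parabolic, elliptic, and hyperbolic cases enumerated in \Cref{defn:generator}. The remainder is routine bookkeeping---tracking orders of vanishing, passing to $u=1/\tau$ when $\beta=\infty$, and noting that $w_{A_\beta^+}$ taking values in $\MM_\C^\times$ automatically yields a nonzero limit at $\beta$ whenever the value is defined.
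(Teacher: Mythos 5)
Your proof is correct and follows essentially the same path as the paper's: both reduce to computing $\lim_{\tau\to\beta} f(A_\beta^+\cdot\tau)/f(\tau) = \lambda^{-2n}$ where $n$ is the order of $f$ at $\beta$, and then read off the three cases from whether $n$ can be nonzero. Your use of $(A_\beta^+)'(\beta) = j_{\!A_\beta^+}(\beta)^{-2}$ is a slightly cleaner way to extract the factor $\lambda^{-2n}$ than the paper's direct term-by-term manipulation of the Laurent series, but it is the same computation (the paper's key identity $\frac{a\tau+b}{c\tau+d} - \frac{a\beta+b}{c\beta+d} = \frac{\tau-\beta}{j_{\!A}(\tau)j_{\!A}(\beta)}$ is precisely the statement that $A'(\beta)=\lambda^{-2}$), and the rest of the argument is identical.
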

\begin{proof}
Let $A = A_\beta^+$.
Consider $w, \tilde{w} \in Z_{\UU}^1(\Gamma,\MM_\C^\times)$ such that $[\tilde{w}] = [w] \in H_{\UU}^1(\Gamma,\MM_\C^\times)$. Then, we can write
\begin{equation}
\tilde{w}_{\!A}(\tau) = \frac{f(A\cdot\tau)}{f(\tau)}w_{\!A}(\tau)
\end{equation}
for some $f \in \MM^\times(\C)$. Near $\tau = \beta$, we have a Laurent series expansion
\begin{equation}
f(\tau) = \sum_{k=n}^\infty c_k(\tau-\beta)^k
\end{equation}
for some $n \in \Z$ (possibly negative) and $c_n \neq 0$. Since $\beta = A\cdot\beta$, writing $A = \frac{a\tau+b}{c\tau+d}$, we have
\begin{align}
f(A\cdot\tau) 
&= \sum_{k=n}^\infty c_k\left(\frac{a\tau+b}{c\tau+d}-\frac{a\beta+b}{c\beta+d}\right)^k \\
&= \sum_{k=n}^\infty c_k\left(\frac{(ad-bc)(\tau-\beta)}{(c\tau+d)(c\beta+d)}\right)^k \\
&= \sum_{k=n}^\infty c_k j_{\!A}(\beta)^{-k}j_{\!A}(\tau)^{-k}(\tau-\beta)^k.
\end{align}
Thus, as $\tau \to \beta$, we have
\begin{equation}
\lim_{\tau \to \beta} \frac{f(A\cdot\tau)}{f(\tau)}
= \lim_{\tau \to \beta} \frac{c_n j_{\!A}(\beta)^{-n}j_{\!A}(\tau)^{-n}}{c_n}
= j_{\!A}(\beta)^{-2n}.
\end{equation}
Hence $\tilde{w}_{\!A}(\tau) = \lambda^{-2n}w_{\!A}(\tau)$ where $\lambda = j_{\!A}(\beta)$.
Therefore, $[w] \in H_{\UU}^1(\Gamma,\MM_\C^\times)$ defines a canonical element $[w][\beta] \in \C^\times/\lambda^{2\Z}$.

Moreover, if $f \in \MM_{\{\beta\},\C}^\times(\C)$, then $n=0$, so $\tilde{w}_{\!A}(\tau) = w_{\!A}(\tau)$. Therefore, the class of $w$ in $\frac{Z_{\UU}^1(\Gamma,\MM_\C^\times)}{B_{\UU}^1(\Gamma,\MM_{\{\beta\},\C}^\times)}$ defines a canonical value $[w][\beta] \in \C^\times$.
\end{proof}

\begin{defn}
We call $w[\beta] := w_{\!A_\beta^+}(\beta)$ the \textit{stable value} of $w$ at $\beta$; we may also write this quantity as $[w][\beta]$ to indicate that it only depends on the class $[w]$ (although we typically use the former for notational simplicity).
If $\beta \in \Rquad$, we also call $w[\beta]$ (or $[w][\beta]$) the \textit{real multiplication value (RM value)} of $w$ (or $[w]$) at $\beta$.
\end{defn}

We now describe the RM values of $j_{\!A}(\tau)$ and show that the two uses of $\lambda$ earlier in this section are consistent with each other.
\begin{lem}\label{lem:jeval}
If $\beta \in \C \cup \{\infty\}$ and $A_\beta^+ \smcoltwo{\beta}{1} = \lambda \smcoltwo{\beta}{1}$, then
\begin{equation}
    j[\beta] =  \lambda.
\end{equation}
\end{lem}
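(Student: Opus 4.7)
The proof is essentially a one-line unpacking of the eigenvalue equation, so the plan is to write things down carefully rather than to invoke deeper machinery. First I would recall that for the standard cocycle $j$, the system of domains is constant with $\UU_{\!A} = \C$, so the hypothesis ``$j[\beta]$ is defined'' in \Cref{prop:stablewd} holds automatically for every finite $\beta$, and $j[\beta]$ is literally given by $j_{\!A_\beta^+}(\beta)$.

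Next I would write $A_\beta^+ = \smmattwo{a}{b}{c}{d}$, so that by definition $j_{\!A_\beta^+}(\tau) = c\tau + d$. The hypothesis $A_\beta^+ \smcoltwo{\beta}{1} = \lambda \smcoltwo{\beta}{1}$ expands componentwise to
\begin{equation}
\coltwo{a\beta + b}{c\beta + d} = \coltwo{\lambda\beta}{\lambda},
\end{equation}
and the second component immediately yields $c\beta + d = \lambda$. Therefore
\begin{equation}
j[\beta] = j_{\!A_\beta^+}(\beta) = c\beta + d = \lambda,
\end{equation}
which is the desired conclusion.

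It is worth noting for bookkeeping that this lemma is exactly what is needed to reconcile the two uses of the symbol $\lambda$ in \Cref{sec:stable}: the eigenvalue of $A_\beta^+$ appearing in \Cref{defn:generator}(2) and the quantity $\lambda = j_{\!A_\beta^+}(\beta)$ governing the ambiguity $\C^\times/\lambda^{2\Z}$ in \Cref{prop:stablewd}. One should also observe that the formula is consistent with the degenerate cases of \Cref{defn:generator}: if $\beta \in \Q$ and $A_\beta^+ = P\smmattwo{1}{b}{0}{1}P^{-1}$, a short computation with $P\smcoltwo{1}{0}$ as a projective representative of $\beta$ shows the eigenvalue is $1$, in agreement with $j[\beta] = 1$; and if $A_\beta^+ = I$ the identity $j[\beta] = 1 = \lambda$ is trivial. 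Thus there is no real obstacle — the only subtlety is making sure the case $\beta = \infty$ is understood as excluded (or handled projectively), since $\smcoltwo{\beta}{1}$ is not then a valid homogeneous vector.
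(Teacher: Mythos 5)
Your proof is correct, and it is genuinely more elementary than the one in the paper. You read off $\lambda = c\beta + d$ directly from the second component of the eigenvector equation, and identify this with $j[\beta] = j_{A_\beta^+}(\beta)$ by definition of the stable value. The paper instead proves the conjugation-invariance identity $j_{RAR^{-1}}(R\cdot\beta) = j_{A}(\beta)$ for all $R \in \GL_2(\C)$ via the cocycle relation for $j$, and then chooses $R$ putting $A_\beta^+$ in Jordan form $\smmattwo{\lambda^{-1}}{\delta}{0}{\lambda}$, from which the value $\lambda$ is visible. Your argument is a one-line coordinate computation and avoids introducing the auxiliary $R$; the paper's argument is coordinate-free and has the modest advantage that the conjugation step packages the $\beta = \infty$ case cleanly (since $R$ can move $\infty$ to a finite point before the evaluation is made), whereas you handle that case as a side remark. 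Both are fine; yours is arguably the proof one would expect to see for a statement of this weight, and your closing observation about reconciling the two uses of $\lambda$ in the section correctly identifies the reason the lemma is recorded at all.
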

\begin{proof}
Let $A = A_\beta^+$.
For any $R \in \GL_2(\C)$, we have
\begin{align}
    j_{\!RAR^{-1}}(R\cdot\beta) 
    &= j_{\!RA}(\beta)j_{\!R^{-1}}(R\cdot\beta) \\
    &= j_{\!R}(A\cdot\beta)j_{\!A}(\beta)j_{\!R^{-1}}(R\cdot\beta) \\
    &= j_{\!R}(\beta)j_{\!A}(\beta)j_{\!R^{-1}}(R\cdot\beta).
\end{align}
But also
$1 = j_{\!I}(\beta) = j_{\!R^{-1}R}(\beta) = j_{\!R^{-1}}(R\cdot\beta)j_{\!R}(\beta)$.
Thus, $j_{\!RAR^{-1}}(R\cdot\beta) = j_{\!A}(\beta)$. 
Choose $R$ such that $RAR^{-1}$ is in Jordan form:
\begin{equation}
    RAR^{-1} = \mattwo{\lambda^{-1}}{\delta}{0}{\lambda},
\end{equation}
where $\delta = 0$ if $\lambda \neq 1$.
Thus, $j_{\!A}(\beta) = j_{\!RAR^{-1}}(R\cdot\beta) = \lambda$.
\end{proof}

More precisely, we can directly compute in the hyperbolic, parabolic, and elliptic cases, respectively, that:
\begin{itemize}
\item If $\beta \in \Rquad$, then $j[\beta]$ is a unit in the real quadratic field $\Q(\beta)$.
\item If $\beta \in \SL_2(\Z) \cdot \infty = \Q \cup \{\infty\}$, then $j[\beta]=1$.
\item If $\beta \in \SL_2(\Z) \cdot \frac{1+\sqrt{-3}}{2}$, then $j[\beta]=\frac{-1+\sqrt{-3}}{2}$.
\item If $\beta \in \SL_2(\Z) \cdot \sqrt{-1}$, then $j[\beta]=\sqrt{-1}$.
\item Otherwise, $j[\beta]=1$.
\end{itemize}
Moreover, in the hyperbolic case, we have the following characterization of $j[\beta]$ as the fundamental totally positive unit of a certain real quadratic order.
\begin{prop}\label{prop:jevalreal}
Let $\beta \in \Rquad$ and $\lambda = j[\beta]$. If $\OO = \colonideal{\beta\Z + \Z}{\beta\Z + \Z}$, then $\lambda$ is a generator of the group $\OO^\times_+$ of totally positive units of $\OO$.
\end{prop}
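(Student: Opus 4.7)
The plan is to exhibit $\stab_{\SL_2(\Z)}(\beta)/\{\pm I\}$ as canonically isomorphic to $\OO^\times_+$ via the eigenvalue map $A \mapsto j_{\!A}(\beta)$, and then observe that the distinguished generator $A_\beta^+$ of \Cref{defn:generator} corresponds under this isomorphism to the fundamental totally positive unit greater than $1$.

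First I would show that $j_{\!A}(\beta) \in \OO^\times$ for any $A = \smmattwo{a}{b}{c}{d} \in \stab_{\SL_2(\Z)}(\beta)$. The fixed-point equation forces $A\smcoltwo{\beta}{1} = (c\beta+d)\smcoltwo{\beta}{1}$, so setting $\mu := c\beta+d$ we have $\mu\cdot\beta = a\beta+b \in \beta\Z+\Z$ and $\mu\cdot 1 \in \beta\Z+\Z$; hence $\mu \in \colonideal{\beta\Z+\Z}{\beta\Z+\Z} = \OO$. The Galois-conjugate equation yields $\mu\mu' = \det(A) = 1$, so $\mu$ is a unit of $\OO$ with $\Nm(\mu) = 1$. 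The map $A \mapsto \mu$ is a homomorphism since the composition of eigenvalues for a common eigenvector is multiplicative. Conversely, given any unit $\mu \in \OO^\times$ with $\Nm(\mu) = 1$, multiplication by $\mu$ on $\beta\Z+\Z$ in the basis $\{\beta,1\}$ is an integer matrix $A \in \Mat_{2\times 2}(\Z)$ with $\det(A) = \Nm(\mu) = 1$, so $A \in \SL_2(\Z)$ and $A$ visibly fixes $\beta$. This sets up a group isomorphism between $\stab_{\SL_2(\Z)}(\beta)$ and the subgroup of norm-one units of $\OO$.

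To reach $\OO^\times_+$, I would quotient by $\{\pm I\}$. Since $\mu\mu' = 1$, the numbers $\rho_1(\mu)$ and $\rho_2(\mu) = \mu'$ share a sign, so every norm-one unit of $\OO$ is either totally positive or totally negative, and the isomorphism above sends $\pm I$ to $\pm 1$. Passing to the quotient gives $\stab_{\SL_2(\Z)}(\beta)/\{\pm I\} \cong \OO^\times_+$. Dirichlet's unit theorem applied to the order $\OO$ makes $\OO^\times_+$ infinite cyclic, with a unique generator $\e$ satisfying $\rho_1(\e) > 1$. Under the isomorphism, this $\e$ corresponds to the generator of $\stab_{\SL_2(\Z)}(\beta)/\{\pm I\}$ singled out by the condition $\lambda > 1$ in \Cref{defn:generator}---namely $A_\beta^+$---and its image $\lambda = j_{\!A_\beta^+}(\beta)$ is the desired generator of $\OO^\times_+$.

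The argument is essentially bookkeeping; the only real subtlety concerns signs. One must verify that $-I$ maps to $-1$ (rather than to the identity), so that quotienting by $\{\pm I\}$ produces $\OO^\times_+$ itself rather than a two-fold cover, and that the attracting-fixed-point normalization $\lambda > 1$ from \Cref{defn:generator} coincides with the normalization that distinguishes the fundamental totally positive unit of $\OO$ above $1$ from its inverse and its powers.
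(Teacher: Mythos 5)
Your proof is correct and takes essentially the same approach as the paper: the paper's argument shows $\lambda$ is a power of the fundamental totally positive unit $\e$ and vice versa (which forces $\lambda = \e$), while you package the same two facts — that $j_{\!A}(\beta)$ is a norm-one unit of $\OO$ for $A \in \stab_{\SL_2(\Z)}(\beta)$, and that every norm-one unit arises as multiplication by a matrix in $\stab_{\SL_2(\Z)}(\beta)$ — as an explicit group isomorphism $\stab_{\SL_2(\Z)}(\beta)/\{\pm I\} \cong \OO^\times_+$. The reformulation is cleaner and makes the sign bookkeeping (that $\lambda > 1$ forces $\lambda' > 0$) more visibly load-bearing, but the underlying argument is identical.
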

\begin{proof}
Write $\OO^\times_+ = \langle \e \rangle$.
Clearly $\lambda$ is a totally positive unit in $\OO$ (as it is in $\OO$, is greater than $1$, and is the eigenvalue of an integral matrix), so $\lambda$ is some positive integral power of $\e$. On the other hand, $\e(\beta\Z+\Z) = \beta\Z+\Z$, so there exists $a,b,c,d \in \Z$ such that
\begin{align}
\e\beta &= a\beta + b, \\
\e &= c\beta + d.
\end{align}
That is, $\e\smcoltwo{\beta}{1} = \smmattwo{a}{b}{c}{d}\smcoltwo{\beta}{1}$. Since $\e$ is an eigenvalue of $\smmattwo{a}{b}{c}{d}$, it follows that $\e' = \e^{-1}$ is also an eigenvalue of $\smmattwo{a}{b}{c}{d}$, so $\det \smmattwo{a}{b}{c}{d} = 1$. Therefore, $\smmattwo{a}{b}{c}{d}$ is some integral power of $A_\beta^+$, so $\e$ is an integral power of $\lambda$. As we've already shown that $\lambda$ is a positive integral power of $\e$, we conclude that $\lambda = \e$.
\end{proof}

\subsection{Stable values and real multiplication values of Jacobi cocycles}\label{sec:stablejacobi}

On can define stable values of Jacobi cocycles is a similar way to stable values of modular cocycles. We will restrict to irrational $\beta$ because the Jacobi group action is not well-defined on $(\C \cup \{\infty\}) \times (\C \cup \{\infty\})$ but is well-defined on $\C \times (\C \setminus \Q)$. Stable values will be trivial except when $\beta \in \Rquad$ (real multiplication points) and $\beta \in \left(\SL_2(\Z) \cdot \sqrt{-1}\right) \cup \left(\SL_2(\Z) \cdot \frac{-1+\sqrt{-3}}{2}\right)$ (elliptic points). It will turn out that (ignoring rational points) the stable values of Jacobi cocycles are identical to stable values of related modular cocycles.

\begin{defn}
Let $\Gamma$ be a finite-index subgroup of $\Z^2 \semidirect \SL_2(\Z)$.
Let $\beta \in \C \setminus \Q$ and $\ww \in \beta\Q+\Q$.
Define $\m_{\ww,\beta}^+$ to be the unique element of $\Z^2$ such that
\begin{equation}
\stab_{\Gamma}(z,\beta) = \langle(\m_{\ww,\beta}^+,B_{\ww,\beta}^+)\rangle \mbox{ or } \langle(\mathbf{0},-I), (\m_{\ww,\beta}^+,B_{\ww,\beta}^+)\rangle,
\end{equation}
where $B_{\ww,\beta}^+ = (A_\beta^+)^k$ for some $k \in \N$ and $A_\beta^+$ is defined with respect to the subgroup $\Gamma \cap \SL_2(\Z)$ as in \Cref{defn:generator}.
\end{defn}

\begin{defn}
Let $\Gamma$ be a finite-index subgroup of $\Z^2 \semidirect \SL_2(\Z)$.
Let $\beta \in \C \setminus \Q$ and $\ww \in \beta\Q+\Q$.
Define the \textit{stable value} of $u$ at $(\ww,\beta)$ to be
\begin{equation}
u[\ww,\beta] := u_{\m_{\ww,\beta}^+,B_{\ww,\beta}^+}(\ww,\beta).
\end{equation}
If $\beta \in \Rquad$, then $u[\ww,\beta]$ is also called the \textit{real multiplication (RM) value} of $u$ at $(\ww,\beta)$.
\end{defn}

\begin{lem}\label{lem:mintermsofB}
Let $\Gamma$ be a finite-index subgroup of $\Z^2 \semidirect \SL_2(\Z)$, and let $\beta \in \C \setminus \Q$.
If $\r \in \Q^2$ and $\ww=\sympt{\r}{\beta}$, then
$\m_{\ww,\beta}^+ = (I-B_{\ww,\beta}^+)\r$.
\end{lem}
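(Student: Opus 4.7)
The plan is to unpack the stabilizer condition for the Jacobi action and reduce the lemma to a direct algebraic identity involving the symplectic form, exploiting the fact that $\smcoltwo{\beta}{1}$ is an eigenvector of $B_{\ww,\beta}^+$.

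First, I would write $B = B_{\ww,\beta}^+$ and $\m = \m_{\ww,\beta}^+$. By definition, $(\m,B) \in \Gamma$ stabilizes $(\ww,\beta)$ under the Jacobi action, so
\begin{equation}
\frac{\ww}{j_B(\beta)} + \sympt{\m}{B\cdot\beta} = \ww.
\end{equation}
Since $B\cdot\beta = \beta$, this becomes $\sympt{\m}{\beta} = \ww\bigl(1 - j_B(\beta)^{-1}\bigr)$. The plan is then to show that $(I-B)\r$ has exactly this value under $\sympt{\,\cdot\,}{\beta}$, and to invoke a uniqueness argument.

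The computation of $\sympt{(I-B)\r}{\beta}$ splits as $\sympt{\r}{\beta} - \sympt{B\r}{\beta} = \ww - \sympt{B\r}{\beta}$. To evaluate the second term, I would use \Cref{lem:jeval} together with the fact that $A_\beta^+$ (and hence $B$) satisfies $B\smcoltwo{\beta}{1} = j_B(\beta)\smcoltwo{\beta}{1}$, so $\smcoltwo{\beta}{1} = B\bigl(j_B(\beta)^{-1}\smcoltwo{\beta}{1}\bigr)$. Since $\symp{\cdot}{\cdot}$ is $\SL_2$-invariant,
\begin{equation}
\sympt{B\r}{\beta} = \symp{B\r}{\smcoltwo{\beta}{1}} = \tfrac{1}{j_B(\beta)}\symp{B\r}{B\smcoltwo{\beta}{1}} = \tfrac{1}{j_B(\beta)}\symp{\r}{\smcoltwo{\beta}{1}} = \tfrac{\ww}{j_B(\beta)}.
\end{equation}
Combining, $\sympt{(I-B)\r}{\beta} = \ww - \ww/j_B(\beta) = \sympt{\m}{\beta}$.

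Finally, I would conclude by uniqueness. The vector $\m - (I-B)\r$ lies in $\Q^2$ (since $\r \in \Q^2$ and $B$ is integral), and satisfies $\sympt{\m - (I-B)\r}{\beta} = 0$. Writing this out as $(m_2 - ((I-B)\r)_2)\beta = m_1 - ((I-B)\r)_1$ with both sides rational and $\beta$ irrational forces both coordinates to vanish, giving $\m = (I-B)\r$. This also confirms $(I-B)\r \in \Z^2$ a posteriori. The proof is essentially a short computation; the only subtlety is remembering that uniqueness holds in $\Q^2$ (not in $\R^2$) precisely because $\beta \notin \Q$, which is where the hypothesis $\beta \in \C \setminus \Q$ gets used. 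There is no serious obstacle here.
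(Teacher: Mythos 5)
Your proof is correct and reaches the same conclusion as the paper's, but the route is genuinely cleaner. The paper works entirely in coordinates: it uses $B\cdot\beta=\beta$ to derive the algebraic relation $c\beta^2=(a-d)\beta+b$, uses this to establish $(c\beta+d)^{-1}=-c\beta+a$, substitutes everything into the stabilizer equation, expands, and equates coefficients of $\beta$ and $1$. You instead observe that $B\cdot\beta=\beta$ is equivalent to $\smcoltwo{\beta}{1}$ being an eigenvector of $B$ with eigenvalue $j_B(\beta)$, and then use $\SL_2$-invariance of the symplectic form to get $\sympt{B\r}{\beta}=\ww/j_B(\beta)$ in one line. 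Your version is coordinate-free except at the final uniqueness step, which is where irrationality of $\beta$ enters in both proofs. The tradeoff is that the paper's computation, while more tedious, explicitly produces the formulas $m_1=(1-a)r_1-br_2$ and $m_2=-cr_1+(1-d)r_2$, which some readers may find reassuring. One minor quibble: your invocation of \Cref{lem:jeval} is not quite needed here — the identity $B\smcoltwo{\beta}{1}=j_B(\beta)\smcoltwo{\beta}{1}$ follows directly from $a\beta+b=\beta(c\beta+d)$, which is the content of $B\cdot\beta=\beta$; \Cref{lem:jeval} is a downstream consequence of this fact, not a prerequisite for it.
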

\begin{proof}
Write $\m_{\ww,\beta}^+ = \m = \smcoltwo{m_1}{m_2}$ and $B_{\ww,\beta}^+ = B = \smmattwo{a}{b}{c}{d}$.
The stability condition $B \cdot \beta = \beta$ means that $\frac{a\beta+b}{c\beta+d} = \beta$, that is, $c\beta^2 = (a-d)\beta + b$. Thus,
\begin{align}
(-c\beta+a)(c\beta+d) 
&= -c^2\beta^2 + (ac-cd)\beta + ad \\
&= -c((a-d)\beta+b) + (ac-cd)\beta + ad
= -bc+ad = 1.
\end{align}
That is, $(c\beta+d)^{-1} = -c\beta+a$, and it follows that
\begin{align}
\frac{\ww}{j_{\!B}(\beta)} + \sympt{\m}{B\cdot\beta}
&= \sympt{\r}{\beta}(c\beta+d)^{-1} + \sympt{\m}{\beta} \\
&= (r_2\beta-r_1)(-c\beta+a) + (m_2\beta - m_1) \\
&= -r_2(c\beta^2) + (cr_1 + ar_2 + m_2\beta)\beta - (ar_1+m_1) \\
&= -r_2((a-d)\beta+b) + (cr_1 + ar_2 + m_2)\beta - (ar_1+m_1) \\
&= (cr_1 + dr_2 + m_2)\beta - (ar_1 + br_2 + m_1).
\end{align}
But also $\frac{\ww}{j_{\!B}(\beta)} + \sympt{\m}{B\cdot\beta} = \ww = r_2\beta-r_1$, so 
\begin{equation}\label{eq:cdab}
(cr_1 + dr_2 + m_2)\beta - (ar_1 + br_2 + m_1)
= r_2\beta - r_2.
\end{equation}
Since $\beta \nin \Q$, we can equate coefficients of $\beta$ and $1$ in \eqref{eq:cdab}, obtaining $m_2 = -cr_1 + (1-d)r_2$ and $m_1 = (1-a)r_1 - br_2$, that is,
$\m = (I-B)\r$.
\end{proof}

\begin{prop}\label{prop:jacobistable}
Let $u_{\m,A}(z,\tau)$ be a Jacobi cocycle for a finite-index subgroup $\Gamma \leq \Z^2 \cap \SL_2(\Z)$ with the property that $\Gamma \cap (\Z^2 \semidirect \{I\}) = \Z^2$. For each $\r \in \Q^2$, define an associated modular cocycle
\begin{equation}
w^\r_{\!A}(\tau) = u_{(I-A)\r,A}(\sympt{\r}{\tau},\tau).
\end{equation}
Then $w^\r$ is a modular cocycle for the group $\Gamma \cap \Gamma_\r$. If $\beta \in \C \setminus \Q$ and $w^\r[\beta]$ is defined, then
\begin{equation}
u[\sympt{\r}{\beta},\beta] = w^\r[\beta].
\end{equation}
\end{prop}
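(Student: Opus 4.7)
The proof breaks into (i) checking that $w^\r$ is a cocycle and (ii) matching the stable values.

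For (i), the plan is to apply the Jacobi cocycle identity for $u$ to the product $((I-A_1)\r, A_1)\cdot ((I-A_2)\r, A_2)$ in $\Gamma$. The first-coordinate bookkeeping is immediate:
\[
(I-A_1)\r + A_1(I-A_2)\r = (I - A_1 A_2)\r.
\]
The remaining substantive check is the ``transport'' identity
\[
((I-A_2)\r, A_2) \cdot (\sympt{\r}{\tau}, \tau) = (\sympt{\r}{A_2 \cdot \tau},\, A_2 \cdot \tau),
\]
which, after expanding the Jacobi action, reduces to
\[
\frac{\sympt{\r}{\tau}}{j_{\!A}(\tau)} = \sympt{A\r}{A \cdot \tau}.
\]
I would verify this last identity by a short cross-multiplication using $\det A = 1$; the calculation is essentially already present in the proof of \Cref{lem:mintermsofB}. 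Combining these two checks with the cocycle property of $u$ yields $w^\r_{A_1 A_2}(\tau) = w^\r_{\!A_1}(A_2 \cdot \tau)\, w^\r_{\!A_2}(\tau)$, and the required domain/cover conditions for $w^\r$ on $\Gamma \cap \Gamma_\r$ are inherited from those of $u$.

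For (ii), write $\ww := \sympt{\r}{\beta}$ and let $(\m^+, B^+) = (\m^+_{\ww, \beta}, B^+_{\ww, \beta})$ be the distinguished generator of $\stab_\Gamma(\ww, \beta)$ modulo $\{\pm I\}$, as in the definition of $u[\ww, \beta]$. \Cref{lem:mintermsofB} gives $\m^+ = (I - B^+)\r$. The hypothesis $\Gamma \cap (\Z^2 \semidirect \{I\}) = \Z^2$---i.e., $\Gamma$ contains every pure integer translation---means that the condition $((I - B^+)\r, B^+) \in \Gamma$ decouples: it is equivalent to asking that $B^+$ lie in the image of $\Gamma$ under projection to $\SL_2(\Z)$ and that $(I - B^+)\r \in \Z^2$, i.e.\ $B^+ \in \Gamma_\r$. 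Hence $B^+$ is the generator of $\stab_{(\Gamma \cap \SL_2(\Z)) \cap \Gamma_\r}(\beta)$ modulo $\{\pm I\}$; that is, $B^+ = A_\beta^+$ where $A_\beta^+$ is computed with respect to the group $\Gamma \cap \Gamma_\r$ used in defining $w^\r[\beta]$.

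With the identification of generators in hand, the conclusion is a one-line unwinding of definitions:
\[
u[\ww, \beta] = u_{(I - B^+)\r,\, B^+}(\sympt{\r}{\beta}, \beta) = w^\r_{B^+}(\beta) = w^\r[\beta].
\]
The only substantive obstacle is the stabilizer identification in step (ii): one must keep track of which ambient group is being used to pick out the distinguished generator, since $B^+$ is defined via the full Jacobi-group stabilizer in $\Gamma$ while $A_\beta^+$ refers to the modular-group stabilizer in $\Gamma \cap \Gamma_\r$. The hypothesis that $\Gamma$ contains every integer translation is precisely what forces these two generators to agree, after which the argument is formal.
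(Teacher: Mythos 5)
Your proof is correct and follows essentially the same route as the paper: verify the semidirect-product bookkeeping, verify the transport identity, apply the Jacobi cocycle property to get the modular cocycle condition, and then use \Cref{lem:mintermsofB} together with the hypothesis $\Gamma \cap (\Z^2 \semidirect \{I\}) = \Z^2$ to identify the Jacobi-group generator $B^+_{\ww,\beta}$ with the modular-group generator $A^+_\beta$ for $\Gamma \cap \Gamma_\r$.

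One small point worth flagging: your statement of the transport identity,
\[
((I-A)\r, A)\cdot(\sympt{\r}{\tau},\tau) = (\sympt{\r}{A\cdot\tau},\,A\cdot\tau),
\]
is the correct one, and it reduces (as you say) to $\frac{\sympt{\r}{\tau}}{j_{\!A}(\tau)} = \sympt{A\r}{A\cdot\tau}$, which is the $\det A = 1$ computation also implicit in \Cref{lem:taut}. The version of this chain of equalities printed in the paper's proof contains a transcription slip (the roles of $\sympt{\r}{A\cdot\tau}$ and $\sympt{A\r}{A\cdot\tau}$ are interchanged in the intermediate lines, and the final line should read $\sympt{\r}{A\cdot\tau}$); your write-up gets it right, and the end result $w^\r_{A_1A_2}(\tau) = w^\r_{A_1}(A_2\cdot\tau)\, w^\r_{A_2}(\tau)$ of course agrees. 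Your ``decoupling'' framing of the stabilizer identification in part (ii) is a slightly more explicit rendering of the parenthetical remark in the paper's proof, and is the cleaner way to see why the hypothesis on $\Gamma$ is needed.
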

\begin{proof}
First, we check the modular cocycle condition. 
In the Jacobi group, we have the identity
\begin{align}
((I-A_1)\r,A_1)((I-A_2)\r,A_2)
&= ((I-A_1)\r + (A_1-A_1A_2)\r, A_1A_2) \\
&= ((I-A_1A_2)\r, A_1A_2).
\end{align}
The Jacobi action and the modular action are also related by the identity
\begin{align}
\left((I-A)\r,A\right)\cdot\left(\sympt{\r}{\tau},\tau\right)
&= \left(\frac{\sympt{\r}{\tau}}{j_{\!A}(\tau)} + \sympt{(I-A)\r}{A\cdot\tau} A\cdot\tau\right) \\
&= \left(\sympt{\r}{A\cdot\tau} + \sympt{\r-A\r}{A\cdot\tau}, A\cdot\tau\right) \\
&= \left(\sympt{A\r}{A\cdot\tau},A\cdot\tau\right).
\end{align}
Thus, using the Jacobi cocycle condition and the two identities just shown, we have
\begin{align}
w^\r_{\!A_1A_2}(\tau)
&= u_{(I-A_1A_2)\r,A_1A_2}(\sympt{\r}{\tau},\tau) \\
&= u_{(I-A_1)\r,A_1}\!\left(((I-A_2)\r,A_2)\cdot(\sympt{\r}{\tau},\tau)\right)u_{(I-A_2)\r,A_2}(\sympt{\r}{\tau},\tau) \\
&= u_{(I-A_1)\r,A_1}\!\left(\sympt{A_1\r}{A_1\cdot\tau},A_1\cdot\tau\right)u_{(I-A_2)\r,A_2}(\sympt{\r}{\tau},\tau) \\
&= w^\r_{\!A_1}(A_2\cdot\tau)w^\r_{\!A_2}(\tau).
\end{align}

Now, we check the equality of stable values. Let $\m = \m_\beta^+$ and $B = B_\beta^+$. Then $B$ is also the positive generator for the stabilizer of $\beta$ in $\Gamma \cap \Gamma_\r$ (because it is the smallest power of $A_\beta^+$ fixing $\m$ modulo $\beta\Z+\Z$, or equivalently, fixing $\r$ modulo $\Z^2$). Therefore, using \Cref{lem:mintermsofB},
\begin{align}
u[\sympt{\r}{\beta},\beta] 
= u_{(I-B)\m,B}(\sympt{\r}{\beta},\beta)
= w^\r[\beta]. \tag*{\qedhere}
\end{align}
\end{proof}

\subsection{The Shintani--Faddeev cocycles}\label{sec:sfcocycles}

We now define the main transcendental functions of interest in this paper, which give nontrivial examples of a Jacobi cocycle and a modular cocycle. 
For now, we define them as coboundaries on $\MM_\HH^\times$ and $\MM_{\C \times \HH}^\times$, which we will later extend to cocyles on larger systems of domains.

We will define both the ``modular'' and ``Jacobi'' versions of the cocycle in terms of special cases of the following function.
\begin{defn}\label{defn:sfjacobimaster}
For $\m \in \R^2$, $A \in \SL_2(\R)$, $z \in \C$, and $\tau \in \HH$, define the following meromorphic function of $z$ and $\tau$:
\begin{equation}
\sfj{\m,A}{z}{\tau} = \frac{\varpi\!\left(\frac{z}{j_{\!A}(\tau)}+\sympt{\m}{A\cdot\tau},A \cdot \tau\right)}{\varpi(z,\tau)}.
\end{equation}
\end{defn}

\begin{defn}\label{defn:sfjacobi}
The \textit{Shintani--Faddeev Jacobi cocycle} is the $(\Z^2 \semidirect \SL_2(\Z))$-tuple 
\begin{equation}
(\sfj{\m,A}{z}{\tau})_{(\m,A) \in \Z^2 \semidirect \SL_2(\Z)}.
\end{equation}
Here, we meromorphically continue $\sfj{\m,A}{z}{\tau}$ to $(z,\tau) \in \C \times \DD_{\!A}$, where
\begin{equation}\label{eq:DD}
\DD_{\!A} = 
\begin{cases}
\C \setminus (-\infty,-d/c] & \mbox{if } c > 0,\\
\C & \mbox{if } c=0 \mbox{ and } d>0,\\
\HH & \mbox{if } c=0 \mbox{ and } d<0,\\
\C \setminus [-d/c,\infty) & \mbox{if } c < 0.
\end{cases}
\end{equation}
The existence of the meromorphic continuation will be shown in \Cref{thm:wannabej}. In the case when $\m=\mathbf{0}$, we will sometimes drop $\m$ and write
\begin{equation}
\sfj{A}{z}{\tau} := \sfj{\mathbf{0},A}{z}{\tau}.
\end{equation}
\end{defn}

\begin{defn}\label{defn:sfmodular}
Let $\r \in \Q^2$. The \textit{Shintani--Faddeev modular cocycle (with rational characteristics $\r$)} of $A \in \Gamma_\r$ is the $\Gamma_\r$-tuple
\begin{equation}
(\sf{\r}{A}{\tau})_{\!A \in \Gamma_\r},
\end{equation}
where $\sf{\r}{A}{\tau} = \sfj{\r,A}{0}{\tau}$.
Here, we meromorphically continue $\sf{\r}{A}{\tau}$
to $\tau \in \tDD_{\!A}$, where
\begin{equation}\label{eq:tDD}
\tDD_{\!A} = \left\{\begin{array}{ll}
\C \setminus (-\infty,-d/c] & \mbox{ if } c > 0,\\
\C & \mbox{ if } c=0,\\
\C \setminus [-d/c,\infty) & \mbox{ if } c < 0.
\end{array}\right.
\end{equation}
The existence of the meromorphic continuation will be shown in \Cref{thm:wannabem}.
\end{defn}
For $\tau \in \HH$, except when $\r \in \Z^2$ and $r_2 \leq 0$, we have
\begin{equation}
\sf{\r}{A}{\tau} = \frac{\varpi_\r(A \cdot \tau)}{\varpi_\r(\tau)} 
= \prod_{k=0}^\infty \frac{1-\ee{(k+r_2)(A\cdot\tau)-r_1}}{1-\ee{(k+r_2)\tau-r_1}}.
\end{equation}

The Shintani--Faddeev modular cocycle is related to the Shintani--Faddeev Jacobi cocycle by the following proposition.
\begin{prop}\label{prop:goose}
The following relations of meromorphic functions hold for $\r \in \Q^2$, $A \in \Gamma_\r$, and $\tau \in \DD_{\!A}$.
\begin{align}
\sf{\r}{A}{\tau} 
&= \sfj{(I-A)\r,A}{\sympt{\r}{\tau}}{\tau} \\
&= \left(\ee{\frac{\sympt{\r}{\tau}}{j(A,\tau)}},\ee{A\cdot\tau}\right)_{\symp{(I-A)\r}{\smcoltwo{1}{0}}}^{-1} \sfj{A}{\sympt{\r}{\tau}}{\tau}; \label{eq:prel1} \\
\sf{\r}{A}{\tau} 
&= \left(\ee{\sympt{A^{-1}\r}{\tau}},\ee{\tau}\right)_{\symp{\r}{(I-A)\smcoltwo{1}{0}}} \sfj{A}{\sympt{A^{-1}\r}{\tau}}{\tau}. \label{eq:prel2}
\end{align}
\end{prop}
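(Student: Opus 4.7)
The plan is to derive both identities by direct manipulation of the product formula in \Cref{defn:sfjacobimaster}, relying on two elementary ingredients: the $\SL_2$-invariance of the symplectic form (which gives the identity $\sympt{A\v}{A\cdot\tau}=\sympt{\v}{\tau}/j_{\!A}(\tau)$) and \Cref{lem:ell} (the quasi-periodicity of $\varpi$ under shifts in $\Z\tau+\Z$).

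For \eqref{eq:prel1}, I first substitute $\m=(I-A)\r$ and $z=\sympt{\r}{\tau}$ into \Cref{defn:sfjacobimaster}. The denominator is immediately $\varpi(\sympt{\r}{\tau},\tau)=\varpi_\r(\tau)$. For the numerator, applying $\sympt{A\r}{A\cdot\tau}=\sympt{\r}{\tau}/j_{\!A}(\tau)$ gives
\[
\tfrac{\sympt{\r}{\tau}}{j_{\!A}(\tau)}+\sympt{(I-A)\r}{A\cdot\tau}
=\sympt{\r}{A\cdot\tau},
\]
so the numerator equals $\varpi_\r(A\cdot\tau)$, which establishes the first equality $\sfj{(I-A)\r,A}{\sympt{\r}{\tau}}{\tau}=\sf{\r}{A}{\tau}$. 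For the second equality of \eqref{eq:prel1}, I compare $\sfj{\m,A}{z}{\tau}$ with $\sfj{\mathbf{0},A}{z}{\tau}$ for general $\m\in\Z^2$: the denominators agree and the numerators differ by the shift $m_2(A\cdot\tau)-m_1\in\Z(A\cdot\tau)+\Z$, so \Cref{lem:ell} (applied with base point $z/j_{\!A}(\tau)$ and parameter $A\cdot\tau$) gives
\[
\sfj{\m,A}{z}{\tau}=\bigl(\ee{\tfrac{z}{j_{\!A}(\tau)}},\ee{A\cdot\tau}\bigr)_{m_2}^{-1}\sfj{A}{z}{\tau}.
\]
Identifying $m_2=((I-A)\r)_2=\symp{(I-A)\r}{\smcoltwo{1}{0}}$ then yields the right-hand side of \eqref{eq:prel1} after setting $z=\sympt{\r}{\tau}$.

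For \eqref{eq:prel2}, I again use the symplectic identity, this time in the form $\sympt{\r}{A\cdot\tau}=\sympt{A^{-1}\r}{\tau}/j_{\!A}(\tau)$ (take $\v=A^{-1}\r$). This rewrites the numerator of $\sf{\r}{A}{\tau}=\varpi_\r(A\cdot\tau)/\varpi_\r(\tau)$ as the numerator of $\sfj{A}{\sympt{A^{-1}\r}{\tau}}{\tau}$. The remaining ratio is $\varpi(\sympt{A^{-1}\r}{\tau},\tau)/\varpi(\sympt{\r}{\tau},\tau)$, and since $\sympt{A^{-1}\r-\r}{\tau}=n_2\tau-n_1$ with $\n=(A^{-1}-I)\r\in\Z^2$, another application of \Cref{lem:ell} collapses this ratio to a finite $q$-Pochhammer symbol; a short calculation using $A^{-1}=\smmattwo{d}{-b}{-c}{a}$ identifies the resulting index $-n_2=cr_1+(1-a)r_2$ with $\symp{\r}{(I-A)\smcoltwo{1}{0}}$.

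There is no real obstacle: both identities reduce to bookkeeping once the symplectic identity is in hand. The only care required is tracking the sign and index conventions in the finite Pochhammer symbols that arise from \Cref{lem:ell}, and verifying that the equalities, which hold immediately for $\tau\in\HH$ where the infinite products converge, extend to the stated meromorphic domain $\DD_{\!A}$ by analytic continuation (which is automatic once the meromorphic continuation of $\sfj{\m,A}{z}{\tau}$ in \Cref{defn:sfjacobi} has been established in \Cref{thm:wannabej}).
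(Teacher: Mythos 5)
Your proof is correct and takes essentially the same route as the paper: both rely on the symplectic identity $\sympt{A\v}{A\cdot\tau}=\sympt{\v}{\tau}/j_{\!A}(\tau)$ to rewrite $\sf{\r}{A}{\tau}$ in terms of $\sigma_{\!A}$, then apply \Cref{lem:ell} to the numerator (for \eqref{eq:prel1}) or denominator (for \eqref{eq:prel2}) of the ratio of $\varpi$-values. The only cosmetic difference is that you isolate the first equality of \eqref{eq:prel1} as a standalone step before invoking \Cref{lem:ell}, whereas the paper carries out the full symplectic decomposition at once; the underlying bookkeeping, including the identification of the Pochhammer index with $\symp{(I-A)\r}{\smcoltwo{1}{0}}$ and $\symp{\r}{(I-A)\smcoltwo{1}{0}}$ and the extension to $\DD_{\!A}$ by meromorphic continuation, agrees.
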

\begin{proof}
For $\tau \in \HH$, write
\begin{equation}
\sf{\r}{A}{\tau} \label{eq:shinfracj}
= \frac{\varpi_\r(A\cdot\tau)}{\varpi_\r(\tau)}
= \frac{\varpi\!\left(\sympt{\r}{A\cdot\tau},A\cdot\tau\right)}{\varpi\!\left(\sympt{\r}{\tau},\tau\right)}.
\end{equation}
To prove \eqref{eq:prel1}, express
\begin{align}
\sympt{\r}{A\cdot\tau}
&= \sympt{A\r}{A\cdot\tau} + \sympt{(I-A)\r}{A\cdot\tau} \\
&= \frac{\sympt{\r}{\tau}}{j_{\!A}(\tau)} + \symp{(I-A)\r}{\smcoltwo{A\cdot\tau}{1}} \\
&= \frac{\sympt{\r}{\tau}}{j_{\!A}(\tau)}  + \symp{(I-A)\r}{\smcoltwo{1}{0}}(A\cdot\tau) + \symp{(I-A)\r}{\smcoltwo{0}{1}}.
\end{align}
Note that $\symp{(I-A)\r}{\smcoltwo{1}{0}}$ and $\symp{(I-A)\r}{\smcoltwo{0}{1}}$ are integers because $A\r \con \r \Mod{\Z^2}$. Apply \Cref{lem:ell} to the numerator of \eqref{eq:shinfracj} to prove \eqref{eq:prel1}. 

Similarly, to prove \eqref{eq:prel2}, express
\begin{equation}
\sympt{\r}{\tau} = \sympt{A^{-1}\r}{\tau} + \symp{(I-A^{-1})\r}{\smcoltwo{1}{0}}\tau + \symp{(I-A^{-1})\r}{\smcoltwo{0}{1}},
\end{equation}
and again apply \Cref{lem:ell}, this time to the demoninator of \eqref{eq:shinfracj}.

Both \eqref{eq:prel1} and \eqref{eq:prel2} then hold on $\DD_{\!A}$ by meromorphic continuation.
\end{proof}

\begin{rmk}
The zeros of the analytic function $\varpi(z,\tau)$ for $(z,\tau) \in \C \times \HH$ occur exactly when $z \in \tau\N_0 + \Z$, where $\N_0$ is the set of nonnegative integers. 
It follows that, for $(\m,A) \in \Z^2 \semidirect \SL_2(\Z)$, the meromorphic function $\sigma_{\m,A}(z,\tau)$ has its poles and zeros at $z$-values at lattice points in $\tau\Z + \Z$, with the poles occurring in a cone and the zeros occurring in the opposite cone. It may be shown using the double gamma product formula \eqref{eq:dgprod} that the locations of the poles and zeros remain the same for $(z,\tau) \in \C \times \DD_{\!A}$. It may further be shown that the poles and zeros of $\sf{\r}{\!A}{\tau}$ occur at discrete sets of rational numbers.
\end{rmk}

\begin{rmk}
For $(\m,A) \in \Z^2 \semidirect \SL_2(\Z)$, the function $\sigma_{\m,A}(z,\tau)$ can be written as a ratio of $q$-Pochhammer symbols not only on the upper half-plane $\HH$, but also on the lower half-plane $-\HH$. This is done in \cite{dimofte}. The function $\sigma_{\m,A}(z,\tau)$ can thus be sensibly defined on $\C \setminus \{j_{\!A}(\tau) \leq 0\}$ in all cases; this is discussed further in \cite{afk}.
\end{rmk}

\subsection{The Shintani--Faddeev modular cocycle with half-integral characteristics}\label{sec:half}

As an aside, we deal with the special cases when $\r \in \foh\Z^2$, in which we obtain simple expressions for $\sf{\r}{A}{\tau}$.
If $\r \in \foh\Z$, then $-I \in \Gamma_\r$, and the identity $\sf{\r}{A}{\tau} = \sf{\r}{-A}{\tau}$ implies that $\sf{\r}{A}{\tau}$ is meromorphic on $\DD_{\!A} \cup \DD_{-A} = \C \setminus \{-d/c\}$. 
Moreover, from \eqref{eq:modspecial}, we have the relations
\begin{align}
\varpi_{\tiny\smcoltwo{0}{1}}(\tau) 
&= q^{-1/24} \eta(\tau); \\
\varpi_{\tiny\smcoltwo{0}{1/2}}(\tau) 
&= q^{1/48} \frac{\eta(\tau/2)}{\eta(\tau)}; \\
\varpi_{\tiny\smcoltwo{1/2}{1}}(\tau) 
&= q^{-1/24} \frac{\eta(2\tau)}{\eta(\tau)}; \\
\varpi_{\tiny\smcoltwo{1/2}{1/2}}(\tau) 
&= \zeta_{48}q^{1/48} \frac{\eta((\tau+1)/2)}{\eta(\tau)}. 
\end{align}
For all $A \in \SL_2(\Z)$, let $\hep_{\!A}(\tau) = \ep(\tau)\psi(A,\ep)$ for any choice of square root function $\ep(\tau)^2 = j_{\!A}(\tau)$; the relation $\hep_{\!A}(\tau) = \frac{\eta(A\cdot\tau)}{\eta(\tau)}$ shows that there is no dependence on the choice of square root. Let $P = \smmattwo1002$, $Q = \smmattwo2001$, and $R=\smmattwo1102$. We obtain the following relations.
\begin{align}
\mbox{For $A \in \SL_2(\Z)$: }
\sf{\tiny\smcoltwo{0}{1}}{A}{\tau} 
&= \frac{\ee{-\tfrac{A\cdot\tau}{24}} \eta(A\cdot\tau)}{\ee{-\tfrac{\tau}{24}} \eta(\tau)} \\
&= \ee{\tfrac{\tau - A\cdot\tau}{24}}\hep_{\!A}(\tau). \label{eq:shin01} \\
\mbox{For $A \in \Gamma_{\tiny\smcoltwo{0}{1/2}}$: }
\sf{\tiny\smcoltwo{0}{1/2}}{A}{\tau}
&= \frac{\ee{\tfrac{A\cdot\tau}{48}} \eta(\tfrac{A\cdot\tau}{2}) / \eta(A\cdot\tau)}{\ee{\frac{\tau}{48}} \eta(\tfrac{\tau}{2}) / \eta(\tau)} \\
&= \ee{\tfrac{A\cdot\tau - \tau}{48}}\frac{\eta(PAP^{-1}\cdot(P\cdot\tau))/\eta(P\cdot\tau)}{\eta(A\cdot\tau)/\eta(\tau)} \\
&= \ee{\tfrac{A\cdot\tau - \tau}{48}}\frac{\hep_{\!PAP^{-1}}(P\cdot\tau)}{\hep_{\!A}(\tau)}. \\
\mbox{For $A \in \Gamma_{\tiny\smcoltwo{1/2}{1}}$: }
\sf{\tiny\smcoltwo{1/2}{1}}{A}{\tau}
&= \frac{\ee{-\tfrac{A\cdot\tau}{24}} \eta(2(A\cdot\tau)) / \eta(A\cdot\tau)}{\ee{-\tfrac{\tau}{24}} \eta(2\tau) / \eta(\tau)} \\
&= \ee{\tfrac{\tau - A\cdot\tau}{24}}\frac{\eta(QAQ^{-1}\cdot(Q\cdot\tau))/\eta(Q\cdot\tau)}{\eta(A\cdot\tau)/\eta(\tau)} \\
&= \ee{\tfrac{\tau - A\cdot\tau}{24}}\frac{\hep_{\!QAQ^{-1}}(Q\cdot\tau)}{\hep_{\!A}(\tau)}. \\
\mbox{For $A \in \Gamma_{\tiny\smcoltwo{1/2}{1/2}}$: }
\sf{\tiny\smcoltwo{1/2}{1/2}}{A}{\tau} 
&= \frac{\ee{\tfrac{A\cdot\tau}{48}} \eta(\frac{A\cdot\tau+1}{2}) / \eta(A\cdot\tau)}{\ee{\tfrac{\tau}{48}} \eta(\tfrac{\tau+1}{2}) / \eta(\tau)} \\
&= \ee{\tfrac{A\cdot\tau - \tau}{48}}\frac{\eta(RAR^{-1}\cdot(R\cdot\tau))/\eta(R\cdot\tau)}{\eta(A\cdot\tau)/\eta(\tau)} \\
&= \ee{\tfrac{A\cdot\tau - \tau}{48}}\frac{\hep_{\!RAR^{-1}}(R\cdot\tau)}{\hep_{\!A}(\tau)}.
\end{align}
In the last three cases, it is noteworthy that: 
\begin{align}
\left(\frac{\hep_{\!PAP^{-1}}(P\cdot\tau)}{\hep_{\!A}(\tau)}\right)^2
&= \frac{j_{\!PAP^{-1}}(\tau)\psi^2(PAP^{-1})}{j_{\!A}(\tau)\psi^2(A)}
= \frac{j_{\!PAP^{-1}}(\tau)}{j_{\!A}(\tau)}
= \frac{j_{\!P}(A\cdot\tau)}{j_{\!P}(\tau)}
= \frac{2}{2}
= 1; \\
\left(\frac{\hep_{\!QAQ^{-1}}(Q\cdot\tau)}{\hep_{\!A}(\tau)}\right)^2
&= \frac{j_{\!QAQ^{-1}}(\tau)\psi^2(QAQ^{-1})}{j_{\!A}(\tau)\psi^2(A)}
= \frac{j_{\!QAQ^{-1}}(\tau)}{j_{\!A}(\tau)}
= \frac{j_{\!Q}(A\cdot\tau)}{j_{\!Q}(\tau)}
= \frac{1}{1}
= 1; \\
\left(\frac{\hep_{\!RAR^{-1}}(R\cdot\tau)}{\hep_{\!A}(\tau)}\right)^2
&= \frac{j_{\!RAR^{-1}}(\tau)\psi^2(RAR^{-1})}{j_{\!A}(\tau)\psi^2(A)}
= \frac{j_{\!RAR^{-1}}(\tau)}{j_{\!A}(\tau)}
= \frac{j_{\!R}(A\cdot\tau)}{j_{\!R}(\tau)}
= \frac{2}{2}
= 1.
\end{align}
Therefore, 
$\sf{\tiny\smcoltwo{0}{1/2}}{A}{\tau} = \pm \ee{\tfrac{A\cdot\tau - \tau}{48}}$,
$\sf{\tiny\smcoltwo{1/2}{1}}{A}{\tau} = \pm \ee{\tfrac{\tau - A\cdot\tau}{24}}$, and
$\sf{\tiny\smcoltwo{1/2}{1/2}}{A}{\tau} = \pm \ee{\tfrac{A\cdot\tau - \tau}{48}}$,
where the signs depend (only) on $A$.

\subsection{The double sine function}

The Shintani--Faddeev Jacobi cocycle of $A=S = \smmattwo{0}{-1}{1}{0}$ is closely related to Shintani's \textit{double sine function}. The double sine function is defined in terms of the \textit{double gamma function} originally studied by Barnes \cite{barnes}.
\begin{defn}
For $\omega_1, \omega_2 \in \R_+$ and $z \in \R$ such that $-z \nin \omega_1\N_0 + \omega_2\N_0$, and $s \in \C$ such that $\re(s)>2$, define the \textit{double zeta function} by
\begin{equation}
\zeta_2(s, z; \omega_1,\omega_2) = \sum_{m=0}^\infty \sum_{n=0}^\infty (z+\omega_1 m + \omega_2 n)^{-s}.
\end{equation}
This function is holomorphically continued to $\omega_1, \omega_2 \in \C$ and $z \in \C$ such that 
$\re(\omega_1)>0$, $\re(\omega_2)>0$, and $\re(z)>0$, 
and $s \in \C \setminus \{1,2\}$ by the contour integral formula
\begin{equation}
\zeta_2(s, z; \omega_1,\omega_2) = \frac{\Gamma(1-s)}{2\pi i} \int_C \frac{e^{-zt}}{(1-e^{-\omega_1 t})(1-e^{-\omega_2 t})} (-t)^s \frac{dt}{t},
\end{equation}
where $C$ is a contour following the real line from below staring at $\infty - i \varepsilon$, going clockwise around zero, and following the real line from above to $\infty + i \varepsilon$. See \cite[\S 38]{barnes} for a proof of this formula.

Define \textit{double gamma function} by
\begin{equation}
\Gamma_2(z; \omega_1,\omega_2) := \rho_2(\omega_1,\omega_2) \exp\!\left(\left.\frac{d}{ds} \zeta_2(s, z; \omega_1,\omega_2)\right|_{s=0}\right),
\end{equation}
where $\rho_2(\omega_1,\omega_2)$ is a nonzero constant independent of $z$ whose exact value is irrelevant to our considerations.
The double gamma function has a product formula
\begin{align}
\Gamma_2(z;\omega_1,\omega_2)^{-1}\label{eq:dgprod} 
&= z \exp\!\left(\gamma_{22}(\omega_1,\omega_2)z + \foh\gamma_{21}(\omega_1,\omega_2)z^2\right) \\
& \ \ \ \times \!\prod_{\substack{m,n \geq 0 \\ (m,n) \neq 0}} \left(1+\frac{z}{m\omega_1+n\omega_2}\right)\exp\!\left(-\frac{z}{m\omega_1+n\omega_2}+\frac{z^2}{2(m\omega_1+n\omega_2)^2}\right), 
\end{align}
which extends $\Gamma_2$ to a meromorphic function of $z \in \C$ and $\omega_1, \omega_2 \in \C \setminus \{0\}$ such that $\frac{\omega_2}{\omega_1}$ is not a negative real number. See \cite[\S 19--24]{barnes} for a proof of this formula and the definition of $\gamma_{22}(\omega_1,\omega_2)$ and $\gamma_{21}(\omega_1,\omega_2)$.

Define the \textit{double sine function} (which does not depend on the value of $\rho_2(\omega_1,\omega_2)$) by 
\begin{equation}\label{eq:dsinegamma}
\SS_2(z;\omega_1,\omega_2) = \frac{\Gamma_2(\omega_1+\omega_2-z;\omega_1,\omega_2)}{\Gamma_2(z;\omega_1,\omega_2)}.
\end{equation}
(This function is called $S_2$ by Koyama and Kurokawa \cite{sinevalues} and Tagedal \cite{tangedal}.)
We will often specialize to the case $\omega_2=1$, in which case we set
\begin{align}
\Gamma_2(z, \tau) &:= \Gamma_2(z; \tau,1), \\
\SS_2(z,\tau) &:= \SS_2(z;\tau,1).
\end{align}
\end{defn}

\begin{thm}[Shintani]\label{thm:shin5}
We have the following relation between double sine function and the Shintani--Faddeev Jacobi cocycle of $A = S = \smmattwo{0}{-1}{1}{0}$:
\begin{equation}
\sfj{S}{z}{\tau} = \ee{\frac{\tau-3+\tau^{-1}}{24} + \frac{(\tau-z)(1-z)}{4\tau}} \left(1-\ee{\frac{z}{\tau}}\right) \SS_2(z,\tau)^{-1}.
\end{equation}
\end{thm}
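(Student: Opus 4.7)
The plan is a Liouville-type argument on the torus $\C/(\Z+\tau\Z)$: I will show that both sides of the identity are meromorphic in $z$ with the same divisor and the same quasi-periodicities under $z\mapsto z+1$ and $z\mapsto z+\tau$, so their ratio is an entire, nowhere vanishing, doubly periodic function of $z$, hence a constant $c(\tau)$; then I will pin down $c(\tau)=1$ by expansion at $z=0$.

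First I match divisors. From the product $\varpi(w,q)=\prod_{k\geq 0}(1-wq^k)$, the function $\sfj{S}{z}{\tau}=\varpi(z/\tau,-1/\tau)/\varpi(z,\tau)$ has simple zeros on $\{m+n\tau:m\geq 0,\,n\geq 1\}$ and simple poles on $\{m+n\tau:m\leq -1,\,n\leq 0\}$. From Barnes's Weierstrass product \eqref{eq:dgprod}, $\Gamma_2(z;\tau,1)$ has simple poles at $\{-m\tau-n:m,n\geq 0\}$ and no zeros, so via \eqref{eq:dsinegamma}, $\SS_2(z,\tau)^{-1}$ has simple poles on $\{-m\tau-n:m,n\geq 0\}$ and simple zeros on $\{m\tau+n:m,n\geq 1\}$. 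Multiplying by $(1-\ee{z/\tau})$ introduces extra simple zeros on $\tau\Z$, which cancel the unwanted poles along $m=0,\,n\leq 0$ and supply the missing zeros along $m=0,\,n\geq 1$. The resulting divisor of the right-hand side then agrees with that of $\sfj{S}{z}{\tau}$.

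Next I verify quasi-periodicity. The recursion $\varpi(w,q)=(1-w)\varpi(wq,q)$ together with the periodicity $\varpi(z+1,\tau)=\varpi(z,\tau)$ yields
\begin{align}
\sfj{S}{z+1}{\tau} &= (1-\ee{(z+1)/\tau})\,\sfj{S}{z}{\tau}, &
\sfj{S}{z+\tau}{\tau} &= (1-\ee{z})\,\sfj{S}{z}{\tau}.
\end{align}
For the right-hand side I use the standard Kurokawa--Koyama difference equations $\SS_2(z+1,\tau)=(2\sin(\pi z/\tau))^{-1}\SS_2(z,\tau)$ and $\SS_2(z+\tau,\tau)=(2\sin(\pi z))^{-1}\SS_2(z,\tau)$, together with the elementary observation that the quadratic exponent $(\tau-z)(1-z)/(4\tau)$ shifts by exactly $-\tfrac14$ under each of $z\mapsto z+1$ and $z\mapsto z+\tau$; the factor $\ee{-1/4}=-i$ then conspires with the sines (written as $2\sin(\pi u)=-i\,\ee{-u/2}(\ee{u}-1)$) to reproduce the same two quasi-periodicities. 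Hence by Liouville's theorem the ratio of the two sides is a constant $c(\tau)$.

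To fix $c(\tau)=1$, I expand both sides at $z=0$. Using $\varpi(z,\tau)\sim -2\pi i\,z\,\ee{-\tau/24}\eta(\tau)$ and the modular transformation $\eta(-1/\tau)=\sqrt{-i\tau}\,\eta(\tau)$, the left-hand side has leading behaviour $\lim_{z\to 0}\sfj{S}{z}{\tau}=\tau^{-1}\sqrt{-i\tau}\,\ee{(\tau+\tau^{-1})/24}$; the right-hand side is expanded similarly using the residue of $\Gamma_2(z;\tau,1)$ at $z=0$ (a Kronecker-limit quantity expressible via $\eta(\tau)$), and the two match precisely. The main obstacle in the whole argument is this last bookkeeping step: Steps 1--2 are mechanical, but determining the leading coefficient of $\SS_2(z,\tau)^{-1}$ at $z=0$ with the correct normalization requires either invoking a Kronecker limit formula for the Barnes double zeta $\zeta_2(s,z;\tau,1)$ at $s=0$ or directly expanding Barnes's contour integral, and it is this computation that produces the precise $24$th-root-of-unity prefactor $\ee{(\tau-3+\tau^{-1})/24}$.
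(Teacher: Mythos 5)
Your plan is a self-contained Liouville argument on the torus, which is genuinely different from what the paper does: the paper simply cites this as a rephrasing of \cite[Prop.~5]{shintani}, where Shintani derives it from the contour-integral representation of the Barnes double zeta function. A Liouville-on-the-torus proof is a reasonable alternative route, and your divisor matching in Step 1 and the overall structure in Step 2 are sound. But there are two issues.

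First, a correctable imprecision in Step 2. The shift of the quadratic exponent is not ``exactly $-\tfrac14$''; one computes
\begin{equation}
\frac{(\tau-(z+1))(1-(z+1))}{4\tau} - \frac{(\tau-z)(1-z)}{4\tau} = \frac{z}{2\tau}-\frac14,
\qquad
\frac{(\tau-(z+\tau))(1-(z+\tau))}{4\tau} - \frac{(\tau-z)(1-z)}{4\tau} = \frac{z}{2}-\frac14.
\end{equation}
The non-constant parts $\frac{z}{2\tau}$ and $\frac{z}{2}$ are exactly what cancel the prefactors $\ee{-z/(2\tau)}$ and $\ee{-z/2}$ hiding in $2\sin(\pi z/\tau)$ and $2\sin(\pi z)$ when you rewrite them as $-i\,\ee{-u/2}(\ee{u}-1)$. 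The final bookkeeping does close, but not for the reason you state; as written, your accounting would leave an uncancelled exponential, and a reader following it literally would conclude the quasi-periodicities do \emph{not} match.

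Second, and more seriously, Step 3 is a genuine gap that you yourself flag. You assert that the leading behaviour of $\SS_2(z,\tau)^{-1}$ at $z=0$ (equivalently, the residue of $\Gamma_2(z;\tau,1)$ at $z=0$ together with $\Gamma_2(\tau+1;\tau,1)$) combines with $\eta(-1/\tau)=\sqrt{-i\tau}\,\eta(\tau)$ to give $c(\tau)=1$, but you do not establish it. This is not peripheral: the explicit $24$th root of unity $\ee{(\tau-3+\tau^{-1})/24}$ is precisely the nontrivial content of the theorem, and the $\eta$-transformation you invoke is essentially equivalent to the identity being proved (indeed, setting $z=0$ in the theorem after clearing the removable singularity \emph{is} a statement about $\eta(-1/\tau)/\eta(\tau)$). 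Pinning down the constant requires either evaluating $\gamma_{21}$, $\gamma_{22}$, $\rho_2$ in Barnes's product \eqref{eq:dgprod} precisely enough, or invoking a Lerch/Kronecker-type formula for $\zeta_2'(0,z;\tau,1)$ near $z=0$ --- i.e., you would end up redoing a piece of Shintani's original calculation, which is why the paper defers to \cite[Prop.~5]{shintani} rather than proving it. Until this constant is actually computed, your argument shows only that the two sides agree up to an unknown multiplicative function of $\tau$.
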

\begin{proof}
This is a rephrasing of \cite[Prop.~5]{shintani}.
\end{proof}

The restriction $\omega_2=1$ is not a serious restriction, by the following identity.
\begin{prop}\label{prop:scaling}
For any $\alpha \in \C^\times$,
\begin{equation}\label{eq:hello1}
\SS_2(\alpha z;\alpha \omega_1,\alpha \omega_2) = \SS_2(z,\omega_1,\omega_2).
\end{equation}
In particular,
\begin{equation}
\SS_2(z;\omega_1,\omega_2) = \SS_2\!\left(\frac{z}{\omega_2};\frac{\omega_1}{\omega_2}\right). \label{eq:hello2}
\end{equation}
\end{prop}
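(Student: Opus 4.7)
The plan is to exploit the obvious homogeneity of the double zeta function. From the series definition, for real positive $\omega_1,\omega_2,\alpha,z$ and $\re(s) > 2$, the substitution $z + m\omega_1 + n\omega_2 \mapsto \alpha(z + m\omega_1 + n\omega_2)$ immediately yields
\begin{equation*}
\zeta_2(s,\alpha z;\alpha\omega_1,\alpha\omega_2) = \alpha^{-s}\,\zeta_2(s,z;\omega_1,\omega_2),
\end{equation*}
and this extends to all admissible parameters via the contour integral representation (fixing the principal branch of $\alpha^{-s}$ in a simply connected neighborhood of the positive reals). Differentiating at $s=0$ and invoking the definition of $\Gamma_2$, I obtain
\begin{equation*}
\log\Gamma_2(\alpha z;\alpha\omega_1,\alpha\omega_2) = \log\Gamma_2(z;\omega_1,\omega_2) - (\log\alpha)\,\zeta_2(0,z;\omega_1,\omega_2) + c(\alpha;\omega_1,\omega_2),
\end{equation*}
where $c(\alpha;\omega_1,\omega_2) := \log\bigl(\rho_2(\alpha\omega_1,\alpha\omega_2)/\rho_2(\omega_1,\omega_2)\bigr)$ is independent of $z$.

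Next, I will apply this identity twice---once at $z$ and once at $\omega_1 + \omega_2 - z$---and take their ratio as in \eqref{eq:dsinegamma}. The $z$-independent constants $c(\alpha;\omega_1,\omega_2)$ cancel, leaving
\begin{equation*}
\SS_2(\alpha z;\alpha\omega_1,\alpha\omega_2) = \alpha^{\zeta_2(0,z;\omega_1,\omega_2) - \zeta_2(0,\omega_1+\omega_2-z;\omega_1,\omega_2)}\,\SS_2(z;\omega_1,\omega_2).
\end{equation*}
So it suffices to establish the symmetry $\zeta_2(0,z;\omega_1,\omega_2) = \zeta_2(0,\omega_1+\omega_2-z;\omega_1,\omega_2)$, which is the one genuinely nontrivial step.

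The symmetry I would obtain by computing $\zeta_2(0,z;\omega_1,\omega_2)$ explicitly from the contour integral: since $\Gamma(1-s)\big|_{s=0}=1$ and $(-t)^{0}=1$, collapsing the Hankel contour onto the pole at $t=0$ shows that $\zeta_2(0,z;\omega_1,\omega_2)$ is, up to the standard sign convention, the residue at $t=0$ of $\frac{e^{-zt}}{t(1-e^{-\omega_1 t})(1-e^{-\omega_2 t})}$. This residue is Barnes's classical quadratic in $z$ whose $z$-dependent part equals $\frac{z^2-(\omega_1+\omega_2)z}{2\omega_1\omega_2}$, manifestly invariant under $z \mapsto \omega_1+\omega_2-z$. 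A more conceptual (calculation-free) route is to observe that the simultaneous substitution $t \mapsto -t$, $z \mapsto \omega_1+\omega_2-z$ leaves the Hankel integrand unchanged, forcing the residue to enjoy the same symmetry. Either way, \eqref{eq:hello1} follows, and \eqref{eq:hello2} is immediate on specializing $\alpha = \omega_2^{-1}$.
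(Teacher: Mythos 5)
Your proposal is correct and follows essentially the same route as the paper: establish the homogeneity $\zeta_2(s,\alpha z;\alpha\omega_1,\alpha\omega_2)=\alpha^{-s}\zeta_2(s,z;\omega_1,\omega_2)$, differentiate at $s=0$, and observe that the extra term proportional to $\log\alpha$ cancels between the two $\Gamma_2$ factors because $\zeta_2(0,z;\omega_1,\omega_2)$ is invariant under $z\mapsto\omega_1+\omega_2-z$. The only variation is your second, calculation-free argument for that invariance: the paper computes $\zeta_2(0,z;\omega_1,\omega_2)=\frac{\omega_1^2+3\omega_1\omega_2+\omega_2^2-6z(\omega_1+\omega_2-z)}{12\omega_1\omega_2}$ and reads off the symmetry, whereas you also note that the simultaneous substitution $t\mapsto -t$, $z\mapsto\omega_1+\omega_2-z$ fixes the residue of $\frac{e^{-zt}}{t(1-e^{-\omega_1 t})(1-e^{-\omega_2 t})}$ (one checks that the integrand is sent to its negative and the residue of $g(-t)$ is $-\mathrm{Res}\,g(t)$, so the two signs cancel) — a slicker way to see the same fact that avoids writing out the quadratic.
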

\begin{proof}
Consider the Laurent series expansion
\begin{align}
\tfrac{e^{-zt}}{(1-e^{-\omega_1 t})(1-e^{-\omega_2 t})}
&= \tfrac{1}{\omega_1\omega_2}t^{-2} 
+ \tfrac{\omega_1+\omega_2-2z}{2\omega_1\omega_2}
+ \tfrac{\omega_1^2+3\omega_1\omega_2+\omega_2^2 - 6z(\omega_1+\omega_2-z)}{12\omega_1\omega_2}
+ O(t).
\end{align}
By the residue theorem, we have
\begin{align}
\zeta_2(0,z;\omega_1,\omega_2)
&= \tfrac{1}{2\pi i} \int_C \tfrac{e^{-zt}}{(1-e^{-\omega_1 t})(1-e^{-\omega_2 t})} \frac{dt}{t}
= \tfrac{\omega_1^2+3\omega_1\omega_2+\omega_2^2 - 6z(\omega_1+\omega_2-z)}{12\omega_1\omega_2}.
\end{align}
Moreover, for $\alpha > 0$, we have
\begin{equation}
\zeta_2(s,\alpha z;\alpha\omega_1,\alpha\omega_2) = \alpha^{-s} \zeta_2(s,z;\omega_1,\omega_2).
\end{equation}
Taking the derivative in $s$,
\begin{equation}
\zeta_2'(s,\alpha z;\alpha \omega_1,\alpha \omega_2)
= \alpha^{-s} \zeta_2'(s,z;\omega_1,\omega_2)
-  (\log\alpha) \alpha^{-s} \zeta_2(s,z;\omega_1,\omega_2),
\end{equation}
and thus
\begin{align}
\zeta_2'(0,\alpha z;\alpha \omega_1,\alpha \omega_2)
&= \zeta_2'(0,z;\omega_1,\omega_2) \\
& \ \ \ - (\log\alpha) \tfrac{\omega_1^2+3\omega_1\omega_2+\omega_2^2 - 6z(\omega_1+\omega_2-z)}{12\omega_1\omega_2}; \\
\zeta_2'(0,\alpha (\omega_1+\omega_2-z));\alpha \omega_1,\alpha \omega_2)
&= \zeta_2'(0,\omega_1+\omega_2-z;\omega_1,\omega_2) \\
& \ \ \ - (\log\alpha) \tfrac{\omega_1^2+3\omega_1\omega_2+\omega_2^2 - 6z(\omega_1+\omega_2-z)}{12\omega_1\omega_2}.
\end{align}
Thus,
\begin{align}
\SS_2(\alpha z;\alpha\omega_1,\alpha\omega_2)
&= \exp\left(\zeta_2'(0,\alpha (\omega_1+\omega_2-z));\alpha \omega_1,\alpha \omega_2) - \zeta_2'(0,\alpha z;\alpha \omega_1,\alpha \omega_2)\right) \\
&= \exp\left(\zeta_2'(0,\omega_1+\omega_2-z); \omega_1, \omega_2) - \zeta_2'(0, z; \omega_1, \omega_2)\right) \\
&= \SS_2(z;\omega_1,\omega_2),
\end{align}
proving \eqref{eq:hello1} for $\alpha>0$. The claim follows for all $\alpha \in \C^\times$ because the ratio of the two sides is a meromorphic function in $\alpha$. Equation \eqref{eq:hello2} follows by a specialization of the variables.
\end{proof}

The double sine function satisfies the following ``quasiperiodicity'' properties with respect to the elliptic transformations $z \mapsto z+1$ and $z \mapsto z+\tau$.
\begin{prop}\label{prop:quasiperiodicity}
The double sine function satisfies the identities
\begin{align}
\Sin_2(z+1,\tau) &= \left(2\sin\!\left(\frac{\pi z}{\tau}\right)\right)^{-1} \Sin_2(z,\tau) \mbox{ and} \\
\Sin_2(z+\tau,\tau) &= \left(2\sin(\pi z)\right)^{-1} \Sin_2(z,\tau).
\end{align}
\end{prop}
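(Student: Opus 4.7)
The plan is to derive the two quasiperiodicities directly from \Cref{thm:shin5}, which writes $\sigma_S(z,\tau) = \varpi(z/\tau,-1/\tau)/\varpi(z,\tau)$ as an explicit elementary prefactor times $\Sin_2(z,\tau)^{-1}$. Since elliptic shifts of $\sigma_S$ are trivial to compute via \Cref{lem:ell}, matching them against the shifts of the prefactor in \Cref{thm:shin5} isolates the quasiperiodicity factor of $\Sin_2$. No new analytic input is needed beyond \Cref{thm:shin5}, \Cref{lem:ell}, and the elementary identity $1 - \ee{w} = -2i\sin(\pi w)\,\ee{w/2}$.

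For $z \mapsto z+1$: the denominator $\varpi(z+1,\tau) = \varpi(z,\tau)$ is unchanged, while $(z+1)/\tau = z/\tau - (-1)\cdot(-1/\tau)$, so \Cref{lem:ell} with base $-1/\tau$ and $(m,n)=(-1,0)$ gives $\varpi((z+1)/\tau,-1/\tau) = (1-\ee{(z+1)/\tau})\,\varpi(z/\tau,-1/\tau)$. Hence $\sigma_S(z+1,\tau)/\sigma_S(z,\tau) = 1 - \ee{(z+1)/\tau}$. Substituting \Cref{thm:shin5} on both sides, the universal factor $\ee{(\tau-3+\tau^{-1})/24}$ cancels, the $(1-\ee{(z+1)/\tau})$ appearing from \Cref{thm:shin5} cancels against the one arising from the shift ratio, and the difference of the Gaussian exponents simplifies to $(2z-\tau)/(4\tau) = z/(2\tau) - 1/4$, yielding
\begin{equation}
\frac{\Sin_2(z+1,\tau)}{\Sin_2(z,\tau)} = \frac{\ee{z/(2\tau)-1/4}}{1-\ee{z/\tau}}.
\end{equation}
Applying $1 - \ee{w} = -2i\sin(\pi w)\,\ee{w/2}$ with $w = z/\tau$ and $\ee{-1/4} = -i$ collapses the right-hand side to $(2\sin(\pi z/\tau))^{-1}$.

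The case $z \mapsto z+\tau$ is entirely parallel: now $\varpi((z+\tau)/\tau,-1/\tau) = \varpi(z/\tau,-1/\tau)$ (a pure integer shift in the first argument) while $\varpi(z+\tau,\tau) = (1-\ee{z})^{-1}\,\varpi(z,\tau)$ by \Cref{lem:ell} with $(m,n)=(1,0)$, giving $\sigma_S(z+\tau,\tau)/\sigma_S(z,\tau) = 1-\ee{z}$. Substituting \Cref{thm:shin5}, the $(1-\ee{z/\tau})$ factors cancel directly, the Gaussian exponent difference reduces to $(2z-1)/4 = z/2 - 1/4$, and the same sine identity produces $\Sin_2(z+\tau,\tau)/\Sin_2(z,\tau) = (2\sin(\pi z))^{-1}$. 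The identities initially hold on $\C \times \HH$ and extend to the full meromorphic domain of $\Sin_2$ by analytic continuation. There is no serious obstacle in this proof; the only point requiring care is the sign bookkeeping in the Gaussian exponent and the recognition of $\ee{w/2 - 1/4}/(1-\ee{w})$ as $(2\sin(\pi w))^{-1}$.
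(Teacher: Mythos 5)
Your proof is correct. The paper's own ``proof'' is simply a citation to an external reference (Koyama--Kurokawa, \cite[Thm.~2.1]{sine}), so your argument is a genuinely different and self-contained route that stays inside the machinery already established in the paper.

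What you do: you read \Cref{thm:shin5} as an explicit factorization $\sigma_S(z,\tau) = (\text{elementary prefactor}) \cdot \SS_2(z,\tau)^{-1}$, compute the elliptic shifts of $\sigma_S$ in one line each via \Cref{lem:ell} (which is just manipulation of the infinite product), and extract the quasiperiodicity factor of $\SS_2$ by dividing. I checked the bookkeeping: for $z\mapsto z+1$, the exponent difference is $\frac{(\tau-z-1)(-z) - (\tau-z)(1-z)}{4\tau} = \frac{2z-\tau}{4\tau}$, and the $(1-\ee{(z+1)/\tau})$ factor in \Cref{thm:shin5} cancels exactly against the shift factor; combining $\ee{z/(2\tau)-1/4}$ with $1-\ee{z/\tau} = -2i\,\ee{z/(2\tau)}\sin(\pi z/\tau)$ and $\ee{-1/4}=-i$ produces $(2\sin(\pi z/\tau))^{-1}$ cleanly. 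For $z\mapsto z+\tau$, the $(1-\ee{z/\tau})$ factor is literally unchanged, the exponent difference is $\frac{(2z-1)\tau}{4\tau} = \frac{2z-1}{4}$, and the same trigonometric collapse gives $(2\sin(\pi z))^{-1}$. Both are right.

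What each approach buys: the citation is faster on the page, and the quasiperiodicity is of course a classical fact about $\SS_2$ that can be proved directly from the double gamma product or integral representation. Your derivation has the merit of being internally self-contained given that the paper already imports Shintani's Prop.~5 as \Cref{thm:shin5} and already proves \Cref{lem:ell}; in particular it makes transparent that the quasiperiodicity of $\SS_2$ and the elliptic behavior of the $q$-Pochhammer symbol are literally the same identity seen through the dictionary of \Cref{thm:shin5}. That observation is actually the engine of \Cref{lem:deltafrac2} later in the paper, where the paper runs the shift in the opposite direction (using the quasiperiodicity of $\SS_2$ to manipulate $\varpi_\r$), so your proof also illuminates why that later computation works. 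No circularity arises: \Cref{thm:shin5} is imported from Shintani's paper, not derived from \Cref{prop:quasiperiodicity}.
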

\begin{proof}
This is a special case of \cite[Thm.\ 2.1]{sine}.
\end{proof}

The reader should be aware of different conventions for related functions appearing in the physics literature, denoted by $S_b(z)$ and $\Phi_b(z)$.
\begin{defn}
The \textit{physicist's double sine function} is
\begin{equation}
S_b(z) := \SS_2(z;b,b^{-1})^{-1}.
\end{equation}
Faddeev's \textit{noncompact quantum dilogarithm} is defined by
\begin{equation}\label{eq:faddefn}
\Phi_b(z) = \exp\left(\frac{1}{4}\int_C \frac{e^{-2izw}}{\sinh(wb)\sinh(wb^{-1})} \frac{dw}{w}\right),
\end{equation}
where $C$ is a contour that follows the real line from $-\infty$ the $\infty$ except near zero, where it goes into the upper half plane to avoid the pole at $w=0$.
\end{defn}
The noncompact quantum dilogarithm may be expressed in terms of the Shintani--Faddeev Jacobi cocycle as follows.
\begin{prop}
For $\im(b^2)>0$, we have
\begin{equation}\label{eq:fadrel1}
\Phi_b(z) 
= \sfj{S}{\frac{1}{2} - \frac{b^2}{2} - izb}{b^2}.
\end{equation}
\end{prop}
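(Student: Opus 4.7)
My plan is to prove \eqref{eq:fadrel1} by first converting $\sfj{S}{\cdot}{b^2}$ into a value of Barnes's double sine function via Theorem \ref{thm:shin5}, then invoking the classical identification of the ``modular-covariant'' double sine $\SS_2(\cdot;b,b^{-1})$ with Faddeev's quantum dilogarithm $\Phi_b$.

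Setting $z_0 := \tfrac12-\tfrac{b^2}{2}-izb$, Theorem \ref{thm:shin5} gives
\begin{equation*}
\sfj{S}{z_0}{b^2} \;=\; \ee{E(z_0,b^2)}\,(1-\ee{z_0/b^2})\,\SS_2(z_0,b^2)^{-1},
\end{equation*}
where $E(z,\tau)=\tfrac{\tau-3+\tau^{-1}}{24}+\tfrac{(\tau-z)(1-z)}{4\tau}$. The scaling identity \eqref{eq:hello1} of \Cref{prop:scaling}, applied with $\alpha=b^{-1}$, yields $\SS_2(z_0,b^2)=\SS_2(z_0/b;b,b^{-1})$, recasting the double sine in its modular-covariant form with symmetry center $c_b := (b+b^{-1})/2$. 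A short algebraic manipulation gives $z_0/b = c_b-b-iz$, so the quasiperiodicity of \Cref{prop:quasiperiodicity} (in its rescaled form for periods $b, b^{-1}$) relates $\SS_2(z_0/b;b,b^{-1})$ to $\SS_2(c_b-iz;b,b^{-1})$ up to an explicit $2\sin(\cdot)$ factor, which can be rewritten as a sum of exponentials in $z$ and $b^2$.

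Next, I would invoke the classical identification of $\Phi_b$ with the double sine: for $\im(b^2)>0$, one has
\begin{equation*}
\Phi_b(z) \;=\; \SS_2(c_b+iz;\,b,\,b^{-1})^{-1}\cdot\exp\!\left(-\tfrac{i\pi z^2}{2}-\tfrac{i\pi(b^2+b^{-2})}{24}\right),
\end{equation*}
or an equivalent formulation. This may be derived by writing the integral representation of $\log\SS_2(w;b,b^{-1})$---obtained from the Mellin--Barnes form
$\zeta_2(s,w;b,b^{-1})=\tfrac{1}{\Gamma(s)}\int_0^\infty \tfrac{e^{-wt}\,t^{s-1}\,dt}{(1-e^{-bt})(1-e^{-b^{-1}t})}$
and differentiating at $s=0$---and matching it to the Faddeev integral \eqref{eq:faddefn} after the change of variables $t=-2iw$ and a contour deformation; the Gaussian-polynomial prefactor emerges from the residue contribution of the triple pole at $w=0$ in \eqref{eq:faddefn}, whose Laurent expansion produces terms of orders $w^{-3}$, $w^{-1}$, and $w^0$ that integrate against $e^{-2izw}$ to polynomials in $z$ of degrees $2$, $0$, and (vanishing) $-1$. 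Combining with the first step, \eqref{eq:fadrel1} follows after verifying that the collected elementary prefactors reduce to unity.

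The main obstacle is this final bookkeeping step: the quadratic exponential $\ee{E(z_0,b^2)}$, the factor $(1-\ee{z_0/b^2})$, the sine factor introduced by the quasiperiodicity shift, and the Gaussian from the $\Phi_b$-$\SS_2$ identification must all combine cleanly. Each is an explicit elementary function of $z$ and $b^{\pm1}$, so the verification reduces to routine algebra---expanding $E(z_0,b^2)$ as a polynomial in $z$ and $b^{\pm2}$, rewriting $(1-\ee{z_0/b^2})$ as an exponential times a sine, and collecting terms of each polynomial degree in $z$ separately.
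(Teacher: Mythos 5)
The paper proves this proposition by a one-line citation to the literature, so your proposal is doing something genuinely different: it sketches an actual derivation. The broad strategy — pass through \Cref{thm:shin5} to a double sine, rescale via \Cref{prop:scaling}, shift via \Cref{prop:quasiperiodicity}, and then match against a known $\Phi_b$--$\SS_2$ identity — is a reasonable plan, and you correctly compute $z_0/b=c_b-b-iz$. Two observations, one small and one structural.

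The small one: after the quasiperiodicity shift you arrive at $\SS_2(c_b-iz;b,b^{-1})$, but the identity you then invoke is phrased in terms of $\SS_2(c_b+iz;b,b^{-1})^{-1}$. The bridge is the reflection identity $\SS_2(\omega_1+\omega_2-w;\omega_1,\omega_2)=\SS_2(w;\omega_1,\omega_2)^{-1}$ (stated in the paper shortly after \Cref{prop:quasiperiodicity}), which you should make explicit; otherwise the reader sees a sign mismatch in the elliptic variable.

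The structural one: you introduce three separate elementary prefactors — the $\ee{E(z_0,b^2)}$ from \Cref{thm:shin5}, the $(1-\ee{z_0/b^2})$ factor, and the $2\sin$ from quasiperiodicity — plus a Gaussian from an invoked $\Phi_b$--$\SS_2$ identity whose precise normalization you hedge on (``or an equivalent formulation''), and then defer the cancellation as ``routine algebra.'' This is the load-bearing step, and since the invoked identity's constants vary across the literature, nothing in your sketch pins them down; the argument is not closed until you fix a convention and carry out the expansion. A more economical route for exactly the regime $\im(b^2)>0$ is to bypass the double sine entirely: \Cref{defn:sfjacobimaster} gives
\begin{equation*}
\sfj{S}{z_0}{b^2} \;=\; \frac{\varpi\!\left(z_0/b^2,\,-1/b^2\right)}{\varpi(z_0,\,b^2)},
\end{equation*}
and with $z_0=\tfrac12-\tfrac{b^2}{2}-izb$ one finds $\ee{z_0}=-e^{-i\pi b^2}e^{2\pi zb}$ and $\ee{z_0/b^2}=-e^{i\pi/b^2}e^{2\pi z/b}$, so both numerator and denominator are explicit bilateral-style $q$-Pochhammer products in $e^{\pm 2\pi i b^{\pm 2}}$. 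The product representation of $\Phi_b$ for $\im(b^2)>0$ (obtained from \eqref{eq:faddefn} by closing the contour in the upper half-plane and summing residues at $w\in\pi i b\Z_{>0}\cup \pi i b^{-1}\Z_{>0}$) has the same shape, so the verification becomes a direct term-by-term matching — one must still track the convention-dependent shift in the Pochhammer argument, but all the double-sine prefactors disappear. Either way, the essential work is aligning conventions rather than new ideas, and the paper sidesteps it by citation.
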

\begin{proof}
This formula is given as \cite[eq.~(53)]{resurgence}.
\end{proof}

There is a fast-converging integral formula for the logarithm of the double sine function. While we don't require if for our main results, we have used it to check formulas numerically in Mathematica.
\begin{prop}\label{prop:sinhintegral}
If $0 < \re(z) < \re(\omega_1+\omega_2)$, then
\begin{align}
\SS_2(z;\omega_1,\omega_2) 
&= \exp\left(-\int_0^\infty \left(\frac{\sinh\left(\left(\frac{\omega_1+\omega_2}{2}-z\right)t\right)}{2\sinh\left(\frac{\omega_1 t}{2}\right)\sinh\left(\frac{\omega_2 t}{2}\right)} - \frac{\omega_1+\omega_2-2z}{\omega_1 \omega_2 t}\right)\frac{dt}{t}\right).
\end{align}
\end{prop}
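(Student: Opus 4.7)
The plan is to derive the formula by computing the difference of Barnes's Hankel contour integrals for $\zeta_2(s,\omega_1+\omega_2-z;\omega_1,\omega_2)$ and $\zeta_2(s,z;\omega_1,\omega_2)$, and then differentiating at $s=0$ to recover $\log\SS_2(z;\omega_1,\omega_2)$ via the definition \eqref{eq:dsinegamma}. After applying Barnes's formula to both arguments and subtracting, I multiply numerator and denominator of the integrand by $e^{(\omega_1+\omega_2)t/2}$ to collapse the exponentials into hyperbolic sines:
\begin{equation*}
\frac{e^{-(\omega_1+\omega_2-z)t}-e^{-zt}}{(1-e^{-\omega_1 t})(1-e^{-\omega_2 t})}=-\frac{\sinh\!\left(\left(\tfrac{\omega_1+\omega_2}{2}-z\right)t\right)}{2\sinh(\omega_1 t/2)\sinh(\omega_2 t/2)}=:-h(t),
\end{equation*}
which is exactly the function appearing in the statement of the proposition.

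For $\re(s)>1$, I collapse the Hankel contour onto the positive real axis: the simple pole of $h$ at $t=0$ contributes $O(\delta^{s-1})$ on the small circle of radius $\delta$, which vanishes as $\delta\to 0^+$ under the assumption $\re(s)>1$. Consistency with the Mellin identity $\zeta_2(s,z;\omega_1,\omega_2)=\tfrac{1}{\Gamma(s)}\int_0^\infty \tfrac{e^{-zt}}{(1-e^{-\omega_1 t})(1-e^{-\omega_2 t})}\,t^{s-1}\,dt$ (valid for $\re(s)>2$) then forces
\begin{equation*}
\zeta_2(s,\omega_1+\omega_2-z;\omega_1,\omega_2)-\zeta_2(s,z;\omega_1,\omega_2)=-\frac{1}{\Gamma(s)}\int_0^\infty h(t)\,t^{s-1}\,dt
\end{equation*}
in this region. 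To continue to $s=0$, I split $h(t)=C(z)/t+\tilde h(t)$ with $C(z)=(\omega_1+\omega_2-2z)/(\omega_1\omega_2)$ and $\tilde h$ holomorphic near $0$; truncating at $t=1$, computing $\int_0^1 t^{s-2}\,dt = 1/(s-1)$, and reassembling shows that the analytic continuation $F(s)$ of the right-hand integral satisfies $F(0)=\int_0^\infty\bigl(h(t)-C(z)/t\bigr)\,dt/t$.

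Finally, the explicit polynomial formula for $\zeta_2(0,z;\omega_1,\omega_2)$ derived in the proof of \Cref{prop:scaling} is symmetric under $z\mapsto \omega_1+\omega_2-z$, so the left-hand side of the displayed identity vanishes at $s=0$; matching first-order Taylor coefficients and using $\tfrac{d}{ds}\bigl[1/\Gamma(s)\bigr]\big|_{s=0}=1$ together with the identity $\log\SS_2(z;\omega_1,\omega_2)=\zeta_2'(0,\omega_1+\omega_2-z;\omega_1,\omega_2)-\zeta_2'(0,z;\omega_1,\omega_2)$ produces the claimed formula, with the minus sign in the proposition arising from the sign in the collapsed-contour identity above. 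The main technical obstacle is the careful justification of the contour collapse and the interchange of limits in the analytic continuation; the hypothesis $0<\re(z)<\re(\omega_1+\omega_2)$ is precisely what guarantees both the exponential decay of $h(t)$ at $\infty$ and the vanishing of the small-circle contribution.
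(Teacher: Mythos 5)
Your proof is correct, but it takes a genuinely different route from the paper. The paper's own proof is essentially a citation: it notes the formula is stated in the physics literature (Ponsot's appendix) for the special case $\omega_1=b$, $\omega_2=b^{-1}$ (i.e.\ for $S_b(z)=\SS_2(z;b,b^{-1})^{-1}$) and then invokes the scaling relation \Cref{prop:scaling} to pass to general $(\omega_1,\omega_2)$. You instead derive the formula from scratch starting from Barnes's Hankel-contour representation of $\zeta_2(s,z;\omega_1,\omega_2)$: the algebraic collapse of $e^{-(\omega_1+\omega_2-z)t}-e^{-zt}$ over $(1-e^{-\omega_1 t})(1-e^{-\omega_2 t})$ into the $\sinh$ quotient is exactly right, the contour collapse for $\re(s)>1$ is justified by the $O(\delta^{\re(s)-1})$ estimate on the small circle, and the subtraction of the $C(z)/t$ singularity with $C(z)=(\omega_1+\omega_2-2z)/(\omega_1\omega_2)$ is the correct way to continue the Mellin integral past $s=1$ to $s=0$. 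Your final step — expanding $-F(s)/\Gamma(s)$ around $s=0$, using that $1/\Gamma(s)=s+O(s^2)$, and reading off the first Taylor coefficient to get $\log\SS_2(z;\omega_1,\omega_2)=-F(0)$ — is likewise correct, and the identity $F(0)=\int_0^\infty\bigl(h(t)-C(z)/t\bigr)\,dt/t$ follows from the cancellation $\int_1^\infty t^{-2}\,dt=1$ exactly as you compute. What each approach buys: the paper's route is very short but relies on an external reference and on the scaling lemma; your route is longer but self-contained, makes transparent exactly where the convergence hypothesis $0<\re(z)<\re(\omega_1+\omega_2)$ enters (it guarantees the exponential decay of $h$ at $\infty$ and the vanishing of the small-circle term), and automatically reproduces the vanishing of the $s=0$ value as a byproduct. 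The observation about the $z\mapsto\omega_1+\omega_2-z$ symmetry of the explicit $\zeta_2(0,\cdot)$ polynomial is a nice consistency check, though it is not logically needed since the $-F(0)s+O(s^2)$ expansion already forces the difference to vanish at $s=0$.
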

\begin{proof}
This formula is stated in the case of $S_b(z) = \SS_2(z;b,b^{-1})^{-1}$ in \cite[App.~B]{ponsot}. The general case follows by \Cref{prop:scaling}.
\end{proof}

\subsection{Multiplicative quantum modularity of the $q$-Pochhammer symbol}

From the definition of $\sigma_{\!A}$, we have $\varpi\left(\frac{z}{c\tau+d},A\cdot\tau\right) = \sfj{A}{z}{\tau}\varpi(z,\tau)$. In order to show that $\varpi$ is a $\sigma$-Jacobi form in a nontrivial sense, we need to show that $\sfj{A}{z}{\tau}$ are defined on larger domains than $\varpi$ is.
\begin{theorem}\label{thm:wannabej}
The function $\varpi(z,\tau)$ is a meromorphic $\sigma_{\!A}$-Jacobi form for $\Gamma = \SL_2(\Z)$ with the system of domains $\DD_{\!A}$ defined by \eqref{eq:DD}.
Specifically, it satisfies the elliptic relation
\begin{equation}
\varpi(z+m\tau+n,\tau) = \left(\ee{z},\ee{\tau}\right)_m^{-1} \varpi(z,\tau)
\end{equation}
and the modular relation
\begin{equation}
\varpi\!\left(\frac{z}{j(z,\tau)},A \cdot \tau\right) = \sfj{A}{z}{\tau} \varpi(z,\tau),
\end{equation}
and $\sfj{A}{z}{\tau}$ is meromorphic on $\C \times \DD_{\!A}$.
\end{theorem}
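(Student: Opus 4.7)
The plan is to verify the elliptic and modular transformation formulas first and then construct the meromorphic continuation. The elliptic relation is immediate from \Cref{lem:ell}, and the modular relation on $\C \times \HH$ follows at once from the definition of $\sfj{A}{z}{\tau}$ in \Cref{defn:sfjacobimaster}, since $A$ preserves $\HH$ and both $q$-Pochhammer products converge there. The substantive task is therefore to extend $\sfj{A}{z}{\tau}$ meromorphically from $\C \times \HH$ to $\C \times \DD_{\!A}$.

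I would first dispose of the parabolic cases $c = 0$ directly. For $A = T^n$, the identity $\varpi(z,\tau+n) = \varpi(z,\tau)$ makes $\sfj{T^n}{z}{\tau}$ identically $1$, which is meromorphic on all of $\C \times \C = \C \times \DD_{T^n}$. For $A = -T^n$, one computes $j_{\!A}(\tau) = -1$ and $A\cdot\tau = \tau+n$, whence $\sfj{-T^n}{z}{\tau} = \varpi(-z,\tau)/\varpi(z,\tau)$, visibly meromorphic on $\C \times \HH = \C \times \DD_{-T^n}$.

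The essential content is the case $c \neq 0$. For $A = S$, \Cref{thm:shin5} expresses $\sfj{S}{z}{\tau}$ as an explicit elementary factor multiplied by $\SS_2(z,\tau)^{-1}$; the double gamma product formula \eqref{eq:dgprod} continues $\SS_2(z,\tau) = \Gamma_2(\tau+1-z,\tau)/\Gamma_2(z,\tau)$ to a meromorphic function on $\C \times (\C \setminus (-\infty, 0])$, which is exactly $\C \times \DD_S$. For general $A$ with $c \neq 0$, I would factor $A$ as a word in the generators $S$, $T^{\pm 1}$, $-I$ of $\SL_2(\Z)$ via the Euclidean algorithm on $(c,d)$, and iteratively invoke the cocycle identity $\sfj{A_1A_2}{z}{\tau} = \sfj{A_1}{z'}{\tau'}\,\sfj{A_2}{z}{\tau}$ with $(z',\tau') = A_2\cdot(z,\tau)$, which holds on $\C \times \HH$ directly from \Cref{defn:sfjacobimaster}. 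Each factor carries its own meromorphic continuation from the cases above, and the product delivers the desired continuation.

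The main obstacle will be the bookkeeping of domains under this iterative continuation: one must verify that the intermediate Jacobi-group actions land in the domains of the successive cocycle factors whenever $\tau \in \DD_{\!A}$, so that the resulting formula really extends meromorphically to all of $\C \times \DD_{\!A}$. The asymmetric definition of $\DD_{\!A}$---a slit $(-\infty,-d/c]$ when $c > 0$ but $[-d/c,\infty)$ when $c < 0$---is precisely what specifies the direction from which one may analytically continue out of $\HH$ past the singularity $\tau = -d/c$ of $A\cdot\tau$. The verification should reduce to a finite case analysis on the signs of $c$ at each stage of the reduction, and the payoff is that the continuation obtained from the $\SS_2^{-1}$-representation on each $\sfj{S}{\cdot}{\cdot}$-factor agrees on its domain with the function originally defined by the infinite product on $\HH$.
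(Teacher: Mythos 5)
Your plan coincides with the paper's: dispose of the parabolic cases directly, continue $\sigma_S$ via the double sine function through \Cref{thm:shin5}, and bootstrap a general $A$ with $c\neq 0$ by factoring into generators and iterating the cocycle identity. What you defer---the domain bookkeeping, which you rightly call the main obstacle---is in fact the entire substance of the proof, and the phrase ``should reduce to a finite case analysis on the signs of $c$ at each stage'' underestimates what is required.

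The paper makes the iteration a genuine induction on $\abs{c}$: for $c>0$, write $a=ck-c'$ with $0\le c'<c$, set $B = S^{-1}T^{-k}A = \smmattwo{c}{d}{c'}{d'}$, so that $A=T^kSB$ and
\[
\sfj{A}{z}{\tau} = \sigma_{S}\!\left(\tfrac{z}{j_{B}(\tau)},B\cdot\tau\right)\sfj{B}{z}{\tau}.
\]
The step that must actually be proved is not a sign check but the quantitative inequality $\frac{d'}{c'}-\frac{d}{c}=\frac{1}{cc'}>0$, which follows from $\det B=1$. This one inequality does two jobs. First, it gives $(-\infty,-d'/c']\subseteq(-\infty,-d/c]$, so $\DD_B\supseteq\DD_A$ and the inductive hypothesis for $B$ (whose $c$-entry $c'$ is strictly smaller) applies on all of $\DD_A$. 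Second, it shows that if $\tau>-d/c$ then both $c\tau+d>0$ and $c'\tau+d'>0$, hence $B\cdot\tau = \frac{c\tau+d}{c'\tau+d'}>0$ lies in $\DD_S$, so the $\sigma_S$ factor is defined there too. Without this slit-nesting inequality, the iterated cocycle formula could place an intermediate point on the cut $(-\infty,0]$ of $\sigma_S$, or demand $\sigma_B$ on a domain larger than its $\DD_B$, and the continuation would not close up. The $c<0$ case then follows from the $c>0$ case via $\sigma_{A^{-1}}\!\left(A\cdot(z,\tau)\right)\sigma_{A}(z,\tau)=1$. So: right strategy, but the nesting of the slits---driven by unimodularity---is the step you need to actually produce, not merely flag.
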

\begin{proof}
The elliptic relation is straightforward to check. The modular relation holds by the definition of $\sfj{A}{z}{\tau}$; however, we must prove that $\sfj{A}{z}{\tau}$ meromorphically continues to $(z,\tau) \in \C \times \DD_{\!A}$.  Write $A = \smmattwo{a}{b}{c}{d}$.

First, note that
\begin{equation}
\varpi(z,\tau+b) = \varpi(z,\tau).
\end{equation}
So, when $c=0$ and $d>0$, $\sfj{A}{z}{\tau} = 1$ (and thus is defined for $(z,\tau) \in \C \times \C$). When $c=0$ and $d<0$, there is nothing to prove.

Now suppose $c>0$. Divide $a$ by $c$ with negative remainder to obtain $a=ck-c'$ for some $k \in \Z$ and some $c' \in \Z$ with $0 \leq c' < c$. Set
\begin{equation}
B = \mattwo{a'}{b'}{c'}{d'} = S^{-1}T^{-k}A= \mattwo{c}{d}{ck-a}{dk-b}.
\end{equation} 
We then have $A=T^kSB$, so by the cocycle condition, 
\begin{align}
\sfj{A}{z}{\tau} 
&= \sfj{T^k}{\frac{z}{j_{\!SB}(\tau)}}{SB \cdot \tau}\sfj{S}{\frac{z}{j_{\!B}(\tau)}}{B\cdot\tau}\sfj{B}{z}{\tau} \\
&= \sfj{S}{\frac{z}{j_{\!B}(\tau)}}{B\cdot\tau}\sfj{B}{z}{\tau}.
\end{align}
By \Cref{thm:shin5}, $\sfj{S}{z'}{\tau}$ analytically continues to all $\tau \in \C \setminus (-\infty,0]$. Thus, $\sfj{S}{\frac{z}{j_{\!B}(\tau)}}{B\cdot\tau}$ continues (in $\tau$) to the lower half-plane and the portion of the real line where $B\cdot\tau > 0$. We have $B\cdot\tau = \frac{a'\tau+b'}{c'\tau+d'}= \frac{c\tau+d}{c'\tau+d'}$, and $cd'-dc'=a'd'-b'c'=1$, so $\frac{d'}{c'}-\frac{d}{c} = \frac{1}{cc'} > 0$ and $\frac{d'}{c'} > \frac{d}{c}$. Thus, $\tau > -\frac{d}{c}$ implies that $B \cdot \tau > 0$ and $\sfj{S}{\frac{z}{j_{\!B}(\tau)}}{B\cdot\tau}$ is well-defined. The inequality $c'<c$ allows us to induct on $c'$, and again using the fact the $\left(-\infty,-\frac{d'}{c'}\right) \subseteq \left(-\infty,-\frac{d}{c}\right)$, we have shown that $\sfj{A}{z}{\tau}$ meromorphically continues to $\tau \in \DD_{\!A}$.

If $c < 0$, we use the relation $1 = \sfj{A^{-1}A}{z}{\tau} = \sfj{A^{-1}}{\frac{z}{j_{\!A}(\tau)}}{A\cdot\tau}\sfj{A}{z}{\tau}$. Since $A^{-1} = \smmattwo{d}{-b}{-c}{a}$, the result on positive $c$ tells us that $\s_{\!A}(\tau)$ meromorphically continues the lower half-plane and to real $\tau$ such that $A\cdot\tau > \frac{a}{c}$. It is straightforward to check that this is equivalent to the condition that $\tau < -\frac{d}{c}$.
\end{proof}

\begin{theorem}\label{thm:wannabem}
Let $\r \in \Q^2$. The function $\varpi_\r(\tau)$ is a $\shin^\r$-modular form for $\Gamma_\r$ with the system of domains $\tDD_{\!A}$ as defined in \eqref{eq:tDD}.
Specifically, it satisfies the modular relation
\begin{equation}
\varpi_\r(A \cdot \tau) = \sf{\r}{A}{\tau} \varpi_\r(\tau),
\end{equation}
and $\sf{\r}{A}{\tau}$ is meromorphic on $\tDD_{\!A}$.
\end{theorem}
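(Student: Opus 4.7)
The proof has two parts: verifying the functional equation on $\HH$ and extending $\sf{\r}{A}{\tau}$ meromorphically from $\HH$ to $\tDD_{\!A}$.

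For the functional equation $\varpi_\r(A\cdot\tau) = \sf{\r}{A}{\tau}\varpi_\r(\tau)$ on $\HH$, \Cref{prop:goose} (specifically identity \eqref{eq:prel1}) expresses $\sf{\r}{A}{\tau} = \sigma_{(I-A)\r,A}(\sympt{\r}{\tau},\tau)$. I plan to show directly that $\sympt{A\r}{A\cdot\tau} = \sympt{\r}{\tau}/j_{\!A}(\tau)$, a short calculation using $\det A = 1$. Together with $\sympt{(I-A)\r}{A\cdot\tau} = \sympt{\r}{A\cdot\tau} - \sympt{A\r}{A\cdot\tau}$, this shows that the elliptic-variable argument of the numerator $\varpi$ in $\sigma_{(I-A)\r,A}(\sympt{\r}{\tau},\tau)$ simplifies to $\sympt{\r}{A\cdot\tau}$. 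The whole expression then equals $\varpi(\sympt{\r}{A\cdot\tau},A\cdot\tau)/\varpi(\sympt{\r}{\tau},\tau) = \varpi_\r(A\cdot\tau)/\varpi_\r(\tau)$, as required.

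For the meromorphic continuation, I would invoke identity \eqref{eq:prel2} of \Cref{prop:goose}:
\begin{equation*}
\sf{\r}{A}{\tau} = \left(\ee{\sympt{A^{-1}\r}{\tau}},\ee{\tau}\right)_{\symp{\r}{(I-A)\smcoltwo{1}{0}}}\sfj{A}{\sympt{A^{-1}\r}{\tau}}{\tau}.
\end{equation*}
The subscript $\symp{\r}{(I-A)\smcoltwo{1}{0}} = (1-a)r_2+cr_1$ is always an integer for $A \in \Gamma_\r$: from $A\r-\r \in \Z^2$ I can write $cr_1 = n_2 - (d-1)r_2$ with $n_2 \in \Z$, so the subscript equals $n_2 + (2-\tr A)r_2$, and Cayley--Hamilton applied to $A$ gives $(\tr(A)-2)\r \in \Z^2$. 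Hence the finite $q$-Pochhammer factor is meromorphic in $\tau$ (in fact entire when the subscript is nonnegative). \Cref{thm:wannabej} gives the meromorphic extension of $\sfj{A}{z}{\tau}$ to $\C\times\DD_{\!A}$; specializing $z = \sympt{A^{-1}\r}{\tau}$, a linear function of $\tau$, yields a meromorphic function of $\tau$ on $\DD_{\!A}$. Multiplying, $\sf{\r}{A}{\tau}$ is meromorphic on $\DD_{\!A}$, and for $c\neq 0$ or for $c=0$, $d>0$, this is exactly $\tDD_{\!A}$.

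The main obstacle is the case $A = -T^b$, where $\DD_{\!A} = \HH$ but $\tDD_{\!A} = \C$. Since $-T^b$ and $T^b$ induce the same M\"obius transformation $\tau\mapsto\tau+b$, the function $\sf{\r}{-T^b}{\tau}$ equals $\varpi_\r(\tau+b)/\varpi_\r(\tau)$ on $\HH$. When $-I\in\Gamma_\r$ (i.e., $\r\in\tfrac{1}{2}\Z^2$), I would verify $\sf{\r}{-I}{\tau}\equiv 1$ directly from \Cref{prop:goose} (since $\sigma_{2\r,-I}(\sympt{\r}{\tau},\tau) = \varpi_\r(\tau)/\varpi_\r(\tau) = 1$), and then apply the cocycle relation $\sf{\r}{-T^b}{\tau} = \sf{\r}{-I}{T^b\cdot\tau}\sf{\r}{T^b}{\tau} = \sf{\r}{T^b}{\tau}$ to reduce to the case $c=0$, $d>0$ already handled. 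In the remaining ``exceptional'' sub-case (forcing $r_2\in\tfrac12+\Z$, $b$ odd, and $r_1\in\tfrac14+\tfrac12\Z$, so that $-T^b\in\Gamma_\r$ but $-I\notin\Gamma_\r$), the cocycle identity $\sf{\r}{-T^b}{\tau} = \sf{\r}{(-T^b)B}{B^{-1}\cdot\tau}\sf{\r}{B^{-1}}{\tau}$ for a hyperbolic auxiliary $B \in \Gamma_\r$ with $c_B\neq 0$ yields, for each such $B$, meromorphic extension of $\sf{\r}{-T^b}{\tau}$ to an open subset of $\C$ determined by $\tDD_{B^{-1}} \cap B(\tDD_{(-T^b)B})$; since $\Gamma_\r$ contains some principal congruence subgroup $\Gamma(N)$, a family of such $B$'s can be chosen with attracting fixed points $a_B/c_B$ dense in $\R$, and the union of the corresponding sectors exhausts $\tDD_{\!A}=\C$, with any residual boundary singularities shown to be isolated by a local Laurent-series analysis at real points.
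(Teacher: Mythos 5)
Your verification of the modular relation and of the meromorphic continuation to $\DD_{\!A}$ matches the paper's approach exactly (via identity \eqref{eq:prel2} of Prop.\ \ref{prop:goose} and Thm.\ \ref{thm:wannabej}); your integrality check on the $q$-Pochhammer subscript is correct, though the Cayley--Hamilton detour is unnecessary---$\symp{\r}{(I-A)\smcoltwo{1}{0}}$ equals the second entry of $(I-A^{-1})\r$, which lies in $\Z$ directly because $A^{-1}\in\Gamma_\r$.

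The valuable part of your write-up is the observation that the paper's treatment of the case $c=0$, $d<0$ is incomplete. The paper asserts that this case ``only arises when $-I\in\Gamma_\r$,'' but you correctly show that $-T^b$ can lie in $\Gamma_\r$ without $-I\in\Gamma_\r$: precisely when $r_2\in\tfrac12+\Z$, $r_1\in\tfrac14+\tfrac12\Z$, and $b$ is odd (e.g.\ $\r=\smcoltwo{1/4}{1/2}$, $b=1$). This is a genuine gap in the paper's proof.

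Unfortunately, the repair you propose does not close it. For any $B=\smmattwo{p}{q}{r}{s}\in\Gamma_\r$ with $r\neq 0$, a direct computation shows that $\tDD_{B^{-1}}$ and $B(\tDD_{(-T^b)B})$ are complements of the two complementary closed rays in $\R\cup\{\infty\}$ emanating from $p/r$, so the intersection $\tDD_{B^{-1}}\cap B(\tDD_{(-T^b)B})$ is exactly $\C\setminus\R$ no matter how $B$ is chosen; the residual set is therefore all of $\R$, not isolated, and the density argument never yields continuation across a real point. In fact the claimed continuation appears to be \emph{false} in the exceptional sub-case. There one computes $\sf{\r}{-T^{-b}}{\tau}=\varpi_\r(\tau-b)/\varpi_\r(\tau)=\varpi_{\{-\r\}}(\tau)/\varpi_\r(\tau)$, and along the modular geodesic into a real quadratic $\beta$ this quotient grows like $(\shin^{-\r}[\beta]/\shin^{\r}[\beta])^t$ with $t\to\infty$; since $\shin^\r[\beta]\,\shin^{-\r}[\beta]$ is a root of unity by Thm.\ \ref{thm:shincharacter} while $|\shin^\r[\beta]|\neq 1$ generically by Thm.\ \ref{thm:main}, the modulus diverges and no meromorphic continuation across $\beta$ can exist. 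The correct resolution is thus not to prove the stated claim but to shrink the system of domains so that $\tDD_{-T^b}=\DD_{-T^b}=\HH$ whenever $\r\notin\tfrac12\Z^2$. This restriction does not touch Thm.\ \ref{thm:main}, whose $A$ is always hyperbolic, but the parabolic-at-$\infty$ cohomology discussion in \S\ref{sec:cohomology} (which asserts $\Gamma_{\r,\infty}=\langle T^{q_2}\rangle$, also false in the exceptional sub-case) would need a corresponding adjustment.
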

\begin{proof}
The modular relation follows by the definition of $\sf{\r}{A}{\tau}$. By \Cref{prop:goose}, 
\begin{align}
\sf{\r}{A}{\tau} &= \left(\ee{\sympt{A^{-1}\r}{\tau}},\ee{\tau}\right)_{\symp{\r}{(I-A)\smcoltwo{1}{0}}} \sfj{A}{\sympt{A^{-1}\r}{\tau}}{\tau}. 
\end{align}
Thus, $\sf{\r}{A}{\tau}$ extends to a meromorphic function on $\DD_{\!A}$. The domain $\tDD_{\!A} = \DD_{\!A}$ unless $c=0$ and $d<0$. This final case only arises when $-I \in \Gamma_\r$, that is, when $\r \in \foh\Z^2$, and in this case, we may replace $A$ by $-A$ to show that $\sf{\r}{A}{\tau}$ is meromorphic on $\C$.
\end{proof}

\subsection{Functional equations of the Shintani--Faddeev cocycles}\label{sec:further}

In this section, we obtain functional equations for $\sfj{A}{z}{\tau}$ and $\sf{\r}{A}{\tau}$, using modularity of classical theta functions discussed in \Cref{sec:prelim}.

\begin{thm}
For $A \in \SL_2(\Z)$, $z \in \C$, and $\tau \in \DD_{\!A}$,
\begin{equation}
\sfj{A}{z}{\tau}\sfj{A}{-z}{\tau} = 
\psi^2(A) \frac{\ee{\tfrac{z}{2(c\tau+d)}}-\ee{-\tfrac{z}{2(c\tau+d)}}}{\ee{\tfrac{z}{2}}-\ee{-\tfrac{z}{2}}} \ee{\tfrac{\tau - A\cdot\tau}{12} + \tfrac{cz^2}{2(c\tau+d)}}.
\end{equation}
\end{thm}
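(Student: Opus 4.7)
The plan is to reduce both sides to classical theta and eta functions via the Jacobi triple product, then exploit the modular transformation laws proved in Section 2. It suffices to prove the identity for $(z,\tau) \in \C \times \HH$, since both sides are meromorphic on $\C \times \DD_{\!A}$ and agree there by the identity theorem.

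First, I would write
\begin{equation}
\sfj{A}{z}{\tau}\sfj{A}{-z}{\tau} = \frac{\varpi\!\left(\frac{z}{j_{\!A}(\tau)},A\cdot\tau\right)\varpi\!\left(\frac{-z}{j_{\!A}(\tau)},A\cdot\tau\right)}{\varpi(z,\tau)\varpi(-z,\tau)}
\end{equation}
directly from \Cref{defn:sfjacobimaster} with $\m = \mathbf{0}$. Then I would apply \Cref{prop:jacprod2} (the Jacobi triple product in the form $\varpi(z,\tau)\varpi(-z,\tau) = -i\ee{-\tau/12}(\ee{z/2}-\ee{-z/2})\vartheta_1(z,\tau)/\eta(\tau)$) to both numerator and denominator. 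The factors of $-i$ cancel, yielding
\begin{equation}
\sfj{A}{z}{\tau}\sfj{A}{-z}{\tau} = \ee{\tfrac{\tau - A\cdot\tau}{12}} \cdot \frac{\ee{\tfrac{z}{2j_{\!A}(\tau)}}-\ee{-\tfrac{z}{2j_{\!A}(\tau)}}}{\ee{\tfrac{z}{2}}-\ee{-\tfrac{z}{2}}} \cdot \frac{\vartheta_1(z/j_{\!A}(\tau),A\cdot\tau)/\eta(A\cdot\tau)}{\vartheta_1(z,\tau)/\eta(\tau)}.
\end{equation}

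Next, I would choose any $\ep$ making $(A,\ep) \in \Mp_2(\Z)$ and apply the modular transformation laws. \Cref{thm:thetatrans} gives $\vartheta_1(z/j_{\!A}(\tau), A\cdot\tau) = \psi(A,\ep)^3 \ee{cz^2/(2(c\tau+d))} \ep(\tau) \vartheta_1(z,\tau)$, and \eqref{eq:etatrans} gives $\eta(A\cdot\tau) = \psi(A,\ep)\ep(\tau)\eta(\tau)$. Dividing, the $\ep(\tau)$ factors cancel and three of the four factors of $\psi(A,\ep)$ combine to leave
\begin{equation}
\frac{\vartheta_1(z/j_{\!A}(\tau),A\cdot\tau)/\eta(A\cdot\tau)}{\vartheta_1(z,\tau)/\eta(\tau)} = \psi(A,\ep)^2 \ee{\tfrac{cz^2}{2(c\tau+d)}} = \psi^2(A)\ee{\tfrac{cz^2}{2(c\tau+d)}},
\end{equation}
where in the last equality I invoke the fact (noted after \eqref{eq:etatrans}) that $\psi^2(A)$ does not depend on the choice of $\ep$. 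Substituting back yields exactly the claimed formula.

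There is no serious obstacle to this approach; the argument is essentially a bookkeeping exercise in combining the Jacobi triple product with the standard modular laws for $\vartheta_1$ and $\eta$. The only subtlety is that one must verify the identity on $\C \times \HH$ first (where the triple product form of $\varpi \cdot \varpi$ is valid) and then invoke meromorphic continuation to extend it to $\C \times \DD_{\!A}$, which is guaranteed by \Cref{thm:wannabej}.
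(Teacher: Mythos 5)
Your proof is correct and follows essentially the same route as the paper: apply the Jacobi triple product (\Cref{prop:jacprod2}) to both numerator and denominator, then use the modular transformation laws for $\vartheta_1$ and $\eta$ and cancel. The only cosmetic difference is that the paper bundles $\vartheta_1/\eta$ into a single auxiliary weight-$0$ Jacobi form $f(z,\tau)$ before transforming, whereas you transform $\vartheta_1$ and $\eta$ separately; the content is identical.
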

\begin{proof}
Define the function $f(z,\tau)$ for $z \in \C$ and $\tau \in \HH$ by
\begin{equation}\label{eq:fdefn}
f(z,\tau) := \frac{\vartheta_1(z,\tau)}{\eta(\tau)}.
\end{equation}
This function is a meromorphic Jacobi form of weight 0. Specifically, by \eqref{eq:etatrans} and \Cref{thm:thetatrans}, it satisfies the following modular transformation law for $A = \smmattwo{a}{b}{c}{d} \in \SL_2(\Z)$.
\begin{equation}\label{eq:ftrans}
f\!\left(\frac{z}{c\tau+d},A\cdot\tau\right) = \psi^2(A) \ee{\frac{cz^2}{2(c\tau+d)}} f(z,\tau).
\end{equation}
By the Jacobi triple product formula, specifically by \eqref{eq:jacprod2},
\begin{align}
\varpi(z,\tau)\varpi(-z,\tau) 
&= -i\ee{-\tfrac{\tau}{12}}\left(\ee{\tfrac{z}{2}}-\ee{-\tfrac{z}{2}}\right)f(z,\tau). \label{eq:jacagain}
\end{align}
By definition, we have
\begin{equation}
\sfj{A}{z}{\tau}\sfj{A}{-z}{\tau} = \frac{\varpi\!\left(\frac{z}{c\tau+d},A\cdot\tau\right)\varpi\!\left(-\frac{z}{c\tau+d},A\cdot\tau\right)}{\varpi(z,\tau)\varpi(-z,\tau)}.
\end{equation}
Thus,
\begin{align}
\sfj{A}{z}{\tau}\sfj{A}{-z}{\tau}
&= \frac{-i\ee{-\tfrac{A\cdot\tau}{12}}\left(\ee{\tfrac{z}{2(c\tau+d)}}-\ee{-\tfrac{z}{2(c\tau+d)}}\right)}{-i\ee{-\tfrac{\tau}{12}}\left(\ee{\tfrac{z}{2}}-\ee{-\tfrac{z}{2}}\right)} \times \frac{f\!\left(\frac{z}{c\tau+d},A\cdot\tau\right)}{f(z,\tau)} \\
&= \frac{\ee{-\tfrac{A\cdot\tau}{12}}\left(\ee{\tfrac{z}{2(c\tau+d)}}-\ee{-\tfrac{z}{2(c\tau+d)}}\right)}{\ee{-\tfrac{\tau}{12}}\left(\ee{\tfrac{z}{2}}-\ee{-\tfrac{z}{2}}\right)} \times \psi^2(A) \ee{\tfrac{cz^2}{2(c\tau+d)}} \\
&= \psi^2(A) \frac{\ee{\tfrac{z}{2(c\tau+d)}}-\ee{-\tfrac{z}{2(c\tau+d)}}}{\ee{\tfrac{z}{2}}-\ee{-\tfrac{z}{2}}} \ee{\tfrac{\tau - A\cdot\tau}{12} + \tfrac{cz^2}{2(c\tau+d)}}. \tag*{\qedhere}
\end{align} 
\end{proof}

We will need a version of the previous theorem for the Shintani--Faddeev cocycle with characteristics, $\sf{\r}{A}{\tau}$.
\begin{thm}\label{thm:funchar}
Let $\r \in \Q^2 \setminus \Z^2$, $A \in \Gamma_\r$, and $\tau \in \tDD_{\!A}$.
We have the identity
\begin{align}\label{eq:funchar}
\sf{\r}{A}{\tau}\sf{-\r}{A}{\tau}
&= \psi^2(A)\chi_\r(A) \ee{\left(\tfrac{r_2^2}{2}+\tfrac{1}{12}\right)(\tau - A\cdot\tau)} \frac{\ee{\tfrac{r_2(A\cdot\tau)-r_1}{2}}-\ee{\tfrac{-r_2(A\cdot\tau)+r_1}{2}}}{\ee{\tfrac{r_2\tau-r_1}{2}}-\ee{\tfrac{-r_2\tau+r_1}{2}}}.
\end{align}
\end{thm}
\begin{proof}
For $\tau \in \HH$, define the function $f_\r(\tau) := \frac{\theta_\r(\tau)}{\eta(\tau)}$.
By \Cref{thm:thetamod} and \eqref{eq:etatrans}, we have the modular transformation law
\begin{equation}\label{eq:frtrans}
f_\r(A\cdot\tau) = \psi^2(A)\chi_\r(A)f_\r(\tau).
\end{equation}
Taking $z=0$ in \Cref{prop:jacprod3},
\begin{align}\label{eq:varpipm}
\varpi_\r(\tau)\varpi_{-\r}(\tau)
&= i \ee{-\left(\tfrac{r_2^2}{2}+\tfrac{1}{12}\right)\tau-r_2\left(-r_1+\foh\right)}\left(\ee{\tfrac{r_2\tau-r_1}{2}}-\ee{\tfrac{-r_2\tau+r_1}{2}}\right) f_\r(\tau).
\end{align}
By definition, 
\begin{equation}\label{eq:shinpmfrac}
\sf{\r}{A}{\tau}\sf{-\r}{A}{\tau} 
= \frac{\varpi_\r(A\cdot\tau)\varpi_{-\r}(A\cdot\tau)}{\varpi_\r(\tau)\varpi_{-\r}(\tau)}.
\end{equation}
Applying \eqref{eq:varpipm} to \eqref{eq:shinpmfrac}, and then using \eqref{eq:frtrans} to simplify the resulting expression, yields \eqref{eq:funchar} for $\tau \in \HH$.
The identity extends to $\tau \in \tDD_{\!A}$ by analytic continuation.
\end{proof}

\subsection{Values at rational $\tau$ and quantum modularity}\label{sec:rational}

In this section, we'll evaluate $\sfj{A}{z}{\tau}$ for $\tau = \frac{m}{n} \in \DD_{\!A} \cap \Q$. 
The values of $\sfj{A}{z}{\tau}$ at rational $\tau$ are not needed for the main results of this paper. We hope that they may be useful in the future---perhaps further study of these values could lead to a definition of a $p$-adic analogue of the Shintani--Faddeev cocycle using $p$-adic interpolation. They also suggest a connection to quantum modularity.

Our formula for $\sfj{A}{z}{\tau}$ will be stated in \Cref{prop:qcdid} with a restriction on the value of $z$. This restriction could potentially be removed by a careful accounting of branch cuts.

The \textit{cyclic quantum dilogarithm} $D_\zeta(w)$ is a finite product appearing in the asymptotic formula describing the behavior of the $q$-Pochhamer symbol as $q$ approaches an $n$-th root of unity $\zeta$. The cyclic quantum dilogarithm plays a role in Garoufalidis and Zagier's work on Nahm's conjecture on the modularity of certain $q$-hypergeometric series \cite{garzag}. It is defined as
\begin{equation}\label{eq:qcd}
D_\zeta(w) := \prod_{k=1}^{n-1} (1-\zeta^k w)^{k}.
\end{equation}

The behavior of the $q$-Pochhammer symbol as $q$ approaches a primitive $n$-th root of unity $\zeta$ is described as follows.
\begin{prop}\label{prop:bazhanov1}
Let $\zeta$ be a primitive $n$-th root of $1$.
When $\abs{w} < 1$ and $q = e^{-t/n^2}\zeta$ for $\re(t)>0$,
\begin{equation}\label{eq:bazhanov}
(w;q)_\infty = R(w^n,t)D_\zeta(w)^{-1/n}(1+O(t)) \mbox{ as } t \to 0,
\end{equation}
where
\begin{align}
R(x,t) &:= (1-x)^{1/2}\exp(-\Li_2(x)/t)
\end{align}
and the root $D_\zeta(w)^{-1/n}$ is defined by taking the $(-n)$-th root of each term in the product \eqref{eq:qcd} using the standard branch of the logarithm.
\end{prop}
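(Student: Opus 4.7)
The plan is to work on the logarithmic side: expanding $\log(1-wq^k) = -\sum_{m\ge 1} (wq^k)^m/m$ and summing first in $k$ gives the absolutely convergent expansion
\begin{equation*}
\log(w;q)_\infty \;=\; -\sum_{m=1}^\infty \frac{w^m}{m\,(1-q^m)},
\end{equation*}
valid for $|w|<1$ and $|q|<1$ (so certainly for $q = e^{-t/n^2}\zeta$ with $\re t>0$). I then split this sum according to whether $n\mid m$ or $n\nmid m$, since the factor $1-q^m$ behaves radically differently in the two cases as $t\to 0^+$.

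For the ``singular'' part $n\mid m$, write $m=nm'$ so that $q^m = e^{-m't/n}$ and
\begin{equation*}
\frac{1}{1-e^{-m't/n}} \;=\; \frac{n}{m't} \,+\, \tfrac{1}{2} \,+\, O(m't)
\end{equation*}
uniformly for $m't$ in a fixed bounded region. Summing against $-w^{nm'}/(nm')$ and using $\sum_{m'\ge 1} w^{nm'}/(m')^2 = \Li_2(w^n)$ and $\sum_{m'\ge 1} w^{nm'}/m' = -\log(1-w^n)$, the singular part contributes
\begin{equation*}
-\frac{\Li_2(w^n)}{t} \;+\; \frac{1}{2n}\log(1-w^n) \;+\; O(t).
\end{equation*}
The tail estimate $O(t)$ requires a uniform bound: one can replace the Taylor remainder by its integral form and use $|w|<1$ to dominate the sum by a convergent geometric series in $|w|^n$.

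For the ``regular'' part $n\nmid m$, the factor $1-q^m$ converges to $1-\zeta^m\neq 0$ uniformly for $m$ in bounded ranges, and the tail in $m$ is controlled geometrically by $|w|<1$; dominated convergence gives
\begin{equation*}
-\sum_{\substack{m\ge 1 \\ n\nmid m}} \frac{w^m}{m\,(1-q^m)}
\;=\; -\sum_{\substack{m\ge 1 \\ n\nmid m}} \frac{w^m}{m\,(1-\zeta^m)} \;+\; O(t).
\end{equation*}
The key algebraic step is then the identity
\begin{equation*}
\sum_{\substack{m\ge 1\\ n\nmid m}} \frac{w^m}{m\,(1-\zeta^m)} \;=\; \frac{1}{n}\log D_\zeta(w) \;-\; \frac{n-1}{2n}\log(1-w^n),
\end{equation*}
which I would derive by expanding $\log D_\zeta(w) = \sum_{k=1}^{n-1} k\log(1-\zeta^k w)$ as a power series in $w$ and evaluating the Gauss-type sum $\sum_{k=1}^{n-1} k\zeta^{km}$: differentiating $\sum_{k=0}^{n-1} z^k = (z^n-1)/(z-1)$ and specializing at $z=\zeta^m$ gives $\tfrac{n}{\zeta^m-1}$ when $n\nmid m$ and $\tfrac{n(n-1)}{2}$ when $n\mid m$.

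Combining the two pieces yields
\begin{equation*}
\log(w;q)_\infty \;=\; -\frac{\Li_2(w^n)}{t} \;+\; \tfrac{1}{2}\log(1-w^n) \;-\; \tfrac{1}{n}\log D_\zeta(w) \;+\; O(t),
\end{equation*}
which exponentiates to \eqref{eq:bazhanov}, provided the branches on both sides match. For this, I would observe that $D_\zeta(w)^{-1/n}$ as defined in the proposition (term-by-term $(-n)$-th roots via the principal branch) corresponds precisely to the power series expansion obtained from $-\tfrac{1}{n}\log D_\zeta(w)$ when $|w|<1$ and $|\zeta^k w|<1$ for all $k$, both sides equalling $1$ at $w=0$.

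The main obstacle, beyond the routine but delicate branch bookkeeping, is justifying the $O(t)$ error uniformly for $w$ in compact subsets of the open unit disk. This reduces to two estimates: (i) a uniform bound on the Taylor remainder in $1/(1-e^{-m't/n})$ after summation against $w^{nm'}/m'$, and (ii) a dominated-convergence estimate in the regular-$m$ sum, where one uses that $|1-q^m|$ is bounded below by a positive constant when $m$ is restricted to residues mod $n$ coprime to the vanishing locus, together with $\sum |w|^m/m$ convergence. Both can be handled by splitting $m$ into a ``small'' range $m\le 1/t$ and a ``large'' tail treated by crude geometric bounds.
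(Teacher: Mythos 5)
Your proposal is correct. The paper does not give a proof here---it is a one-line citation to Proposition~3.2 of Bazhanov---so your argument is a genuinely self-contained alternative. The strategy you use, writing $\log(w;q)_\infty = -\sum_{m\geq 1}\frac{w^m}{m(1-q^m)}$, splitting by whether $n\mid m$, and reducing the $n\nmid m$ part to $\tfrac{1}{n}\log D_\zeta(w)$ via the identity $\sum_{k=1}^{n-1}k\zeta^{km}=\tfrac{n}{\zeta^m-1}$ for $n\nmid m$ (and $\tfrac{n(n-1)}{2}$ for $n\mid m$), is the standard route and almost certainly mirrors the cited argument. The bookkeeping checks out: the singular part gives $-\Li_2(w^n)/t + \tfrac{1}{2n}\log(1-w^n)$, the regular part gives $-\tfrac{1}{n}\log D_\zeta(w) + \tfrac{n-1}{2n}\log(1-w^n)$, and the two $\log(1-w^n)$ contributions combine to $\tfrac{1}{2}\log(1-w^n)$, matching $\log R(w^n,t)$. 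The branch matching is also right: for $\abs{w}<1$ each factor $1-\zeta^k w$ lies in the disk $\abs{z-1}<1$, so the power-series logarithm is the principal one, which is exactly how $D_\zeta(w)^{-1/n}$ is specified in the statement.

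One estimate deserves to be made explicit in a finished write-up. The regular-part error requires a uniform lower bound on $\abs{1-q^m}$ for $n\nmid m$. Restricting $t$ to a sector $\abs{\arg t}\leq\pi/2-\delta$ makes this routine: for $m\abs{t}\lesssim 1$, $q^m$ is near $\zeta^m$ and $\abs{1-\zeta^m}\geq 2\sin(\pi/n)$; for $m\abs{t}\gtrsim 1$, $\abs{q^m}=e^{-m\re(t)/n^2}$ is bounded away from $1$, and the super-geometric decay of $\abs{w}^m$ kills the tail. Without a sector restriction, as $\arg t\to\pm\pi/2$ the point $q^m$ can drift near $1$ for $m\sim n/\abs{\im t}$ while $\abs{q^m}$ stays near $1$, and the implicit constant in $O(t)$ degrades. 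This is the standard caveat for such asymptotics and does not reflect a gap in your argument, but it should be stated.
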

\begin{proof}
This is \cite[Prop.~3.2]{bazhanov}.
\end{proof}

We can evaluate $\sfj{A}{z}{\tau}$ at $\tau = \frac{m}{n}$ in terms of $D_\zeta(w)$, giving a ``quantum modularity'' property for a function related to $D_\zeta(w)$.

\begin{prop}\label{prop:qcdid}
Let $A = \smmattwo{a}{b}{c}{d} \in \SL_2(\Z)$, and let $m,n \in \Z$, $\gcd(m,n)=1$, $n>0$ such that $j_{\!A}\!\left(\frac{m}{n}\right) > 0$. 
For $\im(z)>0$,
\begin{equation}\label{eq:qcdid}
\sfj{A}{z}{\frac{m}{n}} = \frac{D_{{\rm e}\left(\frac{am+bn}{cm+dn}\right)}\!\left(\ee{\frac{nz}{cm+dn}}\right)^{-1/(cm+dn)}}{D_{{\rm e}\left(\frac{m}{n}\right)}\!\left(\ee{z}\right)^{-1/n}}.
\end{equation}
\end{prop}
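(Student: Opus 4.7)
The plan is to realize $\sfj{A}{z}{m/n}$ as the limit of $\sfj{A}{z}{m/n + i\epsilon}$ as $\epsilon \to 0^+$ (which agrees with the meromorphic continuation since $m/n \in \DD_{\!A}$), and to control the numerator and denominator of the defining ratio $\sfj{A}{z}{\tau} = \varpi\!\left(z/j_{\!A}(\tau),\, A\cdot\tau\right)/\varpi(z,\tau)$ via the root-of-unity asymptotic of \Cref{prop:bazhanov1}.

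The preparation is to match parameters in Bazhanov's expansion on both sides. Write $\tau = m/n + i\epsilon$ and set $t = 2\pi n^2\epsilon$, so that $q = \ee{\tau} = \zeta\, \exp(-t/n^2)$ with $\zeta = \ee{m/n}$ of exact order $n$. Setting $M = am+bn$ and $N = cm+dn$, the determinant identity $aN - cM = n$ (which is immediate from $\det A = 1$) gives
\begin{equation*}
A\cdot\tau - \tfrac{M}{N} \;=\; \tfrac{n(\tau - m/n)}{N\, j_{\!A}(\tau)} \;=\; \tfrac{n^2 i\epsilon}{N(N + cni\epsilon)}.
\end{equation*}
Writing $\tilde q := \ee{A\cdot\tau} = \zeta' \exp(-\tilde t/N^2)$, we therefore find $\zeta' = \ee{M/N}$ and $\tilde t = tN/(N + cni\epsilon) = t + O(t^2)$. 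The hypothesis $j_{\!A}(m/n) > 0$ gives $N > 0$ (so $\re(\tilde t) > 0$ for small $\epsilon$), and invertibility of $A$ over $\Z$ forces $\gcd(M, N) = 1$, so $\zeta'$ is primitive of order $N$. A parallel computation for $\tilde w := \ee{z/j_{\!A}(\tau)}$ gives $\tilde w^N = \ee{nz}(1 + O(t))$ and $\tilde w \to \ee{nz/N}$ as $\epsilon \to 0^+$.

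With this setup, I would apply \Cref{prop:bazhanov1} to both $q$-Pochhammer factors to obtain
\begin{equation*}
\sfj{A}{z}{\tau} \;=\; \frac{R(\tilde w^N,\, \tilde t)}{R(\ee{z}^n,\, t)} \cdot \frac{D_{\zeta'}(\tilde w)^{-1/N}}{D_\zeta(\ee{z})^{-1/n}}\bigl(1 + O(t)\bigr),
\end{equation*}
where $R(x, t) = (1-x)^{1/2}\exp(-\Li_2(x)/t)$ carries all the singular behaviour. The cyclic-quantum-dilogarithm ratio on the right passes continuously to the target, yielding the right-hand side of \eqref{eq:qcdid} (the branches being fixed by the standard logarithm on each factor of the product defining $D$, exactly as in \Cref{prop:bazhanov1}). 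What remains is the critical step of showing $R(\tilde w^N, \tilde t)/R(\ee{z}^n, t) \to 1$: the prefactor $(1-x)^{1/2}$-ratio converges to $1$ by continuity, and in the exponential quotient $\exp(\Li_2(\ee{z}^n)/t - \Li_2(\tilde w^N)/\tilde t)$ the leading $O(1/t)$ singularity is already cancelled by $\tilde w^N \to \ee{z}^n$ and $\tilde t \to t$.

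The main obstacle is pinning down that the residual $O(1)$ contribution in this exponent also vanishes in the limit. This requires refining $\tilde t = t + O(t^2)$ and $\tilde w^N = \ee{nz} + O(t)$ to next order, extracting the precise coefficients (which are governed by the same determinant identity $aN - cM = n$), and checking that they combine to zero. Once that cancellation is established, letting $\epsilon \to 0^+$ gives the claimed identity. As a sanity check, when $c = 0$ the hypothesis $j_{\!A}(m/n) > 0$ forces $A = T^b$ for some $b \in \Z$, and then $\tilde t = t$ and $\tilde w^N = \ee{z}^n$ identically, so the claim degenerates to the tautology $1 = 1$; more generally the formula is cocycle-compatible with the decomposition $\sfj{A_1A_2}{z}{\tau} = \sfj{A_1}{z/j_{\!A_2}(\tau)}{A_2\cdot\tau}\,\sfj{A_2}{z}{\tau}$, which provides a further consistency check.
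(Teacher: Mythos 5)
Your proposal has the same skeleton as the paper's proof: let $\tau \to m/n$ radially from inside $\HH$, apply \Cref{prop:bazhanov1} to both $q$-Pochhammer symbols in $\sfj{A}{z}{\tau} = \varpi\!\left(z/j_{\!A}(\tau), A\cdot\tau\right)/\varpi(z,\tau)$, and pass the ratio of cyclic quantum dilogarithms through to the limit. The one organizational difference is that the paper inserts an intermediate idealized pair $(\tilde z', \tilde\tau')$, engineered so that the two $R$-factors are \emph{identical}, namely $R(\ee{nz},t)$, and hence cancel exactly; the replacement of $(\tilde z, \tilde\tau)$ by $(\tilde z', \tilde\tau')$ is then asserted to be harmless ``by continuity'' in \eqref{eq:tlim1}. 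Your proposal compares the true $R$-factors directly and correctly flags, as the ``main obstacle,'' the residual $O(1)$ contribution arising when the $O(t)$ drift of $\tilde n\tilde z$ and the $O(t^2)$ drift of $\tilde t$ hit the $1/t$ singularity of $\Li_2(\cdot)/t$.

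You have put your finger on exactly the right issue, but the hoped-for cancellation does not occur, and the same defect is present (silently) in the paper's continuity step. Writing $\gamma := (1+cn(\tau-m/n)/N)^{-1}$, one has $\tilde n\tilde z = nz\,\gamma$ and $\tilde t = t\,\gamma$ \emph{exactly}, and
\begin{equation*}
-\frac{\Li_2(\ee{nz\gamma})}{t\gamma} + \frac{\Li_2(\ee{nz})}{t}
\;\longrightarrow\;
\frac{c}{2\pi i\, n N}\Bigl[\Li_2\!\bigl(\ee{nz}\bigr) + 2\pi i\, nz\,\log\!\bigl(1-\ee{nz}\bigr)\Bigr]
\qquad\text{as }t\to 0,
\end{equation*}
which is generically nonzero whenever $c\neq 0$, so the $R$-factor ratio does not tend to $1$. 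As a concrete check, take $A=S$ and $m=n=1$: the formula in \eqref{eq:qcdid} gives $\sfj{S}{z}{1}=1$ (since $D_1$ is the empty product), but evaluating the radial limit of $\varpi(z/\tau,-1/\tau)/\varpi(z,\tau)$ gives $\sfj{S}{z}{1}=(1-\ee{z})^{z}\exp\!\bigl(-\tfrac{i}{2\pi}\Li_2(\ee{z})\bigr)$. This is consistent with \Cref{thm:shin5} together with the value $\SS_2(\tfrac12,1)=\sqrt{2}$ (forced by the reflection and quasi-periodicity functional equations of the double sine), under which $\sfj{S}{z}{1}\to\sqrt{2}\,e^{\pi i/24}\neq 1$ as $z\to\tfrac12$ from above. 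So the ``residual $O(1)$ contribution'' you identified is a genuine nonvanishing correction, and the proof cannot be completed as written without modifying \eqref{eq:qcdid} to include the extra elementary factor.
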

\begin{proof}
Suppose $z \in \C$ and $\tau \in \HH$; let $\tilde{z} = \frac{z}{c\tau+d}$ and $\tilde{\tau} = A \cdot \tau$. Suppose also that $\tau = \frac{m}{n} - \frac{t}{2\pi i n^2}$ for a small complex number $t$ with positive real part, and let $\tilde{m}=am+bn, \tilde{n}=cm+dn$. We have
\begin{equation}
\sfj{A}{z}{\tau} = \frac{\left(\ee{\tilde{z}};\ee{\tilde{\tau}}\right)_\infty}{\left(\ee{z};\ee{\tau}\right)_\infty}.
\end{equation}
Let $\zeta = \ee{\frac{m}{n}}$. If $\tau = \frac{m}{n} - \frac{t}{2\pi i n^2}$ then as $t \to 0$,
\begin{align}
\tilde{\tau} = \frac{a\tau+b}{c\tau+d} 
&= \frac{a\frac{m}{n}+b}{c\frac{m}{n}+d} - \frac{t}{2\pi i n^2\left(c\frac{m}{n}+d\right)^2} + O(t^2) \\
&= \frac{am+bn}{cm+dn} - \frac{t}{2\pi i (cm+dn)^2} + O(t^2) \\
&= \frac{\tilde{m}}{\tilde{n}} - \frac{t}{2\pi i \tilde{n}^2} + O(t^2).
\end{align}
Also, as $t \to 0$,
\begin{align}
\tilde{z} &= \frac{nz}{cm+dn} + O(t).
\end{align}
Let $\tilde{z}' = \frac{nz}{cm+dn}$ and $\tilde{\tau}' = \frac{\tilde{m}}{\tilde{n}} - \frac{t}{2\pi i \tilde{n}^2}$. Then, 
it follows from the asymptotic formula in \eqref{eq:bazhanov} and the continuity in $w$ of the functions on the right-hand side that
\begin{equation}
\lim_{t \to 0} \frac{(\ee{\tilde{z}};\ee{\tilde{\tau}})_\infty}{(\ee{\tilde{z}'};\ee{\tilde{\tau}'})_\infty} = 1. \label{eq:tlim1}
\end{equation}
Let $\zeta = \ee{\frac{m}{n}}$ and $\tilde{\zeta} = \ee{\frac{\tilde{m}}{\tilde{n}}}$. As $t \to 0$, we have by \Cref{prop:bazhanov1} that 
\begin{align}
(\ee{z};\ee{\tau})_\infty &= R(\ee{nz},t)D_\zeta(\ee{z})^{-1/n}(1+O(t)) \mbox{ and }\\
(\ee{\tilde{z}'};\ee{\tilde{\tau}'})_\infty &= R(\ee{\tilde{n}\tilde{z}'},t)D_{\tilde{\zeta}}(\ee{\tilde{z}'})^{-1/\tilde{n}}(1+O(t)). \label{eq:tlim2}
\end{align}
However, $\tilde{n}\tilde{z}' = (cm+dn)\frac{nz}{cm+dn} = nz$. Thus, by \eqref{eq:tlim1} and \eqref{eq:tlim2},
\begin{equation}
\frac{(\ee{\tilde{z}};\ee{\tilde{\tau}})_\infty}{(\ee{z},\ee{\tau})_\infty} = \frac{D_{\tilde{\zeta}}(\ee{\tilde{z}})^{-1/\tilde{n}}}{D_{\zeta}(\ee{z})^{-1/n}}(1+O(t)).
\end{equation}
Sending $t \to 0$ proves the proposition.
\end{proof}

The restrictions $\abs{w} < 1$ in \Cref{prop:bazhanov1} and $\re(z)>0$ in \Cref{prop:qcdid} are there to avoid branch points of the logarithm and dilogarithm functions and could be removed or relaxed by a more careful analysis. In particular, both sides of \eqref{eq:qcdid} analytically continue in $z$ to a larger domain, so the identity would continue to hold on that larger domain.

Formally, \eqref{eq:qcdid} seems to imply that $f\!\left(z,\frac{m}{n}\right):=-\frac{1}{n}\log D_{{\rm e}(\frac{m}{n})}(\ee{z})$ defines a quantum Jacobi form of weight 0 in the sense of Bringmann and Folsom \cite{bringfol}. (See also Zagier \cite{zagierquantum}.) There are a few issues with this: The function $f$ has branch points at $z \in \frac{1}{n}\Z$, and \Cref{prop:qcdid} is proven only for $\im(z)>0$ and $j_{\!A}\!\left(\frac{m}{n}\right) > 0$. These issues could potentially be resolved by handling branch cuts carefully to extend the domain of \eqref{eq:qcdid} and relaxing the definition of a quantum Jacobi form to allow some undefined values.

The evaluation of $\sf{\r}{A}{\tau}$ at rational $\tau$ is more subtle due to the possible zeros of $D_\zeta(w)$. The details are deferred to later work.

\subsection{Special properties of the Shintani--Faddeev cocycle at real multiplication points}

The Shintani--Faddeev cocycle with characteristics has the following pseudolattice invariance property at fixed points.
\begin{prop}\label{prop:invariance}
Suppose that $\r \in \Q^2$, $A \in \Gamma_\r$, and $\beta \in \tDD_{\!A}$ such that $A\cdot\beta = \beta$. Let $\n \in \Z^2$. We have the invariance relation 
\begin{equation}
\sf{\r+\n}{A}{\beta}
=
\begin{cases}
j_{\!A}(\beta)\,\sf{\r}{\!A}{\beta} & \mbox{if $\r \in \Z^2$ and $r_2 \leq 0 < r_2+n_2$}, \\
j_{\!A}(\beta)^{-1}\,\sf{\r}{\!A}{\beta} & \mbox{if $\r \in \Z^2$ and $r_2+n_2 \leq 0 < r_2$}, \\
\sf{\r}{\!A}{\beta} & \mbox{otherwise}.
\end{cases}
\end{equation}
\end{prop}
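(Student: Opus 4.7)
The plan is to derive a clean elliptic-shift identity relating $\sf{\r+\n}{A}{\tau}$ and $\sf{\r}{A}{\tau}$, evaluate it at the fixed point $\tau=\beta$, and carefully track vanishing factors when $\r\in\Z^2$.

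First, I would use \Cref{lem:ell} on $\sympt{\r+\n}{\tau}=\sympt{\r}{\tau}+n_2\tau-n_1$ to obtain $\varpi_{\r+\n}(\tau)=(\ee{\sympt{\r}{\tau}},\ee{\tau})_{n_2}^{-1}\varpi_\r(\tau)$ and the analogue at $A\cdot\tau$. Whenever the direct formula $\sf{\r'}{A}{\tau}=\varpi_{\r'}(A\cdot\tau)/\varpi_{\r'}(\tau)$ is valid for both $\r'=\r$ and $\r'=\r+\n$, dividing yields the identity
\begin{equation*}
\sf{\r+\n}{A}{\tau}=\frac{(\ee{\sympt{\r}{\tau}},\ee{\tau})_{n_2}}{(\ee{\sympt{\r}{A\cdot\tau}},\ee{A\cdot\tau})_{n_2}}\,\sf{\r}{A}{\tau}.
\end{equation*}
Any individual factor $1-\ee{(k+r_2)\beta-r_1}$ vanishes only when $(k+r_2)\beta-r_1\in\Z$, which for irrational $\beta$ forces $k+r_2=0$ and $r_1\in\Z$. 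In the ``otherwise'' case this never occurs for $k$ in the range $\{0,\ldots,|n_2|-1\}$ appearing in the Pochhammer, so at $\tau=\beta$ with $A\cdot\beta=\beta$ the two Pochhammer symbols agree and cancel, giving $\sf{\r+\n}{A}{\beta}=\sf{\r}{A}{\beta}$.

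Next I would handle Case 1: $\r\in\Z^2$, $r_2\leq 0<r_2+n_2$. Here $k_0:=-r_2\in\{0,\ldots,n_2-1\}$, so both $\varpi_\r(\tau)$ and the Pochhammer $(\ee{\sympt{\r}{\tau}},\ee{\tau})_{n_2}$ vanish identically and the previous identity degenerates to $0/0$. However, $\r+\n\in\Z^2$ with $r_2+n_2>0$, so $\sf{\r+\n}{A}{\tau}=\varpi_{\r+\n}(A\cdot\tau)/\varpi_{\r+\n}(\tau)$ remains valid. For $\sf{\r}{A}{\tau}$ I would appeal to \Cref{prop:goose}, writing it as $\lim_{\delta\to 0}\sigma_{(I-A)\r,A}(\sympt{\r}{\tau}+\delta,\tau)$. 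Both numerator and denominator are simple zeros in $\delta$; setting $\tilde\varpi_\r(\tau):=\prod_{k\geq 0,\,k\neq k_0}(1-\ee{(k+r_2)\tau-r_1})$, the leading behaviors $(-2\pi i\delta/j_A(\tau))\tilde\varpi_\r(A\cdot\tau)$ and $(-2\pi i\delta)\tilde\varpi_\r(\tau)$ combine to give
\begin{equation*}
\sf{\r}{A}{\tau}=\frac{1}{j_A(\tau)}\cdot\frac{\tilde\varpi_\r(A\cdot\tau)}{\tilde\varpi_\r(\tau)}.
\end{equation*}
Reindexing splits $\tilde\varpi_\r(\tau)=\tilde P(\tau)\,\varpi_{\r+\n}(\tau)$ with $\tilde P(\tau)=\prod_{k\in\{0,\ldots,n_2-1\},\,k\neq k_0}(1-\ee{(k+r_2)\tau-r_1})$. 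Substituting yields
\begin{equation*}
\frac{\sf{\r+\n}{A}{\tau}}{\sf{\r}{A}{\tau}}=j_A(\tau)\cdot\frac{\tilde P(\tau)}{\tilde P_A(\tau)},
\end{equation*}
and at $\tau=\beta$ the finite products $\tilde P(\beta)=\tilde P_A(\beta)$ cancel, giving $\sf{\r+\n}{A}{\beta}=j_A(\beta)\,\sf{\r}{A}{\beta}$.

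Finally, Case 2 ($\r\in\Z^2$, $r_2+n_2\leq 0<r_2$) follows immediately by applying Case 1 to the pair $(\r',\n'):=(\r+\n,-\n)$, whose hypotheses are precisely those of Case 1; inverting the resulting identity gives $\sf{\r+\n}{A}{\beta}=j_A(\beta)^{-1}\sf{\r}{A}{\beta}$.

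The main obstacle is distinguishing between the naive ratio $\varpi_\r(A\cdot\tau)/\varpi_\r(\tau)$ (which is an uninformative $0/0$ when $\r\in\Z^2$ and $r_2\leq 0$) and the ``correct'' value of $\sf{\r}{A}{\tau}$ as given by \Cref{prop:goose}, which differs from the naive cancellation of vanishing factors by the Jacobian factor $1/j_A(\tau)=\lim_{\delta\to 0}(1-\ee{\delta/j_A(\tau)})/(1-\ee{\delta})$. This Jacobian is precisely the source of $j_A(\beta)^{\pm 1}$ in the two exceptional cases; in the ``otherwise'' cases with both $r_2$ and $r_2+n_2$ of the same sign, the factor either never appears (both sides use the direct formula) or appears on both sides (both use the limit formula) and cancels.
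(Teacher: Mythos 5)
Your proof is correct and shares the paper's underlying mechanism, but the decomposition is different. The paper reduces to the single shifts $\textbf{e}_1 = \smcoltwo{1}{0}$ and $\textbf{e}_2 = \smcoltwo{0}{1}$ and inducts, isolating the $j_{\!A}$ factor in the one induction step where $r_2$ crosses zero with $r_1 \in \Z$; you instead handle the general shift $\n$ in a single pass via \Cref{lem:ell}'s finite Pochhammer ratio and regularize directly, which avoids the induction and makes transparent that the $j_{\!A}(\beta)^{\pm 1}$ factor appears exactly when $\r$ and $\r+\n$ straddle the singular set $\Z^2 \cap \{r_2 \leq 0\}$. Both proofs ultimately compute the same Jacobian limit $\lim_{z\to 0}\left(1-\ee{z/j_{\!A}(\tau)}\right)/\left(1-\ee{z}\right) = j_{\!A}(\tau)^{-1}$ at the removable zero; your route is arguably cleaner, and the reduction of Case 2 to Case 1 via $(\r',\n') := (\r+\n,-\n)$ is a nice way to avoid repeating the calculation. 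The one place to write out more carefully is the ``otherwise'' subcase with $\r\in\Z^2$ and both $r_2\leq 0$ and $r_2+n_2\leq 0$, where both sides require the limit formula; there the two $1/j_{\!A}$ factors cancel and the residual finite ratio $\prod_{0\leq k<n_2}(1-\ee{(k+r_2)\beta-r_1})$ never vanishes because $-r_2 \geq n_2$ lies outside the product range. Your closing paragraph gestures at this cancellation but does not carry out the check; it is correct, but worth stating.
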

\begin{proof}
Let $\textbf{e}_1 = \smcoltwo{1}{0}$ and $\textbf{e}_2 = \smcoltwo{0}{1}$. 
For $\tau \in \HH$, we have $\varpi_{\r+\textbf{e}_1}(\tau) = \varpi_\r(\tau)$, and thus $\sf{\r+\textbf{e}_1}{\!A}{\tau} = \sf{\r}{\!A}{\tau}$; the latter identity holds for all $\tau \in \tDD_{\!A}$ by analytic continuation. 

Suppose that $r_1 \nin \Z$ or $r_2 \neq 0$. Then, $\varpi_{\r+\textbf{e}_2}\!(\tau) = (1+\ee{r_2\tau-r_1})^{-1}\varpi_\r(\tau)$ and thus
\begin{equation}
\sf{\r+\textbf{e}_2}{\!A}{\tau} = \frac{1-\ee{r_2\tau-r_1}}{1-\ee{r_2(A\cdot\tau)-r_1}}\,\sf{\r}{\!A}{\tau} 
\end{equation}
for $\tau \in \HH$. Again by analytic continuation, this identity of meromorphic functions holds for $\tau \in \tDD_{\!A}$. Setting $\tau = \beta$, numerator and denominator become equal, and so $\sf{\r+\textbf{e}_1}{\!A}{\tau} = \sf{\r}{\!A}{\tau}$.

In the case when $r_1 \in \Z$ and $r_2 = 0$, we instead must write
\begin{align}
\shin_{\!A}^{\r+\mathbf{e}_2}(\tau) 
&= \lim_{z \to 0} \sigma_{\r+\textbf{e}_2,A}(z,\tau) \\
&= \lim_{z \to 0} \frac{1-\ee{z}}{1-\ee{j_{\!A}(\tau)^{-1}z}}\sigma_{\r,A}(z,\tau) \\
&= j_{\!A}(\tau)\,\ee{(1-j_{\!A}(\tau)^{-1})z}\sf{\r}{\!A}{\tau}.
\end{align}
Setting $\tau = \beta$, we obtain
$\sf{\r+\textbf{e}_2}{\!A}{\beta} = j_{\!A}(\beta)\,\sf{\r}{\!A}{\beta}$.

The proposition follows by writing $\n=n_1\textbf{e}_1+n_2\textbf{e}_2 \in \Z^2$ and making an induction argument.
\end{proof}

\Cref{prop:invariance} \textit{does not} imply that $\sf{\r}{A}{\beta}$ is doubly periodic as a function of real $\r \in \R^2$; it generally isn't.

At fixed points, the Shintani--Faddeev cocycle with characteristics has a simplified functional equation relating values at $\r$ and $-\r$.
\begin{thm}\label{thm:shincharacter}
Suppose that $\r \in \Q^2 \setminus \Z^2$, $A \in \Gamma_\r$, and $\beta \in \tDD_{\!A}$ such that $A\cdot\beta = \beta$. Then,
\begin{align}
\sf{\r}{A}{\beta}\sf{-\r}{A}{\beta} = \psi^2(A)\chi_\r(A).
\end{align}
\end{thm}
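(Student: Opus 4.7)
The proof is essentially a direct specialization of Theorem~\ref{thm:funchar} to the fixed point $\tau=\beta$. The plan is to substitute into the identity
\begin{equation*}
\sf{\r}{A}{\tau}\sf{-\r}{A}{\tau}
= \psi^2(A)\chi_\r(A)\,\ee{\!\left(\tfrac{r_2^2}{2}+\tfrac{1}{12}\right)\!(\tau - A\cdot\tau)}\,
\frac{\ee{\tfrac{r_2(A\cdot\tau)-r_1}{2}}-\ee{\tfrac{-r_2(A\cdot\tau)+r_1}{2}}}{\ee{\tfrac{r_2\tau-r_1}{2}}-\ee{\tfrac{-r_2\tau+r_1}{2}}}
\end{equation*}
and verify that the two correction factors on the right collapse to $1$. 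Since $A\cdot\beta = \beta$, the exponent $\bigl(\tfrac{r_2^2}{2}+\tfrac{1}{12}\bigr)(\beta - A\cdot\beta)$ vanishes, so the exponential prefactor is $\ee{0}=1$. Likewise, $A\cdot\beta = \beta$ makes the numerator and denominator of the ratio of sines literally equal, so (assuming neither vanishes) the ratio is $1$. This gives $\sf{\r}{A}{\beta}\sf{-\r}{A}{\beta} = \psi^2(A)\chi_\r(A)$ immediately.

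The one step requiring care is the possibility that the denominator $\ee{\tfrac{r_2\beta-r_1}{2}}-\ee{\tfrac{-r_2\beta+r_1}{2}} = 2i\sin(\pi(r_2\beta-r_1))$ vanishes, i.e.\ that $r_2\beta - r_1 \in \Z$. In the hyperbolic and elliptic cases, $\beta$ is an algebraic number of degree $2$, so $r_2\beta - r_1 \in \Z$ with $\r\in\Q^2$ forces $r_2=0$ and $r_1\in\Z$, contradicting $\r\notin\Z^2$. Thus this case cannot occur for hyperbolic or elliptic fixed points, and the substitution argument applies directly.

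The remaining edge case is a parabolic $A\in\Gamma_\r$ with rational fixed point $\beta\in\tDD_{\!A}$ satisfying $r_2\beta - r_1\in\Z$. Here both sides of the ratio vanish, and one interprets the right-hand side as a limit as $\tau\to\beta$ along $\tDD_{\!A}$. Using the standard identity $A\cdot\tau - \beta = (\tau-\beta)/\!\bigl(j_{\!A}(\tau)j_{\!A}(\beta)\bigr)$ together with the simple zero of $\sin(\pi(r_2\tau-r_1))$ at $\tau=\beta$, the ratio tends to $j_{\!A}(\beta)^{-2}$. But for a parabolic $A$ fixing $\beta$ we have $j_{\!A}(\beta)=\pm 1$, so $j_{\!A}(\beta)^{-2}=1$, and the claim holds in this case as well. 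The hard part, such as it is, is this bookkeeping in the parabolic degenerate case; the main calculation is simply plugging $\tau=\beta$ into Theorem~\ref{thm:funchar}.
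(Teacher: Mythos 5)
Your proof is correct and takes the same route as the paper: substitute $\tau=\beta$ into Theorem~\ref{thm:funchar} and observe that the exponential factor and the sine ratio both collapse to $1$. The paper's own proof is exactly this, stated in two lines; the extra care you take over the $0/0$ degeneracy of the sine ratio (ruling it out for hyperbolic and elliptic $\beta$ by irrationality of $\beta$, and computing the limit to be $j_{\!A}(\beta)^{-2}=1$ in the parabolic case) is a genuine refinement that the paper tacitly omits, and your handling of it is correct.
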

\begin{proof}
Follows from \Cref{thm:funchar} by simplifying \eqref{eq:funchar} in the case when $A \cdot \beta = \beta$.
\end{proof}

The following theorem shows that the real multiplication values of the Shintani--Faddeev modular cocycle satisfy a $\GL_2(\Z)$-invariance property.
\begin{thm}\label{thm:shinconj}
Suppose that  $\r \in \Q^2$, $A \in \Gamma_\r$, $R \in \GL_2(\Z)$, and $\beta \in \tDD_{\!A}$ such that $A \cdot \beta = \beta$.
Let $s_{\!R}(\beta) = \sgn(j_{\!R}(\beta))$, and suppose $s_{\!R}(\beta) \neq 0$.
Then,
\begin{equation}\label{eq:shinconj}
\sf{s_{\!R}(\beta)R\r}{R A R^{-1}}{R\cdot\beta} 
= 
\begin{cases}
\vspace{4pt}
\sf{\r}{A}{\beta} & \mbox{ if } \det(R) = 1, \\
\ol{\sf{\r}{A}{\ol{\beta}}} & \mbox{ if } \det(R) = -1.
\end{cases}
\end{equation}
\end{thm}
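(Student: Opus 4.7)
The plan is to split by the sign of $\det R$ and, within each case, derive an identity of meromorphic functions on $\HH$ that passes to the boundary point $\tau=\beta$. For $\det R=1$, the key input is the $\SL_2$-invariance of the symplectic form, which (together with $\smcoltwo{R\cdot\tau}{1}=j_{\!R}(\tau)^{-1}R\smcoltwo{\tau}{1}$) yields $\sympt{R\r}{R\cdot\tau}=\sympt{\r}{\tau}/j_{\!R}(\tau)$; substituting this into $\varpi_{R\r}(R\cdot\tau)=\varpi(\sympt{R\r}{R\cdot\tau},R\cdot\tau)$ and comparing to the definition of $\sigma_{\mathbf{0},R}$ gives, for $\tau\in\HH$,
\[
\varpi_{R\r}(R\cdot\tau)=\sigma_{\mathbf{0},R}(\sympt{\r}{\tau},\tau)\cdot\varpi_\r(\tau).
\]
Taking the ratio at $\tau$ and $A\cdot\tau$, I obtain
\[
\sf{R\r}{RAR^{-1}}{R\cdot\tau}=\frac{\sigma_{\mathbf{0},R}(\sympt{\r}{A\cdot\tau},A\cdot\tau)}{\sigma_{\mathbf{0},R}(\sympt{\r}{\tau},\tau)}\,\sf{\r}{A}{\tau}.
\]
At $\tau=\beta$ with $A\cdot\beta=\beta$, the ratio of $\sigma_{\mathbf{0},R}$-factors trivializes to $1$, so $\sf{R\r}{RAR^{-1}}{R\cdot\beta}=\sf{\r}{A}{\beta}$, settling the subcase $s_R(\beta)=1$. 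When $s_R(\beta)=-1$, I would replace $R$ by $-R\in\SL_2(\Z)$, which induces the same M\"obius transformation and the same conjugate $RAR^{-1}$ but satisfies $j_{-R}(\beta)>0$ and $(-R)\r=-R\r$; this places us in the previous subcase and gives $\sf{-R\r}{RAR^{-1}}{R\cdot\beta}=\sf{\r}{A}{\beta}$, which is the theorem since $-R\r=s_R(\beta)R\r$.

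\textbf{The case $\det R=-1$.} I would first treat the reflection $J=\smmattwo{-1}{0}{0}{1}$ via complex conjugation: since $\overline{\varpi(z,\tau)}=\varpi(-\bar z,-\bar\tau)$ on $\HH$ (just conjugate each factor of the convergent product), I get $\overline{\varpi_\r(\tau)}=\varpi_{J\r}(-\bar\tau)$, and combining with $\overline{A\cdot\tau}=-(JAJ^{-1})\cdot(-\bar\tau)$ yields
\[
\overline{\sf{\r}{A}{\tau}}=\sf{J\r}{JAJ^{-1}}{J\cdot\bar\tau}.
\]
Evaluating at $\tau=\bar\beta$ gives the theorem for $R=J$, since $s_J(\beta)=1$ and $J\cdot\bar{\bar\beta}=J\cdot\beta=R\cdot\beta$. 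For general $R$ with $\det R=-1$, I would factor $R=SJ$ with $S=RJ\in\SL_2(\Z)$, apply the $R=J$ identity once and then the $\det R=1$ result to $S$ acting on $(J\r,J\cdot\bar\beta)$; routine matrix verifications of $S(JAJ^{-1})S^{-1}=RAR^{-1}$, $S\cdot(J\cdot\bar\beta)=R\cdot\bar\beta$, and $j_R(\beta)=j_S(J\cdot\beta)$ (so the sign factors $s_R(\beta)=s_S(J\cdot\beta)$ match) show that the composition of the two identities produces exactly $\sf{s_R(\beta)R\r}{RAR^{-1}}{R\cdot\beta}=\overline{\sf{\r}{A}{\bar\beta}}$.

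\textbf{Main obstacle.} The delicate step is the passage from the identity of meromorphic functions on $\HH$ to evaluation at the boundary point $\tau=\beta$: the auxiliary factor $\sigma_{\mathbf{0},R}(\cdot,\tau)$ is meromorphic only on $\DD_{\!R}$, and $\beta\in\DD_{\!R}$ fails precisely when $s_R(\beta)=-1$. The switch $R\mapsto -R$ in the $\det R=1$ case is the essential trick for moving the branch cut of $\DD_{\!R}$ to the opposite ray, restoring validity of the analytic continuation; the matching identity one obtains by this substitution is genuinely different from the one that would arise by naively continuing through the cut, which is exactly what accounts for the sign $s_R(\beta)$ in the theorem statement. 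One also needs to verify that $\sigma_{\mathbf{0},R}(\sympt{\r}{\beta},\beta)$ is finite and nonzero so that the trivializing ratio is really $1$ rather than an indeterminate form $0/0$ or $\infty/\infty$; this is a generic condition controlled by the loci of poles and zeros of $\sigma_{\mathbf{0},R}$ discussed in the remark following \Cref{defn:sfmodular}.
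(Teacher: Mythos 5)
Your proof follows essentially the same route as the paper's: for $\det R = 1$ you reduce to the ratio $\sigma_{\mathbf 0,R}(\sympt{\r}{A\cdot\tau},A\cdot\tau)/\sigma_{\mathbf 0,R}(\sympt{\r}{\tau},\tau)$ which cancels at $\tau = \beta$, using $R \mapsto -R$ to handle $s_{\!R}(\beta) = -1$, and for $\det R = -1$ you prove the single reflection $J = \smmattwo{-1}{0}{0}{1}$ via complex conjugation and then factor a general $R$ through $J$, which is exactly the paper's multiplicativity-in-$R$ argument. Your reorganization through the auxiliary identity $\varpi_{R\r}(R\cdot\tau) = \sigma_{\mathbf 0,R}(\sympt{\r}{\tau},\tau)\,\varpi_\r(\tau)$ is algebraically equivalent to the paper's direct computation, and your remark on finiteness of $\sigma_{\mathbf 0,R}$ at $(\sympt{\r}{\beta},\beta)$ is a genuine point the paper also elides but which only affects the degenerate case $\r \in \Z^2$, where the poles and zeros of $\sigma_{\mathbf 0,R}$ lie.
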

\begin{proof}
By \Cref{lem:jeval}, 
$j_{RA R^{-1}}(R\cdot\beta) = j_{\!A}(\beta) = \lambda > 0$, so both values of the Shintani--Faddeev modular cocycle in the statement are well-defined.
Moreover, if $s_{\!R}(\beta) = -1$, then $s_{\!-R}(\beta) = 1$, and \eqref{eq:shinconj} remains the same upon replacing $R$ by $-R$; thus, without loss of generality, we may restrict to the case of $s_{\!R}(\beta) = 1$ (that is, $j_{\!R}(\beta)>0$).

We will first prove \eqref{eq:shinconj} in the case when $\det(R) = 1$.
Write $R = \smmattwo{a}{b}{c}{d}$ and $\r = \smcoltwo{r_1}{r_2}$.
For $\tau \in \HH$, we compute
\begin{align}
\sf{R\r}{RA R^{-1}}{R\cdot\tau} 
&= \frac{\varpi_{R\r}(R\cdot(A\cdot\tau))}{\varpi_{R\r}(R\cdot\tau)} \\
&= \frac{\varpi\!\left((cr_1+dr_2)\frac{a(A\cdot\tau)+b}{c(A\cdot\tau)+d}-(ar_1+br_2),\frac{a(A\cdot\tau)+b}{c(A\cdot\tau)+d}\right)}{\varpi\!\left((cr_1+dr_2)\frac{a\tau+b}{c\tau+d}-(ar_1+br_2),\frac{a\tau+b}{c\tau+d}\right)} \\
&= \frac{\varpi\!\left(\frac{r_2(A\cdot\tau)-r_1}{c(A\cdot\tau)+d},\frac{a(A\cdot\tau)+b}{c(A\cdot\tau)+d}\right)}{\varpi\!\left(\frac{r_2\tau-r_1}{c\tau+d},\frac{a\tau+b}{c\tau+d}\right)}.
\end{align}
By definition, we also have 
\begin{equation}
\sf{\r}{A}{\tau} = \frac{\varpi\!\left(r_2(A\cdot\tau)-r_1,A\cdot\tau\right)}{\varpi\!\left(r_2\tau-r_1,\tau\right)}.
\end{equation}
Dividing, we obtain the identity
\begin{equation}
\frac{\sf{R\r}{RA R^{-1}}{R\cdot\tau}}{\sf{\det(R)\r}{A}{\tau}}
= \frac{\sfj{R}{r_2(A\cdot\tau)-r_1}{A\cdot\tau}}{\sfj{R}{r_2\tau-r_1}{\tau}}.
\end{equation}
Taking the limit as $\tau \to \beta$, and using the fact that $j_R(\beta)>0$, we obtain
\begin{equation}
\frac{\sf{R\r}{RA R^{-1}}{R\cdot\beta}}{\sf{\r}{A}{\beta}} = 1,
\end{equation}
proving \eqref{eq:shinconj} in the case of positive determinant.

Now we deal with the case when $\det(R) = -1$. It is easy to see that \eqref{eq:shinconj} for $R=R_1$ and $R=R_2$ implies the statement for $R=R_1R_2$, so since we have already proven the statement for $R \in \SL_2(\Z)$, it suffices to prove it for any one matrix of negative determinant; say, fix $R = \smmattwo{-1}{0}{0}{1}$. 
Consider $\tau \in \HH$. Then
\begin{equation}\label{eq:varpibar}
\ol{\varpi_\r(\ol\tau)} 
= \left(\ol{\ee{r_2\ol\tau-r_1}}, \ol{\ee{\ol\tau}}\right)_\infty
= \left(\ee{-r_2\tau+r_1},\ee{-\tau}\right)_\infty
= \varpi_{R\r}(R\cdot\tau).
\end{equation}
Applying \eqref{eq:varpibar} to the numerator and demoninator, we have
\begin{equation}
\ol{\sf{\!A}{\r}{\ol\tau}}
= \frac{\ol{\varpi_{\r}(A\cdot\ol\tau)}}{\ol{\varpi_\r(\ol\tau)}}
= \frac{\varpi_{R\r}(AR\cdot\tau)}{\varpi_{R\r}(R\cdot\tau)}
= \sf{\!RAR^{-1}}{R\r}{R\cdot\tau}.
\end{equation}
Sending $\tau \to \beta$ proves \eqref{eq:shinconj} is this (final) case, completing the proof of the lemma.
\end{proof}

Finally, we evaluate the real multiplication values of the Shintani--Faddeev modular cocycle for $\r \in \foh\Z$, showing that they are algebraic units of a simple form. These values are much simpler than those for $\r \in \Q \setminus \foh\Z$ that we will relate to Stark units. 
\begin{thm}\label{thm:trivrmval}
Let $A \in \SL_2(\Z)$ be non-parabolic, and let $\beta$ be a fixed point of $A$ with associated eigenvalue $\e$. Then, for $\r \in \Z^2$,
\begin{equation}\label{eq:trivrmval}
\sf{\r}{A}{\beta} 
= 
\begin{cases}
\psi(A,\sqrt{j_{\!A}})\sqrt{\e} & \mbox{if $r_2 > 0$}, \\
\psi(A,\sqrt{j_{\!A}})\frac{1}{\sqrt{\e}} & \mbox{if $r_2 \leq 0$}.
\end{cases}
\end{equation}
For $\r \in \foh \Z^2 \setminus \foh \Z$, we have $\sf{\r}{A}{\beta} \in \{\pm 1\}$.
Therefore, if $\r \in \foh\Z^2$, then $\sf{\r}{A}{\beta}$ is an algebraic unit in an abelian extension of $\Q(\beta)$.
\end{thm}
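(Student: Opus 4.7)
The plan is to reduce the computation to a few special representatives of $\foh\Z^2/\Z^2$ and then propagate using the fixed-point invariance given by \Cref{prop:invariance}.

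First, for $\r \in \Z^2$, I would start at $\r_0 = \smcoltwo{0}{1}$. Equation~\eqref{eq:shin01} gives $\sf{\r_0}{\!A}{\tau} = \ee{(\tau - A\cdot\tau)/24}\,\hep_{\!A}(\tau)$; since $A\cdot\beta = \beta$, the exponential collapses to $1$, and choosing $\ep = \sqrt{j_{\!A}}$ produces $\sf{\r_0}{\!A}{\beta} = \psi(A,\sqrt{j_{\!A}})\sqrt{\e}$. For a general $\r = \r_0 + \n$ with $\n \in \Z^2$, I would then apply \Cref{prop:invariance} with starting vector $\r_0$ and shift $\n$. Because $r_{0,2} = 1 > 0$, only two possibilities can arise: if $r_2 > 0$ the ``otherwise'' clause applies and the value is unchanged, while if $r_2 \le 0$ the second clause applies and the value is multiplied by $j_{\!A}(\beta)^{-1} = \e^{-1}$. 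This yields the two cases of~\eqref{eq:trivrmval} exactly.

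Next, for $\r \in \foh\Z^2\setminus\Z^2$, the hypothesis $\r \notin \Z^2$ forces every $\Z^2$-shift into the ``otherwise'' branch of \Cref{prop:invariance}, so $\sf{\r}{\!A}{\beta}$ depends only on the coset $\r + \Z^2$. The three nontrivial cosets in $\foh\Z^2/\Z^2$ have representatives $\smcoltwo{0}{1/2}$, $\smcoltwo{1/2}{1}$, $\smcoltwo{1/2}{1/2}$, for which \Cref{sec:half} supplies closed-form expressions of the shape $\pm\,\ee{\alpha\,(A\cdot\tau-\tau)}$ for rational $\alpha$; evaluation at $\tau = \beta$ collapses each exponential to $1$, leaving a value in $\{\pm 1\}$, as claimed. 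There is no real obstacle here---just bookkeeping of three cases---so I would expect the bulk of the writing to be the careful parsing of \Cref{prop:invariance} in the first step.

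Finally, for the algebraic unit / abelian extension claim: $\psi(A,\sqrt{j_{\!A}})$ is a root of unity (its square lies in $\mu_{24}$ by the discussion in \Cref{sec:eta}), so it generates a cyclotomic, hence abelian, extension of $\Q$; $\sqrt{\e}$ generates an at-most-quadratic, hence abelian, extension of $\Q(\beta)$; and the compositum of two abelian extensions of a common base field is abelian. Thus $\sf{\r}{\!A}{\beta} \in \Q(\beta,\zeta_{48},\sqrt{\e})$, which is abelian over $\Q(\beta)$, and is a product of algebraic units, hence an algebraic unit. For $\r \in \foh\Z^2\setminus\Z^2$, the value $\pm 1$ already lies in $\Q \subseteq \Q(\beta)$. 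A small consistency check I would include is that the product $\psi(A,\sqrt{j_{\!A}})\sqrt{\e}$ is independent of the branch of $\sqrt{j_{\!A}}$: flipping the sign of $\ep$ flips both $\psi(A,\ep)$ (by its defining transformation law for $\eta$) and $\ep(\beta) = \sqrt{\e}$, so their product is invariant.
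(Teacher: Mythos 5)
Your argument is correct and follows the same route as the paper: reduce to the four coset representatives of $\foh\Z^2/\Z^2$ via \Cref{prop:invariance}, then read off the values from the closed-form expressions in \Cref{sec:half}, and finally use \Cref{lem:jeval} (or directly the eigenvalue relation) to identify $j_{\!A}(\beta)$ with $\e$. Your rendering is actually a bit more careful than the paper's in two places: you explicitly parse the three clauses of \Cref{prop:invariance} starting from $\r_0 = \smcoltwo{0}{1}$ to obtain the case split $r_2>0$ versus $r_2\le 0$ in \eqref{eq:trivrmval} (the paper states this reduction without displaying which clause fires), and you note that the final product is independent of the branch of $\sqrt{j_{\!A}}$, a consistency check the paper omits. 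The one expository nit is your final abelian-extension argument: it is correct, but it suffices (as the paper does) to observe directly that $\psi(A,\sqrt{j_{\!A}})$ is a $24$th root of unity and $\sqrt{\e}$ is a square root of a unit of $\Q(\beta)$, both of which lie in $\Q(\beta)^{\mathrm{ab}}$; no compositum bookkeeping is really needed.
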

\begin{proof}
By \Cref{prop:invariance}, it suffices to evaluate $\sf{\r}{A}{\beta}$ for $\r \in \left\{\smcoltwo{0}{1}, \smcoltwo{0}{1/2}, \smcoltwo{1/2}{1}, \smcoltwo{1/2}{1/2}\right\}$. Taking the principal branch of the square root $\ep(\tau) = \sqrt{j_{\!A}(\tau)}$,
we obtain from \Cref{sec:half} the formula
\begin{equation}
    \sf{\smcoltwo{0}{1}}{A}{\beta} = \ee{\frac{-(A \cdot \beta) + \beta}{24}}\psi(A,\sqrt{j_{\!A}})\sqrt{j_{\!A}(\beta)} = \psi(A,\sqrt{j_{\!A}})\sqrt{j_{\!A}(\beta)}.
\end{equation}
By \Cref{lem:jeval}, $j_{\!A}(\beta)=\e$, proving \eqref{eq:trivrmval}. The fact the $\sf{\r}{A}{\beta} \in \{\pm 1\}$ for the half-integral characteristics $\r \in \left\{\smcoltwo{0}{1/2}, \smcoltwo{1/2}{1}, \smcoltwo{1/2}{1/2}\right\}$ follows from the last line of \Cref{sec:half}.

Moreover, $\e$ is a unit in $\Q(\beta)$ because it is an eigenvalue of an integral matrix of determinant $1$ with eigenvector $\smcoltwo{\beta}{1}$, and $\psi(A,\sqrt{j_{\!A}})$ is a $24$-th root of $1$. The maximal abelian extension $\Q(\beta)^{\rm ab}$ contains $\sqrt{\e}$ and the $24$-th roots of unity.
\end{proof}

Finally, we show an equivalence between stable values (including RM values) of the Jacobi and modular Shintani--Faddeev cocycles.
\begin{prop}
If $\r \in \Q$ and $\beta \in \C \setminus \Q$, then
\begin{equation}
\sigma[\sympt{\r}{\beta},\beta] = \shin^\r[\beta].
\end{equation}
\end{prop}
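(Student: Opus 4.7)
The plan is to obtain this identity as an immediate specialization of the general equivalence between stable values of Jacobi cocycles and stable values of the associated modular cocycles already established in \Cref{prop:jacobistable}. The bulk of the argument has, in effect, been carried out in that proposition; what remains is to verify that the modular cocycle $w^\r$ produced from $u = \sigma$ by the recipe of \Cref{prop:jacobistable} is literally the Shintani--Faddeev modular cocycle $\shin^\r$.

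First I would apply \Cref{prop:jacobistable} to the group $\Gamma = \Z^2 \semidirect \SL_2(\Z)$ and the cocycle $u = \sigma$. The hypothesis $\Gamma \cap (\Z^2 \semidirect \{I\}) = \Z^2$ is automatic. The proposition then associates to each $\r \in \Q^2$ a modular cocycle
\begin{equation}
w^\r_{\!A}(\tau) = \sigma_{(I-A)\r,\,A}\!\left(\sympt{\r}{\tau},\tau\right)
\end{equation}
for the congruence subgroup $\Gamma \cap \Gamma_\r = \Gamma_\r$, together with the equality of stable values
\begin{equation}
\sigma[\sympt{\r}{\beta},\beta] = w^\r[\beta]
\end{equation}
whenever the right-hand side is defined.

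Next I would invoke \eqref{eq:prel1} in \Cref{prop:goose}, which states precisely
\begin{equation}
\sf{\r}{A}{\tau} = \sfj{(I-A)\r,\,A}{\sympt{\r}{\tau}}{\tau},
\end{equation}
so $w^\r_{\!A}(\tau) = \sf{\r}{A}{\tau}$ as meromorphic functions on $\tDD_{\!A}$. Hence $w^\r = \shin^\r$ as modular cocycles on $\Gamma_\r$, and the identity $\sigma[\sympt{\r}{\beta},\beta] = \shin^\r[\beta]$ follows.

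There is essentially no obstacle here, since all the work is hidden inside \Cref{prop:jacobistable}: that proof already checks the key compatibility that the canonical generator $B^+_{\ww,\beta}$ of the stabilizer of $(\ww,\beta)$ inside the Jacobi group coincides with the generator $A^+_\beta$ of the stabilizer of $\beta$ inside $\Gamma_\r$ (it is the smallest power of $A^+_\beta$ whose action on $\r$ is integral, and since $A^+_\beta \in \Gamma_\r$ that smallest power is $A^+_\beta$ itself), and combined with \Cref{lem:mintermsofB} this realizes $(I-B)\r$ as exactly the integer shift appearing in the generator of $\stab_{\Gamma}(\sympt{\r}{\beta},\beta)$. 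The only minor point worth noting is the evaluation at $\beta$: since $A^+_\beta \in \Gamma_\r$ and $\beta \in \tDD_{\!A^+_\beta}$ whenever $\shin^\r[\beta]$ is defined, the point $(\sympt{\r}{\beta},\beta)$ lies in the domain of meromorphy of $\sigma_{(I-A^+_\beta)\r,\,A^+_\beta}$ by \eqref{eq:prel1}, so the evaluation on both sides is taken at the same point and yields the same finite nonzero value.
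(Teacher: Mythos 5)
Your proposal is correct and follows precisely the paper's own proof: the paper likewise combines \eqref{eq:prel1} from \Cref{prop:goose} with \Cref{prop:jacobistable}, only stating the two invocations more tersely. Your added remarks about the generator compatibility and the domain of evaluation are accurate but are already implicit in the proof of \Cref{prop:jacobistable}.
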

\begin{proof}
\Cref{prop:goose} gives following identity of meromorphic functions of $\tau \in \tDD_{\!A}$:
\begin{equation}
\sf{\r}{A}{\tau} = \sfj{(I-A)\r,A}{\sympt{\r}{\tau}}{\tau}.
\end{equation}
The claim then follows immediately by \Cref{prop:jacobistable}.
\end{proof}

\subsection{Conductor-lowering/level-raising relations}
We will now show that certain real multiplication values of the Shintani--Faddeev modular cocycle can be written as products of RM values at points of ``conductor 1.'' The ``conductor 1'' points correspond to ideal classes of maximal orders, at which the RM values will be directly related to Stark class invariants coming from $L$-functions. 

First, we define the conductor of a (real or complex) quadratic number.
\begin{defn}
Suppose $\beta \in \Cquad$ is the root of a quadratic polynomial $a\beta^2 + b\beta + c = 0$ with $a,b,c \in \Z$, $\gcd(a,b,c)=1$. Write $b^2 - 4ac = \Delta = f^2\Delta_0$ where $\Delta_0$ is a fundamental discriminant. We say that $f$ is the \textit{conductor} of $\beta$.

Equivalently, the conductor of $\beta$ is the conductor of the multiplier ring $\ord(\beta\Z+\Z)=\colonideal{\beta\Z+\Z}{\beta\Z+\Z}$ of the (pseudo-)lattice $\beta\Z+\Z$.
\end{defn}

We use the following notation, following Iwaniec \cite{iwaniec}, for the set of integral matrices of determinant $f$.
\begin{defn}
For $f \in \N$, define
\begin{equation}
G_f = \left\{\mattwo{a}{b}{c}{d} : a,b,c,d \in \Z, ad-bc = f\right\}.
\end{equation}
\end{defn}

It turns out that every quadratic number can be obtained from a quadratic number of conductor 1 by an integral fractional linear transformation.
\begin{lem}\label{lem:fto1}
If $\beta \in \Rquad$ is a real quadratic number of conductor $f$, then there exists some $B \in G_f$ and some $\alpha \in \Rquad$ of conductor $1$ such that $\beta = B \cdot \alpha$. 
\end{lem}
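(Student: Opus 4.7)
The plan is to produce $\alpha$ by forming the $\OO_K$-closure of the lattice $\beta\Z+\Z$ and then choosing an appropriate basis. Let $\OO = \ord(\beta\Z+\Z)$; by hypothesis this order has conductor $f$ in the maximal order $\OO_K$ of $K = \Q(\beta)$, so $[\OO_K:\OO] = f$. By \Cref{prop:ordinvertible}, $\beta\Z+\Z$ is an invertible $\OO$-ideal. Form the fractional $\OO_K$-ideal $\aa := \OO_K \cdot (\beta\Z+\Z)$. As a rank-two $\Z$-lattice in $K \subset \R$, write $\aa = \gamma(\alpha\Z+\Z)$ for some $\gamma \in K^\times$ and $\alpha \in K\setminus\Q \subset \Rquad$. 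Since $\aa$ is an $\OO_K$-ideal, $\ord(\alpha\Z+\Z) = \OO_K$, so $\alpha$ has conductor $1$.

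From the inclusion $\beta\Z+\Z \subseteq \aa = \gamma(\alpha\Z+\Z)$, pick integers $p,q,r,s$ with $\gamma^{-1}\beta = p\alpha+q$ and $\gamma^{-1} = r\alpha+s$; then $\beta = (p\alpha+q)/(r\alpha+s)$, and $B = \smmattwo{p}{q}{r}{s}$ satisfies $|\det B| = [\aa : \beta\Z+\Z]$ by the standard formula for the index of a sublattice. The key step is to identify this index with $f$. Using that $\beta\Z+\Z$ is a locally free $\OO$-module of rank $1$ (\Cref{prop:locallyprincipal}), tensor the exact sequence $0 \to \OO \to \OO_K \to \OO_K/\OO \to 0$ with $\beta\Z+\Z$ to obtain
\[
0 \to \beta\Z+\Z \to \aa \to (\OO_K/\OO) \otimes_\OO (\beta\Z+\Z) \to 0.
\]
Local freeness of $\beta\Z+\Z$ (combined with the fact that $\OO_K/\OO$ is a torsion $\OO$-module) identifies the rightmost term with $\OO_K/\OO$ as a finite abelian group, yielding $[\aa : \beta\Z+\Z] = [\OO_K : \OO] = f$. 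Hence $\det B = \pm f$; if the sign is $-1$, replace $\alpha$ by $-\alpha$ (still of conductor $1$) and $B$ by $\smmattwo{-p}{q}{-r}{s}$, which has determinant $+f$ and still satisfies $B\cdot(-\alpha) = \beta$.

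The main obstacle is the index identification $[\aa : \beta\Z+\Z] = [\OO_K : \OO]$. Although standard once one has invertibility and local freeness in hand, one must be careful that tensor-flatness is applied correctly and that the resulting identification of abelian-group orders is genuine rather than merely an isomorphism of $\OO$-modules up to a twist. Everything else in the argument is essentially bookkeeping: rewriting lattices via bases, reading off the matrix entries, and handling the sign of the determinant by a final reflection.
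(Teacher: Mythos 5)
Your proof is correct and follows essentially the same route as the paper's: form the $\OO_F$-closure $\aa = \OO_F(\beta\Z+\Z)$, write it in terms of a basis to get $B$ and $\alpha$, identify $\det B$ with the index $[\aa : \beta\Z+\Z]$, show that index equals $f$, and fix the sign at the end. The only real difference is that where the paper dispatches the index identity $[\bb\OO_F : \bb] = [\OO_F : \OO]$ by citing \cite[Prop.\ A.2]{kopplagarias}, you prove it from scratch via flatness of the invertible ideal $\bb = \beta\Z+\Z$ and tensoring $0 \to \OO \to \OO_F \to \OO_F/\OO \to 0$ with $\bb$. That argument is sound, and the concern you flag at the end resolves cleanly: since $\bb$ is locally free of rank one, localizing at each maximal ideal $\pp$ gives $(\OO_F/\OO)_\pp \otimes_{\OO_\pp} \bb_\pp \cong (\OO_F/\OO)_\pp$ as genuine $\OO_\pp$-modules (there is no residual twist once you localize, because $\bb_\pp$ is free), and since $\OO_F/\OO$ is a finite module supported at finitely many primes, the cardinality $|(\OO_F/\OO)\otimes_\OO \bb| = |\OO_F/\OO| = f$ follows by multiplying the local orders. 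One small point worth making explicit is the identification $\OO_F \otimes_\OO \bb \cong \OO_F\bb = \aa$: this again uses flatness of $\bb$, applied to the inclusion $\OO_F \hookrightarrow F$, to see that the multiplication map has zero kernel. With that, the chain $|\det B| = [\aa : \bb] = [\OO_F : \OO] = f$ is airtight.
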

\begin{proof}
Let $F = \Q(\beta)$ and $\bb = \beta\Z + \Z$. Let $\OO$ be the order of conductor $f$ in $F$, and let $\OO_F$ be the maximal order. Write $\bb\OO_F = \alpha_1\Z + \alpha_2\Z$ for some $\alpha_1, \alpha_2 \in F$. Since $\bb$ is a sublattice of $\bb\OO_F$, we can write
\begin{equation}
\coltwo{\beta}{1} = \mattwo{a}{b}{c}{d}\coltwo{\alpha_1}{\alpha_2}.
\end{equation}
for some integral change-of-basis matrix $B = \smmattwo{a}{b}{c}{d}$. By general properties of lattices, the change of basis matrix $B$ has determinant $\det(B) = \pm [\bb\OO_F : \bb]$. Possibly reordering the basis $\{\alpha_1, \alpha_2\}$, we may assume $\det(B) = [\bb\OO_F : \bb]$.

By \Cref{prop:ordinvertible}, $\bb$ is an $\OO$-invertible fractional ideal, so the index of $\bb$ in $\bb\OO_F$ is
\begin{align}
[\bb\OO_F : \bb] 
&= [\bb\OO_F : \bb\OO] \\
&= [\OO_F : \OO] \mbox{ by \cite[Prop.\ A.2]{kopplagarias}} \\
&= f.
\end{align}
Thus, $\det(B) = f$; that is, $B \in G_f$. If we let $\alpha = \frac{\alpha_1}{\alpha_2}$, then
\begin{align}
B \cdot \alpha 
= \frac{a \frac{\alpha_1}{\alpha_2} + b}{c \frac{\alpha_1}{\alpha_2} + d}
= \frac{a \alpha_1 + b \alpha_2}{c \alpha_1 + d \alpha_2} = \frac{\beta}{1} = \beta. \tag*{\qedhere}
\end{align}
\end{proof}

The next two lemmas concern the basic properties of $G_f$ and its interactions with with congruence subgroups of $\SL_2(\Z)$.
\begin{lem}\label{lem:Gfconj}
Let $f, N \in \N$.
If $A \in \Gamma(fN)$ and $B \in G_f$, then $B^{-1}A B \in \Gamma(N)$.
\end{lem}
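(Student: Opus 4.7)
The plan is to write $A = I + fN\,M$ for some $M \in \Mat_{2 \times 2}(\Z)$, using the definition of $\Gamma(fN)$, and then exploit the fact that $B^{-1}$ can be expressed via the adjugate matrix with denominator $f$, so that the factor of $f$ in the denominator of $B^{-1}$ is absorbed by the factor of $f$ in $fN$.

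More concretely, I would proceed as follows. Since $B \in G_f$, we have $\det(B) = f$, and Cramer's rule gives $B^{-1} = f^{-1}\adj(B)$, where $\adj(B) \in \Mat_{2 \times 2}(\Z)$ is the classical adjugate. Then I would compute
\begin{equation}
B^{-1} A B = B^{-1}(I + fN\,M)B = I + fN\,B^{-1}MB = I + N\,\adj(B)\, M\, B.
\end{equation}
The matrix $\adj(B)\,M\,B$ has integer entries because $\adj(B)$, $M$, and $B$ all do. Thus $B^{-1}AB \equiv I \pmod{N}$, and it clearly has determinant $1$, so it lies in $\SL_2(\Z)$; hence $B^{-1}AB \in \Gamma(N)$.

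There is no real obstacle here — the whole argument is the one-line observation that $fN \cdot B^{-1}$ is an integer matrix divisible by $N$. The only thing worth double-checking is that $B^{-1}AB$ does land in $\SL_2(\Z)$ (not just in $\SL_2(\Q)$), which follows automatically from the displayed congruence since the right-hand side is manifestly an integer matrix. So the proof is essentially immediate from the definitions and the adjugate identity.
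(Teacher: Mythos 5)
Your proof is correct and is essentially the same as the paper's: both write $A = I + fN\,M$ and observe that $fB^{-1}$ (which you identify as $\adj(B)$) is an integer matrix, so the factor of $f$ is absorbed and $B^{-1}AB \equiv I \pmod{N}$.
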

\begin{proof}
Certainly $\det(B^{-1}A B) = \det(A) = 1$. Moreover, we may write
\begin{equation}
A = I + fNA_1
\end{equation}
for some $2 \times 2$ integral matrix $A_1$, where $I$ is the $2 \times 2$ identity matrix. Also, since $\det(B) = f$, $fB^{-1}$ is an integral matrix. Thus,
\begin{equation}
B^{-1}A B = I + N(fB^{-1})A_1 B \equiv I \Mod{N}.
\end{equation}
So $B^{-1}A B \in \Gamma(N)$.
\end{proof}

\begin{lem}\label{lem:Gforbits}
The right $\SL_2(\Z)$-orbits in $G_f$ are represented by upper triangular matrices:
\begin{equation}
G_f/\SL_2(\Z) = \left\{\smmattwo{a}{b}{0}{d}\SL_2(\Z) : ad=f,\, 0 \leq b < a\right\}.
\end{equation}
\end{lem}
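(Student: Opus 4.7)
\medskip

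\noindent\textbf{Proof proposal.} This is the Hermite normal form for $2\times 2$ integer matrices of determinant $f$, restricted to the $\SL_2(\Z)$-action on the right, so the plan is to establish existence and uniqueness of a representative of the stated form.

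For existence, I would start with an arbitrary $M = \smmattwo{p}{q}{r}{s} \in G_f$ and produce $A \in \SL_2(\Z)$ such that $MA$ has bottom-left entry zero. Since $\det M = f \neq 0$, the bottom row $(r,s)$ is nonzero; let $g = \gcd(r,s) > 0$. Choose integers $\beta,\delta$ with $r\beta + s\delta = g$, and set
\[
A = \mattwo{s/g}{\beta}{-r/g}{\delta} \in \SL_2(\Z).
\]
A direct computation gives $(r,s)A = (0,g)$, so $MA = \smmattwo{f/g}{\ast}{0}{g}$. This yields an upper-triangular representative with positive diagonal entries whose product is $f$. To adjust the $(1,2)$ entry into the range $[0,a)$ where $a = f/g$, I would right-multiply by an appropriate power of $T = \smmattwo{1}{1}{0}{1} \in \SL_2(\Z)$, using that $\smmattwo{a}{b'}{0}{d} T^k = \smmattwo{a}{b'+ka}{0}{d}$.

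For uniqueness, suppose $\smmattwo{a}{b}{0}{d}$ and $\smmattwo{a'}{b'}{0}{d'}$ are two such representatives in the same right $\SL_2(\Z)$-coset, with $ad = a'd' = f$, $a,d,a',d' > 0$, and $0 \leq b < a$, $0 \leq b' < a'$. Writing $\smmattwo{a'}{b'}{0}{d'} = \smmattwo{a}{b}{0}{d} \smmattwo{\alpha}{\beta}{\gamma}{\delta}$ for some $\smmattwo{\alpha}{\beta}{\gamma}{\delta} \in \SL_2(\Z)$, the bottom row gives $d\gamma = 0$ and $d\delta = d'$. Since $d > 0$, we get $\gamma = 0$, whence $\alpha\delta = 1$ with $\alpha,\delta \in \Z$, forcing $\alpha = \delta = \pm 1$; positivity of $a,a',d,d'$ selects $\alpha = \delta = 1$. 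Then $a = a'$, $d = d'$, and $b' = b + a\beta$, and the constraint $0 \leq b,b' < a$ forces $\beta = 0$ and $b = b'$.

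There is no real obstacle here, as the argument reduces to elementary manipulations with $2 \times 2$ integer matrices; the only minor subtlety is being careful about signs when choosing the Bezout coefficients so that $A$ actually lies in $\SL_2(\Z)$ (rather than merely $\GL_2(\Z)$) and so that the diagonal entries $a,d$ come out positive rather than merely of product $f$.
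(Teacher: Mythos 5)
Your proof is correct and takes essentially the same approach as the paper: integral column reduction to Hermite normal form via right-multiplication by $\SL_2(\Z)$. The paper phrases the existence step as iterated applications of the signed Euclidean algorithm using $T$ and $S$, whereas you collapse it into a single explicit Bezout matrix $A = \smmattwo{s/g}{\beta}{-r/g}{\delta}$ killing the bottom-left entry in one step; and you spell out the uniqueness argument, which the paper only asserts. Both amount to the standard Hermite normal form computation.
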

\begin{proof}
A matrix $\smmattwo{a}{b}{c}{d} \in G_f$ can be integrally column-reduced to a unique matrix of the desired form using the signed Euclidean algorithm on $c$ and $d$, multiplying on the right by products of the matrices $\smmattwo{1}{1}{0}{1}$ and $\smmattwo{0}{-1}{1}{0}$, and potentially multiplying by $\smmattwo{-1}{0}{0}{-1}$ in the last step. The column-reduction matrix has determinant $1$.
\end{proof}

We now examine the values of $\varpi_\r$ under the action of the orbit representatives from \Cref{lem:Gforbits}.
\begin{prop}\label{prop:utrel}
Let $\r = \smcoltwo{r_1}{r_2} \in \Q^2$ and $a,b,d \in \Z$ with $0 \leq b < a$. Then, for $\tau \in \HH$,
\begin{equation}
\varpi_{\r}\!\left(\frac{a\tau + b}{d}\right)
= \prod_{j=0}^{a-1}\prod_{\ell=0}^{d-1} \varpi_{\smmattwo{a}{b}{0}{d}^{-1}\smcoltwo{j+r_1}{\ell+r_2}}(\tau)
\end{equation}
\end{prop}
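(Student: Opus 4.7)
The plan is to unfold both sides as infinite products and match them factor-by-factor via a reindexing together with the cyclotomic factorization
\[
1 - Y^a = \prod_{j=0}^{a-1}\bigl(1 - \omega_a^{-j} Y\bigr), \qquad \omega_a = \ee{1/a}.
\]
Note first that $\det \smmattwo{a}{b}{0}{d} = ad > 0$ (since $a > b \geq 0$ forces $a \geq 1$, and in the intended application $ad = f \in \N$), so $(a\tau+b)/d \in \HH$ and the left-hand side product converges.

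First I would compute
\[
\mattwo{a}{b}{0}{d}^{-1}\coltwo{j+r_1}{\ell+r_2} = \coltwo{(j+r_1)/a - b(\ell+r_2)/(ad)}{(\ell+r_2)/d} =: \r'(j,\ell),
\]
and then simplify, by a routine algebraic manipulation, the exponent appearing in the $k$-th factor of $\varpi_{\r'(j,\ell)}(\tau)$:
\[
\bigl(k + r_2'(j,\ell)\bigr)\tau - r_1'(j,\ell)
= \frac{(kd+\ell+r_2)(a\tau+b)}{ad} - \frac{bk + j + r_1}{a}.
\]
Setting $n = kd+\ell$ and $\alpha_n = (n+r_2)(a\tau+b)/(ad)$, the right-hand side of the proposition becomes
\[
\prod_{j=0}^{a-1}\prod_{\ell=0}^{d-1}\prod_{k=0}^{\infty} \Bigl(1 - \ee{\alpha_n - (bk + j + r_1)/a}\Bigr).
\]

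Second I would reindex. As $(k,\ell)$ ranges over $\Z_{\geq 0} \times \{0,\ldots,d-1\}$, the integer $n = kd + \ell$ ranges bijectively over $\Z_{\geq 0}$ with $k = \lfloor n/d\rfloor$. For fixed $n$, as $j$ varies over $\{0,\ldots,a-1\}$, the integers $bk + j = b\lfloor n/d\rfloor + j$ form a complete residue system modulo $a$; since shifting the argument of $\ee{\cdot}$ by an integer is trivial, I may replace $bk+j$ by $j$ inside the product. Thus the right-hand side is
\[
\prod_{n=0}^{\infty}\prod_{j=0}^{a-1}\Bigl(1 - \ee{\alpha_n - (j+r_1)/a}\Bigr).
\]

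Third, applying the cyclotomic identity with $Y = \ee{\alpha_n - r_1/a}$ (so that $\omega_a^{-j} Y = \ee{\alpha_n - (j+r_1)/a}$ and $Y^a = \ee{a\alpha_n - r_1}$), the inner product collapses:
\[
\prod_{j=0}^{a-1}\Bigl(1 - \ee{\alpha_n - (j+r_1)/a}\Bigr) = 1 - \ee{a\alpha_n - r_1} = 1 - \ee{(n+r_2)(a\tau+b)/d - r_1},
\]
which is exactly the $n$-th factor of $\varpi_\r\!\bigl((a\tau+b)/d\bigr)$. Taking the product over $n \geq 0$ yields the stated identity. This argument is essentially formal, and I do not anticipate a substantive obstacle; the only points requiring care are verifying the algebraic simplification of the exponent and noting that the triangular form of $\smmattwo{a}{b}{0}{d}$ is what makes $r_2'$ independent of $j$, which is precisely what allows the cyclotomic collapse in the $j$-variable.
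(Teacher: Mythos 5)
Your proof is correct and rests on the same key ingredients as the paper's — the cyclotomic factorization $1 - Y^a = \prod_{j=0}^{a-1}(1 - \ee{-j/a}Y)$ and the index identification $n = kd + \ell$ — but you run the computation from the right-hand side to the left, whereas the paper unfolds the left-hand side and splits each factor with the cyclotomic identity. Because of the direction you chose, you need the additional small observation that $b\lfloor n/d\rfloor + j$ ranges over a complete residue system modulo $a$ (so that the $bk$ shift can be dropped); the paper's direction sidesteps this since $j$ is introduced fresh by the factorization rather than coming from the product $\smmattwo{a}{b}{0}{d}^{-1}\smcoltwo{j+r_1}{\ell+r_2}$.
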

\begin{proof}
The proof is a direct calculation:
\begin{align}
\varpi_{\r}\!\left(\frac{a\tau + b}{d}\right)
&= \prod_{k=0}^\infty \left(1 - \ee{(k + r_2)\tfrac{a\tau+b}{d} - r_1}\right) \\
&= \prod_{k=0}^\infty \left(1 - \ee{\tfrac{a(k + r_2)}{d}\tau + \left(\tfrac{b(k + r_2)}{d} - r_1\right)}\right) \\
&= \prod_{\ell=0}^{d-1}\prod_{m=0}^\infty \left(1 - \ee{\tfrac{a(dm+\ell + r_2)}{d}\tau + \left(\tfrac{b(dm+\ell + r_2)}{d} - r_1\right)}\right) \\
&= \prod_{\ell=0}^{d-1}\prod_{m=0}^\infty \left(1 - \ee{a\left(m+\tfrac{\ell + r_2}{d}\right)\tau + \left(\tfrac{b(\ell + r_2)}{d} - r_1\right)}\right) \\
&= \prod_{\ell=0}^{d-1}\prod_{m=0}^\infty \prod_{j=0}^{a-1} \left(1 - \ee{\left(m+\tfrac{\ell + r_2}{d}\right)\tau + \tfrac{1}{a}\left(\tfrac{b(\ell + r_2)}{d} - j - r_1\right)}\right) \\
&= \prod_{j=0}^{a-1}\prod_{\ell=0}^{d-1}\prod_{m=0}^\infty \left(1 - \ee{\left(m+\tfrac{\ell + r_2}{d}\right)\tau - \tfrac{d(j + r_1) - b(\ell + r_2)}{ad}}\right) \\
&= \prod_{j=0}^{a-1}\prod_{\ell=0}^{d-1} \varpi_{\frac{d(j + r_1) - b(\ell + r_2)}{ad},\frac{\ell + r_2}{d}}(\tau) \\
&= \prod_{j=0}^{a-1}\prod_{\ell=0}^{d-1} \varpi_{\smmattwo{a}{b}{0}{d}^{-1}\smcoltwo{j+r_1}{\ell+r_2}}(\tau),
\end{align}
using $\smmattwo{a}{b}{0}{d}^{-1} = \frac{1}{ad}\smmattwo{d}{-b}{0}{a}$ in the last step.
\end{proof}

We now give a ``conductor-lowering/level-raising'' relation for the RM values of the Shintani--Faddeev modular cocycle.
\begin{thm}\label{thm:cllr}
Let $\r \in \Q/\Z$, $f \in \Z$, and $B \in G_f$. Let $A \in \ds\bigcap_{\substack{\s \in \Q^2/\Z^2 \\ B\s - \r \in \Z^2}} \Gamma_\s$. (In particular, this holds if $\r \in \frac{1}{N}\Z/\Z$ and $A \in \Gamma(fN)$.) Let $\alpha$ be a fixed point $A$. Then,
\begin{equation}
\sf{\r}{B A B^{-1}}{B \cdot \alpha} = \prod_{\substack{\s \in \Q^2/\Z^2 \\ B\s - \r \in \Z^2}} \sf{\s}{A}{\alpha}
\end{equation}
\end{thm}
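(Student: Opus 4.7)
The plan is to reduce to the case where $B$ is upper triangular via Lemma \ref{lem:Gforbits} and then apply the multiplicativity relation of Proposition \ref{prop:utrel}. I would write $B = B_0 R$ with $B_0 = \smmattwo{a}{b}{0}{d}$ ($ad = f$, $0 \leq b < a$) and $R \in \SL_2(\Z)$. The key observation is that, as $(j,\ell)$ ranges over $0 \leq j < a$ and $0 \leq \ell < d$, the vectors $\t_{j,\ell} := B_0^{-1}\smcoltwo{j+r_1}{\ell+r_2}$ run through exactly the $|\det B_0| = f$ cosets in $\Q^2/\Z^2$ satisfying $B_0 \t \equiv \r \pmod{\Z^2}$. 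Substituting $\s = R^{-1}\t$, this index set becomes $\{\s \in \Q^2/\Z^2 : B\s \equiv \r \pmod{\Z^2}\}$, which matches the index set in the conclusion.

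For $\tau \in \HH$ near $\alpha$, since $B\cdot\tau = B_0\cdot(R\tau)$ and $BA\cdot\tau = B_0\cdot(RA\tau)$, applying Proposition \ref{prop:utrel} in both the numerator and denominator of the definition of the Shintani--Faddeev cocycle gives
\begin{equation*}
\sf{\r}{BAB^{-1}}{B\cdot\tau}
= \frac{\varpi_\r(B_0 \cdot RA\tau)}{\varpi_\r(B_0 \cdot R\tau)}
= \prod_{\t\,:\, B_0\t \equiv \r} \frac{\varpi_\t(RA\tau)}{\varpi_\t(R\tau)}
= \prod_\t \sf{\t}{RAR^{-1}}{R\tau}.
\end{equation*}
To convert each factor to $\sf{\s}{A}{\tau}$ with $\s = R^{-1}\t$, I would use the one-line identity $\sympt{R\s}{R\tau} = \sympt{\s}{\tau}/j_R(\tau)$, which is immediate from $\det R = 1$ and implies $\varpi_{R\s}(R\tau) = \sfj{R}{\sympt{\s}{\tau}}{\tau}\,\varpi_\s(\tau)$. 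Applying this identity at both $\tau$ and $A\tau$ and dividing yields
\begin{equation*}
\sf{R\s}{RAR^{-1}}{R\tau} = \frac{\sfj{R}{\sympt{\s}{A\tau}}{A\tau}}{\sfj{R}{\sympt{\s}{\tau}}{\tau}}\,\sf{\s}{A}{\tau}.
\end{equation*}
Since $A\cdot\alpha = \alpha$, the $\sigma_R$-ratio on the right tends to $1$ as $\tau \to \alpha$, which (after reindexing as above) delivers the claimed formula.

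The main technical obstacle is justifying the $\tau \to \alpha$ limit cleanly: I need each $\sfj{R}{\sympt{\s}{\alpha}}{\alpha}$ to be a nonzero finite value so that the ratio of $\sigma_R$ values limits to $1$. This is generic, but should be checked (or handled by a bespoke limit argument using the meromorphic continuation machinery of \Cref{sec:wannabe}) in any degenerate configurations where $\sympt{\s}{\alpha}$ lies on the polar or zero locus of $\sigma_R$. The hypothesis $A \in \bigcap_{\s} \Gamma_\s$ ensures each $\sf{\s}{A}{\alpha}$ is a well-defined RM value, and the fact that $RAR^{-1} \in \Gamma_{R\s}$ is immediate because $R \in \SL_2(\Z)$ preserves $\Z^2$. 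Notably, this direct computation sidesteps the orientation sign $s_R(\alpha)$ that would otherwise arise from invoking \Cref{thm:shinconj} to transport between $\sf{R\s}{RAR^{-1}}{R\alpha}$ and $\sf{\s}{A}{\alpha}$, which is what keeps the index set $\{\s : B\s \equiv \r\}$ unchanged under the argument.
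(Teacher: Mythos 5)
Your approach exactly mirrors the paper's: column-reduce $B = B_0 R$ via Lemma \ref{lem:Gforbits} (the paper calls your $R$ ``$C$''), apply Proposition \ref{prop:utrel} for the level-raising factorization, and convert the $R$-conjugated factors back to values $\sf{\s}{A}{\alpha}$. The only deviation is that you carry out the conversion directly from the $\varpi$-transformation law rather than citing Theorem \ref{thm:shinconj}; in effect you are inlining the proof of that theorem.

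However, your claim that this inline computation ``sidesteps the orientation sign $s_R(\alpha)$'' is not correct, and this is exactly where the gap lies. The assertion
\begin{equation*}
\lim_{\tau \to \alpha}\frac{\sfj{R}{\sympt{\s}{A\tau}}{A\tau}}{\sfj{R}{\sympt{\s}{\tau}}{\tau}} = 1
\end{equation*}
requires $\sigma_R$ to be meromorphically continued to a neighborhood of $\tau = \alpha$, which by \eqref{eq:DD} requires $j_R(\alpha) > 0$. When $j_R(\alpha) < 0$, the value $\sfj{R}{\sympt{\s}{\alpha}}{\alpha}$ is simply not defined, and the continuity argument does not apply. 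The limit of the ratio does still exist, since it equals $\sf{R\s}{RAR^{-1}}{R\alpha}\,/\,\sf{\s}{A}{\alpha}$ and both shin functions \emph{are} continued to the relevant real points ($j_{RAR^{-1}}(R\alpha) = j_A(\alpha) > 0$ and $\alpha \in \tDD_{\!A}$); but by Theorem \ref{thm:shinconj} this ratio equals $1$ only when $s_R(\alpha)=+1$, and when $s_R(\alpha)=-1$ it is $\sf{R\s}{RAR^{-1}}{R\alpha}\,/\,\sf{-R\s}{RAR^{-1}}{R\alpha}$, which by Theorem \ref{thm:shincharacter} is generically $\neq 1$. The sign $s_R(\beta)$ in Theorem \ref{thm:shinconj} is there precisely to handle $j_R(\beta)<0$, via the WLOG replacement $R \mapsto -R$ in its proof. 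The case $B=-I\in G_1$ makes this concrete: then $R=-I$ and $s_R(\alpha)=-1$ for every $\alpha$, and dropping the sign would force the generically false identity $\sf{\r}{A}{\alpha}=\sf{-\r}{A}{\alpha}$.

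Your pole/zero caveat is legitimate but secondary: it arises only when some $\s\equiv\mathbf{0}$ (i.e.\ $\r\in\Z^2$) and is governed by Propositions \ref{prop:stablewd} and \ref{prop:invariance}. The sign is the dominant concern. Note also that the paper's own proof applies Theorem \ref{thm:shinconj} without explicitly tracking $s_{R^{-1}}(R\alpha)=\sgn(j_B(\alpha))$; it would be worth checking with the author whether the theorem as stated should instead have the index set $\{\s : s_B(\alpha)B\s - \r \in \Z^2\}$ on the right-hand side, or whether some normalization ensures $j_B(\alpha)>0$ in the intended applications.
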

\begin{proof}
By \Cref{lem:Gforbits}, we can represent $B$ as $B = \smmattwo{a}{b}{0}{d} C$ for some $a,b,d \in \Z$, $ad=f$, $0 \leq b < a$, and some $C \in \SL_2(\Z)$. Thus, by \Cref{prop:utrel}, 
\begin{equation}
\varpi_{\r}(B \cdot \tau)
= \varpi_{\r}\!\left(\frac{a(C \cdot \tau) + b}{d}\right)
= \prod_{j_1=0}^{a-1}\prod_{j_2=0}^{d-1} \varpi_{\smmattwo{a}{b}{0}{d}^{-1}(\mathbf{j}+\r)}(C \cdot \tau)
\end{equation}
for all $\tau \in \HH$, where $\mathbf{j} = \smcoltwo{j_1}{j_2}$.
Thus,
\begin{align}
\sf{\r}{B A B^{-1}}{B \cdot \tau} 
&= \frac{\varpi_{\r}(B \cdot (A \cdot \tau))}{\varpi_{\r}(B \cdot \tau)} \\
&= \prod_{j_1=0}^{a-1}\prod_{j_2=0}^{d-1} 
\frac{\varpi_{\smmattwo{a}{b}{0}{d}^{-1}(\mathbf{j}+\r)}(CA \cdot \tau)}{\varpi_{\smmattwo{a}{b}{0}{d}^{-1}(\mathbf{j}+\r)}(C \cdot \tau)} \\
&= \prod_{j_1=0}^{a-1}\prod_{j_2=0}^{d-1} 
\sf{\smmattwo{a}{b}{0}{d}^{-1}(\mathbf{j}+\r)}{CAC^{-1}}{C\cdot \tau}.
\end{align}
Sending $\tau \to \alpha$ and using \Cref{thm:shinconj} with $R=C^{-1}$, we obtain
\begin{equation}
\sf{\r}{BAB^{-1}}{B \cdot \alpha} 
= \prod_{j_1=0}^{a-1}\prod_{j_2=0}^{d-1} \sf{C^{-1}\smmattwo{a}{b}{0}{d}^{-1}(\mathbf{j}+\r)}{A}{\alpha}
= \prod_{j_1=0}^{a-1}\prod_{j_2=0}^{d-1} \sf{B^{-1}(\mathbf{j}+\r)}{A}{\alpha}.
\end{equation}
By \Cref{prop:invariance}, $\sf{B^{-1}(\mathbf{j}+\r)}{A}{\alpha}$ is periodic of period $a$ in $j_1$ and period $d$ in $j_2$. Moreover, the set of column vectors $\s \in \Q^2/\Z^2$ of the form $\s = B^{-1}(\mathbf{j}+\r)$ with $\mathbf{j} \in \Z^2$ are precisely those such that $B\s - \r \in \Z^2$. Thus,
\begin{equation}
\sf{\r}{BAB^{-1}}{B \cdot \alpha}  = \prod_{\substack{\s \in \Q^2/\Z^2 \\ B\s - \r \in \Z^2}} \sf{\s}{A}{\alpha}. \tag*{\qedhere}
\end{equation}
\end{proof}

\section{Cohomological interpretations of the Shintani--Faddeev cocycles}\label{sec:cohomology}

We will now offer some more formal cohomological perspectives on the groups of ``cocycles'' and ``cohomology classes'' introduced in the previous section. In \Cref{sec:wannabe}, we described the tuple $\shin^\r = (\shin^\r_A)_{A \in \Gamma_\r}$ of meromorphic functions as an element of a group $Z^1_\DD(\Gamma_\r,\MM_\C^\times)$ of ``$1$-cocycles,'' defining a coset class $[\shin^\r]$ in a quotient group $H^1_\DD(\Gamma_\r,\MM_\C^\times)$. A key property of this ``cohomology class'' is that the \textit{stable values} (including the \textit{real multiplication values}) are (essentially) independent of the choice of class representative; see \Cref{prop:stablewd}. We will now present several ways of describing the group $H^1_\DD(\Gamma_\r,\MM_\C^\times)$ (and variants thereof) in a matter resembling existing cohomology theories. Similar constructions can also be used for the groups of Jacobi cocycles, but we omit the details in that case.

Our cocycles are group-cohomological cocycles, with the essentially modification that they are valued in abelian groups $\MM^\times\!(\UU_{\!A})$ that vary based on the element $A \in \Gamma_\r$. 
In \Cref{sec:gengroupcohom}, we provide a simple generalization of the definition of the first cohomology of a group to accomodate this modification. In \Cref{sec:equivariant}, we provide a more complicated construction of a sequence of cohomology groups, somewhat akin to equivariant sheaf cohomology; we connect this construction to $H^1_\UU(\Gamma_\r,\MM_\C^\times)$ and to a ``parabolic'' variant in \Cref{sec:connecttoworking}.

These constructions are meant to provide a launching point for any future efforts to describe the structure of $H^1_\UU(\Gamma_\r,\MM_\C^\times)$ (or a related group) using tools such as spectral sequences to study maps between it and better-understood cohomology groups, such as Eichler cohomology. They are not intended to be a complete exposition of a new cohomology theory, nor are they meant to be a final authoritative answer to the question of what group the Shintani--Faddeev modular cocycle ``should'' live in.

\subsection{Generalized first group cohomology}\label{sec:gengroupcohom}

We first describe a simple modification of the first group cohomology that encompass the groups described in \Cref{sec:working}.

Let $\Gamma$ be any group and $M$ an abelian group (written multiplicatively) with a compatible right action of $\Gamma$ (written as exponentiation). In other words, $M$ is a right $\Z\Gamma$-module. The theory of group cohomology associates a series of comomology groups $H^n(\Gamma,M)$ to the pair $(\Gamma,M)$; see, for example, \cite[Ch.\ VII]{serre} for details. 
The first cohomology group is a quotient of two subgroups of the group of functions from $\Gamma \to M$; we denote evaluation of a function $m : \Gamma \to M$ at $m \in M$ by $m_g$.
\begin{equation}
H^1(\Gamma,M) = \frac{\left\{m : \Gamma \to M \mid m_{g_1g_2} = m_{g_1}^{g_2} m_{g_2}\right\}}{\left\{m : \Gamma \to M \mid m_g = c^g c^{-1} \mbox{ for some } c \in M\right\}}.
\end{equation}
Now, suppose we have a function $N : \Gamma \to \{\mbox{subgroups of } M\}$ denoted by $N_g$. We may define a generalized first cohomology group
\begin{equation}
\tH_N^1(\Gamma,M) = \frac{\left\{m : \Gamma \to M \mid m_g \in N_g, m_{g_1g_2} = m_{g_1}^{g_2} m_{g_2}\right\}}{\left\{m : \Gamma \to M \mid m_g = c^g c^{-1} \mbox{ for some } c \in \bigcap_{g \in \Gamma} N_g\right\}}.
\end{equation}

Now, as in \Cref{sec:working}, let $\FF$ be a sheaf of multiplicative groups of $\C$-valued functions on a topological space $X$ with a continuous action $\Gamma \times X \to X$. Let $\UU$ be a system of domains in the sense of \Cref{defn:domainscohom}. If each $\UU_A \supseteq \HH$ and the restriction maps $\FF(\UU_A) \to \FF(\HH)$ are injective for all $A \in \Gamma$, then it follows immediately from definitions that, for $N_A := \FF(\UU_A)$,
\begin{align}
\tilde{H}_N^1(\Gamma,M) \isom H_\UU^1(\Gamma,\FF).
\end{align}
In particular, this isomorphism holds when $\Gamma = \Gamma_\r$, $\UU_{\!A} = \DD_{\!A}$,
and either $\FF = \MM^\times$ or $F = \AA^\times$.

\subsection{Equivariant $(Q,V)$-cohomology of $\Gamma$-sheaves}\label{sec:equivariant}

We now describe a more sophisticated approach that that produces higher cohomology groups $H^n_{Q,V}(\Gamma,\FF,X^\circ)$ associated to a sheaf on a space with a group action, along with other data.

Let $X$ be a topological space with a continuous action of a group $\Gamma$.
Let $Q$ be a $\Gamma$-set, that is, a set with a $\Gamma$-action.
Fix open sets $V_{\q} \subseteq X$ for each $n \in \N$ and $\q \in Q^n$. Suppose that, for $A \in \Gamma$ and $\q \in Q^n$,
\begin{equation}
A \cdot V_\q = V_{\!A\cdot \q}
\end{equation}
where the action is diagonal, and also suppose that 
\begin{equation}\label{eq:vinclusion}
V_\q \subseteq \bigcap_{i=0}^{n} V_{\hat{\q}_i}
\end{equation}
for $\q = (q_0, \ldots, q_n)$
and $\hat{\q}_i = (q_0,\ldots,q_{i-1},q_{i+1},\ldots,q_n)$.

We need the following definition of a $\Gamma$-sheaf, which behaves like a sheaf of $\Z\Gamma$-modules but is more general.
\begin{defn}\label{defn:gammasheaf}
A \textit{$\Gamma$-sheaf} $\FF$ on $X$ is a sheaf of abelian groups on $X$ along with a ``$\Gamma$-action'' defined by abelian group maps $f \mapsto f^A$ from $\FF(\UU) \to \FF(A^{-1}\cdot \UU)$ commuting with restriction maps (so that $f^A|_V = (f|_V)^A$ for all open sets $V \subseteq U$) and satisfying the compatibility relations $f^I=f$ and $(f^B)^C = f^{BC}$. A map of $\Gamma$-sheaves is a map of sheaves of abelian groups that commutes with the $\Gamma$-action.
\end{defn}

We can define a ``fixed sheaf functor'' as follows; this functor is left exact.
\begin{defn}\label{defn:fixedsheaf}
For any $\Gamma$-sheaf $\FF$ on $X$, we define the \textit{fixed sheaf} $\FF^\Gamma$ of abelian groups by
\begin{align}
\FF^\Gamma(\UU) = \{f \in \FF(\UU) : f|_{\UU \cap A^{-1}\cdot \UU} = f^A|_{\UU \cap A^{-1}\cdot \UU} \mbox{ for all } A \in \Gamma\}.
\end{align}
Additionally, if $\varphi: \FF \to \FF'$ is a map of sheaves, then for $f \in \FF^\Gamma(\UU)$, we define
\begin{align}
\varphi_\UU^\Gamma(f) = \varphi_\UU(f).
\end{align}
Together, these data define a functor from the category of $\Gamma$-sheaves on $X$ to the category of sheaves of abelian groups on $X$.
\end{defn}
\begin{prop}
The fixed sheaf functor is left exact.
\end{prop}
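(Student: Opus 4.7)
The plan is to show that for any short exact sequence of $\Gamma$-sheaves
\begin{equation}
0 \to \FF_1 \xrightarrow{\varphi} \FF_2 \xrightarrow{\psi} \FF_3 \to 0,
\end{equation}
the induced sequence $0 \to \FF_1^\Gamma \xrightarrow{\varphi^\Gamma} \FF_2^\Gamma \xrightarrow{\psi^\Gamma} \FF_3^\Gamma$ is exact as sheaves of abelian groups. The argument is a mild variant of the standard proof that the $G$-invariants functor $M \mapsto M^G$ on $G$-modules is left exact.

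Injectivity of $\varphi^\Gamma_\UU$ on any open $\UU$ is immediate from injectivity of $\varphi_\UU$, since $\FF_1^\Gamma(\UU) \subseteq \FF_1(\UU)$, and $\psi^\Gamma \circ \varphi^\Gamma = 0$ follows from $\psi \circ \varphi = 0$. The substantive step is the reverse inclusion $\ker(\psi^\Gamma) \subseteq \image(\varphi^\Gamma)$. Suppose $f \in \FF_2^\Gamma(\UU)$ with $\psi_\UU(f) = 0$. Exactness of the original sheaf sequence at $\FF_2$ produces an open cover $\{\UU_i\}$ of $\UU$ and sections $g_i \in \FF_1(\UU_i)$ with $\varphi(g_i) = f|_{\UU_i}$; injectivity of $\varphi$ forces the $g_i$ to agree on overlaps and glue to a unique $g \in \FF_1(\UU)$ with $\varphi(g) = f$. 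It remains to verify that $g \in \FF_1^\Gamma(\UU)$. For each $A \in \Gamma$, the $\Gamma$-equivariance of $\varphi$ and the $\Gamma$-invariance of $f$ give, in $\FF_2(\UU \cap A^{-1}\cdot\UU)$,
\begin{equation}
\varphi\!\left(g^A\big|_{\UU \cap A^{-1}\cdot\UU}\right) = f^A\big|_{\UU \cap A^{-1}\cdot\UU} = f\big|_{\UU \cap A^{-1}\cdot\UU} = \varphi\!\left(g\big|_{\UU \cap A^{-1}\cdot\UU}\right),
\end{equation}
and injectivity of $\varphi$ on $\FF_1(\UU \cap A^{-1}\cdot\UU)$ then forces $g^A|_{\UU \cap A^{-1}\cdot\UU} = g|_{\UU \cap A^{-1}\cdot\UU}$, as required.

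I do not expect serious obstacles; the argument is essentially formal once one unwinds the definitions. The only care needed is bookkeeping for the domains of the $\Gamma$-action (which sends $\FF(\UU)$ to $\FF(A^{-1}\cdot\UU)$, not back to $\FF(\UU)$) and the corresponding restriction maps, so that the displayed chain of equalities is read correctly. The argument also tacitly uses that $\FF^\Gamma$ is genuinely a sheaf and that $\varphi^\Gamma$ is a sheaf morphism, both of which follow from the compatibility of the $\Gamma$-action with restriction that is implicit in Definition \ref{defn:gammasheaf}.
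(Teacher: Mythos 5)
Your argument is correct and matches the paper's proof in all essentials: both reduce the problem to taking the preimage $g = \varphi^{-1}(f)$ and using injectivity of $\varphi$ together with $\Gamma$-invariance of $f$ to conclude $g^A = g$ on $\UU \cap A^{-1}\cdot\UU$. The one (harmless) difference is that you obtain the preimage via local sections and gluing, whereas the paper simply asserts $\ker\psi_\UU = \image\varphi_\UU$ section-wise — which holds here because injectivity of $\varphi$ makes the presheaf image already a sheaf — so your version is marginally more careful about the sheaf-theoretic bookkeeping without changing the substance.
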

\begin{proof}
Consider an exact sequence of $\Gamma$-sheaves
\begin{equation}
1 \to \FF_1 \xrightarrow{\varphi} \FF_2 \xrightarrow{\psi} \FF_3.
\end{equation}
We want to show that the sequence of sheaves of abelian groups
\begin{equation}
1 \to \FF_1^\Gamma \xrightarrow{\varphi^\Gamma} \FF_2^\Gamma \xrightarrow{\psi^\Gamma} \FF_3^\Gamma.
\end{equation}
is exact. To do so, we must show that $\ker\varphi^\Gamma = 1$ and $\image\varphi^\Gamma = \ker\psi^\Gamma$.

The first is easy: For each open set $\UU \subseteq X$,
\begin{equation}
(\ker\varphi^\Gamma)(\UU) = (\ker\varphi_\UU) \cap \FF_2^\Gamma(\UU) = 1
\end{equation}
because $\ker\varphi_\UU = 1$; thus, $\ker\varphi^\Gamma = 1$.

It remains to show that $\image\varphi^\Gamma = \ker\psi^\Gamma$; it's clear that $(\image\varphi^\Gamma)(\UU) \subseteq (\ker\psi^\Gamma)(\UU)$ for each open set $\UU \subseteq X$, so we must show the reverse inclusion. For each $\UU$,
\begin{equation}
(\ker\psi^\Gamma)(\UU) = (\ker\psi_\UU) \cap \FF_2^\Gamma(\UU) \subseteq \ker\psi_\UU = \image\varphi_\UU.
\end{equation}
Suppose that $f_2 \in (\ker\psi^\Gamma)(\UU)$, so in particular, $f_2 \in \image\varphi_\UU$. Write $f_2 = \varphi_\UU(f_1)$ for some $f_1 \in \FF_1$. Thus, $\varphi_\UU(f_2) \in \FF_2^\Gamma(\UU)$, so $\varphi_\UU(f_1^A) = \varphi_\UU(f_1)^A = \varphi_\UU(f_1)$. The function $\varphi_\UU$ is injective, so $f_1^A = f_1$. Thus, $f_2 \in \image\varphi_\UU^\Gamma = (\image\varphi^\Gamma)(\UU)$. We've now shown that $(\image\varphi^\Gamma)(\UU) = (\ker\psi^\Gamma)(\UU)$ for each $\UU$, so $\image\varphi^\Gamma = \ker\psi^\Gamma$.
\end{proof}

Let $\FF$ be a $\Gamma$-sheaf on $X$ with the group operation written multiplicatively.
Define the $\Gamma$-sheaf $\CC^n$ with underlying sheaf of abelian groups
\begin{equation}
\CC^n = \CC^n_{Q,V,\FF} = \prod_{\q \in Q^{n+1}} \FF_{V_\q}
\end{equation}
and group action defined for $f \in \CC^n(\UU)$ by
\begin{equation}
\left((f_\q)_{\q \in Q^{n+1}}\right)^{A} = (f_{\!A\cdot\q}^A)_{\q \in Q^{n+1}}.
\end{equation}
To check that $\CC^n$ is a $\Gamma$-sheaf, observe that
\begin{equation}
(f^A)_\q = f_{A\cdot\q}^A \in \FF(A^{-1}(V_{A\cdot\q} \cap \UU) = \FF(A^{-1}\cdot V_{A\cdot\q} \cap A^{-1}\cdot \UU) = \FF(V_{\q} \cap A^{-1}\cdot \UU).
\end{equation}
Now consider the complex of sheaves
\begin{equation}
1 
\to 
\CC^0
\xrightarrow{\partial_0}
\CC^1
\xrightarrow{\partial_1}
\CC^2
\xrightarrow{\partial_2} 
\cdots,
\end{equation}
where the boundary maps are defined (on each $\CC^n(\UU)$) by
\begin{equation}
(\partial_n f)_{q_0,\ldots,q_{n+1}} = 
\prod_{j=0}^n f_{q_0,\ldots,q_{j-1},q_{j+1},\ldots,q_{n+1}}^{(-1)^j}.
\end{equation}
We may apply the fixed sheaf functor to obtain a complex of sheaves of abelian groups
\begin{equation}
1 
\to 
\left(\CC^0\right)^{\!\Gamma}
\xrightarrow{\partial_0^\Gamma}
\left(\CC^1\right)^{\!\Gamma}
\xrightarrow{\partial_1^\Gamma}
\left(\CC^2\right)^{\!\Gamma}
\xrightarrow{\partial_2^\Gamma}
\cdots.
\end{equation}
Finally, we choose a particular open set $X^\circ \subseteq X$ and apply the left exact functor $\CC \mapsto \CC(X^\circ)$, $\varphi \mapsto \varphi_{X^\circ}$ from the category of sheaves of abelian groups to the category of abelian groups. Taking the induced maps to be $d_n = (\partial_n^\Gamma)_{X^\circ}$, we obtain a complex of abelian groups
\begin{equation}\label{eq:abgpcomplex}
1 
\to 
\left(\CC^0\right)^{\!\Gamma}\!\!(X^\circ)
\xrightarrow{d_0}
\left(\CC^1\right)^{\!\Gamma}\!\!(X^\circ)
\xrightarrow{d_1}
\left(\CC^2\right)^{\!\Gamma}\!\!(X^\circ)
\xrightarrow{d_2}
\cdots.
\end{equation}
\begin{defn}\label{defn:QV}
The \textit{equivariant $(Q,V)$-cohomology of $\FF$ on $X^\circ$} is defined to be the cohomology of \eqref{eq:abgpcomplex}, that is,
\begin{equation}
H_{Q,V}^n(\Gamma,\FF,X^\circ) = \frac{\ker(d_n)}{\image(d_{n-1})}.
\end{equation}
\end{defn}

We will now compute the zeroth and first cohomology groups more explicitly. While not needed for the definitions above, we will now add the assumption that the $V_q$ cover $X^\circ$.
\begin{prop}
Suppose that $\bigcup_{q \in Q} V_q \supseteq X^\circ$.
Then,
\begin{equation}
H_{Q,V}^0(\Gamma,\FF,X^\circ) \isom \FF^\Gamma\!(X^\circ).
\end{equation}
\end{prop}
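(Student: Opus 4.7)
The plan is to unwind the definition of $H^0_{Q,V}$ and show that the natural restriction-to-cover map realizes the desired isomorphism via the usual sheaf gluing argument, decorated with the equivariance bookkeeping.

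Unpacking the definitions, $H^0_{Q,V}(\Gamma,\FF,X^\circ)$ is simply $\ker(d_0)$, since no $d_{-1}$ exists. An element of $(\CC^0)^\Gamma(X^\circ)$ is a tuple $(f_q)_{q \in Q}$ with each $f_q \in \FF(V_q \cap X^\circ)$, subject to the $\Gamma$-invariance relations $f_{A\cdot q}^A = f_q$ (on the common domain). The zero-boundary is $(\partial_0 f)_{q_0,q_1} = f_{q_1} f_{q_0}^{-1}$, so the kernel condition is exactly that $f_{q_0}$ and $f_{q_1}$ agree on $V_{q_0,q_1} \cap X^\circ$ for every pair $(q_0,q_1)$.

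First I would construct the natural map $\rho : \FF^\Gamma(X^\circ) \to H^0_{Q,V}(\Gamma,\FF,X^\circ)$ by sending a global invariant section $f$ to the restriction tuple $\rho(f) = (f|_{V_q \cap X^\circ})_{q \in Q}$. This tuple lies in $\ker(d_0)$ because two restrictions of the same section automatically agree on the intersection $V_{q_0,q_1} \cap X^\circ \subseteq V_{q_0} \cap V_{q_1}$, invoking the inclusion \eqref{eq:vinclusion}. The $\Gamma$-invariance of the tuple is inherited directly from the $\Gamma$-invariance of $f$, using that the $\Gamma$-action on $\CC^0$ permutes the index set $Q$ via the diagonal action.

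Next I would verify bijectivity. Injectivity is immediate from the sheaf separation axiom: the hypothesis $\bigcup_{q \in Q} V_q \supseteq X^\circ$ makes $\{V_q \cap X^\circ\}_{q \in Q}$ an open cover of $X^\circ$, so a global section is determined by its restrictions. For surjectivity, take any cocycle $(f_q)_{q \in Q}$ in the kernel of $d_0$; the matching condition on pairwise intersections is exactly the hypothesis needed by the sheaf gluing axiom to produce a unique $f \in \FF(X^\circ)$ with $f|_{V_q \cap X^\circ} = f_q$. The only remaining point is to check that $f \in \FF^\Gamma(X^\circ)$, i.e., that $f^A|_{X^\circ \cap A^{-1}\cdot X^\circ} = f|_{X^\circ \cap A^{-1}\cdot X^\circ}$ for each $A \in \Gamma$; this follows by restricting to each $V_q$, using the $\Gamma$-invariance of the tuple $(f_q)_q$ together with the identity $A^{-1}\cdot V_{A\cdot q} = V_q$, and invoking sheaf separation once more on the cover $\{V_q \cap X^\circ \cap A^{-1}\cdot X^\circ\}_{q}$ of $X^\circ \cap A^{-1}\cdot X^\circ$.

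The one subtlety—and the step to handle most carefully—is the domain bookkeeping when comparing $f^A$ and $f$: the section $f^A$ naturally lives on $A^{-1}\cdot X^\circ$, while the tuple components $f_q^A$ live on $V_q \cap A^{-1}\cdot X^\circ$, so one must verify that the equality of tuples on the appropriate overlaps glues to the desired equality on $X^\circ \cap A^{-1}\cdot X^\circ$. Once this is tracked, the argument is a direct application of the sheaf axioms and the definition of the fixed sheaf functor from \Cref{defn:fixedsheaf}.
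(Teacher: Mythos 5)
Your proof is correct and takes essentially the same approach as the paper's: unwind the definitions to identify $H^0_{Q,V}$ with $\ker(d_0)$, observe that membership in $(\CC^0)^\Gamma(X^\circ)$ plus the $d_0$-kernel condition forces the components to agree pairwise and to be $\Gamma$-invariant, and then glue along the cover $\{V_q \cap X^\circ\}$ to identify this with $\FF^\Gamma(X^\circ)$. The paper compresses the separation/gluing verifications into the single phrase ``by gluing, the diagonal map defines an isomorphism,'' whereas you spell them out, together with the bookkeeping that the glued section is itself $\Gamma$-invariant, which is a useful expansion but not a different argument.
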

\begin{proof}
By \Cref{defn:QV}, the zeroth $(Q,V)$-cohomology group is
\begin{equation}
H_{Q,V}^0(\Gamma,\FF,X^\circ) = \frac{\ker(d_0)}{\image(d_{-1})} = \ker(d_0),
\end{equation}
where the $d_n$ are as in \eqref{eq:abgpcomplex}.
The condition that $f \in \left(\CC^0\right)^{\!\Gamma}\!\!(X^\circ)$ is equivalent to the condition that $f \in \prod_{q\in Q}\FF(X^\circ \cap V_q)$ and $f_q = (f^A)_q = f_{A\cdot q}^A$.
The condition that $d_0(f)=0$ means that $f_{q_0}=f_{q_1}$ for every pair $(q_0,q_1) \in Q^2$.
Therefore, fixing any choice of $q_0 \in Q$,
\begin{equation}
\ker(d_0) = \left\{f \in \prod_{q\in Q}\FF(X^\circ \cap V_q) : f_q = f_{q_0} = f_{q_0}^A\right\}.
\end{equation}
By gluing, 
the diagonal map defines an isomorphism $\FF^\Gamma\!\left(\bigcup_{q\in Q} (X^\circ \cap V_q)\right) \isom \ker(d_0)$; moreover, by hypothesis, $\FF^\Gamma\!\left(\bigcup_{q\in Q} (X^\circ \cap V_q)\right)=\FF^\Gamma(X^\circ)$.
\end{proof}
For the computation of the first cohomology, we will also assume that $\Gamma$ acts transitively on $Q$. We give a presentation of the first cohomology that generalizes the usual presentation of standard group cohomology.
\begin{prop}\label{prop:HQV1calc}
Suppose that $\bigcup_{q \in Q} V_q \supseteq X^\circ$ and that $\Gamma$ acts transitively on $Q$. Fix any $q_0 \in Q$, let $\Gamma_0 = \stab_\Gamma(q_0)$, and choose for each $q \in Q$ some $A_q \in \Gamma$ with $A_q \cdot q = q_0$.
Then,
\begin{equation}
H_{Q,V}^1(\Gamma,\FF,X^\circ) \isom
\frac{Z_{Q,V}^1(\Gamma,\FF,X^\circ)}{B_{Q,V}^1(\Gamma,\FF,X^\circ)} \label{eq:HQVasZQVoverBQV}
\end{equation}
where
\begin{align}
Z_{Q,V}^1(\Gamma,\FF,X^\circ) 
&= \left\{g \in \prod_{q \in Q} \FF(V_{q,q_0}) : g_{A_1A_2\cdot q}=g_{A_2\cdot q}^{A_1^{-1}}g_{A_1\cdot q_0}\right\};\label{eq:zqvdef} \\
B_{Q,V}^1(\Gamma,\FF,X^\circ) 
&= \left\{g \in \prod_{q \in Q} \FF(V_{q,q_0}) : 
\begin{array}{ll}
\exists \ h \in \FF^{\Gamma_0}(V_{q_0}) \\
\text{such that } g_{q}=h^{A_q} h^{-1}
\end{array}
\right\}.\label{eq:bqvdef}
\end{align}
These groups can also be written as
\begin{align}
Z_{Q,V}^1(\Gamma,\FF,X^\circ) 
&\isom \left\{w \in \prod_{A \in \Gamma} \FF(V_{A^{-1}\cdot q_0,q_0}) : 
\begin{array}{ll}
w_{A_1A_2}=w_{A_1}^{A_2}w_{A_2}; \\
w_A=1 \mbox{ for } A \in \Gamma_0
\end{array}
\right\};\label{eq:zqvdefA} \\
B_{Q,V}^1(\Gamma,\FF,X^\circ) 
&\isom \left\{w \in \prod_{A \in \Gamma} \FF(V_{A^{-1}\cdot q_0,q_0}) : 
\begin{array}{ll}
\exists \ h \in \FF^{\Gamma_0}(V_{q_0}) \\
\text{such that } w_{A}=h^{A} h^{-1}
\end{array}
\right\}.\label{eq:bqvdefA}
\end{align}
\end{prop}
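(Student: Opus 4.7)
The plan is to unpack $H^1_{Q,V}(\Gamma,\FF,X^\circ) = \ker(d_1)/\image(d_0)$ directly from the complex \eqref{eq:abgpcomplex} and identify it with $Z^1_{Q,V}/B^1_{Q,V}$ via the restriction map $f \mapsto g$ where $g_q := f_{q_0,q}$. The key structural observation is that a $\Gamma$-equivariant tuple $f \in (\CC^1)^\Gamma(X^\circ)$ satisfying $d_1 f = 0$ obeys the path-composition identity $f_{q_0,q_2} = f_{q_0,q_1}\, f_{q_1,q_2}$ on triple intersections, so $f$ is determined on $V_{q_0,q_1,q_2}$ by its base-point slice $g_q = f_{q_0,q}$ via $f_{q_1,q_2} = g_{q_1}^{-1}g_{q_2}$; transitivity of $\Gamma$ on $Q$ will then let me recover the full tuple on each $V_{q_1,q_2}$.

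First I would check that the forward map $\Phi : \ker(d_1) \to Z^1_{Q,V}$ is well-defined. Inserting $A\cdot q_0$ as an intermediate point and applying the $\Gamma$-equivariance $f_{A\cdot q_0, A\cdot q} = f_{q_0,q}^{A^{-1}}$ gives $g_{A\cdot q} = f_{q_0, A\cdot q_0}\cdot f_{A\cdot q_0, A\cdot q} = g_{A\cdot q_0}\cdot g_q^{A^{-1}}$. Reindexing $(A,q) \mapsto (A_1, A_2\cdot q)$ yields exactly \eqref{eq:zqvdef}, and specializing to $q = q_0$ together with $A_1 = A$ forces $g_{q_0} = 1$. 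To invert $\Phi$, given $g \in Z^1_{Q,V}$ and $(q_1,q_2) \in Q^2$, I would choose $A \in \Gamma$ with $A\cdot q_0 = q_1$ (possible by transitivity) and set $\Psi(g)_{q_1,q_2} := (g_{A^{-1}\cdot q_2})^{A^{-1}}$. The cocycle relation applied to $B \in \Gamma_0$, together with $g_{q_0}=1$, shows independence of the choice of $A$, and the equivariance and cocycle identities $d_1\Psi(g)=0$ then follow by routine calculation.

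The main obstacle will be a sheaf-theoretic subtlety: the naive formula $f_{q_1,q_2} = g_{q_1}^{-1}g_{q_2}$ only makes sense on the smaller open set $V_{q_0,q_1,q_2}$, whereas an element of $(\CC^1)^\Gamma(X^\circ)$ must live in $\FF(V_{q_1,q_2}\cap X^\circ)$. To resolve this I would cover $V_{q_1,q_2}\cap X^\circ$ by the $\Gamma$-translates $A^{-1}\cdot V_{q_0, A\cdot q_1, A\cdot q_2}$ as $A$ ranges over $\Gamma$ (such a cover exists because the hypothesis $\bigcup_{q\in Q}V_q \supseteq X^\circ$ together with transitivity means every point of $V_{q_1,q_2}\cap X^\circ$ lies in some $\Gamma$-translate carrying a third index to $q_0$), define $\Psi(g)_{q_1,q_2}$ on each such piece by the equivariance formula above, and verify that the cocycle relation on $g$ makes these local sections agree on overlaps so that they glue via the sheaf property to a global element of $\FF(V_{q_1,q_2}\cap X^\circ)$.

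Finally, the coboundary comparison will be immediate: an element $h \in (\CC^0)^\Gamma(X^\circ)$ is determined by its base-point value $h_{q_0} \in \FF^{\Gamma_0}(V_{q_0}\cap X^\circ)$ through $h_q = h_{q_0}^{A_q}$, so $\Phi(d_0 h)_q = h_q\, h_{q_0}^{-1} = h_{q_0}^{A_q}\,h_{q_0}^{-1}$, matching \eqref{eq:bqvdef} with $h := h_{q_0}$. The alternative presentation \eqref{eq:zqvdefA}--\eqref{eq:bqvdefA} will then follow from the bijection $w_A := g_{A^{-1}\cdot q_0}$; independence of the inverse assignment $g_q := w_{A_q}$ from the choice of $A_q$ uses $w_{BA_q} = w_B^{A_q}\,w_{A_q} = w_{A_q}$ for $B \in \Gamma_0$, and a direct substitution translates the $g$-cocycle relation into the $w$-cocycle relation and back.
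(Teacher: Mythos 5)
Your approach is essentially the paper's: restrict $f\in\ker(d_1)$ to its base-point slice $g_q$ and match the cocycle and coboundary conditions (your convention $g_q=f_{q_0,q}$ versus the paper's $g_q=f_{q,q_0}$ differ only by inversion, so the resulting identities are equivalent). You go further than the paper in explicitly raising the sheaf-theoretic reconstruction issue, whereas the paper simply asserts that transitivity makes $\varphi\colon f\mapsto(f_{q,q_0})_q$ an isomorphism.

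However, the covering claim on which your fix rests has a gap. You assert that the translates $A^{-1}\cdot V_{q_0,\,A q_1,\,A q_2}=V_{A^{-1}q_0,\,q_1,\,q_2}$ cover $V_{q_1,q_2}\cap X^\circ$, and justify this from $\bigcup_{q}V_q\supseteq X^\circ$ plus transitivity. That does not follow: the inclusion \eqref{eq:vinclusion} gives $V_{q,q_1,q_2}\subseteq V_q\cap V_{q_1,q_2}$, i.e.\ the triple-index sets are \emph{smaller}, not larger, so knowing the singleton sets $V_q$ cover $X^\circ$ does not make the sets $V_{q,q_1,q_2}$ cover $V_{q_1,q_2}\cap X^\circ$. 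What your argument would need is the reverse containment $V_q\cap V_{q_1,q_2}\subseteq V_{q,q_1,q_2}$, which is not assumed; in the paper's running example, where $V_{A_0,\ldots,A_n}=\bigcap_i V_{A_i,A_{i+1}}$, this would require $V_q\cap V_{q_1,q_2}\subseteq V_{q,q_1}$, and nothing in the setup guarantees that. To close the step one would need either an additional covering hypothesis on the system $V$, or a restriction to sheaves obeying the identity principle on connected domains (as $\AA^\times$ and $\MM^\times$ do, which is what the paper implicitly relies on in its applications). In fairness the paper's own proof does not address this point either, so your attempt to flag it is a genuine improvement, but the justification you offer for the covering is not correct as stated.
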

\begin{proof}
By \Cref{defn:QV}, the first $(Q,V)$-cohomology group is
\begin{equation}\label{eq:HQVaskeroverim}
H_{Q,V}^1(\Gamma,\FF,X^\circ) = \frac{\ker(d_1)}{\image(d_0)},
\end{equation}
where the $d_n$ are as in \eqref{eq:abgpcomplex}.
The condition that $f \in \left(\CC^1\right)^{\!\Gamma}\!\!(X^\circ)$ is equivalent to the condition that $f \in \prod_{(q_1,q_2)\in Q^2}\FF(X^\circ \cap V_{q_1,q_2})$ and $f_{q_1,q_2} = (f^A)_{q_1,q_2} = f_{A\cdot q_1, A\cdot q_2}^A$.
The condition that $d_1(f)=0$ means that $f_{q_1,q_3}=f_{q_1,q_2}f_{q_2,q_3}$ for every pair $(q_1,q_2,q_3) \in Q^3$.
If $g_q = f_{q,q_0}$ for $f \in \ker(d_1)$, then
\begin{equation}
g_{A_1A_2\cdot q} 
= f_{A_1A_2\cdot q,q_0}
= f_{A_1A_2\cdot q,A_1\cdot q_0} f_{A_1\cdot q_0,q_0}
= f_{A_2\cdot q,q_0}^{A_1^{-1}} f_{A_1\cdot q_0,q_0}
= g_{A_2\cdot q}^{A_1^{-1}} g_{A_1\cdot q_0}.
\end{equation}
Thus, with $Z_{Q,V}^1(\Gamma,\FF,X^\circ)$ as defined in \eqref{eq:zqvdef}, there is a homomorphism
\begin{align}
\ker(d_1) &\xrightarrow{\varphi} Z_{Q,V}^1(\Gamma,\FF,X^\circ) \\
f &\mapsto (f_{q,q_0})_{q\in Q},
\end{align}
and the hypothesis that $\Gamma$ acts transitively on $Q$ ensures that $\varphi$ is an isomorphism.

Moreover, $f \in \image(d_0)$ if and only if $f_{q_1,q_2} = h_{q_1}h_{q_2}^{-1}$ for some $h \in \left(\CC^0\right)^{\!\Gamma}\!\!(X^\circ)$. By transitivity of the group action, this is equivalent to the condition that $f_{q,q_0} = h_{q_0}^A h_{q_0}^{-1}$ for every $A \in \Gamma$ such that $A \cdot q = q_0$. In turn, setting $h = h_{q_0}$, this is equivalent to the condition that $f_{q,q_0} = h^{A_q}h^{-1}$ and $h \in \FF^{\Gamma_0}(V_{q_0})$.
Thus, $\varphi(\image(d_0)) = B_{Q,V}^1(\Gamma,\FF,X^\circ)$ as defined in \eqref{eq:bqvdef}.
Therefore, \eqref{eq:HQVasZQVoverBQV} follows from \eqref{eq:HQVaskeroverim}.

Finally, the map sending $g \mapsto w$ where $w_A = g_{A^{-1} \cdot q_0}$ defines the isomorphisms \eqref{eq:zqvdefA} and \eqref{eq:bqvdefA}, having inverse maps defined by $g_q = w_{A_q}$.
\end{proof}

\subsection{From the working definition to equivariant $(Q,V)$-cohomology}\label{sec:connecttoworking}

As in both \Cref{sec:working} and \Cref{sec:equivariant}, let $X$ is a topological space with a continuous action of a group $\Gamma$. As in \Cref{sec:working}, let $\FF$ be a sheaf of multiplicative groups of $\C$-values functions on $X$. For $A \in \Gamma$ and $f \in \FF(\UU)$, write $f^A \in \FF(A^{-1} \cdot \UU)$ for the function defined by $f^A(u) = f(A\cdot u)$. We see by inspection that $\FF$ is a $\Gamma$-sheaf in the sense of \Cref{defn:gammasheaf}.

The cohomology group $H_\UU^1(\Gamma,\FF)$ defined in \Cref{sec:working} is identified with an equivariant $(\Gamma,V)$-cohomology group by the following proposition.

\begin{prop}
Let $(\UU_A)_{A \in \Gamma}$ be a system of domains in the sense of \Cref{defn:domainscohom}, covering an open subset $X^\circ$ of $X$, and suppose $\UU_I = X^\circ$. For any $A,B \in \Gamma$, set $V_{A} = A \cdot X^\circ$ and $V_{A,B} = B \cdot \UU_{A^{-1}B}$. For $n \geq 2$, define $V_{\mathbf{A}} = \bigcap_{i=0}^{n-1} V_{A_i,A_{i+1}}$ for $\mathbf{A} = (A_0, \ldots, A_n) \in \Gamma^{n+1}$. Then,
\begin{equation}
H^1_\UU(\Gamma,\FF) \isom H^1_{\Gamma,V}(\Gamma,\FF,X^\circ).
\end{equation}
\end{prop}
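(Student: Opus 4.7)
The plan is to identify $H^1_\UU(\Gamma,\FF)$ with $H^1_{\Gamma,V}(\Gamma,\FF,X^\circ)$ as a direct application of \Cref{prop:HQV1calc}, taking $Q = \Gamma$ with the left-multiplication action and basepoint $q_0 = I$. Under this choice the action is transitive, the stabilizer $\Gamma_0 = \{I\}$ is trivial, and a natural choice of translating elements is $A_q := q^{-1}$, so that $A_q \cdot q = I$. The key identifications are then $V_{A^{-1}, I} = I \cdot \UU_{(A^{-1})^{-1} \cdot I} = \UU_A$, $V_I = X^\circ$, and $\FF^{\Gamma_0}(V_I) = \FF(X^\circ)$. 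Substituting these into \eqref{eq:zqvdefA} and \eqref{eq:bqvdefA} should reproduce verbatim the definitions of $Z^1_\UU(\Gamma,\FF)$ and $B^1_\UU(\Gamma,\FF)$ from \Cref{defn:domainscohom}, after noting that the apparent extra normalization $w_I = 1$ in \eqref{eq:zqvdefA} follows automatically from the cocycle condition $w_{A_1 A_2} = w_{A_1}^{A_2} w_{A_2}$ applied at $A_1 = A_2 = I$.

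Before invoking \Cref{prop:HQV1calc}, I would first verify the structural hypotheses from the setup of \Cref{sec:equivariant}. The equivariance $C \cdot V_{\mathbf{A}} = V_{C \cdot \mathbf{A}}$ holds by direct computation from $C \cdot (B \cdot \UU_{A^{-1}B}) = (CB) \cdot \UU_{(CA)^{-1}(CB)}$. The inclusions $V_\q \subseteq \bigcap_i V_{\hat{\q}_i}$ for tuples of length at least three hold automatically from the intersection construction, and for length-two tuples the inclusion $V_{A,B} \subseteq V_B$ is immediate from $\UU_{A^{-1}B} \subseteq X^\circ$. The other length-two inclusion $V_{A,B} \subseteq V_A$ is less transparent and deserves attention.

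The hard part will be securing $V_{A,B} \subseteq V_A$, which unwinds to $(A^{-1}B) \cdot \UU_{A^{-1}B} \subseteq X^\circ$ and genuinely fails for some choices in the modular case, where $X^\circ = \C$ is not $\Gamma$-stable. The cleanest remedy is to tacitly replace $V_{A,B}$ throughout by $(A \cdot X^\circ) \cap (B \cdot \UU_{A^{-1}B})$; this does not disturb the identification $V_{A^{-1}, I} = \UU_A$ in the applications of interest (each $\UU_A$ already avoids the real points where the action of $A^{-1}$ fails to land in $\C$), and the resulting cohomology is unchanged because restriction maps for $\AA^\times$ and $\MM^\times$ on connected overlaps are injective. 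Once the set-theoretic bookkeeping is in order, the identification of cocycles and coboundaries is essentially a one-line substitution that yields the claimed isomorphism $H^1_\UU(\Gamma,\FF) \isom H^1_{\Gamma,V}(\Gamma,\FF,X^\circ)$.
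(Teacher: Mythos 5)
Your approach is exactly the paper's: apply \Cref{prop:HQV1calc} with $Q = \Gamma$ and $q_0 = I$, compute $\Gamma_0 = \{I\}$ and $V_{A^{-1},I} = \UU_A$, and match the cocycle and coboundary groups (your remark that the normalization $w_I = 1$ in \eqref{eq:zqvdefA} is already forced by the cocycle relation is correct, and that is what reconciles the two definitions). The equivariance computation $C \cdot V_{\mathbf{B}} = V_{C\mathbf{B}}$ is also what the paper does.

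Where your writeup goes beyond the paper is in the scrutiny of the inclusion \eqref{eq:vinclusion}, and this scrutiny is warranted: the paper disposes of it in half a sentence (``the inclusion is also verified directly''). Your reduction of $V_{A,B} \subseteq V_A$ to $(A^{-1}B) \cdot \UU_{A^{-1}B} \subseteq X^\circ$ is correct, and you are right that this is not forced by the system-of-domains axioms alone: taking every $\UU_C = \C$ in the modular setting yields $C \cdot \C \ni \infty$ for any non-upper-triangular $C$. The condition \emph{does} hold for the domains $\DD_A$, $\tDD_A$ actually used by the paper (each $\DD_C$ excludes the pole of $\tau \mapsto C\cdot\tau$, so $C \cdot \DD_C \subseteq \C$), which is presumably the intended content of ``verified directly,'' but the proposition as phrased for an arbitrary system of domains leaves this as an unchecked hypothesis. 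One quibble with your remedy: replacing $V_{A,B}$ by $V_A \cap V_{A,B}$ also replaces $V_{A^{-1},I}$ by $\UU_A \cap A^{-1}\cdot X^\circ$, which equals $\UU_A$ exactly when the inclusion already held, and the assertion that ``the resulting cohomology is unchanged'' by passing to a dense open subdomain would require an extension argument, not merely injectivity of restriction, for meromorphic cocycles. The cleanest patch is simply to add the hypothesis $A \cdot \UU_A \subseteq X^\circ$ to the statement; this holds in all the paper's applications.
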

\begin{proof}
We check directly that $A\cdot V_{B} = AB \cdot X^\circ = V_{AB}$, $A\cdot V_{B,C} = AC\cdot \UU_{B^{-1} C} = AC\cdot \UU_{(AB)^{-1} (AC)} = V_{AB,AC}$, and thus $A \cdot V_{\mathbf{B}} = V_{A\mathbf{B}}$ for $\mathbf{B} \in \Gamma^{n+1}$; the inclusion \eqref{eq:vinclusion} is also verified directly.

We now apply \Cref{prop:HQV1calc} with $Q=\Gamma$ and $q_0=I$. We have $V_{A^{-1}\cdot q_0, q_0} = V_{A^{-1},I} = \UU_A$ and $\Gamma_0 = \stab_\Gamma(I) = \{I\}$.
Then \eqref{eq:zqvdefA} and \eqref{eq:bqvdefA} define the same groups of cocycles and coboundaries as defined in \Cref{defn:domainscohom}:
\begin{align}
Z^1_\UU(\Gamma,\FF) \isom Z^1_{\Gamma,V}(\Gamma,\FF,X^\circ); \\
B^1_\UU(\Gamma,\FF) \isom B^1_{\Gamma,V}(\Gamma,\FF,X^\circ).
\end{align}
The proposition follows.
\end{proof}

Writing $\r = \smcoltwo{p_1/q_1}{p_2/q_2}$, the Shintani--Faddeev modular cocycle satisfies an triviality condition with respect to the matrix $T^{q_2}=\smmattwo{1}{q_2}{0}{1}$; specifically, $\sf{\r}{T^{q_2}}{\tau)}=1$. 
Suppose that $\r \nin \frac{1}{2}\Z^2$, so that $-I \nin \Gamma_\r$.
Under the fractional linear transformation action of $\Gamma_\r$, let
\begin{equation}
\Gamma_{\r,\infty} = \stab_\Gamma(\infty) = \langle T^{q_2} \rangle.
\end{equation}
The triviality condition can be expressed by stating that $\shin^\r$ is a member of the group 
\begin{align}
Z^1_{\DD,\text{par-}\infty}(\Gamma,\FF) 
&:= \left\{w \in \prod_{A \in \Gamma_\r} \FF(\DD_A) : 
\begin{array}{ll}
w_{A_1A_2} = w_{A_1}^{A_2}w_{A_2}; \\
w_{A} = 1 \mbox{ for } A \in \Gamma_{\r,\infty}
\end{array}
\right\} \\
&= \left\{w \in \prod_{A \in \Gamma_\r} \FF(\DD_A) : w_{A_1A_2} = w_{A_1}^{A_2}w_{A_2}; w_{T^{q_2}} = 1\right\}
\end{align}
with $\FF = \MM^\times$.
We may additionally define
\begin{align}
B^1_{\DD,\text{par-}\infty}(\Gamma,\FF) &= 
\left\{w \in \prod_{A \in \Gamma_\r} \FF(\DD_A) : 
\begin{array}{ll}
\exists \ h \in \FF^{\Gamma_{\r,\infty}}(\C) \\
\mbox{such that } w_A = h^Ah^{-1}
\end{array}
\right\}\\
H^1_{\DD,\text{par-}\infty}(\Gamma,\FF) &=
\frac{Z^1_{\DD,\text{par-}\infty}(\Gamma,\FF)}{B^1_{\DD,\text{par-}\infty}(\Gamma,\FF)}. \label{eq:h1par}
\end{align}
Note that this ``parabolic at $\infty$'' cohomology group is not the same as parabolic cohomology imposing vanishing conditions at every cusp.

For $q,q' \in \Gamma_\r \cdot \infty$ with $q \neq q'$, define
\begin{equation}
\sgn_\r(q,q') = 
\begin{cases}
+1 & \mbox{if } (\exists \smmattwo{a}{b}{c}{d} \in \Gamma_\r) \ (-\tfrac{d}{c}, \infty) \in \Gamma_\r\cdot(q,q') \mbox{ and } c\geq 0 \\ 
-1 & \mbox{if } (\exists \smmattwo{a}{b}{c}{d} \in \Gamma_\r) \ (-\tfrac{d}{c}, \infty) \in \Gamma_\r\cdot(q,q') \mbox{ and } c<0
\end{cases}
\end{equation}
This sign is well-defined because $-I \nin \Gamma_\r$, so in particular $-I \nin \Gamma_{\r,\infty}$.
Comparing \eqref{eq:h1par} with \Cref{prop:HQV1calc} and especially \eqref{eq:zqvdefA} and \eqref{eq:bqvdefA}, we immediately obtain the following proposition, identifying the ``parabolic at $\infty$'' cohomology group with an equivariant $(Q,V)$-cohomology group for $Q = \Gamma_\r \cdot \infty$.
\begin{prop}
Let $\r \in \Q^2 \setminus \foh\Z^2$.
Let $Q = \Gamma_\r \cdot \infty$ (which is a subset of $\Q \cup \{\infty\} \isom \Pj^1(\Q)$).
For $q, q' \in Q$, set $V_{q} = (\C \cup \{\infty\}) \setminus \{q\}$ and 
\begin{equation}
V_{q,q'} = 
\begin{cases}
(\C \cup \{\infty\}) \setminus [q',q] & \mbox{if } q \neq q' \mbox{ and } \sgn_\r(q,q') = +1, \\
(\C \cup \{\infty\}) \setminus [q,q'] & \mbox{if } q \neq q' \mbox{ and } \sgn_\r(q,q') = -1, \\
V_{q} & \mbox{if } q=q'.
\end{cases}
\end{equation}
(Here, for $x, y \in \R$, the notation $[x,y]$ is the usual closed interval from $x$ to $y$ if $x \leq y$, and $[x,y] := [x,\infty)\cup\{\infty\}\cup(-\infty,y]$ if $x > y$. We define $[\infty,x] := \{\infty\}\cup(-\infty,x]$ and $[x,\infty] := [x,\infty)\cup\{\infty\}$.)
For $n \geq 2$, define $V_{\q} = \bigcap_{i=0}^{n-1} V_{q_i,q_{i+1}}$ for $\q = (q_0, \ldots, q_n) \in \Gamma_\r^{n+1}$. Then,
\begin{equation}
H^1_{\DD,{\rm par\text{-}}\infty}(\Gamma_\r,\FF) \isom H^1_{Q,V}(\Gamma_\r,\FF,\C).
\end{equation}
\end{prop}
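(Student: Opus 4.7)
The plan is to invoke \Cref{prop:HQV1calc} with $q_0 = \infty$ and read off the identification. I first verify the hypotheses: $\Gamma_\r$ acts transitively on $Q = \Gamma_\r \cdot \infty$ by construction; $V_\infty = \C$ covers $X^\circ = \C$; the $\Gamma_\r$-equivariance $A \cdot V_\q = V_{A\cdot\q}$ and the nesting $V_\q \subseteq \bigcap_i V_{\hat{\q}_i}$ both follow from the definition of $V_{q,q'}$ and the invariance of $\sgn_\r$ on $\Gamma_\r$-orbits.

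Taking $A_q = A$ whenever $q = A^{-1}\cdot\infty$, the stabilizer $\Gamma_0 = \stab_{\Gamma_\r}(\infty) = \langle T^{q_2}\rangle$ yields, via \eqref{eq:zqvdefA}--\eqref{eq:bqvdefA}, the descriptions
\begin{align*}
Z^1_{Q,V}(\Gamma_\r,\FF,\C) &\isom \left\{w \in \prod_{A \in \Gamma_\r} \FF(V_{A^{-1}\cdot\infty,\,\infty}) \,:\, w_{A_1A_2} = w_{A_1}^{A_2}w_{A_2};\ w_{T^{q_2}} = 1\right\}, \\
B^1_{Q,V}(\Gamma_\r,\FF,\C) &\isom \left\{w \,:\, w_A = h^A h^{-1} \text{ for some } h \in \FF^{\Gamma_{\r,\infty}}(\C)\right\},
\end{align*}
where I have used the cocycle identity to reduce the condition $w_A = 1$ on all of $\Gamma_0$ to the single generator $T^{q_2}$, and the identification $\FF^{\Gamma_0}(V_\infty) = \FF^{\Gamma_{\r,\infty}}(\C)$ of coefficient groups.

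The crux is to verify $V_{A^{-1}\cdot\infty,\,\infty} \cap \C = \DD_A$ for each $A = \smmattwo{a}{b}{c}{d} \in \Gamma_\r$. If $c > 0$, then $A^{-1}\cdot\infty = -d/c$, and $A$ itself witnesses $\sgn_\r(-d/c, \infty) = +1$; here the hypothesis $\r \nin \foh\Z^2$ (equivalently $-I \nin \Gamma_\r$) rules out a conflicting representative with $c < 0$ in the same orbit, so $V_{-d/c,\,\infty} \cap \C = \C \setminus (-\infty, -d/c] = \DD_A$. The case $c < 0$ is symmetric. If $c = 0$, then $A \in \Gamma_0$, and the parabolic condition $w_{T^{q_2}} = 1$ combined with the cocycle identity forces $w_A = 1$, so the (possibly larger) domain $V_{\infty,\infty} = \C \supseteq \DD_A$ does not affect the cohomology.

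The main obstacle is the bookkeeping for the $c = 0$ case, particularly for elements of the form $-T^b$ that can lie in $\Gamma_\r$ even when $-I$ does not (e.g., when $r_1 \in \tfrac{1}{4}\Z \setminus \foh\Z$ and $r_2 \in \foh + \Z$). Such elements give a genuine domain discrepancy $\DD_A = \HH \subsetneq \C = V_{\infty,\infty}$, but they all lie in the stabilizer of $\infty$, so the parabolic triviality forces $w_A = 1$ in both settings and the two cohomology groups agree; once this wrinkle is handled, the identification of cocycle, coboundary, and hence cohomology groups follows at once from \Cref{prop:HQV1calc}.
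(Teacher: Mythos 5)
Your overall strategy matches the paper's, which offers no detailed proof but states the claim follows ``immediately'' by comparing \eqref{eq:h1par} with \Cref{prop:HQV1calc}; you carry out that comparison, and your crux step---identifying $V_{A^{-1}\cdot\infty,\,\infty}\cap\C$ with $\DD_{\!A}$---is exactly the right thing to check.

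However, your handling of the case $-T^b\in\Gamma_\r$ is flawed, and the flaw points to a genuine gap in the paper's setup rather than something you can silently repair. You assert that $\r\nin\foh\Z^2$ (equivalently $-I\nin\Gamma_\r$) ``rules out a conflicting representative with $c<0$ in the same orbit.'' This is not so: taking for instance $r_1\in\tfrac{1}{4}\Z\setminus\foh\Z$ and $r_2\in\foh+\Z$, one has $-T^b\in\Gamma_\r$ for $b$ odd even though $-I\nin\Gamma_\r$, and then for any $A\in\Gamma_\r$ with $c_A>0$ the matrix $-T^bA\in\Gamma_\r$ has $c_{-T^bA}<0$ while $(-T^bA)^{-1}\cdot\infty=A^{-1}\cdot\infty$, so $\sgn_\r(A^{-1}\cdot\infty,\infty)$---and therefore the system $(V_{q,q'})$---is genuinely ill-defined. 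Your final paragraph notices the $-T^b$ elements but claims the parabolic condition forces $w_{-T^b}=1$ ``in both settings''; that holds in the $(Q,V)$ picture, where $\Gamma_0=\stab_{\Gamma_\r}(\infty)$, but not in $Z^1_{\DD,\text{par-}\infty}$ as the paper defines it, which imposes only $w_{T^{q_2}}=1$, and the cocycle relation then yields merely $w_{-T^b}(\tau+b)\,w_{-T^b}(\tau)=1$. Indeed the Shintani--Faddeev cocycle itself has $\sf{\r}{-T^b}{\tau}=\varpi_\r(\tau+b)/\varpi_\r(\tau)\neq 1$ here (since $\ee{br_2}=-1$), a function on $\HH$ with no extension to $\C$, so the class of $\shin^{\r}$ in $H^1_{\DD,\text{par-}\infty}$ cannot be represented by any cocycle in $Z^1_{Q,V}$ and the claimed isomorphism fails. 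The proposition, and the definitions of $\sgn_\r$ and $\Gamma_{\r,\infty}$ preceding it, require the further hypothesis $\stab_{\Gamma_\r}(\infty)=\langle T^{q_2}\rangle$, i.e.\ that $\Gamma_\r$ contains no element of the form $-T^b$; this should be flagged as a needed correction rather than patched over.
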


The definition of equivariant $(Q,V)$-cohomology should be more broadly applicable beyond the propositions stated and proven in this section. In particular, the Shintani--Faddeev Jacobi cocycle will be identified with an equivariant $(Q,V)$-cohomology class for the Jacobi group $\Z^2 \semidirect \SL_2(\Z)$. More generally, one hopes that the definition will apply to $(r_1+r_2-1)$-cocycles for $\Z^n \semidirect \SL_n(\Z)$ encoding Kronecker limit formulas for more general number fields $F$ with $[F : \Q] = n$ and $F \tensor \R \isom \R^{r_1} \times \C^{r_2}$. This seems particularly feasible in the totally real case ($r_2=0$, multiple sine functions) and the almost totally real case ($r_2=1$, multiple elliptic gamma functions, including the complex cubic case \cite{bcg}).

It would also be interesting (in future work) to use this framework to consider \textit{additive} cocycles for the weight $k$ action $(f|_k{A})(\tau) := j_{\!A}(\tau)^k f(A\cdot\tau)$ of finite-index subgroups of $\SL_2(\Z)$, such as cocycles of holomorphic quantum modular forms, generalizing Eichler cohomology. 
More generally, one could consider additive cocycles for the ``weight $w$'' action $(f|_w{A})(\tau) := w_{\!A}(\tau) f(A\cdot\tau)$ for any multiplicative cocycle $w_{\!A}$, including $w_{\!A} = \shin_{\!A}^{\r}$.

\section{Partial zeta functions}\label{sec:partial}

In this section, we introduce two partial zeta functions, the \textit{ray class partial zeta function} and the \textit{ideal coset partial zeta function}, and we prove that they are closely related. We also relate the ray class partial zeta function to the \textit{Galois-theoretic partial zeta function} appearing in Tate's refinement of the Stark conjectures. We also discuss statements of the Stark conjectures in terms of \textit{differenced ray class partial zeta functions}.

\subsection{Ray class partial zeta functions}

Let $F$ be a number field and $\OO$ be an order in $F$. Let $\mm$ be an ideal of $\OO$ and $\rS$ a subset of the set of real embeddings of $F$.

To a ray class in the \rcmia, we associate a zeta function. (The dependence of $\OO$ is implicit in this definition, via the dependence on $\A$.)
\begin{defn}[Ray class partial zeta function]
Let $\A \in \Clt_{\mm,\rS}(\OO)$. For $\re(s)>1$, define 
\begin{equation}\label{eq:rayseries}
\zeta_{\mm,\rS}(s,\A) = \sum_{\substack{\aa \in \A \\ \aa \subseteq \OO}} \Nm(\aa)^{-s}.
\end{equation}
\end{defn}

In the case of the maximal order, the ray class partial zeta function can be reduced to the ray class group case.
\begin{prop}\label{prop:changemodzeta}
Let $\A \in \Clt_{\mm,\rS}(\OO_F)$. Choose an ideal $\cc \in \rJ_\mm^\ast(\OO_F)$ such that $\cc\A = [(\gamma_0)]$ for some $\gamma_0 \in \OO_F$, and let $\dd = \mm + \gamma_0\OO_F$. Then
\begin{equation}
\zeta_{\mm,\rS}(s,\A) = \Nm(\dd)^{-s} \zeta_{\mm',\rS}(s,\A') \label{eq:changemodzeta}
\end{equation}
where $\mm' = \dd^{-1}\mm$ and $\A' = [\gamma\cc^{-1}\dd^{-1}] \in \Cl_{\mm,\rS}(\OO_F)$.
\end{prop}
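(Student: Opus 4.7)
The strategy is to construct a bijection between integral ideals in the class $\A \subseteq \Clt_{\mm,\rS}(\OO_F)$ and integral ideals in the class $\A' \subseteq \Cl_{\mm',\rS}(\OO_F)$ via $\aa' \leftrightarrow \dd\aa'$. Since $\Nm(\dd\aa') = \Nm(\dd)\Nm(\aa')$, a termwise comparison of the Dirichlet series \eqref{eq:rayseries} yields \eqref{eq:changemodzeta}.

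First I would verify the basic structure of $\dd$: since $\dd = \mm + \gamma_0\OO_F$ equals $\gcd(\mm, \gamma_0\OO_F)$, both $\mm' = \dd^{-1}\mm$ and the ideal $\gamma_0\dd^{-1}\OO_F$ are integral, and a direct comparison of $\pp$-adic valuations at each $\pp \mid \mm$ shows they are coprime to one another. In particular, the representative $(\gamma_0\dd^{-1}\OO_F)\cc^{-1}$ of $\A'$ is an integral ideal coprime to $\mm'$.

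Next I would prove a preparatory lemma: if $\aa, \bb \in \ol\rJ_{\mm}^\ast(\OO_F)$ satisfy $\aa \sim_{\mm,\rS} \bb$, then $\min(v_\pp(\aa), v_\pp(\mm)) = \min(v_\pp(\bb), v_\pp(\mm))$ for every prime $\pp \mid \mm$. Unpacking the definition, write $\aa = \alpha\ccc$, $\bb = \beta\ccc$ with $\alpha \equiv \beta \Mod{\mm\OO_F[S_\mm^{-1}]}$; the inequality $v_\pp(\alpha - \beta) \geq v_\pp(\mm)$ forces either $v_\pp(\alpha) = v_\pp(\beta)$ or that both are at least $v_\pp(\mm)$. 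Applying this lemma to $\cc\aa \sim_{\mm,\rS} \gamma_0\OO_F$, and using $v_\pp(\cc) = 0$ for $\pp \mid \mm$, one deduces $v_\pp(\aa) \geq v_\pp(\dd)$ at each such $\pp$, so $\aa' := \dd^{-1}\aa$ is integral. A further prime-by-prime check shows $\aa'$ is coprime to $\mm'$. Extracting $\alpha/\beta \in F^\times$ as a principal generator of the fractional ideal $\aa'\cdot\bigl((\gamma_0\dd^{-1}\OO_F)\cc^{-1}\bigr)^{-1}$ directly from the data of the equivalence relation, I would verify that $\alpha/\beta \equiv 1 \Mod{\mm'\OO_F[S_{\mm'}^{-1}]}$ and $\rho(\alpha/\beta) > 0$ for $\rho \in \rS$, placing $\aa'$ in the class $\A'$.

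For the reverse direction, given integral $\aa'$ coprime to $\mm'$ in the class $\A'$, I would write $\aa' = y(\gamma_0\dd^{-1}\OO_F)\cc^{-1}$ for some $y \in F^\times$ with $y \equiv 1 \Mod{\mm'}$, positive signs at $\rS$, and $v_\pp(y) = 0$ for $\pp \mid \mm'$. Then $\aa := \dd\aa' = y\gamma_0\cc^{-1}$ is integral, and to verify $\aa \in \A$ it suffices to show $y\gamma_0\OO_F \sim_{\mm,\rS} \gamma_0\OO_F$. This is the main obstacle: at primes $\pp \mid \mm$ with $\pp \nmid \mm'$ (where $v_\pp(\gamma_0) \geq v_\pp(\mm)$), the element $y$ can have negative $\pp$-adic valuation, so the naive witness $\ccc = \gamma_0\OO_F$, $\alpha = y$, $\beta = 1$ fails because $\alpha \notin \OO_F[S_\mm^{-1}]$. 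I would resolve this by choosing $y_0 \in F^\times$ via weak approximation so that $v_\pp(y_0) = v_\pp(\gamma_0)$ at every $\pp \mid \mm$ (with matching signs at $\rS$), and then take $\ccc = y_0^{-1}\gamma_0\OO_F$, $\alpha = y_0 y$, $\beta = y_0$. A case analysis on the two types of primes dividing $\mm$ will verify each clause of the definition of $\sim_{\mm,\rS}$: semilocal integrality of $\ccc$, membership of $\alpha, \beta$ in $\OO_F[S_\mm^{-1}]$, the congruence $\alpha \equiv \beta \Mod{\mm\OO_F[S_\mm^{-1}]}$, and the sign condition.
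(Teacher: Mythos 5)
Your proof takes the same approach as the paper: both establish a bijection $\aa' \leftrightarrow \dd\aa'$ between integral ideals in the classes $\A'$ and $\A$, then compare the Dirichlet series termwise using $\Nm(\dd\aa') = \Nm(\dd)\Nm(\aa')$. The paper's version is considerably terser---it simply asserts that ``for integral ideals $\aa \in \A$, the ideals $\aa\dd^{-1}$ run over all ideals in the class $\A'$''---while you have correctly identified and carefully handled the genuine subtlety in the reverse direction, where the generator $y$ can have negative valuation at primes dividing $\mm$ but not $\mm'$, so that the witness data for $y\gamma_0\OO_F \sim_{\mm,\rS} \gamma_0\OO_F$ must be adjusted via weak approximation.
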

\begin{proof}
By definition of $\Clt_{\mm,\rS}(\OO_F)$, for an integral ideal $\mm$, we have that
\begin{align}
\aa \in \A 
&\iff \cc\aa = \gamma\OO_F \mbox{ with } \gamma \con \gamma_0 \Mod{\mm} \mbox{ and } \rho(\gamma)>0 \mbox{ for } \rho \in \rS
\\
&\implies \cc\aa = \gamma\OO_F \mbox{ with } \gamma \in \dd 
\\ 
&\implies \dd | \cc\aa
\\
&\implies \dd | \aa \mbox{ because $\cc$ is coprime to $\mm$ and thus to $\dd$}.
\end{align}
Additionally, $\gamma\dd^{-1}$ is coprime to $\mm'$, so $\A' = [\gamma\cc^{-1}\dd^{-1}]$ defines a class in $\Cl_{\mm,\rS}(\OO_F)$.
Moreover, for integral ideals $\aa \in \A$, the ideals $\aa\dd^{-1}$ run over all ideals in the class $\A'$. Equation \eqref{eq:changemodzeta} follows.
\end{proof}

The ray class partial zeta function has an analytic continuation to $\C \setminus \{1\}$ with a simple pole at $s=1$ \cite[Ch.\ VII, Thm.\ 5.9]{neukirch}. Taking a difference of two ray class partial zeta functions cancels the poles and leads to a nicer constant term at $s=1$. This behavior led Stark \cite{stark3} to study certain differences of partial zeta functions.
\begin{defn}[Differenced ray class partial zeta function]
Let $\sR$ be the element of $\Clt_{\mm,\rS}(\OO)$ defined by
\begin{equation}
\sR := \{\alpha\OO : \alpha \equiv -1 \Mod{\mm} \mbox{ and } \rho(\alpha)>0 \mbox{ for all } \rho \in \rS\}.
\end{equation} 
For any ray class $\A \in \Clt_{\mm,\rS}(\OO)$, define 
\begin{equation}
Z_{\mm,\rS}(s,\A) = \zeta_{\mm,\rS}(s,\A) - \zeta_{\mm,\rS}(s,\sR\A).
\end{equation}
\end{defn}

\subsection{Ideal coset partial zeta functions}

To a subset $\rS$ of the set of real embeddings of $F$ and a function $(\rho \mapsto \sn_\rho) : \rS \to \{\pm 1\}$, associate the cone
\begin{equation}
\CC_{\rS}(\sn) = \{\alpha \in F : \sgn(\rho(\alpha))=\sn_\rho \mbox{ for all } \rho \in \rS\}.
\end{equation}

\begin{defn}[Ideal coset partial zeta function]
Let $\bb$ be an ideal of $\OO$ coprime to $\mm$, and let $\gamma_0 \in \bb$. Define
\begin{equation}
C_{\mm,\rS}(\bb,\gamma_0,\sn) := (\gamma_0+\bb\mm) \cap \CC_{\rS}(\sn).
\end{equation}
The \textit{ideal coset partial zeta function} is
\begin{equation}
\xi_{\mm,\rS}^\OO(s,\bb,\gamma_0,\sn) := \sum_{\gamma \in C_{\mm,\rS}(\bb,\gamma_0,\sn)/\U_{\mm,\rS}(\OO)} \Nm(\gamma)^{-s}.
\end{equation}
\end{defn}

In the case when $\mm$ is an ideal of a larger order $\OO'$, the zeta functions $\zeta_{\mm,\rS}^\OO$ and $\zeta_{\mm,\rS}^{\OO'}$ are the same under ideal extension of $\bb$. Specifically, $\U_{\mm,\rS}(\OO) = \U_{\mm,\rS}(\OO)$ by definition, and 
\begin{align}
C_{\mm,\rS}(\bb,\gamma_0,\sn) 
= (\gamma_0+\bb\mm) \cap \CC_{\rS}(\sn)
= (\gamma_0+\bb\OO'\mm) \cap \CC_{\rS}(\sn)
= C_{\mm,\rS}(\bb\OO',\gamma_0,\sn),
\end{align}
so $\xi_{\mm,\rS}^\OO(s,\bb,\gamma_0,\sn) = \xi_{\mm,\rS}^{\OO'}(s,\OO'\bb,\gamma_0,\sn)$.

\subsection{Equivalence of partial zeta functions}

From \Cref{prop:exmonoid} we have the exact sequence of multiplicative monoids
\begin{equation}
\left(\OO/\mm,\times\right)\times\{\pm 1\}^{\rS} \to \Clt_{\mm,\rS}(\OO) \xrightarrow{\phi} \Cl(\OO) \to 1.
\end{equation}
For each $\A_0 \in \Cl(\OO)$, the exact sequence gives a surjection
\begin{equation}
\left(\OO/\mm,\times\right)\times\{\pm 1\}^{\rS} \xsurj{\psi_{\!\A_0}} \phi^{-1}(\A_0).
\end{equation}
This surjection is described by the formula
$\psi_{\!\A_0}(\gamma_0,\sn_\rho) = [\gamma\bb^{-1}]$, where $\bb \in \A_0$ is any representative coprime to $\mm$, and $\gamma \in \OO$ satisfying $\gamma \equiv \gamma_0 \Mod{\mm}$ and $\sgn(\rho(\gamma)) = \sn_\rho$ for $\rho \in \rS$.

\begin{prop}\label{prop:partialzetaeq}
Let $\A_0 \in \Cl(\OO)$, and choose an integral ideal $\bb \in \A_0^{-1}$ such that $\bb$ is coprime to $\mm$. 
Let $\A \in \Clt_{\mm,\rS}(\OO)$ such that $\phi(\A) = \A_0$. 
Choose some $\gamma_0 \in \bb$ and $\sn \in \{\pm 1\}^{\rS}$
such that $\psi_{\!\A_0}(\ol\gamma_0,\sn) = \A$, where $\ol\gamma_0$ is the reduction of $\gamma_0 \Mod{\mm}$.
Then,
\begin{equation}\label{eq:partialzetaeq}
\Nm(\bb)^{-s} \zeta_{\mm,\rS}(s,\A) 
= \left[\U_{\colon{\mm}{\mm+\gamma_0\OO},\rS}(\OO):\U_{\mm,\rS}(\OO)\right]^{-1} \xi_{\mm,\rS}^{\OO}(s,\bb,\gamma_0,\sn).
\end{equation}
\end{prop}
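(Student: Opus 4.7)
The plan is to parametrize $\{\aa \in \A : \aa \subseteq \OO\}$ by a suitable quotient of $C_{\mm,\rS}(\bb,\gamma_0,\sn)$ and then reconcile it with the quotient by $\U_{\mm,\rS}(\OO)$ appearing in $\xi$. First I would verify that $\gamma \mapsto \gamma\bb^{-1}$ sends $C_{\mm,\rS}(\bb,\gamma_0,\sn)$ into $\{\aa \in \A : \aa \subseteq \OO\}$: since $\gamma_0 \in \bb$, every $\gamma \in \gamma_0 + \bb\mm$ lies in $\bb$, giving $\gamma\bb^{-1} \subseteq \OO$; meanwhile, $\gamma \equiv \gamma_0 \Mod{\mm}$ (using $\bb\mm \subseteq \mm$) together with $\sgn(\rho(\gamma)) = \sn_\rho$ for $\rho \in \rS$ forces $\gamma\bb^{-1} \in \psi_{\A_0}(\ol\gamma_0,\sn) = \A$. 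For surjectivity, I would note that for any integral $\aa \in \A$ the coprimality hypothesis gives $\aa\bb \in \rJ_\mm^\ast(\OO)$, and $\bb \in \A_0^{-1}$ forces $[\aa\bb] = 1$ in $\Cl(\OO)$, so $\aa\bb = \gamma\OO$ for some $\gamma$; invoking the surjection $\psi_{\A_0}$, I can adjust $\gamma$ by a unit so that $\gamma \equiv \gamma_0 \Mod{\mm}$ and $\sgn(\rho(\gamma)) = \sn_\rho$, and then $\gamma - \gamma_0 \in \mm \cap \bb = \bb\mm$ (the identity $\mm \cap \bb = \bb\mm$ being a standard consequence of $\bb + \mm = \OO$), so $\gamma \in C_{\mm,\rS}(\bb,\gamma_0,\sn)$.

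Next I would compute the fibers of this map. Two elements $\gamma, \gamma' \in C_{\mm,\rS}(\bb,\gamma_0,\sn)$ yield the same ideal iff $\gamma' = u\gamma$ for some $u \in \OO^\times$ with $u\gamma \in C_{\mm,\rS}(\bb,\gamma_0,\sn)$; the sign condition forces $\rho(u) > 0$ for $\rho \in \rS$, and the coset condition reduces, via $\gamma \equiv \gamma_0 \Mod{\bb\mm}$, to $(u-1)\gamma_0 \in \bb\mm$. Let $\widetilde\U$ denote the resulting subgroup of $\OO^\times$. I would then prove $\widetilde\U = \U_{\colonideal{\mm}{\mm+\gamma_0\OO},\rS}(\OO)$: since $(u-1)\mm \subseteq \mm$ for $u \in \OO$, membership in $\U_{\colonideal{\mm}{\mm+\gamma_0\OO},\rS}(\OO)$ amounts to $(u-1)\gamma_0 \in \mm$ plus the sign conditions; one containment then uses $\bb\mm \subseteq \mm$, and the other uses $\gamma_0 \in \bb$ to upgrade $(u-1)\gamma_0 \in \mm$ to $(u-1)\gamma_0 \in \mm \cap \bb = \bb\mm$.

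Finally, since $\U_{\mm,\rS}(\OO) \subseteq \widetilde\U$, each $\widetilde\U$-orbit in $C_{\mm,\rS}(\bb,\gamma_0,\sn)$ decomposes into exactly $[\widetilde\U : \U_{\mm,\rS}(\OO)]$ many $\U_{\mm,\rS}(\OO)$-orbits of equal norm, and $\Nm(\gamma) = \Nm(\aa\bb) = \Nm(\aa)\Nm(\bb)$ for the corresponding ideal $\aa = \gamma\bb^{-1}$; assembling gives $\xi_{\mm,\rS}^{\OO}(s,\bb,\gamma_0,\sn) = [\widetilde\U : \U_{\mm,\rS}(\OO)] \cdot \Nm(\bb)^{-s} \cdot \zeta_{\mm,\rS}(s,\A)$, which rearranges to \eqref{eq:partialzetaeq}. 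The main delicate point will be the identification $\widetilde\U = \U_{\colonideal{\mm}{\mm+\gamma_0\OO},\rS}(\OO)$, which ultimately rests on the coprimality $\bb + \mm = \OO$ yielding $\mm \cap \bb = \bb\mm$; the rest of the argument is careful unpacking of definitions.
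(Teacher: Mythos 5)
Your proposal is correct and follows essentially the same route as the paper: parametrize the integral ideals in $\A$ by $\U$-orbits in the coset $\gamma_0 + \bb\mm$ via $\gamma \mapsto \gamma\bb^{-1}$, identify the stabilizer with $\U_{\colonideal{\mm}{\mm+\gamma_0\OO},\rS}(\OO)$, and pass to Dirichlet series. The paper's proof is terser, simply asserting that $\gamma$ is determined up to a unit lying in that $\U$-group, whereas you spell out well-definedness, surjectivity, and the stabilizer computation in detail; in particular, you make explicit the two facts the paper leaves implicit, namely that $(u-1)\gamma_0 \in \bb\mm$ is equivalent to $(u-1) \in \colonideal{\mm}{\mm + \gamma_0\OO}$ (using $\mm \cap \bb = \bb\mm$ from $\bb + \mm = \OO$), and that the coset and sign conditions transfer cleanly between $\gamma$ and $\gamma_0$.
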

\begin{proof}
For any $\aa \in \A$, write $\aa\bb = \gamma\OO$ for some $\gamma \in \bb$ such that $\gamma \con \gamma_0 \Mod{\mm}$ and $\sgn(\rho(\gamma))=\sn_\rho$. 
The choice of $\gamma$ is determined up to a unit that stabilizes $\gamma_0 \Mod{\mm}$ and is positive at the real places in $\rS$. The group of such units is precisely $\U_{\colon{\mm}{\mm+\gamma_0\OO},\rS}(\OO)$.
The map $\aa \mapsto \gamma$ defines a bijection
\begin{align}
\A &\xrightarrow{\sim} C_{\mm,\rS}(\bb,\gamma_0,\sn)/\U_{\colon{\mm}{\mm+\gamma_0\OO},\rS}(\OO),\\
\aa &\mapsto \gamma.
\end{align}
In terms of zeta functions, we have
\begin{align}
\Nm(\bb)^{-s} \zeta_{\mm,\rS}(s,\A) 
&= \sum_{\aa \in \A} \Nm(\aa\bb)^{-s} \\
&= \sum_{\gamma \in \frac{C_{\mm,\rS}(\bb,\gamma_0,\sn)}{\U_{\colon{\mm}{\mm+\gamma_0\OO},\rS}(\OO)}} \Nm(\gamma)^{-s} \\
&= \left[\U_{\colon{\mm}{\mm+\gamma_0\OO},\rS}(\OO):\U_{\mm,\rS}(\OO)\right]^{-1} \xi_{\mm,\rS}(s,\bb,\gamma_0,\sn). \tag*{\qedhere}
\end{align}
\end{proof}

\subsection{Galois-theoretic partial zeta functions}\label{sec:gal}

In this section, we introduce the Galois-theoretic partial zeta functions used in Tate's formulation of the rank $1$ abelian Stark conjecture in \cite[Sec.\ 4]{tate}.

\begin{defn}[Galois-theoretic partial zeta function]
Let $H/F$ be an abelian Galois extension of number fields. Let $S$ be a finite set of places of $F$ containing all the places that ramify in $H$ as well as all the infinite places of $F$, and let $S=S_{\rm fin} \sqcup S_\infty$ for a set of finite places $S_{\rm fin}$ and a set of infinite places $S_\infty$. For any $\sigma \in \Gal(H/F)$ and $\re(s)>1$, define
\begin{equation}\label{eq:galseries}
    \zeta_S^{\Gal}(\sigma,s) = \sum_{\substack{\aa \subseteq \OO_F \\ (\forall \pp \in S_{\rm fin}) \aa+\pp=\OO_F \\ \Art([\aa])=\sigma}} \Nm(\aa)^{-s}.
\end{equation}
\end{defn}

In the case when $H = H_{\mm,\rS}^{\OO_F}$ is a ray class field of the maximal order, the Galois theoretic partial zeta function is equal to the ray class partial zeta function.
\begin{thm}\label{thm:zetagaleq}
Let $\A \in \Cl_{\mm,\rS}(\OO_F)$ for $\mm$ a nonzero $\OO_F$-ideal and $\rS$ a subset of the real places of $F$. Let $\sigma = \Art(\A) \in \Gal(H_{\mm,\rS}^{\OO_F}/F)$. Let $S_{\rm fin}$ be the set of primes of $\OO_F$ dividing $\mm$, let $S_\infty$ be the set of infinite places of $F$, and let $S = S_{\rm fin} \sqcup S_\infty$.
Then,
\begin{equation}
   \zeta_S^{\Gal}(\sigma,s) = \zeta_{\mm,\rS}(s,\A).
\end{equation}
\end{thm}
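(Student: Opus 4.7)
The plan is to show that both series in \eqref{eq:galseries} and \eqref{eq:rayseries} are indexed by exactly the same set of integral ideals, so that the equality reduces to matching summation indices termwise. Both series converge absolutely for $\re(s)>1$ (the Galois-theoretic one by a Dirichlet series majorization against $\zeta_F(s)$; the ray class one as stated in the paper), so it suffices to identify the indexing sets.

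First, I would verify that the coprimality conditions agree. The class $\A \in \Cl_{\mm,\rS}(\OO_F)$ is by definition a coset in $\rJ_\mm^\ast(\OO_F)/\rP_{\mm,\rS}(\OO_F)$, so every $\aa \in \A$ is an invertible fractional $\OO_F$-ideal coprime to $\mm$; in particular, any integral $\aa \in \A$ is coprime to every prime in $S_{\rm fin}$ (the set of primes dividing $\mm$). Conversely, an integral $\OO_F$-ideal coprime to every prime in $S_{\rm fin}$ is coprime to $\mm$ and thus represents a class in $\Cl_{\mm,\rS}(\OO_F)$.

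Next, I would use \Cref{thm:main1} to translate between classes and Galois elements. That theorem provides a canonical isomorphism $\Art_{\OO_F}: \Cl_{\mm,\rS}(\OO_F) \to \Gal(H_{\mm,\rS}^{\OO_F}/F)$, so for any integral $\OO_F$-ideal $\aa$ coprime to $\mm$,
\begin{equation}
\Art([\aa]) = \sigma \iff [\aa] = \Art^{-1}(\sigma) = \A \iff \aa \in \A.
\end{equation}
Combining with the previous paragraph, the index set of \eqref{eq:galseries}, namely $\{\aa \subseteq \OO_F : \aa + \pp = \OO_F \text{ for all } \pp \in S_{\rm fin},\ \Art([\aa]) = \sigma\}$, coincides with $\{\aa \in \A : \aa \subseteq \OO_F\}$, the index set of \eqref{eq:rayseries}. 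The theorem follows.

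Since the proof is essentially a bookkeeping exercise, there is no serious obstacle — the only subtlety to flag is that in the maximal-order case every fractional $\OO_F$-ideal is invertible, so the distinction between $\Clt$ and $\Cl$ collapses, and no representative-switching (as in \Cref{prop:changemodzeta}) or index adjustment is needed. The more substantive content has already been absorbed into the existence of the canonical Artin isomorphism recalled in \Cref{thm:main1}.
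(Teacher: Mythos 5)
Your proposal is correct and follows essentially the same approach as the paper: both identify the two indexing sets termwise by observing that coprimality to all primes in $S_{\rm fin}$ is equivalent to coprimality to $\mm$, and that $\Art([\aa])=\sigma$ is equivalent to $\aa\in\A$ via the Artin isomorphism of \Cref{thm:main1}. The extra remarks you add about absolute convergence and the collapse of $\Clt$ to $\Cl$ in the maximal-order case are harmless elaborations of what the paper leaves implicit.
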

\begin{proof}
An integral ideal $\aa \subseteq \OO_F$ is coprime to every $\pp \in S_{\rm fin}$ if and only if it is coprime to $\mm$, and $\Art([\aa])=\sigma$ if and only if $\aa \in \A$.
Thus, the Dirichlet series in \eqref{eq:galseries} has the same terms as the Dirichlet series is \eqref{eq:rayseries}.
\end{proof}
\Cref{thm:zetagaleq} is essentially a restatement of Artin reciprocity, which we have used implicitly here, in terms of zeta functions.

\subsection{The rank 1 abelian Stark--Tate conjecture}

We now state Tate's refinement of the rank 1 abelian Stark conjecture. 
The following conjecture appears as \cite[(4.2) Conjecture ${\rm St}(S,K/k)$]{tate}, but we remove several equivalent statements for conciseness. Our notation differs from Tate's only in that we denote field extension as $H/F$ rather than $K/k$.
\begin{conj}[Stark--Tate conjecture ${\rm ST}(S,H/F)$]\label{conj:stark1}
Let $H/F$ be an abelian extension of number fields, and let $W$ be the number of roots of unity in $H$. Let $S$ be a finite set of places of $F$ containing all the places that ramify in $H$ as well as all the infinite places of $F$, satisfying $\abs{S}\geq 2$. Suppose that $S$ contains a place $\pp$ (finite or infinite) that splits completely in $F$, and let $T = S \setminus \{\pp\}$. Let $U_{S,H}^T$ denote the set of elements $\alpha \in H^\times$ such that its $\QQ$-adic valuations at places $\QQ$ of $H$ satisfy
\begin{align}
& \abs{\alpha}_{\QQ} = 1 \mbox{ for } \QQ | \qq \nin S, &&  \\
& \abs{\alpha}_{\QQ} = 1 \mbox{ for } \QQ | \qq \in T, && \mbox{ if } \abs{T} \geq 2, \mbox{ and}  \\
& \abs{\alpha}_{\QQ} = a \mbox{ for } \QQ | \qq \mbox{ and $a$ constant}, && \mbox{ if } T=\{\qq\}.
\end{align}
Then, there is an element $\e \in U_{S,H}^T$ such that
\begin{equation}
\log\abs{\sigma(\e)}_\PP = -W\zeta_S'(\sigma,0) \mbox{ for each } \sigma \in \Gal(H/F) \mbox{ and } \PP|\pp
\end{equation}
and such that $H(\e^{1/W})$ is abelian over $F$.
\end{conj}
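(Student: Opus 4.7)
The plan is to approach \Cref{conj:stark1} in two stages corresponding to its two assertions: first, exhibit a candidate $\e \in H^\times$ satisfying the logarithmic identity $\log\abs{\sigma(\e)}_\PP = -W\zeta_S'(\sigma,0)$; second, verify that $\e \in U_{S,H}^T$ and that $H(\e^{1/W})/F$ is abelian.

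Because this is a deep and in general open conjecture, the realistic plan is to restrict attention to the case the present paper directly addresses: $F$ real quadratic, $\pp$ a real archimedean place that splits completely in $H = H_{\mm\infty_2}^{\OO_F}$, and $T$ the complement. Here the natural candidate is $\e_\A = \exp(-Z_{\mm\infty_2}'(0,\A))$ for $\A = \Art^{-1}(\sigma)$, and \Cref{thm:main} already provides an explicit analytic formula for (a power of) this quantity as a root of unity times the square of an RM value of the Shintani--Faddeev modular cocycle:
\[
\e_\A^n = (\psi^{-2}\chi_\r^{-1})(A)\,\sf{\r}{A}{\beta}^2.
\]
The logarithmic identity for $\log\abs{\cdot}_\PP$ would then follow by unwinding \Cref{thm:zetagaleq} to identify $\zeta_S^{\Gal}(\sigma,s)$ with the ray class partial zeta function $\zeta_{\mm\infty_2}(s,\A)$, subtracting the companion class $\sR\A$ to pass to the differenced partial zeta $Z_{\mm\infty_2}(s,\A)$, and absorbing the resulting constants and the factor $W$ into the choice of $\e$.

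The genuine difficulty---and the reason \Cref{conj:stark1} remains open even in this case---is algebraicity: nothing in the $q$-Pochhammer manipulations, the triple product identity, or the cocycle-telescoping argument underlying \Cref{thm:main} forces $\sf{\r}{A}{\beta}$ to be algebraic. This is precisely the content of \Cref{conj:field}, which the paper explicitly leaves as a conjecture. The imaginary quadratic analogue succeeds because complex multiplication theory identifies the analogous transcendental values with elliptic units; no unconditional real-multiplication counterpart is presently available. A serious attack would combine the conductor-lowering relation of \Cref{thm:cllr} with the character relation of \Cref{thm:shincharacter} to produce a candidate tuple of Galois conjugates of $\sf{\r}{A}{\beta}$, then attempt to match this tuple against a Hilbert-class-field descent. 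However, the step of identifying $\sf{\r}{A}{\beta}$ with a period of a motive or with a special $L$-value of known algebraicity is the real obstruction; without such an input, the abelianness of $H(\e^{1/W})/F$---which would otherwise follow from the Shimura-type reciprocity hinted at in the discussion after \Cref{thm:field}---cannot be established by the cocycle machinery alone.
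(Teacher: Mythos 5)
The statement you were asked about is \Cref{conj:stark1}, which is precisely that---a \emph{conjecture}. The paper does not prove it and does not claim to: it is Tate's refinement of the rank $1$ abelian Stark conjecture, quoted essentially verbatim from \cite{tate} (Tate's \textit{Conjecture ${\rm St}(S,K/k)$}), and it remains open. What the paper does prove in \Cref{sec:gal}--\ref{prop:stark3} is a chain of implications among formulations: \Cref{stark1implies2} shows \Cref{conj:stark1} $\Rightarrow$ \Cref{conj:stark2}, and \Cref{prop:stark3} shows \Cref{conj:stark2} implies the differenced-zeta version actually used. \Cref{thm:field} is then \emph{conditional} on this hypothesis. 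So there is no ``paper's own proof'' to compare against.

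Your write-up, to its credit, does recognize this---you explicitly flag the conjecture as open and correctly locate the obstruction at algebraicity. But two points of framing are worth correcting. First, your stage-one/stage-two split (``first exhibit $\e$, then verify algebraicity'') is a bit misleading: if one simply sets $\e_\A := \exp(-Z_{\mm\infty_2}'(0,\A))$ then the logarithmic identity at $\PP$ holds by construction and \Cref{thm:zetagaleq}, but this is vacuous until one knows $\e_\A$ is an algebraic unit satisfying the valuation constraints at the other places and that $H(\e_\A^{1/W})/F$ is abelian; those are the entire content of the conjecture, not a ``second stage.'' Second, the direction of the paper's logic is the reverse of what an attempted proof would need: \Cref{thm:main} gives an unconditional \emph{analytic} identity $\exp(nZ'_{\mm\infty_2}(0,\A)) = \samech^\r[\beta]$, and one then \emph{assumes} Stark--Tate to conclude that $\samech^\r[\beta]$ lies in $H_{\mm\infty_2}$; inverting this to derive Stark--Tate from the cocycle identity is circular, and you rightly note that no present ingredient (CM-style reciprocity, a motivic period identification, etc.) closes the loop. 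If you wanted a correct ``blind proof'' of the displayed statement as it appears in the paper, the honest answer is: there is none, it is a cited conjecture, and the paper's actual proofs concerning it are the implication lemmas \Cref{stark1implies2} and \Cref{prop:stark3}, which are elementary bookkeeping with $L$-functions and ray class groups rather than anything touching the conjecture's core difficulty.
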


While Tate attributes the full conjecture to Stark in the case when $\pp$ is Archimedean, his statement is stronger that Stark's published conjectures even in that case, so we use this refinement even though only the Archimedean case is important to this paper. Precisely, Stark conjectured that $H(\e^{1/W})$ is normal over $F$, without claiming in print that the group $\Gal(H(\e^{1/W})/F)$ is always abelian \cite{stark4}. We will need Tate's refinement to conclude that the RM values of the Shintani--Faddeev modular cocycle live in abelian extensions of $F$.

We now give an alternatively statement of a special case of the Stark--Tate conjecture.
\begin{conj}\label{conj:stark2}
Let $F$ be a real quadratic field and $\mm$ a nonzero integral $\OO_F$-ideal such that $\mm \neq \OO_F$.
Let $\{\infty_1, \infty_2\}$ be the two real places of $F$ and $\{\rho_1, \rho_2\}$ the corresponding real embeddings.
Let $H = H_{\mm\infty_2}^{\OO_F}$, and let $\tilde\rho_1 : H \to \R$ be any embedding extending $\rho_1$.
Then, for all $\A \in \Cl_{\mm\infty_2}(\OO_F)$, 
there are elements $\e_{\!\A} \in \OO_H^\times$ such that 
\begin{equation}
\tilde\rho_1(\e_{\!\A}) = \exp\!\left(-2\zeta_{\mm\infty_2}'(0,\A)\right),
\end{equation}
$(\Art(\B))(\e_{\!\A}) = \e_{\!\A\B}$ for $\B \in \Cl_{\mm\infty_2}(\OO_F)$, $\abs{\tilde\rho_2(\e_{\!\A})}=1$ for all $\tilde\rho_2 \mbox{ extending } \rho_2$, and $H(\e^{1/2})$ is abelian over $F$.
\end{conj}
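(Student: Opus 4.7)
The approach is to deduce Conjecture~\ref{conj:stark2} from the Stark--Tate Conjecture~\ref{conj:stark1} applied to $H = H_{\mm\infty_2}^{\OO_F}/F$, translated through the identification of Galois--theoretic and ray--class partial zeta functions from Theorem~\ref{thm:zetagaleq}.

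First I would choose the data for Conjecture~\ref{conj:stark1}. Let $S_{\rm fin}$ be the set of primes of $\OO_F$ dividing $\mm$ and set $S = S_{\rm fin} \cup \{\infty_1,\infty_2\}$. By Theorem~\ref{thm:main1}, $H$ is unramified outside the primes dividing $\mm$ and $\infty_2$, so $S$ contains every ramified place and every archimedean place. Because $\infty_1$ does not appear in the modulus of $H$, the characterization in Theorem~\ref{thm:main1} shows $\infty_1$ splits completely in $H/F$; set $\pp = \infty_1$ and $T = S\setminus\{\infty_1\}$. The hypothesis $\mm \ne \OO_F$ forces $\abs{T} \ge 2$, placing us in the ``$\abs{T} \ge 2$'' case of Conjecture~\ref{conj:stark1} where $\abs{\alpha}_\QQ = 1$ is required for all $\QQ\mid\qq \in T$. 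The element $\e \in U_{S,H}^T$ produced by Stark--Tate therefore lies in $\OO_H^\times$, has $\abs{\tilde\rho_2(\e)}=1$ for every extension $\tilde\rho_2$ of $\rho_2$, and satisfies $H(\e^{1/W})/F$ abelian, where $W$ is the number of roots of unity in $H$.

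Next, fix any lift $\tilde\rho_1$ of $\rho_1$ to $H$, giving the archimedean place $\tilde\infty_1$. Define $\e_\A := (\Art(\A))(\e)$ for each $\A \in \Cl_{\mm\infty_2}(\OO_F)$. The Galois compatibility $(\Art(\B))(\e_\A) = \e_{\A\B}$ is then immediate, and $\abs{\tilde\rho_2(\e_\A)}=1$ for every extension of $\rho_2$ is preserved under the action of $\Gal(H/F)$ on the places of $H$ over $\infty_2$. By Theorem~\ref{thm:zetagaleq}, $\zeta_S^{\Gal}(\Art(\A),s) = \zeta_{\mm\infty_2}(s,\A)$, so Stark--Tate translates to $\log\abs{\tilde\rho_1(\e_\A)} = -W\,\zeta'_{\mm\infty_2}(0,\A)$. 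In the typical case $W = 2$ this matches the target $\tilde\rho_1(\e_\A) = \exp(-2\,\zeta'_{\mm\infty_2}(0,\A))$ up to sign, and the abelianness statement $H(\e_\A^{1/2})/F$ of Conjecture~\ref{conj:stark2} is exactly the $W=2$ specialization of Tate's refinement $H(\e^{1/W})/F$ abelian.

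The main obstacle has two components. First, the conjecture hard--codes the exponent $-2$ and the square root $\e_\A^{1/2}$ where Stark--Tate naturally produces $-W$ and $\e^{1/W}$; when $W > 2$ one must either extract a $(W/2)$--th power (which a priori lies only in the abelian extension $H(\e^{1/W})$) and verify it descends to $H$, or invoke a refined form of Stark--Tate adapted to ray class fields in which the factor $2$ replaces $W$ because $\mu_W \subset H$ is already accounted for in the modulus data. Second, and more subtly, one must pin down the sign so that $\tilde\rho_1(\e_\A)$ is positive for \emph{all} $\A$ simultaneously: Stark--Tate determines $\e$ only up to the group of roots of unity in $H$, which for $W=2$ is only a single global sign, while the cross--compatibility $\Art(\B)(\e_\A) = \e_{\A\B}$ determines the signs of the whole family from a single choice. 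Verifying that the resulting character $\A \mapsto \sgn\tilde\rho_1(\e_\A) \in \{\pm 1\}$ is trivial is the hard part; it should follow because the abelianness of $H(\e_\A^{1/2})/F$ forces the square roots $\sqrt{\e_\A}$ to transform coherently under a single abelian character of $\Gal(H/F)$, and the rigidity of the Stark unit within its orbit under $\mu(H)$ eliminates the sign ambiguity. It is precisely this sign/positivity rigidity, rather than the translation of zeta functions, where the bulk of the technical work resides.
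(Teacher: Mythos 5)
Your route matches the paper's Proposition~\ref{stark1implies2}: take $S = \{\pp : \pp\mid\mm\}\cup\{\infty_1,\infty_2\}$, take $\pp = \infty_1$ and $T = S\setminus\{\infty_1\}$, invoke Conjecture~\ref{conj:stark1}, translate zeta functions via Theorem~\ref{thm:zetagaleq}, and push $\e$ forward by the Artin map. But you stop short on both ``obstacles,'' and neither is as serious as you suggest.

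The first one does not exist. You already observed that $\infty_1$ splits completely in $H/F$, so $H$ admits a real embedding; hence $\mu(H) = \{\pm 1\}$ and $W = 2$ \emph{unconditionally}. The exponent $2$ and the square root in Conjecture~\ref{conj:stark2} are exactly Stark--Tate's $W$ and $\e^{1/W}$, with no discrepancy, no $(W/2)$-th root to extract, and no descent to verify. The paper dispatches this in a single sentence at the start of the proof.

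For the second obstacle you correctly diagnose that Stark--Tate provides $\log\abs{\tilde\rho_1(\sigma(\e))}$ while the conjecture asks for $\tilde\rho_1(\e_\A)$ itself, but the ``character coherence'' heuristic you gesture at is vaguer than what is needed. The paper's resolution is direct: since $W=2$, the only ambiguity in $\e$ is a global sign, so normalize $\tilde\rho_1(\e) > 0$. Extend $\tilde\rho_1$ to $H(\e^{1/2})$ and pick $\nu = \pm\e^{1/2}$ with $\tilde\rho_1(\nu) > 0$; since $\tilde\rho_1(\nu)^2 = \tilde\rho_1(\e) > 0$ forces $\tilde\rho_1(\nu)\in\R$, the extended $\tilde\rho_1$ is real on all of $H(\e^{1/2}) = H(\nu)$. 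Any $\sigma\in\Gal(H/F)$ extends to $\Gal(H(\e^{1/2})/F)$ (here is where abelianness, hence normality, of $H(\e^{1/2})/F$ enters), and then $\tilde\rho_1(\sigma(\e)) = \tilde\rho_1(\sigma(\nu))^2$ is the square of a real number, hence nonnegative, hence positive since $\e$ is a unit. This gives positivity of $\tilde\rho_1(\e_\A)$ for \emph{all} $\A$ at once, with no case analysis on a sign character. That is the one step your proposal genuinely lacks.
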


\begin{prop}\label{stark1implies2}
\Cref{conj:stark1} implies \Cref{conj:stark2}.
\end{prop}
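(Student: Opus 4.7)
The plan is to specialize \Cref{conj:stark1} with $H := H_{\mm\infty_2}^{\OO_F}$, $S = \{\pp \subset \OO_F : \pp \mid \mm\} \cup \{\infty_1, \infty_2\}$, and distinguished place $\pp = \infty_1$, and then translate Galois-theoretic data to ray class-theoretic data via \Cref{thm:zetagaleq}. First I would verify the structural hypotheses: $S$ contains all archimedean places of $F$ and all finite places ramifying in $H$; the place $\infty_1$ splits completely in $H$, because $\infty_1 \notin \rS = \{\infty_2\}$ in the construction of the ray class field $H_{\mm\infty_2}^{\OO_F}$, so all embeddings of $H$ extending $\rho_1$ are real and the given $\tilde\rho_1$ corresponds to a real place $\PP_0$ of $H$; with $T := S \setminus \{\infty_1\}$, the hypothesis $\mm \neq \OO_F$ gives $|T| \geq 2$; hence the $U_{S,H}^T$-condition forces every $\alpha$ in this set to be a unit at every non-archimedean place in $T$ and (trivially) at every place outside $S$, and to have $|\alpha|_\QQ = 1$ at the archimedean places over $\infty_2$, whence $U_{S,H}^T \subseteq \OO_H^\times$. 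Because $H$ has a real embedding, it is not totally complex, so $\mu(H) = \{\pm 1\}$ and $W = 2$.

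Next, by \Cref{thm:zetagaleq}, $\zeta_S^{\Gal}(\Art(\A), s) = \zeta_{\mm\infty_2}(s, \A)$ for $\A \in \Cl_{\mm\infty_2}(\OO_F)$. \Cref{conj:stark1} then supplies an element $\e \in \OO_H^\times$ satisfying
\begin{equation*}
\log |\Art(\A)(\e)|_{\PP_0} \; = \; -2\, \zeta'_{\mm\infty_2}(0, \A), \qquad \A \in \Cl_{\mm\infty_2}(\OO_F),
\end{equation*}
and such that $H(\sqrt{\e})/F$ is abelian. Setting $\e_{\A} := \Art(\A)(\e)$, the Galois compatibility $\Art(\B)(\e_\A) = \e_{\A\B}$ holds automatically by commutativity of $\Gal(H/F)$; the condition $|\tilde\rho_2(\e_\A)| = 1$ at every embedding $\tilde\rho_2$ extending $\rho_2$ follows from $\infty_2 \in T$; and the abelianness of $H(\sqrt{\e_\A})/F$ is preserved under Galois conjugation of $H(\sqrt{\e})/F$. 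Translating absolute values at the distinguished place gives $|\tilde\rho_1(\e_\A)| = \exp(-2\, \zeta'_{\mm\infty_2}(0, \A))$.

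The main obstacle is upgrading this magnitude equation to the signed equation $\tilde\rho_1(\e_\A) = \exp(-2\zeta'_{\mm\infty_2}(0,\A))$ demanded by \Cref{conj:stark2}. Since $\Gal(H/F)$ permutes the real embeddings of $H$ extending $\rho_1$ simply transitively, simultaneous positivity of $\tilde\rho_1(\e_\A) = (\tilde\rho_1 \circ \Art(\A))(\e)$ across all $\A$ is equivalent to $\e$ being positive at every such embedding. The abelianness of $L := H(\sqrt{\e})$ over $F$ is the decisive tool here: in an abelian extension the archimedean decomposition groups at conjugate primes coincide, so the places of $H$ over $\infty_1$ either uniformly split in $L/H$---corresponding to $\tilde\rho(\e) > 0$ for every real embedding $\tilde\rho$ of $H$ extending $\rho_1$---or uniformly ramify, corresponding to uniformly negative signs. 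In the first case we are finished; in the second the sign can be absorbed by replacing $\e$ by $-\e$, which preserves both the absolute-value identity and the zeta-formula (since $W=2$ and $|{-\e}|_\PP = |\e|_\PP$), while a short verification using $i \notin H$ shows that the abelianness of $H(\sqrt{\pm\e})/F$ is maintained. After this sign normalization, the collection $\{\e_\A = \Art(\A)(\e)\}$ satisfies all the requirements of \Cref{conj:stark2}.
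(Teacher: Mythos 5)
Your proof is correct and follows essentially the same route as the paper's: specialize \Cref{conj:stark1} to $H = H_{\mm\infty_2}^{\OO_F}$, $S = \{\pp \mid \mm\}\cup\{\infty_1,\infty_2\}$, $T = S \setminus \{\infty_1\}$, note $W = 2$, translate via \Cref{thm:zetagaleq}, define $\e_\A := \Art(\A)(\e)$, and then fight the sign ambiguity using the abelianness of $L := H(\e^{1/2})$ over $F$.

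The one genuine difference is in how the sign is handled. The paper first replaces $\e$ by $-\e$ if needed so that $\tilde\rho_1(\e)>0$, extends $\tilde\rho_1$ to a \emph{real} embedding of $L$, and observes that for any $\sigma \in \Gal(L/F)$ one has $\tilde\rho_1(\sigma(\e)) = \tilde\rho_1(\sigma(\nu))^2 > 0$ because $\tilde\rho_1 \circ \sigma$ is again a real embedding (here Galois-ness of $L/F$ is what lets you extend $\sigma$ from $H$ to $L$). You instead appeal to the fact that decomposition groups at conjugate archimedean places coincide in an abelian extension, conclude that $\e$ has a uniform sign at all real places of $H$ over $\infty_1$, and then flip $\e$ if the uniform sign is negative. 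These are two packagings of the same underlying Galois-theoretic fact, and both are fine; the paper's is a touch more direct. (You should also note the degenerate case $L=H$, where ``split versus ramify in $L/H$'' is vacuous but $\e$ being a square in $H$ makes it automatically totally positive at the real places over $\infty_1$.)

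One wrinkle to fix: you assert that ``a short verification using $i \notin H$'' shows abelianness of $H(\sqrt{-\e})/F$. The hypothesis $i\notin H$ is not what is used; the argument is that $H(\sqrt{-\e}) \subseteq H(i)\cdot H(\sqrt{\e})$, a compositum of two abelian extensions of $F$ (since $F(i)/F$ has degree $\leq 2$ and $H/F$, $H(\sqrt{\e})/F$ are abelian by hypothesis), and a subextension of an abelian extension is abelian. This works whether or not $i \in H$; if anything, $i\in H$ is the trivial case. The paper also elides this verification, so you are no worse off, but the ``using $i\notin H$'' phrasing points at the wrong ingredient and should be dropped. Likewise, stating ``abelianness of $H(\sqrt{\e_\A})/F$ is preserved under Galois conjugation'' would be tighter as: an extension $\tilde\sigma \in \Gal(L/F)$ of $\Art(\A)$ sends $\sqrt{\e}$ to $\pm\sqrt{\e_\A}$, so $H(\sqrt{\e_\A}) \subseteq L$ and is thus abelian over $F$ as a subextension.
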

\begin{proof}
Assume \Cref{conj:stark1}.
The real place $\infty_1$ is unramified in $H/F$, so $H$ has real embeddings; thus, the number of roots of unity in $H$ is $W=2$. 

Let $S = \{\mbox{prime ideals } \pp | \mm \} \cup \{\infty_1, \infty_2\}$, and let $T = S \setminus \{\infty_1\}$. Then $\abs{T} \geq 2$, so
\begin{equation}
U^T_{S,H} = \{\eta \in \OO_H^\times : \abs{\tilde\rho_2(\eta)}=1 \mbox{ for all } \tilde\rho_2 \mbox{ extending } \rho_2\}.
\end{equation}
By \Cref{conj:stark1}, there is an element $\e$ of this group such that
\begin{equation}\label{eq:starkimp1}
\log\abs{\tilde\rho_1(\sigma(\e))} = -2\zeta_S'(\sigma,0) \mbox{ for each } \sigma \in \Gal(H/F)
\end{equation}
and such that $H(\e^{1/2})$ is abelian over $F$. This statement remains true if $\e$ is replaced by $-\e$, so choosing $\e$ appropriately, we may assume $\tilde\rho_1(\e) > 0$. Denote also by $\tilde\rho_1$ an extension of $\tilde\rho_1$ to $H(\e^{1/2})$, and let $\nu = \pm \e^{1/2}$ so that $\tilde\rho_1(\nu) > 0$. We have $\e = \nu^2$, and $\sigma(\e) = \sigma(\nu)^2$ for any $\sigma \in \Gal(H(\e^{1/2})/F)$, and thus $\tilde\rho_1(\sigma(\e)) = \tilde\rho_1(\sigma(\nu))^2 > 0$. Since any $\sigma \in \Gal(H/F)$ may be extended to $H(\e^{1/2})$, it follows that $\tilde\rho_1(\sigma(\e))>0$ in \eqref{eq:starkimp1}, and thus
\begin{equation}
\tilde\rho_1(\sigma(\e)) = \exp\!\left(-2\zeta_S'(\sigma,0)\right) \mbox{ for each } \sigma \in \Gal(H/F).
\end{equation}
For $\A \in \Cl_{\mm\infty_2}(\OO_F)$, set $\e_\A = (\Art(\A))(\e)$. Thus, we have $(\Art(\B))(\e_{\!\A}) = \e_{\!\A\B}$, and $\zeta_S'(\sigma,0) = \zeta_{\mm\infty_2}'(0,\A)$ by \Cref{thm:zetagaleq}, completing the proof.
\end{proof}

We will actually use an essentially equivalent formulation in terms of the differenced zeta function of a class in the ray class monoid. We show that \Cref{conj:stark2} (and thus \Cref{conj:stark1}) implies the statement we need. 
\begin{prop}\label{prop:stark3}
Assume \Cref{conj:stark2}.
Let $F$ be a real quadratic field and $\mm$ a nonzero integral $\OO_F$-ideal.
Let $\{\infty_1, \infty_2\}$ be the two real places of $F$ and $\{\rho_1, \rho_2\}$ the corresponding real embeddings.
Let $H = H_{\mm\infty_2}$, and let $\tilde\rho_1 : H \to \R$ be any embedding extending $\rho_1$.
Then, for all $\A \in \Clt_{\mm\infty_2}(\OO_F)$, 
there are elements $\e_{\!\A} \in \OO_H^\times$ such that 
\begin{equation}
\tilde\rho_1(\e_{\!\A}) = \exp\!\left(-Z_{\mm\infty_2}'(0,\A)\right), \label{eq:stark3}
\end{equation}
$(\Art(\B))(\e_{\!\A}) = \e_{\!\A\B}$ for $\B \in \Cl_{\mm\infty_2}(\OO_F)$, $\abs{\tilde\rho_2(\e_{\!\A})}=1$ for all $\tilde\rho_2 \mbox{ extending } \rho_2$, and $H(\e_{\A}^{1/2})$ is abelian over $F$.
For $\A \in \Cl_{\mm\infty_2}(\OO_F)$, these are the same $\e_\A$ as in \Cref{conj:stark2}.
\end{prop}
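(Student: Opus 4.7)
The plan is to derive \Cref{prop:stark3} from \Cref{conj:stark2} by extending the Stark construction from the ray class group to the ray class monoid. For an invertible class $\A \in \Cl_{\mm\infty_2}(\OO_F)$, I take $\e_\A$ to be the element provided by \Cref{conj:stark2}. The integrality $\e_\A \in \OO_H^\times$, the Galois compatibility $(\Art(\B))(\e_\A) = \e_{\A\B}$, the condition $|\tilde\rho_2(\e_\A)|=1$, and the abelian nature of $H(\e_\A^{1/2})/F$ all transfer verbatim. The only substantive verification is the identity $\tilde\rho_1(\e_\A) = \exp(-Z'_{\mm\infty_2}(0,\A))$, which, given \Cref{conj:stark2}'s formula $\tilde\rho_1(\e_\A) = \exp(-2\zeta'_{\mm\infty_2}(0,\A))$, reduces to the numerical equality $\zeta'_{\mm\infty_2}(0,\sR\A) = -\zeta'_{\mm\infty_2}(0,\A)$, or equivalently $Z'_{\mm\infty_2}(0,\A) = 2\zeta'_{\mm\infty_2}(0,\A)$. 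I expect this to follow from Shintani's explicit formula expressing $\zeta'(0,\A)$ as a sum of $\log\Gamma_2$-values, upon observing that replacing $\A$ by $\sR\A$ corresponds to the involution $z \mapsto \omega_1+\omega_2-z$ on the double-gamma arguments and then exploiting the relation $\log\SS_2(z) = \log\Gamma_2(\omega_1+\omega_2-z)-\log\Gamma_2(z)$, with careful tracking of the $\rho_2>0$ sign condition on the summation domains.

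For $\A \in \Clt_{\mm\infty_2}(\OO_F) \setminus \Cl_{\mm\infty_2}(\OO_F)$, the idea is to reduce to the invertible case at a smaller modulus via \Cref{prop:changemodzeta}: pick $\cc \in \rJ_\mm^\ast(\OO_F)$ with $\cc\A = [(\gamma_0)]$ and set $\dd = \mm+\gamma_0\OO_F$, $\mm' = \dd^{-1}\mm$, and $\A' = [\gamma_0\cc^{-1}\dd^{-1}] \in \Cl_{\mm'\infty_2}(\OO_F)$. I define $\e_\A := \e_{\A'}$, which lies in $\OO_{H_{\mm'\infty_2}}^\times \subseteq \OO_H^\times$ because $\mm' \mid \mm$ implies $H_{\mm'\infty_2} \subseteq H_{\mm\infty_2} = H$. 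Since $\zeta_{\mm'\infty_2}(0,\A') = 0$ (order of vanishing one at $s=0$), differentiating $\zeta_{\mm\infty_2}(s,\A) = \Nm(\dd)^{-s}\zeta_{\mm'\infty_2}(s,\A')$ at $s=0$ yields $\zeta'_{\mm\infty_2}(0,\A) = \zeta'_{\mm'\infty_2}(0,\A')$; the analogous reduction for $\sR\A$ gives $Z'_{\mm\infty_2}(0,\A) = Z'_{\mm'\infty_2}(0,\A')$, so the zeta formula transfers from the invertible case. The Galois compatibility $(\Art(\B))(\e_\A) = \e_{\A\B}$, the valuation condition at $\infty_2$, and the abelianness of $H(\e_\A^{1/2})/F$ all descend from their counterparts for $\A'$ via the surjection $\Gal(H_{\mm\infty_2}/F) \twoheadrightarrow \Gal(H_{\mm'\infty_2}/F)$ induced by the compatibility of Artin maps at different moduli.

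The main obstacle is the symmetry identity $\zeta'_{\mm\infty_2}(0,\sR\A) = -\zeta'_{\mm\infty_2}(0,\A)$ for invertible $\A$. Although expected from the double-sine framework---in which $Z'$ is naturally expressed via $\log\SS_2$ precisely so that the differenced zeta has a clean product formula---translating this to an identity on $\zeta'$ itself requires careful cancellation between two Shintani-type sums indexed by differing sign data. An attractive alternative is the Galois-theoretic route: because $\Art(\sR)$ generates the decomposition group at the places of $H$ above the ramified archimedean place $\infty_2$ and thus acts as ``complex conjugation at $\infty_2$,'' comparing $\tilde\rho_1(\e_{\sR\A}) = (\tilde\rho_1\circ\Art(\sR))(\e_\A)$ with the Stark formula applied at the shifted embedding $\tilde\rho_1\circ\Art(\sR)$ should produce the needed relation, perhaps after invoking the functional equation of the partial $L$-functions appearing in the character decomposition of $\zeta_{\mm\infty_2}(s,\A)$.
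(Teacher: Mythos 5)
Your overall structure is the same as the paper's, and the reduction via Proposition~\ref{prop:changemodzeta} for non-invertible classes is exactly what the paper does. However, there are two genuine problems.

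First, your reduction for $\A \in \Clt_{\mm\infty_2}(\OO_F) \setminus \Cl_{\mm\infty_2}(\OO_F)$ breaks down on the zero classes $\A \in \ZClt_{\mm\infty_2}(\OO_F)$. For such $\A$, any generator $\gamma_0$ of $\cc\A$ lies in $\mm$, so $\dd = \mm + \gamma_0\OO_F = \mm$ and $\mm' = \dd^{-1}\mm = \OO_F$. But Conjecture~\ref{conj:stark2} explicitly requires the modulus to be different from $\OO_F$, so $\e_{\A'}$ is not supplied by that conjecture and your definition $\e_\A := \e_{\A'}$ is vacuous. The paper resolves this by observing that for zero classes the reduced modulus is trivial, $\sR$ reduces to the identity in $\Cl_{\OO_F\infty_2}(\OO_F)$, so $Z_{\mm\infty_2}(s,\A) \equiv 0$, and one may simply set $\e_\A = 1$. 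You need to split this case off explicitly.

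Second, your primary route to the symmetry $\zeta'_{\mm\infty_2}(0,\sR\A) = -\zeta'_{\mm\infty_2}(0,\A)$ does not work as described. Replacing $\A$ by $\sR\A$ does send $z$ to $\omega_1+\omega_2-z$ in the double-gamma arguments, and the identity $\log\SS_2(z) = \log\Gamma_2(\omega_1+\omega_2-z) - \log\Gamma_2(z)$ then gives a clean expression for the \emph{difference} $\zeta'(0,\A) - \zeta'(0,\sR\A)$ --- that is Lemma~\ref{lem:z2primeof0diff} and Proposition~\ref{prop:Zprimesum}. What you actually need is vanishing of the \emph{sum} $\zeta'(0,\A) + \zeta'(0,\sR\A)$, which involves $\log\Gamma_2(z) + \log\Gamma_2(\omega_1+\omega_2-z)$; this does not collapse via the double-sine function and is logically independent of the $\SS_2$-telescoping. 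The paper obtains the symmetry from the functional equations of the Hecke $L$-functions appearing in the character decomposition of the partial zeta function, by reference to Tangedal. Your proposed ``Galois-theoretic alternative'' via functional equations is essentially the correct argument and should be promoted from a fallback to the main route; the $\Gamma_2$-involution argument should be dropped.
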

\begin{proof}
If $\A \in \ZClt_{\mm\infty_2}(\OO_F)$ (including the case $\mm = \OO_F$), then $\zeta_{\mm\infty_2}(s,\A)=\zeta_{\mm\infty_2}(s,\sR\A)$, so $Z_{\mm\infty_2}'(0,\A)=0$, and we may take $\e_{\!\A}=1$. Henceforth, assume $\A \nin \ZClt_{\mm\infty_2}(\OO_F)$.

We now consider $\A \in \Cl_{\mm\infty_2}(\OO_F)$. Standard results in the theory of $L$-functions give
\begin{align}
\zeta_{\mm\infty_2}(0,\sR\A) &= \zeta_{\mm\infty_2}(0,\A); \label{eq:zetaatzero1} \\
\zeta_{\mm\infty_2}'(0,\sR\A) &= -\zeta_{\mm\infty_2}'(0,\A). \label{eq:zetaatzero2}
\end{align}
Specifically, \eqref{eq:zetaatzero2} is proven as \cite[Prop.\ 5]{tangedal} by means of writing the partial zeta functions in terms of $L$-functions of finite-order Hecke characters, and \eqref{eq:zetaatzero1} is proven similarly.
Thus, under the assumption of \Cref{conj:stark2},
\begin{align}
\tilde\rho_1(\e_{\!\A}) 
&= \exp\!\left(-2\zeta_{\mm\infty_2}'(0,\A)\right) 
= \exp\!\left(-\zeta_{\mm\infty_2}'(0,\A) + \zeta_{\mm\infty_2}'(0,\sR\A)\right)
= \exp\!\left(-Z_{\mm\infty_2}'(0,\A)\right).
\end{align}

Finally, consider a general $\A \in \Clt_{\mm\infty_2}(\OO_F)$.
In the case when $\A = [\aa]$ with $\mm | \aa$, \eqref{eq:stark3} is trivially true with $\e_\A = 1$, and the Galois action is trivial. Otherwise, choose an ideal $\cc \in \rJ_\mm^\ast(\OO_F)$ such that $\cc\A = [(\gamma_0)]$ for some $\gamma_0 \in \OO_F$, and let $\dd = \mm + \gamma_0\OO_F$, $\mm' = \dd^{-1}\mm$, and $\A' = [\gamma\cc^{-1}\dd^{-1}] \in \Cl_{\mm,\rS}(\OO_F)$. 
By \Cref{prop:changemodzeta}, we have
\begin{equation}
Z_{\mm,\rS}(s,\A) = \Nm(\dd)^{-s} Z_{\mm',\rS}(s,\A').
\end{equation}
By \eqref{eq:zetaatzero1}, we also have $Z_{\mm',\rS}(0,\A')=0$; it follows that $Z_{\mm,\rS}'(0,\A) = Z_{\mm',\rS}'(0,\A')$.
Since $\A$ is \textit{not} of the form $\A = [\aa]$ with $\mm | \aa$, we know that $\mm' \neq \OO_F$.
Applying \Cref{conj:stark2} to $\A'$ and taking $\e_\A = \e_{\!\A'}$ proves the rest of the proposition.
\end{proof}

\section{Partial zeta functions at $s=0$ and continued fractions}\label{sec:partialzero}

In this section, we relate this value of a real quadratic ideal coset partial zeta function at $s=0$ to an RM value of the Shintani--Faddeev modular cocycle. That is, we demonstrate that a formula like that given in \Cref{thm:main} is true. We will need to tie up some loose ends to complete the proof of \Cref{thm:main} in \Cref{sec:completing}.

Our proof relies on Shintani's Kronecker limit formula for real quadratic fields, originally proven in \cite{shintani}. Many variants of this formula exist. Our presentation most closely follows Tangedal \cite{tangedal}, whose formula builds on earlier work of Zagier \cite{zagier, zagierfrench}, Arakawa \cite{arakawa}, Hayes \cite{hayes}, and Sczech \cite{sczechstark,sczech}. Yamamoto \cite{yamamoto1} independently developed a similar approach to Tangedal's; see Onodera \cite{onodera} for a comparison. 

In this section, to ease the burden of the notation, we will consider our real quadratic field $F$ to be embedded in $\R$ using the real embedding $\rho_1$, so $\rho_1(\beta) = \beta$ and $\rho_2(\beta) = \beta'$ for $\beta \in F$.

\subsection{Hirzebruch--Jung continued fractions}\label{sec:continuedfractions}

Tangedal's formulation of Shintani's formula uses a particular type of continued fraction expansion. Before discussing zeta functions, we establish some fundamental results about these continued fractions and their connection to expressions for elements of $\SL_2(\Z)$ in terms of the matrices $S = \smmattwo{0}{-1}{1}{0}$ and $T = \smmattwo{1}{1}{0}{1}$.

\begin{defn}
For $a_0, a_1, a_2, \ldots, a_k, \ldots$ real numbers, we denote
\begin{equation}
[a_0, a_1, a_2, \ldots, a_k]_- := a_0 - \dfrac{1}{a_1-\dfrac{1}{a_2-\dfrac{1}{\ddots-\dfrac{1}{a_k}}}}
\end{equation}
and
\begin{equation}
[a_0, a_1, a_2, \ldots]_- := \lim_{k\to\infty} [a_0, a_1, a_2, \ldots, a_k]_- = a_0 - \dfrac{1}{a_1-\dfrac{1}{a_2-\dfrac{1}{\ddots}}}
\end{equation}
provided the limit exists. In the special case when the $a_j$ are integers and $a_j \geq 2$ for all $j \geq 1$,
we call these expressions \textit{Hirzebruch--Jung continued fractions} or \textit{HJ continued fractions}.
For an irrational real number $\alpha$, its unique such expression of the form
\begin{equation}
\alpha = [a_0,a_1,a_2,\ldots]_-
\end{equation}
is its \textit{Hirzebruch--Jung (HJ) continued fraction expansion}. We will also use the notation $[a_0, a_1, \ldots, a_k, \ol{a_{k+1}, a_{k+2}, \ldots, a_{k+\ell}}]_-$ for a periodic Hirzebruch--Jung continued fraction.
\end{defn}
Our terminology follows Popescu--Pampu \cite{popescu}. These continued fractions play a crucial role in Hirzebruch's work on Hilbert modular surfaces \cite{hirzebruch} and were studied earlier by Jung in a related context \cite{jung}.
In the literature, HJ continued fractions are also called \textit{``-'' continued fractions} \cite{katok} or \textit{minus continued fractions} \cite{katoknotes}, \textit{backwards continued fractions} \cite{afback,blback}, \textit{negative-regular continued fractions} \cite{negreg}, \textit{reduced regular continued fractions} \cite{redreg}, and \textit{by-excess continued fractions} \cite{lm}. They are closely connected to the $\SL_2(\Z)$ reduction theory of indefinite integral binary quadratic forms. Proofs of the fundamental properties of HJ continued fractions and their connection to reduction theory are available in the work of Svetlana Katok \cite{katok, katoknotes}.

\begin{prop}\label{prop:quadhj}
Let $\beta$ be a real quadratic number. Then, $\beta$ has a periodic HJ continued fraction expansion
\begin{equation}
\beta = [a_0, \ldots, a_k, \ol{b_1, \ldots, b_\ell}]_-.
\end{equation}
This expression is purely periodic if and only if $0<\beta'<1<\beta$.
\end{prop}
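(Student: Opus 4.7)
The plan is to follow the standard template for proving periodicity of quadratic continued fractions, adapted from the regular continued fraction case to the HJ (minus) case. All the ingredients are classical and appear in Katok's treatment \cite{katok, katoknotes}, which is already cited elsewhere in this section.

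First I would set up the shift map of the HJ algorithm. Given $\beta_0 = \beta$ irrational, define inductively $a_n = \lceil \beta_n \rceil$ and $\beta_{n+1} = 1/(a_n - \beta_n)$, which is well-defined and gives $\beta_{n+1} > 1$ whenever $\beta_n$ is irrational. A short induction shows $\beta = [a_0, a_1, a_2, \ldots]_-$ in the sense of convergence of the truncations. The key algebraic observation is that $\beta_{n+1}$ is obtained from $\beta_n$ by a matrix in $\SL_2(\Z)$, namely the inverse of $\smmattwo{a_n}{-1}{1}{0}$ acting by fractional linear transformation, so each $\beta_n$ lies in $\Q(\beta)$ and is a root of an integral quadratic polynomial with the same discriminant $D$ as $\beta$.

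Next I would prove periodicity. Writing $\beta_n = (P_n + \sqrt{D})/Q_n$ in lowest terms with $Q_n \mid D - P_n^2$, the inequalities $\beta_n > 1$ together with $\beta_n' < 0$ (which I would verify holds for all $n \geq 1$, since applying the Galois automorphism to the recursion gives $\beta_{n+1}' = 1/(a_n - \beta_n') \in (0,1)$ and consequently $\beta_{n+2}' < 0$; one also needs to argue it from the outset by examining the first step) yield uniform bounds $|P_n|, |Q_n| \leq C(D)$. Hence the sequence $(\beta_n)_{n \geq N}$ takes only finitely many values, so some $\beta_m = \beta_{m+\ell}$, producing eventual periodicity $\beta = [a_0, \ldots, a_k, \ol{b_1, \ldots, b_\ell}]_-$.

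For the reduced characterization of pure periodicity, I would show that $0 < \beta' < 1 < \beta$ is preserved by the shift $\beta \mapsto 1/(\lceil\beta\rceil - \beta)$: if $\beta > 1$ and $0 < \beta' < 1$, then $\lceil\beta\rceil - \beta' > 0$ and $\lceil\beta\rceil - \beta' < \lceil\beta\rceil < $ a bound that gives $(\lceil\beta\rceil - \beta')^{-1} \in (0,1)$, verifying the condition for the next term. Pure periodicity then follows by combining finiteness with this invariance, since it forces the first $\beta_n$ to recur. For the converse, a purely periodic expansion $\beta = [\ol{b_1, \ldots, b_\ell}]_-$ exhibits $\beta$ as a fixed point of a hyperbolic matrix $B = \prod_i \smmattwo{b_i}{-1}{1}{0}$, and a direct analysis of such a fixed point (comparing the two eigenvalues of $B$ and their corresponding eigenlines) shows $\beta > 1$ and $0 < \beta' < 1$.

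The main obstacle is not any single deep step but rather the careful bookkeeping of signs and inequalities in the minus-continued-fraction setting, where (unlike the regular case) $\beta_n$ is always $> 1$ and Galois conjugates satisfy $\beta_n' \in (0,1)$ at reduced points rather than the familiar $\beta' \in (-1, 0)$; getting the inequalities correct when verifying shift-invariance of the reduced condition requires some care. I would lean heavily on Katok's \cite[Thm.\ 1.3, Thm.\ 1.4]{katoknotes}, which essentially contain this statement in the language of reduction theory, and sketch the bijective correspondence between purely periodic HJ expansions and reduced quadratic numbers rather than redo the full argument.
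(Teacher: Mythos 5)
The paper's proof of this proposition is literally the single citation ``See \cite[Thm.\ 1.3 and Thm.\ 1.4]{katoknotes},'' which you also invoke at the end, so the approach matches; your sketch fleshes out what that citation proves.

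However, your middle paragraph contains a self-contradictory sign claim. You write that you would ``verify $\beta_n' < 0$ holds for all $n \geq 1$,'' but the parenthetical justification correctly derives $\beta_{n+1}' = 1/(a_n - \beta_n') \in (0,1)$, hence $\beta_{n+1}' > 0$, not $< 0$; and the follow-on ``consequently $\beta_{n+2}' < 0$'' is false, since from $\beta_{n+1}' \in (0,1)$ and $a_{n+1} \geq 2$ one gets $a_{n+1} - \beta_{n+1}' > 1$ and hence $\beta_{n+2}' \in (0,1)$ again. For minus (HJ) continued fractions the invariant region for the Galois conjugate under the shift is $(0,1)$, not $(-\infty,0)$---this is precisely the reduced condition $0 < \beta' < 1 < \beta$ appearing in the statement, and it is the inequality that the finiteness-of-reduced-forms bound should be fed. (One also has to argue that $\beta_n'$ \emph{enters} $(0,1)$ eventually; the usual way is to note that the shift strictly expands $|\beta_n - \beta_n'|$ whenever $\beta_n$ and $\beta_n'$ lie in the same unit interval $(a_n-1,a_n)$, which cannot persist.) Your closing paragraph explicitly flags this pitfall, so this looks like a slip rather than a conceptual gap, but as written the middle paragraph asserts $\beta_n' < 0$ and $\beta_{n+1}' \in (0,1)$ for the same sequence, which is incoherent. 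Replacing $\beta_n' < 0$ with $\beta_n' \in (0,1)$ for $n$ sufficiently large repairs the sketch and aligns it with what Katok proves.
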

\begin{proof}
See \cite[Thm.\ 1.3 and Thm.\ 1.4]{katoknotes}.
\end{proof}

\begin{defn}\label{defn:fracpart}
If $\r = \smcoltwo{r_1}{r_2}$, set $\{\r\} := \smcoltwo{r_1-\floor{r_1}-1}{r_2-\floor{r_2}}$. That is, $\{\r\} \con \r \Mod{1}$, and $\{\r\} = \r$ if and only if $-1 \leq r_1 < 0$ and $0 \leq r_2 < 1$.
\end{defn}

\begin{defn}\label{defn:cycledata}
Let $\beta$ be a real quadratic number satisfying $0<\beta'<1<\beta$, and let $\r \in \frac{1}{N}\Z^2 \setminus \Z^2$ satisfying $\{\r\}=\r$. The \textit{associated HJ cycle data} are:
\begin{itemize}
\item positive integers $k, \ell$,
\item integers $b_n \geq 2$, $n \in \Z/\ell\Z$,
\item real quadratic numbers $\beta_n$ for $n \in \Z/\ell\Z$,
\item matrices $P \in \SL_2(\Z)$, $A \in \Gamma_\r$,
\item matrices $A_{m,n} \in \SL_2(\Z)$ for $m,n \in \Z$,
\item $\r_n \in \frac{1}{N}\Z^2 \setminus \Z^2$ for $n \in \Z/k\ell\Z$, and
\item real quadratic numbers $w_n$ for $n \in \Z/k\ell\Z$.
\end{itemize}
(Sometimes we will use the term ``HJ cycle data'' while only requiring a subset of this data.) They are defined as follows.
The number $\beta$ has a purely periodic Hirzeburch--Jung continued fraction expansion
\begin{equation}
\beta = [\ol{b_0, b_1, \ldots, b_{\ell-1}}]_-
\end{equation}
of period $\ell$; treat the indices as elements of $\Z/\ell\Z$, so $b_{n+\ell} = b_n$.
Let $\beta=\beta_0, \beta_1, \ldots, \beta_{\ell-1}$ be the real numbers with Hirzebruch--Jung continued fraction expansions given by the cyclic permutations of the Hirzebruch--Jung continued fraction expansion of $\beta$; that is,
\begin{equation}
\beta_n = [\ol{b_n, b_{n+1} \ldots, b_{n+\ell-1}}]_-.
\end{equation}
Let $P = T^{b_0}ST^{b_1}S \cdots T^{b_{\ell-1}}S$, so $P\cdot\beta = \beta$ and $\langle -I, P \rangle$ is the stabilizer of $\beta$ under of the $\SL_2(\Z)$-action by fractional linear transformations. 
Choose $k \in \N$ so that $A = P^k$ is the smallest positive power of $P$ in $\Gamma_\r$. 
Define the matrices $A_{m,n}$ for $m,n \in \Z$ by
\begin{equation}
A_{m,n} = 
\begin{cases}
T^{b_m}ST^{b_{m+1}}S \cdots T^{b_{n-1}}S & \mbox{for } m < n, \\
I & \mbox{for } m=n, \\
\left(T^{b_n}ST^{b_{n+1}}S \cdots T^{b_{m-1}}S\right)^{-1} & \mbox{for } m > n.
\end{cases}
\end{equation}
Thus, $A_{n_1,n_2}A_{n_2,n_3} = A_{n_1,n3}$, $A_{0,\ell}=P$, $A_{0,k\ell}=A$, $\beta_m = A_{m,n}\cdot\beta_n$, and in particular $\beta_n = A_{n,0}\cdot\beta = A_{0,n}^{-1}\cdot\beta$.

Let $\r_n = \{A_{n,0}\r\}$, and let $w_n = \sympt{\r_n}{\beta_n}$. The index $n \in \Z/k\ell\Z$.
\end{defn}

\begin{lem}\label{lem:betajs}
Let $\beta$ be a real quadratic number with $0<\beta'<1<\beta$, and let $\beta_n$ and $A_{m,n}$ be associated HJ cycle data, as in $\Cref{defn:cycledata}$. Then,
\begin{equation}\label{eq:betaprodj}
j_{\!A_{m,n}}(\beta_n) = 
\begin{cases}
\beta_{m+1} \cdots \beta_{n} & \mbox{if } m < n, \\
1 & \mbox{if } m=n, \\
\left(\beta_{n+1} \ldots \beta_m\right)^{-1} & \mbox{if } m>n.
\end{cases}
\end{equation}
\end{lem}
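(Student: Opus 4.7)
The plan is to prove this by induction on $|n-m|$, using the cocycle property of the automorphy factor $j_A(\tau)$ together with the base case relating $\beta_m$ to $\beta_{m+1}$ via the elementary matrix $T^{b_m} S$.

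First I would establish the base case $n = m+1$. By the definition of the HJ cycle data, $A_{m,m+1} = T^{b_m} S = \smmattwo{b_m}{-1}{1}{0}$, so $j_{A_{m,m+1}}(\tau) = \tau$ for any $\tau$, and in particular $j_{A_{m,m+1}}(\beta_{m+1}) = \beta_{m+1}$. This matches the right-hand side of \eqref{eq:betaprodj} with $n = m+1$. Along the way I would verify (essentially from the periodicity of the HJ expansion) the key identity $A_{m,m+1} \cdot \beta_{m+1} = \beta_m$, equivalently $\beta_m = b_m - 1/\beta_{m+1}$, which is immediate from the HJ continued fraction expansions of $\beta_m$ and $\beta_{m+1}$.

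Next I would carry out the inductive step for $m < n$. Using $A_{m,n} = A_{m,m+1} A_{m+1,n}$ and the cocycle relation $j_{AB}(\tau) = j_A(B\cdot\tau) j_B(\tau)$ (stated in Section 2.2 of the paper), together with $A_{m+1,n} \cdot \beta_n = \beta_{m+1}$, I get
\begin{equation}
j_{A_{m,n}}(\beta_n) = j_{A_{m,m+1}}(\beta_{m+1}) \cdot j_{A_{m+1,n}}(\beta_n) = \beta_{m+1} \cdot j_{A_{m+1,n}}(\beta_n).
\end{equation}
The inductive hypothesis gives $j_{A_{m+1,n}}(\beta_n) = \beta_{m+2} \cdots \beta_n$, and the desired product formula follows. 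The case $m = n$ is trivial since $A_{m,m} = I$. Finally, for $m > n$, I would use $A_{m,n} = A_{n,m}^{-1}$ combined with the identity $j_{A^{-1}}(\tau) = j_A(A^{-1}\cdot\tau)^{-1}$ (which follows from $j_I = 1$ and the cocycle relation applied to $A A^{-1} = I$), giving
\begin{equation}
j_{A_{m,n}}(\beta_n) = j_{A_{n,m}}(\beta_m)^{-1} = (\beta_{n+1} \cdots \beta_m)^{-1}
\end{equation}
by the already-proved case $n < m$.

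There is no significant obstacle here; the argument is a routine induction using the cocycle property of $j$. The only point requiring mild care is keeping track of the two elementary facts $j_{T^b S}(\tau) = \tau$ and $\beta_m = b_m - 1/\beta_{m+1}$, both of which are immediate from the definitions in \Cref{defn:cycledata}.
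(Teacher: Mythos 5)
Your proof is correct and uses essentially the same approach as the paper: establish the recursion $j_{A_{m,n}}(\beta_n)=\beta_{m+1}\,j_{A_{m+1,n}}(\beta_n)$ via the cocycle property of $j$ and the decomposition $A_{m,n}=T^{b_m}S\,A_{m+1,n}$, then induct. The only cosmetic difference is in the $m>n$ case, where you reduce to the $m<n$ case via $j_{A^{-1}}(\tau)=j_{A}(A^{-1}\cdot\tau)^{-1}$ whereas the paper simply runs the same recursion in the other direction by induction on $m-n$; both are immediate and equivalent.
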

\begin{proof}
If $m=n$, \eqref{eq:betaprodj} holds because both sides are equal to $1$. We will prove the recursion $j_{\!A_{m,n}}(\beta_n) = \beta_{m+1}j_{\!A_{m+1,n}}(\beta_n)$; then \eqref{eq:betaprodj} will follow by induction on $n-m$ (for $m<n$) and induction on $m-n$ (for $m>n$).

To prove the recursion, use the fact that $A_{m,n} = T^{b_m} S A_{m+1,n}$ and the cocycle property of $j$. We have:
\begin{align}
j_{\!A_{m,n}}(\beta_n) 
&= j_{T^{b_m} S A_{m+1,n}}(\beta_n) \\
&= j_{T^{b_m} S}(A_{m+1,n}\cdot\beta_n)j_{\!A_{m+1,n}}(\beta_n) \\
&= j_{\smmattwo{b_m}{-1}{1}{0}}j_{\!A_{m+1,n}}(\beta_n) \\
&= \beta_{m+1}j_{\!A_{m+1,n}}(\beta_n). \tag*{\qedhere}
\end{align}
\end{proof}

\begin{lem}\label{lem:betaunit}
Let $\beta$ be a real quadratic number with $0<\beta'<1<\beta$, and let $\beta_n$ be associated HJ cycle data, as in $\Cref{defn:cycledata}$. Let $\e>1$ be the smallest totally positive unit greater than 1 in $\Q(\beta)$ such that $\e(\beta\Z + \Z) = \beta\Z+\Z$ (that is, $\e$ is the fundamental totally positive unit of the muliplier ring of $\beta\Z+\Z$). Then, the product
\begin{equation}
\beta_0 \cdots \beta_{\ell-1} = \e.
\end{equation}
\end{lem}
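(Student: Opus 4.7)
The plan is to identify the product $\beta_0 \cdots \beta_{\ell-1}$ with the automorphy factor $j_P(\beta)$, then to recognize $P$ as the canonical positive generator $A_\beta^+$ of $\stab_{\SL_2(\Z)}(\beta)/\{\pm I\}$, and finally to invoke the already-proved characterization of $j[\beta]$ as the fundamental totally positive unit of $\ord(\beta\Z+\Z)$.

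First, I would apply Lemma \ref{lem:betajs} with $m=0$ and $n=\ell$. Since $A_{0,\ell}=P$ and $\beta_\ell = \beta_0 = \beta$ (by periodicity of the HJ expansion), the lemma gives
\begin{equation}
j_P(\beta) = j_{\!A_{0,\ell}}(\beta_\ell) = \beta_1\beta_2\cdots\beta_\ell = \beta_0\beta_1\cdots\beta_{\ell-1},
\end{equation}
where in the last step I used $\beta_\ell = \beta_0$.

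Next, I would show that $P = A_\beta^+$ in the sense of Definition \ref{defn:generator} for $\Gamma = \SL_2(\Z)$. By construction $P\cdot\beta=\beta$ and $\langle -I,P\rangle = \stab_{\SL_2(\Z)}(\beta)$, so $P$ satisfies condition (1). For condition (2), I need the eigenvalue $\lambda$ of $P$ with eigenvector $\smcoltwo{\beta}{1}$ to satisfy $\lambda > 1$; but $\lambda = j_P(\beta) = \beta_0\cdots\beta_{\ell-1}$, and each $\beta_n = [\overline{b_n,b_{n+1},\ldots}]_-$ with $b_n \geq 2$ satisfies $\beta_n > 1$ (a standard consequence of the HJ expansion, provable by the same argument that shows purely-periodic HJ continued fractions are reduced). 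Hence $\lambda > 1$, confirming $P = A_\beta^+$.

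Finally, by Lemma \ref{lem:jeval} applied with $\Gamma = \SL_2(\Z)$ we have $j[\beta] = \lambda = j_P(\beta)$, and by Proposition \ref{prop:jevalreal} the value $j[\beta]$ is a generator of $\OO^\times_+$ where $\OO = \colonideal{\beta\Z+\Z}{\beta\Z+\Z}$. Since $\OO^\times_+$ is infinite cyclic with unique generator greater than $1$, namely the fundamental totally positive unit $\e$, we conclude $\lambda = \e$ and therefore $\beta_0\cdots\beta_{\ell-1} = \e$. I don't anticipate a serious obstacle: the main points are purely bookkeeping against results already established in the paper. The only place requiring a small separate argument is the bound $\beta_n > 1$, which I would handle by a one-line estimate from the HJ recursion (or by citing the relevant statement in Katok \cite{katoknotes} on which Proposition \ref{prop:quadhj} is based).
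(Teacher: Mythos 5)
Your argument is correct and follows exactly the route the paper intends: the paper's own proof is the one-line citation ``Follows from \Cref{lem:betajs} and \Cref{prop:jevalreal},'' and you have correctly unpacked that chain (identify $\beta_0\cdots\beta_{\ell-1}$ with $j_P(\beta)$ via \Cref{lem:betajs}, verify $P=A_\beta^+$, then apply \Cref{lem:jeval} and \Cref{prop:jevalreal}). The bookkeeping point you flag---that each $\beta_n>1$---is most cleanly dispatched by noting that $\beta_n$ has a purely periodic HJ expansion by construction, so $0<\beta_n'<1<\beta_n$ by \Cref{prop:quadhj}.
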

\begin{proof}
Follows from \Cref{lem:betajs} and \Cref{prop:jevalreal}.
(Note that this lemma is also stated in \cite[p.\ 215]{hirzebruch}.)
\end{proof}

\begin{prop}\label{prop:betatogamma}
Let $\beta$ be a real quadratic number, and write
\begin{equation}\label{eq:betahj}
\beta = [a_0, \ldots, a_k, \ol{b_0, \ldots, b_{\ell-1}}]_-.
\end{equation}
Then the 2-by-2 matrix
\begin{equation}
A = (T^{a_0}S \cdots T^{a_k}S) (T^{b_0}S \cdots T^{b_{\ell-1}}S) (T^{a_0}S \cdots T^{a_k}S)^{-1}
\end{equation}
defines an element of $\SL_2(\Z)$ 
with attracting fixed point $\beta$.
If HJ-expansion \eqref{eq:betahj} for $\beta$ is primitive, then $\stab_{\SL_2(\Z)}(\beta) = \langle -I, A \rangle$.
\end{prop}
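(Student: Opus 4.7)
The plan is to reduce everything to the purely periodic case. Set $Q = T^{a_0}S \cdots T^{a_k}S$ and $P = T^{b_0}S \cdots T^{b_{\ell-1}}S$, so that $A = QPQ^{-1}$ is manifestly in $\SL_2(\Z)$. Let $\gamma = [\ol{b_0,\ldots,b_{\ell-1}}]_-$ be the purely periodic tail. By \Cref{prop:quadhj}, $0<\gamma'<1<\gamma$, so the purely periodic setting of \Cref{defn:cycledata} applies to $\gamma$ (with $\beta_n$ there replaced by $\gamma_n = [\ol{b_n,\ldots,b_{n+\ell-1}}]_-$).

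First I would establish two computational identities. Using the basic relation $T^aS\cdot x = a - 1/x$, whose inverse acts as $(T^aS)^{-1}\cdot y = 1/(a-y)$, the continued fraction recurrence $\gamma_n = b_n - 1/\gamma_{n+1}$ rearranges to $(T^{b_n}S)^{-1}\cdot \gamma_n = \gamma_{n+1}$. Composing these for $n=0,\ldots,\ell-1$ and using $\gamma_\ell = \gamma_0 = \gamma$ yields $P^{-1}\cdot\gamma=\gamma$, hence $P\cdot\gamma = \gamma$. By the identical telescoping applied to the pre-period, $Q^{-1}\cdot\beta = \gamma$, i.e., $\beta = Q\cdot\gamma$. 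Combining gives $A\cdot\beta = QPQ^{-1}\cdot\beta = QP\cdot\gamma = Q\cdot\gamma = \beta$.

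Next I would identify the eigenvalue of $A$ as the attracting one. The cocycle relation for $j$ together with \Cref{lem:betajs} (applied to the cycle data of $\gamma$) gives $j_P(\gamma) = \gamma_0\gamma_1\cdots\gamma_{\ell-1}$. Each $\gamma_n$ is purely periodic, hence $\gamma_n>1$, so this product exceeds $1$. Thus $P$ has attracting fixed point $\gamma$ with eigenvalue $\lambda := \gamma_0\cdots\gamma_{\ell-1}>1$; by conjugation $A$ has attracting fixed point $\beta$ with the same eigenvalue $\lambda$.

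For the stabilizer statement, write $Q = \smmattwo{p}{q}{r}{s}$ and set $\mu = r\gamma + s \in \Q(\beta)^\times$. Then $\mu\beta = p\gamma+q$, and since $\det Q = 1$, the basis change gives $\mu(\beta\Z+\Z) = \gamma\Z+\Z$. Therefore $\ord(\beta\Z+\Z) = \ord(\gamma\Z+\Z) =: \OO$. By \Cref{lem:betaunit}, $\lambda$ is the fundamental totally positive unit of $\OO$ when the purely periodic expansion of $\gamma$ has minimal period; primitivity of the HJ expansion of $\beta$ is precisely this minimality condition on $\ell$. By \Cref{prop:jevalreal}, the generator $A_\gamma^+$ of $\stab_{\SL_2(\Z)}(\gamma)/\{\pm I\}$ has $j_{A_\gamma^+}(\gamma)$ equal to the fundamental totally positive unit of $\OO$; since $\lambda$ equals this unit, $P = A_\gamma^+$ (modulo $\pm I$), and conjugating by $Q$ yields $\stab_{\SL_2(\Z)}(\beta) = \langle -I, A\rangle$.

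The main obstacle is bookkeeping in the stabilizer argument: one must carefully verify that primitivity of the mixed (pre-period plus period) HJ expansion of $\beta$ forces the minimal period of the purely periodic $\gamma$ to equal $\ell$, and that the resulting $\lambda$ is genuinely the fundamental unit rather than a proper power. Everything else is a direct unwinding of the continued fraction recurrence and an application of the cited lemmas.
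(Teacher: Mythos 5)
Your proof is correct and takes essentially the same approach as the paper, but packaged slightly differently. Both arguments first verify $A \cdot \beta = \beta$ by telescoping the continued-fraction recurrence, then identify the eigenvalue with the product $\gamma_0 \cdots \gamma_{\ell-1}$, and finally invoke the fundamental-unit identification (\Cref{lem:betaunit} together with \Cref{prop:jevalreal}) to pin down the stabilizer.

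The differences are organizational. You conjugate by $Q$ to reduce everything to the purely periodic tail $\gamma$, establishing $P \cdot \gamma = \gamma$ and $\beta = Q \cdot \gamma$ separately and pulling the conclusion back through the change of basis $\mu(\beta\Z+\Z) = \gamma\Z+\Z$. The paper instead keeps the mixed expansion in hand throughout, using the one-line identity $T^nS\smcoltwo{\alpha}{1} = \alpha\smcoltwo{[n,\alpha]_-}{1}$ to compute $A\smcoltwo{\beta}{1} = \e\smcoltwo{\beta}{1}$ directly, with the pre-period factors $\alpha_1,\ldots,\alpha_{k+1}$ cancelling telescopically. For the stabilizer, you argue $P = A_\gamma^+$ by matching eigenvalues via \Cref{prop:jevalreal} and then conjugate; the paper instead explicitly parametrizes $\stab_{\SL_2(\R)}(\beta)$ as a one-parameter group and shows the integrality constraint forces $v \in (\log\e)\Z$. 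Both routes ultimately rest on $\e$ being the \emph{fundamental} totally positive unit of $\colonideal{\beta\Z+\Z}{\beta\Z+\Z}$, which is supplied by \Cref{lem:betaunit}. Your explicit reduction to the purely periodic case makes the role of that lemma more transparent; the paper's direct telescoping is marginally more compact.

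The ``obstacle'' you flag --- that primitivity of the mixed HJ expansion of $\beta$ forces $\ell$ to be the minimal period of $\gamma$ --- is not a real gap: uniqueness of the HJ expansion of a real quadratic number means the minimal period of the tail is intrinsic to $\gamma$, and primitivity of \eqref{eq:betahj} is precisely the statement that $\ell$ is that minimal period. So the bookkeeping you worry about is a tautology once ``primitive'' is unwound, and your proof goes through.
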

\begin{proof}
For any $\alpha \in \R$, a direct calculation shows that
\begin{equation}\label{eq:onetee}
T^n S\smcoltwo{\alpha}{1} = \alpha\smcoltwo{[n,\alpha]_-}{1}.
\end{equation}
Set $b_i := b_{i \Mod{\ell}}$ for all $i \in \Z$, and define the real numbers
\begin{align}
\alpha_n &= [a_n, \ldots, a_k, \ol{b_0, \ldots, b_{\ell-1}}]_- \mbox{ and}\\ 
\beta_n &= [\ol{b_n, \ldots, b_{n+\ell-1}}]_-.
\end{align}
Applying \eqref{eq:onetee} repeatedly (using induction), 
\begin{align}
A \smcoltwo{\beta}{1} 
&= (\alpha_1\cdots\alpha_{k+1})(\beta_1\cdots\beta_{\ell})(\alpha_{k+1}^{-1}\cdots\alpha_1^{-1})\smcoltwo{\beta}{1} 
= \e\smcoltwo{\beta}{1},
\end{align}
where $\e = \beta_1\cdots\beta_{\ell} = \beta_0\cdots\beta_{\ell-1} > 1$ is a generator of the totally positive unit group of $\OO^\times$ for the real quadratic order $\OO = \colonideal{\beta\Z+\Z}{\beta\Z+\Z}$ by \Cref{lem:betaunit}. 
The group $\stab_{\SL_2(\Z)}(\beta)$ is a discrete subgroup of
\begin{equation}
\stab_{\SL_2(\R)}(\beta) = \left\{\pm\smmattwo{\beta}{\beta'}{1}{1}\smmattwo{e^v}{0}{0}{e^{-v}}\smmattwo{\beta}{\beta'}{1}{1}^{-1} : v \in \R\right\}.
\end{equation}
The eigenvalues of $A$ are $\e$ and $\e'=\e^{-1}$, corresponding to $v=\log{\e}$.
A general element of the stabilizer looks like
\begin{equation}
\pm\smmattwo{\beta}{\beta'}{1}{1}\smmattwo{e^v}{0}{0}{e^{-v}}\smmattwo{\beta}{\beta'}{1}{1}^{-1} 
= \frac{\pm 1}{\beta - \beta'}\smmattwo{\beta e^v - \beta' e^{-v}}{-\beta\beta'(e^v-e^{-v})}{e^v-e^{-v}}{-(\beta' e^v - \beta e^{-v})},
\end{equation}
so if it is contained in $\SL_2(\Z)$, then $e^v - e^{-v} = (\beta - \beta')m$ for some $m \in \Z$, which is only possible if $v \in (\log \e)\Z$. 
Thus, $\stab_{\SL_2(\Z)}(\beta) = \pm \langle A \rangle = \langle -I, A \rangle$.
\end{proof}

The following lemma relates the HJ continued fraction expansion of a real quadratic number $\beta$ with that of its nontrivial Galois conjugate $\beta'$.
\begin{lem}\label{lem:hjconj}
Let $\beta$ be a real quadratic number with purely periodic Hirzebruch--Jung continued fraction expansion
\begin{equation}
\beta = [\ol{\{2\}^{n_0}, m_1+3, \{2\}^{n_1}, \ldots, m_k+3, \{2\}^{n_k}}]_-,
\end{equation}
for integers $m_j, n_j \geq 0$, where the notation $\{2\}^n$ stands for $n$ consecutive $2$s.
Then, the Hirzebruch--Jung continued fraction expansion of $\beta'$ is
\begin{equation}
\beta' = [1,n_k+2,\ol{\{2\}^{m_k},n_{k-1}+3,\{2\}^{m_{k-1}}, \ldots, n_1+3, \{2\}^{m_1},n_0+n_k+3}]_-.
\end{equation}
\end{lem}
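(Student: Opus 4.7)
The strategy is to verify the claimed HJ expansion of $\beta'$ by way of an explicit matrix identity in $\SL_2(\Z)$. By \Cref{prop:quadhj}, the hypothesis that $\beta$ has a purely periodic HJ expansion forces $0 < \beta' < 1 < \beta$, so the HJ algorithm applied to $\beta'$ outputs $\lceil \beta' \rceil = 1$ as its initial digit and then reduces to computing the HJ expansion of $\alpha_1 := 1/(1-\beta') > 1$. It suffices, therefore, to show
\begin{equation}
\alpha_1 = [n_k+2, \ol{\{2\}^{m_k}, n_{k-1}+3, \ldots, n_1+3, \{2\}^{m_1}, n_0+n_k+3}]_-. \label{eq:alphaoneexp}
\end{equation}

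Let $P := T^{a_0}S \cdots T^{a_{\ell-1}}S$ with $(a_0,\ldots,a_{\ell-1}) = (\{2\}^{n_0}, m_1+3, \{2\}^{n_1}, \ldots, m_k+3, \{2\}^{n_k})$, so that by \Cref{prop:betatogamma}, $P$ fixes $\beta$ attractingly and $\beta'$ repellingly. Let $Q := T^{c_0}S \cdots T^{c_{p-1}}S$ with $(c_0,\ldots,c_{p-1}) = (\{2\}^{m_k}, n_{k-1}+3, \ldots, n_1+3, \{2\}^{m_1}, n_0+n_k+3)$, and set $M := (TS)^{-1} = \smmattwo{0}{1}{-1}{1} \in \SL_2(\Z)$, so that $M \cdot \beta' = \alpha_1$. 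The matrix $\widetilde P := M P^{-1} M^{-1}$ then fixes $\alpha_1$ attractingly. The crux of the proof is the $\SL_2(\Z)$ identity
\begin{equation}
\widetilde P \;=\; (T^{n_k+2}S)\,Q\,(T^{n_k+2}S)^{-1}. \label{eq:keymatrixid}
\end{equation}
Granting \eqref{eq:keymatrixid}, the right-hand side has attracting fixed point $(T^{n_k+2}S)\cdot\delta_Q$, where $\delta_Q$ denotes the attracting fixed point of $Q$; matching attracting fixed points yields $\alpha_1 = n_k+2 - 1/\delta_Q$. Because every $c_i \geq 2$ and $c_{p-1} = n_0+n_k+3 \geq 3$, one checks that $0 < \delta_Q' < 1 < \delta_Q$, so by \Cref{prop:quadhj} together with the construction of $Q$, the HJ expansion of $\delta_Q$ is precisely $[\ol{c_0,\ldots,c_{p-1}}]_-$; hence $\alpha_1 = [n_k+2, \ol{c_0,\ldots,c_{p-1}}]_-$, which is \eqref{eq:alphaoneexp}.

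The identity \eqref{eq:keymatrixid} will be proved by induction on $k$, the number of $\geq 3$ entries in the period of $\beta$. The base case $k = 1$ is a finite matrix computation (e.g., for $n_0 = n_1 = m_1 = 0$ one checks directly that both sides equal $\smmattwo{2}{1}{1}{1}$). For the inductive step, I will use ``Riemenschneider collapsing identities'' in $\SL_2(\Z)$ --- local $\SL_2(\Z)$-word identities that exchange a block $(T^2S)^m \cdot T^{n+3}S$ for its dual $T^{m+3}S \cdot (T^2S)^n$ up to a controlled boundary word --- reflecting the classical duality between the Hirzebruch--Jung expansions of $p/q$ and $p/(p-q)$. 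Applied iteratively to the bulk ``$\geq 3$'' entries of the word for $\widetilde P = M P^{-1} M^{-1}$, these local moves transform it into the conjugate of $Q$ on the right of \eqref{eq:keymatrixid}. The main obstacle is the cyclic boundary bookkeeping: the terminal entry $n_0+n_k+3$ in the period of $\alpha_1$ arises because the factors $M = (TS)^{-1}$ and $M^{-1} = TS$ on either side of $P^{-1}$ cyclically link the ``first'' $2$-run $\{2\}^{n_0}$ and the ``last'' $2$-run $\{2\}^{n_k}$ appearing in $P^{-1}$'s reversed word, and these two blocks fuse into a single entry of size $n_0+n_k+3$ under one final application of the collapsing identity at the boundary. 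This boundary fusion also explains the specific conjugator $T^{n_k+2}S$ on the right of \eqref{eq:keymatrixid}. Synchronizing the bulk rewrites with this boundary collapse, while maintaining exact $\SL_2(\Z)$-word equalities at every step, is the principal technical challenge; the individual local rewrites are otherwise routine.
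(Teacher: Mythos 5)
Your setup is correct and matches the paper's: by \Cref{prop:quadhj} the initial digit of the HJ expansion of $\beta'$ is $1$; using \Cref{prop:betatogamma} both $\beta$ and $\beta'$ are characterized as attracting fixed points of explicit words in $T$ and $S$; and the required matrix identity — your \eqref{eq:keymatrixid}, equivalently $P^{-1} = (TST^{n_k+2}S)\,Q\,(TST^{n_k+2}S)^{-1}$ — is exactly what the paper establishes (there $P$ is called $A$ and $Q$ is called $D$). I verified your key identity in the case $k=1$, $n_0=n_1=m_1=0$; it holds. So the architecture of your argument is sound.

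However, the proposal does not actually \emph{prove} the key identity; it only sketches a strategy, and the sketch has gaps that prevent it from being a proof as written. First, your ``base case $k=1$'' is not a finite matrix computation: $n_0,n_1,m_1$ range over all nonnegative integers, so even the base case is a three-parameter infinite family and needs an argument of its own. Second, the ``Riemenschneider collapsing identities'' are never stated, nor is their applicability to $MP^{-1}M^{-1}$ verified; you invoke them as a black box. Third — and you are honest about this — the ``boundary fusion'' producing the entry $n_0+n_k+3$ and the conjugator $T^{n_k+2}S$ is flagged as ``the principal technical challenge'' but left unresolved. Until that bookkeeping is carried out concretely, the proof is incomplete. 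The paper sidesteps both the induction and the boundary bookkeeping by a single, non-inductive chain of word rewrites: it writes the word for $A$ using the auxiliary matrices $B = TST$ and $C = STS$, pushes $B$ and $C$ through $S$ and $T$ via identities like $B = ST^{-1}S^{-1}$, $C = T^{-1}ST^{-1}$, $C^{-1}=TS^{-1}T$, and then reads off the form of $A^{-1}$ directly. That approach treats the whole word at once, so there is no base case to establish nor any inductive step or boundary term to synchronize. If you want to keep the inductive framing, you would need to (i) state and prove the local collapsing moves you intend to use, (ii) handle the base case as an infinite family (say by a nested induction on $n_0+n_1+m_1$, or by a separate direct computation in that case), and (iii) make the boundary fusion a precise lemma rather than a remark.
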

\begin{proof}
By \Cref{prop:betatogamma}, $\beta$ is the attracting fixed point of 
\begin{equation}
A = (T^2S)^{n_0}\prod_{i=1}^k T^{m_i+3}S(T^2S)^{n_i}.
\end{equation}
The Galois conjugate $\beta'$ is the attracting fixed point of $A^{-1}$. To prove the lemma, we express $A^{-1}$ in the form given in the conclusion of \Cref{prop:betatogamma}.

Let $B = TST = \smmattwo{1}{0}{1}{1}$, and rewrite $A$ as
\begin{equation}
A = TB^{n_0+1}\left(\prod_{i=1}^k (ST)^{-1}T^{m_i+1}(ST)B^{n_i+1}\right)T^{-1}.
\end{equation}
Furthermore, using the relation $B=ST^{-1}S^{-1}$
and setting $C=STS$, we may write
\begin{equation}
A = TST^{-(n_0+1)}\left(\prod_{i=1}^k C^{-1}T^{m_i+1}CT^{-(n_i+1)}\right)TS^{-1}T^{-1}
\end{equation}
We may then use the relations $C = T^{-1}ST^{-1}$ and $C^{-1} = TS^{-1}T$ to write 
\begin{equation}
A = TST^{-n_0}\left(\prod_{i=1}^k S^{-1}T^{m_i+1}ST^{-(n_i+1)}\right)S^{-1}T^{-1}
\end{equation}
Inverting $A$ will switch the role of $T$ and $T^{-1}$, whereas $S^{-1} = -S$, and we can now see that this will essentially switch the role of the $m_j$ and the $n_j$. Specifically,
\begin{align}
A^{-1} 
&= TS\left(\prod_{i=0}^{k-1} T^{n_{k-i}+1}S^{-1}T^{-(m_{k-i}+1)}S\right)T^{n_0}S^{-1}T^{-1} \\
&= (TSTT^{n_k}S^{-1}T^{-1}S^{-1}T^{-1}) D (TSTST^{-n_k}T^{-1}S^{-1}T^{-1}),
\end{align}
where
\begin{align}
D
&= TST^{-m_k}\left(\prod_{i=1}^{k-1}ST^{n_{k-i}+1}S^{-1}T^{-(m_{k-i}+1)}\right)ST^{n_0+n_k+1}S^{-1}T^{-1}S^{-1}T^{-1} \\
&= TST^{-m_k}\left(\prod_{i=1}^{k-1}S^{-1}T^{n_{k-i}+1}ST^{-(m_{k-i}+1)}\right)ST^{n_0+n_k+1}S^{-1}T^{-1}S^{-1}T^{-1} \\
&= (T^2S)^{m_k} \left(\prod_{i=1}^{k-1} T^{n_{k-i}+3}S(T^2S)^{m_{k-i}}\right)T^{n_0+n_k+3}S.
\end{align}
We also simplify
\begin{align}
TSTT^{n_k}S^{-1}T^{-1}S^{-1}T^{-1} &= -TST^{n_k+2}S; \\
TSTST^{-n_k}T^{-1}S^{-1}T^{-1} &= -(TST^{n_k+2}S)^{-1}.
\end{align}
Thus, $A^{-1} = PDP^{-1}$ for $P=TST^{n_k+2}S$.
Using \Cref{prop:betatogamma} again, the lemma follows.
\end{proof}

\subsection{Shintani's limit formula and Stark--Tangedal--Yamamoto class invariants}

Shintani's formula expresses the derivative of a zeta value at zero as a product of values of the double sine function. In Shintani's original formulation \cite{shintani}, the product is not uniquely determined but depends on certain choices. We present a formulation following Tangedal \cite{tangedal} that gives a canonical product (in the real quadratic case) whose terms are determined by a certain Hirzebruch--Jung continued fraction expansion.

\begin{defn}
Let $s \in \C$ with $\re(s)>2$, $x_1, x_2 > 0$ and $\omega_1, \omega_2 > 0$.
The \textit{Shintani zeta function} in dimension 2 is the function
\begin{equation}
z_2(s,(x_1,x_2),(\omega_1,\omega_2)) := \sum_{m=0}^\infty \sum_{n=0}^\infty \left((x_1+m\omega_1+n)(x_2+m\omega_2+n)\right)^{-s}.
\end{equation}
\end{defn}
More generally, one may allow the parameters $x_1, x_2, \omega_1, \omega_2$ to take complex values, but we do not need to do so in this paper. (As a warning, our use of the variables $x_i$ deviates from Shintani's \cite{shintani} and Tangedal's \cite{tangedal}, despite the similar notation.)

The following proposition is a special case of Shintani decomposition, generalized slightly to the case of arbitrary orders in real quadratic fields. See \cite[Chap.\ VII]{neukirch} for an exposition of Shintani decomposition for the maximal order in a general totally real number fields.
\begin{prop}\label{prop:shintanidecomp}
Let $\OO$ be an order in a real quadratic field $F$, and let $\mm$ be an integral ideal of $\OO$ and $\rS=\{\infty_1,\infty_2\}$.
Let $\bb$ be a fractional ideal of $\OO$. Write $\bb\mm = \alpha(\beta\Z+\Z)$ for some $\alpha,\beta \in F$ satisfying $0 < \beta' < 1 < \beta$, and such that $\sgn(\rho(\alpha)) = \sn_\rho$ for $\rho \in \{\rho_1, \rho_2\}$. 
Let $w_0 = \sympt{\r_0}{\beta}$ for some $\r_0 \in \Q^2 \setminus \Z^2$ with $\{\r_0\}=\r_0$. Associate to $\beta$ and $\r_0$ the HJ cycle data $k,\ell \in \Z$, $b_n \in \Z$, $\beta_n \in F$, $A_{m,n} \in \SL_2(\Z)$, and $w_n \in F$.
For each $n\in \Z$, let $\alpha_n = \alpha (\beta_{n+1} \cdots \beta_0)$ if $n < 0$ and $\alpha_n = \alpha(\beta_1 \cdots \beta_n)^{-1}$ if $n \geq 0$. 
Then,
\begin{align}
&\left[\U_{\mm,\rS}(\OO) : \U_{\colonideal{\mm}{\mm+\alpha w_0\OO},\rS}(\OO)\right]^{-1}\xi_{\mm,\rS}(s,\bb,\alpha w_0,\sn) \\
&= \sum_{n=0}^{k\ell-1} \Nm(\alpha_n)^{-s}z_2(s,(w_n,w_n'),(\beta_n,\beta_n')).
\end{align}
\end{prop}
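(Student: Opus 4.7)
The plan is to parameterize the coset, pass from a $\U_{\mm,\rS}(\OO)$-orbit sum to a $\U_{\mm',\rS}(\OO)$-orbit sum for $\mm' = \colonideal{\mm}{\mm+\alpha w_0\OO}$, and tile the resulting fundamental domain by simplicial cones indexed by the HJ cycle data. First I would use $\bb\mm = \alpha(\beta\Z+\Z)$ to write each $\gamma \in \alpha w_0 + \bb\mm$ uniquely as $\gamma = \alpha\sympt{\r}{\beta}$ with $\r \equiv \r_0 \Mod{\Z^2}$. The sign conditions defining $\CC_\rS(\sn)$ become positivity of $\sympt{\r}{\beta}$ and $\sympt{\r}{\beta'}$, since the hypothesis $\sgn(\rho(\alpha))=\sn_\rho$ absorbs the signs of $\alpha$. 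The subgroup $\U_{\mm',\rS}(\OO) \leq \U_{\mm,\rS}(\OO)$ is exactly the stabilizer of $\alpha w_0 \Mod{\mm}$, so the index factor on the left-hand side converts the original sum into the sum over a single $\U_{\mm',\rS}(\OO)$-orbit. By \Cref{lem:betaunit} and the choice $A=P^k$ in \Cref{defn:cycledata}, the fundamental totally positive unit of $\U_{\mm',\rS}(\OO)$ acting on the cone is $\e^k$ with $\e = \beta_0\cdots\beta_{\ell-1}$.

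The central geometric step is the Shintani cone decomposition: the totally positive cone in $F \otimes_\Q \R$ admits a fundamental domain for multiplication by $\e^k$ given by the disjoint union, with a prescribed half-open boundary convention, of $k\ell$ simplicial cones $C_0, C_1, \ldots, C_{k\ell-1}$, where $C_n$ is spanned by the vectors $\alpha_n(\beta_n, \beta_n')$ and $\alpha_{n+1}(\beta_{n+1}, \beta_{n+1}')$ under the embedding $F \hookrightarrow \R^2$ via $(\rho_1,\rho_2)$. This is the HJ form of the classical Shintani decomposition: the recursion $\beta_n = b_n - \beta_{n+1}^{-1}$ combined with $\alpha_{n+1}=\alpha_n/\beta_{n+1}$ identifies $\alpha_n(\beta_n,\beta_n')$ and $\alpha_{n+1}(\beta_{n+1},\beta_{n+1}')$ as successive primitive edges of the Klein polygon of the positive cone, and the periodicity $\alpha_{n+k\ell}=\alpha_n\e^{-k}$ ensures that $\e^k$ carries $C_0$ onto $C_{k\ell}$, closing the fundamental domain.

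Finally, on each cone $C_n$, a change of basis identifies the lattice points in $(\alpha w_0 + \bb\mm) \cap C_n$ with $\N_0^2$ shifted by a pair of positive reals. The cocycle identity $\r_n = \{A_{n,0}\r_0\}$ of \Cref{defn:cycledata}, together with \Cref{lem:betajs} for $j_{A_{0,n}}(\beta_n)$, forces this shift to be exactly $(w_n, w_n') = (\sympt{\r_n}{\beta_n}, \sympt{\r_n}{\beta_n'})$ in the two embeddings; positivity of $w_n, w_n'$ follows from $\{\r_n\}=\r_n$ together with $\r_n \notin \Z^2$. Summing $\Nm(\gamma)^{-s}$ on each cone gives
\begin{equation}
\sum_{\gamma \in (\alpha w_0 + \bb\mm) \cap C_n} \Nm(\gamma)^{-s} = \Nm(\alpha_n)^{-s}\,z_2(s, (w_n, w_n'),(\beta_n, \beta_n')),
\end{equation}
and summing over $n$ yields the identity. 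The main obstacle is the cone-tiling claim in the second paragraph: one must verify that the $C_n$ are pairwise disjoint except along shared rays, that their closures cover the totally positive cone, and that multiplication by $\e^k$ carries $C_0$ onto $C_{k\ell}$, all while being careful with half-open boundary conventions so that no lattice point is double-counted or omitted across consecutive cones.
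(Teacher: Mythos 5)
Your proposal follows essentially the same route as the paper's proof: parameterize the coset $\alpha w_0 + \bb\mm$ by $\r \equiv \r_0 \pmod{\Z^2}$, use the index factor to reduce to a single $\U_{\mm',\rS}(\OO)$-orbit with $\mm' = \colonideal{\mm}{\mm+\alpha w_0\OO}$, tile $\CC_\rS(\sn)$ by the simplicial cones on consecutive $\alpha_n$'s (equivalently $\alpha_n\beta_n = \alpha_{n-1}$ and $\alpha_n$), use the cocycle relation $\r_n = \{A_{n,0}\r_0\}$ together with \Cref{lem:betajs} to identify the shift in cone $n$ as $(w_n, w_n')$, and sum using the $k\ell$-periodicity coming from $\e^k$. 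The one place where you are slightly imprecise is the justification that the shift lands in the correct fundamental simplex: what is needed is not merely $w_n, w_n' > 0$ but the full condition $\{\r_n\} = \r_n$, i.e.\ $0 \le r_{n2} < 1$ and $-1 \le r_{n1} < 0$, so that the intersection of the shifted lattice with the half-open cone $\alpha_{n-1}\Q_{\ge 0} + \alpha_n\Q_{>0}$ equals $\alpha_n w_n + \alpha_{n-1}\Z_{\ge 0} + \alpha_n\Z_{\ge 0}$ --- but you do cite $\{\r_n\}=\r_n$, so this is a phrasing issue rather than a gap. The ``main obstacle'' you flag (correctness of the cone tiling with a consistent half-open convention) is likewise stated without proof in the paper, being the standard Shintani/Zagier decomposition, so there is no missing content relative to what the paper itself provides.
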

\begin{proof}
For each $n\in\Z$, $\alpha_{n-1}\Z+\alpha_n\Z = \alpha_n(\beta_n\Z+\Z)$. Moreover, $\b_{n} = b_{n} - \beta_{n+1}^{-1}$, so
\begin{align}
\alpha_n(\beta_n\Z+\Z) 
&= \alpha_n\left(\left(b_n-\beta_{n+1}^{-1}\right)\Z+\Z\right) \\
&= \alpha_{n+1}\left(\left(b_n\beta_{n+1}-1\right)\Z+\beta_{n+1}\Z\right) \\
&= \alpha_{n+1}(\beta_{n+1}\Z+\Z).
\end{align}
By an induction argument, $\alpha_{n-1}\Z+\alpha_n\Z$ is independent of $n$, so $\alpha_{n-1}\Z+\alpha_n\Z = \alpha_{-1}\Z+\alpha_0\Z = \alpha(\beta\Z+\Z)$.

We may write the cone $\CC_\rS(\sn)$ as a disjoint union of subcones
\begin{equation}
\CC_\rS(\sn) = \bigsqcup_{n=-\infty}^\infty (\alpha_{n-1}\Q_{\geq 0} + \alpha_{n}\Q_{> 0}).
\end{equation}
Thus, the cone $C_{\mm,\rS}(\bb,\alpha w_0,\sn)$ may be written as
\begin{equation}\label{eq:coned2}
C_{\mm,\rS}(\bb,\alpha w_0,\sn) = \bigsqcup_{n=-\infty}^\infty (\alpha w_0+\bb \cap \mm) \cap (\alpha_{n-1}\Q_{\geq 0} + \alpha_n\Q_{> 0}).
\end{equation}
We have
\begin{align}
w_0-j_{\!A_{0,n}}(\beta)^{-1}w_n
&= \sympt{\r_0}{\beta_0}-j_{\!A_{0,n}}(\beta)^{-1}\sympt{\r_n}{\beta_n} \\
&= \sympt{\{A_{0,n}\r_n\}}{A_{0,n}\beta_n}-\sympt{A_{0,n}\r_n}{A_{0,n}\beta_n} \\
&= \sympt{\{A_{0,n}\r_n\}-A_{0,n}\r_n}{\beta} \in \beta\Z+\Z. 
\end{align}
Since $j_{\!A_{0,n}}(\beta) = \beta_1\cdots\beta_n$ by \Cref{lem:betajs}, and $\alpha(\beta\Z+\Z) = \alpha_{n-1}\Z+\alpha_n\Z$,
\begin{align}
w_0 &\in \left(\beta_1\cdots\beta_n\right)^{-1}w_n + \beta\Z + \Z, \mbox{ so} \\
\alpha w_0 &\in \alpha_n w_n + \alpha_{n-1}\Z + \alpha_n\Z.
\end{align}
Moreover, $\alpha_n w_n = \alpha_n\sympt{\r_n}{\beta_n} = \alpha_n(r_{n2}\beta_n-r_{n1}) = r_{n2}\alpha_{n-1}+(-r_{n1})\alpha_n$ with $0 \leq r_{n2} < 1$ and $0 < -r_{n1} \leq 1$.
Thus, 
$(\alpha w_0+\bb \cap \mm) \cap (\alpha_{n-1}\Q_{\geq 0} + \alpha_n\Q_{> 0})
= \alpha_n w_n + \alpha_{n-1}\Z_{\geq 0} + \alpha_{n}\Z_{\geq 0} = \alpha_n\left(w_n + \beta_n\Z_{\geq 0} + \Z_{\geq 0}\right)$, so we may rewrite the cone decomposition \eqref{eq:coned2} as
\begin{equation}
C_{\mm,\rS}(\bb,\alpha w_0,\sn) = \bigsqcup_{n=-\infty}^\infty \alpha_n (w_n + \beta_n\Z_{\geq 0} + \Z_{\geq 0}).
\end{equation}
Since $\e^k$ is the smallest totally positive unit in $\OO$ for which $\e^k \alpha w_0 \con \alpha w_0 \Mod{\mm}$, a fundamental domain for the action of $\U_{\colonideal{\mm}{\mm+\alpha w_0},\rS}(\OO)$ on $C_{\mm,\rS}(\bb,\alpha w_0,\sn)$ is given by $\bigsqcup_{n=0}^{k\ell-1} \alpha_n (w_n + \beta_n\Z_{\geq 0} + \Z_{\geq 0})$. Thus,
\begin{align}
\frac{\xi_{\mm,\rS}(s,\bb,\alpha w_0,\sn) }{\left[\U_{\mm,\rS}(\OO) : \U_{\colonideal{\mm}{\mm+\alpha w_0\OO},\rS}(\OO)\right]}
&= \sum_{\alpha \in C_{\mm,\rS}(\bb,\alpha w_0,\sn)/\U_{\colonideal{\mm}{\mm+\alpha w_0\OO},\rS}(\OO)} \Nm(\alpha)^{-s} \\
&= \sum_{n=0}^{k\ell-1} \sum_{m_1=0}^\infty\sum_{m_2=0}^\infty \Nm(\alpha_n(w_n+m_1\beta_n+m_2))^{-s} \\
&= \sum_{n=0}^{k\ell-1} \Nm(\alpha_n)^{-s}z_2(s,(w_n,w_n'),(\beta_n,\beta_n')). \tag*{\qedhere}
\end{align}
\end{proof}

The following result is proven in Shintani's original work on his Kronecker limit formula for real quadratic fields. Shintani's full proof, by manipulation of a contour integral expression for the Shintani zeta function, is omitted from this paper.
\begin{lem}\label{lem:z2primeof0}
The Shintani zeta function has a meromorphic continuation in $s$ that is analytic at $s=0$.
Let $x_1,x_2,\omega_1,\omega_2 > 0$, and set $r_1 = \frac{\omega_2x_1-\omega_1x_2}{\omega_1-\omega_2}$ and $r_2=\frac{x_1-x_2}{\omega_1-\omega_2}$ (that is, $x_1=r_2\omega_1-r_1$ and $x_2=r_2\omega_2-r_1$).
The evaluation of the Shintani zeta function at $s=0$ is given by
\begin{align}
z_2(0,(x_1,x_2),(\omega_1,\omega_2))
&= \tfrac{1}{4}\left(\tfrac{1}{\omega_1}+\tfrac{1}{\omega_2}\right)B_2(-r_1) + B_1(-r_1)B_1(r_2)+\tfrac{1}{4}(\omega_1+\omega_2)B_2(r_2). \label{eq:z2of0}
\end{align}
The evaluation of the first derivative (in $s$) of the Shintani zeta function at $s=0$ is given by
\begin{align}
z_2'(0,(x_1,x_2),(\omega_1,\omega_2))
&= \log\!\left(\frac{\Gamma_2(x_1,\omega_1)\Gamma_2(x_2,\omega_2)}{\rho_2(\omega_1)\rho_2(\omega_2)}\right) + \frac{\omega_1-\omega_2}{4\omega_1\omega_2}\log\!\left(\frac{\omega_2}{\omega_1}\right)B_2(-r_1). \label{eq:z2primeof0}
\end{align}
In both equations, $B_1$ and $B_2$ denote the Bernoulli polynomials
\begin{align}
B_1(r) &= r-\foh\mbox{ and} \\
B_2(r) &= r^2-r+\tfrac{1}{6}.
\end{align}
\end{lem}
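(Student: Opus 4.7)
The plan is to give an integral representation that reduces $z_2$ to the Barnes double zeta function $\zeta_2(s,z;\omega_1,\omega_2)$ defined earlier in the excerpt, and then extract the meromorphic continuation together with the evaluations at $s=0$. First, I would apply the Feynman parameter identity
\begin{equation*}
A^{-s}B^{-s} = \frac{\Gamma(2s)}{\Gamma(s)^2}\int_0^1 v^{s-1}(1-v)^{s-1}\bigl(vA+(1-v)B\bigr)^{-2s}\,dv,
\end{equation*}
which follows from two applications of $\Gamma(s)A^{-s}=\int_0^\infty t^{s-1}e^{-tA}\,dt$ followed by the change of variables $t_1 = rv$, $t_2 = r(1-v)$. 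Applying this termwise with $A = x_1+m\omega_1+n$ and $B = x_2+m\omega_2+n$, summing over $m,n\ge 0$, and interchanging the sum with the integral (justified for $\re(s)>2$ by absolute convergence), I obtain
\begin{equation*}
z_2(s,(x_1,x_2),(\omega_1,\omega_2)) = \frac{\Gamma(2s)}{\Gamma(s)^2}\int_0^1 v^{s-1}(1-v)^{s-1}\,\zeta_2\bigl(2s,\alpha(v);\gamma(v),1\bigr)\,dv,
\end{equation*}
where $\alpha(v) := vx_1+(1-v)x_2 = r_2\gamma(v)-r_1$ and $\gamma(v) := v\omega_1+(1-v)\omega_2$. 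The meromorphic continuation of $\zeta_2$ recalled in the excerpt then gives the meromorphic continuation of $z_2$ and its analyticity at $s=0$.

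Next, I would expand the right-hand side in Laurent series at $s=0$. The Legendre duplication formula (or a direct expansion using $\Gamma(1+s)=1-\gamma s + O(s^2)$) yields $\Gamma(2s)/\Gamma(s)^2 = \tfrac{s}{2} + O(s^3)$. Setting $h(v;s) := \zeta_2(2s,\alpha(v);\gamma(v),1)$ and decomposing $h(v;s) = L(v;s) + v(1-v)h_1(v;s)$ with $L(v;s) := (1-v)h(0;s) + v\,h(1;s)$ the linear interpolant between the endpoint values, the linear piece integrates to $[h(0;s)+h(1;s)]\,B(s,s+1) = [h(0;s)+h(1;s)]\,\Gamma(s)^2/(2\Gamma(2s))$; multiplication by $\Gamma(2s)/\Gamma(s)^2$ yields exactly $\tfrac{1}{2}(h(0;s)+h(1;s))$. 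The remainder $\tfrac{\Gamma(2s)}{\Gamma(s)^2}\int_0^1 v^s(1-v)^s h_1(v;s)\,dv$ equals $\tfrac{s}{2}\int_0^1 h_1(v;0)\,dv + O(s^2)$, since $\int_0^1 v^s(1-v)^s h_1(v;s)\,dv$ is analytic at $s=0$.

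Reading off the constant and linear coefficients in $s$ yields the two claims. The constant term gives $z_2(0,\cdot) = \tfrac{1}{2}[\zeta_2(0,x_1;\omega_1,1)+\zeta_2(0,x_2;\omega_2,1)]$; substituting the explicit formula from the proof of \Cref{prop:scaling} and rewriting in Bernoulli polynomial form using $x_i = r_2\omega_i-r_1$ produces \eqref{eq:z2of0}. The coefficient of $s$ gives $z_2'(0,\cdot) = \tfrac{1}{2}[\partial_s h(0;0)+\partial_s h(1;0)] + \tfrac{1}{2}\int_0^1 h_1(v;0)\,dv$; the first bracket equals $\zeta_2'(0,x_1;\omega_1,1) + \zeta_2'(0,x_2;\omega_2,1) = \log\bigl(\Gamma_2(x_1,\omega_1)\Gamma_2(x_2,\omega_2)/[\rho_2(\omega_1)\rho_2(\omega_2)]\bigr)$ by the definition of $\Gamma_2$. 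For the integral of $h_1(v;0)$, the factorization
\begin{equation*}
(v\omega_1+(1-v)\omega_2)((1-v)\omega_1+v\omega_2) = \omega_1\omega_2 - v(1-v)(\omega_1-\omega_2)^2
\end{equation*}
reduces $h_1(v;0)$ to $-B_2(-r_1)(\omega_1-\omega_2)^2/[2\omega_1\omega_2\gamma(v)]$, and then the elementary integral $\int_0^1 dv/\gamma(v) = \log(\omega_1/\omega_2)/(\omega_1-\omega_2)$ produces the logarithmic term in \eqref{eq:z2primeof0}. The main obstacle is the careful bookkeeping in the Laurent expansion at $s=0$; once the decomposition via the linear interpolant $L(v;s)$ is in place, the residual integrals are elementary and the remaining work is matching Bernoulli polynomial identities.
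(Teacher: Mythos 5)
Your proof is correct but takes a genuinely different route from the paper, which simply defers to Shintani's original work (the meromorphic continuation is cited from Shintani's zeta-evaluation paper, and the two formulas from his Kronecker limit formula paper, translated into the present notation). Shintani works directly with a contour-integral representation of $z_2$; you instead apply the Feynman/Schwinger parameter identity to collapse $A^{-s}B^{-s}$ into a single power, writing $z_2$ as a Beta-weighted integral of the Barnes $\zeta_2$ over the segment of parameters joining $(x_2,\omega_2)$ to $(x_1,\omega_1)$. This reduces everything to the explicit value of $\zeta_2(0,z;\omega_1,\omega_2)$ computed in the proof of \Cref{prop:scaling} and the defining relation between $\Gamma_2$ and $\zeta_2'(0,\cdot)$, so the whole derivation becomes self-contained within material the paper already recalls. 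The decisive device is subtracting the linear interpolant $L(v;s)$: the endpoint piece integrates against $v^{s-1}(1-v)^{s-1}$ to $B(s,s+1)=\Gamma(s)^2/(2\Gamma(2s))$, which exactly cancels the prefactor $\Gamma(2s)/\Gamma(s)^2$ and isolates the symmetric average $\tfrac12\bigl(h(0;s)+h(1;s)\bigr)$, while the $v(1-v)h_1$ remainder is integrable near $v=0,1$ and contributes only at order $s$. Two small points are worth making explicit. First, the chain rule gives $\partial_s h(v;s)\big|_{s=0}=2\zeta_2'(0,\alpha(v);\gamma(v),1)$, and that factor of $2$ is absorbed by the $\tfrac12$ in the symmetric average, producing $\zeta_2'(0,x_1;\omega_1,1)+\zeta_2'(0,x_2;\omega_2,1)$ as claimed. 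Second, substituting $\alpha(v)=r_2\gamma(v)-r_1$ into the explicit formula for $\zeta_2(0,\cdot)$ gives $h(v;0)=\tfrac12 B_2(r_2)\gamma(v)+B_1(-r_1)B_1(r_2)+\tfrac12 B_2(-r_1)/\gamma(v)$, so only the $1/\gamma(v)$ term is nonlinear in $v$; the identity $\gamma(v)\gamma(1-v)=\omega_1\omega_2-v(1-v)(\omega_1-\omega_2)^2$ together with $\int_0^1 dv/\gamma(v)=\log(\omega_1/\omega_2)/(\omega_1-\omega_2)$ then yields exactly the coefficient $\tfrac{\omega_1-\omega_2}{4\omega_1\omega_2}\log\!\bigl(\tfrac{\omega_2}{\omega_1}\bigr)B_2(-r_1)$ in \eqref{eq:z2primeof0}. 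Your route costs a little more $s$-bookkeeping than citing Shintani, but it buys self-containedness and a transparent reason why the Shintani zeta function reduces to Barnes data, which is exactly what the paper is exploiting in the surrounding telescoping argument.
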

\begin{proof}
The meromorphic continuation of $z_2$ is proven as \cite[Prop.\ 1]{shintanievaluation}.
\Cref{eq:z2primeof0} is \cite[Prop.\ 3]{shintani}, and \eqref{eq:z2of0} is proven within Shintani's proof of that proposition, on \cite[p.\ 177]{shintani}. We have translated both equations into our preferred notation.
\end{proof}

The expressions in \Cref{lem:z2primeof0} become simpler when one takes a certain difference of two $z_2$-values.
\begin{lem}\label{lem:z2primeof0diff}
For $0<x_1<\omega_1$ and $0<x_2<\omega_2$,
\begin{equation}\label{eq:z2of0diff}
z_2(0,(\omega_1+1-x_1,\omega_2+1-x_2),(\omega_1,\omega_2)) - z_2(0,(x_1,x_2),(\omega_1,\omega_2)) = 0
\end{equation}
and
\begin{align}
&z_2'(0,(\omega_1+1-x_1,\omega_2+1-x_2),(\omega_1,\omega_2)) - z_2'(0,(x_1,x_2),(\omega_1,\omega_2)) \\
&= \log\!\left(\Sin_2(x_1,\omega_1)\Sin_2(x_2,\omega_2)\right). \label{eq:z2primeof0diff}
\end{align}
\end{lem}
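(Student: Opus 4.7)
The plan is to apply the explicit formulas from \Cref{lem:z2primeof0} directly, tracking how the parameters $r_1,r_2$ transform under the substitution $(x_1,x_2) \mapsto (\omega_1+1-x_1,\omega_2+1-x_2)$. Writing $\tilde{x}_i = \omega_i+1-x_i$ and solving the linear system $\tilde{x}_1 = \tilde{r}_2\omega_1-\tilde{r}_1$, $\tilde{x}_2 = \tilde{r}_2\omega_2-\tilde{r}_1$, I would compute
\begin{align}
\tilde{r}_2 = \frac{\tilde{x}_1-\tilde{x}_2}{\omega_1-\omega_2} = 1-r_2, \qquad \tilde{r}_1 = \tilde{r}_2\omega_1-\tilde{x}_1 = -1-r_1,
\end{align}
so $-\tilde{r}_1 = 1+r_1$ and $\tilde{r}_2 = 1-r_2$.

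Next, I would apply the Bernoulli polynomial symmetries $B_1(1-r) = -B_1(r)$, $B_2(1-r)=B_2(r)$, and the combined identity $B_2(-\tilde{r}_1) = B_2(1+r_1) = B_2(-r_1)$ (using $B_2(s) = B_2(1-s)$ with $s=-r_1$). Substituting these into \eqref{eq:z2of0}, the outer terms $\tfrac14(\tfrac{1}{\omega_1}+\tfrac{1}{\omega_2})B_2(-r_1)$ and $\tfrac14(\omega_1+\omega_2)B_2(r_2)$ are unchanged, while the middle term picks up two sign flips from $B_1(-\tilde{r}_1)B_1(\tilde{r}_2) = (-B_1(-r_1))(-B_1(r_2))$, yielding exactly $z_2(0,(x_1,x_2),(\omega_1,\omega_2))$. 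This establishes \eqref{eq:z2of0diff}.

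For \eqref{eq:z2primeof0diff}, I would substitute into \eqref{eq:z2primeof0}. The auxiliary term $\frac{\omega_1-\omega_2}{4\omega_1\omega_2}\log(\omega_2/\omega_1)B_2(-r_1)$ is invariant under the substitution (by the same $B_2(-\tilde{r}_1) = B_2(-r_1)$ identity), so it cancels in the difference. The difference thereby reduces to
\begin{equation}
\log\!\left(\frac{\Gamma_2(\omega_1+1-x_1,\omega_1)}{\Gamma_2(x_1,\omega_1)}\right)+\log\!\left(\frac{\Gamma_2(\omega_2+1-x_2,\omega_2)}{\Gamma_2(x_2,\omega_2)}\right),
\end{equation}
and invoking the defining identity \eqref{eq:dsinegamma} for the double sine function specialized to $\omega_2=1$ (via $\Sin_2(z,\omega) = \Gamma_2(\omega+1-z,\omega)/\Gamma_2(z,\omega)$) gives $\log(\Sin_2(x_1,\omega_1)\Sin_2(x_2,\omega_2))$, as desired.

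There is no serious obstacle here: the lemma is purely a bookkeeping consequence of \Cref{lem:z2primeof0} combined with standard Bernoulli polynomial symmetries and the definition of $\Sin_2$. The only minor care needed is verifying the sign conventions in the transformation $\tilde{r}_1 = -1-r_1$ and confirming that $B_2(1+r) = B_2(-r)$, but both are immediate from the $1/2$-symmetry of $B_2$.
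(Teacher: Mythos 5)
Your proposal is correct and follows essentially the same route as the paper: solve for the transformed parameters $\tilde r_1 = -1-r_1$, $\tilde r_2 = 1-r_2$, apply the Bernoulli symmetries $B_1(1-r)=-B_1(r)$ and $B_2(1-r)=B_2(r)$ to equation \eqref{eq:z2of0} and \eqref{eq:z2primeof0}, and then collapse the $\Gamma_2$-ratio to $\Sin_2$ via \eqref{eq:dsinegamma}. One small remark: the paper's written Bernoulli identity ``$B_1(1-r) = -B_1(-r)$'' appears to contain a sign typo (it should read $B_1(1-r) = -B_1(r)$, equivalently $B_1(1+r) = -B_1(-r)$, which is what is actually used); your form is the correct one and your use of it in the product $B_1(-\tilde r_1)B_1(\tilde r_2) = (-B_1(-r_1))(-B_1(r_2))$ is valid.
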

\begin{proof}
As in \Cref{lem:z2primeof0}, set $r_1 = \frac{\omega_2x_1-\omega_1x_2}{\omega_1-\omega_2}$ and $r_2=\frac{x_1-x_2}{\omega_1-\omega_2}$. We have
\begin{align}
x_1 &= r_2\omega_1-r_1, & && 
\omega_1+1-x_1 &= (1-r_2)\omega_1-(-1-r_1), \\
x_2 &= r_2\omega_2-r_1, &
\text{and} &&
\omega_2+1-x_2 &= (1-r_2)\omega_2-(-1-r_1).
\end{align}
Using the relations $B_1(1-r) = -B_1(-r)$ and $B_2(1-r) = B_2(r)$ together with \eqref{eq:z2of0}, we obtain \eqref{eq:z2of0diff}.

Again using the relation $B_2(1-r) = B_2(r)$ (specifically, $B_2(1+r_1)=B_2(-r_2)$), we obtain cancellation of the second summand of \eqref{eq:z2primeof0} in the expression
\begin{align}
&z_2'(0,(\omega_1+1-x_1,\omega_2+1-x_2),(\omega_1,\omega_2)) - z_2'(0,(x_1,x_2),(\omega_1,\omega_2))  \\
&= \log\!\left(\tfrac{\Gamma_2(\omega_1+1-x_1,\omega_1)\Gamma_2(\omega_2+1-x_2,\omega_2)}{\rho_2(\omega_1)\rho_2(\omega_2)}\right) - \log\!\left(\tfrac{\Gamma_2(x_1,\omega_1)\Gamma_2(x_2,\omega_2)}{\rho_2(\omega_1)\rho_2(\omega_2)}\right) \\
&= \log\!\left(\tfrac{\Gamma_2(\omega_1+1-x_1,\omega_1)\Gamma_2(\omega_2+1-x_2,\omega_2)}{\Gamma_2(x_1,\omega_1)\Gamma_2(x_2,\omega_2)}\right) \\
&= \log\!\left(\Sin_2(x_1,\omega_1)\Sin_2(x_2,\omega_2)\right),
\end{align}
proving \eqref{eq:z2primeof0diff}.
\end{proof}

If $\A$ is a ray class in $\Cl_{\mm\infty_1\infty_2}(\OO_F)$, Tangedal uses the double sine function to define class invariants $U^{(i)}_{\mm}(\A) \in \R$ for $i \in \{1,2\}$ \cite{tangedal}. He shows that, in the unproven cases of the real quadratic abelian Stark conjectures, these invariants coincide with positive square roots of Stark units. Tangedal's invariants (and similar invariants defined by Yamamoto \cite{yamamoto1}) generalize in a straightforward manner to both nonmaximal orders and imprimitive classes, so we describe them in that context.

\begin{defn}\label{defn:tangedalu}
Let $\OO$ be an order in a real quadratic field $F$, and let $\mm$ be an integral ideal of $\OO$.
Let $\A \in \Clt_{\mm\infty_1\infty_2}(\OO)$. 
Let $(\r,\beta)$ be a reduced representative of $\tilde\Upsilon_\mm(\A)$.
Associate to $(\r,\beta)$ the HJ cycle data $k, \ell \in \Z$ and $\beta_i, w_i \in F$. Define the \textit{Stark--Tangedal--Yamamoto invariant}
\begin{align}
U^{(j)}_{\mm}(\A)
&= \prod_{i=0}^{k\ell-1} \Sin_2(\rho_j(w_i), \rho_j(\beta_i)) & \mbox{for $j \in \{1,2\}$}.
\end{align}  
\end{defn}

The following proposition is a slight generalization of \cite[Prop.\ 1]{tangedal}.
\begin{prop}\label{prop:Zprimesum}
Let $\OO$ be an order in a real quadratic field $F$, and let $\mm$ be an integral ideal of $\OO$.
If $\A \in \Clt_{\mm\infty_1\infty_2}(\OO)$, then
\begin{equation}\label{eq:Zprimesum}
Z_{\mm\infty_1\infty_2}'(0,\A) 
= -\log\!\left(U_\mm^{(1)}(\A)U_\mm^{(2)}(\A)\right).
\end{equation}
\end{prop}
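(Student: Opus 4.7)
The plan is to produce matching Shintani decompositions of $\zeta_{\mm\infty_1\infty_2}(s,\A)$ and $\zeta_{\mm\infty_1\infty_2}(s,\sR\A)$ whose termwise difference collapses under \Cref{lem:z2primeof0diff}. First I will fix a reduced representative $(\r,\beta)$ of $\tilde\Upsilon_\mm(\A)$ (\Cref{prop:reduced}), an integral ideal $\bb\in\A_0^{-1}$ coprime to $\mm$, and a totally positive $\alpha\in F$ with $\bb\mm=\alpha(\beta\Z+\Z)$, giving rise to the HJ cycle data $(k,\ell,\beta_n,A_{m,n},\r_n,w_n,\alpha_n)$ of \Cref{defn:cycledata}. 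Combining \Cref{prop:partialzetaeq} and \Cref{prop:shintanidecomp}, whose two unit-index factors are mutually inverse and cancel, yields
\begin{equation*}
\zeta_{\mm\infty_1\infty_2}(s,\A)=\Nm(\bb)^s\sum_{n=0}^{k\ell-1}\Nm(\alpha_n)^{-s}\,z_2(s,(w_n,w_n'),(\beta_n,\beta_n')).
\end{equation*}
For $\sR\A$, I will use the same $\bb,\alpha,\beta$ but replace $\r$ by $\tilde\r:=\{-\r\}$: since every element of $\sR$ is totally positive and congruent to $-1$ modulo $\mm$, the element $\gamma_0':=\alpha\sympt{\tilde\r}{\beta}$ is totally positive, satisfies $\gamma_0'\equiv-\alpha\sympt{\r}{\beta}\pmod{\mm}$ (because $\tilde\r+\r\in\Z^2$ and $\alpha(\beta\Z+\Z)=\bb\mm\subseteq\mm$), and gives $\mm+\gamma_0'\OO=\mm+\alpha\sympt{\r}{\beta}\OO$, so the unit-group indices in \Cref{prop:partialzetaeq} agree. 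Hence the identical sum representation holds for $\sR\A$ with $w_n,w_n'$ replaced by $\tilde w_n:=\sympt{\tilde\r_n}{\beta_n}$ and $\tilde w_n':=\sympt{\tilde\r_n}{\beta_n'}$, where $\tilde\r_n=\{A_{n,0}\tilde\r\}=\{-\r_n\}$ and the data $\beta_n,A_{m,n},\alpha_n$ are unchanged.

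In the generic situation ($r_{n,1}\in(-1,0)$ and $r_{n,2}\in(0,1)$), a direct calculation from the definition of $\{\cdot\}$ gives $\tilde\r_n=\smcoltwo{-r_{n,1}-1}{1-r_{n,2}}$, so that $\tilde w_n=\beta_n+1-w_n$ and $\tilde w_n'=\beta_n'+1-w_n'$---exactly the substitution appearing in \Cref{lem:z2primeof0diff}. Subtracting the two decompositions and differentiating at $s=0$, the contributions from $\log\Nm(\bb)$ and $\log\Nm(\alpha_n)$ vanish by \eqref{eq:z2of0diff}, while each remaining difference $z_2'(0,(w_n,w_n'),\cdot)-z_2'(0,(\tilde w_n,\tilde w_n'),\cdot)$ evaluates by \eqref{eq:z2primeof0diff} to $-\log(\Sin_2(w_n,\beta_n)\,\Sin_2(w_n',\beta_n'))$. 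Summing over $n$ gives $Z_{\mm\infty_1\infty_2}'(0,\A)=-\log(U_\mm^{(1)}(\A)\,U_\mm^{(2)}(\A))$, which is \eqref{eq:Zprimesum}.

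The principal obstacle is the careful treatment of boundary cases in which some $\r_n$ lies on an edge of the reduced box---namely when $r_{n,2}=0$ (giving $\tilde w_n=1-w_n$) or $r_{n,1}=-1$ (giving $\tilde w_n=\beta_n+2-w_n$)---so that \Cref{lem:z2primeof0diff} does not apply verbatim. These cases must be handled by returning to the explicit formulas of \Cref{lem:z2primeof}, using the Bernoulli identities $B_1(r+1)=-B_1(-r)$ and $B_2(r+1)=B_2(-r)$ to control the polynomial-in-$r$ terms, and exploiting the double-gamma functional equation relating $\Gamma_2(z+\beta_n,\beta_n)$ to $\Gamma_2(z,\beta_n)$ in order to reconcile the shifted $\Gamma_2$-values with the $\Sin_2$-values in $U_\mm^{(j)}(\A)$; the residual $\log\Nm(\bb)$ and $\log\Nm(\alpha_n)$ corrections must be shown to telescope around the full cycle. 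A minor subsidiary issue is the zero-class case $\A\in\ZClt_{\mm\infty_1\infty_2}(\OO)$, where the cycle data of \Cref{defn:cycledata} are undefined; this requires the convention $U_\mm^{(j)}(\A)=1$ together with a direct verification that $Z_{\mm\infty_1\infty_2}'(0,\A)=0$.
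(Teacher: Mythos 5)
Your proposal follows the same route as the paper's proof: combine \Cref{prop:partialzetaeq} with \Cref{prop:shintanidecomp} (the unit-index factors cancelling), subtract the decompositions for $\A$ and $\sR\A$, and apply \Cref{lem:z2primeof0diff} termwise. The substantive difference is that you are \emph{more careful} than the published proof: you explicitly flag the boundary cases where $\r_n$ lies on an edge of the reduced box.

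Your concern is legitimate and identifies a genuine omission. The paper's proof asserts without argument that ``the associated HJ cycle data include $\tilde w_n=\beta_n+1-w_n$'' and then applies \Cref{lem:z2primeof0diff} termwise. Both steps fail when $r_{n,2}=0$ or $r_{n,1}=-1$, and these cases do occur (for instance at $n=0$ for $\r=\smcoltwo{-1/2}{0}$). As you compute, the reduced representative then gives $\tilde w_n=1-w_n$ or $\tilde w_n=\beta_n+2-w_n$, not $\beta_n+1-w_n$; moreover the identity $z_2(0,(\tilde w_n,\tilde w_n'),\cdot)=z_2(0,(w_n,w_n'),\cdot)$, which the paper uses implicitly to justify dropping the $\log\Nm(\alpha_n\bb^{-1})$ contribution in the $s$-derivative, also fails at those indices, with nonzero discrepancy $\pm(r_{n,1}+\tfrac12)$. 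Since $r_{n,2}=0 \iff r_{n+1,1}=-1$, these $z_2(0,\cdot)$ discrepancies cancel in adjacent pairs $(n,n+1)$, but the prefactors $\Nm(\alpha_n\bb^{-1})^{-s}$ differ by $\Nm(\beta_{n+1})^{-s}$ between consecutive indices, so once the derivative is taken an uncancelled term proportional to $\log\Nm(\beta_{n+1})$ appears and must be offset by a matching discrepancy in the $z_2'$ terms, coming from quasiperiodicity of $\Gamma_2$ and $\Sin_2$ as you sketch. This bookkeeping is genuinely absent from the paper's argument, which treats only the generic case. You do not carry it out either, so your proof is also incomplete precisely where the paper's is---but you correctly locate the work and the mechanism you propose (Bernoulli identities together with $\Gamma_2$-quasiperiodicity, with the residual $\log\Nm$ corrections telescoping along the HJ cycle) is the right one.

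Two minor points. First, you cite \Cref{lem:z2primeof0} once with a mangled label (``\texttt{lem:z2primeof}''). Second, your zero-class remark is apt: \Cref{defn:cycledata} requires $\r\nin\Z^2$, so the cycle data are undefined for $\A\in\ZClt_{\mm\infty_1\infty_2}(\OO)$; the paper never addresses this, though \Cref{thm:ZprimeU} sidesteps it by excluding zero classes, so a convention such as $U_\mm^{(j)}(\A)=1$ there is harmless for the downstream applications.
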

\begin{proof}
Let $\bb, \alpha, \beta, \r, k, \ell, \beta_i$, and $w_i$ be as in \Cref{defn:tangedalu}.
By \Cref{prop:partialzetaeq},
\begin{equation}
\Nm(\bb)^{-s} \zeta_{\mm,\rS}(s,\A) 
= \frac{\xi_{\mm,\rS}(s,\bb,\alpha w_0,\sn)}{\left[\U_{\mm,\rS}(\OO) : \U_{\colonideal{\mm}{\mm+\alpha w_0\OO},\rS}(\OO)\right]}.
\end{equation}
For each $n\in \Z$, let  
$\alpha_n = \alpha (\beta_{n+1} \cdots \beta_0)$ if $n < 0$ and $\alpha_n = \alpha(\beta_1 \cdots \beta_n)^{-1}$ if $n \geq 0$. 
By \Cref{prop:shintanidecomp},
\begin{align}
\zeta_{\mm,\rS}(s,\A) 
&= \frac{\Nm(\bb)^{s} \xi_{\mm,\rS}(s,\bb,\alpha w_0,\sn)}{\left[\U_{\mm,\rS}(\OO) : \U_{\colonideal{\mm}{\mm+\alpha w_0\OO},\rS}(\OO)\right]} \\
&= \sum_{n=0}^{k\ell-1} \Nm(\alpha_n\bb^{-1})^{-s}z_2(s,(w_n,w_n'),(\beta_n,\beta_n')).
\end{align}
Similarly, we apply the same results to $\zeta_{\mm,\rS}(s,\mathfrak{R}\A)$.
The associated HJ cycle data include 
$\tilde{w}_n = \beta+1-w_n$, and one obtains
\begin{align}
\zeta_{\mm,\rS}(s,\mathfrak{R}\A) 
&= \frac{\Nm(\bb)^{s} \xi_{\mm,\rS}(s,\bb,\alpha \tilde{w}_0,\sn)}{\left[\U_{\mm,\rS}(\OO) : \U_{\colonideal{\mm}{\mm+\alpha w_0\OO},\rS}(\OO)\right]} \\
&= \sum_{n=0}^{k\ell-1} \Nm(\alpha_n\bb^{-1})^{-s}z_2(s,(\tilde{w}_n,\tilde{w}_n'),(\beta_n,\beta_n')).
\end{align}
Subtracting, 
and using the fact that $Z_{\mm,\rS}(s,\A)=0$ by \eqref{eq:zetaatzero1}, we have
\begin{align}
Z_{\mm,\rS}'(0,\A) 
&= \sum_{n=0}^{k\ell-1} \left(z_2'(0,(w_n,w_n'),(\beta_n,\beta_n'))-z_2'(0,(\tilde{w}_n,\tilde{w}_n'),(\beta_n,\beta_n'))\right) \\
&= \sum_{n=0}^{k\ell-1} -\log\!\left(\Sin_2(w_n,\beta_n)\Sin_2(w_n',\beta_n')\right) \mbox{ by \Cref{lem:z2primeof0diff}} \\
&= -\log\!\left(U_\mm^{(1)}(\A)U_\mm^{(2)}(\A)\right),
\end{align}
proving the proposition.
\end{proof}

The following theorem may be compared to \cite[Thm. 1]{tangedal}.
\begin{thm}\label{thm:ZprimeU}
Let $\OO$ be an order in a real quadratic field $F$, and let $\mm$ be an integral $\OO$-ideal. 
Consider the surjective monoid map $\phi: \Clt_{\mm\infty_1\infty_2}(\OO) \to \Clt_{\mm\infty_2}(\OO)$.
If $\A \in \Clt_{\mm\infty_2}(\OO) \setminus \ZClt_{\mm\infty_2}(\OO)$,
and $\tilde\A \in \Clt_{\mm\infty_1\infty_2}(\OO)$ such that $\phi(\tilde\A) = \A$,
then
$U^{(1)}(\A) := U^{(1)}(\tilde\A)$ is well-defined. Moreover,
\begin{equation}\label{eq:ZprimeU}
Z_{\mm\infty_2}'(0,\A) 
= -t\log\!\left(U_\mm^{(1)}(\A)\right),
\end{equation}
where $t = \abs{\phi^{-1}(\A)} \in \{1,2\}$.
Additionally, $U^{(1)}(\A)$ depends only on $\Upsilon_\mm(\A)$.
\end{thm}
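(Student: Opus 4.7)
The plan is to first decompose $Z_{\mm\infty_2}(s,\A)$ along the fiber of $\phi$ and apply \Cref{prop:Zprimesum}, then reconcile the resulting $U^{(1)}U^{(2)}$-product with the claimed $U^{(1)}$-power via a Galois-conjugation symmetry.

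First I would observe that an integral ideal $\aa \in \A$ lies in a unique $\tilde\A \in \phi^{-1}(\A)$ (determined by the sign of $\rho_1$ on a generator of $\aa\bb$ for any fixed coprime $\bb$), and that the preimage $\phi^{-1}(\sR\A)$ equals $\{\sR^{(+,+)}\tilde\A : \tilde\A \in \phi^{-1}(\A)\}$ where $\sR^{(+,+)}$ is the totally positive lift of $\sR$. This gives the partition identity $Z_{\mm\infty_2}(s,\A) = \sum_{\tilde\A \in \phi^{-1}(\A)} Z_{\mm\infty_1\infty_2}(s,\tilde\A)$, and hence by \Cref{prop:Zprimesum},
\begin{equation}\label{eq:plan1}
Z_{\mm\infty_2}'(0,\A) = -\sum_{\tilde\A \in \phi^{-1}(\A)} \log\!\left(U^{(1)}_\mm(\tilde\A)\,U^{(2)}_\mm(\tilde\A)\right).
\end{equation}

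Next I would establish two symmetry facts about how the Tangedal invariants transform under the involution $\tilde\A \leftrightarrow \rR_{-+}\tilde\A$ on the fiber: (a) $U^{(1)}(\rR_{-+}\tilde\A) = U^{(1)}(\tilde\A)$ and (b) $U^{(2)}(\rR_{-+}\tilde\A) \cdot U^{(2)}(\tilde\A) = 1$. The mechanism is to realize $\rR_{-+}$ on reduced representatives by $\beta \mapsto 1/\beta'$: if $(\r,\beta)$ is reduced with $0<\beta'<1<\beta$, then $(\r^*, 1/\beta')$, for a suitable characteristics shift $\r^*$ determined by the construction of $\tilde\Upsilon_\mm(\mathfrak{R}_{-+}\A)$ in the proof of \Cref{thm:correspondence}, is a reduced representative of $\rR_{-+}\tilde\A$. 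Under this involution, the HJ expansion reverses (by directly manipulating \Cref{prop:betatogamma}: if $\beta = [\ol{b_0,\ldots,b_{\ell-1}}]_-$ then $1/\beta' = [\ol{b_{\ell-1},\ldots,b_0}]_-$), so $\beta_i^* = 1/\beta'_{\ell-1-i}$, and the corresponding $w_i^*$ can be tracked explicitly. Claim (a) then follows from the scaling identity $\Sin_2(z/\tau, 1/\tau) = \Sin_2(z,\tau)$ of \Cref{prop:scaling}, which converts the $\rho_1$-product defining $U^{(1)}(\rR_{-+}\tilde\A)$ into the $\rho_1$-product defining $U^{(1)}(\tilde\A)$ after reindexing. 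Claim (b) is the parallel calculation under $\rho_2$: here the Galois action swaps $\rho_1(\beta) = \beta$ with $\rho_2(\beta) = \beta'$, interchanging the roles of ``large'' and ``small'' arguments in $\Sin_2$, and the scaling identity now acts in the inverting direction, producing a sign flip in the logarithm that yields $\log U^{(2)}(\rR_{-+}\tilde\A) = -\log U^{(2)}(\tilde\A)$.

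Granting (a) and (b), I would conclude as follows. Claim (a) gives the well-definedness of $U^{(1)}(\A)$ (part (1) of the theorem), and more generally the $\GL_2(\Z)$-invariance of the reduced-HJ construction under cyclic shifts (handled by the periodicity of the product) together with (a) gives part (3), since the two $\SL_2(\Z)$-orbits within $\Upsilon_\mm(\A)$ correspond precisely to the preimages of $\A$ under $\phi$. For part (2), combining \eqref{eq:plan1} with (a) and (b),
\[
Z_{\mm\infty_2}'(0,\A) = -\sum_{\tilde\A} \log U^{(1)}_\mm(\tilde\A) - \log\!\prod_{\tilde\A} U^{(2)}_\mm(\tilde\A) = -t\log U^{(1)}_\mm(\A) - 0,
\]
since (a) makes the first sum constant across the fiber and (b) makes the product in the second term equal to $1$ (it pairs up $\tilde\A$ with $\rR_{-+}\tilde\A$ when $t=2$, and forces the unique $U^{(2)}(\tilde\A)$ to be a positive real number with $U^{(2)}(\tilde\A)^2 = 1$, hence $=1$, when $t=1$). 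The main obstacle is verifying (b): the characteristics shift $\r^*$ and the cyclic reindexing must combine to yield \emph{exactly} the $\rho_2$-conjugate product with inverted sign, and this requires a careful calculation tracking the $\GL_2(\Z)$-transformation of characteristics through the reduction theory of indefinite binary quadratic forms.
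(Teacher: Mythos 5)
Your overall architecture matches the paper's: partition the fiber of $\phi$ and reduce to the two Tangedal‐invariant relations $U^{(1)}(\sR_{-+}\tilde\A)=U^{(1)}(\tilde\A)$ and $U^{(2)}(\sR_{-+}\tilde\A)\,U^{(2)}(\tilde\A)=1$, then conclude by cases on $t$. These are exactly the relations the paper invokes (citing Arakawa, Prop.\ 7, and Tangedal, Cor.\ 1); so far so good.

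The gap is in the mechanism you propose for proving those relations. You assert that $\sR_{-+}$ is realized on reduced representatives by $\beta\mapsto 1/\beta'$. That is false in general. The fact $\beta=[\overline{b_0,\ldots,b_{\ell-1}}]_- \Rightarrow 1/\beta'=[\overline{b_{\ell-1},\ldots,b_0}]_-$ is correct, but the reversed‐cycle point realizes \emph{Galois conjugation} (equivalently, passage to the inverse narrow class), not the $\sR_{-+}$-twist. By the paper's own proof of \Cref{thm:correspondence}, $\tilde\Upsilon_\mm(\sR_{-+}\tilde\A)$ is obtained by acting on $(\r,\beta)$ with a matrix $A\in\GL_2(\Z)$ of determinant $-1$ satisfying $\rho_2(j_A(\beta))<0<\rho_1(j_A(\beta))$; on narrow classes this is multiplication by a fixed $2$-torsion element $[\tau]\in\ker(\Cl^+(\OO)\to\Cl(\OO))$, whereas $\beta\mapsto 1/\beta'$ sends $[\bb]\mapsto[\bb]^{-1}$. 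These coincide only when $[\bb]^2=[\tau]$. Concretely: take $\beta = 1+1/\sqrt{3}$, so $\OO=\Z[\sqrt{3}]$ and $\beta=[\overline{2,3}]_-$. Then $1/\beta'=(3+\sqrt{3})/2=[\overline{3,2}]_-$, a cyclic shift of the same cycle, hence $1/\beta'$ is $\SL_2(\Z)$-equivalent to $\beta$ itself. But the determinant-$(-1)$ twist with $\Nm(j_A(\beta))<0$ (e.g.\ $A=\smmattwo{0}{-1}{-1}{1}$, $A\cdot\beta=\sqrt{3}$) lands in the $\SL_2(\Z)$-orbit with cycle $(\overline{4})$. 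So $(\r^\ast,1/\beta')$ is a reduced representative of $\tilde\A$, not of $\sR_{-+}\tilde\A$, and the intended scaling‐identity argument proves nothing about $\sR_{-+}$.

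Consequently, the claimed proofs of (a) and (b) do not go through as written, and since the theorem rests on those two relations, there is a genuine gap. To repair it you would need to track the HJ cycle and the characteristics $\r$ through the actual determinant-$(-1)$ transformation with the sign condition above (this is precisely what Arakawa's Prop.\ 7 and Tangedal's Cor.\ 1 accomplish), rather than through $\beta\mapsto 1/\beta'$. Everything downstream of (a) and (b) — the fiber decomposition of $Z_{\mm\infty_2}$, the case split on $t$, the positivity argument giving $U^{(2)}(\tilde\A)=1$ when $t=1$, and the well‐definedness and $\Upsilon_\mm$-dependence of $U^{(1)}$ — is correct and coincides with the paper.
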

\begin{proof}
Define the following classes in $\Cl_{\mm\infty_1\infty_2}(\OO)$:
\begin{align}
\sR_{+-} &= \{\gamma\OO : \gamma \con 1 \Mod{\mm}, \rho_1(\gamma)>0, \rho_2(\gamma)<0\}, \\
\sR_{-+} &= \{\gamma\OO : \gamma \con 1 \Mod{\mm}, \rho_1(\gamma)<0, \rho_2(\gamma)>0\}, \mbox{ and}\\
\sR_{--} &= \{\gamma\OO : \gamma \con 1 \Mod{\mm}, \rho_1(\gamma)<0, \rho_2(\gamma)<0\} = \sR_{+-}\sR_{-+}.
\end{align}
In $\Cl_{\mm\infty_2}(\OO)$, we also have
\begin{align}
\sR &= \{\gamma\OO : \gamma \con 1 \Mod{\mm}, \rho_2(\gamma)<0\} = \sR_{+-} \cup \sR_{--}.
\end{align}
Choose some $\tilde{\A} \in \Cl_{\mm\infty_1\infty_2}(\OO)$ such that $\phi(\tilde{\A}) = \A$. It may be shown by the same method as \cite[Prop.\ 7]{arakawa} and \cite[Cor.\ 1]{tangedal} that
\begin{align}\label{eq:Uconjrels}
U_{\mm}^{(1)}(\sR_{-+}\tilde{\A}) &= U_{\mm}^{(1)}(\tilde{\A})
&&\mbox{and}&
U_{\mm}^{(2)}(\sR_{-+}\tilde{\A}) &= \left(U_{\mm}^{(2)}(\tilde{\A})\right)^{-1};
\end{align}
we omit the details. We complete the proof by cases.

\textit{Case 1: Assume that $t=1$.}
Then $\sR_{-+}\A = \A$ and $\sR_{--}\A = \sR_{+-}\A$.
By \eqref{eq:Uconjrels}, it follows that $\left(U_{\mm}^{(2)}(\sR_{-+}\tilde{\A})\right)^2 = 1$; the properties of the double sine function also imply that $U_{\mm}^{(2)}(\tilde{\A}) > 0$ (because all the terms in the product defining it are positive), so $U_{\mm}^{(2)}(\tilde{\A}) = 1$.
We have $Z_{\mm\infty_2}(s,\A) = Z_{\mm\infty_1\infty_2}(s,\tilde\A)$, so by \Cref{prop:Zprimesum}
\begin{equation}
Z_{\mm\infty_2}'(0,\A)
= -\log\!\left(U_{\mm}^{(1)}(\tilde{\A})U_{\mm}^{(2)}(\tilde{\A})\right) 
= -\log\!\left(U_{\mm}^{(1)}(\tilde{\A})\right).
\end{equation}

\textit{Case 2: Now assume that $t=2$.}
Write $\A = \tilde{\A} \cup \sR_{-+}\tilde{\A}$, $\sR\A = \sR_{+-}\tilde{\A} \cup \sR_{--}\tilde{\A}$, and
\begin{align}
Z_{\mm\infty_2}(s,\A)
&= Z_{\mm\infty_1\infty_2}(s,\tilde{\A}) + Z_{\mm\infty_1\infty_2}(s,\sR_{-+}\tilde{\A}).
\end{align}
Thus, by \Cref{prop:Zprimesum},
\begin{align}
Z_{\mm\infty_2}'(0,\A)
&= -\log\!\left(U_{\mm}^{(1)}(\tilde{\A})U_{\mm}^{(2)}(\tilde{\A})\right) - \log\!\left(U_{\mm}^{(1)}(\sR_{-+}\tilde{\A})U_{\mm}^{(2)}(\sR_{-+}\tilde{\A})\right) \\
&= -\log\!\left(U_{\mm}^{(1)}(\tilde{\A})U_{\mm}^{(2)}(\tilde{\A})U_{\mm}^{(1)}(\sR_{-+}\tilde{\A})U_{\mm}^{(2)}(\sR_{-+}\tilde{\A})\right).
\end{align}
Therefore, it follows that $Z_{\mm\infty_2}'(0,\A) = -2\log\!\left(U_{\mm}^{(1)}(\tilde{\A})\right)$.

We've now shown that $Z_{\mm\infty_2}'(0,\A) = -t\log\!\left(U_{\mm}^{(1)}(\tilde{\A})\right)$ in all cases. It follows that $U_{\mm}^{(1)}(\tilde{\A})$ depends only on $\Upsilon_\mm(\A)$, and in particular, $U_{\mm}^{(1)}(\A) := U_{\mm}^{(1)}(\tilde{\A})$ is well-defined.
\end{proof}

\subsection{Telescoping the product}

We need the following identity for the double sine function.
\begin{lem}
$\SS_2(\omega_1+\omega_2-z;\omega_1,\omega_2) = \SS_2(z;\omega_1,\omega_2)^{-1}$.
\end{lem}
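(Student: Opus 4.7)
The plan is to unfold the definition of $\SS_2$ directly. Recall from equation \eqref{eq:dsinegamma} in the excerpt that
\[
\SS_2(z;\omega_1,\omega_2) = \frac{\Gamma_2(\omega_1+\omega_2-z;\omega_1,\omega_2)}{\Gamma_2(z;\omega_1,\omega_2)}.
\]
So I would simply substitute $z \mapsto \omega_1+\omega_2-z$ in this defining identity. The numerator becomes $\Gamma_2(\omega_1+\omega_2-(\omega_1+\omega_2-z);\omega_1,\omega_2) = \Gamma_2(z;\omega_1,\omega_2)$, and the denominator becomes $\Gamma_2(\omega_1+\omega_2-z;\omega_1,\omega_2)$. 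Thus
\[
\SS_2(\omega_1+\omega_2-z;\omega_1,\omega_2) = \frac{\Gamma_2(z;\omega_1,\omega_2)}{\Gamma_2(\omega_1+\omega_2-z;\omega_1,\omega_2)},
\]
which is manifestly the reciprocal of $\SS_2(z;\omega_1,\omega_2)$.

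There is essentially no obstacle here: the statement is a tautological consequence of the symmetric way $\SS_2$ is built from $\Gamma_2$, provided one is working in the region where $\Gamma_2$ and its reciprocal are both defined and nonzero. Because the product formula \eqref{eq:dgprod} extends $\Gamma_2(z;\omega_1,\omega_2)$ to a meromorphic function of $z$ (with $\omega_1,\omega_2$ fixed so that $\omega_2/\omega_1 \nin \R_{\leq 0}$), both sides of the asserted identity are meromorphic in $z$, and the equality of meromorphic functions follows from the equality on any open subset. The only thing one might note is that if $z$ or $\omega_1+\omega_2-z$ happens to be a pole of one $\Gamma_2$, then the matching factor on the other side produces a zero, so the ratio is well-defined as a meromorphic identity.

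Hence the proof is a one-line substitution, and no further work (e.g.\ involving the contour-integral or product representation of $\Gamma_2$) is required.
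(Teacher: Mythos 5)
Your proof is correct and matches the paper's approach exactly: the paper's proof is the one-line remark "Follows directly from the definition of the double sine function, specifically \eqref{eq:dsinegamma}," which is precisely the substitution $z \mapsto \omega_1+\omega_2-z$ you carry out. Your added remarks about meromorphic continuation and matching poles/zeros are sound but not needed for the paper's level of detail.
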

\begin{proof}
Follows directly from the definition of the double sine function, specifically \eqref{eq:dsinegamma}.
\end{proof}

We also need the following identities for $\varpi_\r$, whose proofs are straightforward.
\begin{lem}\label{lem:taut}
Let $A \in \SL_2(\R)$ and $\r = \smcoltwo{r_1}{r_2} \in \R^2$. Then,
\begin{equation}
\varpi_{\r}(A\cdot\tau) = \varpi\left(\frac{\sympt{A^{-1}\r}{\tau}}{j(A,\tau)},A\cdot\tau\right).
\end{equation}
\end{lem}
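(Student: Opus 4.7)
The plan is to unwind the definition of $\varpi_\r$ and then verify the key identity
\begin{equation}
\sympt{\r}{A\cdot\tau} = \frac{\sympt{A^{-1}\r}{\tau}}{j_{\!A}(\tau)}
\end{equation}
by a direct computation using the symplectic invariance of $\SL_2$-matrices. Once this identity is established, the lemma follows immediately from the definition $\varpi_\r(\tau) := \varpi(\sympt{\r}{\tau},\tau)$ by substitution.

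First I would expand the left-hand side as $\varpi_{\r}(A\cdot\tau) = \varpi(\sympt{\r}{A\cdot\tau},\,A\cdot\tau)$, reducing the lemma to verifying the claimed identity for $\sympt{\r}{A\cdot\tau}$. Then I would use the fact that $A\smcoltwo{\tau}{1} = j_{\!A}(\tau)\smcoltwo{A\cdot\tau}{1}$, so
\begin{equation}
\sympt{\r}{A\cdot\tau} = \symp{\r}{\smcoltwo{A\cdot\tau}{1}} = \frac{1}{j_{\!A}(\tau)}\symp{\r}{A\smcoltwo{\tau}{1}}.
\end{equation}
Finally, since $A \in \SL_2(\R)$ preserves the standard symplectic form, $\symp{A\u}{A\v} = \symp{\u}{\v}$ for all $\u,\v$, and applying this with $\u = A^{-1}\r$ gives $\symp{\r}{A\v} = \symp{A^{-1}\r}{\v}$. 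Specializing to $\v = \smcoltwo{\tau}{1}$ yields
\begin{equation}
\sympt{\r}{A\cdot\tau} = \frac{1}{j_{\!A}(\tau)}\symp{A^{-1}\r}{\smcoltwo{\tau}{1}} = \frac{\sympt{A^{-1}\r}{\tau}}{j_{\!A}(\tau)},
\end{equation}
completing the proof.

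There is no genuine obstacle here; the lemma is essentially a tautology, and the only content is the observation that $\r \mapsto \sympt{\r}{\tau}$ transforms under $A \in \SL_2(\R)$ by the inverse of $A$ (acting on $\r$), coupled with the scaling factor $j_{\!A}(\tau)$ that converts between the projective representative $\smcoltwo{A\cdot\tau}{1}$ and the affine image $A\smcoltwo{\tau}{1}$. As a sanity check, one can alternatively verify the identity in coordinates by writing $A = \smmattwo{a}{b}{c}{d}$ and computing both sides explicitly; both routes give the same one-line verification.
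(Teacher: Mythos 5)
Your proof is correct and takes essentially the same route as the paper: both reduce the lemma to the identity $\sympt{\r}{A\cdot\tau} = \sympt{A^{-1}\r}{\tau}/j_{\!A}(\tau)$ and then substitute into the definition $\varpi_\r(\tau) = \varpi(\sympt{\r}{\tau},\tau)$. The only difference is that you actually spell out why the key identity holds (via $A\smcoltwo{\tau}{1} = j_{\!A}(\tau)\smcoltwo{A\cdot\tau}{1}$ and symplectic invariance of $A \in \SL_2(\R)$), whereas the paper just asserts it, so your version is a slightly more complete write-up of the same argument.
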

\begin{proof}
We use the identity $\sympt{\r}{A\cdot\tau} = \frac{\sympt{A^{-1}\r}{\tau}}{j(A,\tau)}$. Specifically,
\begin{align}
\varpi_\r(A\cdot\tau) 
&= \varpi\left(\sympt{\r}{A\cdot\tau},A\cdot\tau\right)
= \varpi\left(\frac{\sympt{A^{-1}\r}{\tau}}{j(A,\tau)},A\cdot\tau\right).
\end{align}
This completes the proof.
\end{proof}

\begin{lem}\label{lem:deltafrac1}
Let $b \in \Z$ and $\r = \smcoltwo{r_1}{r_2} \in \R^2$. If $z = \sympt{\r}{\tau}$, then
\begin{equation}
\frac{\varpi_{T^bS\r}(T^bS\cdot\tau)}{\varpi_{\r}(\tau)} = 
\ee{\frac{\tau-3+\tau^{-1}}{24} + \frac{(\tau-z)(1-z)}{4\tau}} \left(1-\ee{\frac{z}{\tau}}\right) 
\SS_2(z,\tau)^{-1}.
\end{equation}
\end{lem}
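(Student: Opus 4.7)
The plan is to identify the left-hand side with $\sfj{S}{z}{\tau}$ and then invoke Shintani's formula, \Cref{thm:shin5}, which gives exactly the right-hand side.

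First I would treat the numerator. Setting $A = T^bS$ in \Cref{lem:taut} with $\r$ replaced by $T^bS\r$ gives
\begin{equation}
\varpi_{T^bS\r}(T^bS\cdot\tau) = \varpi\!\left(\frac{\sympt{(T^bS)^{-1}(T^bS\r)}{\tau}}{j_{T^bS}(\tau)},\,T^bS\cdot\tau\right) = \varpi\!\left(\frac{z}{\tau},\,T^bS\cdot\tau\right),
\end{equation}
using $(T^bS)^{-1}(T^bS\r)=\r$, $j_{T^bS}(\tau)=\tau$, and $\sympt{\r}{\tau}=z$. Since $T^bS\cdot\tau = b - \tau^{-1} = S\cdot\tau + b$ and $\varpi(\,\cdot\,,\tau)$ is invariant under $\tau\mapsto\tau+1$ in its second argument (the factors $1-\ee{z+k(\tau+1)}$ equal $1-\ee{z+k\tau}$), the right-hand side simplifies to $\varpi(z/\tau,S\cdot\tau)$.

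Next I would divide by $\varpi_\r(\tau)=\varpi(z,\tau)$ to obtain
\begin{equation}
\frac{\varpi_{T^bS\r}(T^bS\cdot\tau)}{\varpi_\r(\tau)} = \frac{\varpi(z/\tau,\,S\cdot\tau)}{\varpi(z,\tau)} = \sfj{S}{z}{\tau},
\end{equation}
by the definition of the Shintani--Faddeev Jacobi cocycle at $A=S$ (see \Cref{defn:sfjacobimaster}, noting $j_S(\tau)=\tau$ and $\sympt{\mathbf{0}}{S\cdot\tau}=0$).

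Finally, applying Shintani's identity \Cref{thm:shin5} to $\sfj{S}{z}{\tau}$ yields exactly
\begin{equation}
\ee{\tfrac{\tau-3+\tau^{-1}}{24} + \tfrac{(\tau-z)(1-z)}{4\tau}}\bigl(1-\ee{z/\tau}\bigr)\SS_2(z,\tau)^{-1},
\end{equation}
completing the proof. There is no serious obstacle here: the argument is a bookkeeping reduction to $\sigma_S$ via translation invariance of $\varpi$ in $\tau$, and the analytic content is entirely supplied by \Cref{thm:shin5}. The only point requiring a small amount of care is checking the two identities $(T^bS)^{-1}(T^bS\r)=\r$ and $T^bS\cdot\tau \equiv S\cdot\tau \pmod{1}$, both of which are immediate.
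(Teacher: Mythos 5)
Your proof is correct and follows the paper's own argument almost exactly: both apply \Cref{lem:taut} with $A = T^bS$ to rewrite the numerator as $\varpi(z/\tau, -1/\tau + b)$, drop the integer shift $b$ by periodicity of $\varpi$ in the modular variable, recognize the resulting quotient as $\sfj{S}{z}{\tau}$, and then invoke \Cref{thm:shin5}. The only difference is that you make the identification with $\sfj{S}{z}{\tau}$ explicit rather than leaving it implicit, which is a reasonable clarification.
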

\begin{proof}
We have
$T^bS = \smmattwo{b}{-1}{1}{0}$. 
Thus, by \Cref{lem:taut},
\begin{align}
\varpi_{T^bS\r}(T^bS\cdot\tau) 
&= \varpi\left(\frac{\sympt{(T^{b}S)^{-1}T^{b}S\r}{\tau}}{\tau}, -\frac{1}{\tau} + c\right) 
= \varpi\left(\frac{z}{\tau}, -\frac{1}{\tau}\right),
\end{align}
where in the last step, we simplified the elliptic term and used periodicity in the modular coordinate.
We also have
$
\varpi_{\r}(\tau) = \varpi(z, \tau).
$
The claim now follows by \Cref{thm:shin5}.
\end{proof}

We actually need to transform the formula in \Cref{lem:deltafrac1} further. We will want to translate $z$ to some $\widetilde{z}$ in the fundamental domain of the (pseudo)lattice $\tau \Z + \Z$ so that we may compare the factors in Shintani's formula. We also replace $\r$ by $\{\r\}$.
\begin{lem}\label{lem:deltafrac2}
Let $\tau \in \HH$, $b \in \Z$, and $\r \in \R^2$. If $\widetilde{z} = \sympt{\{\r\}}{\tau}$, then
\begin{equation}
\frac{\varpi_{\{T^bS\r\}}(T^bS \cdot \tau)}{\varpi_{\{\r\}}(\tau)} 
= 
\ee{
\frac{\tau-3+\tau^{-1}}{24} + 
\frac{(\tau-\widetilde{z})(1-\widetilde{z})}{4\tau}
}
\SS_2(\widetilde{z},\tau)^{-1}. 
\end{equation}
\end{lem}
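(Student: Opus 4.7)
\emph{Plan.} The strategy is to deduce \Cref{lem:deltafrac2} from \Cref{lem:deltafrac1} by carefully tracking the change when passing from $\r$ to $\{\r\}$. Since $\{\r\}$ and $\{T^bS\r\}$ depend only on $\r \Mod{\Z^2}$, I would first reduce to the case $\r = \s$ where $\s := \{\r\} \in [-1,0) \times [0,1)$, and observe that $\widetilde z = \sympt{\s}{\tau}$ is precisely the variable $z$ appearing in \Cref{lem:deltafrac1}. A direct application of that lemma then yields
\[
\frac{\varpi_{T^bS\s}(T^bS\cdot\tau)}{\varpi_\s(\tau)}
= \ee{\tfrac{\tau-3+\tau^{-1}}{24} + \tfrac{(\tau-\widetilde z)(1-\widetilde z)}{4\tau}}\bigl(1 - \ee{\widetilde z/\tau}\bigr)\SS_2(\widetilde z,\tau)^{-1}.
\]

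The remaining step is to relate $\varpi_{T^bS\s}$ to $\varpi_{\{T^bS\s\}}$. From $T^bS = \smmattwo{b}{-1}{1}{0}$, the second coordinate of $T^bS\s$ equals $s_1 \in [-1,0)$, while the second coordinate of $\{T^bS\s\}$ lies in $[0,1)$. Hence the shift vector $\n := T^bS\s - \{T^bS\s\}$ has second component $n_2 = -1$ (and some integer first component $n_1$). Invoking \Cref{lem:ell} with this shift produces exactly one Pochhammer factor $(w,q)_{-1}^{-1} = 1 - w q^{-1}$. Using the identity $\sympt{A\s}{A\cdot\tau} = \sympt{\s}{\tau}/j_A(\tau)$ for $A = T^bS$ (so $j_A(\tau) = \tau$), a short calculation reduces the exponent in this factor to $\widetilde z/\tau + n_1$, and the integer $n_1$ disappears in $\ee{\cdot}$. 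This gives the clean identity
\[
\varpi_{T^bS\s}(T^bS\cdot\tau) = \bigl(1 - \ee{\widetilde z/\tau}\bigr)\,\varpi_{\{T^bS\s\}}(T^bS\cdot\tau).
\]
Substituting into the displayed equation above, cancelling the common factor $1 - \ee{\widetilde z/\tau}$, and noting that $\{T^bS\s\} = \{T^bS\r\}$ yields the lemma.

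The main obstacle is the sign bookkeeping: I must verify that the shift vector $\n$ has $n_2 = -1$ (rather than $0$ or $+1$) so that \Cref{lem:ell} produces a single Pochhammer factor whose exponent matches the extra $(1-\ee{\widetilde z/\tau})$ term from \Cref{lem:deltafrac1}. This ultimately rests on the (a priori asymmetric) convention in \Cref{defn:fracpart} that $\{\r\}_1 \in [-1,0)$ rather than $[0,1)$; indeed, this convention appears to have been chosen precisely to make the cancellation exact, so that \Cref{lem:deltafrac2} holds with no residual phase or character.
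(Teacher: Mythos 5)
Your proof is correct, and it takes a genuinely different (and cleaner) route than the paper's. The paper normalizes the \emph{target}, assuming without loss of generality that $\{T^bS\r\}=T^bS\r$, applies \Cref{lem:deltafrac1} with $z=\sympt{\r}{\tau}$, and then must translate $z$ to $\widetilde z = z - m\tau + 1$ with $m=\floor{r_2}$ arbitrary; this forces repeated use of the $\tau$-quasiperiodicity of $\SS_2$ from \Cref{prop:quasiperiodicity}, produces a product of $m$ sine factors in both the $\SS_2$ translation and the $\varpi_\r/\varpi_{\{\r\}}$ ratio, and requires a case split on $\sgn(m)$ (the paper only writes out $m\geq 0$). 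You instead normalize the \emph{source}, setting $\r=\{\r\}$ so that the $z$ in \Cref{lem:deltafrac1} is already $\widetilde z$, and then observe that the residual shift $T^bS\{\r\}-\{T^bS\{\r\}\}$ has second component exactly $-1$ because $(T^bS\{\r\})_2 = \{\r\}_1 \in [-1,0)$; this is precisely the observation that makes the convention $\{\r\}_1\in[-1,0)$ in \Cref{defn:fracpart} do its job. A single application of \Cref{lem:ell} with $m=-1$ then yields one Pochhammer factor $1-\ee{\widetilde z/\tau}$ that cancels the corresponding factor in \Cref{lem:deltafrac1}, with no quasiperiodicity, no sine products, and no case analysis. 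The one computational step you sketch---that the shift exponent reduces to $\widetilde z/\tau$ modulo $\Z$---does check out via $\sympt{T^bS\s}{T^bS\cdot\tau}=\sympt{\s}{\tau}/j_{T^bS}(\tau)=\widetilde z/\tau$. Your argument could profitably replace the paper's.
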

\begin{proof}
Write $\r = \smcoltwo{r_1}{r_2}$.
Note that $\{T^bS\r\} = T^bS\tilde\r$ for some $\tilde\r \con \r \Mod{\Z^2}$; it thus suffices without loss of generality to prove the lemma under the assumption that $\{T^bS\r\}=T^bS\r$; we make that assumption henceforth. Since $T^bS\r = \smcoltwo{br_1-r_2}{r_1}$, it follows that $0 \leq r_1 < 1$. Let $m=\floor{r_2}$; then, $\{\r\} = \smcoltwo{r_1-1}{r_2-m}$.

As in \Cref{lem:deltafrac1}, let $z = \sympt{\r}{\tau}$.
Using the 1-quasiperiodicity of the double sine function (from \Cref{prop:quasiperiodicity}), we rewrite the formula in \Cref{lem:deltafrac1} as follows.
\begin{align}
\frac{\varpi_{T^bS\r}(T^bS\tau)}{\varpi_{\r}(\tau)} 
&= 
\ee{\tfrac{\tau-3+\tau^{-1}}{24} + \tfrac{(\tau-z)(1-z)}{4\tau}} \left(1-\ee{\tfrac{z}{\tau}}\right) 
\SS_2(z,\tau)^{-1} \\
&=
\ee{\tfrac{\tau-3+\tau^{-1}}{24} + \tfrac{(\tau-z)(1-z)}{4\tau}} \left(1-\ee{\tfrac{z}{\tau}}\right) 
\frac{\SS_2(z+1,\tau)^{-1}}{2\sin\left(\frac{\pi z}{\tau}\right)} \\
&=
\ee{\tfrac{\tau-3+\tau^{-1}}{24} + \tfrac{(\tau-z)(1-z)}{4\tau}} \left(1-\ee{\tfrac{z}{\tau}}\right) 
\frac{\SS_2(z+1,\tau)^{-1}}{-i\left(\ee{\frac{z}{2\tau}}-\ee{-\frac{z}{2\tau}}\right)} \\
&=
\ee{\tfrac{\tau-3+\tau^{-1}}{24} + \tfrac{(\tau-z)(1-z)}{4\tau}} \ee{\tfrac{z}{2\tau}-\tfrac{1}{4}}
\SS_2(z+1,\tau)^{-1} \\
&=
\ee{\tfrac{\tau-3+\tau^{-1}}{24} + \tfrac{(\tau-(z+1))(1-(z+1))}{4\tau}}
\SS_2(z+1,\tau)^{-1}.
\end{align}
Assume $m \geq 0$. The case $m < 0$ may be handled similarly and is omitted.
Note that $z+1 = \tilde{z}+m\tau$.
By repeated use of the $\tau$-quasiperiodicity property of the double sine function (from \Cref{prop:quasiperiodicity}), we have
\begin{align}\label{eq:deltop}
\frac{\varpi_{T^bS\r}(T^bS\tau)}{\varpi_{\r}(\tau)}
&= 
\ee{\tfrac{\tau-3+\tau^{-1}}{24} + \tfrac{(\tau-(z+1))(1-(z+1))}{4\tau}}
\left(\prod_{k=0}^{m-1} 2\sin\left(\pi(\widetilde{z}+k\tau)\right)\right)
\SS_2(\widetilde{z},\tau)^{-1}. 
\end{align}
We also need to relate $\varpi_{\r}(\tau)$ to $\varpi_{\{\r\}}(\tau)$, which we do using the $q$-product:
\begin{align}
\frac{\varpi_{\{\r\}}(\tau)}{\varpi_{\r}(\tau)}
&= \prod_{k=0}^{m-1} \left(1 - \ee{\widetilde{z}+k\tau}\right) \\
&= \prod_{k=0}^{m-1} \left(-i \ee{\foh \left(\widetilde{z}+k\tau\right)}\right)\frac{\ee{\foh (\widetilde{z}+k\tau)} - \ee{-\foh (\widetilde{z}+k\tau)}}{i} \\
&= \prod_{k=0}^{m-1} \ee{\tfrac{2\widetilde{z}-1}{4} + \tfrac{k\tau}{2}}\left(2\sin\left(\pi(\widetilde{z}+k\tau)\right)\right) \\
&= \ee{\tfrac{m(2\widetilde{z}-1)}{4} + \tfrac{m(m-1)\tau}{4}} \prod_{k=0}^{m-1} 2\sin\left(\pi(\widetilde{z}+k\tau)\right). \label{eq:delbot}
\end{align}
Dividing \eqref{eq:deltop} by \eqref{eq:delbot}, and using the relation  
$z+1 = \widetilde{z}+m\tau$, 
we obtain (after some algebra)
\begin{align}
\frac{\varpi_\r(T^cS\tau)}{\varpi_{\{\r T^cS\}}(\tau)}
&= 
\ee{\tfrac{\tau-3+\tau^{-1}}{24} + \tfrac{(\tau-(z+1))(1-(z+1))}{4\tau}
- \tfrac{m(2\widetilde{z}-1)}{4} - \tfrac{m(m-1)\tau}{4}
}
\SS(\widetilde{z};1,\tau)^{-1} \\
&= 
\ee{
\tfrac{\tau-3+\tau^{-1}}{24} + 
\tfrac{(\tau-\widetilde{z})(1-\widetilde{z})}{4\tau}
} 
\SS(\widetilde{z};1,\tau)^{-1}. 
\end{align}
We have now proved the lemma.
\end{proof}

We are now in a position to relate the Stark--Tangedal--Yamamoto invariant to an RM value of the Shintani--Faddeev modular cocycle.
\begin{prop}\label{prop:almost}
Let $\OO$ be an order in a real quadratic field $F$, and let $\mm$ be an integral ideal of $\OO$.
Let $\A \in \Clt_{\mm\infty_1\infty_2}(\OO)$, 
and let $(\r,\beta)$ be a reduced representative of $\tilde\Upsilon_\mm(\A)$.
Associate to $\beta$ and $\r$ the HJ cycle data $k, \ell \in \Z$, $A \in \Gamma_\r$, and $\beta_i, w_i \in F$.
Identify $\beta$ with $\rho_1(\beta)$.
Then,
\begin{align}
U^{(1)}_{\mm}(\A)^{-1}
&= \ee{-\tfrac{1}{24}\gamma(A)-\tfrac{1}{4}\lambda_\r(A)}\sf{\r}{A}{\beta},
\end{align}
where
\begin{equation}
\gamma(A) := \sum_{n=0}^{k\ell-1}\left(\beta_n - 3 + \beta_n^{-1}\right) = k\sum_{n=0}^{\ell-1}\left(\beta_n - 3 + \beta_n^{-1}\right)
\end{equation}
and 
\begin{equation}
\lambda_\r(A) := \sum_{n=0}^{k\ell-1} \frac{(\beta_n-w_n)(1-w_n)}{\beta_n}.
\end{equation}
\end{prop}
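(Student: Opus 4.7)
The plan is to telescope $\sf{\r}{A}{\beta}$ along the Hirzebruch--Jung decomposition $A = A_{0,k\ell}$ into the $k\ell$ consecutive factors $A_{n-1,n} = T^{b_{n-1}} S$, and to evaluate each telescope factor via \Cref{lem:deltafrac2}. First, for $\tau \in \HH$ near $\beta$, I would introduce the intermediate points $u_n := A_{n,0} A \cdot \tau$ for $n = 0, 1, \ldots, k\ell$, so that $u_0 = A \cdot \tau$, $u_{k\ell} = \tau$, and $u_{n-1} = T^{b_{n-1}} S \cdot u_n$; in the limit $\tau \to \beta$ each $u_n$ tends to $\beta_n = A_{n,0} \cdot \beta$. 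Using $A_{n-1,0} = T^{b_{n-1}} S \cdot A_{n,0}$ and the fact that $T^{b_{n-1}} S \in \SL_2(\Z)$, the characteristics satisfy $\r_{n-1} = \{T^{b_{n-1}} S \r_n\}$, and the cycle closes (i.e.\ $\r_0 = \r_{k\ell} = \r$) precisely because $A \in \Gamma_\r$ and $(\r,\beta)$ is reduced.

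Next, using the definition $\sf{\r}{A}{\tau} = \varpi_\r(A \cdot \tau)/\varpi_\r(\tau)$ together with $\r_0 = \r_{k\ell} = \r$, I would insert the telescoping intermediate ratios $\varpi_{\r_n}(u_n)$ to write
\begin{equation*}
\sf{\r}{A}{\tau} = \prod_{n=1}^{k\ell} \frac{\varpi_{\{T^{b_{n-1}} S \r_n\}}(T^{b_{n-1}} S \cdot u_n)}{\varpi_{\r_n}(u_n)}.
\end{equation*}
Each factor on the right matches the left-hand side of \Cref{lem:deltafrac2} with the substitutions $\tau \mapsto u_n$, $b \mapsto b_{n-1}$, and $\r \mapsto \r_n$, and thus evaluates to the explicit expression
\begin{equation*}
\ee{\tfrac{u_n - 3 + u_n^{-1}}{24} + \tfrac{(u_n - \widetilde{z}_n)(1 - \widetilde{z}_n)}{4 u_n}}\, \SS_2(\widetilde{z}_n, u_n)^{-1},
\end{equation*}
where $\widetilde{z}_n = \sympt{\r_n}{u_n}$.

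Finally, I would pass to the limit $\tau \to \beta$. Since the right-hand side of \Cref{lem:deltafrac2} is meromorphic on a neighborhood of $\tau = \beta_n$, each factor admits a well-defined limit, with $u_n \to \beta_n$ and $\widetilde{z}_n \to w_n = \sympt{\r_n}{\beta_n}$. Reindexing the product using the periodicity of the sequences $(\beta_n, w_n)$ (so that summing over $n = 1, \ldots, k\ell$ is equivalent to summing over $n = 0, \ldots, k\ell - 1$), the product of $\SS_2(w_n, \beta_n)^{-1}$ assembles into $U^{(1)}_\mm(\A)^{-1}$ by \Cref{defn:tangedalu}, while the exponential prefactors assemble into the sum $\gamma(A)/24 + \lambda_\r(A)/4$. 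Matching against the stated normalizations of $\gamma(A)$ and $\lambda_\r(A)$ then yields the claimed identity.

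The main obstacle is not analytic but combinatorial: carefully verifying the compatibility $\r_{n-1} = \{T^{b_{n-1}} S \r_n\}$ and confirming that the telescope closes at the two endpoints, which is where the Hirzebruch--Jung cycle data of \Cref{defn:cycledata} interlocks with the membership $A \in \Gamma_\r$. A secondary point, handled essentially for free by \Cref{lem:deltafrac2}, is that although individual values $\varpi_{\r_n}(\beta_n)$ may be singular, each telescope ratio is exhibited as a genuinely meromorphic function that is regular at $\beta$, which is what makes the limit $\tau \to \beta$ legitimate factor-by-factor.
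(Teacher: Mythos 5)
Your proposal is correct and follows essentially the same route as the paper's proof: telescope $\shin^{\r}_{\!A}$ along the HJ factorization $A = T^{b_0}S\cdots T^{b_{k\ell-1}}S$, apply \Cref{lem:deltafrac2} to each factor of the telescope, and pass to the limit $\tau\to\beta$ so that the double-sine factors reassemble into $U^{(1)}_\mm(\A)^{-1}$ while the exponential prefactors accumulate into $\gamma$ and $\lambda_\r$. Your bookkeeping (the intermediate points $u_n = A_{n,k\ell}\cdot\tau$, the identity $\r_{n-1}=\{T^{b_{n-1}}S\,\r_n\}$, and the closure $\r_0=\r_{k\ell}=\r$ using $A\in\Gamma_\r$ and reducedness of $(\r,\beta)$) is exactly the combinatorial content the paper relies on, just stated a bit more explicitly. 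One small point worth making exact rather than gesturing at: your telescope literally produces the exponent $\tfrac{1}{24}\sum_n(\beta_n-3+\beta_n^{-1}) + \tfrac{1}{4}\sum_n\tfrac{(\beta_n-w_n)(1-w_n)}{\beta_n}$, so to match the stated identity $U^{(1)}_\mm(\A)^{-1} = \ee{-\gamma(A)-\lambda_\r(A)}\shin^{\r}_{\!A}(\beta)$ one must read the proposition's definitions of $\gamma(A)$ and $\lambda_\r(A)$ as implicitly carrying the $\tfrac{1}{24}$ and $\tfrac{1}{4}$ normalizations (this is consistent with how they are used downstream, e.g.\ $\ee{\gamma(A)}^2 = \psi^2(A)$ in the proof of \Cref{thm:main} requires $\gamma(A) = \tfrac{1}{24}\Psi(A,\sqrt{j_A})$); your phrase ``matching against the stated normalizations'' correctly flags this but it would be cleaner to state the factor explicitly.
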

\begin{proof}
By \Cref{defn:tangedalu}, 
\begin{align}
U^{(1)}_{\mm}(\A)
= \prod_{n=0}^{k\ell-1} \Sin_2(w_n, \beta_n)
= \prod_{n=1}^{k\ell} \Sin_2(w_n, \beta_n).
\end{align}
We have $\beta_n = A_{n,k\ell}\cdot\beta$, $T^{b_{n-1}}S\cdot \beta_n = \beta_{n-1}$, and $T^{b_{n-1}}S\cdot \r_n = \r_{n-1}$. Thus, by \Cref{lem:deltafrac2},
\begin{equation}
\Sin_2(w_n,\beta_n)^{-1} = \ee{-\frac{\beta_n-3+\beta_n^{-1}}{24} - \frac{(\beta_n-w_n)(1-w_n)}{4\beta_n}}\lim_{\tau \to \beta}\frac{\varpi_{\r_{n-1}}(A_{n-1,k\ell}\cdot\tau)}{\varpi_{\r_n}(A_{n,k\ell}\cdot\tau)},
\end{equation} 
where the limit is taken over $\tau \in \HH$. Taking the product from $n=1$ to $n=k\ell$ and using the identities $A_{0,k\ell} = A$ and $A_{k\ell,k\ell}=I$, we have
\begin{align}
U^{(1)}_{\mm}(\A)^{-1}
&= \ee{-\tfrac{1}{24}\gamma(A) - \tfrac{1}{4}\lambda_\r(A)}\lim_{\tau \to \beta} \prod_{n=1}^{k\ell} \frac{\varpi_{\r_{n-1}}(A_{n-1,k\ell}\cdot\tau)}{\varpi_{\r_n}(A_{n,k\ell}\cdot\tau)} \\
&= \ee{-\tfrac{1}{24}\gamma(A) - \tfrac{1}{4}\lambda_\r(A)}\lim_{\tau \to \beta} 
\frac{\varpi_{\r}(A\cdot\tau)}{\varpi_{\r}(\tau)} \\
&= \ee{-\tfrac{1}{24}\gamma(A) - \tfrac{1}{4}\lambda_\r(A)}\sf{\r}{A}{\beta},
\end{align}
proving the proposition.
\end{proof}

To prove \Cref{thm:main} from \Cref{prop:almost}, we need to show that the root of unity factors in front agree up to a plus or minus sign. This is done in \Cref{sec:global} for the ``global phase factor'' $\ee{\gamma(A)}$ and in \Cref{sec:local} for the ``$\r$-dependent phase factor'' $\ee{\lambda_\r(A)}$.

We also need to show that the condition that $0 < \beta' < 1 < \beta$ can be removed. This is done as part of the final proof of \Cref{thm:main} in \Cref{sec:proof}.

\subsection{The global phase factor}\label{sec:global}

We relate the global phase factor to the eta-multiplier character $\psi$.

\begin{prop}\label{prop:globalphase}
Let $\beta$ be a real number purely periodic Hirzeburch--Jung continued fraction expansion
\begin{equation}
\beta = [\ol{b_0, b_1, \ldots, b_{\ell-1}}],
\end{equation}
and treat the indices as elements of $\Z/\ell\Z$, so $b_{n+\ell} = b_n$.
Let $\beta_0, \beta_1, \ldots, \beta_{\ell-1}$ be the real numbers with Hirzebruch--Jung continued fraction expansions given by the cyclic permutations of the Hirzebruch--Jung continued fraction expansion of $\beta = \beta_0$; that is,
\begin{equation}
\beta_n = [b_n, b_{n+1} \ldots, b_{n+\ell-1}].
\end{equation}
Let $\ell'$ be the length of the periodic part of the HJ continued fraction of the nontrivial Galois conjugate $\beta'$ of $\beta$, and let $P = T^{b_0}ST^{b_1}S\cdots T^{b_{\ell-1}}S$.
We have the following identities:
\begin{equation}
\sum_{n=0}^{\ell-1}\left(\beta_n - 3 + \beta_n^{-1}\right) 
= -3\ell + \sum_{n=0}^{\ell-1} b_n 
= \ell' - \ell
= \Phi(P)-3
= \Psi(P,\sqrt{j_P}).
\end{equation}
Here $\Phi$ and $\Psi$ are the integer-valued functions defined in \Cref{sec:eta} and appearing in the transformation law of the logarithm of the Dedekind eta function.
\end{prop}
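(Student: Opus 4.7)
The statement chains four equalities, and the plan is to verify each in turn. The first, $\sum_{n}(\beta_n - 3 + \beta_n^{-1}) = -3\ell + \sum_n b_n$, follows by rearranging the HJ recursion $\beta_n = b_n - \beta_{n+1}^{-1}$ to get $\beta_n^{-1} = b_{n-1} - \beta_{n-1}$ and summing cyclically over $n \in \Z/\ell\Z$; the two running sums of $\beta_n$ telescope against each other, leaving $\sum \beta_n + \sum \beta_n^{-1} = \sum b_n$. The second equality, $-3\ell + \sum b_n = \ell' - \ell$, is purely arithmetic: expanding $\beta = [\,\ol{\{2\}^{n_0}, m_1+3, \{2\}^{n_1}, \ldots, m_k+3, \{2\}^{n_k}}\,]_-$ gives $\ell = k + \sum_{i=0}^{k} n_i$ and $\sum b_n = 2\sum_i n_i + \sum_i m_i + 3k$, while reading off the HJ expansion of $\beta'$ from \Cref{lem:hjconj} gives $\ell' = k + \sum_{i=1}^{k} m_i$; substituting yields $\sum b_n - 3\ell = \sum_i m_i - \sum_i n_i = \ell' - \ell$.

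The main obstacle is the third equality $\Phi(P) - 3 = \sum b_i - 3\ell$, equivalently $\Phi(P) = \sum b_i - 3\ell + 3$. I will prove this by induction on $\ell$ using Rademacher's cocycle relation
\[
\Phi(AB) = \Phi(A) + \Phi(B) - 3\,\sgn(c_A\, c_B\, c_{AB})
\]
applied to the telescoping product $P_{k+1} = P_k \cdot (T^{b_k}S)$. The crucial step is verifying that $c_{P_k} > 0$ for every $k \geq 1$, so that (since $c_{T^{b_k}S}=1$) the sign correction is always $+1$. A direct matrix computation shows the $(2,1)$-entry of $P_k$ satisfies $c_{k+1} = b_k c_k - c_{k-1}$ with $c_0 = 0$ and $c_1 = 1$, the classical HJ-denominator recursion; the hypothesis $b_i \geq 2$ then makes the differences $c_{k+1} - c_k = (b_k-1)c_k - c_{k-1}$ non-decreasing, forcing $c_k \geq 1$ for $k \geq 1$. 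Combined with the base case $\Phi(T^{b_0}S) = b_0$ (a direct evaluation using $s(0,1) = 0$), the induction step $\Phi(P_{k+1}) = \Phi(P_k) + b_k - 3$ telescopes to $\Phi(P) = b_0 + \sum_{i=1}^{\ell-1}(b_i - 3) = \sum b_i - 3\ell + 3$.

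The final equality $\Phi(P) - 3 = \Psi(P, \sqrt{j_P})$ follows from the defining relation \eqref{eq:Psi} between $\Psi$ and $\Phi$. Choosing the principal branch $\ep(\tau) = \sqrt{j_P(\tau)}$ corresponds to $\lambda(\tau) = \log(j_P(\tau))$, and using $c_P > 0$ from the previous step, the bracketed quantity in \eqref{eq:Psi} reduces to the $\tau$-independent constant $\log(-i(c\tau+d)) - \log(c\tau+d) = \log(-i) = -i\pi/2$; multiplying by $6/(\pi i)$ gives $\Psi(P, \sqrt{j_P}) - \Phi(P) = -3$, as desired.
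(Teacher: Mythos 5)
Your proof is correct, and for the first, second, and fourth equalities it closely tracks the paper's own argument: the cyclic pairing via the recursion $\beta_n + \beta_{n+1}^{-1} = b_n$, the entry-count from \Cref{lem:hjconj}, and the branch-of-logarithm computation reducing $\Psi - \Phi$ to $\frac{6}{\pi i}\log(-i) = -3$ once $c_P > 0$ is established. (Your expansion of $\beta$ into runs of $2$s and $m_i+3$s gives the same $\ell' = -2\ell + \sum b_n$ the paper derives by noting each $b_n$ contributes $b_n - 2$ entries; these are two bookkeepings of the same count.)

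The third equality is where you genuinely diverge. The paper cites a formula of Meyer (via Zagier, \cite{meyer}, \cite{zagier}) identifying $\ell' - \ell$ with $\frac{a+d}{c} - 12 s(d,c) - 3 = \Phi(P) - 3$, treating that identity as known. You instead derive $\Phi(P) = \sum b_n - 3\ell + 3$ from scratch by inducting on the word length with Rademacher's cocycle relation $\Phi(AB) = \Phi(A) + \Phi(B) - 3\sgn(c_A c_B c_{AB})$, grounded in the base case $\Phi(T^{b_0}S) = b_0$ and the HJ-denominator recursion $c_{k+1} = b_k c_k - c_{k-1}$ (with $c_0 = 0$, $c_1 = 1$), whose positivity you establish by showing the differences are nondecreasing. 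This buys self-containment: no appeal to Meyer's theorem, and it simultaneously delivers the positivity of $c_P$ that the paper also needs (the paper notes it separately by ``a straightforward induction''). The small cost is that you invoke Rademacher's $\Phi$-cocycle relation, which the paper states transformation laws around but does not explicitly record; it is classical and in the cited reference \cite{rademacher}, so this is a legitimate if unrecorded dependency. Both routes are sound, and yours is arguably more transparent about why $\Phi(P) - 3 = \sum b_n - 3\ell$ holds.
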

\begin{proof}
First, note that the $\beta_n$ satisfy the recurrence $\beta_{n} = b_n - \beta_{n+1}^{-1}$, that is, $\beta_{n} + \beta_{n+1}^{-1} = b_n$. Thus, we may pair up the appropriate summands as follows.
\begin{align}
\sum_{n=0}^{\ell-1}\left(\beta_n - 3 + \beta_n^{-1}\right) 
&= -3\ell + \sum_{n=0}^{\ell-1}\beta_n + \sum_{n=0}^{\ell-1}\beta_n^{-1} \\ 
&= -3\ell + \sum_{n=0}^{\ell-1}\beta_n + \sum_{n=0}^{\ell-1}\beta_{n+1}^{-1} \\
&= -3\ell + \sum_{n=0}^{\ell-1}b_n. \label{eq:malt1}
\end{align}
This proves the first identity. To prove the second, consider the Hirzebruch--Jung continued fraction expansion of the nontrivial Galois conjugate $\beta'$; let $\ell'$ be the length the periodic part that expansion. By \Cref{lem:hjconj}, each $b_n$ contributes $b_n-2$ entries to the periodic part of the expansion of $\beta'$, so
\begin{equation}
\ell' = \sum_{n=0}^{\ell-1} b_n-2 = -2\ell + \sum_{n=0}^{\ell-1}b_n. \label{eq:malt2}
\end{equation}
By combining \eqref{eq:malt1} and \eqref{eq:malt2}, we have
\begin{equation}
\sum_{n=0}^{\ell-1}\beta_n - 3 + \beta_n^{-1} 
= -3\ell + (\ell' + 2\ell) = \ell' - \ell,
\end{equation}
proving the second relation.
For the relation to $\Phi$ and $\Psi$, we appeal to results of Meyer and of Zagier. Writing $P = \smmattwo{a}{b}{c}{d}$ and combining formulas on \cite[p.\ 155]{zagier} and \cite[p.\ 162]{zagier}, we obtain the formula
\begin{equation}
\ell' - \ell = \frac{a+d-2(d,c)}{c}-3,
\end{equation}
where $(d,c) := 6c s(d,c)$ and $s(d,c)$ denotes the Dedekind sum in our notation from \Cref{sec:eta}.
Zagier cites a proof by Meyer \cite{meyer}.
A straightforward induction on $\ell$ shows that $c>0$. Thus, by \eqref{eq:Phi} and \eqref{eq:Psi}, we obtain the two relations involving $\Psi$ and $\Phi$.
\end{proof}

\subsection{The $\r$-dependent phase factor}\label{sec:local}

We relate the $\r$-dependent phase factor to the theta-multiplier character $\chi_\r$.

\begin{prop}\label{prop:lambdaminus}
Let $\r$ and $A$ be as in \Cref{prop:almost}. Then,
\begin{equation}
\lambda_{\r}(A) = \lambda_{-\r}(A).
\end{equation}
\end{prop}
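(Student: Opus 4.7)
The plan is to collect $\lambda_\r(A)-\lambda_{-\r}(A)$ into a single sum and show it vanishes by a cancellation argument. Set $\v_n := A_{n,0}\r \in \Q^2$, so $\r_n = \{\v_n\}$; the analogous HJ data for the pair $(-\r,\beta)$ are $\tilde\r_n = \{-\v_n\}$ and $\tilde w_n := \sympt{\tilde\r_n}{\beta_n}$. Because $\beta_n$, the matrices $A_{m,n}$, and the integer $k$ depend only on $\beta$ (and $\Gamma_\r = \Gamma_{-\r}$), both $\lambda_\r(A)$ and $\lambda_{-\r}(A)$ are sums over the same range $n = 0,\ldots,k\ell-1$. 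A short algebraic manipulation yields
\[
\lambda_\r(A) - \lambda_{-\r}(A) \;=\; \sum_{n=0}^{k\ell-1} \frac{(w_n-\tilde w_n)(w_n+\tilde w_n-\beta_n-1)}{\beta_n}.
\]

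The next step is to compute $\r_n+\tilde\r_n$ by a case analysis on which coordinates of $\v_n$ lie in $\Z$, using the boundary conventions of \Cref{defn:fracpart}. In the generic case (neither coordinate of $\v_n$ integral), one obtains $\r_n+\tilde\r_n = \smcoltwo{-1}{1}$, whence $w_n+\tilde w_n = \beta_n+1$ and the $n$th summand vanishes. Setting $\langle x\rangle := x-\lfloor x\rfloor \in [0,1)$, a direct computation in the remaining three regimes---call position $n$ of \emph{type (i)} when $v_{n,1}\in\Z$, $v_{n,2}\notin\Z$; of \emph{type (ii)} when $v_{n,1}\notin\Z$, $v_{n,2}\in\Z$; of \emph{type (iii)} when both are integral---gives the summand as $2\langle v_{n,2}\rangle-1$, $2\langle v_{n,1}\rangle-1$, and $0$, respectively.

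The final step pairs off the type~(i) and type~(ii) contributions using the recursion $A_{n+1,n} = S^{-1}T^{-b_n} = \smmattwo{0}{1}{-1}{b_n}$, which forces $v_{n+1,1} = v_{n,2}$ and $v_{n+1,2} = -v_{n,1}+b_n v_{n,2}$. Hence a type~(ii) at position $n$ is followed by a type~(i) at position $n+1$ with $\langle v_{n+1,2}\rangle = \langle -v_{n,1}\rangle = 1-\langle v_{n,1}\rangle$, so the two contributions are negatives of one another. Conversely, type~(iii) at $n$ propagates to type~(iii) at $n+1$, so every type~(i) at $n+1$ must come from a type~(ii) at $n$; cyclic closure in $\Z/k\ell\Z$ is automatic because $\v_{n+k\ell} - \v_n = A_{n,0}(A^{-1}\r-\r)\in\Z^2$. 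The standing hypothesis $\r\notin\Z^2$ from \Cref{defn:cycledata} rules out the degenerate situation in which every position has type~(iii) (which would otherwise force $\r = A_{n,0}^{-1}\v_n\in\Z^2$). The main obstacle is the careful bookkeeping of \Cref{defn:fracpart} at the boundaries of $[-1,0)\times[0,1)$ in each of the four regimes; once that is handled, the pairing cancels all contributions and the proposition follows.
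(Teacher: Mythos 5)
Your proof is correct and takes essentially the same approach as the paper: a three-case analysis on which coordinates of $A_{n,0}\r$ are integral, showing the generic (Case 1/no-integral-coordinate) summand vanishes and the two boundary cases (your types (i)/(ii), the paper's Cases 3/2) cancel in consecutive pairs via the recursion $v_{n+1,1} = v_{n,2}$. The only differences are organizational---you collect the difference $\lambda_\r - \lambda_{-\r}$ into a single factored sum and work directly with $\v_n$ and $\langle\cdot\rangle$ rather than with $\r_n=\{A_{n,0}\r\}$, and you make the type-(iii) exclusion and cyclic closure over $\Z/k\ell\Z$ explicit where the paper leaves them implicit.
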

\begin{proof}
If $\r=\smcoltwo{-1}{0}$, the claim is trivial, so we assume $\r \neq \smcoltwo{-1}{0}$. 
Write
\begin{equation}
\lambda_{\r}(A) = \sum_{n=0}^{\ell-1} a_n \mbox{ where } a_n = \frac{(\beta_n-w_n)(1-w_n)}{\beta_n},
\end{equation}
and write 
\begin{equation}
\lambda_{-\r}(A) = \sum_{n=0}^{\ell-1} \tilde{a}_n \mbox{ where } \tilde{a}_n = \frac{(\beta_n-\tilde{w}_n)(1-\tilde{w}_n)}{\beta_n},
\end{equation}
in a similar manner. 
We have $w_n = \sympt{\r_n}{\beta_n}$ with $\r_n = \{A_{n,0}\r\}$ and $\tilde{w}_n = \sympt{\tilde{\r}_n}{\beta_n}$ with $\tilde{\r}_n = \{-A_{n,0}\r\}$. 
It follows from the assumption that $\r \neq \smcoltwo{-1}{0}$ that $\r_n$ and $\tilde{\r}_n$ are also not equal to $\smcoltwo{-1}{0}$.
We'll now compare the summands $a_n$ and $a_n'$ in three cases.

\textit{Case 1:} If $r_{n1} \neq -1$ and $r_{n2} \neq 0$, then $\tilde{w}_n = \beta_n+1-w_n$, so
\begin{align}
\tilde{a}_n 
&= \frac{(\beta_n-\tilde{w}_n)(1-\tilde{w}_n)}{\beta_n} 
= \frac{(-1+w_n)(-\beta_n+w_n)}{\beta_n} 
= \frac{(\beta_n-w_n)(1-w_n)}{\beta_n}
= a_n. \label{eq:aneq1}
\end{align}

\textit{Case 2:} If $r_{n1} \neq -1$ and $r_{n2} = 0$, then $\tilde{w}_n = 1-w_n$, so
\begin{equation}
\tilde{a}_n
= \frac{(\beta_n-\tilde{w}_n)(1-\tilde{w}_n)}{\beta_n} 
= \frac{(\beta_n-1+w_n)w_n}{\beta_n} 
= \frac{w_n(\beta_n-1+w_n)}{\beta_n}.
\end{equation}
A further algebraic calculation shows that
\begin{align}
\tilde{a}_n - a_n 
&= \frac{w_n(\beta_n-1+w_n) - (\beta_n-\tilde{w}_n)(1-\tilde{w}_n)}{\beta_n}
= \frac{-\beta_n+2\beta_n w_n}{\beta_n}
= -1+2w_n.
\end{align}
Moreover, since $r_{n2}=0$, it follows that $w_n = r_{n2}\beta_n-r_{n1} = -r_{n1}$. Thus, $\tilde{a}_n = a_n - (1+2r_{n1})$.

\textit{Case 3:} If $r_{n1} = -1$ and $r_{n2} \neq 0$, then $\tilde{w}_n = \beta_n+2-w_n$, so
\begin{align}
\tilde{a}_n
&= \frac{(\beta_n-\tilde{w}_n)(1-\tilde{w}_n)}{\beta_n} 
= \frac{(-2+w_n)(-\beta_n-1+w_n)}{\beta_n}
= \frac{(2-w_n)(\beta_n+1-w_n)}{\beta_n}.
\end{align}
A further algebraic calculation shows that
\begin{align}
\tilde{a}_n - a_n 
&= \frac{w_n(\beta_n-1+w_n) - (\beta_n-\tilde{w}_n)(1-\tilde{w}_n)}{\beta_n}
= \frac{\beta_n-2-2w_n}{\beta_n}
= 1+2(1-w_n)\beta_n^{-1}.
\end{align}
Moreover, since $r_{n1}=-1$, it follows that $w_n = r_{n2}\beta_n-r_{n1} = r_{n2}\beta_n+1$, so $(1-w_n)\beta_n^{-1} = -r_{n2}$. Thus, $\tilde{a}_n = a_n + (1-2r_{n2})$.

Since $r_{(n+1)1} = r_{n2}-1$, the instances of \textit{Case 2} and \textit{Case 3} occur in pairs: $r_{n2}=0 \iff r_{(n+1)1}=-1$. When this happens, we have
\begin{equation}
\tilde{a}_n + \tilde{a}_{n+1} = (a_n + a_{n+1}) - 2(r_{n1}+r_{(n+1)2}).
\end{equation}
However, $r_{(n+1)2} = \{-r_{n1}+b_n r_{n2}\} = \{-r_{n1}\} = -r_{n1}$ (because $r_{n1} \neq 1$). So, in fact,
\begin{equation}\label{eq:aneq2}
\tilde{a}_n + \tilde{a}_{n+1} = a_n + a_{n+1}.
\end{equation}

Using \eqref{eq:aneq1} and \eqref{eq:aneq2}, we see that
\begin{equation}
\lambda_{-\r}(A) = \sum_{n=0}^{\ell-1} \tilde{a}_n = \sum_{n=0}^{\ell-1} a_n = \lambda_{\r}(A),
\end{equation}
as desired.
\end{proof}

\begin{prop}\label{prop:lambdachi}
Let $\r$ and $A$ be as in \Cref{prop:almost}, and suppose $\r \nin \Z^2$. Then,
\begin{equation}
\ee{\tfrac{1}{2}\lambda_\r(A)} = \chi_\r(A).
\end{equation}
\end{prop}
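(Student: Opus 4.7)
The strategy is to extract the character identity from the functional equation of \Cref{thm:shincharacter}, by comparing it to the product formula of \Cref{prop:almost} applied to both $(\r,\beta)$ and to the reduction of $(-\r,\beta)$. The identity $\sf{\r}{A}{\beta}\sf{-\r}{A}{\beta} = \psi^2(A)\chi_\r(A)$ encodes $\chi_\r(A)$ automatically, and the task will be to isolate $\ee{\lambda_\r(A)}^2$ by matching phases. \Cref{prop:lambdaminus} matches the $\r$-dependent phases across the two applications, \Cref{prop:globalphase} disposes of the global eta-multiplier factor, and positivity of Tangedal's invariants forces the remaining transcendental modulus to cancel.

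Concretely, I would first observe that since $\r \nin \Z^2$ forces $-\r \nin \Z^2$, \Cref{prop:invariance} gives $\sf{-\r}{A}{\beta} = \sf{\{-\r\}}{A}{\beta}$, where $\{-\r\}$ is the reduced representative of $-\r$ modulo $\Z^2$ from \Cref{defn:fracpart}. Applying \Cref{prop:almost} separately to the reduced pairs $(\r,\beta)$ and $(\{-\r\},\beta)$---both reduced, since $\beta$ is common and each characteristic is in the fundamental domain---and multiplying yields
\[
\sf{\r}{A}{\beta}\,\sf{-\r}{A}{\beta} = \ee{2\gamma(A) + \lambda_\r(A) + \lambda_{\{-\r\}}(A)}\bigl(U_\r(\beta)\,U_{\{-\r\}}(\beta)\bigr)^{-1},
\]
where $U_\r(\beta) := \prod_n \Sin_2(w_n,\beta_n)$ and $U_{\{-\r\}}(\beta) := \prod_n \Sin_2(\tilde w_n,\beta_n)$. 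Because the $\tilde\r_n = \{-A_{n,0}\r\}$ appearing in the proof of \Cref{prop:lambdaminus} are exactly the $\r_n$-sequence for $\{-\r\}$, that proposition gives $\lambda_{\{-\r\}}(A) = \lambda_\r(A)$. Equating with $\psi^2(A)\chi_\r(A)$ via \Cref{thm:shincharacter}, one checks that $U_\r(\beta)\,U_{\{-\r\}}(\beta)$ is a positive real---each $\Sin_2(w_n,\beta_n)$ is positive since $0 < w_n < \beta_n + 1$, in which case the integral representation \Cref{prop:sinhintegral} has a real integrand---while the right-hand side is a root of unity. Taking moduli forces $U_\r(\beta)\,U_{\{-\r\}}(\beta) = 1$, and the remaining phase identity reads $\ee{2\gamma(A)}\,\ee{2\lambda_\r(A)} = \psi^2(A)\chi_\r(A)$. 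Finally, \Cref{prop:globalphase} together with the additivity of Rademacher's $\Psi$ on $\widetilde{\SL_2(\Z)}$ (applied to $A = P^k$) identifies $\ee{2\gamma(A)}$ with $\psi^2(A)$; cancelling yields the desired $\ee{\lambda_\r(A)}^2 = \chi_\r(A)$.

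The main subtlety is checking the positivity hypothesis $0 < w_n < \beta_n + 1$ uniformly over the full HJ cycle and for both the $\r$- and $\{-\r\}$-sequences. This reduces to the observation that the cycled $\r_n = \{A_{n,0}\r\}$ (and analogously $\tilde\r_n$) remain in the fundamental domain, with $r_{n1} \in [-1,0)$ and $r_{n2} \in [0,1)$; combined with $\beta_n > 1$, these ranges force $w_n = r_{n2}\beta_n - r_{n1} \in (0,\beta_n+1)$. A secondary bookkeeping point is confirming that the exponent $\gamma(A)$, when supplied with the normalization factor $1/24$ from \Cref{lem:deltafrac2}, coincides with $\Psi(A,\sqrt{j_A})/24$, so that $\ee{\gamma(A)} = \psi(A,\sqrt{j_A})$ and hence $\ee{2\gamma(A)} = \psi^2(A)$; this step reduces to \Cref{prop:globalphase} and the Rademacher formula \Cref{thm:psiformula}.
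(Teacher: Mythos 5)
Your proof is correct and takes essentially the same route as the paper: combine the functional equation of \Cref{thm:shincharacter} with the Shintani-factorization from \Cref{prop:almost} applied to both $\r$ and $-\r$ (reduced), invoke \Cref{prop:lambdaminus} to merge the $\lambda$-terms and \Cref{prop:globalphase} to recognize $\ee{\gamma(A)}^2 = \psi^2(A)$, then kill the transcendental factor by positivity of the double-sine products and the unimodularity of the other factors. The paper packages the two $\Sin_2$-products as $U^{(1)}_\mm(\A)$ and $U^{(1)}_\mm(\sR\A)$ rather than $U_\r$ and $U_{\{-\r\}}$, but this is only notation; the steps and the supporting lemmas are the same.
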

\begin{proof}
By \Cref{thm:shincharacter}, we have
\begin{equation}
\sf{\r}{A}{\beta}\sf{-\r}{A}{\beta} = \psi^2(A)\chi_\r(A). \label{eq:gum1}
\end{equation}
By \Cref{prop:almost} (and using \Cref{prop:globalphase} and \Cref{prop:lambdaminus}), we also have
\begin{align}
\sf{\r}{A}{\beta}\sf{-\r}{A}{\beta}
&= \left(\ee{\tfrac{1}{24}\gamma(A)+\tfrac{1}{4}\lambda_\r(A)}U^{(1)}_\mm(\A)^{-1}\right)\left(\ee{\tfrac{1}{24}\gamma(A)+\tfrac{1}{4}\lambda_{-\r}(A)}U^{(1)}_\mm(\sR\A)^{-1}\right) \\
&= \ee{\tfrac{1}{24}\gamma(A)}^2\ee{\tfrac{1}{4}\lambda_{\r}(A)+\tfrac{1}{4}\lambda_{-\r}(A)}\left(U^{(1)}_\mm(\A)U^{(1)}_\mm(\sR\A)\right)^{-1} \\
&= \psi^2(A)\ee{\tfrac{1}{4}\lambda_{\r}(A)+\tfrac{1}{4}\lambda_{-\r}(A)} \left(U^{(1)}_\mm(\A)U^{(1)}_\mm(\sR\A)\right)^{-1} \mbox{ by \Cref{prop:globalphase}} \\
&= \psi^2(A)\ee{\tfrac{1}{2}\lambda_{\r}(A)} \left(U^{(1)}_\mm(\A)U^{(1)}_\mm(\sR\A)\right)^{-1} \mbox{ by \Cref{prop:lambdaminus}}. \label{eq:gum2}
\end{align}
The left-hand side of \eqref{eq:gum2} is on the unit circle by \eqref{eq:gum1}, and $U^{(1)}_\mm(\A)$ and $U^{(1)}_\mm(\sR\A)$ are positive real numbers, so $U^{(1)}_\mm(\A)U^{(1)}_\mm(\sR\A)=1$. Equating \eqref{eq:gum1} and \eqref{eq:gum2}, we have
\begin{equation}
\psi^2(A)\ee{\tfrac{1}{2}\lambda_{\r}(A)} = \psi^2(A)\chi_\r(A),
\end{equation}
and thus $\ee{\tfrac{1}{2}\lambda_{\r}(A)} = \chi_\r(A)$.
\end{proof}

\section{Culmination of proofs of main theorems and concluding remarks}\label{sec:completing}

We now have everything we need to prove \Cref{thm:main} and \Cref{thm:field}.
We also simplify the statement of \Cref{thm:main} using the notation established throughout the paper. The proofs consist mainly of piecing together our various results on modular cocycles and partial zeta functions.

\subsection{Completing the proof of \Cref{thm:main}}\label{sec:proof}

We will now bring together our calculations on modular cocycles and zeta functions to prove \Cref{thm:main}. We restate the theorem in terms of a modified version of (the square of) the shin cocycle that allows us to leave the characters $\psi$ and $\chi_\r$ out of the statement.
\begin{defn}
Let $\r \in \Q$ and $A \in \Gamma_\r$. For $\tau \in \tDD_{\!A}$, define the \textit{samech modular cocycle}
\begin{equation}
\samech^\r_{\!A}(\tau) = (\psi^{-2}\chi_\r^{-1})(A) \, \sf{\r}{\!A}{\tau}^2.
\end{equation}
\end{defn}

\begin{thm}[Restatement of \Cref{thm:main}]\label{thm:mainrestate}
Let $\OO$ be an order in a real quadratic field $F \subset \R$, and let $\mm$ be a nonzero $\OO$-ideal.
Let $\A \in \Clt_{\mm\infty_2}(\OO) \setminus \ZClt_{\mm\infty_2}(\OO)$, and write
\begin{equation}
\Upsilon_\m(\A) = \GL_2(\Z) \cdot (\r,\beta)
\end{equation}
in the notation of \Cref{thm:correspondence}.
Let $n = \frac{2}{\phi^{-1}(\A)}$, where $\phi : \Clt_{\mm\infty_1\infty_2}(\OO) \to \Clt_{\mm\infty_2}(\OO)$ is the natural quotient map.
Then 
\begin{align}\label{eq:mainrestate}
\exp\!\left(n Z_{\mm\infty_2}'(0,\A)\right)
& = \samech^\r[\beta].
\end{align}
\end{thm}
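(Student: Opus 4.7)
The identity to prove converts a Dirichlet-series derivative at $s=0$ into a real-multiplication value of the samech cocycle, and virtually all of the analytic content is already in place in the preceding sections. The argument will be a four-step chain that combines the Tangedal class-invariant formula of \Cref{thm:ZprimeU}, the HJ-telescoping identity of \Cref{prop:almost}, and the two phase computations \Cref{prop:globalphase} and \Cref{prop:lambdachi}, after a preliminary reduction to a reduced representative.

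\textbf{Step 1 (zeta $\leadsto$ Tangedal).} Since $\A \in \Clt_{\mm\infty_2}(\OO) \setminus \ZClt_{\mm\infty_2}(\OO)$, and since by definition $nt = 2$ with $t = \abs{\phi^{-1}(\A)}$, \Cref{thm:ZprimeU} gives
\begin{equation*}
\exp\!\bigl(n Z_{\mm\infty_2}'(0,\A)\bigr) = \exp\!\bigl(-nt \log U_\mm^{(1)}(\A)\bigr) = U_\mm^{(1)}(\A)^{-2}.
\end{equation*}
The same theorem also ensures that $U_\mm^{(1)}(\A)$ depends only on $\Upsilon_\mm(\A)$, so I am free to compute it using any representative of the $\GL_2(\Z)$-orbit.

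\textbf{Step 2 (reduction to a reduced representative).} The quantity $\samech^\r[\beta]$ is $\SL_2(\Z)$-invariant on the orbit of $(\r,\beta)$: for $R \in \SL_2(\Z)$ with $j_R(\beta)>0$ (replacing $R$ by $-R$ if necessary), \Cref{thm:shinconj} gives $\sf{R\r}{RAR^{-1}}{R\cdot\beta} = \sf{\r}{A}{\beta}$; $\psi^2$ is invariant under conjugation because it is a character; and \Cref{lem:charsimp} together with $\det R = 1$ yields $\chi_{R\r}(RAR^{-1}) = \chi_\r(A)$. Lifting $\A$ to some $\tilde\A \in \phi^{-1}(\A) \subseteq \Clt_{\mm\infty_1\infty_2}(\OO)$ and invoking \Cref{prop:reduced}, I may therefore assume $(\r,\beta)$ is a \emph{reduced} representative of $\tilde\Upsilon_\mm(\tilde\A)$, which is precisely the standing hypothesis for the HJ cycle data of \Cref{defn:cycledata}.

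\textbf{Step 3 (HJ telescoping).} Applying \Cref{prop:almost} to $\tilde\A$ gives
\begin{equation*}
U_\mm^{(1)}(\A)^{-1} = \ee{-\gamma(A) - \lambda_\r(A)}\,\sf{\r}{A}{\beta},
\end{equation*}
where $A = P^k$ is the matrix attached to the HJ cycle data. By construction $P$ generates the stabilizer of $\beta$ in $\SL_2(\Z)$ with eigenvalue $>1$, and $k$ is the smallest positive power of $P$ lying in $\Gamma_\r$, so $A$ coincides with the positive generator $A_\beta^+$ used in the definition of $\shin^\r[\beta]$ in \Cref{sec:stable}. Squaring,
\begin{equation*}
U_\mm^{(1)}(\A)^{-2} = \ee{-2\gamma(A) - 2\lambda_\r(A)}\,\sf{\r}{A}{\beta}^2.
\end{equation*}

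\textbf{Step 4 (phase identification and assembly).} The global phase is handled by \Cref{prop:globalphase}: $\gamma(A) = k\Psi(P,\sqrt{j_P}) = \Psi(A,\sqrt{j_A})$ because $\Psi$ is additive under multiplication in $\widetilde{\SL_2(\Z)}$, so $\ee{-\gamma(A)}^2 = \psi(A,\sqrt{j_A})^{-2} = \psi^{-2}(A)$. The local phase is handled by \Cref{prop:lambdachi}, which states directly that $\ee{\lambda_\r(A)}^2 = \chi_\r(A)$. Substituting,
\begin{equation*}
U_\mm^{(1)}(\A)^{-2} = (\psi^{-2}\chi_\r^{-1})(A)\,\sf{\r}{A}{\beta}^2 = \samech^\r_A(\beta) = \samech^\r[\beta].
\end{equation*}
Combining this with Step 1 produces \eqref{eq:mainrestate}.

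\textbf{The main obstacle} is bookkeeping rather than any single deep step: every piece of analytic content has already been packaged into \Cref{lem:deltafrac2} (the telescoping identity) and the two phase computations. The subtleties to monitor are (i) verifying cleanly that the HJ matrix $A = P^k$ genuinely equals $A_\beta^+$, so that the value appearing in \Cref{prop:almost} is $\sf{\r}{A_\beta^+}{\beta}$ and hence $\shin^\r[\beta]$, and (ii) tracking factors of $24$ and $4$ in the phase exponentials carefully so that \emph{squaring} converts them exactly, with no leftover $\pm 1$ ambiguity, to $\psi^{-2}(A)$ and $\chi_\r^{-1}(A)$. This is ultimately why the squared cocycle $\samech^\r$ pins down the Stark invariant on the nose while $\shin^\r$ only recovers its square root up to a sign.
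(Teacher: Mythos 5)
Your proof is correct and follows essentially the same chain of lemmas as the paper: \Cref{thm:ZprimeU} to trade the differenced zeta derivative for Tangedal's invariant, \Cref{prop:almost} to express that invariant as a phase times $\sf{\r}{A}{\beta}$, and \Cref{prop:globalphase} and \Cref{prop:lambdachi} to identify the squared phases with $\psi^{-2}(A)$ and $\chi_\r^{-1}(A)$. The one place your route diverges slightly is the reduction to a reduced representative: the paper proves the formula for one reduced $(\r,\beta)$ first and then deduces $\GL_2(\Z)$-invariance of $\samech^\r[\beta]$ from the realness of the right-hand side (thereby absorbing the $\det R = -1$ complex conjugation in \Cref{thm:shinconj}), whereas you invoke only $\SL_2(\Z)$-invariance of $\samech^\r[\beta]$ and then pick the lift $\tilde\A$ whose $\SL_2(\Z)$-orbit $\tilde\Upsilon_\mm(\tilde\A)$ actually contains the given $(\r,\beta)$. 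Both are valid; to make your version airtight you should state explicitly that $\tilde\A$ is chosen so that $(\r,\beta) \in \tilde\Upsilon_\mm(\tilde\A)$ rather than an arbitrary lift, since otherwise the reduced representative of $\tilde\Upsilon_\mm(\tilde\A)$ is only $\GL_2(\Z)$-equivalent (not necessarily $\SL_2(\Z)$-equivalent) to $(\r,\beta)$ and $\SL_2$-invariance alone would not suffice.
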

\begin{proof}
\Cref{thm:shinconj} implies the following relation for $R \in \GL_2(\Z)$:
\begin{equation}\label{eq:samechgl}
\samech^{s_R(\beta)R\r}[R\cdot\beta]
= 
\begin{cases}
\vspace{4pt}
\samech^{\r}[\beta] & \mbox{if } \det(R) = 1, \\
\ol{\samech^{\r}[\beta]} & \mbox{if } \det(R) = -1.
\end{cases}
\end{equation}
If \eqref{eq:mainrestate} holds for any pair $(\r,\beta)$ in a $\GL_2(\Z)$-orbit, then it follows that both sides of \eqref{eq:samechgl} must be real, so it will follow that
\begin{equation}\label{eq:samechgl2}
\samech^{s_R(\beta)R\r}[R\cdot\beta]
= \samech^{\r}[\beta].
\end{equation}
Since $\A \nin \ZClt_{\mm,\rS}(\OO)$, we have $\r \nin \Z^2$; thus,
$\samech^{\r}[\beta]$ only depends on the class of $\r \Mod{\Z^2}$ by \Cref{prop:invariance}.
Thus, it suffices to prove \eqref{eq:mainrestate} for one pair $(\r,\beta) \in \Z \times \Fquad$ in each $(\Z^2 \semidirect \GL_2(\Z))$-orbit. Henceforth, we assume $(\r,\beta)$ is \textit{reduced} in the sense of \Cref{defn:reduced}.

By \Cref{thm:ZprimeU}, we have
\begin{equation}
Z_{\mm\infty_2}(0,\A) 
= -t\log\!\left(U^{(1)}_\mm(\A)\right)
= t\log\!\left(U^{(1)}_\mm(\A)^{-1}\right),
\end{equation}
where $U^{(1)}_\mm(\A)$ is the Stark--Tangedal--Yamamoto invariant, $t=1$ if $\OO$ has a unit $u \con 1 \Mod{\mm}$ with $\rho_1(u)>0$ and $\rho_2(u) <0$, and $t=2$ otherwise. The former condition (giving $t=1$) is equivalent to $\OO$ having a unit of negative norm (either $u$ or $-u$) that is congruent to $1 \Mod{\mm}$.
Taking $n = \frac{2}{t}$, we have
\begin{equation}\label{eq:nz}
nZ_{\mm\infty_2}(0,\A) 
= 2\log\!\left(U^{(1)}_\mm(\A)^{-1}\right)
= \log\!\left(U^{(1)}_\mm(\A)^{-2}\right),
\end{equation} 
where $n=2$ if $\OO$ has a unit of negative norm that is congruent to $1 \Mod{\mm}$, and $n=1$ otherwise.
Let $A = A_\beta^+ \in \Gamma_\r$ be the canonical generator of the stabilizer of $\beta$.
By \Cref{prop:almost}, we have
\begin{align}\label{eq:u1}
U^{(1)}_{\mm}(\A)^{-1}
&= \ee{-\tfrac{1}{24}\gamma(A)-\tfrac{1}{4}\lambda_\r(A)}\sf{\r}{A}{\beta},
\end{align}  
where $\gamma(A)$ and $\lambda_\r(A)$ are the rational invariants defined in that proposition. Moreover, these invariants are related to metaplectic characters, as shown in \Cref{sec:global} and \Cref{sec:local}. Specifically, \Cref{prop:globalphase} says that
$\gamma(A) = \Psi(A,\sqrt{j_A})$, and thus $\ee{\tfrac{1}{24}\gamma(A)}^2 = \psi^2(A)$; \Cref{prop:lambdachi} says that $\ee{\tfrac{1}{2}\lambda_\r(A)} = \chi_\r(A)$. Squaring \eqref{eq:u1},
\begin{align}
\left(U^{(1)}_{\mm}(\A)\right)^2
&= \ee{\tfrac{1}{24}\gamma(A)}^{-2}\ee{\tfrac{1}{2}\lambda_\r(A)}^{-1}\left(\sf{\r}{A}{\beta}\right)^2
= (\psi^{-2}\chi_\r^{-1})(A)\left(\sf{\r}{A}{\beta}\right)^2
= \samech^\r[\beta].
\end{align}
Finally, exponentiating \eqref{eq:nz}, we have
\begin{equation}
\exp(nZ_{\mm\infty_2}(0,\A)) = \left(U^{(1)}_\mm(\A)\right)^{-2} = \samech^\r[\beta]. \tag*{\qedhere}
\end{equation}
\end{proof}

We now give a corollary stating the key functional equations and basic properties satisfied by RM values the samech cocycle.
\begin{cor}\label{cor:samechrm}
Suppose that $\r \in \Q^2 \setminus \Z^2$ and $\beta \in \Rquad$. The following hold.
\begin{itemize}
\item[(1)] $\samech^\r[\beta]\samech^{-\r}[\beta] = 1$. 
\item[(2)] $\samech^\r[\beta] = \samech^{\r+\n}[\beta]$ for any $\n \in \Z^2$
\item[(3)] $\samech^{s_R(\beta)R\r}[R\cdot\beta] = \samech^{\r}[\beta]$ for any $R \in \GL_2(\Z)$.
\item[(4)] $\samech^\r[\beta]$ is a positive real number.
\end{itemize}
\end{cor}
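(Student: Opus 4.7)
The plan is to handle parts (1) and (2) directly from earlier functional equations of $\shin^\r$, and to deduce (3) and (4) simultaneously by routing through the Main Theorem and the correspondence of \Cref{thm:correspondence}. Throughout, let $A = A^+_\beta$; note $\Gamma_{-\r} = \Gamma_\r$, so the same $A$ appears in the definitions of $\samech^\r[\beta]$ and $\samech^{-\r}[\beta]$.

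For (1), I would first check from the closed form in \Cref{lem:charsimp} that $\chi_{-\r}(A) = \chi_\r(A)$, since both $\delta_2$ and the symplectic pairing are unchanged under $\r \mapsto -\r$. Multiplying the two defining expressions then reduces (1) to the identity $\sf{\r}{A}{\beta}\sf{-\r}{A}{\beta} = \psi^2(A)\chi_\r(A)$ supplied by \Cref{thm:shincharacter}: the two sides become $(\psi^{-4}\chi_\r^{-2})(A) \cdot (\psi^2\chi_\r)(A)^2 = 1$. For (2), since $\r \nin \Z^2$ implies $\r + \n \nin \Z^2$, the ``otherwise'' case of \Cref{prop:invariance} gives $\sf{\r+\n}{A}{\beta} = \sf{\r}{A}{\beta}$; combined with the lemma immediately following \Cref{thm:thetamod} (which asserts $\chi_{\r+\n} = \chi_\r$ on $\Gamma_\r$), squaring yields (2).

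For (3) and (4), I would exhibit an arithmetic class whose Stark invariant equals $\samech^\r[\beta]$, so that positivity and $\GL_2(\Z)$-invariance fall out simultaneously. Take any $N \in \N$ with $N\r \in \Z^2$, set $\OO' = \ord(\beta\Z+\Z)$ and $\mm = N\OO'$; a direct check shows $\sympt{\r}{\beta}\mm \subseteq \beta\Z+\Z$, i.e.\ $(\r,\beta) \in \M_{\OO',\mm}$. By \Cref{thm:correspondence} there exists $\A \in \Clt_{\mm\infty_2}(\OO')$ with $\Upsilon_\mm(\A) = \GL_2(\Z)\cdot(\r,\beta)$, and the hypothesis $\r \nin \Z^2$ together with the description of $\ZClt$ in that theorem ensures $\A$ is not a zero class. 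The Main Theorem, in the form \Cref{thm:mainrestate}, then gives $\samech^\r[\beta] = \exp(nZ'_{\mm\infty_2}(0,\A))$. The right-hand side is a positive real number, proving (4); and it depends on $(\r,\beta)$ only through its $\GL_2(\Z)$-orbit, yielding (3).

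The main pitfall to avoid is attempting to prove (3) by direct appeal to \Cref{thm:shinconj}: that transformation law introduces a complex conjugation when $\det R = -1$, and combining it with the corresponding conjugation behaviors of $\psi^2$ (which is only invariant under $\SL_2(\Z)$-conjugation) and $\chi_\r$ would require a delicate bookkeeping exercise. Routing through the Main Theorem sidesteps this entirely, at the cost of needing the correspondence $\Upsilon_\mm$ to cover every $\GL_2(\Z)$-orbit with $\r \nin \Z^2$, which is precisely what the choice $\mm = N\OO'$ guarantees.
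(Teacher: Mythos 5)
Your proof is correct and takes essentially the same approach as the paper: (1) via \Cref{thm:shincharacter}, (2) via \Cref{prop:invariance}, and (3)--(4) via \Cref{thm:mainrestate} (the paper cites \eqref{eq:samechgl2}, which is the same observation you make by routing through $\Upsilon_\mm$). The one place you add useful detail is in part (1), where you explicitly verify $\chi_{-\r}(A) = \chi_\r(A)$ from \Cref{lem:charsimp} — a step the paper's terse citation of \Cref{thm:shincharacter} leaves to the reader.
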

\begin{proof}
Property (1) follows from \Cref{thm:shincharacter}. Property (2) follows from \Cref{prop:invariance}. Property (3) follows from \Cref{thm:mainrestate} via \eqref{eq:samechgl2}. Property (4) follows from \Cref{thm:mainrestate} because \eqref{eq:mainrestate} expresses $\samech^\r[\beta]$ as the exponential of a real number.
\end{proof}

\subsection{Stark units and conditional results}\label{sec:stark}

We now discuss the conditional implications of our results under the assumption of Tate's refinement of the Stark conjectures. In particular, we will complete the proof of \Cref{thm:field}.

\begin{proof}[Proof of \Cref{thm:field}]
For $\r \in \Z^2$ (and indeed $\r \in \foh\Z^2$), the conclusion follows from \Cref{thm:trivrmval} (even without assuming \Cref{conj:stark2}), so we may assume $\r \nin \Z^2$ henceforth.

Let $f$ be the conductor of $\beta$ (that is, $b^2-4ac = f^2\Delta_0$ for a fundamental discriminant $\Delta_0$ and a positive integer $f$). 
Set $A = A^+_\beta \in \Gamma_\r$. 
By \Cref{lem:fto1}, there is some $B \in G_f$ and some $\alpha$ of conductor $1$ such that $\beta = B\cdot\alpha$. Choose $n \in \N$ so that 
\begin{equation}
C := B^{-1}A^nB \in \bigcap_{\substack{\s \in \Q^2/\Z^2 \\ B\s - \r \in \Z^2}} \Gamma_\s.
\end{equation}
Then, by \Cref{thm:cllr}, we have
\begin{equation}
\shin^\r[\beta]^n
= \sf{\r}{A^n}{\beta} 
= \sf{\r}{B C B^{-1}}{B \cdot \alpha}
= \prod_{\substack{\s \in \Q^2/\Z^2 \\ B\s - \r \in \Z^2}} \sf{\s}{C}{\alpha}.
\end{equation}
The values $\sf{\s}{C}{\alpha}$ are integral powers of $\shin^\s[\alpha]$, and $\alpha$ has conductor $1$. Thus, (2) implies (1). It suffices to prove (2) on the assumption of \Cref{conj:stark2}. We henceforth assume $f=1$.

Let $w = \sympt{\r}{\beta} = r_2\beta-r_1$. We have defined $\mm$ to be the kernel of the map $\OO_F \to (w\OO_F+\cc)/\cc$ given by $1 \mapsto w$. In other words, $\mm$ is the largest $\OO_F$-ideal with the property that $(\r,\beta) \in \M_{\OO_F,\mm}$. Choose some $\A \in \Cl_{\mm\infty_2}(\OO_F)$ such that $\Upsilon(\A) = \GL_2(\Z) \cdot (\r + \Z^2,\beta)$. By \Cref{thm:mainrestate}, for some $n \in \{1,2\}$ and $A = A^+_\beta$, we have
\begin{equation}
\exp\!\left(n Z_{\mm\infty_2}'(0,\A)\right)
= \samech^\r[\beta]
= (\psi^{-2}\chi_\r^{-1})(A) \shin^\r[\beta]^2.
\end{equation}
Let $\e_\A = \exp\!\left(-Z_{\mm\infty_2}'(0,\A)\right)$.
By \Cref{prop:stark3}, it follows from \Cref{conj:stark2} that $\e_\A \in \OO_H^\times$, where $H = H_{\mm\infty_2}$, and $\e_\A^{1/2}$ is a unit in an abelian extension of $F$. Thus, $\samech^\r[\beta] = \e_\A^n \in \OO_H^\times$, and $\shin^\r[\beta] = \pm \sqrt{(\psi^{2}\chi_\r)(A)}\,\e_\A^{-n/2}$ is a unit in an abelian extension of $F$.
\end{proof}

In \Cref{conj:field}, we no longer define $\mm$ to be the kernel of the map from $\OO \to (w\OO_f+\cc)/\cc$ sending $1 \mapsto w$ (taking here $\OO = \ord(\cc)$). We avoid this because that kernel may not be an $\OO$-invertible ideal, so that the associated $\M_{\OO,\mm}$ would not appear in \Cref{thm:correspondence}. We may need to consider a noncanonical choice of a smaller $\OO$-invertible ideal $\mm$, as in \Cref{eg:notraygroupimage}. (This is a conservative choice---we have not disproved the possibly-stronger conjecture that $\mm$ can be taken to be the kernel of the map from $\OO \to (w\OO_f+\cc)/\cc$ sending $1 \mapsto w$.)

\subsection{Example}\label{sec:example}

For illustrative purposes, we give an example of a nontrivial RM value of the Shintani--Faddeev modular cocycle. This example is related by a Galois automorphism over $\Q(\sqrt{3})$ to the running example in \cite{koppindef,koppklf} and by Galois automorphisms over $\Q$ to the SIC-POVM in dimension $d=5$ \cite{koppsic}.

Consider the order $\OO = \Z[\sqrt{3}]$, which is the maximal order of $\Q(\sqrt{3})$ and has class number $1$. For the modulus $5\infty_2$, the ray class group $\Cl_{5\infty_2}(\OO) \isom \Z/8\Z$. Consider the ray class $\A = [\sqrt{3}\OO] \in \Cl_{5\infty_2}(\OO)$. 
Using \texttt{PARI}, we calculate to high precision the exponentiated derivative partial zeta value 
\begin{equation}\label{eq:nunumber}
\exp(Z_{5\infty_2}'(0,\A)) 
= \exp(2 \zeta_{5\infty_2}'(0,\A)) 
\approx 5.54060902431686855379\ldots
\end{equation}
by writing $\zeta_{5\infty_2}'(0,\A)$ as a linear combination of $L$-functions of finite-order Hecke characters.
Specifically, we take \texttt{F = bnfinit(x{\textasciicircum}2-3)} (a ``Buchmann's number field'' object representing the field $F$ with certain auxiliary data) and \texttt{C = bnrinit(F,[5,[1,0]],1)} (a ``Buchmann's number rays'' object representing the ray class group $\Cl_{5\infty_2}(\OO)$ with certain auxiliary data). The command \texttt{Lvals = bnrL1(C,0,6)} produces a list containing the values of $L'(0,\chi^j)$ for an order $8$ Hecke character $\chi$ of modulus $5\infty_2$ satisfying $\chi([\sqrt{3}\OO]) = \ee{\tfrac{1}{8}}$. We may recover $\zeta_{5\infty_2}'(0,\A) = \frac{1}{8}\sum_{j=0}^7 \ee{-\tfrac{j}{8}}L'(0,\chi^j)$.

In the notation of \Cref{thm:main}, we take $\mm=5\OO$, $\A_0 = [\OO]$, $\bb = \OO$, $\bb\mm = \alpha(\beta\Z+\Z)$ with $\alpha = 5$ and $\beta = \sqrt{3}$, and $\r = \smcoltwo{0}{-1/5}\equiv \smcoltwo{0}{4/5} \Mod{\Z^2}$.
By \Cref{thm:main} and a sign computation, we obtain
\begin{align}\label{eq:shinexample1}
\shin^{\tiny \smcoltwo{0}{4/5}}\!\left[\sqrt{3}\right] 
= \sf{\tiny \smcoltwo{0}{4/5}}{\tiny \smmattwo{26}{45}{15}{26}}{\sqrt{3}} 
= e^{-7\pi i/20} \sqrt{\exp(Z_{5\infty_2}'(0,\A))}.
\end{align}
Conditional on the Stark conjectures,
\begin{align}\label{eq:shinexample2}
\shin^{\tiny \smcoltwo{0}{4/5}}\!\left[\sqrt{3}\right] 
\stackrel{?}{=} e^{-7\pi i/20} \sqrt{\nu} 
\end{align}
where $\nu \approx 5.54060902431686855379\ldots$ is a root of the polynomial
\begin{align}
x^8 
&- (8 + 5\sqrt{3})x^7 
+ (53 + 30\sqrt{3})x^6 
- (156 + 90\sqrt{3})x^5 \\
&+ (225 + 130\sqrt{3})x^4
- (156 + 90\sqrt{3})x^3 
+ (53 + 30\sqrt{3})x^2 
- (8 + 5\sqrt{3})x 
+ 1.
\end{align}
We have done additional numerical checks of \eqref{eq:shinexample1} and \eqref{eq:shinexample2} by computing $\sf{\tiny \smcoltwo{0}{4/5}}{\tiny \smmattwo{26}{45}{15}{26}}{\sqrt{3}}$ in Mathematica, both a limit of $q$-Pochhammer symbols and in terms of a product of double sine values, applying numerical integration to \Cref{prop:sinhintegral} in the latter case.

For computational purposes, it is more efficient to compute RM values of $\shin^\r$ via a product of double sine values than as a limit of $q$-Pochhammer symbols; the comparison is comparable to that described in \cite[Sec.~3]{koppklf}. 
The comparative asymptotic efficiency of using double sine to using the methods for $L$-functions (of finite-order Hecke characters) implemented in PARI requires further investigation.
More will be said about computational methods in \cite{afk}.

\subsection{Asymptotics of the $q$-Pochhammer symbol}\label{sec:asymptotics}

We conclude with a few remarks on the asymptotics of the $q$-Pochhammer symbol near the real line. Specifically, the analytic continuation properties of the $\shin$-function explain the behavior of the function $\varpi_\r(\tau) = (e^{2\pi i (r_2\tau-r_1)},e^{2\pi i \tau})_\infty$ as $\tau \to \beta \in \Rquad$ along modular geodesics.

For $\beta \in \Q$, the behavior of $\varpi_\r(\tau)$ as $\tau \to \beta$ is intimately connected to the dilogarithm function, as well as to the \textit{cyclic quantum dilogarithm}, as discussed in \Cref{sec:rational}. A much more comprehensive study of the behavior of the $q$-Pochhammer symbol near roots of unity may be found in the PhD thesis of Campbell Wheeler \cite{wheelerthesis}.

For real quadratic irrationals, the behavior of asympotic behavior as $\tau \to \beta$ along 
the modular geodesic between $\beta'$ and $\beta$ is given by the following theorem.
\begin{thm}
Let $\r \in \Q^2$ and $\beta \in \Rquad$. Let $\nu_{\r,\beta} = \shin^\r[\beta]$, which is related to a Stark class invariant by \Cref{thm:main}. Let $u$ be the smallest totally positive unit of the quadratic order $\OO = \colonideal{\beta\Z+\Z}{\beta\Z+\Z}$ such that $u>1$ and $(u-1)(r_2\beta-r_1) \in \beta\Z+\Z$. Then,
\begin{equation}\label{eq:varpiasympt}
\varpi_\r\!\left(\frac{\beta + i \beta' e^{-\log(u)t}}{1+i e^{-\log(u)t}}\right)
= \nu_{\r,\beta}^t(f_{\r,\beta}(t)+o(1)) \mbox{ as } t \to \infty.
\end{equation}
where $f_{\r,\beta}$ is a smooth function satisfying $f_{\r,\beta}(t+1) = f_{\r,\beta}(t)$. 
\end{thm}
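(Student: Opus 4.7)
The plan is to reduce the asymptotic question to a dynamical telescoping argument using the modular transformation law for $\varpi_\r$ along the hyperbolic axis of the canonical stabilizer of $\beta$ in $\Gamma_\r$.

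First I would identify the relevant matrix. For any $A \in \stab_{\SL_2(\Z)}(\beta)$ with $A\smcoltwo{\beta}{1} = u\smcoltwo{\beta}{1}$, the identity $\sympt{A\r}{A\cdot\beta} = j_{\!A}(\beta)^{-1}\sympt{\r}{\beta}$ gives $\sympt{(A-I)\r}{\beta} = \tfrac{1-u}{u}\sympt{\r}{\beta}$. Since $\beta$ is irrational, $(A-I)\r\in\Z^2$ is equivalent to $\sympt{(A-I)\r}{\beta}\in\beta\Z+\Z$, and since $u$ is a unit in the multiplier ring $\OO$ of $\beta\Z+\Z$, multiplication by $u$ preserves that lattice; the condition collapses to $(u-1)(r_2\beta-r_1)\in\beta\Z+\Z$. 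So the smallest $u>1$ satisfying the theorem's hypothesis determines $A = A_\beta^+$, the canonical generator of $\stab_{\Gamma_\r}(\beta)$, and $\nu_{\r,\beta} = \sf{\r}{A}{\beta} = \shin^\r[\beta]$.

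Next I would verify that $A$ translates the geodesic by $t\mapsto t+1$, then apply \Cref{thm:wannabem}. Conjugating $A$ to $\smmattwo{u^{1/2}}{0}{0}{u^{-1/2}}$ via $P=\smmattwo{\beta}{\beta'}{1}{1}$ and computing $P^{-1}\cdot\tau(t)$ directly on the geodesic gives $A\cdot\tau(t)=\tau(t+1)$. The modular transformation law then yields the recursion
\begin{equation}
\varpi_\r(\tau(t+1)) = \sf{\r}{A}{\tau(t)}\,\varpi_\r(\tau(t)).
\end{equation}
Set $h(t) := \varpi_\r(\tau(t))/\nu_{\r,\beta}^t$ and $\phi(t) := \sf{\r}{A}{\tau(t)}/\nu_{\r,\beta}$, so $h(t+1) = \phi(t)h(t)$ with $\phi(t)\to 1$. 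Since $\beta\in\tDD_{\!A}$ and $\sf{\r}{A}{\tau}$ is holomorphic there (with value $\nu_{\r,\beta}\neq 0$, as follows from \Cref{thm:shincharacter}), a Taylor expansion plus the direct estimate $\tau(t)-\beta=O(u^{-t})$ yields $\phi(t)-1=O(u^{-t})$.

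Third, the infinite product $P(t):=\prod_{k=0}^\infty \phi(t+k)$ converges absolutely and uniformly on compacta to a smooth nonvanishing function with $P(t)=1+O(u^{-t})$, and $f_{\r,\beta}(t):=h(t)P(t)$ is $1$-periodic by the telescoping identity $P(t)/P(t+1)=\phi(t)$. Thus $h(t) = f_{\r,\beta}(t)/P(t) = f_{\r,\beta}(t)(1+O(u^{-t}))$, and multiplying by $\nu_{\r,\beta}^t$ gives \eqref{eq:varpiasympt}. The main obstacle is the first step---matching the arithmetic hypothesis on $u$ to the group-theoretic condition $A\in\Gamma_\r$---which requires the symplectic pairing calculation combined with the lattice-stability of units in $\OO$; the remaining steps are a routine dynamical telescoping exploiting holomorphy at the attracting fixed point.
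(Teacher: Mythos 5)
Your proposal follows essentially the same strategy as the paper's proof: push the recursion $\varpi_\r$ satisfies under the modular transformation by $A = A_\beta^+$ along the geodesic between $\beta'$ and $\beta$, and extract a periodic limiting function. The main differences are cosmetic. You construct $f_{\r,\beta}$ via the infinite product $P(t) = \prod_{k\geq 0}\phi(t+k)$ rather than the sequential limit $\lim_n g(t+n)$ the paper uses; both work. You also spell out the translation between the arithmetic hypothesis $(u-1)(r_2\beta-r_1)\in\beta\Z+\Z$ and the group-theoretic condition $A\in\Gamma_\r$, which the paper leaves implicit, and you give a quantitative rate $\phi(t)-1=O(u^{-t})$ rather than the bare limit statement.

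One computational detail is off, though it traces back to a discrepancy in the paper itself. You claim that conjugating $A$ gives $\smmattwo{u^{1/2}}{0}{0}{u^{-1/2}}$ and that $A\cdot\tau(t)=\tau(t+1)$. Since $A\smcoltwo{\beta}{1}=u\smcoltwo{\beta}{1}$, diagonalizing yields $\smmattwo{u}{0}{0}{u^{-1}}$, which multiplies the disk coordinate by $u^{-2}$; with the theorem's parametrization (exponent $e^{-\log(u)t}$) the shift is therefore $\tau(t)\mapsto\tau(t+2)$, not $\tau(t+1)$. The paper's own proof uses $A^t\cdot\omega=\frac{\beta+i\beta'e^{-2\log(u)t}}{1+ie^{-2\log(u)t}}$ --- note the $2\log(u)$ --- which makes the shift unit length; the theorem statement and the proof are therefore mutually inconsistent by a factor of two. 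To match the proof, the theorem's exponent should read $e^{-2\log(u)t}$, or else $\nu_{\r,\beta}^t$ should be $\nu_{\r,\beta}^{t/2}$ with $f_{\r,\beta}$ of period $2$. If you fix your diagonalization to match the correct eigenvalue $u$ and then adjust the parametrization (or the exponent on $\nu_{\r,\beta}$) accordingly, your argument goes through unchanged.
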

\begin{proof}
Let $A = A_{\beta}^+$. Diagonalize $A = R\smmattwo{u}{0}{0}{u^{-1}}R^{-1}$ (where $u$ is the unit specified in the statement), and define $A^t = R\smmattwo{u^t}{0}{0}{u^{-t}}R^{-1}$. If $\omega := \frac{\beta+i\beta'}{1+i}$, then for $t \in \R$,
\begin{equation}
A^t \cdot \omega = 
\frac{\beta + i \beta' e^{-2\log(u)t}}{1+i e^{-2\log(u)t}}
\end{equation} 
Thus,
$\varpi_\r(A^{t+1}\cdot\omega) =
\sf{\r}{A}{A^t\cdot\omega}\varpi_\r(A^t\cdot\omega)$, and $\lim_{t \to \infty}\sf{\r}{A}{A^t\cdot\omega} = \nu_{\r,\beta}$. 
Set $g_{\r,\beta}(t) = \nu_{\r,\beta}^{-t}\omega_{\r}(A^t\cdot\omega)$. Then,
\begin{equation}
g_{\r,\beta}(t+1) 
= \nu_{\r,\beta}^{-t-1} \sf{\r}{A}{A^t\cdot\omega}\varpi_\r(A^t\cdot\omega)
= \nu_{\r,\beta}^{-1} \sf{\r}{A}{A^t\cdot\omega} g_{\r,\beta}(t).
\end{equation}
Setting $f_{\r,\beta}(t) = \ds\lim_{\substack{n \to \infty \\ n \in \Z}} g_{\r,\beta}(t+n)$, \eqref{eq:varpiasympt} follows.
\end{proof}

In fact, one may replace $\beta'$ by any $x \in \R$ in \eqref{eq:varpiasympt} (shifting $f_{\r,\beta}$ by a constant depending on $x$). Similarly, along the vertical geodesic between $i\infty$ and $\beta$, we have the asymptotic formula
\begin{equation}\label{eq:vertical}
\varpi_\r\!\left(\beta + i e^{-\log(u)t}\right)
= \nu_{\r,\beta}^t(\tilde{f}_{\r,\beta}(t)+o(1)) \mbox{ as } t \to \infty,
\end{equation}
where $\tilde{f}_{\r,\beta} = f_{\r,\beta}(t+c)$ for some $c \in \R$.
Equation \eqref{eq:vertical} is illustrated in \Cref{fig:qpochasymptotic} for the example in \Cref{sec:example}.
We omit a formal proof of these more general asymptotics. They hold because modular geodesics approach $\beta$ approximately vertically from above, and differences between values of $\varpi_\r$ on nearby points on different geodesics may be bounded appropriately.

\begin{figure}[ht!]
\begin{center}
\begin{tabular}{r}
\includegraphics[scale=0.70]{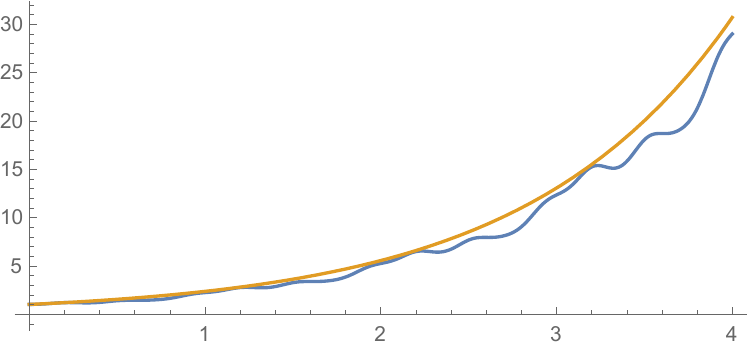}
\\
\includegraphics[scale=0.72]{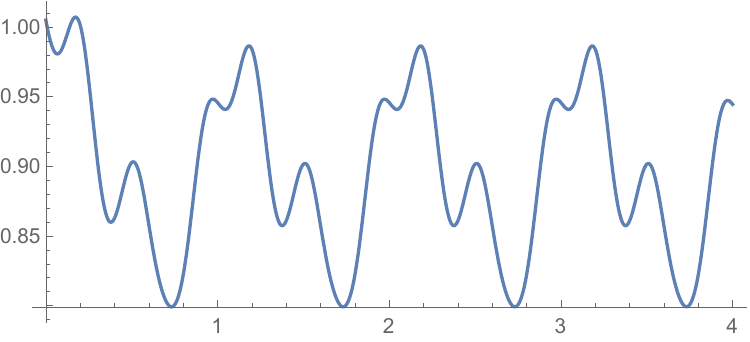}
\\
\includegraphics[scale=0.74]{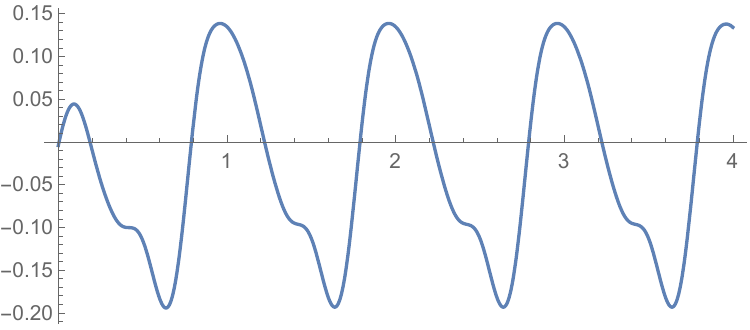}
\end{tabular}
\end{center}
\caption{
The top plot compares $y=\abs{\varpi_{\smcoltwo{0}{4/5}}\!\left(\sqrt{3}+i e^{-6\log(2+\sqrt{3})t}\right)}$ and $y=\mu^t$ for $\mu = e^{-\frac{7\pi i}{20}}\sqrt{\nu}$ and $\nu$ as in \eqref{eq:nunumber}.
The middle and bottom plots show graphs of $y=\abs{\mu^{-t} \,\varpi_{\smcoltwo{0}{4/5}}\!\left(\sqrt{3}+i e^{-6\log(2+\sqrt{3})t}\right)}$ and $y=\arg\!\left(\mu^{-t} \,\varpi_{\smcoltwo{0}{4/5}}\!\left(\sqrt{3}+i e^{-6\log(2+\sqrt{3})t}\right)\right)$, respectively.
}
\label{fig:qpochasymptotic}
\end{figure}

Further investigating the behavior of $\varpi_\r(\tau)$ near the real line could be interesting in several ways. Analytically, such investigations might lead to improved asymptotic formulas, or even exact formulas, for counting integer partitions into parts with congruence restrictions. Algebraically, they could bring Stark units into the ``quantum modular'' universe, perhaps connecting them to the many interesting objects already linked to quantum modular forms, including as $3$-manifolds and conformal field theories.

\section{Typesetting note}\label{sec:typesetting}

The \textit{shin} character $\shin$ and the \textit{samech} character $\samech$ may used as \texttt{{\char`\\}shin} and \texttt{{\char`\\}samech} in \LaTeX\ after adding the following lines to the preamble.
{
\begin{verbatim}
\DeclareFontFamily{U}{rcjhbltx}{}
\DeclareFontShape{U}{rcjhbltx}{m}{n}{<->rcjhbltx}{}
\DeclareSymbolFont{hebrewletters}{U}{rcjhbltx}{m}{n}
\DeclareMathSymbol{\shin}{\mathord}{hebrewletters}{152}
\DeclareMathSymbol{\samech}{\mathord}{hebrewletters}{115}
\end{verbatim}
}

\section{Acknowledgments}\label{sec:acknowledgements}

The author is supported by NSF DMS grant \#2302514 and has been supported by the Heilbronn Institute for Mathematical Research while conducting the research for this paper.

The author thanks Marcus Appleby, Kairi Black, Samit Dasgupta, Steven T.~Flammia, Edna Jones, Jeffrey C.~Lagarias, Owen Patashnick, and David Solomon for helpful conversations.
He also thanks Brett Tangedal for help checking the \texttt{PARI} computation in \Cref{sec:example} and Eleanor McSpirit, Brett Tangedal, and Bora Yalkinoglu for pointing out several important references. 

The author is grateful to the developers of the computer algebra packages Mathematica and PARI, which he has used to compute the example in \Cref{sec:example}.

\bibliographystyle{abbrv}
\bibliography{references}

\end{document}